\newcommand\myshade{85}
\colorlet{mylinkcolor}{Red}
\colorlet{mycitecolor}{Cerulean}
\colorlet{myurlcolor}{Plum}
\tikzset{
  trim node/.default=1cm,
  trim node/.style={
    overlay,
    append after command={
      ([xshift={+#1}]\tikzlastnode.north west)
      ([xshift={+-#1}]\tikzlastnode.south east)}},
  down and trim/.default=1cm,
  down and trim/.style={
    yshift=-(\pgfmatrixcurrentcolumn-1)*1.5\baselineskip,
    trim node={#1}},
  downup and trim/.default=1cm,
  downup and trim/.style={
    yshift=iseven(\pgfmatrixcurrentcolumn) ? -1.5\baselineskip : 0pt,
    trim node={#1}},
  -|/.style={to path={-|(\tikztotarget)\tikztonodes}},
  |-/.style={to path={|-(\tikztotarget)\tikztonodes}},
  -| sl/.style={-|, xslant=-1},
  |- sl/.style={|-, xslant= 1},
  center picture/.style={
    trim left=(current bounding box.center),
    trim right=(current bounding box.center)}}
\mathchardef\mhyphen="2D
\renewcommand{\AA}{\mathbb{A}}
\newcommand{\BB}{\mathbb{B}}
\newcommand{\FF}{\mathbf{F}}
\newcommand{\LL}{\mathbf{L}}
\newcommand{\NN}{\mathbf{N}}
\newcommand{\QQ}{\mathbf{Q}}
\newcommand{\TT}{\mathbb{T}}
\newcommand{\ZZ}{\mathbf{Z}}
\newcommand{\cA}{\mathcal{A}}
\newcommand{\cC}{\mathcal{C}}
\newcommand{\cD}{\mathcal{D}}
\newcommand{\cE}{\mathcal{E}}
\newcommand{\cF}{\mathcal{F}}
\newcommand{\cG}{\mathcal{G}}
\newcommand{\cI}{\mathcal{I}}
\newcommand{\cM}{\mathcal{M}}
\newcommand{\cO}{\mathcal{O}}
\newcommand{\cY}{\mathcal{Y}}
\newcommand{\fS}{\mathfrak{S}}
\newcommand{\fT}{\mathfrak{T}}
\newcommand{\fm}{\mathfrak{m}}
\newcommand{\fp}{\mathfrak{p}}
\newcommand{\rA}{\mathrm{A}}
\newcommand{\rB}{\mathrm{B}}
\newcommand{\rC}{\mathrm{C}}
\DeclareSymbolFontAlphabet{\mathbb}{AMSb}
\DeclareSymbolFontAlphabet{\mathbbl}{bbold}
\newcommand{\Prism}{{\mathlarger{\mathbbl{\Delta}}}}
\newcommand{\suchthat}{\;\ifnum\currentgrouptype=16 \middle\fi\vert\;}
\newcommand{\abs}[1]{\lvert#1\rvert}
\newcommand{\norm}[1]{\left\lVert#1\right\rVert}
\newcommand{\blank}{{-}}
\newcommand\restr[2]{{\left.\kern-\nulldelimiterspace#1\vphantom{\big|}\right|_{#2}}}
\newcommand*\lon{
        \nobreak
        \mskip6mu plus1mu
        \mathpunct{}
        \nonscript
        \mkern-\thinmuskip
        {:}
        \mskip2mu
        \relax
}
\DeclareMathOperator{\Ass}{Ass}
\newcommand{\ch}{\mathrm{ch}}
\DeclareMathOperator*{\colim}{colim}
\newcommand{\conv}{{\mathrm{conv}}}
\DeclareMathOperator{\coker}{coker}
\newcommand{\crys}{{\mathrm{crys}}}          
\newcommand{\dR}{{\mathrm{dR}}}
\newcommand{\et}{\mathrm{\acute et}}
\DeclareMathOperator{\eq}{eq}
\DeclareMathOperator{\Ext}{Ext}
\DeclareMathOperator{\fib}{fib}
\DeclareMathOperator{\Fil}{Fil}
\newcommand{\Fin}{\mathrm{Fin}}
\DeclareMathOperator{\fIsoc}{fIsoc}
\DeclareMathOperator{\Frac}{Frac}
\newcommand{\fSch}{\mathrm{fSch}}
\DeclareMathOperator{\GL}{GL}
\newcommand{\Gr}{\mathrm{Gr}}
\DeclareMathOperator{\gr}{gr}
\DeclareMathOperator{\Hh}{H}
\DeclareMathOperator{\Hom}{Hom}
\DeclareMathOperator{\id}{id}
\DeclareMathOperator{\Isoc}{Isoc}
\DeclareMathOperator{\Kos}{Kos}
\newcommand{\lisse}{\mathrm{lisse}}
\DeclareMathOperator{\Loc}{Loc}
\newcommand{\perf}{{\mathrm{perf}}}
\DeclareMathOperator{\Mod}{Mod}
\DeclareMathOperator{\Perfd}{Perfd}
\DeclareMathOperator{\pr}{pr}
\newcommand{\proet}{{\mathrm{pro\acute et}}}
\newcommand{\qrsp}{\mathrm{qrsp}}
\newcommand{\red}{\mathrm{red}}
\DeclareMathOperator{\Rep}{Rep}
\DeclareMathOperator{\rk}{rk}
\DeclareMathOperator{\Shv}{Shv}
\DeclareMathOperator{\Spa}{Spa}
\DeclareMathOperator{\Spec}{Spec}
\DeclareMathOperator{\Spf}{Spf}
\DeclareMathOperator{\Sym}{Sym}
\DeclareMathOperator{\Tor}{Tor}
\DeclareMathOperator{\tr}{tr}
\newcommand{\tf}{\mathrm{tf}}
\DeclareMathOperator{\Vect}{Vect}
\newcommand{\an}{{\mathrm{an}}}
\def\iHom{\mathop{\mathscr{H}\!\mathit{om}}\nolimits}
\newcommand{\Bp}{{\mathrm{B^+_{dR}}}}
\newcommand{\Be}{{\mathrm{B^+_{dR,e}}}}
\newcommand{\CA}{{\text{\v{C}A}}}
\theoremstyle{plain}
\newtheorem{theorem}{Theorem}[section]
\newtheorem{maintheorem}{Theorem}
\newtheorem{proposition}[theorem]{Proposition}
\newtheorem{conjecture}[theorem]{Conjecture}
\newtheorem{lemma}[theorem]{Lemma}
\newtheorem{claim}[theorem]{Claim}
\newtheorem{corollary}[theorem]{Corollary}
\newtheorem{maincorollary}[maintheorem]{Corollary}
\theoremstyle{definition}
\newtheorem{definition}[theorem]{Definition}
\newtheorem{example}[theorem]{Example}
\newtheorem{construction}[theorem]{Construction}
\newtheorem{warn}[theorem]{Warning}
\newtheorem{convention}[theorem]{Convention}
\newtheorem{remark}[theorem]{Remark}
\title{A prismatic approach to crystalline local systems}
\author{Haoyang Guo}
\author{Emanuel Reinecke}
\address[Haoyang Guo]{Department of Mathematics, University of Chicago, Chicago, IL 60637, USA}
\email{ghy@uchicago.edu}
\address[Emanuel Reinecke]{Institute for Advanced Study, 1 Einstein Drive, Princeton, NJ 08540, USA}
\email{reinec@ias.edu}
\begin{document}

	\begin{abstract}
		Let $X$ be a smooth $p$-adic formal scheme.
		We show that integral crystalline local systems on the generic fiber of $X$ are equivalent to prismatic $F$-crystals over the analytic locus of the prismatic site of $X$.
		As an application, we give a prismatic proof of Fontaine's $\rC_\crys$-conjecture, for general coefficients, in the relative setting, and allowing ramified base fields.
		Along the way, we also establish various foundational results for the cohomology of prismatic $F$-crystals, including various comparison theorems, Poincar\'e duality, and Frobenius isogeny.
	\end{abstract}
	
\maketitle
\tableofcontents

\section{Introduction}
Let $K$ be a complete discretely valued $p$-adic field of characteristic $0$ with ring of integers $\cO_K$ and perfect residue field $k$.
Let $C$ be a fixed complete algebraic closure of $K$ and let $\cO_C$ be its ring of integers, 
Let $G_K$ be the absolute Galois group of $K$.
One of the central goals of $p$-adic Hodge theory is the study of $p$-adic Galois representations, that is, continuous representations of $G_K$ valued in finite dimensional $\QQ_p$-vector spaces.
The key example is the \'etale cohomology of a smooth proper algebraic variety over $K$ with coefficients in $\QQ_p$.

Let $X$ be a smooth $p$-adic formal scheme over $\cO_K$.
Motivated by the de Rham and the Hodge theorem in complex geometry, Grothendieck asked if there is a ``mysterious functor'' relating $\QQ_p$-\'etale cohomology of the generic fiber $X_\eta$ over $K$ and the crystalline cohomology of the special fiber $X_s$ over $k$.
	This was first formulated rigorously in work of Fontaine \cite{Fon82}, using his \emph{$p$-adic period ring} $\mathrm{B}_\crys$, which is a $W(k)[1/p]$-algebra equipped with a natural Galois action, a Frobenius structure, and a filtration.
	Let us recall his prediction, which is now known as the $\rC_\crys$-conjecture.
\begin{conjecture}[Fontaine]\label{conj}
	Let $X$ be a smooth, proper scheme over $\cO_K$.
	There is a natural isomorphism of $\rB_\crys$-modules
	\[
	\Hh^i(X_{\et, \overline{K}}, \ZZ_p) \otimes_{\ZZ_p} \rB_\crys \simeq \Hh^i(X_s/W(k)_\crys)\otimes_{W(k)} \rB_\crys,
	\]
	which is compatible with Galois actions, Frobenius structures, and the filtrations.
\end{conjecture}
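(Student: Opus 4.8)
The plan is to realize both sides of the conjectured isomorphism, base changed to $\rB_\crys$, as base changes of the prismatic cohomology of one and the same object — the unit prismatic $F$-crystal $\cO_\Prism$ on the analytic locus of the prismatic site of $X$. Under the paper's equivalence between integral crystalline local systems on $X_\eta$ and prismatic $F$-crystals on this locus, $\cO_\Prism$ corresponds to the constant local system $\ZZ_p$; the general-coefficient strengthening of Fontaine's conjecture then follows by running the same argument verbatim with the $F$-crystal attached to an arbitrary integral crystalline local system (and its associated filtered $F$-isocrystal on $X_s$ in place of the constant crystal). I would first base change along $\cO_K \to \cO_C$ and work with the perfect prism $\mathbb{A}_{\mathrm{inf}} = W(\cO_C^\flat)$, keeping track of the $G_K$-action on $C$, and recover the $K$-rational statement at the end by descent along $G_K$.

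The first input is the étale comparison theorem for prismatic $F$-crystals: after base change along the natural map $\mathbb{A}_{\mathrm{inf}} \to \rB_\crys$ (which amounts to inverting $\mu$ and completing $p$-adically), it yields a $\rB_\crys$-linear isomorphism
\[
R\Gamma_\Prism(X_{\cO_C}, \cO_\Prism) \otimes_{\mathbb{A}_{\mathrm{inf}}} \rB_\crys \;\simeq\; R\Gamma(X_{\et,\overline{K}}, \ZZ_p) \otimes_{\ZZ_p} \rB_\crys
\]
compatible with the Frobenii and with the $G_K$-actions (on $\cO_C^\flat$, on $X_{\overline K}$, and on $\rB_\crys$). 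The second input is the crystalline comparison theorem for prismatic $F$-crystals which, using that $X$ is proper so that the cohomologies in sight are perfect complexes, yields after base change along the (Frobenius-twisted) map $\mathbb{A}_{\mathrm{inf}} \to \rA_\crys \to \rB_\crys$ a $\rB_\crys$-linear, Frobenius-compatible isomorphism
\[
R\Gamma_\Prism(X_{\cO_C}, \cO_\Prism) \otimes_{\mathbb{A}_{\mathrm{inf}}} \rB_\crys \;\simeq\; R\Gamma(X_s/W(k)_\crys) \otimes_{W(k)} \rB_\crys.
\]
Composing these two identifications and passing to cohomology in each degree — where properness again ensures that all the $\rB_\crys$-modules that appear are finite free — produces the $\rB_\crys$-linear, $G_K$-equivariant, Frobenius-equivariant isomorphism $\Hh^i(X_{\et,\overline K}, \ZZ_p) \otimes_{\ZZ_p} \rB_\crys \simeq \Hh^i(X_s/W(k)_\crys) \otimes_{W(k)} \rB_\crys$ of \Cref{conj}.

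It remains to match the filtrations, which is the most delicate point. The filtration on the right-hand side is built from the Hodge filtration on $\Hh^i_\crys(X_s/W(k)) \otimes_{W(k)} K \cong \Hh^i_\dR(X_\eta/K)$ together with the filtration on $\rB_\dR$ (the compatibility being literally meaningful only after the standard further base change to $\rB_\dR$); on the prismatic side this same filtration is encoded by the de Rham comparison for prismatic $F$-crystals — obtained by base change along $\mathbb{A}_{\mathrm{inf}} \to \Bp$ — or, equivalently, by the Nygaard filtration on prismatic cohomology. I would check that these two descriptions coincide by using that the de Rham and crystalline comparisons both factor through $\rA_\crys$ and are compatible there; on the étale side the matching statement is precisely that $\ZZ_p$ (or the chosen $\mathbb{L}$) is de Rham with the expected Hodge--Tate weights, which is again part of the comparison package. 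The two main obstacles are therefore: first, establishing the étale, crystalline and de Rham comparison theorems for prismatic $F$-crystals in the generality the argument uses — ramified $K$, relative base, general coefficients — which is exactly the foundational work the paper has to carry out; and second, the bookkeeping of the filtration, for which the cleanest route — and the one I would follow — is to transport \emph{every} structure (Frobenius, Galois action, filtration) from the single prismatic complex $R\Gamma_\Prism(X_{\cO_C}, \cO_\Prism)$, so that each compatibility asserted in Fontaine's conjecture becomes a formal consequence of the corresponding structure on that complex rather than an independent comparison of the two sides.
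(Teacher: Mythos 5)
Your proposal correctly identifies the paper's strategy: realize both sides as specializations of the prismatic cohomology of $\cO_\Prism$ over $\rA_{\inf}$, using the étale and crystalline comparison theorems from \cite{BS19} to show both specialization maps become isomorphisms after base change to $\rB_\crys$, with the filtration matched through the de~Rham/Nygaard picture. This is exactly the paper's stated argument for \cref{conj}, and your observation that general coefficients follow verbatim once the coefficient-theoretic comparison theorems are in place is the content of the paper's \cref{Ccrys}.
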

On the other hand, Fontaine's work led to the abstract notion of \emph{crystalline} Galois representations, which are roughly $\QQ_p$-representations that come from a filtered Frobenius module with the trivial Galois action after a base extension to $\mathrm{B}_\crys$.
A natural question then is: 
can crystalline Galois representations be understood in a more geometric manner?

One possible answer to this question has recently been put forward in work of Bhatt--Scholze \cite{BS19,BS21}, building on their previous work with Morrow \cite{BMS18}, \cite{BMS19}.
In \cite{BS19}, they attached to each smooth $p$-adic formal scheme $X$ a novel Grothendieck site $X_\Prism$, called the \emph{prismatic site} of $X$.
The objects of $X_\Prism$ are given by maps $\Spf(\overline{A}) \to X$ from prisms $(A,I)$, that is, $\ZZ_p$-algebras $A$ which are (derived) $(p,I)$-complete for a certain ideal $I \subset A$ and are equipped with an endomorphism on $A$ lifting Frobenius on $A/p$;
see \cite[\S~3]{BS19}.
The cohomology of the associated structure sheaf $\cO_\Prism$ specializes to many other important $p$-adic cohomology theories, including $\ZZ_p$-\'etale cohomology of the generic fiber $X_\eta$ and crystalline cohomology of the special fiber $X_s$ (\cite[Thm.~1.8]{BS19} and \cite[\S~1.3]{BL22} for the absolute version).
In particular, the specialization maps to \'etale cohomology and crystalline cohomology become isomorphisms after base extension to the ring $\rB_\crys$, immediately proving \cref{conj}.
Moreover, this suggests that sheaves on the prismatic site can be used to build a bridge between crystalline Galois representations and their associated filtered Frobenius modules.
In \cite{BS21} Bhatt--Scholze proved such a connection in the special case $X=\Spf(\cO_K)$.
More precisely, their result is as follows.
\begin{theorem}\label{BS main}\cite[Thm.~1.2]{BS21}
	Let $X = \Spf(\cO_K)$.
	There is a natural equivalence
	\[
	\Vect^\varphi(X_\Prism, \cO_\Prism) \longrightarrow \Rep_{\ZZ_p}^\crys(G_K)
	\]
	between the category of prismatic $F$-crystals on $X$ and the category of continuous $G_K$-representations over $\ZZ_p$ that are crystalline after inverting $p$.
\end{theorem}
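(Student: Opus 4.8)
The plan is to deduce the theorem from Kisin's description of crystalline representations in terms of Breuil--Kisin modules, the genuinely new content being a comparison between Breuil--Kisin modules carrying extra structure and prismatic $F$-crystals on the \emph{absolute} prismatic site of $\cO_K$. The first ingredient is that the Breuil--Kisin prism $(\fS,(E))$ --- with $\fS=W(k)[[u]]$ and $E$ an Eisenstein polynomial for a chosen uniformizer $\pi\in\cO_K$ --- covers the final object of the prismatic topos: every prism over $\cO_K$ admits, after a further cover, a map from $(\fS,(E))$ sending $u$ to a lift of $\pi$. By \v{C}ech--Grothendieck descent, $\Vect^\varphi(X_\Prism,\cO_\Prism)$ is then identified with the category of \emph{Breuil--Kisin modules} --- finite free $\fS$-modules $\fM$ equipped with an isomorphism $\varphi^*\fM[1/E]\xrightarrow{\sim}\fM[1/E]$ --- together with a descent datum along the cosimplicial prism $(\fS^{(\bullet)},(E))$ of self-coproducts. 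One therefore has to get hold of the rings $\fS^{(n)}$; by \cite{BS19} these are again explicit prisms (prismatic envelopes) that become tractable Laurent-type rings after inverting $p$ and $E$ and completing.

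Next I would construct the étale realization functor. Inverting the Hodge--Tate ideal sheaf $\cI$ and $p$-completing sends $\cO_\Prism$ to $\cO_\Prism[1/\cI]^\wedge_p$, and the prismatic--étale comparison for the structure sheaf identifies $\Vect^\varphi(X_\Prism,\cO_\Prism[1/\cI]^\wedge_p)$ with $\ZZ_p$-local systems on $X_\eta$; for $X=\Spf\cO_K$ the latter is just $\Rep_{\ZZ_p}(G_K)$. Base change along $\cO_\Prism\to\cO_\Prism[1/\cI]^\wedge_p$ thus yields the desired functor $T\colon\Vect^\varphi(X_\Prism,\cO_\Prism)\to\Rep_{\ZZ_p}(G_K)$; concretely, under the identification above and for a chosen map $\fS\to A_{\mathrm{inf}}$, it sends $\fM$ to $(\fM\otimes_\fS W(C^\flat))^{\varphi=1}$ with its induced $G_K$-action.

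The crux of full faithfulness is the claim that a $\varphi$-equivariant morphism of Breuil--Kisin modules with descent data which becomes an isomorphism after applying $(\blank)[1/\cI]^\wedge_p$ is already an isomorphism --- equivalently, that a Breuil--Kisin module is recovered from its restriction to the analytic locus of $\Spf\fS$. This is a statement about $\varphi$-modules over $\fS$ and over the completed localizations $\fS^{(n)}[1/pE]^\wedge$, to be proved in the spirit of Kisin's fully faithfulness theorem for $\varphi$-modules on the open unit disk, with elementary computations such as $\fS^{\varphi=1}=\ZZ_p$ supplying the $H^0$-level input. Passing to internal Homs then gives that $T$ is fully faithful; in particular the descent datum promoting a Breuil--Kisin module to a prismatic $F$-crystal is unique if it exists.

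It remains to identify the essential image of $T$ with $\Rep_{\ZZ_p}^\crys(G_K)$. For one inclusion, the Breuil--Kisin module $\fM$ underlying a prismatic $F$-crystal, together with its Nygaard filtration, gives rise to a filtered $\varphi$-module over $K_0=W(k)[1/p]$ (Frobenius from reduction modulo $u$) which realizes $T(\fM)[1/p]$ through the crystalline period ring $\rB_\crys$, so $T(\fM)[1/p]$ is crystalline. Conversely, given a $G_K$-stable $\ZZ_p$-lattice $T$ in a crystalline representation, Kisin's theory attaches a Breuil--Kisin module $\fM(T)$ of finite $E$-height with $T(\fM(T))\simeq T$, and one must promote $\fM(T)$ to a crystal on $X_\Prism$ by producing the --- by the previous step, necessarily unique --- descent datum along $(\fS^{(\bullet)},(E))$; its existence should follow from the crystalline comparison isomorphism $\fM(T)\otimes_\fS A_\crys\simeq T\otimes_{\ZZ_p}A_\crys$, which supplies the required $\varphi$-equivariant identifications over the self-products $\fS^{(n)}$ after descent along $\fS^{(n)}\to A_\crys$. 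This last point --- realizing Kisin's module as a genuine crystal on the \emph{full} absolute prismatic site, with arbitrary Hodge--Tate weights and no minuscule hypothesis --- is the main obstacle, and is where the precise structure of $\fS^{(1)}$ and the comparison between the Nygaard and Hodge filtrations really enter.
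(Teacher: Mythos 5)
This theorem is cited by the paper from \cite{BS21} and is not proved there; the paper instead proves a higher-dimensional generalization (\cref{main}) by a rather different method, so there is no ``paper's own proof'' to compare against directly. Your sketch is recognizable as the Breuil--Kisin route, which is essentially the strategy of Du--Liu--Moon--Shimizu \cite{DLMS22} rather than that of Bhatt--Scholze or of this paper. Both Bhatt--Scholze and the paper instead work over the \emph{perfectoid} cover $(\rA_{\inf},\tilde\xi)$: they construct the would-be crystal directly on the analytic locus of $\Spa(\Prism_S)$ for large perfectoid $S$, using the crystalline comparison to glue a Fargues-style modification of $D_0\otimes\rB_\crys^+$ with $T\otimes\cO$, and only then descend along $\rA_{\inf}\to\Prism_{\cO_C\widehat\otimes_{\cO_K}\cO_C}$, with the Beilinson fiber square (\cref{Beilinson}) controlling the descent. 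Your route trades that for the explicit prismatic envelopes $\fS^{(n)}$ of the Breuil--Kisin self-products and Kisin's classification, which buys a cleaner identification of the objects but moves all the difficulty into the structure of $\fS^{(1)}$.

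Two points in your sketch need more than you give them. First, in the full-faithfulness step, ``a morphism that becomes an isomorphism after $(\blank)[1/\cI_\Prism]^\wedge_p$ is already one'' is a conservativity statement; it is not by itself equivalent to full faithfulness. The recovery of a Breuil--Kisin module from its restriction to $\Spec(\fS)\setminus V(p,E)$ does give faithfulness of the étale realization, but fullness requires showing that a $\varphi$-equivariant, descent-compatible map over $\fS[1/E]^\wedge_p$ actually lands in $\fM$. This is genuine content --- in the analogous step in the paper one needs an intersection formula of the form $\bigcap_r\frac{\mu}{\varphi^{-r}(\mu)}\Prism=\mu\Prism$ (\cref{intersection formula}) to bound the denominators, and the corresponding bounding argument for $\varphi$-modules over $\fS$ needs to be carried out. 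Your reduction to internal Homs and the computation $\fS^{\varphi=1}=\ZZ_p$ is the right $H^0$-level input, but it does not by itself supply the bounding.

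Second, and more seriously, the essential surjectivity as you phrase it has a real gap that you half-acknowledge: given Kisin's $\fM(T)$, the crystalline comparison hands you a descent isomorphism over $\rA_\crys$ (equivalently over $\fS^{(1)}[1/p]$, or over the open locus $\{\,\abs{E}\le\abs{p}\neq 0\,\}$), not over $\fS^{(1)}$ or even over the whole analytic locus $\Spec(\fS^{(1)})\setminus V(p,E)$. Extending it across $\{\,\abs{p}\le\abs{E}\,\}$ is exactly where the hard work happens: in \cite{DLMS22} this requires a detailed study of the ring $\fS^{(1)}$ and of finite-height $\varphi$-modules over it, and in this paper the corresponding extension from $\{\,\abs{\cI_\Prism}\le\abs{p}\neq 0\,\}$ to $\{\,p\neq 0\,\}$ uses the Beilinson fiber square (\cref{Beilinson}, \cref{EssSurj3}). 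As written, the phrase ``after descent along $\fS^{(n)}\to\rA_\crys$'' asserts the conclusion of this step rather than proving it, and for unbounded Hodge--Tate weights one also has to reconcile the Nygaard filtration on $\fM(T)$ with the crystalline filtration --- this is precisely the part you flag as ``the main obstacle,'' and the sketch does not close it.
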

Note that continuous $G_K$-representations over $\ZZ_p$ can naturally be identified with \'etale $\ZZ_p$-local systems over the generic fiber $X_\eta=\Spec(K)$.
We can thus rephrase \cref{BS main} more geometrically as an equivalence between the category of prismatic $F$-crystals on $X$ and the category of crystalline \'etale $\ZZ_p$-local systems on $X_\eta$:
\[
\Vect^\varphi(X_\Prism, \cO_\Prism) \simeq  \Loc_{\ZZ_p}^\crys(X_\eta).
\]
Since both categories admit higher-dimensional generalizations, it is natural to ask if the aforementioned equivalence can be extended to general smooth $p$-adic formal schemes over $\cO_K$.
Our first main result in this article gives a positive answer to this question.
\begin{maintheorem}\label{main}
	Let $X$ be a smooth $p$-adic formal scheme over $\cO_K$.
	There is a natural equivalence
	\[
	T \colon \Vect^{\an,\varphi}(X_\Prism) \longrightarrow \Loc_{\ZZ_p}^\crys(X_\eta)
	\]
	between the category of analytic prismatic $F$-crystals on $X$ and the category of crystalline $\ZZ_p$-local systems on $X_\eta$.
\end{maintheorem}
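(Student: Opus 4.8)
The plan is to build the functor $T$ locally and then glue. First I would recall that on a small affine $X = \Spf(R)$ admitting a framing (an étale map to $\Spf(\cO_K\langle T_1^{\pm 1},\dots,T_d^{\pm 1}\rangle)$), the prismatic site has a final object analog: the Breuil--Kisin prism $\mathfrak{S}_R$ together with its perfection, so that an analytic prismatic $F$-crystal is pinned down by a finite projective module over $\mathfrak{S}_R[1/\mathcal{I}]_p^\wedge$ with a Frobenius structure. On the other side, over $X_\eta$ the crystalline local systems can be detected on the pro-étale site via the period sheaves $\mathbb{B}_{\mathrm{dR}}^+$, $\mathcal{O}\mathbb{B}_{\mathrm{dR}}$, $\mathbb{B}_{\crys}$ built by Scholze and studied in the relative setting by Tan--Tong and others. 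The key bridge is that evaluating an analytic prismatic $F$-crystal on the perfect prisms arising from points of $X_\eta$ (the "Laurent $F$-crystal" or "étale realization" direction, cf. Bhatt--Scholze's étale comparison) produces a $\mathbb{Z}_p$-local system, and one must show it lands in the crystalline subcategory.

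The key steps, in order, would be: (i) construct the étale realization functor $T$ by restricting an analytic prismatic $F$-crystal to the "analytic perfect prism" locus and invoking the prismatic--étale comparison of \cite{BS19}, obtaining an étale $\ZZ_p$-local system on $X_\eta$; (ii) show $T$ is fully faithful by reducing, via the arc/$v$-descent properties of both sides, to a computation over a single perfectoid (or small affine) base, where it becomes a statement about $F$-isocrystals versus filtered modules that can be checked by hand using the structure of $\mathfrak{S}_R$ and the fact that $\varphi$-invariants of $\mathfrak{S}_R[1/\mathcal{I}]_p^\wedge$ recover $\ZZ_p$; (iii) show that the local system $T(\mathcal{E})$ is crystalline, by exhibiting its associated $\mathcal{O}\mathbb{B}_{\crys}$-module with its connection and Frobenius directly from the prismatic data (here the de Rham and crystalline comparison theorems for prismatic cohomology of $F$-crystals — presumably established earlier in the paper — do the heavy lifting, producing the requisite filtered $F$-isocrystal with the correct associated graded); (iv) conversely, given a crystalline local system $\mathbb{L}$ on $X_\eta$, produce an analytic prismatic $F$-crystal: first Zariski-locally, using the equivalence of \cite{BS21} over $\Spf(\cO_K)$ together with a relative version of the reconstruction via $\mathbb{B}_{\crys}$-modules and Beauville--Laszlo-type gluing along $\mathcal{I}$, then glue the local pieces using full faithfulness from step (ii) to see the gluing data is canonical; (v) check the two constructions are mutually inverse, which again reduces to the local/perfectoid case.

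The main obstacle I expect is step (iv): descending from the known absolute case $X=\Spf(\cO_K)$ to produce a genuine analytic $F$-crystal in the relative setting. Over a point one has Kisin's theory of Breuil--Kisin modules available, but in families the naive relative Breuil--Kisin module need not extend across the non-analytic part of the prismatic site, which is exactly why one must work with the \emph{analytic} prismatic site; proving that the relative $\mathbb{B}_{\crys}^+$-local system data actually glues to a finite projective $F$-crystal over $\mathfrak{S}_R[1/\mathcal{I}]_p^\wedge$ — rather than just a module over some larger ring — requires a careful boundedness/finiteness argument and control of the Frobenius near the Hodge--Tate locus $V(\mathcal{I})$. A secondary difficulty is making the gluing in step (iv) functorial and independent of the chosen framing, which is where full faithfulness (step (ii)) is essential and must be in place first. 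I would expect the de Rham comparison and the relative crystallinity criterion (in terms of $\mathcal{O}\mathbb{B}_{\crys}$ with nilpotent connection) to be the technical inputs that make everything go through, and these are presumably among the "various comparison theorems" promised in the abstract.
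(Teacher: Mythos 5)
Your overall skeleton — build an \'etale realization $T$, prove it fully faithful, prove its image is crystalline, then prove essential surjectivity locally and glue using full faithfulness — is exactly the architecture of the paper's proof, and you correctly put your finger on where the hard part is: extending a relative Breuil--Kisin-type module across the non-analytic locus is not straightforward, and in particular the slope-theoretic argument of Bhatt--Scholze over a point does not generalize. That diagnosis is right and important.

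However, there is a genuine gap at your step (iv). You propose ``a relative version of the reconstruction via $\mathbb{B}_{\crys}$-modules and Beauville--Laszlo-type gluing along $\cI$,'' but a single Beauville--Laszlo modification at $V(\cI)$ is not enough: it lets you pass from $\{\abs{I}\le\abs{p}\neq 0\}$ to $\{\abs{I}\le\abs{p^{1/p}}\neq 0\}$ and so on, but iterating this only covers the open unit disk $\{p\neq 0\}$, never the whole analytic locus $\Spec(\Prism_S)\setminus V(p,I)$. The missing idea — which is the main new content of the essential surjectivity proof — is to work on a perfectoid cover $S\in X^w_\qrsp$ and glue \emph{two} sheaves over the Fargues--Fontaine-like space $\cY=\Spa(\Prism_{S,\perf})\setminus V(p,I)$: the sheaf $\cM'$ coming from the filtered $F$-isocrystal lives on $\cY_{[1/p,\infty]}$, while $T\otimes\cO_{\cY_{[0,\infty)}}$ (modified at $V(\mu)$ by the Hodge--Tate lattice) lives on $\cY_{[0,\infty)}$; these two are then glued along the overlap using the crystalline comparison isomorphism $T\otimes\rB_\crys\simeq D\otimes\rB_\crys$, and this two-sided gluing is what closes up the analytic locus. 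Without this, one is stuck with a module on the open disk only. A second, related point you do not address is how to extend the \emph{descent data} for this glued module from $\{\abs{I}\le\abs{p}\neq 0\}$ to $\{p\neq 0\}$ on the self-fiber-product $\Prism_{S\widehat\otimes_R S}$; in the paper this requires the Beilinson fiber square / Frobenius-fixed-point argument of Bhatt--Scholze (\S 6.3 of their paper), applied to the relative fiber product $S\widehat\otimes_R S$. Neither ingredient is visible in your outline, and I do not see how ``Beauville--Laszlo along $\cI$'' together with framing-independence gets you there.

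Two lesser remarks. For step (ii), your reduction to a perfectoid base is in the right spirit, but the actual full-faithfulness argument is an analog of Fargues's classification of shtukas with one leg (a lattice realization functor valued in pairs $(M,\Xi)$ with $\Xi\subset M\otimes\BB_\dR$ a $\BB_\dR^+$-lattice) combined with a growth-control lemma $\bigcap_r\frac{\mu}{\varphi^{-r}(\mu)}\Prism_S=\mu\Prism_S$; ``$\varphi$-invariants of $\mathfrak S_R[1/\cI]^\wedge_p$ recover $\ZZ_p$'' is not the mechanism. For step (iii), crystallinity of $T(\cE)$ does not rest on the cohomological comparison theorems of the paper (those are for Theorem B); it is proved directly via the rational crystalline realization $D_\crys$ and an arc-local descent to rank-one perfectoid valuation rings, reducing to the Breuil--Kisin--Fargues module analysis of Bhatt--Morrow--Scholze.
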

The equivalence provides a powerful tool to study crystalline local systems via the prismatic theory.
As an application, we give a prismatic proof of Fontaine's $\rC_\crys$-conjecture, for general crystalline local systems, in the relative setting, and allowing ramified base fields.
\begin{maintheorem}\label{Ccrys}
	Let $f \colon X \to Y$ be a smooth proper morphism of smooth formal $\cO_K$-schemes, let $T$ be a crystalline sheaf of $\ZZ_p$-modules over $X_\eta$, and let $\bigl(\cE_s,\varphi_{\cE_s},\Fil^\bullet(E)\bigr)$ be the associated filtered $F$-isocrystal over $(X_s/W(k))_\crys$.
	Then the higher direct images $R^if_{\eta,*}T$ are crystalline sheaves of $\ZZ_p$-modules on $Y_\eta$ and are associated with the filtered $F$-isocrystals $R^if_{s,\crys,*}\cE_s$.
\end{maintheorem}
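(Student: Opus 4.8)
The plan is to deduce \cref{Ccrys} from \cref{main} together with the comparison theorems for the cohomology of analytic prismatic $F$-crystals and the Frobenius-isogeny result recorded in the abstract. By \cref{main} the crystalline sheaf $T$ corresponds to an analytic prismatic $F$-crystal $\cE$ on $X_\Prism$, and, by the construction of that equivalence, the attached filtered $F$-isocrystal $(\cE_s, \varphi_{\cE_s}, \Fil^\bullet)$ is exactly the crystalline (equivalently Hodge--Tate) realization of $\cE$ equipped with the Hodge filtration supplied by the de Rham comparison. I would then form the relative prismatic cohomology $Rf_{\Prism,*}\cE$ over the analytic locus of $Y_\Prism$ and argue in three stages: (a) $Rf_{\Prism,*}\cE$ is a perfect complex with Frobenius on the analytic locus whose cohomology sheaves $R^if_{\Prism,*}\cE$ are themselves analytic prismatic $F$-crystals on $Y$; (b) the \'etale realization of $R^if_{\Prism,*}\cE$ is $R^if_{\eta,*}T$; (c) the crystalline realization of $R^if_{\Prism,*}\cE$, with its Hodge filtration, is $R^if_{s,\crys,*}\cE_s$. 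Granting these, \cref{main} applied over $Y$ shows that $T(R^if_{\Prism,*}\cE)$ is a crystalline $\ZZ_p$-sheaf on $Y_\eta$ associated with $R^if_{s,\crys,*}\cE_s$; combined with (b) this is precisely the assertion of \cref{Ccrys}.

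For (a), flat and proper base change for prismatic cohomology reduces everything to a single bounded prism $(A,I) \in Y_\Prism$ and its analytic locus $\Spec(A)\smallsetminus V(p,I)$. Perfectness of $R\Gamma_\Prism(X_A/A, \cE_A)$ comes from the Hodge--Tate comparison, which presents its reduction modulo $I$ as a finite iterated extension of the coherent cohomologies of the Hodge bundles of the proper smooth morphism $f_A$, together with derived Nakayama. To promote perfectness to the statement that each $R^if_{\Prism,*}\cE$ is a vector bundle on the analytic locus, I would use that after inverting $I$ the linearized Frobenius on $Rf_{\Prism,*}\cE$ is an isomorphism (the Frobenius-isogeny property, via base change $\varphi^*Rf_{\Prism,*}\cE\simeq Rf_{\Prism,*}\varphi^*\cE$), which forces a perfect $F$-complex to have locally free cohomology sheaves there, together with the concomitant rational degeneration of the Hodge--Tate spectral sequence; the crystal (base-change) property of the $R^i$ is then inherited from that of the total complex.

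For (b) and (c), the \'etale comparison theorem for prismatic $F$-crystals identifies $Rf_{\Prism,*}\cE$, after inverting $I$, $p$-completing, and taking Frobenius-fixed points, with $Rf_{\eta,*}T(\cE)$; proper base change in the pro-\'etale topology then yields $R^if_{\eta,*}T$, and the identification is compatible with the functor $T$ of \cref{main} because $T$ is itself built from this comparison. Likewise, the crystalline comparison theorem identifies the base change of $Rf_{\Prism,*}\cE$ along the crystalline prisms of $Y$ with $Rf_{s,\crys,*}\cE_s$, compatibly with Frobenius, and the de Rham comparison matches the Hodge filtrations; the rational degeneration from (a) ensures that these identifications descend to the individual $R^i$. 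Finally, for general (non-lisse) coefficients one carries out the same steps with perfect $F$-complexes in place of vector bundles and extracts the conclusion in each cohomological degree, using that being a crystalline $\ZZ_p$-sheaf associated with a prescribed filtered $F$-isocrystal is detected by the compatible package of \'etale, crystalline, and de Rham realizations.

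The main obstacle is step (a): upgrading the perfect $F$-complex $Rf_{\Prism,*}\cE$ to one with locally free cohomology sheaves over the analytic locus of an arbitrary prism. The Frobenius structure is indispensable here --- for a general proper smooth morphism the coherent pushforwards need not be flat --- and globalizing the base-change and degeneration statements uniformly over the analytic loci of all prisms in $Y_\Prism$ is the technical core of the argument.
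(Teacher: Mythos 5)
Your overall strategy---produce $\cE$ from $T$ via \cref{main}, push forward, and interface with the comparison theorems and Frobenius isogeny---is the right one, but it differs structurally from the paper and has gaps that would have to be filled.

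Structurally, the paper proves the \'etale--crystalline comparison at the \emph{derived} level first (\cref{derived-Ccrys}: a natural Frobenius-equivariant isomorphism $\BB_\crys(Rf_{s,*}\cE_s) \simeq \BB_\crys \otimes_{\widehat{\ZZ}_p} Rf_{\eta,*}T$ of complexes of pro-\'etale sheaves), checks filtration compatibility at the derived level (\cref{final-fil}), and only then extracts the individual $R^i$ statements by taking cohomology and using \cref{Acrys-isocrystal-cohomology}. You instead try to work degree-by-degree from the outset, promoting each $R^if_{\Prism,*}\cE$ to an analytic prismatic $F$-crystal and re-applying \cref{main} over $Y$. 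In principle this can be made to work (the paper notes something very close in \cref{EssSurj2-recover-T} and the remark following the proof of \cref{Ccrys}), but it is not what the paper does, and it introduces the following problems.

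First, step (a) is too strong as stated. It is \emph{not} true that the Frobenius isogeny alone forces a prismatic $F$-crystal in perfect complexes to have locally free cohomology over the analytic locus of every prism. What the paper proves (in the proof of \cref{strong-et-comp}) is that $R^if_{\Prism,*}\cE[1/p]$ is locally free, using the finite-$E$-height argument of \cite[Prop.~4.13]{DLMS22}; and what it records in the remark after the proof of \cref{Ccrys} is that the \emph{maximal $p^\infty$-torsionfree quotient} $(R^if_{\Prism,*}\cE)_\tf$ is an analytic prismatic $F$-crystal, not $R^if_{\Prism,*}\cE$ itself. Because $R^if_{\eta,*}T$ can have $p$-torsion even when $T$ is a local system, the sheaf $R^if_{\Prism,*}\cE$ generally fails to be a vector bundle on the analytic locus, and you would have to pass to torsionfree quotients to make your (a) correct. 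The appeal to ``rational degeneration of the Hodge--Tate spectral sequence'' is also not a step the paper uses or justifies, and it is unclear why it should hold.

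Second, step (c)---matching the crystalline realization of $R^if_{\Prism,*}\cE$ with $R^if_{s,\crys,*}\cE_s$ including the Hodge filtration---is the technical heart of the argument, and you have essentially hand-waved it as ``the de Rham comparison matches the Hodge filtrations.'' In the paper this requires: the crystalline comparison for coefficients in perfect complexes (\cref{prism-crys-crystal}, \cref{rel vs abs coh}, \cref{prism crys comp reduced-special-fib}), Xu--Shiho's finiteness and Frobenius results for $R^if_{s,\crys,*}$ (\cref{higher-direct-image-crys}), and the entirety of \cref{sec-fil} on infinitesimal cohomology over $\rB_\dR$ to check that the comparison isomorphism is filtered. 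This is not a minor verification: it requires building the \v{C}ech--Alexander-versus-Koszul double complexes and the compatibility diagrams relating the prismatic, crystalline, infinitesimal, and pro-\'etale cohomologies. Any proposal that invokes the filtration compatibility in a single clause has a genuine gap.

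Finally, a smaller point: applying \cref{main} over $Y$ to $(R^if_{\Prism,*}\cE)_\tf$ would indeed show that its \'etale realization is crystalline, and the associated filtered $F$-isocrystal is uniquely determined by \cref{filtered enhancement}/\cref{unique-filtered-F-isoc}; but you still then need the full comparison machinery to identify this $F$-isocrystal with $R^if_{s,\crys,*}\cE_s$, so your route does not actually avoid the hard work in (c)---it just re-routes it.
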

Here, the notion of being \emph{associated} is in the sense of Faltings (cf.\ \cref{crys loc def}).
Concretely, the fact that $R^if_{\eta,*}T$ is associated with $R^if_{s,\crys,*}\cE_s$ says that the relative \'etale cohomology of $T$ is isomorphic to relative crystalline cohomology of $\cE_s$, after base changing them both to the crystalline period ring of an affinoid perfectoid space over $Y_\eta$.
As an example, in the special case when $Y = \Spf \cO_K$, \cref{Ccrys} translates to the following comparison statement.
\begin{maincorollary}
	Let $X$ be a smooth proper $p$-adic formal scheme over $\cO_K$.
	Let $T$ be a crystalline sheaf of $\ZZ_p$-modules on $X_\eta$ with associated filtered $F$-isocrystal $\cE_s$.
	Then there is a natural isomorphism of $\rB_\crys$-modules
	\[ \Hh^i(X_\eta,T) \otimes_{\ZZ_p} \rB_\crys \simeq \Hh^i_\crys(X_s/W(k),\cE_s) \otimes_{W(k)} \rB_\crys \]
	which is compatible with the natural Galois actions, Frobenius, and filtrations on both sides.
\end{maincorollary}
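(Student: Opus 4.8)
The plan is to deduce the corollary from \cref{Ccrys} by specializing to the base $Y = \Spf\cO_K$ and translating the conclusion into Fontaine's classical language of filtered $\varphi$-modules. First one unwinds the two sides of the comparison in this case. Since $Y_\eta = \Spa(K,\cO_K)$ is a point and lisse $\ZZ_p$-sheaves on it are the same as continuous $\ZZ_p$-representations of $G_K$, the higher direct image $R^if_{\eta,*}T$ is identified, by proper base change, with the $G_K$-representation $\Hh^i(X_\eta, T)$, where the latter (here and in the statement) denotes the geometric \'etale cohomology, i.e.\ the cohomology of the base change of $X_\eta$ to $C$. Likewise $Y_s = \Spec k$ is a point, so a filtered $F$-isocrystal on $(Y_s/W(k))_\crys$ is exactly a finite filtered $\varphi$-module over $(K_0, K)$ with $K_0 := W(k)[1/p]$: its underlying $K_0$-vector space is $D := \Hh^i_\crys(X_s/W(k), \cE_s)$, which is finite-dimensional by the finiteness of crystalline cohomology of a proper smooth variety with crystal coefficients, and its Frobenius and filtration are the ones carried by $R^if_{s,\crys,*}\cE_s$.

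Next, invoking \cref{Ccrys} directly: it asserts that $\Hh^i(X_\eta, T)$ is a crystalline $\ZZ_p$-representation of $G_K$ associated, in the sense of Faltings (\cref{crys loc def}), with the filtered $F$-isocrystal $R^if_{s,\crys,*}\cE_s$. Evaluating the ``associated'' condition on the affinoid perfectoid pro-\'etale cover $\Spa(C,\cO_C) \to Y_\eta$, whose attached crystalline period ring is the usual $\rB_\crys$, produces a natural isomorphism
\[
\Hh^i(X_\eta, T)\otimes_{\ZZ_p}\rB_\crys \;\simeq\; D\otimes_{K_0}\rB_\crys
\]
that is compatible with Frobenius and with the filtrations by construction, and is $G_K$-equivariant by functoriality in the cover (the source with its diagonal action, $D$ with the trivial one). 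As $\rB_\crys$ is a $K_0$-algebra via $W(k)\to K_0\to\rB_\crys$, the right-hand side is canonically $\Hh^i_\crys(X_s/W(k),\cE_s)\otimes_{W(k)}\rB_\crys$, which is the isomorphism claimed in the corollary, with all structures matched.

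The one genuinely delicate point is the bookkeeping with the filtrations: one must check that the Hodge filtration on $D\otimes_{K_0} K = \Hh^i_\crys(X_s/W(k),\cE_s)\otimes_{W(k)} K$ coming from the relative de Rham comparison for $\cE_s$ over $\Spf\cO_K$ coincides, with the correct normalization, with the filtration used in Fontaine's $D_\crys$-formalism --- equivalently, that ``associated in the sense of Faltings'' specializes over the base $\Spf\cO_K$ to the assertion that $\Hh^i(X_\eta, T)[1/p]$ is crystalline with $D_\crys$ identified with $D$ as a filtered $\varphi$-module. The remaining Galois- and Frobenius-compatibilities, as well as the finiteness of all modules involved, are immediate from \cref{Ccrys} and \cref{main} together with properness, so apart from this normalization check the proof is a direct unwinding.
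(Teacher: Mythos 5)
Your proposal is correct and follows exactly the paper's intended route: the corollary is stated in the introduction as the specialization of \cref{Ccrys} to $Y = \Spf \cO_K$, and your unwinding — identifying $R^if_{\eta,*}T$ with the $G_K$-representation $\Hh^i(X_{\eta,C},T)$ via proper base change, identifying $R^if_{s,\crys,*}\cE_s$ with the filtered $\varphi$-module $\Hh^i_\crys(X_s/W(k),\cE_s)$, and evaluating the Faltings ``associated'' condition (\cref{crys loc def}) on the cover $\Spa(C,\cO_C) \to \Spa(K,\cO_K)$ — is the correct and essentially the only unwinding. The normalization point you flag is indeed where the bookkeeping lives, but it is handled once and for all in the paper's conventions (\cref{def Bcrys}, \cref{def filtration of BcrysK}, \cref{crystalline-filtration}) and in the proof that $\gamma$ is filtered (\cref{final-fil}), so nothing further is needed here.
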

To the best of our knowledge, \cref{Ccrys} is the first proof of the $\rC_\crys$-conjecture in the above generality.
However, there have been many proofs in various more restricted settings:
When $f$ is the formal completion of a smooth proper morphism of smooth algebraic schemes over $\Spec \cO_K$, \cref{Ccrys} was first proved by Faltings \cite[Thm.~6.3]{Fal89} using almost mathematics.
For smooth proper schemes over a point and constant coefficients, there have since been many other proofs, including by Tsuji \cite[Thm.~0.2]{Tsu99} (building on work of Fontaine--Messing \cite{FM87} and Kato--Messing \cite{KM92} for schemes of small dimension) using syntomic cohomology, by Nizio\l~\cite[Thm.~5.1]{Niz98} using algebraic K-theory, by Beilinson \cite[\S~3.3]{Bei13} using log crystalline cohomology, and by Bhatt \cite[Thm.~10.18]{Bha12} using derived de Rham cohomology.
For nonconstant coefficients, Andreatta--Iovita used the Faltings topos and proved the case of algebraizable smooth  proper morphisms of formal schemes in \cite[Thm.~1.5]{AI13} (over unramified base field)  and in \cite{AI12} (for ramified base fields).
Over unramified base fields, this generality (morphisms of formal schemes and with coefficients) was first achieved by Tan--Tong \cite[Thm.~1.3]{TT19} (builing on work of Scholze \cite{Sch13}), using crystalline period sheaves on the associated generic fibers. 
Our proof presents a completely different method, which also works for smooth proper maps of formal schemes over ramified bases.

We now explain in more detail the various notions in \cref{main} and \cref{Ccrys} above.
\subsection*{Dramatis personae}
A \emph{prismatic $F$-crystal} of vector bundles (resp.\ of perfect complexes) $(\cE,\varphi_\cE)$ consists of a vector bundle (resp.\ perfect complex) $\cE$ over the ringed site $(X_\Prism, \cO_\Prism)$, together with an $\cO_\Prism$-linear isomorphism $\varphi_\cE \colon (\varphi^*\cE)[1/\cI_\Prism] \to \cE[1/\cI_\Prism]$;
	cf.\ \cite[Def.~4.1]{BS21}.
	Here, $\varphi^*\cE$ is the pullback of $\cE$ along the Frobenius endomorphism of the structure sheaf $\cO_\Prism$ and $\cI_\Prism$ is the ideal sheaf sending a prism $(A,I)$ to the ideal $I$. Analogously, an \emph{analytic prismatic $F$-crystal} (of vector bundles) is a similar pair $(\cE,\varphi_{\cE})$, with the difference that $\cE$ sends a prism $(A,I)$ to a vector bundle on the open subset $\Spec(A)\setminus V(p,I)$ instead of all of $\Spec(A)$;
	cf.\ \cref{an Fcrys def}.
	We denote the category of analytic prismatic $F$-crystals over $X$ by $\Vect^{\an,\varphi}(X_\Prism)$.
\begin{example}
A \emph{prismatic Dieudonn\'e crystal} is a prismatic $F$-crystal $(\cE, \varphi_{\cE})$ such that $\varphi_{\cE}$ sends the submodule $\cE$ into itself, with the cokernel killed by $\cI_\Prism$.
    By prismatic Dieudonn\'e theory  (\cite[Thm.~1.16, Prop.~5.10]{ALB19}), there is an equivalence of categories between $p$-divisible groups over $X$ and prismatic Dieudonn\'e crystals, where the latter is a full subcategory of prismatic $F$-crystals.
\end{example}
\begin{remark}\label{prism-F-crys-an}
	The category of prismatic $F$-crystals embeds as a full subcategory into the category of analytic prismatic $F$-crystal by restricting to the open subsets (\cref{restriction-functor}).
	In the special case $X=\Spf(\cO_K)$, this inclusion of categories is in fact an equivalence (\cref{equiv-for-cry-over-point}).
	However, in general not every analytic prismatic $F$-crystal can be extended to a prismatic $F$-crystal.
	See \cite[Ex.~3.35]{DLMS22} for an example.
\end{remark}
\begin{remark}
	The name ``analytic prismatic $F$-crystal'' comes from the following observation:
	Let $C$ be a fixed completed algebraic closure of $K$.
	When $A$ is the perfect prism associated with a $p$-torsionfree perfectoid $\cO_C$-algebra (which covers $X$ locally in the prismatic site), \cite{Ked20} shows that the category of vector bundles over $\Spec(A)\setminus V(p,I)$ is equivalent to the category of vector bundles over the analytic locus of the adic spectrum $\Spa(A,A)$.
\end{remark}
The inclusion $\Spec(A[1/I]) \simeq \Spec(A) \setminus V(I) \hookrightarrow \Spec(A)\setminus V(p,I)$ gives rise to a base change functor
\[
\Vect^{\an,\varphi}(X_\Prism) \longrightarrow \Vect^{\varphi}(X_\Prism, \cO_\Prism[1/\cI_\Prism]^\wedge_p),
\]
where $(-)^\wedge_p$ denotes the derived $p$-adic completion.
Thanks to \cite[Cor.~3.8]{BS21} (see \cite{MW21} for a different approach), we can naturally identify the target category with \'etale $\ZZ_p$-local systems on the generic fiber:
\[
\Vect^{\varphi}(X_\Prism, \cO_\Prism[1/\cI_\Prism]^\wedge_p) \simeq \mathrm{Loc}_{\ZZ_p}(X_\eta).
\]
A combination of the two functors yields the \emph{\'etale realization functor}
\[
T \colon \Vect^{\an,\varphi}(X_\Prism) \longrightarrow \Vect^{\varphi}(X_\Prism, \cO_\Prism[1/\cI_\Prism]^\wedge_p) \simeq \mathrm{Loc}_{\ZZ_p}(X_\eta);
\]
see \cref{realization functors}.

On the other hand, the category of \'etale $\ZZ_p$-local systems on the generic fiber $X_\eta$ contains the full subcategory $\mathrm{Loc}_{\ZZ_p}^\crys(X_\eta)$ of objects which are \emph{crystalline} in the sense of Faltings (\cite[p.~67]{Fal89}) after inverting $p$.
Under the equivalence of $\ZZ_p$-local systems over a point and continuous $G_K$-representations over $\ZZ_p$, crystalline local systems in this sense correspond to the crystalline representations appearing in \cref{BS main}.
By \cref{pris to crys}, the essential image of $T$ is contained in $\mathrm{Loc}_{\ZZ_p}^\crys(X_\eta)$, and \cref{main} says that it is in fact equal.

\begin{example}
    Let $f \colon Y \to X=\Spf(\cO_K)$ be a smooth proper map of $p$-adic formal schemes and let $i \in \ZZ_{\ge 0}$.
    Assume that $\mathrm{H}^i(Y_{\overline{K},\et}, \ZZ_p)$ is $p$-torsionfree.
    Using the \'etale and the crystalline comparison theorems in \cite[Thm.~1.8]{BS19}, one can show that the $i$-th prismatic cohomology group $\mathrm{H}^i(Y_\Prism, \cO_\Prism)$ gives an analytic prismatic $F$-crystal over $X$ after restricting to the analytic locus.
    Its \'etale realization corresponds to the crystalline $G_K$-representation $\mathrm{H}^i(Y_{\overline{K}, \et}, \ZZ_p)$.
\end{example}
\begin{remark}
	The composition of our \'etale realization functor with the fully faithful embedding 
	\[
	\Vect^\varphi(X_\Prism, \cO_\Prism) \longrightarrow  \Vect^{\an,\varphi}(X_\Prism).
	\]
	from \cref{prism-F-crys-an} recovers the \'etale realization functor from \cite[Constr.~4.8]{BS21}.
	In particular, by the equivalence between prismatic $F$-crystals and analytic prismatic $F$-crystals over $X=\Spf(\cO_K)$ mentioned above, our \cref{main} indeed extends the result of Bhatt--Scholze in \cref{BS main} to general $X$.
\end{remark}

\subsection*{Strategy of the proof}
	We now discuss the idea of the proofs of \cref{main} and \cref{Ccrys}.
	
	We begin with \cref{main}.
	For simplicity, we temporarily assume that $X=\Spf(R)$ is an affine smooth formal scheme over $\cO_K$ and that there exists a $p$-torsionfree perfectoid $\cO_C$-algebra $S$ that covers $R$ in the quasi-syntomic site.
	
	For the full faithfulness, essentially the same proof as in \cite[\S~5]{BS21} carries over to our higher-dimensional situation.
	Namely, we consider an analog of Fargues's functor which sends an analytic prismatic $F$-crystal over $\Spf(S)$ to a pair $(T,\Xi)$, consisting of a $\ZZ_p$-local system $T$ over $\Spf(S)_\eta$ together with a $\BB_\dR^+$-lattice $\Xi$ of the pro-\'etale sheaf $T\otimes_{\widehat{\ZZ}_p} \BB_\dR$ (\cref{Fargues-realization}).
	Then the question of the full faithfulness of \cref{main} can be reduced to that of the full faithfulness of Fargues's functor, using a \v{C}ech complex argument.
	See Section \ref{full-faithful} for details.
	
	For essential surjectivity, let $T$ be a crystalline $\ZZ_p$-local system on the generic fiber $X_\eta$.
	The general strategy of our proof follows that of \cite[\S~6]{BS21}:
	first construct an analytic prismatic $F$-crystal over $S$ and then descend it to $X$.
	Starting with the filtered $F$-isocrystal $(\cE, \varphi_{\cE}, \Fil^\bullet (E))$ over the special fiber $X_s$ associated with the crystalline local system $T$ (cf.\ \cref{crys loc def}), one can naturally construct an $F$-crystal over the locus $\{\cI_\Prism \leq p\neq 0\}$ inside $\Spa(\Prism_S)$ (\cref{EssSurj1}).
	However, the situation for higher-dimensional formal schemes is different from that of a point as in \cite[\S~6.4]{BS21}, in that we cannot use Kedlaya's slope filtration results \cite{Ked04} for weakly admissible filtered $\varphi$-modules to extend the $F$-crystal further across the entire locus $\{p\neq 0\}$.
	In fact, it is not even clear how to formulate the notion of weak admissibility for filtered $F$-isocrystals over higher-dimensional $X$.
	Instead, our approach is to construct a canonical analytic $F$-crystal $\cM_S$ over $\Spec(\Prism_S)\setminus V(p,I)$ for every perfectoid algebra $S$ over $R$ that is large enough and satisfies a crystal-like pullback compatibility among perfectoid $S$ (\cref{EssSurj2}).
	After this construction, we use (as in \cite{BS21}) the Beilinson fiber square developed in \cite{AMMN22} to study the structure of the prism $\Prism_{S'}$ attached to the $p$-complete fiber product $S'=S\widehat{\otimes}_R S$.
	Using this structural result, we are then able to extend the descent data on $\cM$ from $\{\cI_\Prism \leq p\neq 0\}$ to $\{p\neq 0\}$ (\cref{EssSurj3}), and eventually to the whole analytic locus (\cref{EssSurj}).
	\begin{example}	
		Let us illustrate the construction of $\cM_S$ in the case when $X=\Spf(\cO_K)$ and $S=\cO_C$.
		Consider a crystalline representation $T$ and its associated filtered $\varphi$-module $D=(D_0,\varphi_D, \Fil D)$.
		Set $\rA_{\inf} \colonequals W(\cO_C^\flat)$ and $I \colonequals \varphi(\xi)$.
		Let	$\cY$ be the analytic adic space $\Spa(\rA_{\inf}, \rA_{\inf})\setminus V(p,I)$.
		Then we have
			\begin{enumerate}
			\item a vector bundle $\cM(D)$ over the subset $\cY_{[1/p, \infty]}=\{ \abs{[p^\flat]} \le \abs{p} \neq 0 \}$, defined by first modifying  $D_0\otimes_{W(k)} \rB^+_\crys$ at $V\bigl(\varphi^{-1}(I)\bigr)$ with $T\otimes \rB^+_\dR$ and then taking its Frobenius twist; and
			\item a vector bundle $\cM(T)$ over the subset $\cY_{[0,1/p]}=\{ \abs{p} \le \abs{[p^\flat]} \neq 0 \}$, defined by $T \otimes \cO_{\cY_{[0,1/p]}}$.
		\end{enumerate}
	    The modification is via the de Rham comparison $T\otimes \rB_\dR \simeq D_0\otimes \rB_\dR$ and the Beauville--Laszlo theorem.
	    Now $\cM(D)$ and $\cM(T)$ can be glued along the intersection $\{ \abs{p} = \abs{[p^\flat]} \neq 0 \} \subset \cY$:
	    \begin{itemize}
	    	\item both $\cM(D)$ and $\cM(T)$ share the same $\rB^+_\dR$-lattice $T\otimes \rB^+_\dR$ on the locus $V(I)$;
	    	\item away from $V(I)$, this follows from crystalline comparison $T\otimes \rB_\crys \simeq D_0\otimes \rB_\crys$.
	    \end{itemize}	
	\end{example}
	Finally, one can check that the Frobenius structures of $\cM(D)$ and $\cO_{\cY_{[0,\infty)}}$ naturally extend to one on the glued object, thanks to the Frobenius equivariance of the crystalline comparison in the gluing above (cf.\ \cref{EssSurj2}).
	\begin{remark}
		During the preparation of our project,
		   we learned that Du--Liu--Moon--Shimizu independently proved \cref{main} at the same time via a different approach by studying higher-dimensional Breuil--Kisin prisms and their descent data;
		cf.\ \cite[Thm.~1.3]{DLMS22}.
		They show that the subcategory of \emph{effective} analytic prismatic $F$-crystals\footnote{
			In \cite[Def.~1.1]{DLMS22} such objects are called \emph{completed prismatic $F$-crystals}.
			To see that their notion is compatible with ours, note that by \cite[Prop.~4.13]{DLMS22} and Beauville--Laszlo gluing, the restriction of a completed prismatic $F$-crystal to the open subset $\Spec(\fS)\setminus V(p,E)$ of a Breuil--Kisin prism $(\fS,E)$ is a vector bundle.
			Moreover, the global sections of a vector bundle over $\Spec(A)\setminus V(p,I)$ are a finitely presented module over $A$ (cf.\ \cref{global-sec}).}
		is equivalent to crystalline $\ZZ_p$-local systems with nonnegative Hodge--Tate weights.
		The equivalence for all analytic prismatic crystals without the assumption that the Frobenius endomorphism is effective (that is, defined on a lattice of the analytic prismatic crystal), can then be obtained by taking Breuil--Kisin and Tate twists;
		see \cite[Rem.~1.5]{DLMS22}.
	\end{remark}
    Next, we explain the proof of \cref{Ccrys}.
    Let $f \colon X\to Y$ be a smooth proper morphism of smooth $p$-adic formal schemes over $\cO_K$ of relative dimension $n$.
    Let $T$ be a crystalline local system over $X_\eta$ and let $\cE_s$ be the associated filtered $F$-isocrystal over $\bigl(X_s/W(k)\bigr)_\crys$.
    When $T$ is the constant local system, the prismatic cohomology of the structure sheaf $\cO_\Prism$ builds a bridge between the \'etale cohomology of the generic fiber and the crystalline cohomology with constant coefficients of the special fiber.
    For general $T$, such a connection becomes possible by \cref{main}:
    it allows us to find an analytic prismatic $F$-crystal $(\cE,\varphi_\cE)$ whose \'etale realization is $T$.
    
    In order to prove \cref{Ccrys}, we show that the bridge provided by $(\cE,\varphi_\cE)$ remains passable under pushforward along $f$.
    For this, it is more convenient to extend $\cE$, which is a priori only defined over the analytic locus, across the whole prism.
    It turns out that this is always possible on the derived level:
    we prove in \cref{an-to-complex} that there is a fully faithful functor 
    \[
    \Vect^{\an,\varphi}(X_\Prism) \longrightarrow D_\perf^\varphi(X_\Prism)
    \]
    to the category $D_\perf^\varphi(X_\Prism, \cO_\Prism)$ of prismatic $F$-crystals in perfect complexes on $(X_\Prism, \cO_\Prism)$.
    A main part of our work is then to construct a commutative diagram
    \begin{equation}\label{Ccrys-strategy} \begin{tikzcd}
            D^b_\lisse(X_\eta,\ZZ_p) \arrow[d,"Rf_{\eta,*}"] & D^\varphi_\perf(X_\Prism) \arrow[l,"T"'] \arrow[d,"Rf_{\Prism,*}"] \arrow[r,"D_\crys"] & D^\varphi_\perf(X_{p=0,\crys}) \arrow[d,"Rf_{p=0,\crys,*}"] \\
            D^b_\lisse(Y_\eta,\ZZ_p) & D^\varphi_\perf(Y_\Prism) \arrow[l,"T"'] \arrow[r,"D_\crys"] & D^\varphi_\perf(Y_{p=0,\crys}),
    \end{tikzcd} \end{equation}
    in which the left (resp.\ right) horizontal arrows are derived (resp.\ derived and integral) versions of the \'etale (resp.\ rational crystalline) realization functors from \cref{realization functors};
    cf.\ \cref{derived et realization} (resp.\ \cref{prism-crys-crystal}.\ref{prism-crys-crystal coefficient}).
    Our goal is to see that the middle vertical arrow above is well-defined and that the diagram commutes.
    
    By evaluating at prisms over $Y$, we are then led to compute the cohomology of prismatic ($F$-)crystals in perfect complexes.
    We generalize some of the structural comparison statements for the prismatic cohomology of $\cO_\Prism$ from \cite[Thm.~1.8]{BS19} to ($F$-)crystals in perfect complexes,
    which will imply the commutativity for the underlying cohomology sheaves of the two squares in Diagram (\ref{Ccrys-strategy}) above.
    This is summarized in the statement below, in which $D_\perf(\cC, \cA)$ denotes the ($\infty$-)category of perfect $\cA$-complexes over the ringed site $(\cC,\cA)$ (and similarly for $\varphi$-equivariant analogs).
    
    \begin{theorem}\label{main-comp}
    	Let $(A,I)$ be a bounded prism and set $\overline{A} \colonequals A/I$.
    	\begin{enumerate}[label=\upshape{(\roman*)},leftmargin=*]
    		\item\label{main-comp-wet} \emph{Weak \'Etale Comparison (\cref{etale-comparison}):}
    		Assume that $(A,I)$ is perfect.
    		Let $Z$ be a bounded $p$-adic formal scheme over $\Spf\bigl(\overline{A}\bigr)$.
    		Let $(\cE,\varphi_\cE)\in D_\perf^\varphi\bigl((Z/A)_\Prism, \cO_\Prism[1/\cI_\Prism]^\wedge_p\bigr)$ and let $T(\cE)$ be its \'etale realization, considered as a bounded $\ZZ_p$-lisse complex over $Z_\eta$.
    		Then there is a natural isomorphism
    		\[
    		R\Gamma(Z_{\eta, \proet}, T(\cE)) \simeq R\Gamma\bigl((Z/A)_\Prism ,\cE\bigr)^{\varphi=1}.
    		\]
    		\item\label{main-comp-crys} \emph{Crystalline Comparison (\cref{prism-crys-crystal}, \cref{rel vs abs coh}):}
    		Assume that $(A,I)$ is the crystalline prism $\bigl(\rA_{\crys}(S),(p)\bigr)$ attached to a quasiregular semiperfect ring $S$ and let $J$ be the kernel of the surjection $\rA_{\crys}(S)\twoheadrightarrow S$.
    		Let $Z \to \Spec(S)$ be a quasi-syntomic morphism of schemes and let $\cE\in D_\perf(Z_\Prism)$.
    		Then there is a natural crystalline crystal $\cE'\in D_\perf(Z_\crys)$ associated with $\cE$, together with a natural isomorphism of cohomologies
    		\[
    		R\Gamma\bigl(\bigl(Z/(A,J)\bigr)_\crys, \cE'\bigr) \xlongrightarrow{\sim}  R\Gamma\bigl((Z_{\overline{A}}/A)_\Prism,\cE\bigr).
    		\]
    		This isomorphism is Frobenius equivariant when $\cE$ admits a $\varphi$-structure.
    	\end{enumerate}
    \end{theorem}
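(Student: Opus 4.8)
The plan is to deduce both parts from the corresponding comparison theorems for the structure sheaf in \cite[Thm.~1.8]{BS19}, promoting them to crystals in perfect complexes by a d\'evissage carried out quasi-syntomic-locally, where the relevant prismatic site collapses to a single object (so that the cohomologies involved are concentrated in degree zero and given by the value of the crystal there).

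\textbf{Part~(i).} Since $R\Gamma$ commutes with the fibre of $\varphi_\cE-1$, I would first rewrite the right-hand side as $R\Gamma\bigl((Z/A)_\Prism,\cE^{\varphi=1}\bigr)$, where $\cE^{\varphi=1}\colonequals\fib\bigl(\varphi_\cE-1\colon\cE\to\cE\bigr)$ is a sheaf of $\ZZ_p$-complexes on the prismatic site. Because $(A,I)$ is perfect, $\overline A$ is perfectoid; after reducing to $Z=\Spf(R_0)$ affine one may choose a quasi-syntomic cover $\Spf(S)\to Z$ by a $p$-torsionfree quasiregular semiperfectoid $\overline A$-algebra $S$, and over such $S$ the relative prismatic site has an initial object, so $R\Gamma\bigl((\Spf(S)/A)_\Prism,\cE\bigr)$ is the value of $\cE$ there, a perfect complex $M$ equipped with a Frobenius $\varphi_M$. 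By the perfect-complex version of \cite[Cor.~3.8]{BS21} (cf.\ \cref{derived et realization}), $M$ corresponds to the $\ZZ_p$-lisse complex $T(\cE)|_{\Spf(S)_\eta}$ on the affinoid perfectoid $\Spf(S)_\eta$, and the fundamental exact sequence --- which globalizes since the relevant prismatic period sheaves are acyclic on affinoid perfectoids --- identifies $\fib(\varphi_M-1)$ with $R\Gamma\bigl(\Spf(S)_{\eta,\proet},T(\cE)\bigr)$. Finally I would pass to the \v{C}ech nerve of $S$ over $R_0$, whose terms remain quasiregular semiperfectoid, apply this level-wise, and conclude by quasi-syntomic descent on both sides, then glue over an affine cover of $Z$.

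\textbf{Part~(ii).} Here no Frobenius structure is imposed, so the only genuinely new ingredient is the construction of $\cE'$, after which the d\'evissage is routine because $D_\perf(Z_\Prism)$ is quasi-syntomic-locally generated by $\cO_\Prism$. The crystal $\cE'$ comes from the site-level comparison of Bhatt--Scholze: over a crystalline prism $(A,(p))$, any $p$-torsionfree prism $(B,(p))$ over $Z$ carries the canonical divided power structure on $(p)$, so $(B,(p),\gamma_{\mathrm{can}})$ is a crystalline thickening of $B/p$; the resulting functor matches the $p$-completely flat parts of the two sites, pulls back crystals, and identifies $R\Gamma_\Prism$ with $R\Gamma_\crys$ for $\cO_\Prism$. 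I would set $\cE'$ to be the pullback of $\cE$ under this identification. For the cohomology statement with coefficients, reduce by quasi-syntomic descent to $Z=\Spf(R)$ with $R$ a quasiregular semiperfect $S$-algebra: then $R\Gamma\bigl((Z_{\overline A}/A)_\Prism,\cE\bigr)$ and $R\Gamma\bigl((Z/(A,J))_\crys,\cE'\bigr)$ are concentrated in degree zero, the two rings there agree (the prismatic envelope coincides with the $p$-completed divided power envelope when the distinguished element is $p$), and $\cE$ and $\cE'$ evaluate to the same perfect complex; the general case follows by totalizing a \v{C}ech resolution by such $R$. Frobenius equivariance when $\cE$ carries a $\varphi$-structure is immediate from the functoriality of the canonical PD-thickening in the $\delta$-structure, and \cref{rel vs abs coh} passes from the relative site $(Z_{\overline A}/A)_\Prism$ to the absolute site $Z_\Prism$.

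\textbf{Main obstacle.} For part~(i) the crux is the cohomological enhancement of \cite[Cor.~3.8]{BS21}: upgrading the equivalence of categories to an identification of $\varphi$-fixed prismatic cohomology with pro-\'etale cohomology for perfect-complex coefficients, which needs both the derived form of the equivalence and control of the higher pro-\'etale cohomology of the relevant period sheaves on affinoid perfectoids. For part~(ii) the crux is making the prismatic--crystalline site comparison precise with perfect-complex coefficients --- in particular, verifying that the $p$-torsionfree (equivalently, $p$-completely flat) objects form a basis stable under the fibre products used in the descent, so that $\cE\mapsto\cE'$ is well defined and compatible with cohomology.
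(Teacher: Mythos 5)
For part~(i), the central issue is that the theorem is stated for an \emph{arbitrary bounded} $p$-adic formal scheme $Z$, whereas your approach runs through quasi-syntomic descent and therefore only works when $Z$ is quasi-syntomic over $\overline{A}$ --- otherwise there is no quasi-syntomic cover $\Spf(S)\to Z$ with $S$ quasiregular semiperfectoid, so you never reach the clean situation where the prismatic site has an initial object. The paper deliberately sidesteps this by working with the $p$-complete arc-topology: after reducing to the mod-$p^n$ situation, it shows that both $R\mu_*(T(\cE)/p^n)$ (via Bhatt--Mathew's arc-descent for torsion \'etale sheaves) and the $\varphi$-invariants of the perfection of $\cE/p^n$ (via \cref{perfection-arc-sheaf} and \cref{perfection-fixed-points}) are arc-sheaves, and then passes to a cover by a product of absolutely integrally closed valuation rings where the identification is essentially \cref{et-prism-functor}. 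Your appeal to the fundamental exact sequence ``globalizing'' is also more delicate than the wording suggests: $\varphi-1$ need not be surjective on $\rA_{\inf}(S)[1/I]^\wedge_p$ for an arbitrary affinoid perfectoid $S$, so identifying $\fib(\widetilde\varphi_M-1)$ with $R\Gamma(\Spf(S)_{\eta,\proet},T(\cE))$ requires exactly the arc-local reduction that your argument is trying to avoid.

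For part~(ii), your route via the site-level comparison (pulling back crystals along the equivalence of $p$-torsionfree prisms with canonical PD-thickenings, cf.\ \cite[Thm.~5.2]{BS19}) is indeed a viable alternative, and the paper acknowledges it in a remark after \cref{rel vs abs coh}. The paper's primary proof instead constructs explicit \v{C}ech--Alexander hypercovers by $\rA_\crys(R\otimes_P P^\bullet)$ for a polynomial presentation $P\twoheadrightarrow R$, where the cohomology comparison becomes a tautology because both sides evaluate the crystal at the same ring; that makes the ``d\'evissage'' you invoke unnecessary. Two details you should be careful about: (a) the statement involves a Frobenius twist ($Z_{\overline A} = Z\otimes_S\overline A$ via the inclusion $S\hookrightarrow\overline A$ coming from the factorization of Frobenius on $\overline A$), which your sketch does not address and which is essential for matching the relative prismatic and relative crystalline sites via \cref{rel vs abs sites}; and (b) as a formal matter, ``$D_\perf(Z_\Prism)$ is quasi-syntomic-locally generated by $\cO_\Prism$'' is not quite the right justification --- the comparison morphism is a natural transformation of exact functors, and the right observation is that over a weakly initial thickening both cohomologies literally equal the value of the crystal there.
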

We make several comments on the above comparison theorems.
\begin{remark}
Using $(\varphi,\Gamma)$-modules, Min--Wang \cite[Thm.~4.1]{MW21} prove a special case of \cref{main-comp}.\ref{main-comp-wet} for  $(\cE,\varphi_\cE)\in \Vect^\varphi((Z/A)_\Prism, \cO_\Prism[1/\cI_\Prism]^\wedge_p)$.
We follow the strategy of \cite[\S~9]{BS19} and use arc-descent instead.
This allows us to treat the more general case where $(\cE,\varphi_\cE)$ is defined on the derived level and $Z$ is an arbitrary bounded $p$-adic formal schemes (cf.\ \cref{etale-comparison}).
\end{remark}
\begin{remark}
Our proof of \cref{main-comp}.\ref{main-comp-crys} extends the quasi-syntomic descent for crystalline cohomology in Bhatt--Lurie \cite[\S~4.6]{BL22} to coefficients.
In particular,  \cref{main-comp}.\ref{main-comp-crys} holds true not only for smooth schemes but also for quasi-syntomic ones (cf.\ \cref{prism-crys-crystal}, \cref{rel vs abs coh}).
On the other hand, even though \cref{main-comp}.\ref{main-comp-crys} is only stated for the bases $(\rA_{\crys}(S),J)$ such that $S$ is a quasi-regular semiperfect ring, by using quasi-syntomic descent, one can extend the result to more general bases $(A,p)$, including the case when $A$ is a $W(k)$-linear Frobenius lift of a smooth $k$-algebra.

We also refer the reader to recent work of Ogus \cite{Ogu22} which gives a more classical approach to \cref{main-comp}.\ref{main-comp-crys} via $p$-connections and $p$-de Rham complexes; cf.\ \cref{ogus}.
\end{remark}

To simplify the discussion, we fix the following notation for the rest of introduction.
Let $\rA_{\inf}=W(\lim_{x \mapsto x^p} \cO_C/p)$,   $\epsilon=(1,\zeta_p,\ldots)$ be a compatible system of $p^n$-th roots of unity in $\cO_C$, $q=[\epsilon]$ be its Teichm\"uller lift in $\rA_{\inf}$, and $\mu=q-1$.
There is a canonical surjection $\rA_{\inf} \to \cO_C$ whose kernel is generated by the element $\tilde{\xi} \colonequals \frac{q^p-1}{q-1}$.

We return to the geometric setting where $f \colon X\to Y$ is a smooth proper morphism of smooth formal schemes over $\cO_K$ and $(\cE,\varphi_\cE)\in D_\perf^\varphi(X_\Prism, \cO_\Prism)$.
Let $(A,I)$ be a perfect prism over $(Y_{\cO_C}/\rA_{\inf})_\Prism$.
By \cref{main-comp}.\ref{main-comp-crys} and the base change formula (\cref{base-change}), the prismatic cohomology $R\Gamma((X_{\overline{A}}/A)_\Prism, \cE)$ specializes to the crystalline cohomology of $\cE'$ after a base change along $\rA_{\inf} \to \rA_\crys$.

On the other hand, the prismatic-\'etale comparison in \cref{main-comp}.\ref{main-comp-wet}, which identifies \'etale cohomology as the $\varphi$-invariants of prismatic cohomology, has two shortcomings:
\begin{itemize}
\item It is not clear that the natural Frobenius on $R\Gamma((Z/A)_\Prism ,\cE[1/\cI_\Prism]^\wedge_p)=R\Gamma((Z/A)_\Prism ,\cE)\otimes_A A[1/I]^\wedge_p$ is an isomorphism.
In particular,  one cannot expect a priori to recover all of $R\Gamma((Z/A)_\Prism ,\cE) \otimes_A A[1/I]^\wedge_p$ from the Frobenius invariants $R\Gamma((Z/A)_\Prism ,\cE[1/\cI_\Prism]^\wedge_p)^{\varphi=1} \simeq R\Gamma(Z_{\eta, \proet}, T(\cE))$.
\item The ring $\rA_{\inf}[1/I]^\wedge_p$ is too large to map into the crystalline period ring $\rB_\crys$.
In order to establish the \'etale-crystalline comparison, we therefore need to relate \'etale and prismatic cohomology over a smaller coefficient ring than $A[1/I]^\wedge_p$ from \cref{main-comp}.\ref{main-comp-wet}.
\end{itemize}
To address these problems, we need to study the natural Frobenius action on the prismatic cohomology of $\cE$ which is induced by $\varphi_\cE$;
cf.\ \cref{Frobenius-on-coh-construction}.
When $\cE = \cO_\Prism$, this action is shown in \cite[Thm.~1.16]{BS19} to be an isogeny, via structural results of the Nygaard filtration.
To deal with general $(\cE, \varphi_\cE)$, our strategy is to consider the cup product pairing between the cohomology of $\cE$ and of $\cE^{\vee}$.
The idea is that if there is a $\varphi$-equivariant perfect pairing
\[
\Hh^i((X_{\overline{A}}/A)_\Prism,\cE) \otimes_A \Hh^{2n-i}((X_{\overline{A}}/A)_\Prism, \cE^\vee) \longrightarrow \Hh^{2n}((X_{\overline{A}}/A)_\Prism, \cO_\Prism)\longrightarrow A\{-n\},
\]
then by pairing any nonzero element $x\in \ker(\varphi_{\Hh^i(\cE)})$ with its dual element $y$, we get a nonzero element in the module $A\{-n\}$.
But since $\varphi_{\Hh^{2n}(\cO_\Prism)}$ acts as an isomorphism on $A\{-n\}$ after inverting $I$, this contradicts the choice of $x$.
This motivates our next main result, which uses the full faithfulness of $T$ from \cref{main} and seems interesting in its own right.
\begin{theorem}[Poincar\'e Duality]\label{main-PD}
	Let $f \colon X\to Y$ be a smooth proper morphism of smooth formal $\cO_K$-schemes, let $\cE\in D_\perf(X_\Prism)$, and let $(A,I)\in Y_\Prism$ be a bounded prism.
	\begin{enumerate}[label=\upshape{(\roman*)},leftmargin=*]
		\item\label{main-PD-tf} \emph{Trace-free duality (\cref{top-coh-free}, \cref{duality}):} Assume $f_*\cO_X = \cO_Y$.
		Then $R^{2n}f_{\Prism,*}\cO_{X_\Prism}$ is a vector bundle of rank one over $\cO_{Y_\Prism}$, and the cup product induces a canonical perfect pairing of perfect complexes
		\[ Rf_{\Prism,*} \cE \otimes_{\cO_{Y_\Prism}}^L Rf_{\Prism,*} \cE^\vee \longrightarrow Rf_{\Prism,*} \cO_{X_\Prism} \longrightarrow R^{2n}f_{\Prism,*} \cO_{X_\Prism}[-2n]. \]
		\item\label{main-PD-nc} \emph{Non-canonical duality (\cref{Berkovich-can-duality}, \cref{Berkovich-trace}):} There is a Frobenius-equivariant trace map $\tr^\Prism_f \colon R^{2n}f_*\cO_{X_\Prism} \to \cO_{Y_\Prism}\{-n\}$, inducing a perfect pairing of perfect complexes 
        \[ Rf_{\Prism,*} \cE \otimes_{\cO_{Y_\Prism}}^L Rf_{\Prism,*} \cE^\vee \longrightarrow Rf_{\Prism,*} \cO_{X_\Prism} \longrightarrow R^{2n}f_{\Prism,*} \cO_{X_\Prism}[-2n] \xrightarrow{\tr^\Prism_f[-2n]} \cO_{Y_\Prism}\{-n\}[-2n] \]
	\end{enumerate}
Both pairings are functorial with respect to $\cE$ and $f$.
Moreover, when $\cE$ admits a Frobenius structure $\varphi_\cE$, they are $\varphi$-equivariant.
\end{theorem}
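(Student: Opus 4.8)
The plan is to reduce \cref{main-PD} to a duality statement for prismatic cohomology over a single bounded prism, to import Poincar\'e duality from crystalline cohomology through the comparison of \cref{main-comp}.\ref{main-comp-crys}, and finally to pin down the trace using \'etale realization and the full faithfulness of $T$ from \cref{main}. First I would invoke the base change formula (\cref{base-change}) to reduce all assertions to the following statement for each bounded prism $(A,I)\in Y_\Prism$, with $\overline A\colonequals A/I$ and $R\Gamma_A(\cE)\colonequals R\Gamma\bigl((X_{\overline A}/A)_\Prism,\cE\bigr)$ a perfect, derived $(p,I)$-complete $A$-complex: when $f_*\cO_X=\cO_Y$, the module $\Hh^{2n}R\Gamma_A(\cO_\Prism)$ is invertible over $A$, and the cup-product map
\[ R\Gamma_A(\cE)\longrightarrow R\iHom_A\bigl(R\Gamma_A(\cE^\vee),\,\Hh^{2n}R\Gamma_A(\cO_\Prism)[-2n]\bigr) \]
is an isomorphism, naturally in $\cE$ and $f$, and $\varphi$-equivariantly when $\cE$ has a Frobenius structure. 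As every complex here is perfect and derived $I$-complete, the amplitude bound and derived Nakayama reduce both claims to their reductions modulo $I$; alternatively one may pass to a quasi-syntomic cover of $(A,I)$ and reduce to the crystalline prisms used next.

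For the structure sheaf I would cover $Y_\Prism$ quasi-syntomically by crystalline prisms $(\rA_\crys(S),(p))$ with $S$ quasiregular semiperfect. Over such a prism \cref{main-comp}.\ref{main-comp-crys} identifies $R\Gamma_{\rA_\crys(S)}(\cO_\Prism)$ with the crystalline cohomology of the smooth proper morphism underlying $X_{\overline A}$, relative to the divided-power base attached to $\rA_\crys(S)$, compatibly with cup products and Frobenius; classical crystalline Poincar\'e duality then yields that $R^{2n}f_{\Prism,*}\cO_{X_\Prism}$ is invertible (when $f_*\cO_X=\cO_Y$) with perfect cup-product pairing into it, and quasi-syntomic descent — extending the descent for crystalline cohomology of \cite[\S~4.6]{BL22} — carries both over to arbitrary bounded prisms, proving \cref{main-PD}.\ref{main-PD-tf} for $\cE=\cO_\Prism$. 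As a more self-contained variant one can instead reduce modulo $I$ and recover these facts from the conjugate filtration on Hodge--Tate cohomology together with relative Serre duality for the sheaves $Rf_*\Omega^j_{X_{\overline A}/\overline A}$, once one knows cup product is strictly compatible with the conjugate filtration.

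The case of general $\cE\in D_\perf(X_\Prism)$ runs along the same lines: by \cref{main-comp}.\ref{main-comp-crys}, $\cE$ has an associated crystalline crystal $\cE'\in D_\perf(X_\crys)$ with a natural, cup-product- and ($\varphi$-)compatible comparison of cohomologies and with $(\cE^\vee)'\simeq(\cE')^\vee$; crystalline Poincar\'e duality with coefficients makes the resulting pairing perfect, and descending as above upgrades the cup-product pairing of \cref{main-PD}.\ref{main-PD-tf} to a perfect pairing of perfect $\cO_{Y_\Prism}$-complexes for arbitrary $\cE$ over arbitrary bounded prisms. Functoriality in $\cE$ and $f$, and $\varphi$-equivariance, are inherited from the crystalline side and from the $\varphi$-equivariance of the comparison isomorphisms and of the crystalline trace.

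For \cref{main-PD}.\ref{main-PD-nc} it remains to identify the target $R^{2n}f_{\Prism,*}\cO_{X_\Prism}$ with the Breuil--Kisin twist $\cO_{Y_\Prism}\{-n\}$. Both are prismatic $F$-crystals (the former by the previous steps and the Frobenius isogeny on prismatic cohomology), so by the full faithfulness of $T$ a morphism between them may be built and tested after \'etale realization, where \cref{main-comp}.\ref{main-comp-wet} gives $T\bigl(R^{2n}f_{\Prism,*}\cO_{X_\Prism}[1/\cI_\Prism]^\wedge_p\bigr)\simeq R^{2n}f_{\eta,*}\ZZ_p$ and $T(\cO_{Y_\Prism}\{-n\})\simeq\ZZ_p(-n)$, and the classical trace for the \'etale cohomology of the rigid-analytic generic fiber supplies $\tr^\Prism_f$ — an isomorphism over the locus of geometrically connected fibers; composing the pairing of part \ref{main-PD-tf} with $\tr^\Prism_f[-2n]$ gives the perfect pairing into $\cO_{Y_\Prism}\{-n\}[-2n]$. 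The hardest part will be the crystalline input treated as a duality statement for \emph{derived} crystals in perfect complexes — making it manifestly compatible with cup products and the Frobenius structure — together with checking that this package satisfies quasi-syntomic descent on $Y_\Prism$, so that it can be transported from the crystalline prisms to all bounded prisms; matching the Breuil--Kisin twist is a further, smaller subtlety.
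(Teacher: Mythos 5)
Your plan deviates from the paper's in a way that introduces a real gap in part~\ref{main-PD-tf}, and it elides a necessary step in part~\ref{main-PD-nc}.

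\textbf{Gap in the reduction to crystalline prisms.} Your primary route is to ``cover $Y_\Prism$ quasi-syntomically by crystalline prisms $(\rA_\crys(S),(p))$'' and then transport crystalline Poincar\'e duality to arbitrary bounded prisms by descent, extending \cite[\S~4.6]{BL22}. This does not work: a map of prisms $(A,I)\to(B,J)$ must send $I$ isomorphically onto $J$, so a transversal prism such as a Breuil--Kisin prism $\bigl(\widetilde R\llbracket u\rrbracket,(E(u))\bigr)$ or a perfect prism $\bigl(\rA_{\inf}(S^+),\tilde\xi\bigr)$ admits no $(p,I)$-completely flat map to a crystalline prism $(\rA_\crys(S),(p))$. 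Crystalline prisms cover the prismatic site of $Y_{p=0}$, not of $Y$ itself, and these are precisely the missing prisms one actually needs here: \cref{main-PD} is stated for an arbitrary bounded $(A,I)\in Y_\Prism$ for a formal $\cO_K$-scheme $Y$, and the objects that arise in the rest of the paper (e.g.\ \cref{DDI}, \cref{strong-et-comp}) are Breuil--Kisin and $\rA_{\inf}$-type prisms. So the crystalline comparison of \cref{main-comp}.\ref{main-comp-crys} cannot be descended to those prisms. Your parenthetical ``more self-contained variant'' — reducing mod $I$ and using the conjugate filtration on Hodge--Tate cohomology plus relative Serre duality for $Rf_*\Omega^j_{X_{\overline A}/\overline A}$ — is in fact what the paper does (\cref{duality-for-HT} and \cref{duality}), but you leave it as an aside: the mod-$I$ reduction by derived Nakayama together with the Hodge--Tate duality for crystals in perfect complexes, proved via the classifying stack $B\fT_X\{1\}^\sharp$ and the Koszul presentation, is the load-bearing argument, not the crystalline detour. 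You would also need to check, not just assert, that cup products and $\varphi$-structures match up under the comparison of \cref{main-comp}.\ref{main-comp-crys} (cf.\ \cref{cup-Frob-equiv}, \cref{dual-pairing-Frob}), which is not automatic for derived coefficients.

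\textbf{Gap in part~\ref{main-PD-nc}.} You propose to compose the perfect pairing of part~\ref{main-PD-tf} with the trace $\tr^\Prism_f$. But part~\ref{main-PD-tf} is only proved under the hypothesis $f_*\cO_X=\cO_Y$, whereas part~\ref{main-PD-nc} makes no such assumption. The missing ingredient is the Stein factorization $X\to Y'\to Y$ with $Y'\to Y$ finite \'etale (the $p$-adic formal-scheme version, \cite[\href{https://stacks.math.columbia.edu/tag/0G7Y}{Tag~0G7Y}]{SP}), together with the identification of relative prismatic sites along finite \'etale base change (\cref{prismatic-site-change-base}) and the push-forward of perfect pairings along finite \'etale maps via the standard \'etale trace (\cref{tr-fet}); one must also verify that the resulting prismatic trace is compatible with the \'etale trace on degree-zero pieces (\cref{et-can-duality}) so that the full faithfulness of $T$ pins it down uniquely and functorially. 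Your use of full faithfulness of $T$ and of Berkovich's \'etale trace is correct and matches the paper (\cref{Berkovich-can-duality}, \cref{Berkovich-trace}), but without the Stein factorization step the composite pairing is not defined in the generality claimed.
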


\begin{remark}
	The natural pairing of \cref{main-PD}.\ref{main-PD-tf} does not use any trace map.
	Moreover, the perfectness of the pairing can be checked after reducing mod $I$ to Hodge--Tate cohomology.
	Under the assumptions in \ref{main-PD-tf}, a trace map is just a trivialization of the top cohomology group $\Hh^{2n}\bigl((X_{\overline{A}}/A)_\Prism, \overline{\cO}_\Prism\{n\}\bigr)$).
	As a consequence, to show \cref{main-PD}.\ref{main-PD-tf}, it suffices to check that there is some trace map inducing a perfect pairing for Hodge--Tate cohomology. This follows for example from Grothendieck duality and an explicit construction of Higgs complexes, as in \cite{Tia21}.
	As we work with derived coefficients, we take a slightly different route and follow the stacky approach of \cite{BL22b} to study Hodge--Tate cohomology and its duality.
\end{remark}
\begin{remark}
    The non-canonicity of \cref{main-PD}.\ref{main-PD-nc} lies in the non-canonicity of the choice of the trace morphism $\tr^\Prism_f$, which is unavoidable for those $f$ that do not satisfy the condition $f_*\cO_X = \cO_Y$.
    However, once we fix a trace morphism $\tr^\et_f$ for the \'etale cohomology of the generic fibers, there is a unique prismatic trace $\tr^\Prism_f$ that is compatible with this choice.
    See \cref{Berkovich-can-duality} for details.
\end{remark}
\begin{remark}
	In \cite{Tang}, Longke Tang gives a different proof of Poincar\'e duality for relative prismatic cohomology of the structure sheaf.
	He constructs the prismatic cycle class for the diagonal embedding with strong uniqueness properties and uses this to show Poincar\'e duality for relative prismatic cohomology.
	In the case when $(A,I)$ is the Breuil--Kisin prism and $X$ is a smooth proper formal scheme over $\overline{A}$, Poincar\'e duality and the cycle class were also studied by Tian Nie in his thesis \cite{Nie21}.
\end{remark}
Applying the \'etale realization functor to \cref{main-PD}.\ref{main-PD-nc}, we obtain in particular a prismatic proof of Poincar\'e duality for the $\ZZ_p$-\'etale cohomology of the generic fibers, which was recently established in greater generality by Zavyalov \cite{Zav21} and Mann \cite{Man22} using different methods.
\begin{corollary}\label{et-PD}
	Let $f \colon X\to Y$ be a smooth proper morphism of smooth formal $\cO_K$-schemes of relative equidimension $n$ and let $T$ be a crystalline local system.
	Then there is a natural perfect pairing of $\ZZ_p$-lisse complexes 
	\[
	Rf_{\eta, *} T \otimes^L_{\widehat\ZZ_{p, Y_\eta}} Rf_{\eta, *} T^\vee \longrightarrow \widehat{\ZZ}_{p,Y_\eta}(-n) [-2n].
	\]
\end{corollary}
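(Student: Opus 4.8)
The plan is to deduce the statement from the prismatic Poincar\'e duality of \cref{main-PD}.\ref{main-PD-nc} by applying the derived \'etale realization functor, using \cref{main} to lift $T$ to the prismatic side. First I would invoke the essential surjectivity in \cref{main}: since $T$ is a crystalline $\ZZ_p$-local system on $X_\eta$, there is an analytic prismatic $F$-crystal $(\cE,\varphi_\cE)\in \Vect^{\an,\varphi}(X_\Prism)$ with $T(\cE)\simeq T$. Via the fully faithful functor of \cref{an-to-complex} I regard $(\cE,\varphi_\cE)$ as an object of $D_\perf^\varphi(X_\Prism)$, whose derived \'etale realization (\cref{derived et realization}) is still $T$, viewed as a $\ZZ_p$-lisse complex concentrated in degree zero. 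Applying \cref{main-PD}.\ref{main-PD-nc} to this $\cE$ produces a $\varphi$-equivariant perfect pairing of perfect complexes
\[
Rf_{\Prism,*}\cE \otimes^L_{\cO_{Y_\Prism}} Rf_{\Prism,*}\cE^\vee \longrightarrow \cO_{Y_\Prism}\{-n\}[-2n]
\]
in $D_\perf^\varphi(Y_\Prism)$, where $Rf_{\Prism,*}\cE$ and $Rf_{\Prism,*}\cE^\vee$ are perfect $\cO_{Y_\Prism}$-complexes by smooth proper pushforward of perfect complexes.

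Next I would push this pairing through the derived \'etale realization functor $D_\perf^\varphi(Y_\Prism)\to D^b_\lisse(Y_\eta,\ZZ_p)$. Three compatibilities are needed, each available from the inputs quoted in the excerpt: (i) the commutativity of the left square of Diagram~(\ref{Ccrys-strategy}), which gives $T\bigl(Rf_{\Prism,*}(-)\bigr)\simeq Rf_{\eta,*}\bigl(T(-)\bigr)$, so that $Rf_{\Prism,*}\cE$ and $Rf_{\Prism,*}\cE^\vee$ realize to $Rf_{\eta,*}T$ and $Rf_{\eta,*}T^\vee$; (ii) the symmetric monoidality of the derived \'etale realization functor (a refinement to perfect complexes of \cite[Cor.~3.8]{BS21}; cf.\ \cref{main-comp}.\ref{main-comp-wet}), which turns $\otimes^L_{\cO_{Y_\Prism}}$ into $\otimes^L_{\widehat\ZZ_{p,Y_\eta}}$ and identifies the realization of $\cE^\vee$ with $T(\cE)^\vee = T^\vee$; and (iii) the identification of the \'etale realization of the Breuil--Kisin twist $\cO_{Y_\Prism}\{-n\}$ with the Tate twist $\widehat\ZZ_{p,Y_\eta}(-n)$. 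Combining these, the \'etale realization of the displayed pairing is precisely the asserted map
\[
Rf_{\eta,*}T \otimes^L_{\widehat\ZZ_{p,Y_\eta}} Rf_{\eta,*}T^\vee \longrightarrow \widehat\ZZ_{p,Y_\eta}(-n)[-2n].
\]

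It remains to see that this pairing is perfect. Since $Rf_{\Prism,*}\cE$ is a perfect $\cO_{Y_\Prism}$-complex, it is dualizable, and by \cref{main-PD}.\ref{main-PD-nc} the pairing exhibits $Rf_{\Prism,*}\cE^\vee$ as its dual, i.e.\ it is a duality datum in the symmetric monoidal $\infty$-category $D_\perf^\varphi(Y_\Prism)$. A symmetric monoidal functor carries a duality datum to a duality datum, so the \'etale realization again exhibits $Rf_{\eta,*}T^\vee$ as the dual of $Rf_{\eta,*}T$ in $D^b_\lisse(Y_\eta,\ZZ_p)$, which is exactly the perfectness claimed (and in particular shows $Rf_{\eta,*}T$ is a dualizable, hence bounded $\ZZ_p$-lisse complex). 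The genuinely substantial inputs are \cref{main} — which is what permits $T$ to be lifted to a prismatic $F$-crystal, and hence the sole reason the hypothesis that $T$ be crystalline appears — and the prismatic duality of \cref{main-PD}.\ref{main-PD-nc}; granting these, the only real care needed is the simultaneous compatibility of the derived \'etale realization functor with tensor products, with proper pushforward, and with the Breuil--Kisin twist, after which the argument is formal.
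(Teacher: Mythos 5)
Your proof matches the paper's intended argument: the paper's proof of this corollary is precisely the one sketched immediately before it, namely ``applying the étale realization functor to \cref{main-PD}.\ref{main-PD-nc},'' with $T$ lifted to an analytic prismatic $F$-crystal via \cref{main} and then regarded as a perfect complex via \cref{an-to-complex}. You have correctly identified and filled in the necessary compatibilities (the left square of Diagram~(\ref{Ccrys-strategy}), symmetric monoidality of the derived étale realization, and the identification of Breuil--Kisin and Tate twists), and the closing observation that symmetric monoidal functors preserve duality data is exactly the right way to transport perfectness of the pairing.
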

Using Poincar\'e duality, we are able to obtain the promised Frobenius isogeny result.
\begin{theorem}[Frobenius Isogeny]\label{main-Frob}\textnormal{(\cref{DDI})}
	Let $f \colon X \to Y$ be a smooth proper morphism of smooth formal $\cO_K$-schemes and let $(\cE,\varphi_\cE)\in D_\perf^\varphi(X_\Prism)$. 
Then the natural map $\varphi^*_{Y_\Prism}Rf_{\Prism,*}\cE[1/\cI] \to Rf_{\Prism,*}\cE[1/\cI]$ is an isomorphism.
In particular, $Rf_{\Prism,*} \cE$ is a prismatic $F$-crystal in perfect complexes over $Y_\Prism$.
\end{theorem}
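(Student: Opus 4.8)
The plan is to reduce the claim, one prism at a time, to the Frobenius isogeny of \cite[Thm.~1.16]{BS19} for the structure sheaf, and to pass from $\cO_\Prism$ to an arbitrary coefficient object by means of Poincar\'e duality (\cref{main-PD}). Granting first that $Rf_{\Prism,*}\cE$ is a perfect complex on $(Y_\Prism,\cO_{Y_\Prism})$ — which follows from properness of $f$ and perfectness of $\cE$ — its formation commutes with base change by \cref{base-change}, so $Rf_{\Prism,*}\cE$ and its Frobenius pullback $\varphi^*_{Y_\Prism}Rf_{\Prism,*}\cE$ are crystals of perfect complexes on $Y_\Prism$; hence the map $\varphi^*_{Y_\Prism}Rf_{\Prism,*}\cE[1/\cI]\to Rf_{\Prism,*}\cE[1/\cI]$ is an isomorphism provided it is so after evaluation at every bounded prism $(A,I)\in Y_\Prism$. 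For such an $(A,I)$, set $X_{\overline A}\colonequals X\times_Y\Spf\overline A$, a smooth proper $\overline A$-scheme of relative dimension $n$, and $M\colonequals R\Gamma\bigl((X_{\overline A}/A)_\Prism,\cE|_{X_{\overline A}}\bigr)$, a perfect $A$-complex; by \cref{base-change} the evaluation in question is the linearized Frobenius $\varphi^*_A M[1/I]\to M[1/I]$, equivalently the Frobenius base-change map $\varphi^*_A M[1/I]\to R\Gamma\bigl((X_{\overline A}/A)_\Prism,\varphi^*\cE|_{X_{\overline A}}[1/\cI_\Prism]\bigr)$. When $\varphi$ is flat on $A$ — as for perfect and for Breuil--Kisin prisms — this is flat base change and poses no difficulty; the substance is the general case.

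With $(A,I)$ fixed I would combine two inputs. First, for $\cE=\cO_\Prism$ the desired isomorphism is \cite[Thm.~1.16]{BS19} applied to $X_{\overline A}$; in particular the semilinear Frobenius on $\Hh^{2n}\bigl((X_{\overline A}/A)_\Prism,\cO_\Prism\bigr)[1/I]$ is bijective, using also that the Breuil--Kisin twist $\cO_{Y_\Prism}\{-n\}$ has linearized Frobenius an isomorphism after inverting $\cI_\Prism$. Second, $\cE^\vee$ is again a prismatic $F$-crystal in perfect complexes, so \cref{main-PD}.\ref{main-PD-nc} yields after evaluation a $\varphi$-equivariant perfect pairing
\[ M\otimes^L_A M'\longrightarrow \Hh^{2n}\bigl((X_{\overline A}/A)_\Prism,\cO_\Prism\bigr)[-2n]\xrightarrow{\ \tr^\Prism_f\ } A\{-n\}[-2n],\qquad M'\colonequals R\Gamma\bigl((X_{\overline A}/A)_\Prism,(\cE|_{X_{\overline A}})^\vee\bigr), \]
with $\tr^\Prism_f$ Frobenius-equivariant; inverting $I$ exhibits $M'[1/I]$ as the (twisted, shifted) $A[1/I]$-linear dual of $M[1/I]$, compatibly with the semilinear Frobenius actions.

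The core of the argument is then the duality trick sketched in the introduction. If some nonzero class $x\in\Hh^i(M)[1/I]$ were killed by the semilinear Frobenius, pairing it with a class $y\in\Hh^{2n-i}(M')[1/I]$ dual to $x$ would give $\langle x,y\rangle\neq 0$ in $A\{-n\}[1/I]$, whereas $\varphi\langle x,y\rangle=\langle\varphi x,\varphi y\rangle=0$ contradicts the injectivity of $\varphi$ on $A\{-n\}[1/I]$ noted above; hence the semilinear Frobenius is injective on each $\Hh^i(M)[1/I]$. Running the same argument with $\cE^\vee$ in place of $\cE$ and feeding the outcome back through the duality $M'[1/I]\simeq M[1/I]^\vee\{-n\}[-2n]$ would upgrade injectivity to bijectivity of the linearized Frobenius on every $\Hh^i(M)[1/I]$, hence on the perfect complex $M[1/I]$. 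This proves the isogeny, and the last assertion of the theorem follows since $Rf_{\Prism,*}\cE$ is then a perfect complex on $(Y_\Prism,\cO_{Y_\Prism})$ whose Frobenius is invertible after inverting $\cI_\Prism$.

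The step I expect to be the main obstacle is making this duality argument rigorous on the level of complexes rather than individual cohomology groups: one has to control the hyper-$\Ext$ spectral sequence identifying $\Hh^\ast(M')$ with the dual of $\Hh^\ast(M)$ — so that a class dual to $x$ genuinely exists and pairs nontrivially — and to pass from an injective semilinear Frobenius on all cohomology to an invertible linearized Frobenius over the possibly ill-behaved ring $A[1/I]$. This is precisely where Poincar\'e duality (\cref{main-PD}), and through it the full faithfulness of $T$ from \cref{main}, is indispensable, together with the structure-sheaf isogeny; a further technical point, underlying the prism-by-prism reduction, is the compatibility of prismatic pushforward with the Frobenius structure and with inverting $\cI_\Prism$.
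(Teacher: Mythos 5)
Your proposal captures the motivation correctly — Poincar\'e duality plus the structure-sheaf isogeny is indeed the engine — and you correctly single out the main obstacle yourself. But the step you flag as the obstacle is not resolvable in the way you sketch, and the paper's proof handles it by a genuinely different technical device. The ``element-chasing'' argument in your core paragraph does not give what you need, for two independent reasons. First, a perfect pairing of complexes $M\otimes^L_A M'\to A\{-n\}[-2n]$ does not induce a perfect pairing on cohomology groups $\Hh^i(M)\times\Hh^{2n-i}(M')\to A\{-n\}$ in general; the spectral sequence you mention can contribute, and for a nonzero $x\in\Hh^i(M)[1/I]$ there need not exist a ``dual class $y$'' pairing nontrivially with it. Second, and more seriously, even if you obtained injectivity of the semilinear Frobenius on each $\Hh^i(M)[1/I]$, the passage to bijectivity of the \emph{linearized} Frobenius $\varphi^*_A M[1/I]\to M[1/I]$ is not automatic. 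Your proposed fix (``run it for $\cE^\vee$ and feed back through duality'') is the kind of move that works for finite-dimensional vector spaces, but $A[1/I]$ is typically a large, possibly non-noetherian ring and the Frobenius $\varphi_A$ may not even be flat, so one cannot conclude this way.

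What the paper does instead is use the duality in a structured form: \cref{split-inj} is an abstract lemma about maps of perfect pairings which, combined with the Frobenius equivariance of the Berkovich-style trace from \cref{Berkovich-can-duality}, shows directly that the Frobenius map $E_1\to F_1$ (in their notation) exhibits $E_1$ as a \emph{direct summand} of $F_1$; this sidesteps the spectral-sequence issue entirely, because the summand conclusion is at the level of complexes. The remaining task is to show the complement $C$ vanishes, and this is where a second idea you did not anticipate is needed. The paper reduces by the crystal property to a Breuil--Kisin prism $(A,I)$, where $A$ is regular noetherian and $\varphi_A$ is faithfully flat; then \cref{direct-sum}, a noetherian ascending-chain argument, kills $\Hh^r(C)^\wedge_p$; and finally $\Hh^r(C)[1/p]$ is killed by base changing to an $\rA_{\inf}$-linear perfect prism and invoking \cref{vanishing of C}, which rests on the prismatic--crystalline comparison (\cref{prism crys comp reduced-special-fib}) together with the known Frobenius isogeny for crystalline cohomology of $F$-isocrystals (\cref{higher-direct-image-crys}, ultimately Xu/Shiho). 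So the crystalline comparison theorem for coefficients is an essential ingredient your plan omits; the duality alone only gets you a split injection, not the vanishing of the complement.
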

Concretely, \cref{main-Frob} shows that for any $(A,I)\in Y_\Prism$, there is a natural isomorphism of perfect $A[1/I]$-complexes
\[
\varphi_A^* R\Gamma\bigl((X_{\overline{A}}/A)_\Prism, \cE\bigr)[1/I] \xlongrightarrow{\sim} R\Gamma\bigl((X_{\overline{A}}/A)_\Prism, \cE\bigr)[1/I].
\]
\begin{remark}
	In the special case when $(\cE, \varphi_\cE)$ is effective (i.e., $\varphi_\cE$ restricts to a morphism $\varphi^*\cE \to \cE$), the $\varphi$-action induces a semilinear endomorphism of $ R\Gamma\bigl((X_{\overline{A}}/A)_\Prism, \cE\bigr)$ (cf.\ \cref{Frobenius-on-coh-construction}).
	In particular, when $(\cE,\varphi_\cE)$ is a unit root crystal (i.e., $\varphi_\cE$ restricts to an isomorphism $\varphi^*\cE \to \cE$; e.g., when $\cE=\cO_\Prism$), both $\cE$ and $\cE^\vee$ are effective, and thus the Poincar\'e pairing in \cref{main-PD} is also integrally Frobenius equivariant.
\end{remark}
\begin{remark}
	When $(A,I)$ is a crystalline prism, \cref{main-PD} and \cref{main-Frob} also imply thanks to \cref{main-comp}.\ref{main-comp-crys} that the crystalline cohomology of $F$-isocrystals satisfies Poincar\'e duality and that Frobenius acts as an isogeny.
	Poincar\'e duality in this setting was first proven by Berthelot \cite[Chap.~VII.~Thm.~2.1.3]{Ber74}.
	The Frobenius isogeny statement for $\cE' = \cO_\crys$ was shown by Berthelot--Ogus \cite[Thm.~8.8]{BO78} via an explicit study of gauges.
	For general $F$-isocrystals over noetherian bases, the Frobenius isogeny on crystalline cohomology was only proven recently by Xu \cite{Xu19} using Ogus's convergent site.
\end{remark}
With the help of \cref{main-Frob}, we can strengthen \cref{main-comp}.\ref{main-comp-wet} as follows:
\begin{theorem}[{Strong \'Etale Comparison, \cref{strong-et-comp}}]\label{main-strong-et}
	Let $f \colon X\to Y$ be a smooth proper morphism of smooth formal $\cO_K$-schemes and let $(\cE,\varphi_\cE)\in D_\perf^\varphi(X_\Prism)$.
	There is a natural isomorphism of pro-\'etale sheaves of complexes over $Y_{C,\proet}$
	\[
	Rf_{\eta,*} T(\cE)\otimes_{\widehat{\ZZ}_p} \AA_{\inf}[1/\mu] \longrightarrow \AA_{\inf}(Rf_{\Prism, *} \cE[1/\mu]).
	\]
\end{theorem}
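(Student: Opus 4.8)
The plan is to deduce the isomorphism from the weak \'etale comparison of \cref{main-comp} together with the Frobenius isogeny of \cref{main-Frob}, working locally on the pro-\'etale site $Y_{C,\proet}$. By \cref{main-Frob}, $\cG \colonequals Rf_{\Prism,*}\cE$ is a prismatic $F$-crystal in perfect complexes over $Y_\Prism$, so both sides of the asserted isomorphism are well-defined pro-\'etale sheaves of complexes on $Y_{C,\proet}$, related by an evident comparison map. Since affinoid perfectoid objects $U \to Y_C$ form a basis of $Y_{C,\proet}$, and on such $U$ the period sheaf $\AA_{\inf}$ is acyclic, the lisse complex $Rf_{\eta,*}T(\cE)$ is computed by $R\Gamma(U_\eta, Rf_{\eta,*}T(\cE))$, and---via the base change formula \cref{base-change}---$\AA_{\inf}(\cG)$ is computed by $N_U \colonequals R\Gamma\bigl((X_U/\AA_{\inf}(U))_\Prism, \cE\bigr)$ (a perfect $\AA_{\inf}(U)$-complex, with $X_U$ the base change of $X$ to the perfectoid $U$), it suffices to verify that the comparison map is an isomorphism after evaluation on each such $U$.

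Fix such a $U$ and put $A \colonequals \AA_{\inf}(U)$. The crucial observation is that, modulo $p$, the element $\mu$ and a generator of the distinguished ideal $\cI$ of $A$ cut out the same closed subset of $\Spec A$---the ``boundary'' $V(p,\cI)$---so that $A[1/\mu]^\wedge_p = A[1/\cI]^\wedge_p$; this is a $p$-complete, $p$-torsion-free ring on which $\varphi$ is an automorphism. Over it, \cref{main-Frob}---whose isogeny conclusion, over the \emph{perfect} prism $A$, says precisely that the semilinear Frobenius on $N_U[1/\cI]$ is bijective---shows that $N_U[1/\mu]^\wedge_p = N_U[1/\cI]^\wedge_p$ is a perfect \'etale $\varphi$-module over $A[1/\mu]^\wedge_p$. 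Standard $\varphi$-descent over a perfect base, i.e.\ the fact that $\fib\bigl(\varphi-1\colon A[1/\mu]^\wedge_p \to A[1/\mu]^\wedge_p\bigr)$ is $\ZZ_p$ concentrated in degree zero (cf.\ \cite[\S~9]{BS19}), then yields a canonical identification
\[
N_U[1/\mu]^\wedge_p \;\simeq\; \bigl(N_U[1/\mu]^\wedge_p\bigr)^{\varphi=1} \otimes_{\ZZ_p}^{L} A[1/\mu]^\wedge_p .
\]
Finally, \cref{main-comp}, applied over the perfect prism $A$ to $Z = X_U$ with the $F$-crystal $\cE[1/\cI_\Prism]^\wedge_p$, identifies $\bigl(N_U[1/\mu]^\wedge_p\bigr)^{\varphi=1} = \bigl(N_U[1/\cI_\Prism]^\wedge_p\bigr)^{\varphi=1}$ with $R\Gamma\bigl(X_{U,\eta,\proet}, T(\cE)\bigr)$, which equals $R\Gamma(U_\eta, Rf_{\eta,*}T(\cE))$ by proper base change for \'etale cohomology. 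Splicing these identifications---all of which are Frobenius-equivariant, so that tracking $\varphi$ is harmless---gives the desired isomorphism over $U$ after $p$-completion; since the complexes in sight are perfect over $\ZZ_p$, resp.\ over $\AA_{\inf}[1/\mu]$, the uncompleted statement follows, and the construction is manifestly natural in $U$, hence globalizes to the asserted isomorphism of sheaves.

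Granting the earlier results, the only genuinely non-formal point is the upgrade from the ``$\varphi$-invariants'' description of \'etale cohomology to the full ``$\otimes\,\AA_{\inf}[1/\mu]$'' description: this needs Frobenius to act \emph{invertibly}---not merely isogenously---on cohomology after inverting $\mu$, which is exactly where \cref{main-Frob}, and hence ultimately the Poincar\'e duality of \cref{main-PD}, enters. The remaining ingredients---acyclicity of $\AA_{\inf}$ on affinoid perfectoids, $\varphi$-descent over perfect rings in the derived setting, naturality in $U$, and the passage between $p$-completed and uncompleted coefficient sheaves---are routine bookkeeping.
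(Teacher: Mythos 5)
Your proposal goes off the rails at the final step, and the gap is not a bookkeeping matter---it is the entire content of the theorem. Up through the identification of $\bigl(N_U[1/\mu]^\wedge_p\bigr)^{\varphi=1}$ with \'etale cohomology, you have essentially just reassembled the \emph{weak} \'etale comparison of \cref{main-comp}.\ref{main-comp-wet}: inverting $\mu$ and then $p$-completing is the same as inverting $\cI$ and then $p$-completing (since $(p,\mu)$ and $(p,\cI)$ have the same radical), so the $p$-completed statement you reach is precisely the weak comparison, with the $\varphi$-descent step over $A[1/\cI]^\wedge_p$ being exactly the mechanism of \cref{etale-comparison}. The claim that ``since the complexes in sight are perfect over $\ZZ_p$, resp.\ over $\AA_{\inf}[1/\mu]$, the uncompleted statement follows'' is false: $p$ does not lie in the Jacobson radical of $A[1/\mu]$, so a map of perfect $A[1/\mu]$-complexes whose derived $p$-completion is an isomorphism need not itself be an isomorphism (derived $p$-completion kills, e.g., $p$-divisible pieces, and $A[1/\mu]$ is far from $p$-complete). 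The upgrade from the $\AA_{\inf}[1/\cI]^\wedge_p$-linear comparison to the $\AA_{\inf}[1/\mu]$-linear one is precisely what the theorem asserts beyond the weak comparison.

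What the paper actually does to bridge this gap: it shows both sides are arc-sheaves on $\Perfd/Y_{\cO_C}$, reduces to the case where $U$ corresponds to a product $V = \prod_j V_j$ of rank-$\le 1$ perfectoid valuation rings with algebraically closed fraction fields, and then proves, in \cref{strong-etale-perfect} (built on \cref{strong-etale-fp}, \cref{restriction-vb}, \cref{arc-local-module}, etc.), that for a perfect $\varphi$-complex $M$ over $A = \rA_{\inf}(V)$ with $\Hh^q(M)[1/p]$ locally free, the map $T' \otimes^L_{\ZZ_p^J} A[1/\mu] \to M[1/\mu]$ is already an isomorphism \emph{before} $p$-completion. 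This is a nontrivial generalization of \cite[Lem.~4.26]{BMS18} to infinite products of $\rA_{\inf}(\cO_C)$-like rings, needing coherence of $A/p^m$, a dévissage to free modules via the Witt vector affine Grassmannian, and case-by-case control of $p$-power torsion. The Frobenius isogeny \cref{main-Frob} still enters the paper's proof, but only to establish that the $\Hh^i$ of $Rf_{\Prism,*}\cE[1/p]$ are vector bundles (the hypothesis of \cref{strong-etale-perfect}), not to effect the passage from completed to uncompleted coefficients. You would need to supply, in place of your last sentence, the full content of \cref{sec-strong-et}.
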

Here, for a prismatic ($F$-)crystal $\cF$, we define $\AA_{\inf}(\cF)$ to be the pro-\'etale sheaf over $Y_{C,\proet}$ sending an affinoid perfectoid space $\Spa(S,S^+)$ to the $\rA_{\inf}(S^+)$-module $\cF(\Prism_{S^+})$.
Unraveling all definitions, \cref{main-strong-et} says that for every affinoid perfectoid space $\Spa(S,S^+)\in Y_{C,\proet}$, there is a natural isomorphism of perfect complexes over $A[1/\mu]$
	\[
	 R\Gamma(X_{\eta, \Spa(S,S^+), \proet}, T(\cE))\otimes_{\ZZ_{p}(\Spa(S,S^+))} A[1/\mu] \longrightarrow R\Gamma((X_{\overline{A}}/A)_\Prism, \cE)[1/\mu],
	\]
where $(A=\rA_{\inf}(S,S^+),I)\in Y_\Prism$ is the associated perfect prism.
\begin{remark}
    In the absolute setting when $Y=\Spf(\cO_K)$, one can improve the weak \'etale comparison using a derived version of \cite[Lem.~4.26]{BMS18}. 
    On the other hand, to deal with the relative setting, we show that both terms in \cref{main-strong-et} satisfy arc-descent when restricted to perfectoid algebras.
    This reduces us to considering perfect complexes with $\varphi$-structures over a perfect prism $A$ for which the ring $A$ looks like an infinite product of $\rA_{\inf}(\cO_C)$.
    In that case, we thoroughly examine the ring $A$ and perfect complexes over it in \cref{sec-strong-et}, extending results for $\rA_{\inf}(\cO_C)$ from \cite[\S~4]{BMS18} to our more complicated situation, with \cref{strong-etale-fp} and \cref{strong-etale-perfect} as our final goal.
\end{remark}	

By combining \cref{main-strong-et} with \cref{main-comp}.\ref{main-comp-crys}, we can now compare the \'etale cohomology with the crystalline cohomology.
\begin{corollary}\label{main-et-crys}
	Let $f \colon X\to Y$ be a smooth proper morphism of smooth formal $\cO_K$-schemes and let $(\cE,\varphi_\cE)\in D_\perf^\varphi(X_\Prism)$.
	There is a $\varphi$-equivariant natural isomorphism of $\BB_{\crys}$-linear pro-\'etale sheaves of complexes over $Y_{\eta, \proet}$
	\[
	\gamma \colon Rf_{\eta,*} T(\cE)\otimes_{\widehat{\ZZ}_{p, Y_\eta}} \BB_{\crys} \longrightarrow \BB_{\crys}(Rf_{p=0,\crys,*}\cE').
	\]
\end{corollary}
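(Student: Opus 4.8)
The plan is to chain together the two big comparison theorems that have just been established and verify that the resulting isomorphism is $\varphi$-equivariant. First I would fix an affinoid perfectoid $\Spa(S,S^+)\in Y_{C,\proet}$ with associated perfect prism $(A,I)=(\rA_{\inf}(S,S^+),I)$, and use \cref{main-strong-et} to obtain, over $A[1/\mu]$, a $\varphi$-equivariant isomorphism
\[
R\Gamma(X_{\eta,\Spa(S,S^+),\proet},T(\cE))\otimes_{\ZZ_p}A[1/\mu]\xlongrightarrow{\sim} R\Gamma\bigl((X_{\overline A}/A)_\Prism,\cE\bigr)[1/\mu].
\]
On the other hand, \cref{main-comp}.\ref{main-comp-crys} (together with the base change formula \cref{base-change}) identifies the prismatic cohomology of $\cE$ after base change along $\rA_{\inf}\to\rA_\crys$ with the crystalline cohomology of the associated crystal $\cE'$ over the crystalline prism; concretely $R\Gamma\bigl((X_{\overline A}/A)_\Prism,\cE\bigr)\otimes_A\rA_\crys(S^\flat)\simeq R\Gamma\bigl((X_{p=0}/\rA_\crys)_\crys,\cE'\bigr)$, and this isomorphism is Frobenius equivariant by the last sentence of \cref{main-comp}.\ref{main-comp-crys}. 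The map $A[1/\mu]\to\BB_\crys$ factors through $\rA_\crys[1/p]$ after the standard localization (since $\mu$ becomes a unit in $\BB_\crys$, $t=\log q$ being a unit there), so tensoring the strong étale comparison up to $\BB_\crys$ and then feeding it through the crystalline comparison yields the desired $\BB_\crys$-linear isomorphism
\[
\gamma\colon Rf_{\eta,*}T(\cE)\otimes_{\widehat\ZZ_{p,Y_\eta}}\BB_\crys\xlongrightarrow{\sim}\BB_\crys\bigl(Rf_{p=0,\crys,*}\cE'\bigr)
\]
on the level of values at each $\Spa(S,S^+)$.

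The second step is to check that this pointwise construction sheafifies to a well-defined morphism of pro-étale sheaves on $Y_{\eta,\proet}$. Both sides are obtained from the pro-étale presheaves $\Spa(S,S^+)\mapsto R\Gamma(X_{\eta,\Spa(S,S^+),\proet},T(\cE))\otimes\BB_\crys$ and $\Spa(S,S^+)\mapsto \rB_\crys(S^\flat,\ldots)\otimes_{\rA_\crys}R\Gamma((X_{p=0}/\rA_\crys)_\crys,\cE')$, and the functoriality of all the arrows involved — the strong étale comparison is natural in $\Spa(S,S^+)$ by \cref{main-strong-et}, and the crystalline comparison is natural by \cref{prism-crys-crystal} — guarantees compatibility with pullback along maps of affinoid perfectoid spaces. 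This is largely bookkeeping: the only mild subtlety is to organize the two comparisons over a common ring. Rather than work directly with $\BB_\crys$, it is cleaner to base change the strong étale comparison along $A[1/\mu]\to A_\crys[1/p]=\rB^+_\crys[1/p]\to\BB_\crys$, invoking that $\mu$ is invertible in $\rB_\crys$, so that both comparison isomorphisms live over $\BB_\crys(S^\flat)$ and can simply be composed.

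The main obstacle I expect is \textbf{Frobenius equivariance of $\gamma$}. The $\varphi$-structure on the left-hand side is the one coming from the crystal $\cF=Rf_{\Prism,*}\cE$ via \cref{main-Frob} (the Frobenius isogeny, which is what makes $Rf_{\Prism,*}\cE[1/\cI]$ an $F$-crystal in perfect complexes in the first place), transported to the étale side; on the right-hand side it is the Frobenius of the $F$-isocrystal $Rf_{p=0,\crys,*}\cE'$. To see these agree under $\gamma$ one must trace the Frobenius through both comparison theorems: the strong étale comparison is $\varphi$-equivariant by construction (\cref{main-strong-et}), and the crystalline comparison is $\varphi$-equivariant by \cref{main-comp}.\ref{main-comp-crys}, but one has to check that the \emph{same} natural Frobenius on $R\Gamma((X_{\overline A}/A)_\Prism,\cE)[1/\cI]$ — namely the one induced by $\varphi_\cE$ as in \cref{Frobenius-on-coh-construction} — is the one being matched on both sides. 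This reduces to the statement that the Frobenius on prismatic cohomology is compatible with the base change maps $A\to A[1/\mu]$ and $A\to\rA_\crys$, which follows from naturality of the construction in \cref{Frobenius-on-coh-construction}, but it requires care because the crystalline comparison target carries the \emph{linearized} Frobenius while the prismatic side a priori only has the semilinear one; one resolves this exactly as in the construction of the vertical arrows in Diagram~(\ref{Ccrys-strategy}), by passing to the isogeny version. Once this compatibility is in place, $\gamma$ is $\varphi$-equivariant by construction, and the proof is complete.
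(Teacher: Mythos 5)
Your proposal is correct and takes essentially the same route as the paper's proof (of \cref{derived-Ccrys}): chain the strong étale comparison \cref{strong-et-comp} over $\rA_{\inf}[1/\mu]$ with the prismatic–crystalline comparison \cref{prism crys comp reduced-special-fib} and the base change formula \cref{base-change}, then base change everything to $\BB_\crys$ (where $\mu$ is a unit) and verify functoriality in the perfectoid test object. Two small remarks: the notation $\rA_\crys(S^\flat)$ should read $\rA_\crys(S,S^+)$ (the crystalline period ring of $S^+$, i.e.\ the $p$-completed pd-envelope of $\ker\theta \subset W(S^{+,\flat})$), and your worry about a linearized vs.\ semilinear Frobenius mismatch is overstated — \cref{prism crys comp reduced-special-fib} is already packaged as a Frobenius-equivariant isomorphism with both sides carrying the same (semilinear) action arising from the $F$-structure $\varphi_\cE$ via \cref{Frobenius-on-coh-construction}, so no extra reconciliation step is needed.
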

Here, $\cE'$ is the crystalline crystal over $(X_{p=0}/\ZZ_p)_\crys$ associated with $\cE$ (\cref{main-comp}.\ref{main-comp-crys}) and $\BB_{\crys}(Rf_{p=0,\crys,*}\cE')$ is the $\BB_{\crys}$-linear pro-\'etale sheaf of complexes associated with the crystalline crystal $Rf_{p=0,\crys,*}\cE'$ over $(Y_{p=0}/\ZZ_p)_\crys$ (\cref{Acrys-isocrystal-complex}).

The right-hand side in \cref{main-et-crys} can be further identified with the relative crystalline cohomology of the reduced special fibers $X_s/Y_s$, using the Frobenius structure.
To finish the proof of \cref{Ccrys}, we are therefore left to check that the comparison in \cref{main-et-crys} is compatible with filtrations.
\begin{theorem}[Compatibility with filtrations (\cref{final-fil}, \cref{BdR-OBdR})]\label{main-fil}
	Let $f \colon X\to Y$ be a smooth morphism of smooth formal $\cO_K$-schemes and let $(\cE,\varphi_\cE)\in \Vect^{\an, \varphi}(X_\Prism)$.
	The base change of the isomorphism $\gamma$ in \cref{main-et-crys} along the map $\BB_{\crys}\to \cO\BB_{\dR}$ over $Y_{\eta, \proet}$ is a filtered isomorphism 
	\[
		Rf_{\eta,*} T\otimes_{\widehat{\ZZ}_{p, Y_\eta}}  \cO\BB_{\dR} \simeq Rf_{\dR, *}(E,\nabla)\otimes_{\cO_{Y_\eta}} \cO\BB_{\dR}.
	\]
\end{theorem}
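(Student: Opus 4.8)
The plan is to deduce the statement from the defining property of the filtered de Rham bundle attached to a crystalline local system on $X_\eta$, and then to propagate it to relative cohomology by a filtered relative Poincar\'e lemma. First I would observe that the base change of the isomorphism $\gamma$ from \cref{main-et-crys} along $\BB_\crys\to\cO\BB_\dR$ is automatically an isomorphism of $\cO\BB_\dR$-linear sheaves of complexes, so the content is twofold: (a) identify $\BB_\crys(Rf_{p=0,\crys,*}\cE')\otimes_{\BB_\crys}\cO\BB_\dR$ with $Rf_{\dR,*}(E,\nabla)\otimes_{\cO_{Y_\eta}}\cO\BB_\dR$, and (b) check that the resulting isomorphism is filtered. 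Since both constructions commute with base change in $Y$ and the assertion is local on $Y_\eta$ in the analytic (indeed pro-\'etale) topology, I would reduce to the case where $Y=\Spf R$ is small affine, so that $X_\eta\to Y_\eta$ admits \'etale coordinates. By \cref{an-to-complex} one may view $\cE$ as an object of $D^\varphi_\perf(X_\Prism)$; write $\cE'$ for its crystalline realization as in \cref{main-comp} and $(E,\nabla,\Fil^\bullet E)$ for the filtered module with integrable connection on $X_\eta$ obtained from the filtered $F$-isocrystal attached to $\cE$ by \cref{main}, so that $(E,\nabla,\Fil^\bullet E)$ is associated with $T$ in the sense of \cref{crys loc def} and $Rf_{\dR,*}(E,\nabla)$ carries the induced Hodge filtration $\Fil^n=\bigl[\,\Fil^nE\to\Fil^{n-1}E\otimes\Omega^1_{X_\eta/Y_\eta}\to\cdots\,\bigr]$.

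For (a) I would pass from crystalline to relative de Rham cohomology while keeping track of filtrations. Since the crystalline cohomology of the crystal $\cE'$ is computed by the de Rham complex of its value on the smooth lift $X$ of $X_{p=0}$, and since $\BB_\crys\to\cO\BB_\dR$ is compatible with the divided-power/Nygaard filtrations by construction, one obtains a filtered identification of $\BB_\crys(Rf_{p=0,\crys,*}\cE')\otimes_{\BB_\crys}\cO\BB_\dR$ with $Rf_{\eta,*}$ of the total relative de Rham complex $\bigl(E\otimes_{\cO_{X_\eta}}\Omega^\bullet_{X_\eta/Y_\eta}\bigr)\otimes_{\cO_{X_\eta}}\cO\BB_{\dR,X_\eta/Y_\eta}$, carrying the convolution of the Hodge filtration above with the filtration on $\cO\BB_\dR$; then, by a filtered relative form of Scholze's de Rham comparison for $\cO\BB_\dR$ over $Y_\eta$, this is filtered-isomorphic to $Rf_{\dR,*}(E,\nabla)\otimes_{\cO_{Y_\eta}}\cO\BB_{\dR,Y_\eta}$. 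A delicate point here is to check that the de Rham realization of $\cE'$ really is $(E,\nabla)$ and that the filtration produced by \cref{main} matches the Hodge filtration $\Fil^\bullet E$.

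For (b), and for the link between the two sides, I would use that the property of $(E,\nabla,\Fil^\bullet E)$ being associated with $T$ furnishes, on $X_\eta$, a filtered, $\nabla$-compatible, $\cO\BB_{\dR,X_\eta}$-linear isomorphism $T\otimes_{\widehat{\ZZ}_{p,X_\eta}}\cO\BB_{\dR,X_\eta}\simeq E\otimes_{\cO_{X_\eta}}\cO\BB_{\dR,X_\eta}$ (this is where ``$T$ crystalline $\Rightarrow$ $T$ de Rham'' enters). Tensoring with $\Omega^\bullet_{X_\eta/Y_\eta}$ over $\cO_{X_\eta}$, applying the filtered relative Poincar\'e lemma for $\cO\BB_{\dR,X_\eta/Y_\eta}$ and the projection formula over $\cO\BB_{\dR,Y_\eta}$, and pushing forward along $f_\eta$ then produces a filtered isomorphism of $Rf_{\eta,*}T\otimes_{\widehat{\ZZ}_{p,Y_\eta}}\cO\BB_{\dR,Y_\eta}$ with the very complex that appeared in (a); matching the two identifications yields the filtered isomorphism in the statement. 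It remains to check that the composite obtained this way is the base change of the abstract map $\gamma$: this is forced by the Frobenius equivariance of the crystalline comparison, which rigidifies the isomorphism over $\BB_\crys$, together with the compatibility of $\BB_\crys\to\cO\BB_\dR$ with all the relevant structures. To lighten the filtered bookkeeping one may first treat the more rigid $\BB_\dR$-coefficient version and then recover the $\cO\BB_\dR$-version, which is the role of \cref{BdR-OBdR}.

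The hard part will be the filtered bookkeeping in the two middle paragraphs: establishing the filtered exactness of the relative $\cO\BB_\dR$-Poincar\'e lemma with the Griffiths-transversal filtration on the source and the convolution (Hodge) filtration on the target, proving the filtered projection formula over $\cO\BB_{\dR,Y_\eta}$, and, on the crystalline side, verifying that $\BB_\crys\to\cO\BB_\dR$ carries the Nygaard/divided-power filtration that \cref{main} attaches to the crystalline realization to the Hodge filtration $\Fil^\bullet E$. All of the Hodge--Tate degeneration content is packaged into the filtered Poincar\'e lemma.
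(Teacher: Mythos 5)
Your proposal correctly isolates the two comparison isomorphisms that need to be matched — the base change of $\gamma$ from \cref{main-et-crys} and the filtered de Rham comparison \`a la \cite[Thm.~8.8]{Sch13} — and your parts (a) and (b) together are a reasonable sketch of the latter. But the sentence ``It remains to check that the composite obtained this way is the base change of the abstract map $\gamma$: this is forced by the Frobenius equivariance of the crystalline comparison, which rigidifies the isomorphism over $\BB_\crys$'' is where the argument has a genuine gap, and you have misidentified where the difficulty of the theorem lies (you place it in the filtered Poincar\'e-lemma bookkeeping, which is standard, rather than here).

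Frobenius equivariance does not rigidify a comparison of cohomology \emph{complexes}. The full-faithfulness results of \cref{hom-compatibility-filtered-F-isoc} and \cref{comparison-functoriality} rigidify morphisms between objects of $\mathrm{fVect}^\varphi(Y_\eta,\BB_\crys)$, i.e.\ they say a Frobenius-equivariant filtered $\BB_\crys$-linear map between the $\BB_\crys$-realizations of two filtered $F$-isocrystals comes from a unique map of filtered $F$-isocrystals. They do not say anything about comparing two isomorphisms of derived pushforwards, nor can they be applied to the $\cO\BB_\dR$-linear isomorphism you build in (b), which is constructed via Scholze's $\cO\BB_\dR$-Poincar\'e lemma and a priori has no Frobenius structure, hence cannot be fed into the rigidity argument in the first place. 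Even at the level of individual cohomology sheaves, two isomorphisms $R^if_{\eta,*}T\otimes\BB_\crys\simeq\BB_\crys(R^if_{s,\crys,*}\cE_s)$ need not coincide just because both are Frobenius-equivariant and filtered; rigidity tells you the difference is an automorphism of the $F$-isocrystal, but it does not tell you the automorphism is the identity without tracing the constructions.

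The reason this matters is that the two sides of the comparison are built from genuinely different resolutions: $\gamma$ goes through the strong \'etale comparison \cref{strong-et-comp} (arc-descent on products of perfectoid valuation rings) and the prismatic-crystalline comparison \cref{rel vs abs coh} (\v{C}ech--Alexander complexes over $q$-crystalline and crystalline prisms), whereas the de Rham comparison of \cite[Thm.~8.8]{Sch13} is built from Koszul complexes of $\ZZ_p(1)^d$-torsors in the pro-\'etale site. Matching these up is the actual content of the paper's proof: it introduces an infinitesimal site over $\rB_\dR^+$ (\cref{inf-weakly-final}, \cref{inf-from-crys}), realizes both the crystalline and the de Rham cohomology inside it, and then shows that the \v{C}ech--Alexander and Koszul computations agree by building explicit double complexes (\cref{inf-CA-dR-comp}, \cref{CA-Kos-comp}) interpolating between the two, functorially in the choice of enlarged framing. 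None of this is implied by Frobenius equivariance, and without some such explicit diagram chase the matching step in your proposal remains unproved.
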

Here, the left-hand side is equipped with the $I$-adic filtration, $(E,\nabla)$ is the vector bundle with connection over $X_\eta$ associated with the crystalline crystal $\cE'$ and the right-hand side is equipped with the tensor product filtration.
\begin{remark}
	As shown in \cite[Thm.~8.8]{Sch13}, there is a natural filtered isomorphism of pro-\'etale sheaves
	\[
	\gamma_\dR \colon Rf_{\eta,*} T(\cE)\otimes_{\widehat{\ZZ}_{p, Y_\eta}} \cO\BB_{\dR} \longrightarrow Rf_{\dR,*}(E,\nabla) \otimes_{\cO_{Y_\eta}} \cO\BB_{\dR}.
	\]
	Our strategy for proving \cref{main-fil} is to show that the construction of the \'etale-crystalline comparison map $\gamma$ in \cref{main-et-crys} is compatible with $\gamma_\dR$ under a commutative diagram of isomorphisms between various cohomology complexes over $\rB_\dR$. 
\end{remark}

\subsection*{Outline}
We briefly explain the structure of the article. 
After recalling the definitions of various period sheaves (\cref{de-Rham}, \cref{sec2.3}) and filtered $F$-isocrystals (\cref{sec2.2}), we introduce the notion of crystalline $\ZZ_p$-local systems in \cref{sec2.4} following Faltings \cite{Fal89}, and study its relation with a simpler variant.
In \cref{sec3.1}, we introduce and study analytic prismatic $F$-crystals.
Then we define the \'etale and the crystalline realization functors in \cref{sec3.2}.
Moreover, we show that the \'etale realization sends an analytic prismatic $F$-crystal to a crystalline $\ZZ_p$-local system.
In \cref{sec4}, we finally we prove our \cref{main}, with \cref{sec4.1} addressing full faithfulness and \cref{sec4.2} essential surjectivity.

Afterward, we begin with the preparations for \cref{Ccrys}.
In \cref{sec5}, we  embed the category of analytic prismatic crystals into that of prismatic crystals in perfect complexes and prove the finiteness of prismatic cohomology using Hodge--Tate comparison.
We then prove the weak \'etale comparison theorem in \cref{sub-prismatic-etale} and the crystalline comparison theorem in \cref{sub-prismatic-crystalline}.
Poincar\'e duality follows in \cref{Poincare-duality}.
As its application, we obtain the Frobenius isogeny property in \cref{Frobenius-pushforward-isogeny}.
Finally, after proving the strong \'etale comparison in \cref{sec-strong-et}, we prove \cref{Ccrys} in the last two sections.
We show the isomorphism of underlying Frobenius-equivariant complexes in \cref{sec-proof-ccrys} and prove that this isomorphism is compatible with filtrations in \cref{sec-fil}.

\subsection*{Notation and conventions}
We say an object is essentially unique when it is unique up to unique isomorphism (in the $1$-categorical context) or unique up to contractible choice (in the $\infty$-categorical context).

Let $A$ be a commutative ring and $I$ be an ideal of $A$.
We denote the derived $I$-adic completion of $A$ by $A^\wedge_I$.
We write $A \langle I/p \rangle$ for the $p$-completion of the $A$-algebra $A[I/p]$.
When $A$ is a $\delta$-ring, $B$ is an $A$-algebra, and $(f_i)$ a regular sequence in $B$, we let $B\{f_i\}$ be the $p$-completion of the $B$-algebra $B \otimes_{A[x_i]} A\{x_i\}$, where $A\{x_i\}$ is the free $\delta$-$A$-algebra on generators $x_i$ and the map $A[x_i] \to B$ sends $x_i$ to $f_i$.
When $(A,I)$ is a prism, we use $\overline{A}$ to denote the reduction $A/I$.

We will assume and follow the foundations of the prismatic theory as in \cite{BS19}, and we refer the reader to \textit{loc.\ cit.} for a detailed exposition. 
As a convention, our prismatic site is equipped with the $(p,I)$-completely flat topology.
Moreover, we also assume that all the $p$-adic formal schemes in this article are bounded.

Throughout the article, $K$ will denote a complete discretely valued $p$-adic field of characteristic $0$ with ring of integers $\cO_K$ and perfect residue field $k$.
We fix a completed algebraic closure $C$ of $K$ with ring of integers $\cO_C$.

To lighten notation, we will in some places not distinguish notationally between a coherent sheaf on an affine scheme and its global sections. 

Starting from \cref{sec5}, several constructions will be derived by default when they concern perfect complexes:
the derived pullback under a map $f$ will simply be denoted by $f^*$ instead of $Lf^*$ (in particular, when we talk about Frobenius structures on complexes over a prism, $\varphi^*$ means derived pullback);
given $C \in D_\perf(A)$ and a nonzerodivisor $x\in A$, we also write $C/x$ for the derived reduction $C\otimes^L_{A} A/x$.

\subsection*{List of period rings}\label{list}
For the reader's convenience, we give a diagram of the period rings used in this article, with a brief indication of the differences between our and some other commonly used constructions.

Choose a compatible system $\zeta_{p^n}$ of $p^n$-th roots of unity in $\cO_C$ and let $\epsilon=(1,\zeta_p,\ldots)$ be the corresponding element of $\cO_C^\flat=\lim_{x\mapsto x^p} \cO_C/p$.
Let $\rA_{\inf} = \rA_{\inf}(\cO_C) \colonequals W(\cO_C^\flat)$ and	$q \colonequals [\epsilon] \in \rA_{\inf}(S)$ be the Teichm\"uller lift.
Set $\mu \colonequals q - 1$ and $\tilde{\xi} \colonequals [p]_q = \frac{q^p-1}{q-1}$.
They satisfy the relation $\varphi(\mu)=\mu\cdot \tilde{\xi}$.
The pair $(\rA_{\inf},\tilde{\xi})$ is a prism and the quotient by $\tilde{\xi}$ is the \emph{Hodge--Tate specialization map} $\tilde{\theta} \colon \rA_{\inf}\to \cO_C$.
Since the resulting morphism $(\Prism_{\cO_C},I) \to (\rA_{\inf},\tilde{\xi})$ from the initial prism is an isomorphism, we will often identify the two under this map, in line with the discussion of prismatic $F$-crystals in \cite{BS21} and the notion of Breuil--Kisin--Fargues modules.
 Let $\rB_\dR^+\colonequals\rB_\dR^+(\cO_C)$ be the formal completion $(\rA_{\inf}[1/p])^\wedge_{\tilde{\xi}}$.
We warn the reader that both the prism $(\rA_{\inf},\tilde{\xi})$ and the period ring $\rB_\dR^+$ used in our article are different from another common definition via \emph{de Rham specialization map} $\theta = \tilde{\theta} \circ \varphi \colon \rA_{\inf}\to \cO_C$, namely $(\rA_{\inf},\ker(\theta))$ (\cite[Thm.~3.10]{BS19}) and $(\rA_{\inf}[1/p])^\wedge_{\ker{\theta}}$ (\cite{Fon82}).

More generally, let $S$ be a quasi-regular semiperfectoid ring over $\cO_C$ and let $(\Prism_S,I)$ be its initial prism \cite[Prop.~7.2]{BS19}.
Following \cite[Constr.~6.2]{BS21}, we can define the period rings for $S$ in terms of prismatic cohomology, as in the following commutative diagram:
\[
\begin{tikzcd}
	\rA_{\inf}(S) = \Prism_S \ar[r] & \rA_{\crys}(S)=\Prism_S\{I/p\} \arrow[r, dashed] \arrow[d] & \rB_\dR^+(S)\colonequals\Prism_S[1/p]^\wedge_I\simeq (\Prism_S\langle I/p\rangle[1/p])^\wedge_I\\
	& \varphi_{\rA_{\inf}(S)}^*\rA_{\crys}(S)=\Prism_S\{\varphi(I)/p\} \arrow[r,"\iota"']& \Prism_S \langle I/p\rangle \colonequals \Prism_S[1/p]^\wedge_p. \arrow[u]
\end{tikzcd}
\]
Here, 
\begin{itemize}
	\item $\rA_{\crys}(S)$ is the \emph{(integral) crystalline period ring}, defined as the $p$-completed divided power envelope $D_{(\xi)}\bigl(\rA_{\inf}(S)\bigr)^\wedge_p$ and identified with $\Prism_S\{I/p\}$ via \cite[Cor.~2.39, Prop.~7.10]{BS19},
	\item the left vertical map $\Prism_S\{I/p\}\to \Prism_S\{\varphi(I)/p\}=\varphi_{\Prism_S}^* \Prism_S\{I/p\}$ is the linearization with respect to $\varphi_{\Prism_S}$,
	\item the map $\iota \colon \Prism_S\{\varphi(I)/p\} \to \Prism_S\langle I/p\rangle$ is the canonical map and its composition with the left vertical map $\Prism_S\{I/p\}\to \Prism_S\{\varphi(I)/p\}$ is denoted as $\tilde{\varphi}$,
	\item $\rB_\dR^+(S)\colonequals\Prism_S[1/p]^\wedge_I\simeq (\Prism_S\langle I/p\rangle[1/p])^\wedge_I$ is the \emph{(integral) de Rham period ring} for $S$ (see \cite[Lem.~6.7]{BS21} for the second isomorphism), and
	\item the dashed map is defined so that the diagram commutes.
\end{itemize}

For a locally noetherian adic space $X_\eta$ over $\Spa(K,\cO_K)$, one can define analogs of the infinitesimal, crystalline, and de Rham period sheaves above, for which we refer to \cref{infinitesimal-period-sheaf} and \cref{def Bcrys}.
Moreover, in this setting there is an \emph{(integral) de Rham period ring with connection} $\cO\rB_\dR^+(S)$ together with the following map enlarging the diagram above:
    \[
    \begin{tikzcd}
    	\rB_\dR^+(S) \ar[r] &\cO\rB^+_\dR(S) .
    \end{tikzcd}
    \]
Except for $\rB_\dR^+(S)$ and $\cO\rB^+_\dR(S)$, the rest of the period rings are naturally equipped with a Frobenius structure coming from $\Prism_S$ and the maps above are naturally Frobenius equivariant.
Moreover, we can define the rational analogs of the period rings by inverting the element $\mu$.

\subsection*{Acknowledgments}
The influence of the work of Bhatt--Scholze \cite{BS19,BS21} on our article is obvious, and we thank them heartily for their pioneering work.
We initiated our project during the fall of 2021 after several conversations with Peter Scholze.
We are grateful for his support and his many patient explanations throughout the writing of this project, especially his suggestion of the gluing construction in \cref{EssSurj2} using both the local system and its associated $F$-isocrystal.
We are indebted to Bhargav Bhatt for many illuminating explanations and suggestions throughout the genesis of this paper.
We are also thankful to the anonymous referee for countless remarks and corrections, which substantially enhanced the quality of our manuscript.
Further thanks go to Sasha Petrov for suggesting that we should be able to prove the Frobenius isogeny property via Poincar\'e duality and to Guido Bosco, David Hansen, Hiroki Kato, Shizhang Li, Samuel Marks, Akhil Mathew, Yuchen Wu and Bogdan Zavyalov for helpful discussions and correspondence.
We gratefully acknowledge funding through the Max Planck Institute for Mathematics in Bonn, Germany, during the preparation of this work.
Additional support came from the University of Chicago (H.G.) and the National Science Foundation
under Grant No.\ DMS-1926686 and the IAS School of Mathematics (E.R.) in the revision stage of the project.

\addtocontents{toc}{\protect\setcounter{tocdepth}{2}}

\section{Local systems on rigid spaces}\label{sec2}

We fix a complete discretely valued $p$-adic field $K$ of characteristic $0$ with ring of integers $\cO_K$ and perfect residue field $k$.
Let $V_0 \colonequals W(k) \subseteq \cO_K$ resp.\ $K_0 \colonequals V_0[p^{-1}] \subseteq K$ be the maximal unramified extension of $\ZZ_p$ resp.\ $\QQ_p$ in $\cO_K$ resp.\ $K$.
Moreover, we fix a locally noetherian adic space $X_\eta$ over $\Spa(K,\cO_K)$.
Following \cite[Def.~3.9]{Sch13}, we equip $X_\eta$ with the pro-\'etale topology and consider the associated site $X_{\eta,\proet}$.

\subsection{Infinitesimal and de Rham period sheaves}\label{de-Rham}
We begin with a reminder of various period sheaves on $X_{\eta,\proet}$, following \cite{Sch13}.
To this end, recall the following statement, which reduces us to defining these sheaves over affinoid perfectoid rings.
\begin{proposition}[{\cite[Lem.~4.6, Prop.~4.8]{Sch13}}]
The affinoid perfectoid objects of $X_{\eta,\proet}$ form a basis for the topology.
In other words, if $\Perfd/X_{\eta,\proet}$ denotes the site associated with the full subcategory of objects $U \in X_{\eta,\proet}$ for which $\widehat{U} \colonequals \lim_i U_i$ is affinoid perfectoid, then restriction along the natural inclusion $\Perfd/X_{\eta,\proet} \to X_{\eta,\proet}$ induces an equivalence of topoi
\[ \Shv(X_{\eta,\proet}) \xrightarrow{\sim} \Shv(\Perfd/X_{\eta,\proet}). \]
\end{proposition}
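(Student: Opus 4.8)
The plan is to derive the equivalence from the standard comparison lemma for topoi: if $\cC$ is a site and $\mathcal{B}\subseteq\cC$ is a full subcategory such that every object of $\cC$ admits a covering by objects of $\mathcal{B}$ and $\mathcal{B}$ carries the topology induced from $\cC$, then restriction induces an equivalence $\Shv(\cC)\xrightarrow{\sim}\Shv(\mathcal{B})$ (cf.\ \cite{Sch13}). Here $\cC=X_{\eta,\proet}$ and $\mathcal{B}=\Perfd/X_{\eta,\proet}$. The induced-topology hypothesis is essentially built into the definition of the site $\Perfd/X_{\eta,\proet}$, so the real content is to show that the affinoid perfectoid objects form a basis, i.e.\ that every $U\in X_{\eta,\proet}$ receives a pro-\'etale covering by objects $V$ whose associated pro-object $\widehat V = \lim_i V_i$ is affinoid perfectoid.

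First I would reduce to $U$ affinoid, and then to $U$ lying over a \emph{small} affinoid in the sense of Scholze, i.e.\ one admitting an \'etale map to a torus $\TT^d$ which factors as a composite of rational embeddings and finite \'etale maps; since $X_\eta$ is locally noetherian, such small affinoids cover $X_\eta$. Over a small affinoid $\Spa(R,R^+)$ one has the explicit pro-\'etale tower $\Spa(R_m,R_m^+)\to\Spa(R,R^+)$ obtained by adjoining compatible systems of $p^m$-th roots of the torus coordinates (and of roots of unity); its inverse limit $\Spa(R_\infty,R_\infty^+)$ has $R_\infty$ perfectoid, by the computation in \cite{Sch13}. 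Pulling this tower back along a pro-\'etale morphism $U\to X_\eta$ over the given small affinoid, and using that the class of affinoid perfectoid objects is stable under rational localization and finite \'etale base change, one produces an affinoid perfectoid pro-\'etale cover of $U$. Hence the affinoid perfectoids form a basis for $X_{\eta,\proet}$.

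To finish I would check the remaining bookkeeping in the comparison lemma: any pro-\'etale covering of an affinoid perfectoid object can, by applying the previous step to the members of the cover, be refined to a covering by affinoid perfectoid objects, so the topology on $\mathcal{B}$ is indeed induced from $\cC$; and the fibre products that enter the \v{C}ech/descent arguments either stay in $\mathcal{B}$ or are handled by the usual refinement-stable formulation of the comparison lemma. With this in place the lemma yields the asserted equivalence. The main obstacle is the middle step: genuinely verifying that $\widehat U = \lim_i U_i$ is affinoid perfectoid in Scholze's sense — that the inverse limit of the tower of affinoids is an affinoid perfectoid space, that its tilt is perfectoid, and that these properties persist under the rational-localization and finite-\'etale operations appearing in the small-affinoid presentation. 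This rests on the structure theory of perfectoid spaces and the almost purity theorem, all established in \cite{Sch13}, which I would invoke rather than reprove.
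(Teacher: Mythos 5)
Your overall strategy — invoke the comparison lemma for topoi and reduce everything to showing that affinoid perfectoid objects form a basis — is exactly the right framework, and it is what underlies the cited result. Note, however, that the paper does not re-prove this proposition at all: it is imported verbatim from [Sch13, Lem.~4.6, Prop.~4.8], so there is no internal argument to compare against, only Scholze's.

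There is one genuine gap in your reconstruction of the basis step. You reduce to ``small'' affinoids admitting an \'etale map to a torus $\TT^d$ (incidentally, this notion of ``small'' is from Bhatt--Morrow--Scholze rather than [Sch13]) and claim that such affinoids cover $X_\eta$ because $X_\eta$ is locally noetherian. That claim is false without smoothness: a non-smooth locally noetherian adic space does not admit \'etale charts to tori near its singular points, and the proposition is stated for a general locally noetherian $X_\eta$ over $\Spa(K,\cO_K)$. Scholze's actual construction avoids this: he takes $\Spa(R,R^+)$ affinoid, chooses a \emph{surjection} $K\langle T_0,\dotsc,T_n\rangle \twoheadrightarrow R$ (which exists by topological finite type, no smoothness needed), after a harmless translation arranges the images of the $T_i$ to be invertible, and then adjoins compatible $p$-power roots of these elements. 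The resulting tower is pro-\'etale over $\Spa(R,R^+)$, and the completed limit is affinoid perfectoid because affinoid perfectoid is stable under passage to Zariski-closed subspaces cut out by finitely generated ideals — a stability statement that is the real workhorse here, and which is stronger than the stability under rational subsets and finite \'etale maps that you cite. So the correct input is ``closed immersion into a polydisc/torus plus Zariski-closed stability,'' not ``\'etale map to a torus.'' Your version happens to suffice for the smooth $X_\eta$ arising as generic fibers of smooth formal schemes (the setting of the paper's later sections), but as a proof of the stated proposition it does not go through.
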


\begin{definition}[{\cite[Def.~6.1, Def.~6.8]{Sch13}, \cite{Sch16}}]\label{infinitesimal-period-sheaf}
The sheaf $\AA_{\inf}$ resp.\ $\BB_{\inf}$ resp.\ $\cO\BB_{\inf}$ in $\Shv(X_{\eta,\proet})  $ is given on an element $U \in \Perfd/X_{\eta,\proet}$ with $\widehat{U} = \Spa(R,R^+)$ by
\begin{itemize}
    \item $\AA_{\inf}(R,R^+) \colonequals W(R^{+,\flat})$ resp.\
    \item $\BB_{\inf}(R,R^+) \colonequals \AA_{\inf}(R,R^+)[1/p] = W(R^{+,\flat})[1/p]$ resp.\
    \item $\cO\BB_{\inf}(R,R^+) \colonequals R \otimes_{V_0} \BB_{\inf}(R,R^+) \simeq \bigl(R^+ \otimes_{V_0} W(R^{+,\flat})\bigr)[1/p]$. 
\end{itemize}
Here $R^{+,\flat} =\lim_{x \mapsto x^p} R^+/p$ is the tilt of $R^+$.
\end{definition}
Next, we recall the definition of the sheaf versions of Fontaine's de Rham period rings.
\begin{definition}[{\cite[Def.~6.1, Def.~6.8]{Sch13}}]
\label{def BdR}
Let $U = \lim_i \Spa(R_i,R^+_i) \in \Perfd/X_{\eta,\proet}$ with $\widehat{U} = \Spa(R,R^+)$ such that $R$ is a perfectoid $C$-algebra.
Let $\theta \colon \AA_{\inf}(R,R^+) \to R^+$ be the de Rham specialization map and $\tilde{\theta}\colonequals \theta\circ\varphi^{-1}$ be the Hodge--Tate specialization map.
The filtered sheaf $\BB^+_{\dR}$ resp.\ $\BB_{\dR}$ resp.\ $\cO\BB^+_{\dR}$ resp.\ $\cO\BB_{\dR}$ in $\Shv(X_{\eta,\proet})$ is given on $U$ by
\begin{itemize}
    \item $\BB^+_{\dR}(R,R^+) \colonequals \BB_{\inf}(R,R^+)^\wedge_{\ker(\tilde{\theta})}$ with the $\ker(\tilde{\theta})$-adic filtration resp.\
    \item $\BB_{\dR}(R,R^+) \colonequals \BB^+_{\dR}(R,R^+)[1/t]$ with the filtration
    \[ \Fil^r\BB_{\dR}(R,R^+)= \sum_{i\in \ZZ} t^{-i} \cdot \Fil^{i+r}\BB^+_{\dR}(R,R^+), \]
    where $t=\log[\epsilon] \in \rB_\dR^+(\cO_C)$ is the canonical element resp.\
    \item $\cO\BB^+_{\dR}(R,R^+) \colonequals \colim_i \bigl((R^+_i \widehat{\otimes}_{V_0} \AA_{\inf}(R,R^+))[1/p]^\wedge_{\ker(\tilde{\theta})}\bigr)$ with the $\ker(\tilde{\theta})$-adic filtration, where the morphism $\tilde{\theta} \colon (R^+_i \widehat{\otimes}_{V_0} \AA_{\inf}(R,R^+))[1/p] \to R$ is induced from $\AA_{\inf}(R^+)[p^{-1}] \to R$ by extension of scalars resp.\
    \item $\cO\BB_{\dR}(R,R^+) \colonequals \cO\BB^+_{\dR}(R,R^+)[1/t]$.
\end{itemize}
\end{definition}
\begin{warn}\label{warn}
In order to be consistent with the construction of $\rB^+_\dR(\blank)$ in the prismatic setting later on (see our \nameref{list}), the definitions of our de Rham period sheaves are different from those of \cite{Sch13}, in that we complete at $\ker(\tilde{\theta})$ instead of $\ker(\theta)$.
\end{warn}
Moreover, $\cO\BB^+_{\dR}$ is equipped with a $\BB^+_{\dR}$-linear connection $\nabla \colon \cO\BB^+_{\dR} \to \cO\BB^+_{\dR} \otimes_{\cO_{X_\eta}} \Omega^1_{X_\eta}$;
if ${X_\eta}$ is smooth, it satisfies Griffiths transversality, that is,  $\nabla\bigl(\Fil^i(\cO\BB^+_{\dR})\bigr) \subseteq \Fil^{i-1}(\cO\BB^+_{\dR}) \otimes_{\cO_{X_\eta}} \Omega^1_{X_\eta}$ (\cite[Cor.~6.13]{Sch13}).
It extends to a connection on $\cO\BB_{\dR}$ with the same properties.

With these period sheaves in hand, we come to the definition of de Rham local systems.
Let $\widehat{\ZZ}_p = \lim_n \ZZ/p^n$ be the lisse local system of $p$-adic integers on $X_{\eta, \proet}$, in the sense of \cite[\S8]{Sch13}.
\begin{definition}[{\cite[Def.~8.3]{Sch13}}]\label{dR-local-system}
Assume that $X_\eta$ is smooth over $\Spa(K,\cO_K)$.
A sheaf of $\widehat{\ZZ}_p$-modules $L$ is \emph{de Rham} if
\begin{enumerate}[label=\upshape{(\roman*)}]
    \item $L$ is \emph{lisse}, that is, locally on $X_{\eta, \proet}$ of the form $\widehat{\ZZ}_p \otimes_{\ZZ_p} M$ for some finitely generated $\ZZ_p$-module $M$ and
    \item there exists a locally free $\cO_{X_\eta}$-module $E$, a separated, exhaustive, locally split, decreasing filtration $\Fil^\bullet E$ on $E$ and a flat connection $\nabla \colon E \to E \otimes_{\cO_{X_\eta}} \Omega^1_{X_\eta}$ with $\nabla(\Fil^iE) \subseteq \Fil^{i-1}E \otimes_{\cO_{X_\eta}} \Omega^1_{X_\eta}$ for all $i \in \ZZ$ such that
    \[ L \otimes_{\widehat{\ZZ}_p} \BB^+_{\dR} \simeq \bigl(\Fil^0(E \otimes_{\cO_{X_\eta}} \cO\BB_{\dR})\bigr)^{\nabla = 0}. \]
\end{enumerate}
It is called a \emph{de Rham local system} if $L$ is furthermore a $\widehat{\ZZ}_p$-local system (equivalently, torsionfree).
\end{definition}

\subsection{Filtered \texorpdfstring{$F$}{F}-isocrystals}\label{sec2.2}
Next, we review the notion of filtered $F$-isocrystals.
\begin{definition}\label{def-isoc}
    Let $X_s$ be a $k$-scheme.
    Equip $V_0 = W(k)$ with the standard pd-structure and let $(X_s/V_0)_\crys$ be the big crystalline site of $X_s$.
    \begin{enumerate}[leftmargin=*]
        \item A \emph{crystal (in coherent sheaves)}  on $X_s$ is a sheaf $\cE$ of $\cO_{X_s/V_0}$-modules on $(X_s/V_0)_\crys$ such that
        \begin{enumerate}[label=(\roman*),leftmargin=*]
            \item for each pd-thickening $(U,T,\gamma)$ in $(X_s/V_0)_\crys$, the restriction $\restr{\cE}{T}$ of $\cE$ to the Zariski site of $T$ is a coherent $\cO_T$-module and
            \item for each $\alpha \colon (U,T,\gamma) \to (U',T',\gamma')$, the induced map $\alpha^*\bigl(\restr{\cE}{T'}\bigr) \to \restr{\cE}{T}$ is an $\cO_T$-linear isomorphism.
        \end{enumerate}
        \item The category $\Isoc(X_s/V_0)$ of \emph{isocrystals} on $X_s$ has as objects the crystals  on $X_s$ and as morphisms
        \[ \Hom_{\Isoc(X_s/V_0)}(\cE,\cE') \colonequals \Hom_{\cO_{X_s/V_0}}(\cE,\cE') \otimes_{\ZZ_p} \QQ_p \]
        for any crystals $\cE$ and $\cE'$ on $X_s$.
    \end{enumerate}
\end{definition}
The absolute Frobenius on $X_s$ and the Witt vector Frobenius on $V_0$ are compatible and thus induce a continuous and cocontinuous morphism of sites
\[ F \colon (X_s/V_0)_\crys \to (X_s/V_0)_\crys; \]
to lighten notation, we will use $F$ for both the morphism of sites and the induced morphism of topoi and suppress any mention of $X_s$ or $V_0$ as this is unlikely to cause any confusion.
\begin{definition}\label{F-isoc-def}
    An \emph{$F$-isocrystal} on $X_s$ is a pair $(\cE,\varphi)$ consisting of an isocrystal $\cE$ on $X_s$ together with an isomorphism $\varphi \colon F^*\cE \to \cE$.
    A morphism between two $F$-isocrystals $(\cE,\varphi)$ and $(\cE',\varphi')$ is a morphism of isocrystals $\alpha \colon \cE \to \cE'$ such that the diagram
    \[ \begin{tikzcd}
        F^*\cE \arrow[r,"\varphi"] \arrow[d,"F^*\alpha"] & \cE \arrow[d,"\alpha"] \\
        F^*\cE' \arrow[r,"\varphi'"] & \cE'
    \end{tikzcd} \]
    commutes.
    We denote the resulting category of $F$-isocrystals on $X_s$ by $\mathrm{Isoc}^\varphi(X_s/V_0)$.
    It has a symmetric closed monoidal structure:
    the tensor product of $(\cE,\varphi),(\cE',\varphi') \in \mathrm{Isoc}^\varphi(X_s/V_0)$ is $(\cE \otimes \cE',\varphi \otimes \varphi')$ and their internal hom is $\iHom(\cE,\cE')$ with $F$-structure
    \[ F^*\iHom(\cE,\cE') \simeq \iHom(F^*\cE,F^*\cE') \xlongrightarrow{(\varphi_{\cE'}^\vee, \varphi^{-1}_\cE)} \iHom(\cE,\cE'). \]
\end{definition}
We now want to recall a well-known concrete description of $F$-isocrystals when $X_s$ admits a smooth formal deformation to $\Spf(V_0)$.
In our eventual application, $X_s$ will arise as the special fiber of a smooth formal scheme $X \to \Spf \cO_K$.
For motivation, we therefore first remind the reader that such an $X$ locally admits a smooth formal model over $\Spf V_0$.
\begin{definition}\label{framing}
    Let $X$ be a smooth affine formal scheme over $\Spf \cO_K$.
    A \emph{framing} of $X$ is an \'etale morphism $X \to \Spf \cO_K\langle x^{\pm 1}_1,\dotsc,x^{\pm 1}_d \rangle$ over $\Spf \cO_K$ for some $d \in \ZZ_{\ge 0}$.
\end{definition}
\begin{lemma}\label{unramified-model}
    Let $X$ be a smooth affine formal scheme over $\cO_K$ and $\square \colon X \to \Spf \cO_K\langle x^{\pm 1}_1,\dotsc,x^{\pm 1}_d \rangle$ be a framing.
    Then there exists a unique smooth affine formal scheme $\widetilde{X}$ over $V_0$ with a framing of $V_0$-formal schemes $\widetilde{\square} \colon \widetilde{X} \to \Spf V_0\langle x^{\pm 1}_1,\dotsc,x^{\pm 1}_d \rangle$ whose base change to $\Spf \cO_K$ is $\square$.
\end{lemma}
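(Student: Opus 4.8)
The plan is to realize $\widetilde{X}$ as a lift of the smooth $V_0$-algebra underlying $X$ along the nilpotent (hence formally \'etale-friendly) thickening $V_0 \to \cO_K$, using the structure of the framing to control uniqueness. Write $X = \Spf R$, where $R$ is a $p$-adically complete $\cO_K$-algebra which is \'etale over $\cO_K\langle x_1^{\pm 1},\dotsc,x_d^{\pm 1}\rangle$ via $\square$. Set $T \colonequals V_0\langle x_1^{\pm 1},\dotsc,x_d^{\pm 1}\rangle$, so that $T \otimes_{V_0} \cO_K \simeq \cO_K\langle x_1^{\pm 1},\dotsc,x_d^{\pm 1}\rangle$. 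First I would observe that since $\cO_K$ is a finite free $V_0$-module and $p$ generates an ideal of definition, the surjection $V_0 \to k$ factors through $\cO_K \to k$, and more relevantly $\cO_K$ is obtained from $V_0$ by a (possibly ramified) finite extension; the key point is that $\Spf V_0 \to \Spf V_0$ and $\Spf \cO_K \to \Spf V_0$ have the same reduction mod $p$ only up to the residue field, so one must instead argue via deformation theory over the base $\Spf V_0$ directly, lifting $R$ as a $T$-algebra.

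The core step is the following. The morphism $T \to R$ (through $\cO_K\langle x_i^{\pm 1}\rangle$) is a composite of the finite flat base change $T \to T\otimes_{V_0}\cO_K$ with an \'etale morphism. I would handle the \'etale part first: by topological invariance of the \'etale site (equivalently, by the infinitesimal lifting property for \'etale morphisms applied $p$-adically and then passing to the limit), there is a unique $p$-adically complete $T$-algebra $\widetilde{R}$, \'etale over $T$, together with an isomorphism $\widetilde{R}\otimes_{T} (T\otimes_{V_0}\cO_K) \simeq R$ of $\cO_K\langle x_i^{\pm1}\rangle$-algebras — provided one first checks that $R$ is \'etale not just over $\cO_K\langle x_i^{\pm 1}\rangle$ but descends as an \'etale $T\otimes_{V_0}\cO_K$-algebra, which is automatic. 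Concretely: the reduction $R/p$ is \'etale over $(T/p)\otimes_{k} \cO_K/p$, but since $\cO_K/p$ is a nilpotent thickening of $k$ and $T/p$ is a $k$-algebra, $R/p$ already comes by base change from a unique \'etale $T/p$-algebra $\overline{\widetilde{R}}$; then one lifts $\overline{\widetilde{R}}$ uniquely across each $T/p^n \to T/p^{n+1}$ by \'etale lifting, and takes the limit to get $\widetilde{R}$. Setting $\widetilde{X} \colonequals \Spf \widetilde{R}$ with $\widetilde{\square}$ the structure map to $\Spf T$ gives existence, and the base change to $\Spf \cO_K$ recovers $\square$ by construction.

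For uniqueness, suppose $\widetilde{X}'=\Spf\widetilde{R}'$ is another such smooth affine $V_0$-formal scheme with framing $\widetilde{\square}'$ whose base change along $V_0 \to \cO_K$ is $\square$. Then $\widetilde{R}'$ is \'etale over $T$ and $\widetilde{R}'\otimes_{V_0}\cO_K \simeq R$ compatibly with the framings, so both $\widetilde{R}$ and $\widetilde{R}'$ reduce mod $p$ to \'etale $T/p$-algebras whose base change to $(T/p)\otimes_k \cO_K/p$ is $R/p$; by uniqueness of \'etale lifts along the nilpotent thickening $k \to \cO_K/p$ (more precisely along $T/p \to (T/p)\otimes_k\cO_K/p$), these agree, and then unique \'etale lifting up the tower $\{T/p^n\}_n$ and passage to the limit forces $\widetilde{R} \simeq \widetilde{R}'$ as $T$-algebras, uniquely. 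The main obstacle is purely bookkeeping: one must be careful that $\cO_K/p$ really is a nilpotent (indeed square-zero after filtering) thickening of $k$, so that topological invariance of the small \'etale site applies, and that the \'etale lifts are unique as algebras over the \emph{framing ring} $T$, not merely abstractly — but this is exactly what the framing hypothesis is designed to supply, and no genuinely hard input beyond standard \'etale deformation theory (e.g. \cite[\S\,III.5]{Ber74}-style arguments, or the deformation theory used throughout \cite{BS19}) is needed.
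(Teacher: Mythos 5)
Your proposal is correct and follows essentially the same approach as the paper: both construct $\widetilde{R}$ by passing to the reductions mod $p^n$ and invoking topological invariance of the \'etale site to move between \'etale algebras over $V_0/p^n[x_i^{\pm 1}]$ and $\cO_K/p^n[x_i^{\pm 1}]$ via their common reduced quotient $k[x_i^{\pm 1}]$, then take the limit. The only cosmetic difference is that the paper directly produces $\widetilde{R}_n$ at each level $n$ as the unique \'etale $V_0/p^n[x_i^{\pm 1}]$-algebra base-changing to $R/p^n$, while you first descend $R/p$ to $k[x_i^{\pm 1}]$ and then climb the $V_0$-tower, which requires an extra (routine) verification that the base change back to $\cO_K$ recovers $R$.
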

\begin{proof}
    Let $R \colonequals \Gamma(X,\cO_X)$.
    For any $n \in \NN$, the framing $\square$ induces an \'etale morphism 
    \[
    \square_n \colon \cO_K/p^n [x^{\pm 1}_1,\dotsc,x^{\pm 1}_d] \to R/p^n.
    \]
    By the topological invariance of the \'etale site (see e.g.\ \cite[\href{https://stacks.math.columbia.edu/tag/039R}{Tag~039R}]{SP}), there is a unique finite \'etale morphism $\widetilde{\square}_n \colon V_0/p^n [x^{\pm 1}_1,\dotsc,x^{\pm 1}_d] \to \widetilde{R}_n$ whose base change to $\cO_K/p^n$ is $\square_n$.
    The uniqueness shows that the reduction of $\widetilde{\square}_m$ mod $p^n$ is $\widetilde{\square}_n$ for all $m \ge n$.
    Taking the limit of the resulting inverse system over $n$, we obtain the desired affine formal scheme $\widetilde{X}$ and framing $\widetilde{\square}$.
\end{proof}
\begin{lemma}[Kedlaya]\label{local-framing}
    A smooth formal scheme $X \to \Spf \cO_K$ has a basis of affine open formal subschemes that admit a framing.
\end{lemma}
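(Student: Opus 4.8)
The plan is to reduce to the standard local structure of smooth morphisms and then promote an étale map to an affine space into an étale map to a torus. Fix a point $x \in X$ and an affine open $\Spf R \subseteq X$ containing $x$; it suffices to produce a basic affine open of $\Spf R$ that contains $x$ and admits a framing, since these form a basis of the topology. I would first pass to the reduction $X_1 \colonequals X \times_{\Spf \cO_K} \Spec(\cO_K/p)$, a scheme that is locally of finite type and smooth over $\Spec(\cO_K/p)$, of some relative dimension $d$ near $x$ (this is locally constant, hence constant after shrinking $\Spf R$). By the local structure of smooth morphisms \cite[\href{https://stacks.math.columbia.edu/tag/054L}{Tag~054L}]{SP}, after replacing $R$ by a basic localization still containing $x$ there is an étale morphism of $\cO_K/p$-schemes $(\bar t_1, \dotsc, \bar t_d) \colon \Spec(R/p) \to \mathbb{A}^d_{\cO_K/p}$. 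Choosing arbitrary lifts $t_i \in R$ of the $\bar t_i$ and sending $x_i \mapsto t_i$ yields a morphism of $p$-adic formal schemes $h \colon \Spf R \to \Spf \cO_K\langle x_1, \dotsc, x_d \rangle$, which I claim is étale. It suffices to check that each reduction $h_n \colon \cO_K/p^n[x_1, \dotsc, x_d] \to R/p^n$ is étale: $h_n$ is of finite presentation; it is unramified since its finitely generated module of differentials vanishes modulo $p$ and $p$ is nilpotent in $R/p^n$; and it is flat by the fiberwise flatness criterion over the Artinian ring $\cO_K/p^n$ — both $R/p^n$ and $\cO_K/p^n[x_i]$ are flat over $\cO_K/p^n$, as $X$ is $\cO_K$-flat, and the fiber over the residue field $k$ is the étale morphism $(\bar t_i)$.

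This yields a ``framing valued in $\mathbb{A}^d$'' near $x$, which I would push into the torus by translating coordinates. For $c = (c_1, \dotsc, c_d) \in \cO_K^d$, the morphism $h_c \colon x_i \mapsto t_i + c_i$ is the composite of $h$ with a translation automorphism of $\Spf \cO_K\langle x_i \rangle$, hence is again étale; restricting $h_c$ to the preimage of the open torus gives an étale morphism $\Spf B \to \Spf \cO_K\langle x_1^{\pm 1}, \dotsc, x_d^{\pm 1} \rangle$, where $\Spf B \colonequals \Spf R\langle 1/\prod_i (t_i + c_i) \rangle$ is a basic open of $\Spf R$; this is a framing of $\Spf B$. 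It remains to choose $c$ with $x \in \Spf B$, i.e.\ with $\bar t_i(x) + \bar c_i \ne 0$ in the residue field $\kappa(x)$ for every $i$, where $\bar c_i$ is the image of $c_i$ in $k \subseteq \kappa(x)$. Since $\lvert k \rvert \ge 2$, this is always possible: for each $i$, take $\bar c_i \in k$ different from $-\bar t_i(x)$ when $-\bar t_i(x)$ lies in $k$, and arbitrary otherwise, and lift $\bar c_i$ to $c_i \in \cO_K$. Then $\Spf B$ is the sought basic affine open containing $x$ and admitting a framing.

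The lemma is essentially an assembly of standard facts, so there is no single serious difficulty. The two points that warrant care are the verification that the lifted coordinate map $h$ is étale — a routine application of the fiberwise flatness criterion over $\cO_K/p^n$, or one may instead invoke that étaleness of morphisms of $p$-adic formal schemes is detected modulo $p$ — and the passage from $\mathbb{A}^d$ to the torus, where one must keep the chosen point $x$ inside the preimage of the torus; this is exactly what the scalar translation and the inequality $\lvert k \rvert \ge 2$ achieve.
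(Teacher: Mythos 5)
Your proof is correct and is essentially the same standard argument the paper cites (the proof of \cite[Lem.~4.9]{Bha18}, itself following Kedlaya): shrink to get an \'etale map to $\AA^d$ mod $p$ via the local structure of smooth morphisms, lift coordinates, check \'etaleness of the lift on thickenings using Nakayama and the fiberwise flatness criterion, and then translate each coordinate by a constant in $\cO_K$ (possible since $\lvert k\rvert \ge 2$) so that the chosen point lands in the preimage of the torus. The paper offers no independent proof beyond the citation, so there is nothing substantively different to compare.
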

\begin{proof}
    Cf.\ e.g.\ the proof of \cite[Lem.~4.9]{Bha18}.
\end{proof}
We return to the local description of $F$-isocrystals.
Let $\widetilde{X} = \Spf R$ be an affine smooth $p$-adic formal scheme over $\Spf V_0$ with special fiber $\widetilde{X}_s \colonequals \widetilde{X} \otimes_{V_0} k$ and rigid generic fiber $\widetilde{X}_\eta \colonequals \widetilde{X} \times_{\Spf V_0} \Spa(K_0,V_0)$.
The absolute Frobenius $F$ on the smooth affine $\widetilde{X}_s$ is flat and hence can be lifted (nonuniquely) to a Frobenius morphism on $\widetilde{X}$ because the obstruction space for deformations of $F$ is given by $\Ext^1_{\cO_{\widetilde{X}_s}}(LF^*\LL_{\widetilde{X}_s/k},\cO_{\widetilde{X}_s}) \simeq \Ext^1(F^*\Omega^1_{\widetilde{X}_s/k},\cO_{\widetilde{X}_s}) \simeq 0$.
\begin{construction}\label{crystal-D-mod}
Let $\cE$ be a crystal on $\widetilde{X}_s$.
Then $\cE(\widetilde{X}) \colonequals \lim_r \cE(\widetilde{X}_s,\widetilde{X} \otimes_{V_0} V_0/p^r,\gamma)$, where $\gamma$ is the canonical pd-structure on $pR/p^rR$, is a finitely presented $R$-module equipped with a canonical connection.
This induces an equivalence between the category of crystals on $\widetilde{X}_s$ and the category of finitely presented $R$-modules $M$ together with a connection $\nabla \colon M \to M \otimes_{R} \Omega^1_R$ that is flat and topologically quasi-nilpotent.
Here, the topological quasi-nilpotence of a connection is defined as follows:
locally on $\widetilde{X}$ take a framing $\widetilde{X} \to \Spf(V_0 \langle x^{\pm 1}_1,\dotsc,x^{\pm 1}_d \rangle)$, which induces a basis $dx_i$ of $\Omega^1_R$.
Then for $1 \le i \le d$ and $m \in M$, we have $\theta^n_i(m) \in pM$ for $n \gg 0$, where $\nabla = \sum^d_{i = 1} \theta_i \otimes dx_i$ for some $\theta_i \colon M \to M$.
See for example \cite[\href{https://stacks.math.columbia.edu/tag/07J7}{Tag~07J7}]{SP} for details.
\end{construction}
If $\cE$ is an isocrystal on $\widetilde{X}_s$, \cref{crystal-D-mod} still gives a canonically defined finitely presented projective module over the affinoid algebra $R[1/p]$, or in other words, an $\cO_{\widetilde{X}_\eta}$-vector bundle $E$ on the rigid generic fiber $\widetilde{X}_\eta$, which is still equipped with a flat connection $\nabla \colon E \to E \otimes_{\cO_{\widetilde{X}_\eta}} \Omega^1_{\widetilde{X}_\eta/K_0}$.
Since $E$ is module-finite over $R[1/p]$, it also carries a complete norm $\norm{\cdot}$, which is unique up to equivalence \cite[Prop.~3.7.3.3]{BGR84}.
When there exists a framing $\widetilde{X} \to \Spf(V_0 \langle x^{\pm 1}_1,\dotsc,x^{\pm 1}_d \rangle)$ as before, the topological quasi-nilpotence of $\nabla$ implies that $\lim_{\abs{\alpha} \to \infty} \norm{\theta^\alpha(e)} = 0$ for all $e \in \Gamma(\widetilde{X}_\eta,E)$.\footnote{\label{multiindex}
For any multiindex $\alpha \in \ZZ^d_{\ge 0}$, we use the customary notation $\theta^\alpha \colonequals \theta^{\alpha_1}_1 \dotsm \theta^{\alpha_d}_d$, $\abs{\alpha} \colonequals \alpha_1 + \dotsb + \alpha_d$, and $\alpha! \colonequals \alpha_1! \dotsm \alpha_d!$.}
Since $\lim_{n \to \infty} \abs{p^n/n!}$ is bounded,\footnote{Here we use that the ramification index of $V_0$ is $1 \le p-1$; when it is $\ge p$, the analogous limit $\lim_{n \to \infty} \abs{\pi^n/n!} = \infty$ for a uniformizer $\pi \in V_0$.} this implies that $\lim_{\abs{\alpha} \to \infty} \norm{\frac{1}{\alpha!}\theta^\alpha(e)}\cdot\abs{p}^{\abs{\alpha}} = 0$.

Lastly, assume that $(\cE,\varphi)$ is an $F$-isocrystal.
In that case, one can check that the Frobenius structure forces a stronger convergence.
This motivates the following notion.
\begin{definition}[{\cite[Cor.~2.2.14]{Ber96}}]
    Let $\widetilde{X} = \Spf R$ be an affine smooth $p$-adic formal scheme over $\Spf V_0$ and $E$ be a coherent $\cO_{\widetilde{X}}$-module, corresponding to a finitely presented $R[1/p]$-module with a complete norm $\norm{\cdot}$.
    A flat connection $\nabla \colon E \to E \otimes_{\cO_{\widetilde{X}_\eta}} \Omega^1_{\widetilde{X}_\eta/K_0}$ is called \emph{convergent} if we have for any $e \in \Gamma(\widetilde{X}_\eta,E)$ and any $0 \le \eta < 1$
    \[ \lim_{\abs{\alpha} \to \infty} \norm{\frac{1}{\alpha!}\theta^\alpha(e)} \cdot \eta^{\abs{\alpha}} = 0 \]
    in local coordinates on $\widetilde{X}_\eta$.
\end{definition}
\begin{remark}
    Using the usual correspondence between connections and isomorphisms of pullbacks to a first-order thickening of the diagonal in $E \times E$, one can rephrase this definition in a way that shows independence of the choice of local coordinates;
    see \cite[Def.~2.2.5]{Ber96}.
\end{remark}
It turns out that one can associate a vector bundle with flat convergent connection to an $F$-isocrystal.
\begin{theorem}[{\cite[Thm.~2.4.2]{Ber96}}]\label{Berthelot-equivalence}
    Let $\widetilde{X} = \Spf R$ be an affine smooth $p$-adic formal scheme over $\Spf V_0$ with special fiber $\widetilde{X}_s \colonequals \widetilde{X} \otimes_{V_0} k$ and rigid generic fiber $\widetilde{X}_\eta \colonequals \widetilde{X} \times_{\Spf V_0} \Spa(K_0,V_0)$.
    Fix a lift of absolute Frobenius to $\widetilde{X}$, which we denote again by $F$ abusing notation.
    Then the functor which associates with an $F$-isocrystal $(\cE,\varphi)$ on $\widetilde{X}_s$ the vector bundle with connection on $\widetilde{X}_\eta$ corresponding to $\cE(\widetilde{X})[1/p] \colonequals \lim_r \cE(\widetilde{X}_s,\widetilde{X} \otimes_{V_0} V_0/p^r,\gamma)[1/p]$ induces an equivalence between
    \begin{itemize}
        \item the category $\Isoc^\varphi(\widetilde{X}_s/V_0)$ of $F$-isocrystals on $\widetilde{X}_s$ and
        \item the category of $\cO_{\widetilde{X}_\eta}$-vector bundles $E$ on $\widetilde{X}_\eta$ together with a flat convergent connection $\nabla \colon E \to E \otimes_{\cO_{\widetilde{X}_\eta}} \Omega^1_{\widetilde{X}_\eta}$ and an isomorphism $\varphi \colon F^*E \to E$ which is compatible with connections.
    \end{itemize}
\end{theorem}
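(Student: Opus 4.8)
The statement is Berthelot's \cite[Thm.~2.4.2]{Ber96}, and the plan is to recall the shape of the argument and isolate where the real work lies. The functor itself is essentially forced by \cref{crystal-D-mod}: an $F$-isocrystal $(\cE,\varphi)$ on $\widetilde X_s$ produces the vector bundle $E \colonequals \cE(\widetilde X)[1/p]$ on $\widetilde X_\eta$ with its canonical flat connection $\nabla$, and since $\varphi$ is a morphism of crystals up to isogeny and pullback along a Frobenius lift is compatible with the dictionary of \cref{crystal-D-mod}, it produces an isomorphism $\varphi \colon F^*E \xlongrightarrow{\sim} E$ compatible with $\nabla$. Two things then have to be checked: first, that the resulting $\nabla$ is \emph{convergent} (not merely topologically quasi-nilpotent), so that the functor lands in the stated target category; and second, that it is an equivalence onto the full subcategory of triples $(E,\nabla,\varphi)$ with $\nabla$ convergent. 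Full faithfulness I would settle first, as it is formal: a morphism of the target data is a morphism of finite projective $R[1/p]$-modules commuting with $\nabla$ and $\varphi$; clearing the (bounded) $p$-power denominators turns it into a morphism of the finitely presented $R$-modules $\cE(\widetilde X)$ commuting with the canonical connections, hence a morphism of crystals by \cref{crystal-D-mod}, and thus -- after inverting $p$ again on $\Hom$ -- a morphism of $F$-isocrystals; the converse is clear.

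The crux is the convergence of $\nabla$, and this is precisely where the Frobenius structure is used, via a Dwork-type descent. Working locally with a framing $\widetilde X \to \Spf V_0\langle x_1^{\pm 1},\dotsc,x_d^{\pm 1}\rangle$ as furnished by \cref{local-framing}, the fixed Frobenius lift satisfies $F(x_i) \equiv x_i^p \pmod p$ in these coordinates, so by the chain rule the Jacobian matrix of the $n$-fold iterate $F^n$ has all of its entries in $p^nR$. Writing $\nabla = \sum_i \theta_i \otimes dx_i$ and $M \colonequals \cE(\widetilde X)$ (on which each $\theta_i$ is a genuine $R$-endomorphism), the compatibility of $\varphi^n \colon F^{n*}E \xlongrightarrow{\sim} E$ with connections then shows that $\theta_i$, conjugated by the $R$-linear automorphism $\Phi_n \colon e \mapsto \varphi^n(e \otimes 1)$ of $E$, equals $p^n$ times an $R$-endomorphism of $M$; iterating over a multi-index, the conjugations telescope, so $\theta^\alpha = p^{n\abs{\alpha}}\, \Phi_n \circ D \circ \Phi_n^{-1}$ with $D$ an $R$-endomorphism of $M$. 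Since $\Phi_n^{\pm 1}$ distort the lattice $M$ by only a power of $p$ that is bounded in $\alpha$ (for fixed $n$), this gives $\norm{\theta^\alpha(e)} \le C_n\, \abs{p}^{n\abs{\alpha}}$ for $e \in M$; combining with $\abs{1/\alpha!} \le \abs{p}^{-\abs{\alpha}/(p-1)}$ (which uses that the ramification index of $V_0$ is $1$) and letting $n$ grow, one gets $\lim_{\abs{\alpha}\to\infty} \norm{\tfrac{1}{\alpha!}\theta^\alpha(e)}\,\eta^{\abs{\alpha}} = 0$ for \emph{every} $0 \le \eta < 1$, i.e.\ $\nabla$ is convergent. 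This estimate is the main obstacle -- everything around it is bookkeeping.

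For essential surjectivity I would, conversely, recover a crystal from a triple $(E,\nabla,\varphi)$ with $\nabla$ flat and convergent. Choosing any coherent $R$-lattice $M_0 \subseteq E$, the convergence of $\nabla$ bounds the $p$-power denominators among the $\theta^\alpha(M_0)$ (indeed it forces $\norm{\theta^\alpha(e)} \to 0$ once $\eta$ is chosen between $\abs{p}^{1/(p-1)}$ and $1$), so $M \colonequals \sum_\alpha \theta^\alpha(M_0)$ is again a finitely generated $R$-submodule of $E$ with $M[1/p] = E$, stable under $\nabla$, and carrying a flat, topologically quasi-nilpotent connection. Then \cref{crystal-D-mod} returns a crystal $\cE$ on $\widetilde X_s$ with $\cE(\widetilde X)[1/p] \simeq E$ as modules with connection, and the given $\varphi$ upgrades it to an $F$-isocrystal whose image under the functor is $(E,\nabla,\varphi)$; that this is inverse to the original functor is a routine comparison at the level of modules with connection and Frobenius. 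Finally, one notes that the target category (and hence the equivalence) is independent of the chosen Frobenius lift, e.g.\ through the coordinate-free description of convergent connections in \cite[Def.~2.2.5]{Ber96} -- which is what makes the theorem well-posed despite the non-uniqueness of $F$.
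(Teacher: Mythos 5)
The paper does not prove this statement: it is quoted directly from Berthelot \cite[Thm.~2.4.2]{Ber96}, so there is no in-paper proof to compare against. I will therefore assess the argument on its own terms.

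Your overall skeleton is sound -- the functor is forced by \cref{crystal-D-mod}, full faithfulness is a denominator-clearing exercise, and essential surjectivity follows once convergence is in hand by taking a lattice $M_0$, saturating it under the $\theta_i$'s (finite generation being guaranteed by $\eta$-convergence for $\eta > \abs{p}^{1/(p-1)}$), and applying \cref{crystal-D-mod} again. But the central convergence step is wrong as written, and the error is not cosmetic.

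First, $\Phi_n\colon e \mapsto \varphi^n(1 \otimes e)$ is $F^n$-semilinear, not $R$-linear, and it is not an automorphism of $E$ (its $R$-span is $E$, but the map itself is generally neither $R$-linear nor surjective). So ``$\theta_i$ conjugated by $\Phi_n$'' is not well-defined. The charitable reading is to conjugate by the genuinely $R$-linear isomorphism $\varphi^n\colon F^{n*}E \to E$: then the conjugate of $\theta_k$ is the pullback connection $\theta^{F^{n*}}_k = \partial_k + \tilde B_k$ on $F^{n*}E$ with matrix $\tilde B_k = \sum_j \tfrac{\partial F^n(x_j)}{\partial x_k}\,F^n(B_j)$, which does lie in $p^n M_r(R)$ by the chain rule. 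However, the claim that the matrix of $(\theta^{F^{n*}})^\alpha$ is divisible by $p^{n\abs{\alpha}}$ -- or equivalently, that the ``conjugations telescope'' -- is false. Iterating the Leibniz rule on constant sections $1\otimes e_i$ gives, already at order two,
\[ \theta^{F^{n*}}_{k'}\theta^{F^{n*}}_k(1\otimes e_i) = (\partial_{k'}\tilde B_k)(1\otimes e_i) + \tilde B_{k'}\tilde B_k(1\otimes e_i), \]
and the first summand is only divisible by $p^n$, not $p^{2n}$: differentiating the entries of $\tilde B_k$ does not produce further $p$'s. In general the matrix of $\theta^{F^{n*},\alpha}$ on the constant sections lies in $p^n M_r(R)$ but not in $p^{n\abs{\alpha}}M_r(R)$. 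So the estimate $\norm{\theta^\alpha(e)} \le C_n\abs{p}^{n\abs{\alpha}}$ is unjustified (and false), and the ``let $n$ grow'' conclusion collapses.

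The correct version of the Dwork--Frobenius trick, which is what Berthelot and Ogus actually do, is at the level of the stratification (equivalently the Taylor series), not at the level of the operators $\theta_i$. The connection's Taylor isomorphism $\epsilon$ is a power series in $\xi - x$ with coefficients $\tfrac{1}{\alpha!}\theta^\alpha$; horizontality of $\varphi$ identifies the Taylor series of $F^*E$ with the pullback of the Taylor series of $E$ along $F\times F$, which replaces $\xi - x$ by $F(\xi) - F(x)$. Since in the coordinates $F(\xi) - F(x) = (\xi-x)\bigl(\xi^{p-1}+\dotsb+x^{p-1}\bigr) + p\bigl(\tilde g(\xi)-\tilde g(x)\bigr)$, one has $\abs{F(\xi)-F(x)} \le \max(\abs{\xi-x}^p,\,\abs{p}\abs{\xi-x})$, so convergence of $\epsilon$ on the polydisk of radius $\rho$ (with $\abs p \le \rho < 1$) yields convergence of $(F\times F)^*\epsilon$ -- hence, via $\varphi$, of $\epsilon$ itself -- on the polydisk of radius $\rho^{1/p}$. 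Starting from $\rho_0 = \abs{p}$ (which is what topological quasi-nilpotence together with the unramifiedness of $V_0$ gives, as the paper notes) and iterating gives radii $\abs{p}^{1/p^n} \to 1$, i.e.\ full convergence. You should replace the ``conjugate-and-telescope'' argument by this radius-of-convergence boost; the rest of the proposal then goes through as you wrote it.
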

\begin{remark}\label{F-isocrystal-and-conv}
    In \cite{Ber96}, Berthelot uses a more general notion of \emph{convergent isocrystals} without any Frobenius structure.
    From a different perspective, Ogus introduces in \cite{Ogu84,Ogu90} the convergent site $(\widetilde{X}/V_0)_\conv$ of a $p$-adic formal scheme $\widetilde{X}$ over $\Spf V_0$, which is reminiscent of the crystalline site but allows more general thickenings.
    Crystals on the convergent site $(\widetilde{X}/V_0)_\conv$ can be identified with convergent isocrystals on $\widetilde{X}_\eta$ in the sense of Berthelot.
    Moreover, any $F$-isocrystal on $\widetilde{X}_s$ gives rise to a convergent isocrystal \cite[Ex.~2.7.3]{Ogu84}:
    the Frobenius structure allows us to extend an $F$-isocrystal on $\widetilde{X}_s$ to a convergent crystal on the convergent site $(\widetilde{X}_{p=0}/V_0)_\conv$.
    In Ogus's language, the additional convergence and the extension provided by the Frobenius structure are obtained via an application of Dwork's Frobenius trick;
    see \cite[Prop.~2.18]{Ogu84}.
    Therefore, the inclusions of categories $(X_s/V_0)_\crys \subset (X_{p=0}/V_0)_\crys \subset (X_{p=0}/V_0)_\conv$ induce natural equivalences of associated categories of $F$-isocrystals
	\[
		\Isoc^\varphi\bigl((X_s/V_0)_\crys\bigr) \simeq \Isoc^\varphi\bigl((X_{p=0}/V_0)_\crys\bigr) \simeq \Isoc^\varphi\bigl((X_{p=0}/V_0)_\conv\bigr),
    \]
    where the inverses are given by restrictions onto subcategories.
    The same applies to the category of $F$-isocrystals in perfect complexes.
\end{remark}
\begin{remark}\label{F-isoc-rat}
    As in \cite[Lem.~3.5, Cor.~3.7]{TT19}, one can give yet another equivalent definition of $F$-isocrystals locally for framed $\widetilde{X}$ with Frobenius lift, as vector bundles $E$ over the rigid generic fiber $\widetilde{X}_\eta$ together with a flat convergent connection $\nabla$ and a Frobenius morphism $F^* E \to E$.
\end{remark}
In general, $X$ does not have a global smooth formal model over $\Spf V_0$.
Nevertheless, we can still define an underlying vector bundle with connection over the whole $X_\eta$.
\begin{proposition}[{\cite[Rem.~2.8.1, Thm.~2.15, Prop.~2.21]{Ogu84}}]\label{underlying-vb}
		Let $X$ be a smooth formal scheme over $\Spf \cO_K$ with rigid generic fiber $X_\eta$ and let $\cE$ be a convergent isocrystal on $(X_{p=0}/V_0)_\conv$.
		\begin{enumerate}[label=\upshape{(\roman*)},leftmargin=*]
			\item There is a vector bundle with flat connection $(E=\cE(X,X_{p=0}),\nabla)$ over $X_\eta$.
			\item Assume $\cE$ underlies an $F$-isocrystal over $(X_s/V_0)_\crys$ (cf.\ \cref{F-isocrystal-and-conv}).
			For any $V_0$-model $\widetilde{U}$ of an affine open subset $U\subset X$, the restriction $\restr{(E,\nabla)}{U_\eta}$ is isomorphic to $(\cE(\widetilde{U}), \nabla_{\widetilde{U}})\otimes_{V_0} K$, where $(\cE(\widetilde{U}), \nabla_{\widetilde{U}})$ is the associated vector bundle with connection from \cref{Berthelot-equivalence}.
		\end{enumerate}
\end{proposition}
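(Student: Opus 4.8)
The plan is to realize $(E,\nabla)$ as the evaluation of the convergent isocrystal $\cE$ at the tautological enlargement attached to $X$, and then to pin this evaluation down locally by comparing it with Berthelot's description.

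For part (i), regard $X$ as a $p$-adic formal scheme over $\Spf V_0$ via $\Spf\cO_K \to \Spf V_0$. Since $\cO_K$ is finite free over $V_0 = W(k)$, the formal scheme $X$ is flat over $V_0$; its reduction modulo $p$ is $X_{p=0}$, whose underlying reduced scheme is the special fiber $X_s$ (as $X$ is regular, hence reduced). Thus the pair $(X, X_s \hookrightarrow X_{p=0})$ is an enlargement of $X_{p=0}/V_0$ in the sense of Ogus, and we set $E \colonequals \cE(X, X_{p=0})$, a coherent sheaf on the rigid generic fiber $X_\eta$. Evaluating $\cE$ at a $p$-adic tube of the diagonal of $X \times_{V_0} X$ and restricting to first order --- exactly as in the crystalline dictionary of \cref{crystal-D-mod} --- equips $E$ with an integrable connection $\nabla$, the cocycle condition on triple self-products giving flatness. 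That $E$ is locally free of finite rank is part of Ogus's structural analysis of convergent isocrystals \cite{Ogu84}; alternatively, since local freeness may be checked Zariski-locally, in the case where $\cE$ underlies an $F$-isocrystal (the only one we need) it also drops out of part (ii) applied to the framed affine opens furnished by \cref{local-framing}.

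For part (ii), fix an affine open $U \subseteq X$ admitting a framing, together with its $V_0$-model $\widetilde U$ from \cref{unramified-model}, so $\widetilde U \otimes_{V_0}\cO_K \cong U$ compatibly with framings. The key observation is that $U$ and $\widetilde U$ have the \emph{same} special fiber: $U_s = U \otimes_{\cO_K} k = \widetilde U \otimes_{V_0} k = \widetilde U_s$, because $\cO_K$ and $V_0$ share the residue field $k$. Hence the projection $U = \widetilde U \otimes_{V_0}\cO_K \to \widetilde U$ induces the identity on (reduced) special fibers and is therefore a morphism of enlargements over $X_{p=0}/V_0$, from the open subobject $(U, U_s \hookrightarrow X_{p=0})$ of $(X, X_{p=0})$ to $(\widetilde U, \widetilde U_s \hookrightarrow X_{p=0})$. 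The crystal property of $\cE$ then identifies $\restr{E}{U_\eta} = \cE(U, U_{p=0})$ with the pullback of $\cE(\widetilde U, \widetilde U_s)$ along $U_\eta = \widetilde U_\eta \otimes_{K_0} K \to \widetilde U_\eta$, that is, with $\cE(\widetilde U, \widetilde U_s) \otimes_{K_0} K$, compatibly with connections. Finally, by \cref{F-isocrystal-and-conv} the $F$-isocrystal $\cE$, viewed as a convergent isocrystal, has value at the $V_0$-model $\widetilde U$ computed by Berthelot's equivalence \cref{Berthelot-equivalence}, namely $\bigl(\cE(\widetilde U)[1/p], \nabla_{\widetilde U}\bigr)$; combining these isomorphisms yields $\restr{(E,\nabla)}{U_\eta} \simeq \bigl(\cE(\widetilde U), \nabla_{\widetilde U}\bigr) \otimes_{V_0} K$, which also re-proves local freeness of $\restr{E}{U_\eta}$.

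The main obstacle is the bookkeeping forced by the two base rings: the convergent site is taken relative to $V_0 = W(k)$, whereas our formal schemes live over the possibly ramified ring $\cO_K$. One has to check carefully that $X$, $U$, and $\widetilde U \otimes_{V_0}\cO_K$ really are flat over $V_0$ and hence define enlargements of $X_{p=0}/V_0$; that their structure maps are the natural locally closed immersions into $X_{p=0}$; and that the evident maps between these enlargements are morphisms in the convergent site --- which, as noted, hinges on the coincidence $U_s = \widetilde U_s$. One also has to verify that the convergence condition on $\nabla$ from \cref{Berthelot-equivalence} survives the base change $K_0 \to K$. Once Ogus's convergent formalism is in place each of these is routine, and the substantive inputs are the already-cited \cref{Berthelot-equivalence} and \cref{F-isocrystal-and-conv}.
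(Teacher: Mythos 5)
Your proof is correct and matches the intended reading: the paper supplies no proof of its own but simply cites \cite[Rem.~2.8.1, Thm.~2.15, Prop.~2.21]{Ogu84}, and your argument is precisely the unwinding of those results — realizing $E$ as the evaluation of $\cE$ at the tautological enlargement $(X, X_s\hookrightarrow X_{p=0})$ of $X_{p=0}/V_0$, and for (ii) using that the projection $U=\widetilde U\otimes_{V_0}\cO_K\to\widetilde U$ is a morphism of enlargements because both induce the same reduced closed immersion $U_s=\widetilde U_s\hookrightarrow X_{p=0}$, so the crystal property gives the desired base-change identification. The only cosmetic remarks are that the parenthetical ``as $X$ is regular, hence reduced'' is not actually what makes $X_s=(X_{p=0})_\red$ the reduced special fiber (that is just the definition), and the argument in (ii) works verbatim for an arbitrary affine $V_0$-model $\widetilde U$, not only the framed ones supplied by \cref{unramified-model}.
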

We call $(E,\nabla)$ the \emph{underlying vector bundle} (with connection) of the convergent isocrystal $\cE$.
Note that by \cref{F-isocrystal-and-conv}, the above in particular includes the case when $\cE$ underlies an $F$-isocrystal $(\cE,\varphi)$ over $X_s$.
\begin{definition}\label{filtered-F-isocrystal}
    Let $X$ be a smooth formal scheme over $\Spf \cO_K$ with special fiber $X_s \colonequals X_\red$ and rigid generic fiber $X_\eta \colonequals X \times_{\Spf \cO_K} \Spa(K,\cO_K)$.
    A \emph{filtered $F$-isocrystal} on $X$ is a triple $\bigl(\cE,\varphi,\Fil^\bullet(E)\bigr)$ such that $(\cE,\varphi)$ is an $F$-isocrystal on $X_s$ and $\Fil^\bullet(E)$ is a separated, exhaustive, locally split, decreasing filtration of the underlying vector bundle $(E,\nabla)$ of $(\cE,\varphi)$ which satisfies the Griffiths transversality condition
    \[ \nabla\bigl(\Fil^r(E)\bigr) \subseteq \Fil^{r-1}(E) \otimes_{\cO_{X_\eta}} \Omega^1_{X_\eta}. \]
    Filtered $F$-isocrystals form a symmetric closed monoidal category $\fIsoc^\varphi(X_s/V_0)$:
    given filtered $F$-isocrystals $\bigl(\cE,\varphi,\Fil^\bullet(E)\bigr),\bigl(\cE',\varphi',\Fil^\bullet(E')\bigr)$, the underlying vector bundle $E \otimes E'$ of $\cE \otimes \cE'$ is again endowed with the product filtration
    \[ \Fil^r(E \otimes E') = \sum_{i \in \ZZ} \Fil^{-i} E \otimes \Fil^{i+r} E' \]
    and the underlying vector bundle $\iHom(E,E')$ of the internal hom $\iHom(\cE,\cE')$ with the filtration
    \[ \Fil^r\bigl(\iHom(E,E')\bigr)(U_\eta) \colonequals \lbrace s \in \iHom(E,E')(U_\eta) \suchthat s\bigl(\Fil^i(E)(U_\eta)\bigr) \subseteq \Fil^{i+r}(E')(U_\eta) \rbrace. \]
\end{definition}

\subsection{Crystalline period sheaves}\label{sec2.3}
Finally, we treat the sheaf versions of Fontaine's crystalline period ring.
\begin{definition}[{\cite[\S~2A]{TT19}}]\label{def Bcrys}
Let $U \in \Perfd/X_{\eta, \proet}$ with $\widehat{U} = \Spa(R,R^+)$ for a perfectoid $C$-algebra $R$.
Let ${\theta} \colon  A_{\inf}(R^+) \to R^+$ be the  de Rham specialization map.
The filtered sheaf $\AA_{\crys}$ resp.\ $\BB^+_{\crys}$ resp.\ $\BB_{\crys}$ in $\Shv(X_{\eta, \proet})$ is given on $U$ as follows:
\begin{itemize}
    \item $\AA_{\crys}(R,R^+)$ is the $p$-completion of the pd-envelope of $\ker(\theta)$ in $\AA_{\inf}(R,R^+)$, with the $r$-th filtration given by the $p$-completions of the divided power ideals generated by $x^{[i]}$ for $i \geq r$ and $x\in \ker(\theta)$ resp.\
    \item $\BB^+_{\crys}(R,R^+) \colonequals \AA_{\crys}(R,R^+)[1/p]$ with the induced filtration resp.\
    \item $\BB_{\crys}(R,R^+) \colonequals \BB^+_{\crys}(R,R^+)[1/t]=\BB^+_{\crys}(R,R^+)[1/\mu]$ with the filtration
    \[
    \Fil^r\BB_{\crys}(R,R^+)= \sum_{i\in \ZZ} t^{-i} \cdot \Fil^{i+r}\BB^+_{\crys}(R,R^+),
    \]
    where $t=\log[\epsilon]$ and $\mu = [\epsilon]-1$ are the canonical elements in $\rA_\crys(\cO_C)$.
\end{itemize}
\end{definition}
\begin{remark}
    Under our definition of de Rham period sheaves, there is a natural filtered map of period sheaves $\varphi_{\AA_{\inf}}^*\BB_\crys \to \BB_\dR$.
    See the \nameref{list} for further explanations.
\end{remark}
For future usage, we also introduce the $K$-linear analog of $\BB_{\crys}$ as follows.
    \begin{definition}\label{K-linear Bcrys}
    	Let $U \in \Perfd/X_{\eta, \proet}$, $\widehat{U} = \Spa(R,R^+)$, and ${\xi} \in  A_{\inf}(R^+)$ be as in \cref{def Bcrys}.
    	Let $\AA_{\inf,K}$ be the finite $\AA_{\inf}$-algebra $\AA_{\inf}\otimes_W \cO_K$, equipped with the natural surjection $\theta_K \colon \AA_{\inf,K}(R,R^+) \to R^+$.
    	The $\cO_K$-linear sheaf $\AA_{\crys,K}$ resp.\ $\BB^+_{\crys,K}$ resp.\ $\BB_{\crys,K}$ in $\Shv(X_{\eta, \proet})$ is given on $U$ as follows:
    	\begin{itemize}
    		\item $\AA_{\crys,K}(R,R^+)$ is the finite $\AA_\crys$-algebra $\AA_{\crys}\otimes_W \cO_K$ resp.\
    		\item $\BB^+_{\crys,K}(R,R^+) \colonequals \AA_{\crys,K}(R,R^+)[1/p]=\BB^+_{\crys}(R,R^+)\otimes_{K_0} K$ resp.\
    		\item $\BB_{\crys,K}(R,R^+) \colonequals \BB^+_{\crys,K}(R,R^+)[1/t] = \BB_{\crys}(R,R^+)\otimes_{K_0} K$.
    	\end{itemize}
    \end{definition}
To describe the relations among $\BB_{\crys}$, $\BB_{\crys,K}$ and $\BB_\dR$, we need the following well-known lemma.
\begin{lemma}\label{injection of period sheaves}
    \begin{enumerate}[label={\upshape{(\roman*)}},wide]
        \item\label{injection of period sheaves compatible} There are natural maps of sheaves of rings over $X_{\eta, \proet}$
    	\[
    	\BB_{\crys,K}\to \varphi_{\AA_{\inf}}^* \BB_{\crys,K} \to \BB_{\dR},
    	\]
    	which are injective and compatible with the maps from $\BB_{\crys}$.
    	\item\label{injection of period sheaves fundamental sequence} The sequence of maps of sheaves of rings on $X_{\eta,\proet}$
    	\[ 0 \to \widehat{\QQ}_p \to (\BB_\crys)^{\varphi=1} \to \BB_\dR/\BB^+_\dR \to 0 \]
    	is exact
	\end{enumerate}
\end{lemma}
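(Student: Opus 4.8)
The plan is to reduce both assertions to statements about sections over affinoid perfectoid objects and then to appeal to known facts from $p$-adic Hodge theory. By \cite[Lem.~4.6, Prop.~4.8]{Sch13} the objects $U$ with $\widehat{U}=\Spa(R,R^+)$ for a perfectoid $C$-algebra $R$ form a basis of $X_{\eta,\proet}$ and $\Shv(X_{\eta,\proet})\simeq\Shv(\Perfd/X_{\eta,\proet})$, so it suffices to construct the maps of the first statement on such $U$, to check injectivity and the left-exactness part of the second statement on the corresponding sections, and to verify surjectivity on the same basis (or, if more convenient, on stalks). Fix such a $U$, write $\theta\colon\AA_{\inf}(R,R^+)\to R^+$ for the de Rham specialization and $\tilde\theta=\theta\circ\varphi^{-1}$ for the Hodge--Tate specialization, and recall that $\ker(\tilde\theta)$ is generated by a nonzerodivisor.

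For the first statement: the Frobenius $\varphi$ of $\AA_{\inf}(R,R^+)=W(R^{+,\flat})$ is an automorphism carrying $\ker(\theta)$ onto $\ker(\tilde\theta)$, so base change along $\varphi$ identifies $\varphi_{\AA_{\inf}}^*\AA_{\crys}(R,R^+)$ with the $p$-completed divided power envelope of $\ker(\tilde\theta)$ in $\AA_{\inf}(R,R^+)$. The first map is then the linearization of the Frobenius of $\BB_{\crys}$, base-changed along $K_0\to K$; it is injective on sections because the Frobenius of $\BB_{\crys}$ is injective (it is already bijective on $\BB_{\inf}(R,R^+)=W(R^{+,\flat})[1/p]$, which contains $\AA_{\crys}(R,R^+)[1/p]$), and the remaining operations ($\varphi_{\AA_{\inf}}^*$, $\otimes_{K_0}K$, inverting $p$ and $t$) are exact. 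For the second map, note that $\BB_{\dR}^+(R,R^+)=\BB_{\inf}(R,R^+)^\wedge_{\ker\tilde\theta}$ is a $\QQ_p$-algebra that is complete and separated for $\ker(\tilde\theta)$, so $\ker(\tilde\theta)$ has canonical divided powers there; the universal property of the divided power envelope supplies a filtered ring homomorphism $\varphi_{\AA_{\inf}}^*\AA_{\crys,K}(R,R^+)\to\BB_{\dR}^+(R,R^+)$, and inverting $p$ and then $t$ (using $\varphi(t)=pt$, so that inverting $t$ commutes with the $\varphi$-twist over $\QQ_p$) yields the desired filtered map $\varphi_{\AA_{\inf}}^*\BB_{\crys,K}\to\BB_{\dR}$. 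Compatibility with the maps out of $\BB_{\crys}$ and with the filtrations is immediate from the construction. Injectivity of the composite follows since it factors through $\BB_{\inf}(R,R^+)$ (via $\AA_{\crys}(R,R^+)[1/p]\subseteq\AA_{\inf}(R,R^+)[1/p]$, as $\xi^{[n]}=\xi^{n}/n!$) and $\BB_{\inf}(R,R^+)$ injects into its $\ker(\tilde\theta)$-adic completion, the generator being a nonzerodivisor with $\bigcap_n(\ker\tilde\theta)^n=0$.

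For the second statement: exactness at $\widehat{\QQ}_p$ and in the middle can be tested on sections, where over a connected $U$ it amounts to the classical identity $\BB_{\crys}(R,R^+)^{\varphi=1}\cap\BB_{\dR}^+(R,R^+)=\QQ_p$ inside $\BB_{\dR}(R,R^+)$ (which reduces to $\AA_{\crys}(R,R^+)^{\varphi=1}=\ZZ_p$ together with the comparison of filtrations), and over a disconnected $U$ to the corresponding products, i.e.\ to $\widehat{\QQ}_p(U)$. The one substantial input is the surjectivity of $(\BB_{\crys})^{\varphi=1}\to\BB_{\dR}/\BB_{\dR}^+$: this is the fundamental exact sequence of $p$-adic Hodge theory, which I would obtain either by passing to stalks, where it becomes the classical statement over an algebraically closed perfectoid field (Fontaine; cf.\ Bloch--Kato), or directly on affinoid perfectoid sections from the relative Fargues--Fontaine curve — there $\BB_{\dR}^+$ is the completed local ring along the point at infinity, $(\BB_{\crys})^{\varphi=1}$ is the ring of functions regular away from it, $\BB_{\dR}/\BB_{\dR}^+$ the principal parts, and the surjectivity expresses the vanishing of $H^1$ of the structure sheaf on the complement of that divisor (Kedlaya--Liu, Fargues--Fontaine). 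Making this last step work uniformly in the relative/sheaf-theoretic setting, rather than only over a geometric point, is the part I expect to demand the most care, although it is by now standard.
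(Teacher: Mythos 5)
There is a genuine gap in your argument for the injectivity in (i). You claim that the composite $\BB^+_{\crys,K}\to\BB^+_\dR$ "factors through $\BB_{\inf}(R,R^+)$, via $\AA_{\crys}(R,R^+)[1/p]\subseteq\AA_{\inf}(R,R^+)[1/p]$." This inclusion is false. The ring $\AA_{\crys}(R,R^+)$ is defined as the $p$-adic \emph{completion} of the divided power envelope; before completion the envelope $\AA_{\inf}[\xi^{[n]}:n\ge 1]$ does lie inside $\AA_{\inf}[1/p]$, but after $p$-adically completing one produces convergent infinite sums $\sum_n a_n\xi^{[n]}$ that are no longer in $\AA_{\inf}[1/p]$. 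A standard example is $t=\log[\epsilon]=\sum_{n\ge 1}(-1)^{n+1}\mu^n/n\in\AA_\crys$, which is not in $\AA_{\inf}[1/p]$. Consequently $\BB^+_{\crys}$ (and $\BB^+_{\crys,K}$) does not sit inside $\BB_{\inf}$, and your reduction of injectivity to the elementary fact that a domain injects into its completion along a nonzerodivisor collapses. The injectivity of $\BB^+_{\crys,K}\hookrightarrow\BB^+_\dR$ is a substantive theorem of relative $p$-adic Hodge theory; the paper obtains it by factoring through $\BB^+_{\max,K}$ and citing Brinon \cite[Prop.~6.2.1]{Bri08} and Shimizu \cite[Lem.~2.34, Prop.~2.18, Prop.~2.21]{Shi22}, and there is no cheap replacement.

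For (ii), your strategy is morally in the same direction as the paper's (which reduces to affine framed $X$ and cites Brinon's relative fundamental exact sequence \cite[Prop.~6.2.24]{Bri08}), but the route via stalks needs more justification than you give. Stalks of pro-\'etale period sheaves on a rigid space are filtered colimits over neighborhoods, and it is not automatic that the stalk of $\BB_\crys$ or $\BB^+_\dR$ at a geometric point is the classical period ring of an algebraically closed perfectoid field; the colimit may not interact simply with $p$-adic and $\xi$-adic completions. Your alternative via the relative Fargues--Fontaine curve would work, but you should either cite the relative fundamental exact sequence directly (as the paper does) or spell out carefully how the $H^1$-vanishing is being invoked uniformly over an affinoid perfectoid base. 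The remaining pieces of your argument (exactness of $\varphi^*_{\AA_\inf}$ and $\otimes_{K_0}K$, bijectivity of Frobenius on $\BB_{\inf}$, the divided power universal property furnishing the second map) are fine.
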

\begin{proof}
    \ref{injection of period sheaves compatible}.
	Let $\tilde{\theta}_K$ be the $\cO_K$-linearization of the Hodge--Tate specialization $\tilde{\theta}=\theta\circ\varphi_{\AA_{\inf}}^{-1}$ and let $\BB^+_{\dR,K}$ be the formal completion for the surjection $\tilde{\theta}_K[1/p] \colon \AA_{\inf,K}[1/p] \to \widehat{\cO}_{X_\eta}$.
	Then there are natural maps
	\[
	\begin{tikzcd}
		\AA_{\crys,K}=\AA_{\crys}\otimes_W \cO_K \arrow[rr, "\varphi_{\AA_{\inf}}\otimes{\id_{\cO_K}}"] && \varphi_{\AA_{\inf}}^*\AA_{\crys,K} \arrow[r] & \BB^+_{\dR,K},
	\end{tikzcd}
    \]
    that are compatible with the composition $\AA_{\crys} \to \varphi_{\AA_{\inf}}^*\AA_{\crys} \to \BB_{\dR}^+$ and are filtered by construction.
	By the completeness and the isomorphism of graded pieces, the natural map $\BB^+_{\dR}\to \BB^+_{\dR,K}$ is a filtered isomorphism (\cite[Lem.\ 2.9]{Shi22}).
	We thus obtain the maps of sheaves $\AA_{\crys,K}\to \BB^+_{\crys,K}\to \BB^+_{\dR,K} \simeq \BB^+_{\dR}$ and their rational analogs by inverting $t$.
	
	Note that we have a commutative diagram
	\[ \begin{tikzcd}
	    \BB^+_{\crys,K} \arrow[r] \arrow[d] & (\AA_{\inf,K}[1/p])^\wedge_{\ker(\theta)} \arrow[d] \\
	    \varphi_{\AA_{\inf}}^*\BB^+_{\crys,K} \arrow[r] & \BB^+_\dR 
	\end{tikzcd} \]
	in which the vertical maps are induced by the Frobenius isomorphism $\varphi_{\AA_{\inf}}$.
	The injectivity of $\BB^+_{\crys,K} \to \BB^+_{\dR}$ therefore reduces to the injectivity of the upper horizontal map.
	The latter factors as a composition
	\[
	\BB^+_{\crys}\otimes_{K_0} K \hookrightarrow \BB^+_{\mathrm{max}}\otimes_{K_0} K \hookrightarrow (\AA_{\inf,K}[1/p])^\wedge_{\ker(\theta)}
	\]
	in which the first map is injective by \cite[Lem.\ 2.34, Prop.\ 2.18]{Shi22} and the second one by \cite[Prop.\ 2.21]{Shi22} (which denotes the formal completion $(\AA_{\inf,K}[1/p])^\wedge_{\ker(\theta)}$ by $\BB^+_\dR$);
	cf.\ also \cite[Prop.~6.2.1]{Bri08}.
	This finishes the proof of \ref{injection of period sheaves compatible}.
	
	\ref{injection of period sheaves fundamental sequence}.
	Since the exactness can be checked locally, we may assume that $X = \Spf(R)$ is framed affine.
	In this case, the assertion is the fundamental exact sequence of $p$-adic Hodge theory, proved in the relative setting in \cite[Prop.~6.2.24]{Bri08}.
\end{proof}
The injection $\BB_{\crys,K} \hookrightarrow \BB_\dR$ from \cref{injection of period sheaves}.\ref{injection of period sheaves compatible} above allows us to define a filtration on $\BB_{\crys,K}$, using the one from $\BB_{\dR}$.
\begin{definition}\label{def filtration of BcrysK}
	We define the filtration on $\BB_{\crys,K}$ as follows:
	\[
	\Fil^r \BB_{\crys,K} \colonequals \Fil^r \BB_\dR \cap \BB_{\crys,K},~r\in \ZZ.
	\]
\end{definition}
\begin{remark}\label{filtration-compatibility}
	By construction, the injections of period sheaves $\BB_{\crys}\to \BB_{\crys,K}\to \BB_{\dR}\simeq \BB_{\dR,K}$ are naturally filtered.
	In fact, it follows from \cite[Cor.\ 2.25]{TT19} that we have the intersection formula
	\[
	\Fil^r \BB_{\crys} \simeq   \Fil^r \BB_\dR \cap  \BB_{\crys}.
	\]
	for the filtration on $\BB_{\crys}$.
	As a consequence, when evaluated at an affinoid perfectoid $C$-algebra $(R,R^+)$ over $X_\eta$, the submodule $\Fil^r\BB_{\crys,K}(R,R^+)$ contains all the elements $\sum_{i\geq r} a_i \xi^{[i]}$, where $\{a_i\}_{i \ge r}\subset \AA_{\inf}(R,R^+)[1/p]$ is any $p$-adically convergent sequence.
\end{remark}
Unfortunately, the definitions of the versions ``with connections'' $\cO\AA_{\crys}$ resp.\ $\cO\BB^+_{\crys}$ resp.\ $\cO\BB_{\crys}$ need the assumption that the ground field $K$ is absolutely unramified.
To treat the ramified case, we use Faltings's definition of crystalline local systems \cite{Fal89}, which we recall now.

We assume again that $X_\eta$ arises as the rigid generic fiber of a smooth formal scheme $X$ over $\Spf \cO_K$ with special fiber $X_s \colonequals X_\red$.
\begin{definition}\label{Acrys-isocrystal}
Let $\cE \in \Isoc^\varphi(X_s/V_0)$ be an $F$-isocrystal on $(X_s/V_0)_\crys$, which naturally extends to a (convergent) $F$-isocrystal on $(X_{p=0}/V_0)_\crys$ by \cref{F-isocrystal-and-conv}.
Then we define sheaves $\BB^+_\crys(\cE)$ and $\BB_\crys(\cE)$ in $\Shv(X_{\eta, \proet})$ as follows:
given an element $U \in \Perfd/X_{\eta, \proet}$ with $\widehat{U} = \Spa(S,S^+)$ a perfectoid $C$-algebra, as the morphism $\AA_{\crys}(S,S^+) \to S^+/p$ is a pro-pd-thickening in $(X_{p=0}/V_0)_\crys$, we can set
\begin{itemize}
\item $\BB^+_\crys(\cE)(U) \colonequals \cE(\AA_\crys(S,S^+))[1/p] = \bigl(\lim_r \cE(\AA_{\crys}(S,S^+)/p,\AA_{\crys}(S,S^+)/p^r,\gamma)\bigr)[1/p]$, where $\gamma$ is the canonical pd-structure and
\item $\BB_\crys(\cE)(U) \colonequals \BB^+_\crys(\cE)(U)[1/\mu]$.
\end{itemize}
Let $\BB^+_{\crys,K}(\cE)$ and $\BB_{\crys,K}(\cE)$ be their $K$-linear base extensions and let $\BB_{\dR}(\cE)$ be the tensor product $\BB_{\crys}(\cE)\otimes_{\BB_{\crys}} \BB_\dR$.
\end{definition}
\begin{lemma}\label{Bcrys-isocrystal-sheaf}
    The functors $\BB_\crys^+(\cE)$ and  $\BB_\crys(\cE)$ from \cref{Acrys-isocrystal} are sheaves.
\end{lemma}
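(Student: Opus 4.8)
The plan is to deduce sheafiness from the known sheaf property of the crystalline period sheaf $\AA_\crys$ — hence of $\BB^+_\crys$ and $\BB_\crys$ (\cite[\S~2A]{TT19}) — by exhibiting $\BB^+_\crys(\cE)$, Zariski-locally on $X$, as a finite projective $\BB^+_\crys$-module. First I would use that the affinoid perfectoid objects form a basis of $X_{\eta,\proet}$ (\cite[Lem.~4.6, Prop.~4.8]{Sch13}) to reduce to checking the sheaf axiom for covers of an affinoid perfectoid $U$ with $\widehat{U} = \Spa(S,S^+)$, $S$ a perfectoid $C$-algebra, by affinoid perfectoids. Since being a sheaf on $X_{\eta,\proet}$ may be checked Zariski-locally on $X$, \cref{local-framing} and \cref{unramified-model} let me further assume that $X = \Spf R$ is affine and admits a smooth affine formal model $\widetilde{X} = \Spf \widetilde{R}$ over $V_0$ carrying a lift of Frobenius (so that the special fiber of $\widetilde{X}$ is $X_s$). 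By \cref{F-isocrystal-and-conv}, $\cE$ extends canonically to an $F$-isocrystal on $(X_{p=0}/V_0)_\crys$, and \cref{crystal-D-mod} together with \cref{Berthelot-equivalence} presents its value on $\widetilde{X}$ as a finitely presented $\widetilde{R}$-module $\cE(\widetilde{X})$ whose base change $E \colonequals \cE(\widetilde{X})[1/p]$ to $\widetilde{R}[1/p]$ is finite projective.

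Next, for such a $U$, the pro-pd-thickening $\AA_\crys(S,S^+) \twoheadrightarrow S^+/p$ is an object of $(X_{p=0}/V_0)_\crys$, and by formal smoothness of $\widetilde{X}$ over $V_0$ it receives a lift $\widetilde{R} \to \AA_\crys(S,S^+)$ of the structure map $\widetilde{R} \to S^+/p$. The crystal property then gives an isomorphism
\[
\BB^+_\crys(\cE)(U) = \cE\bigl(\AA_\crys(S,S^+)\bigr)[1/p] \,\simeq\, \bigl(\cE(\widetilde{X}) \otimes_{\widetilde{R}} \AA_\crys(S,S^+)\bigr)[1/p] \,\simeq\, E \otimes_{\widetilde{R}[1/p]} \BB^+_\crys(U),
\]
in which no derived $p$-completion intervenes, since $\cE(\widetilde{X})$ is finitely presented over $\widetilde{R}$ and $\AA_\crys(S,S^+)$ is $p$-complete. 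Choosing the lifts compatibly along a cover (two lifts being related by parallel transport for the canonical connection on $\cE(\widetilde{X})$), these isomorphisms respect restriction maps; fixing an idempotent splitting $E \oplus E' \cong \widetilde{R}[1/p]^{\oplus n}$ then displays $\BB^+_\crys(\cE)$, over the cover, as a retract of $(\BB^+_\crys)^{\oplus n}$ in presheaves. Since $\BB^+_\crys$ is a sheaf and a retract of a sheaf is a sheaf, $\BB^+_\crys(\cE)$ is a sheaf. The identical argument with $\BB_\crys$ in place of $\BB^+_\crys$ — or the observation $\BB_\crys(\cE) = \BB^+_\crys(\cE) \otimes_{\BB^+_\crys} \BB_\crys$ combined with the sheaf property of $\BB_\crys$ — shows that $\BB_\crys(\cE)$ is a sheaf.

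The main obstacle is the naturality in $U$ of the local identification $\BB^+_\crys(\cE) \simeq E \otimes_{\widetilde{R}[1/p]} \BB^+_\crys$: one has to arrange the lifts $\widetilde{R} \to \AA_\crys(S,S^+)$ compatibly with the transition maps of a cover so that the retract structure passes to the whole \v{C}ech complex. The remaining ingredients — the basis of affinoid perfectoids, the Zariski-localness of sheafiness, and the sheaf property of $\AA_\crys$ — are routine.
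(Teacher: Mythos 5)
Your argument is correct, but it takes a genuinely different and longer route than the paper. The paper's proof is much more economical: for a pro-\'etale cover $\{U_\alpha \to U\}$, it writes down the equalizer diagram for $\BB^+_\crys$ (which is known to be a sheaf), and then observes that the crystal property of $\cE$ gives $\cE(\AA_\crys(S_\alpha,S^+_\alpha))[1/p] \simeq \cE(\AA_\crys(S,S^+))[1/p] \otimes_{\BB^+_\crys(S,S^+)} \BB^+_\crys(S_\alpha,S^+_\alpha)$; since $\cE(\AA_\crys(S,S^+))[1/p]$ is finite projective over $\BB^+_\crys(S,S^+)$, tensoring the equalizer diagram with it preserves exactness, and one is done. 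This requires no Zariski localization on $X$, no choice of unramified model $\widetilde{X}$ over $V_0$, no Frobenius lift, no appeal to Berthelot's equivalence, and no bookkeeping of lifts $\widetilde{R}\to\AA_\crys(S,S^+)$. Your retract argument does go through once you observe, as you do, that one can choose a single lift on $\AA_\crys(S,S^+)$ and push it down compatibly to all terms of a given \v{C}ech nerve; the sheaf axiom only needs to be checked cover by cover, so the non-naturality of the lift across different covers is harmless. But you should be aware that the step identifying $\lim_r\bigl(\cE(\widetilde{X})\otimes_{\widetilde{R}}\AA_\crys(S,S^+)/p^r\bigr)$ with the uncompleted tensor $\cE(\widetilde{X})\otimes_{\widetilde{R}}\AA_\crys(S,S^+)$ deserves a word: the latter need not be $p$-complete since $\AA_\crys(S,S^+)$ is non-noetherian, and what one really uses is that the difference vanishes after inverting $p$ and splitting $E$ off a free module. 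The paper's route, tensoring directly over $\BB^+_\crys(S,S^+)$, sidesteps this subtlety entirely.
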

\begin{proof}
It suffices to prove the statement for $\BB^+_\crys(\cE)$.
Given a pro-\'etale cover $\{ f_\alpha \colon U_\alpha \to U \}$ in $\Perfd/X_{\eta, \proet}$ with $\widehat{U} = \Spa(S,S^+)$ and $\widehat{U}_\alpha = \Spa(S_\alpha,S^+_\alpha)$, we need to show that
\begin{equation}\label{Acrys-isocrystal-sheaf}
    \cE(\AA_{\crys}(S,S^+))[1/p] \to \prod_\alpha \cE(\AA_{\crys}(S_\alpha,S^+_\alpha))[1/p] \rightrightarrows \prod_{\alpha,\beta} \cE(\AA_{\crys}(S_\alpha \widehat{\otimes}_S S_\beta,(S_\alpha \widehat{\otimes}_S S_\beta)^+))[1/p]
\end{equation}
is an equalizer diagram.
Since $\BB^+_\crys$ is a sheaf on $\Perfd/X_{\eta, \proet}$, we know that
\begin{equation}\label{Acrys-sheaf}
    \BB^+_{\crys}(S,S^+) \to \prod_\alpha \BB^+_{\crys}(S_\alpha,S^+_\alpha) \rightrightarrows \prod_{\alpha,\beta} \BB^+_{\crys}(S_\alpha \widehat{\otimes}_S S_\beta,(S_\alpha \widehat{\otimes}_S S_\beta)^+)
\end{equation}
is an equalizer diagram.
On the other hand, since $\cE$ is an isocrystal in the crystalline site, the crystal property gives natural isomorphisms $\cE(\AA_{\crys}(S_\alpha,S^+_\alpha))[1/p] \simeq \cE(\AA_{\crys}(S,S^+))[1/p] \otimes_{\BB^+_{\crys}(S,S^+)} \BB^+_{\crys}(S_\alpha,S^+_\alpha)$, and likewise for $S_\alpha \widehat{\otimes}_S S_\beta$.
Thus, \cref{Acrys-isocrystal-sheaf} can be obtained by tensoring \cref{Acrys-sheaf} over $\BB^+_\crys(S,S^+)$ with $\cE(\AA_\crys(S,S^+))[1/p]$.
Since $\cE(\AA_\crys(S,S^+))[1/p]$ is a finite projective $\BB^+_\crys(S,S^+)$-module, the tensor product is exact, so $\BB^+_\crys(\cE)$ is a sheaf as desired.
\end{proof}
\begin{remark}\label{Frob-BcrysE}
    Note that $\BB_\crys(\cE)$ is a module over the sheaf of rings $\BB_\crys$ and $\BB_\crys$ is equipped with a Frobenius endomorphism $F \colon \BB_\crys \to \BB_\crys$.
    Pullback along $F$ is compatible with Frobenius pullback in the special fiber in the sense that there is a canonical isomorphism $F^*\BB_\crys(\cE) \simeq \BB_\crys(F^*\cE)$.
    If $(\cE,\varphi) \in \Isoc^\varphi(X_s/V_0)$, we therefore obtain an induced Frobenius isomorphism
    \[ \varphi \colon F^*\BB_\crys(\cE) \simeq \BB_\crys(F^*\cE) \xrightarrow{\sim} \BB_\crys(\cE). \]
\end{remark}
\begin{construction}[Filtration on $\BB_{\crys,K}(\cE)$ and $\BB_\dR(\cE)$]\label{crystalline-filtration}
	If $\Fil^\bullet(E)$ is a filtration on the underlying vector bundle $E$ on $X_\eta$ from \cref{filtered-F-isocrystal}, we can define a filtration on $\BB_{\crys,K}(\cE)$ as follows:
	Assume first that $X$ is affine framed with a model $\widetilde{X} = \Spf R^+$ over $V_0$ and a framing $V_0 \langle x^{\pm 1}_1,\dotsc,x^{\pm 1}_d \rangle \to R^+$.
	We also assume that the $S^+$ in \cref{Acrys-isocrystal} admits compatible systems $x^\flat_i$ of $p$-th power roots of the $x_i$.
	By the infinitesimal lifting criterion of formal \'etaleness, we can then extend the map
	\[ V_0 \langle x^{\pm 1}_1,\dotsc,x^{\pm 1}_d \rangle \to \AA_\crys(S,S^+), \quad x_i \mapsto [x^\flat_i] \]
	to a map $s \colon R^+ \to \AA_\crys(S,S^+)$ because $\ker(\AA_\crys(S,S^+)/p^n \to S^+/p^n)$ is a nil ideal for all $n$.
	The crystal property of $\cE$ applied to the resulting morphisms of pd-thickenings $(R^+ \to R^+/p) \to (\AA_\crys(S,S^+) \to S^+/p)$ furnishes an isomorphism
	\[  \cE\bigl(\AA_{\crys}(S,S^+)\bigr)[1/\mu] \otimes_{K_0} K \simeq \cE(R^+) \otimes_{R^+,s} \BB_{\crys,K}(S,S^+)  \simeq E(R_K) \otimes_{\cO(X_\eta),s} \BB_{\crys,K}(S,S^+). \]
	The filtration on $\BB_{\crys,K}(\cE)(S,S^+)$ is defined as the tensor product filtration of $\Fil^\bullet(E)$ and the filtration on $\BB_{\crys,K}(S,S^+)$ from \cref{def filtration of BcrysK}, using the formula in the equation above.
	
	Next, we show that the filtration is independent of the choice of the morphism $s$;
	since every $S^+$ admits a compatible systems $x^\flat_i$ pro-\'etale locally on the generic fiber (e.g, on the perfectoid cover $\Spa(S,S^+) \times_{\Spa(C\langle x^{\pm 1}_i\rangle,\cO_C\langle x^{\pm 1}_i\rangle)} \Spa\bigl(C\bigl\langle x^{\pm 1/p^\infty}_i\bigr\rangle,\cO_C\bigl\langle x^{\pm 1/p^\infty}_i\bigr\rangle\bigr) \to \Spa(S,S^+)$), this will allow us to glue the filtrations coming from different affine framed subschemes of $X$ and different $S^+$ to a filtration on $\BB_{\crys,K}(\cE)$.
	The independence is discussed in \cite[Rmk.\ 3.20]{TT19} (when $K=K_0$), but for the reader's convenience we spell out the argument here.
	Let $s' \colon R^+\to \AA_\crys(S,S^+)$ be another map whose composition with the surjection $\AA_{\crys}(S,S^+)\to S^+/p$ is the structure morphism.
	Then by the crystal property of $\cE$, we get an isomorphism of finite projective modules over $\BB^+_{\crys}(S,S^+)$
	\[
	c \colon \cE(R^+)\otimes_{R^+,s} \BB^+_\crys(S,S^+) \simeq \cE(R^+)\otimes_{R^+,s'} \BB^+_\crys(S,S^+).
	\]
	In terms of the local coordinates $x_i$ and using the same multiindex notation as in \cref{multiindex}, we have $c(e\otimes 1) = \sum_{\alpha \in \NN^d} \theta^\alpha(e)\otimes \bigl(s(x) - s'(x)\bigr)^{[\alpha]}$, where $\theta=(\theta_1,\dotsc,\theta_d)$ are the endomorphisms of $\cE(R^+)$ such that $\nabla = \sum^d_{i = 1} \theta_i \otimes dx_i$ and the elements $\bigl(s(x) - s'(x)\bigr)^{[\alpha]}$ are well defined because all $s(x_i) - s'(x_i)$ are contained in $\Fil^1\AA_{\crys}(S,S^+)$.
	After tensoring with $K$, we get an isomorphism
	\[
	c\otimes_{K_0} K \colon E(R_K) \otimes_{R_K,s} \BB^+_{\crys,K}(S,S^+) \simeq E(R_K) \otimes_{R_K, s'} \BB^+_{\crys,K}(S,S^+),
	\]
	which is filtered by Griffiths transversality:
	for $e\in \Fil^i E(R_K)$, the image $\theta_j(e)$ is contained in $\Fil^{i-1}E(R_K)$ for $1 \le j \le d$, and hence $(c\otimes_{K_0} K)(e\otimes 1)=\sum_{\alpha\in \NN^d} \theta^\alpha(e)\otimes \bigl(s(x) - s'(x)\bigr)^{[\alpha]} $ is in the $i$-th piece of the product filtration on the right-hand side above.
	As a consequence, the isomorphism $c$ is a filtered map. 
	The same argument for $c^{-1}$ shows that it is a filtered isomorphism.
	
	Using the same method, we can define a filtration on $\BB_{\dR}(\cE)$ as well:
	after locally choosing a map $\cO(X)\to \BB^+_{\dR}(S,S^+)$ whose composition with $\BB^+_\dR(S,S^+)\to S$ is the structure morphism $\cO(X)\to S$,
	we can equip $\BB_{\dR}(\cE)(S,S^+)$ with the tensor product filtration on $\Fil^\bullet E \otimes_{\cO_{X_\eta}} \Fil^\bullet\BB_{\dR}(S,S^+)$.
	As above, this is independent of the choice of $\cO(X)\to S$ by Griffiths transversality.
	By construction, the morphism $\BB_{\crys,K}(\cE) \to \BB_{\dR}(\cE)$ is a filtered so that we have a filtered identification
	\[ \Fil^\bullet \BB_{\dR}(\cE) \simeq \Fil^\bullet \BB_{\crys,K}(\cE) \otimes_{\Fil^\bullet \BB_{\crys,K}} \Fil^\bullet \BB_{\dR}. \]
\end{construction}
We use $\mathrm{fVect}^\varphi(X_\eta, \BB_\crys)$ to denote the category of $\BB_{\crys}$-vector bundles with Frobenius structure and filtration that is compatible with that of $\BB_{\crys,K}$. 
In the following, we show that the $\BB_\crys$-linear sheaves from \cref{Acrys-isocrystal} can be used to compute the global section of an isocrystal over the special fiber via the pro-\'etale site of the generic fiber.
\begin{proposition}\label{isocrystal-section-comparison}
	Let $X$ be a smooth $p$-adic formal scheme over $\cO_K$ and let $\cE$ be a convergent isocrystal over $(X_{p=0}/V_0)_\conv$.
	Let $\BB_\crys(\cE)$ be the corresponding $\BB_\crys$-linear sheaf on $X_{\eta,\proet}$ defined as in \cref{Acrys-isocrystal}.
	Then there is an isomorphism
	\[ \Gamma\bigl((X_{p=0}/V_0)_\conv,\cE\bigr)[1/p] \xrightarrow{\sim} \Gamma\bigl(X_{\eta,\proet},\BB_\crys(\cE)\bigr) \]
	which is natural in $X$ and $\cE$.
	Moreover, the isomorphism is Frobenius equivariant (resp. filtered after tensoring with $K$) when $\cE$ admits a Frobenius structure (resp. filtration).
\end{proposition}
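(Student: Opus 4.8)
The plan is to first produce a natural map from left to right using only the crystal property of $\cE$ — so that the construction of $\BB_\crys(\cE)$ in \cref{Acrys-isocrystal}, which does not in fact use a Frobenius structure, applies verbatim to a convergent isocrystal — and then to check that it is an isomorphism after reducing to an explicit local model. Concretely, for $U \in \Perfd/X_{\eta,\proet}$ with $\widehat U = \Spa(S,S^+)$ and $S$ a perfectoid $C$-algebra, the divided power thickening $\AA_\crys(S,S^+) \twoheadrightarrow S^+/p$ is a (pro-)object of $(X_{p=0}/V_0)_\crys \subseteq (X_{p=0}/V_0)_\conv$, so a global section of $\cE$ restricts to a compatible family of elements of $\cE(\AA_\crys(S,S^+))[1/p] = \BB^+_\crys(\cE)(U) \subseteq \BB_\crys(\cE)(U)$, and by \cref{Bcrys-isocrystal-sheaf} this family glues to a global section of $\BB_\crys(\cE)$. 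The resulting map $\Gamma((X_{p=0}/V_0)_\conv,\cE)[1/p] \to \Gamma(X_{\eta,\proet},\BB_\crys(\cE))$ is manifestly natural in $X$ and $\cE$, and — being built only from functorial operations — is compatible with the Frobenius on $\BB_\crys(\cE)$ of \cref{Frob-BcrysE} when $\cE$ is an $F$-isocrystal, and, after $\otimes_{K_0}K$, with the filtration on $\BB_{\crys,K}(\cE)$ of \cref{crystalline-filtration} when $\cE$ carries a filtration.

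\textbf{Reduction to a framed affine model.} Both sides are sheaves in $X$ for the Zariski topology: the left because convergent cohomology has \v{C}ech descent along a Zariski cover, so its $H^0$ is the corresponding equalizer, and the right because $\BB_\crys(\cE)$ is a pro-\'etale sheaf (\cref{Bcrys-isocrystal-sheaf}) and a Zariski cover of $X$ induces an open, hence pro-\'etale, cover of $X_\eta$; the comparison map respects these presentations. By \cref{local-framing} I may thus assume $X = \Spf R$ admits a framing over $\cO_K$, and I let $\widetilde X = \Spf\widetilde R$ be the smooth formal $V_0$-model from \cref{unramified-model}. By \cref{Berthelot-equivalence} (together with \cref{F-isocrystal-and-conv} for the passage between the crystalline and convergent pictures, and \cref{underlying-vb} to identify the base change to $X_\eta$) the isocrystal $\cE$ corresponds to a finite projective $\widetilde R[1/p]$-module $M$ with a flat convergent connection $\nabla$; since convergent cohomology computes the de Rham cohomology of $(M,\nabla)$ over the rigid generic fiber $\widetilde X_\eta$ (Ogus \cite{Ogu84}), the left-hand side becomes $M^{\nabla = 0}$.

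\textbf{The computation over a toric perfectoid cover.} Adjoining to $R$ compatible systems $x^\flat_i$ of $p$-power roots of the framing coordinates and passing to $\cO_C$ produces an object $\Spa(S_\infty,S_\infty^+) \in \Perfd/X_{\eta,\proet}$ which, after base change to $C$, is a pro-(finite \'etale) cover of $X_\eta$ whose Galois group $\Delta$ fits into $1 \to \ZZ_p(1)^d \to \Delta \to \Gal(\overline K/K) \to 1$, so that $\Gamma(X_{\eta,\proet},\BB_\crys(\cE))$ is the $\Delta$-invariant part of $\BB_\crys(\cE)(S_\infty,S_\infty^+)$. Formal \'etaleness of the framing gives a section $s\colon\widetilde R\to\AA_\crys(S_\infty^+)$, $x_i\mapsto[x^\flat_i]$, exactly as in \cref{crystalline-filtration}, and the crystal property yields $\BB_\crys(\cE)(S_\infty,S_\infty^+)\simeq M\otimes_{\widetilde R[1/p],s}\BB_\crys(S_\infty,S_\infty^+)$, with a topological generator $\gamma_i$ of the $i$-th copy of $\ZZ_p(1)$ acting by the convergent Taylor series $\sum_{n\ge 0}\theta_i^n(\blank)\otimes\bigl([x^\flat_i]-\gamma_i([x^\flat_i])\bigr)^{[n]}$ built from the connection operators $\theta_i$ (convergence coming from the topological quasi-nilpotence of $\nabla$ and the divided powers in $\AA_\crys$, as in \cref{crystalline-filtration}), while $\Gal(\overline K/K)$ acts through its action on $\BB_\crys$. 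Unwinding this $\Delta$-action shows that the geometric generators cut the invariants down to the horizontal sections and the arithmetic quotient then contributes the $\Gal(\overline K/K)$-invariants of $\BB_\crys$, so that the $\Delta$-invariant part is exactly $M^{\nabla=0}$ — and one checks that this identification is the map of the first paragraph. Re-running the computation with the Frobenius (resp.\ with the filtrations after $\otimes_{K_0}K$, using that $\Fil^j\rB_\dR^{\Gal(\overline K/K)}$ equals $K$ for $j\le 0$ and $0$ for $j>0$) gives the refined assertions.

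\textbf{The main obstacle.} The essential difficulty is the last step: carrying out the Galois-descent computation for the pro-\'etale cohomology of $\BB_\crys(\cE)$ and controlling the Taylor-series action over the crystalline period ring $\AA_\crys$ rather than over $\cO\AA_\crys$ — the latter being unavailable for ramified $K$, which is exactly why $\BB_\crys(\cE)$ is defined here via the crystal property. This is the relative $p$-adic Hodge theory analogue of the computations of \cite{Bri08} and \cite{TT19}, with the divided powers of $\AA_\crys$ playing the role of the polynomial variables of $\cO\AA_\crys$; once it is in hand, naturality takes care of everything else.
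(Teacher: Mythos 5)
Your overall plan coincides with the paper's: construct the comparison map via the crystal property, reduce to a framed affine $X = \Spf R$ with $V_0$-model $\widetilde X$ via Zariski descent, use Berthelot's equivalence to rewrite the left side as $M^{\nabla=0}$ for a finite projective $\widetilde R[1/p]$-module with flat convergent connection, and compute the right side as the $G$-invariants of $M\otimes_{\widetilde R,s}\BB_\crys(S_\infty,S^+_\infty)$ for the same toric perfectoid cover. So the skeleton is right.

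But the one step that carries the entire proof — showing $\bigl(M\otimes_{\widetilde R,s}\BB_\crys(S_\infty,S^+_\infty)\bigr)^G \subseteq M^{\nabla=0}$ — is only waved at, and the wave points in a slightly wrong direction. You write that ``the geometric generators cut the invariants down to the horizontal sections and the arithmetic quotient then contributes the $\Gal(\overline K/K)$-invariants of $\BB_\crys$.'' This suggests first taking $H = \ZZ_p(1)^d$-invariants to land in $M^{\nabla=0}\otimes\BB_\crys$ and then taking $G_K$-invariants of the period ring. That decomposition doesn't work cleanly: $H$ acts nontrivially on $\BB_\crys(S_\infty,S_\infty^+)$ itself (not just through the crystal transition cocycle), so $(M\otimes\BB_\crys)^H$ is not visibly $M^{\nabla=0}\otimes\BB_\crys^H$, and the group $G$ is an extension, not a product, so the two factors entangle. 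The paper sidesteps this by a different mechanism, following \cite[Lem.~5.5]{Fal89}: it puts the filtration from $\BB_\dR$ on $\BB_\crys$, uses $\gr^\bullet\BB_\crys\simeq\bigoplus_r S\otimes C(r)$ and $\bigl(\bigoplus_r S\otimes C(r)\bigr)^G\simeq R$ to show that $G$-invariants live in $\gr^0$ and hence are of the form $e\otimes 1$ for $e\in M$ (in particular, this step is run for the full group $G$ at once), and then for such an $e$ uses the connecting map $\delta$ in the $G$-cohomology of the two-step filtered piece $\Fil^0/\Fil^2$ to produce the identity $\theta_i(e)\otimes([\epsilon]-1)[x_i^\flat]=0$ in $\Hh^1\bigl(\ZZ_p(1)\cdot f_i,\,M\otimes S(1)\bigr)$, hence $\nabla(e)=0$. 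You explicitly flag this as ``the main obstacle,'' which is honest, but it is also precisely the content of the proposition; as written the proposal does not supply the filtration-plus-cocycle argument that closes the gap.
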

For the last statement, recall that when $\cE$ is equipped with an $F$-structure as in \cref{F-isoc-def} resp.\ a filtration as in \cref{filtered-F-isocrystal},
we can endow $\BB_\crys(\cE)$ with the corresponding $F$-structure as in \cref{Frob-BcrysE} resp.\ filtration as in \cref{crystalline-filtration}.
We further note that by \cite[Def.~2.1]{Ogu84}, the left-hand side above is naturally isomorphic to the global sections $\Gamma\bigl((X_s/V_0)_\conv,\cE\bigr)[1/p]$ of the reduced special fiber.
\begin{proof}
	First, we construct a canonical morphism as in the statement.
	To do so, note that the target admits a map from $\Gamma\bigl(X_{\eta,\proet},\BB^+_\crys(\cE)\bigr)$, which is by definition isomorphic to the limit 
	\[
	\lim_{(S,S^+)\in \Perfd/X_{\eta,\proet}} \cE(\rA_{\crys}(S^+),S^+) [1/p].
	\]
    Precomposing with the canonical map 
    \[
    \Gamma\bigl((X_{p=0}/V_0)_\conv,\cE\bigr)[1/p] \to \lim_{(S,S^+)\in \Perfd/X_{\eta,\proet}} \cE(\rA_{\crys}(S^+),S^+) [1/p],\]
    we get the natural morphism from the statement.
    When $\cE$ is equipped with an Frobenius structure (resp. is filtered), this map is by construction naturally equivariant (resp. filtered).
    So it is left to show the map is an isomorphism.
	
	As the both sides satisfy Zariski descent with respect to $X$, we may assume that $X$ is affine framed with a model $\widetilde{X} = \Spf R^+$ over $V_0$ and a framing $V_0 \langle x^{\pm 1}_1,\dotsc,x^{\pm 1}_d \rangle \to R^+$.
	Set $R \colonequals R^+[1/p]$.
	Finally, let $U \colonequals X_\eta \times_{\Spa(K\langle x^{\pm 1}_i \rangle,\cO_K\langle x^{\pm 1}_i \rangle)} \Spa\bigl(C\bigl\langle x^{\pm 1/p^\infty}_i \bigr\rangle,\cO_C\bigl\langle x^{\pm 1/p^\infty}_i \bigr\rangle\bigr) \to X_\eta$.
	This defines a perfectoid pro-\'etale cover of $X_\eta$ as in \cref{Acrys-isocrystal} and \cref{crystalline-filtration}, with $\widehat{U} = \Spa(S,S^+)$ such that $S^+$ is a perfectoid $\cO_C$-algebra which admits a compatible system $x^\flat_i$ of $p$-th power roots of the $x_i$.
	Let $G$ be the continuous Galois group of this cover;
	it fits into a short exact sequence
	\[ 0 \to H \colonequals \bigoplus^d_{i=1} \ZZ_p(1) \cdot f_i \to G \to G_K \to 0 \]
	in which the factor $\ZZ_p(1) \cdot f_i$ acts on $U$ via $(\zeta_{p^r})_r \cdot x^{1/p^n}_i \colonequals \zeta_{p^n}x^{1/p^n}_i$.
	
	By \cite[Thm.~2.15]{Ogu84} (cf.\ \cref{Berthelot-equivalence}), the convergent isocrystal $\cE$ corresponds to a vector bundle $\widetilde{E} = \cE\bigl(\widetilde{X})[1/p]$ on $\widetilde{X}_\eta$ with connection $\widetilde{\nabla}$,
	such that under this correspondence $\Gamma\bigl((R^+ \otimes_{V_0} k /V_0)_\conv,\cE\bigr)[1/p] \simeq \widetilde{E}\bigl(\widetilde{X}_\eta\bigr)[1/p]^{\nabla=0}$.
	On the other hand, upon fixing a section $s \colon R^+ \to \AA_\crys(S,S^+)$ as in \cref{crystalline-filtration}, we obtain an isomorphism $\BB_\crys(\cE)(U) = \cE\bigl(\AA_\crys(S,S^+)\bigr)[1/\mu] \simeq \widetilde{E}\bigl(\widetilde{X}_\eta\bigr) \otimes_{R,s} \BB_\crys(S,S^+)$.
	Similarly to the formula in \cref{crystalline-filtration}, the action of $H$ on $\widetilde{E}\bigl(\widetilde{X}_\eta\bigr) \otimes_{R,s} \BB_\crys(S,S^+)$ induced from the natural action on $\cE\bigl(\AA_\crys(S,S^+)\bigr)[1/\mu]$ is determined by 
	\[ \sigma(e \otimes 1) = \sum_{\alpha \in \NN^d} \theta^\alpha(e) \otimes \bigl(\sigma(s(x)) - s(x)\bigr)^{[\alpha]}, \]
	The map in the statement now takes the following concrete form on global sections:
	\begin{equation}\label{isocrystal-section-comparison-map}\begin{tikzcd}[row sep=tiny]
		\Gamma\bigl((R^+ \otimes_{V_0} k /V_0)_\conv,\cE\bigr)[1/p] \arrow[r] \arrow[d,phantom,sloped,"\simeq"] & \BB_\crys(\cE)(X_\eta) \arrow[d,phantom,sloped,"\simeq"] \\
		\widetilde{E}\bigl(\widetilde{X}_\eta\bigr)^{\nabla=0} \arrow[r] & \bigl(\widetilde{E}\bigl(\widetilde{X}_\eta\bigr) \otimes_{R,s} \BB_\crys(S,S^+)\bigr)^G.
	\end{tikzcd}\end{equation}
	
	We claim that the bottom arrow in (\ref{isocrystal-section-comparison-map}) is an isomorphism.
	For this, we follow the idea of \cite[Lem.~5.5]{Fal89}. 
	Since (\ref{isocrystal-section-comparison-map}) is visibly injective, we can and will identify $\widetilde{E}\bigl(\widetilde{X}_\eta\bigr)^{\nabla=0}$ with a submodule of $\widetilde{E}\bigl(\widetilde{X}_\eta\bigr) \otimes_{R,s} \BB_\crys(S,S^+)$.
	It suffices to show that $\bigl(\widetilde{E}\bigl(\widetilde{X}_\eta\bigr) \otimes_{R,s} \BB_\crys(S,S^+)\bigr)^G \subseteq \widetilde{E}\bigl(\widetilde{X}_\eta\bigr)^{\nabla=0}$.
	Recall from \cref{def Bcrys} that $\BB_\crys(S,S^+)$ is naturally equipped with a filtration, which by \cref{filtration-compatibility} is equal to the restriction of the $t$-adic filtration on $\BB_\dR(S,S^+)$.
	In particular, we have $\gr^\bullet \BB_\crys(S,S^+) \simeq \bigoplus_{r \in \ZZ} S \otimes C(r)$.
	Moreover, the $G$-action on $\widetilde{E}\bigl(\widetilde{X}_\eta\bigr) \otimes_{R,s} \BB_\crys(S,S^+)$ induces one on $\widetilde{E}\bigl(\widetilde{X}_\eta\bigr) \otimes_R \gr^\bullet \BB_\crys(S,S^+)$, where for the latter the $G$-action only comes from the second factor because
	\[ \epsilon \cdot f_i(s(x)) - s(x) = \bigl[\bigl(\epsilon_n x^{1/p^n}_i\bigr)\bigr] - \bigl[\bigl(x^{1/p^n}_i\bigr)\bigr] = ([\epsilon]-1) \cdot [x^\flat_i] \in \Fil^1\BB_\crys(S,S^+) \]
	for $\epsilon \in \ZZ_p(1)$ and all $i = 1,\dotsc,d$.
	As $\bigl(\bigoplus_{r \in \ZZ} S \otimes C(r)\bigr)^G \simeq \bigl(\bigoplus_{r \in \ZZ} R \otimes C(r)\bigr)^{G_K} \simeq R$, it follows that
	\begin{itemize}
		\item $\bigl(\widetilde{E}\bigl(\widetilde{X}_\eta\bigr) \otimes_{R,s} \BB_\crys(S,S^+)\bigr)^G = \bigl(\widetilde{E}\bigl(\widetilde{X}_\eta\bigr) \otimes_{R,s} \Fil^0\BB_\crys(S,S^+)\bigr)^G$ and 
		\item $\bigl(\widetilde{E}\bigl(\widetilde{X}_\eta\bigr) \otimes_{R,s} \Fil^1\BB_\crys(S,S^+)\bigr)^G = \bigl(\widetilde{E}\bigl(\widetilde{X}_\eta\bigr) \otimes_{R,s} \Fil^r\BB_\crys(S,S^+)\bigr)^G$ inductively for all $r \ge 1$ and thus $\bigl(\widetilde{E}\bigl(\widetilde{X}_\eta\bigr) \otimes_{R,s} \Fil^1\BB_\crys(S,S^+)\bigr)^G = 0$, by \cref{filtration-compatibility} and the filtered completeness of $\Fil^\bullet \BB_\dR$.
	\end{itemize}
	Taken together, this shows that
	\[ \bigl(\widetilde{E}\bigl(\widetilde{X}_\eta\bigr) \otimes_{R,s} \BB_\crys(S,S^+)\bigr)^G \simeq \bigl(\widetilde{E}\bigl(\widetilde{X}_\eta\bigr) \otimes_R \gr^0\BB_\crys(S,S^+)\bigr)^G \simeq \bigl(\widetilde{E}\bigl(\widetilde{X}_\eta\bigr) \otimes_R R\bigr)^G. \]
	In other words, every element of $\widetilde{E}\bigl(\widetilde{X}_\eta\bigr) \otimes_{R,s} \BB_\crys(S,S^+)$ which is fixed under the $G$-action is of the form $e \otimes 1$ for some $e \in \widetilde{E}\bigl(\widetilde{X}_\eta\bigr)$
	
	Fix such an $e \otimes 1 \in \bigl(\widetilde{E}\bigl(\widetilde{X}_\eta\bigr) \otimes_{R,s} \Fil^0\BB_\crys(S,S^+)\bigr)^G$.
	It remains to show that $\nabla(e) = 0$.
	The two-step filtration on $\bigl(\widetilde{E}\bigl(\widetilde{X}_\eta\bigr) \otimes_{R,s} \Fil^0\BB_\crys(S,S^+)\bigr) / \bigl(\widetilde{E}\bigl(\widetilde{X}_\eta\bigr) \otimes_{R,s} \Fil^2\BB_\crys(S,S^+)\bigr)$ induces an exact sequence
    \[ \begin{tikzcd}[column sep=small,row sep=small,center picture]
	\bigl(\widetilde{E}\bigl(\widetilde{X}_\eta\bigr) \otimes_R \gr^1\BB_\crys(S,S^+)\bigr)^H \arrow[r] & \Bigl(\tfrac{\widetilde{E}(\widetilde{X}_\eta) \otimes_{R,s} \Fil^0\BB_\crys(S,S^+)}{\widetilde{E}(\widetilde{X}_\eta) \otimes_{R,s} \Fil^2\BB_\crys(S,S^+)}\Bigr)^H \arrow[r] \arrow[d, phantom, ""{coordinate, name=Z}] & \bigl(\widetilde{E}\bigl(\widetilde{X}_\eta\bigr) \otimes_R \gr^0\BB_\crys(S,S^+)\bigr)^H \arrow[dll,"\delta"',rounded corners,to path={ -- ([xshift=2ex]\tikztostart.east) |- (Z) [near end]\tikztonodes -| ([xshift=-2ex]\tikztotarget.west) -- (\tikztotarget)}] \\
	\Hh^1\bigl(H,\widetilde{E}\bigl(\widetilde{X}_\eta\bigr) \otimes_R \gr^1\BB_\crys(S,S^+)\bigr) \arrow[r] & \makebox[6em][l]{\dotso} &
\end{tikzcd} \]
	Since $e \otimes 1$ is fixed under $H$, its image $\overline{e \otimes 1}$ in $\bigl(\widetilde{E}\bigl(\widetilde{X}_\eta\bigr) \otimes_R \gr^0\BB_\crys(S,S^+)\bigr)^H$ lifts to the previous term.
	Hence,
	\[ \delta(\overline{e \otimes 1}) \in \Hh^1\bigl(H,\widetilde{E}\bigl(\widetilde{X}_\eta\bigr) \otimes_R \gr^1\BB_\crys(S,S^+)\bigr) \simeq \bigoplus^d_{i=1} \Hh^1\bigl(\ZZ_p(1) \cdot f_i,\widetilde{E}\bigl(\widetilde{X}_\eta\bigr) \otimes_R S(1)\bigr) \]
	must vanish.
	That is, for all $i = 1,\dotsc,d$
	\[ 0 = \delta(\overline{e \otimes 1})_i = (\epsilon\cdot f_i) \cdot m - m \equiv  \theta_i(e) \otimes ([\epsilon]-1) \cdot [x^\flat_i] \quad \text{in } \Hh^1\bigl(\ZZ_p(1) \cdot f_i,\widetilde{E}\bigl(\widetilde{X}_\eta\bigr) \otimes_R S(1) \bigr), \]
	and thus $\nabla(e) = 0$, as desired.
\end{proof}
In the following, we use the notation $\fIsoc^\varphi(X_{p=0}/V_0)$ and $\mathrm{fVect}^\varphi(X_\eta, \BB_\crys)$ from \cref{filtered-F-isocrystal} and \cref{crystalline-filtration} for the category of filtered $F$-isocrystals over the crystalline site $(X_{p=0}/V_0)_\crys$ (equivalently over the crystalline site $(X_s/V_0)_\crys$ by \cref{F-isocrystal-and-conv}) and the category of filtered $\varphi$-vector bundles over the ringed site $(X_{\eta,\proet}, \BB_\crys)$, respectively.
\begin{corollary}\label{hom-compatibility-filtered-F-isoc}
    Let $X$ be a smooth $p$-adic formal scheme over $\cO_K$ and let $\cE_i=\bigl(\cE_i,\varphi_i,\Fil^\bullet(E_i)\bigr)$, $i=1,2$, be two filtered $F$-isocrystals on $X_s$.
    Then there is a natural isomorphism of $\QQ_p$-vector spaces
    \[ \Hom_{\fIsoc^\varphi(X_{p=0}/V_0)}\bigl(\cE_1,\cE_2\bigr) \simeq \Hom_{\mathrm{fVect}^\varphi(X_\eta, \BB_\crys)}\bigl(\BB_\crys(\cE_1),\BB_\crys(\cE_2)\bigr). \]
\end{corollary}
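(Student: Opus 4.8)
The plan is to express both Hom-spaces in terms of global sections of the internal hom and then to invoke \cref{isocrystal-section-comparison} applied to $\cG \colonequals \iHom(\cE_1,\cE_2)$ rather than to $\cE_1$ or $\cE_2$ directly. Since $\fIsoc^\varphi(X_{p=0}/V_0)$ is symmetric closed monoidal (\cref{filtered-F-isocrystal}), $\cG$ is again a filtered $F$-isocrystal, whose underlying vector bundle with connection $(G,\nabla)$ on $X_\eta$ is the internal hom of the underlying bundles of $\cE_1$ and $\cE_2$, endowed with the hom-filtration of \cref{filtered-F-isocrystal}. Unwinding definitions, a morphism $\cE_1 \to \cE_2$ of filtered $F$-isocrystals is precisely a global section $s$ of $G$ that is (i) horizontal for $\nabla$, (ii) fixed by $\varphi_\cG$, and (iii) contained in $\Fil^0(G)$: conditions (i)--(ii) together say that $s$ is a morphism of $F$-isocrystals (via \cref{Berthelot-equivalence} and \cref{F-isocrystal-and-conv}), and (iii) is exactly filtered compatibility by the definition of the filtration on $\iHom$. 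Since the horizontal global sections of $G$ are identified with $\Gamma\big((X_{p=0}/V_0)_\conv,\cG\big)[1/p]$ (cf.\ \cref{underlying-vb} and the proof of \cref{isocrystal-section-comparison}), this yields
\[ \Hom_{\fIsoc^\varphi(X_{p=0}/V_0)}(\cE_1,\cE_2) \simeq \Fil^0\Big(\Gamma\big((X_{p=0}/V_0)_\conv,\cG\big)[1/p]\Big)^{\varphi=1}. \]

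Next I would carry out the analogous bookkeeping on the pro-\'etale side. The category $\mathrm{fVect}^\varphi(X_\eta,\BB_\crys)$ also has internal homs, and a morphism $\BB_\crys(\cE_1) \to \BB_\crys(\cE_2)$ is the same as a global section of $\iHom_{\BB_\crys}\big(\BB_\crys(\cE_1),\BB_\crys(\cE_2)\big)$ that is $\varphi$-fixed and, after base change along $\BB_\crys \to \BB_{\crys,K}$, lies in $\Fil^0$. The key point is the identification $\iHom_{\BB_\crys}\big(\BB_\crys(\cE_1),\BB_\crys(\cE_2)\big) \simeq \BB_\crys(\cG)$: for an affinoid perfectoid $U$ with $\widehat{U} = \Spa(S,S^+)$ this is the tautological isomorphism $\Hom_{\AA_\crys(S,S^+)}\big(\cE_1(\AA_\crys(S,S^+)),\cE_2(\AA_\crys(S,S^+))\big) \simeq \cG(\AA_\crys(S,S^+))$, valid because $\cE_1(\AA_\crys(S,S^+))$ is finite projective; by \cref{Frob-BcrysE} and functoriality it is Frobenius-equivariant, and after $\otimes_{K_0} K$ it is filtered because, by \cref{crystalline-filtration}, the filtration on each $\BB_{\crys,K}(\cE_i)$ is the tensor-product filtration $\Fil^\bullet E_i \otimes \Fil^\bullet\BB_{\crys,K}$, and the hom-filtration of two such tensor-product filtrations is the tensor-product filtration of the hom-filtration. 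Consequently,
\[ \Hom_{\mathrm{fVect}^\varphi(X_\eta,\BB_\crys)}\big(\BB_\crys(\cE_1),\BB_\crys(\cE_2)\big) \simeq \Fil^0\Big(\Gamma\big(X_{\eta,\proet},\BB_\crys(\cG)\big)\Big)^{\varphi=1}. \]

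Finally, I would apply \cref{isocrystal-section-comparison} to the convergent $F$-isocrystal $\cG$, which provides a natural, Frobenius-equivariant, and (after $\otimes_{K_0} K$) filtered isomorphism $\Gamma\big((X_{p=0}/V_0)_\conv,\cG\big)[1/p] \xrightarrow{\sim} \Gamma\big(X_{\eta,\proet},\BB_\crys(\cG)\big)$. Taking $\varphi$-invariants and intersecting with $\Fil^0$ on both sides then matches the two descriptions above, and naturality in $\cE_1,\cE_2$ is inherited from that of all the intervening identifications. The substance of the argument is entirely contained in \cref{isocrystal-section-comparison}; the only genuinely new points to verify are the compatibility of the pro-\'etale internal hom with the functor $\BB_\crys(\blank)$ and the matching of the hom-filtration with the one induced from the tensor-product filtrations of \cref{crystalline-filtration}, which I expect to be the mildest obstacle — essentially routine linear algebra with (locally split) filtered modules.
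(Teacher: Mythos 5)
Your argument is correct and follows the same strategy as the paper: both express the two Hom-spaces as global sections of the respective internal homs and reduce to \cref{isocrystal-section-comparison} applied to $\iHom(\cE_1,\cE_2)$. The paper compresses this reduction to a single sentence, whereas you spell out the unwinding of what a morphism in each category amounts to (horizontal, $\varphi$-fixed, in $\Fil^0$) and verify the compatibility of $\BB_\crys(\blank)$ with internal homs and with the hom-filtration — details the paper implicitly takes for granted.
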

\begin{proof}
    By identifying the homomorphisms on both sides of the isomorphism as global sections of the respective internal homs, this reduces to the Frobenius equivariant and filtered isomorphism from \cref{isocrystal-section-comparison}.
\end{proof}

\subsection{Crystalline local systems}\label{sec2.4}

After our preparations on crystalline period sheaves and $\BB_\crys$-vector bundles associated with filtered $F$-isocrystals in \cref{sec2.3}, we can now recall the notion of crystalline sheaves and discuss a simpler variant.
\begin{definition}[{\cite[p.~67]{Fal89}}]\label{crys loc def}
	Let $X$ be a smooth $p$-adic formal scheme over $\cO_K$, let $X_s$ be its reduced special fiber, and let $X_\eta$ be its generic fiber, considered as an adic space over $\Spa(K,\cO_K)$.
	A sheaf of $\widehat{\ZZ}_p$-modules $L$ on $X_{\eta, \proet}$ is \emph{weakly crystalline} if
	\begin{enumerate}[label=\upshape{(\roman*)}]
		\item $L$ is \emph{lisse}, that is, locally on $X_{\eta, \proet}$ of the form $\widehat{\ZZ}_p \otimes_{\ZZ_p} M$ for some finitely generated $\ZZ_p$-module $M$ and
		\item\label{crys loc def filtr} there exists an isocrystal $(\cE,\varphi)$ on $X_s$ in the sense of \cref{F-isoc-def} and an isomorphism
		\[ \vartheta \colon \BB_{\crys}(\cE) \xrightarrow{\sim} \BB_{\crys} \otimes_{\widehat{\ZZ}_p} L \]
		such that
		\begin{enumerate}[label=\upshape{(\alph*)},series=innerlist]
			\item $\vartheta$ commutes with the Frobenius isomorphisms on both sides (cf.\ \cref{Frob-BcrysE}).
		\end{enumerate}
	\end{enumerate}
	The sheaf $L$ is \emph{crystalline} if $(\cE,\varphi)$ underlies a filtered $F$-isocrystal $\bigl(\cE,\varphi,\Fil^\bullet(E)\bigr)$ on $X$ in the sense of \cref{filtered-F-isocrystal} and $\vartheta$ satisfies the following additional condition:
	\begin{enumerate}[resume]
		\item[]\begin{enumerate}[resume=innerlist]
			\item\label{crys lod def filtr BcrysK} the $K$-linear base extension $\vartheta_K \colonequals \vartheta \otimes_{K_0} K \colon \BB_{\crys}(\cE) \otimes_{K_0} K \xrightarrow{\sim} (\BB_{\crys} \otimes_{K_0} K) \otimes_{\widehat{\ZZ}_p} L$ sends the filtration described in \cref{crystalline-filtration} to the filtration coming from the first factor.
		\end{enumerate}
	\end{enumerate}
	If $L$ is furthermore a $\widehat{\ZZ}_p$-local system (equivalently, torsionfree), we call it a \emph{weakly crystalline local system} resp.\ \emph{crystalline local system}
\end{definition}
\begin{remark}
	We warn the reader that the notion of crystallinity a priori depends on the choice of the smooth integral model $X$.
	In a forthcoming article \cite{GPY} of the first named author with Sasha Petrov and Ziquan Yang, we show that a local system over $X_\eta$ is crystalline if and only if the restriction to every closed point is a crystalline representation.
	This implies in particular that crystallinity is independent of the choice of smooth integral models.
\end{remark}
In \cref{crys loc def}.\ref{crys loc def filtr}, one can give an equivalent description of the filtration condition in terms of a $\BB_\dR$-linear filtered isomorphism.
\begin{proposition}\label{crys-de-Rham-fil}
	Assume the same setup as in \cref{crys loc def}.\ref{crys loc def filtr}.
	The condition in \cref{crys loc def}.\ref{crys loc def filtr}.\textup{(\ref{crys lod def filtr BcrysK})} that the $\BB_{\crys,K}$-linear isomorphism $\vartheta\otimes_{K_0} K$ is filtered is equivalent to the condition that the $\BB_{\dR}$-linear isomorphism
	\begin{equation}\label{de-Rham-fil}
		\vartheta\otimes_{\BB_{\crys}} \BB_\dR \colon \BB_\dR(\cE) \simeq \BB_\dR  \otimes_{\widehat{\ZZ}_p} L
	\end{equation}
	is filtered.
\end{proposition}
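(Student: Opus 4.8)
The plan is to derive both implications from the way the filtration on $\BB_\dR(\cE)$ is built out of the one on $\BB_{\crys,K}(\cE)$ in \cref{crystalline-filtration}, together with the fact that, by definition, $\Fil^r\BB_{\crys,K}=\Fil^r\BB_\dR\cap\BB_{\crys,K}$ (\cref{def filtration of BcrysK}). Since the two conditions in question are equalities of sub-sheaves on $X_{\eta,\proet}$, they may be checked pro-\'etale locally. I would therefore first pass to the local situation of \cref{crystalline-filtration}: $X$ affine framed with a $V_0$-model, framing, and section $s$ as there, $L$ trivial, and --- using that $\Fil^\bullet(E)$ is locally split --- a splitting $E\simeq\bigoplus_j\gr^j(E)$ with $\Fil^i(E)=\bigoplus_{j\ge i}\gr^j(E)$ and each $\gr^j(E)$ locally free.

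The implication ``$\vartheta\otimes_{K_0}K$ filtered $\Rightarrow$ $\vartheta\otimes_{\BB_\crys}\BB_\dR$ filtered'' should be formal. By \cref{crystalline-filtration}, the filtered $\BB_\dR$-module $\BB_\dR(\cE)$ is the filtered base change $\Fil^\bullet\BB_{\crys,K}(\cE)\otimes_{\Fil^\bullet\BB_{\crys,K}}\Fil^\bullet\BB_\dR$ along $\BB_{\crys,K}\hookrightarrow\BB_\dR$, and likewise $\Fil^\bullet(\BB_\dR\otimes_{\widehat{\ZZ}_p}L)=\Fil^\bullet(\BB_{\crys,K}\otimes_{\widehat{\ZZ}_p}L)\otimes_{\Fil^\bullet\BB_{\crys,K}}\Fil^\bullet\BB_\dR$ (here the $L$-side filtration is the one coming from the first factor, so this is immediate), and $\vartheta\otimes_{\BB_\crys}\BB_\dR$ is the image of $\vartheta\otimes_{K_0}K$ under this base change. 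Since filtered base change sends a filtered isomorphism $f$ (one with $f(\Fil^r)=\Fil^r$ for all $r$, the induced map on graded pieces then being automatically an isomorphism) to a filtered isomorphism, the claim follows.

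For the converse, I would exploit that $\vartheta$ is $\BB_\crys$-linear: hence $\vartheta_\dR\colonequals\vartheta\otimes_{\BB_\crys}\BB_\dR$ carries the sub-sheaf $\BB_{\crys,K}(\cE)\subseteq\BB_\dR(\cE)$ isomorphically onto $\BB_{\crys,K}\otimes L\subseteq\BB_\dR\otimes L$ and restricts there to $\vartheta\otimes_{K_0}K$; the inclusions are those of \cref{injection of period sheaves}.\ref{injection of period sheaves compatible} tensored with a locally free module, so they are injective. From the splitting of $\Fil^\bullet(E)$ and the identity $\Fil^r\BB_\dR\cap\BB_{\crys,K}=\Fil^r\BB_{\crys,K}$ one obtains the intersection formulas $\Fil^r\BB_\dR(\cE)\cap\BB_{\crys,K}(\cE)=\Fil^r\BB_{\crys,K}(\cE)$ and $\Fil^r(\BB_\dR\otimes L)\cap(\BB_{\crys,K}\otimes L)=\Fil^r(\BB_{\crys,K}\otimes L)$. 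Now assume $\vartheta_\dR$ is filtered. Being bijective, $\vartheta_\dR$ commutes with intersections, so
\begin{align*}
	\bigl(\vartheta\otimes_{K_0}K\bigr)\bigl(\Fil^r\BB_{\crys,K}(\cE)\bigr)
	&=\vartheta_\dR\bigl(\Fil^r\BB_\dR(\cE)\cap\BB_{\crys,K}(\cE)\bigr)
	=\vartheta_\dR\bigl(\Fil^r\BB_\dR(\cE)\bigr)\cap\vartheta_\dR\bigl(\BB_{\crys,K}(\cE)\bigr)\\
	&=\Fil^r\bigl(\BB_\dR\otimes L\bigr)\cap\bigl(\BB_{\crys,K}\otimes L\bigr)
	=\Fil^r\bigl(\BB_{\crys,K}\otimes L\bigr),
\end{align*}
which is precisely the assertion that $\vartheta\otimes_{K_0}K$ is filtered.

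The only genuinely technical step --- and the one I expect to need the most care --- is the local verification that, under the chosen splitting of $\Fil^\bullet(E)$ and the section $s$, the filtration on $\BB_\dR(\cE)$ (resp.\ $\BB_{\crys,K}(\cE)$) is really the direct sum $\bigoplus_j\gr^j(E)\otimes_{\cO_{X_\eta}}\Fil^{\bullet-j}\BB_\dR$ (resp.\ with $\BB_{\crys,K}$), so that the intersection formulas and the base-change identifications used above hold exactly; this rests on the filteredness and $s$-independence of the comparison isomorphisms $c$ from \cref{crystalline-filtration}. Everything else is formal manipulation with filtered modules.
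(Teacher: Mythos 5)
Your proof is correct and follows essentially the same path as the paper's: the forward direction is the same filtered-base-change observation via \cref{crystalline-filtration}, and the converse rests on exactly the same intersection formulas ($\Fil^r\BB_\dR(\cE)\cap\BB_{\crys,K}(\cE)=\Fil^r\BB_{\crys,K}(\cE)$ via the local splitting of $\Fil^\bullet E$, and $\Fil^r(\BB_\dR\otimes L)\cap(\BB_{\crys,K}\otimes L)=\Fil^r(\BB_{\crys,K}\otimes L)$ — which in the paper comes from flatness of $L$ over $\widehat{\QQ}_p$ together with \cref{def filtration of BcrysK} rather than from the splitting of $\Fil^\bullet E$, a minor misattribution on your part), combined with $\vartheta$ being $\BB_\crys$-linear and $\vartheta_\dR$ a filtered bijection. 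The only difference is cosmetic: you transport $\Fil^r\BB_{\crys,K}(\cE)$ forward under $\vartheta\otimes_{K_0}K$ and identify the image, whereas the paper identifies the intersection $\Fil^r\BB_\dR(\cE)\cap\BB_{\crys,K}(\cE)$ with $L\otimes\Fil^r\BB_{\crys,K}$ using the hypothesis and then shows the canonical map from $\Fil^r\BB_{\crys,K}(\cE)$ into it is an isomorphism — the same calculation reorganized.
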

\begin{proof}
	Assume that $\vartheta\otimes_{K_0} K \colon \BB_{\crys}(\cE) \otimes_{K_0} K \xrightarrow{\sim} (\BB_{\crys} \otimes_{K_0} K) \otimes_{\widehat{\ZZ}_p} L$ is filtered.
	By the filtered base change formula $\Fil^\bullet \BB_{\dR}(\cE) \simeq \Fil^\bullet \BB_{\crys,K}(\cE) \otimes_{\Fil^\bullet \BB_{\crys,K}} \Fil^\bullet \BB_{\dR}$ in \cref{crystalline-filtration}, we may take the filtered base change of $\vartheta\otimes_{K_0} K$ along the map of filtered sheaves of rings $\Fil^\bullet \BB_{\crys,K} \to \Fil^\bullet \BB_{\dR}$.
	This enhances the $\BB_\dR$-linear map $\vartheta\otimes_{\BB_{\crys}} \BB_\dR \simeq (\vartheta \otimes_{K_0} K)\otimes_{\BB_{\crys,K}} \BB_{\dR}$ to a filtered isomorphism $(\vartheta\otimes_{K_0} K)\otimes_{\Fil^\bullet \BB_{\crys,K}} \Fil^\bullet \BB_{\dR}$.
	
	Conversely, assume that $\vartheta\otimes_{\BB_{\crys}} \BB_\dR$ from (\ref{de-Rham-fil}) is a filtered isomorphism.
	By the flatness of the local system $L[1/p]$ over $\widehat{\QQ}_p$, \cref{def filtration of BcrysK}, and the assumption, we have 
	\[  \bigl( \Fil^i (\BB_{\dR}(\cE) )\bigr) \cap \bigl(\BB_{\crys,K}(\cE)\bigr) \simeq ( L \otimes \Fil^i \BB_\dR ) \cap (L\otimes \BB_{\crys,K}) \simeq L \otimes (\Fil^i \BB_{\crys,K}). \]
	We then claim that the canonical map $\Fil^i\BB_{\crys,K}(\cE) \to \bigl( \Fil^i (\BB_{\dR}(\cE) )\bigr) \cap \bigl(\BB_{\crys,K}(\cE)\bigr)$ is an isomorphism, which implies that the map $\vartheta\otimes_{K_0} K$ is a filtered isomorphism.
	
	To check the claim, we may work pro-\'etale locally.
	By \cref{crystalline-filtration}, given a local $V_0$-model $R^+$ of $X$ that maps to $\AA_\crys(S,S^+)$ for a perfectoid $C$-algebra $(S,S^+)$, the filtrations on $\BB_{\crys,K}(\cE)(S,S^+)$ and $\BB_\dR(\cE)(S,S^+)$ are given by the product filtrations 
	\[
	\Fil^\bullet E\otimes_{\cO_{X_\eta}} (\Fil^\bullet\BB_{\crys,K}(S,S^+)) \quad \text{and} \quad \Fil^\bullet E\otimes_{\cO_{X_\eta}} \Fil^\bullet\BB_\dR(S,S^+).
	\]
	In particular, as the filtration on the vector bundle $E$ locally splits into a direct sum of subquotient vector bundles, we obtain after localizing $(S,S^+)$ further if necessary
	\begin{align*}
		&\bigl(\Fil^i\BB_{\dR}(\cE)(S,S^+)\bigr) \cap (\BB_{\crys,K}(\cE)(S,S^+))  \\
		& \simeq  \bigl( \Fil^i ( E\otimes_{\cO_{X_\eta}} \BB_\dR(S,S^+) ) \bigr)  \cap \bigl( E\otimes_{\cO_{X_\eta}} \BB_{\crys,K}(S,S^+)  \bigr) \\
		& \simeq \Fil^i \bigl( E \otimes_{\cO(X)} \BB_{\crys,K}(S,S^+) \bigr) \\
		& = \Fil^i \BB_{\crys,K}(\cE)(S,S^+). \qedhere
	\end{align*}
\end{proof}
\begin{remark}
    By \cref{unramified-model}, any connected framed open $U \subset X$ has a model $\widetilde{U} = \Spf(R^+)$ over $\Spf V_0$.
    The $p$-completion of the normalization of $R^+$ in the maximal \'etale extension of $R \colonequals R^+[1/p]$ defines a perfectoid cover $(R,R^+) \to (S,S^+)$ of the rigid fiber $\widetilde{U}_\eta$ of this model.
    Let $G_{\widetilde{U}_\eta}$ be the continuous Galois group of the cover.
    It is enough to give compatible (between the $U$) isomorphisms
    \[ \cE\bigl(\BB_{\crys}(S,S^+)\bigr) \xrightarrow{\sim} \BB_{\crys}(S,S^+) \otimes_{\ZZ_p} L\bigl(\Spa(S,S^+)\bigr) \]
    that preserve the $G_{\widetilde{U}_\eta}$-action, Frobenius, and filtrations after $\otimes_{K_0} K$.
\end{remark}
As a consequence, we see that crystalline local systems embed fully faithfully into filtered $F$-isocrystals:
\begin{corollary}\label{comparison-functoriality}
    Let $X$ be a smooth $p$-adic formal scheme over $\cO_K$.
    Let $L_i$ for $i=1,2$ be two crystalline sheaves of $\widehat{\ZZ}_p$-modules with associated filtered $F$-isocrystals $\bigl(\cE_i,\varphi_i,\Fil^\bullet(E_i)\bigr)$ on $X_s$, respectively.
    Then the natural map of groups of  homomorphisms below is an isomorphism.
    \[ \Hom(L_1,L_2) \otimes_{\ZZ_p} \QQ_p \to \Hom_{\mathrm{fVect}^\varphi(X_\eta, \BB_\crys)}\bigl(\BB_\crys \otimes_{\widehat{\ZZ}_p} L_1,\BB_\crys \otimes_{\widehat{\ZZ}_p} L_2) \simeq \Hom_{\fIsoc^\varphi(X_s/V_0)}\bigl(\cE_1,\cE_2\bigr). \]
    Here the last isomorphism comes from the crystalline comparison isomorphisms and \cref{hom-compatibility-filtered-F-isoc}.
\end{corollary}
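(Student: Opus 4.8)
The plan is to deduce the corollary from the full faithfulness of the functor $\mathbb{L}\mapsto\BB_\crys\otimes_{\widehat{\ZZ}_p}\mathbb{L}$ from crystalline $\QQ_p$-local systems to $\mathrm{fVect}^\varphi(X_\eta,\BB_\crys)$, and to establish the latter by an explicit computation of Frobenius-invariant, filtration-zero sections using the fundamental exact sequence of \cref{injection of period sheaves}.\ref{injection of period sheaves fundamental sequence}. First I would rewrite both sides via internal homs: the source becomes $H^0(X_{\eta,\proet},\iHom(L_1,L_2)[1/p])$, and the target, via the closed monoidal structure on $\mathrm{fVect}^\varphi(X_\eta,\BB_\crys)$, becomes $\Hom_{\mathrm{fVect}^\varphi}\bigl(\mathbf 1,\iHom(\BB_\crys\otimes L_1,\BB_\crys\otimes L_2)\bigr)$. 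Since forming the associated filtered $F$-isocrystal together with the crystalline comparison isomorphism is compatible with internal homs --- a formal consequence of the symmetric monoidal structures, using \cref{hom-compatibility-filtered-F-isoc} and the construction of the $\vartheta_i$ --- the sheaf $\mathbb{L}\colonequals\iHom(L_1,L_2)[1/p]$ is again crystalline, with associated object $\iHom(\cE_1,\cE_2)$ and comparison $\vartheta\colon\BB_\crys\bigl(\iHom(\cE_1,\cE_2)\bigr)\simeq\BB_\crys\otimes_{\widehat{\QQ}_p}\mathbb{L}$. The corollary is thereby reduced to showing that the natural inclusion of pro-\'etale sheaves $\mathbb{L}=\widehat{\QQ}_p\otimes_{\widehat{\QQ}_p}\mathbb{L}\hookrightarrow\underline{\Hom}_{\mathrm{fVect}^\varphi(X_\eta,\BB_\crys)}(\mathbf 1,\BB_\crys\otimes_{\widehat{\QQ}_p}\mathbb{L})$ is an isomorphism, after which one takes $H^0(X_{\eta,\proet},-)$ and invokes \cref{hom-compatibility-filtered-F-isoc} for the last identification in the statement.

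To analyze the right-hand side, I would note that a section of $\underline{\Hom}_{\mathrm{fVect}^\varphi}(\mathbf 1,M)$ for $M\colonequals\BB_\crys\otimes_{\widehat{\QQ}_p}\mathbb{L}$ is an element $m\in M$ with $\varphi(m)=m$ whose image in the $K$-base change $M_K=\BB_{\crys,K}\otimes_{\widehat{\QQ}_p}\mathbb{L}$ lies in $\Fil^0$. Because $\vartheta$ is Frobenius-equivariant and, after $\otimes_{K_0}K$, carries the filtration on $\BB_{\crys,K}\bigl(\iHom(\cE_1,\cE_2)\bigr)$ from \cref{crystalline-filtration} to the filtration $\Fil^i\BB_{\crys,K}\otimes_{\widehat{\QQ}_p}\mathbb{L}$ induced from the first factor (by the definition of crystalline sheaf, \cref{crys loc def}), and because $\Fil^0\BB_{\crys,K}=\Fil^0\BB_\dR\cap\BB_{\crys,K}$ by \cref{def filtration of BcrysK} while $\Fil^0\BB_\crys=\Fil^0\BB_\dR\cap\BB_\crys$ by \cref{filtration-compatibility}, this description simplifies to $\underline{\Hom}_{\mathrm{fVect}^\varphi}(\mathbf 1,M)\simeq\bigl((\Fil^0\BB_\crys)\otimes_{\widehat{\QQ}_p}\mathbb{L}\bigr)^{\varphi=1}$. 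Comparing coefficients against a local basis of $\mathbb{L}$, which is flat over $\widehat{\QQ}_p$, turns the Frobenius-fixed condition into the requirement that each coefficient lie in $\BB_\crys^{\varphi=1}\cap\Fil^0\BB_\crys=\BB_\crys^{\varphi=1}\cap\Fil^0\BB_\dR$; since $\Fil^0\BB_\dR=\BB_\dR^+$, this is exactly the kernel of $\BB_\crys^{\varphi=1}\to\BB_\dR/\BB_\dR^+$, which equals $\widehat{\QQ}_p$ by the fundamental exact sequence $0\to\widehat{\QQ}_p\to\BB_\crys^{\varphi=1}\to\BB_\dR/\BB_\dR^+\to 0$ of \cref{injection of period sheaves}.\ref{injection of period sheaves fundamental sequence}. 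Hence the right-hand side is $\widehat{\QQ}_p\otimes_{\widehat{\QQ}_p}\mathbb{L}=\mathbb{L}$, and unwinding the identifications shows this computation is precisely the inclusion we started with, so it is an isomorphism.

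The main obstacle is bookkeeping rather than a new idea: one must track carefully the several filtrations in play --- the one on $\BB_{\crys,K}(\cE)$ from \cref{crystalline-filtration}, its transport along $\vartheta$ after $\otimes_{K_0}K$, and the intersection formulas for $\Fil^0\BB_{\crys,K}$ and $\Fil^0\BB_\crys$ inside $\BB_\dR$ --- and one must check that the assignment of $(\cE,\vartheta)$ to a crystalline sheaf is genuinely compatible with the formation of internal homs, so that the reduction to a single crystalline local system is legitimate; the remaining input, the fundamental exact sequence, is already available. A minor point is the harmless passage between crystalline $\widehat{\ZZ}_p$-sheaves (possibly with torsion) and their $\QQ_p$-linearizations after inverting $p$.
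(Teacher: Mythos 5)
Your proposal is correct and follows essentially the same route as the paper's proof: reduce via internal homs to the case of a single crystalline sheaf $\mathbb{L}$, identify the target as Frobenius-fixed, filtration-zero sections of $\BB_\crys \otimes_{\widehat{\QQ}_p} \mathbb{L}$, and conclude by applying the fundamental exact sequence (\cref{injection of period sheaves}.\ref{injection of period sheaves fundamental sequence}) together with flatness of $\mathbb{L}$ and the intersection formula from \cref{def filtration of BcrysK}. The extra bookkeeping you flag around compatibility of the crystalline-sheaf structure with internal homs is present in the paper only implicitly ("thanks again to the existence of internal Hom functors"), so your care there is not a divergence but a fuller version of the same step.
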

\begin{proof}
    Thanks again to the existence of internal Hom functors, it suffices to show that for any crystalline sheaf $L$ of $\widehat{\ZZ}_p$-modules on $X_{\eta,\proet}$, the natural map
    \[ \Gamma(X_\eta,L) \otimes_{\ZZ_p} \QQ_p \simeq \Gamma(X_\eta,L[1/p]) \to \Gamma(X_\eta,\BB_\crys \otimes_{\widehat{\ZZ}_p} L)^{\varphi=1} \cap \Gamma(X_\eta,\Fil^0\BB_{\crys,K} \otimes_{\widehat{\ZZ}_p} L) \]
    is an isomorphism;
    here, $\varphi$ acts on $\BB_\crys \otimes_{\widehat{\ZZ}_p} L$ via the first factor as usual.
    Since $L$ is flat over $\widehat{\ZZ}_p$, \cref{injection of period sheaves}.\ref{injection of period sheaves fundamental sequence} produces a short exact sequence of sheaves on $X_{\eta,\proet}$
    \[ 0 \to L[1/p] \to (\BB_\crys \otimes L)^{\varphi=1} \to (\BB^+_\dR \otimes_{\widehat{\ZZ}_p} L) / (\BB_\dR \otimes_{\widehat{\ZZ}_p} L) \to 0. \]
    Combined with \cref{def filtration of BcrysK}, this yields the statement.
\end{proof}
The following observation will be used later to compare the two notions of crystallinity.
\begin{proposition}\label{OBdR isocrystal and tensor product connection}
   Let $X$ be a smooth $p$-adic formal scheme over $\cO_K$ and let $\cE$ be a convergent isocrystal over $(X_{p=0}/V_0)_\conv$.
   Let $(E,\nabla_E)$ be the underlying vector bundle with flat connection on $X_\eta$ from \cref{underlying-vb}.
   We denote the connection on the period sheaf $\cO\BB_\dR$ by $\nabla_{\cO\BB_\dR}$.
   \begin{enumerate}[label=\upshape{(\roman*)},leftmargin=*]
   	\item\label{OBdR two connections} Let $\nabla'$ be the product connection of $\nabla_E$ and $\nabla_{\cO\BB_\dR}$ on the $\cO\BB_\dR$-vector bundle $E\otimes_{\cO_{X_\eta}} \cO\BB_\dR$.
   	Then $(E\otimes_{\cO_{X_\eta}} \cO\BB_\dR, \nabla')$ is naturally isomorphic to the base change $(\BB_{\dR}(\cE)\otimes_{\BB_\dR} \cO\BB_\dR, \mathrm{id}_{\BB_{\dR}(\cE)}\otimes \nabla_{\cO\BB_\dR})$.
   	\item\label{OBdR two filtrations} Assume $\cE$ underlies a filtered isocrystal $(\cE,\Fil^\bullet(E))$, 
   	and let $\Fil^\bullet\BB_{\dR}(\cE)$ be the induced filtration on $\BB_\dR(\cE)$ defined as in \cref{crystalline-filtration}.
   	Then the product filtration on $\Fil^\bullet E\otimes_{\cO_{X_\eta}} \Fil^\bullet \cO\BB_\dR$ is naturally isomorphic to the filtration $\Fil^\bullet \BB_\dR(\cE)\otimes_{\Fil^\bullet \BB_\dR} \Fil^\bullet \cO\BB_\dR$.
   \end{enumerate}
\end{proposition}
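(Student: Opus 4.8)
The plan is to reduce to an explicit local model and identify the claimed isomorphism with a Taylor (crystal) isomorphism, in the same spirit as the independence argument inside \cref{crystalline-filtration}. All the objects and constructions in the statement are Zariski-local on $X$, and the isomorphism we produce will be canonical, so we may assume that $X$ is affine and carries a framing with smooth model $\widetilde X = \Spf R^+$ over $V_0$ and framing coordinates $x_1,\dots,x_d$. Write $(\widetilde E,\widetilde\nabla) = \bigl(\cE(\widetilde X)[1/p],\sum_i \theta_i\otimes dx_i\bigr)$ for the associated vector bundle with connection on $\widetilde X_\eta$ (\cref{Berthelot-equivalence}), so that $(E,\nabla_E)$ restricts to $(\widetilde E,\widetilde\nabla)\otimes_{K_0}K$ over $\widetilde X_\eta$ by \cref{underlying-vb}. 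We may work pro-\'etale locally on the perfectoid cover $\widehat U = \Spa(S,S^+)$ as in \cref{crystalline-filtration}, over which the $x_i$ acquire compatible systems $x_i^\flat$ of $p$-power roots.

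For (i), recall first from the crystal property (used identically in \cref{crystalline-filtration} and \cref{isocrystal-section-comparison}) that the map $s' \colon R^+ \to \AA_\crys(S,S^+) \to \BB^+_\dR(S,S^+)$, $x_i\mapsto[x_i^\flat]$, identifies $\BB_\dR(\cE)(U)$ with $\widetilde E\otimes_{R^+,s'}\BB_\dR(S,S^+)$; hence it identifies $\BB_\dR(\cE)(U)\otimes_{\BB_\dR(S,S^+)}\cO\BB_\dR(S,S^+)$, with its connection $\id\otimes\nabla_{\cO\BB_\dR}$, with $\widetilde E\otimes_{R^+,s'}\cO\BB_\dR(S,S^+)$ (the connection coming from the second factor only, since $s'$ factors through $\BB^+_\dR$). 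On the other hand, the structure morphism $s'' \colon R^+ \to \cO_{X_\eta}\to\cO\BB^+_\dR(S,S^+)$, $x_i\mapsto x_i$, identifies $E\otimes_{\cO_{X_\eta}}\cO\BB_\dR(S,S^+)$ with product connection $\nabla'$ with $\widetilde E\otimes_{R^+,s''}\cO\BB_\dR(S,S^+)$. The maps $s'$ and $s''$ agree after composing with $\cO\BB^+_\dR(S,S^+)\twoheadrightarrow S$, and $s''(x_i)-s'(x_i)\in\Fil^1\cO\BB^+_\dR(S,S^+)$; so the Taylor isomorphism $e\otimes1\mapsto\sum_\alpha\theta^\alpha(e)\otimes\bigl(s''(x)-s'(x)\bigr)^{[\alpha]}$ yields a canonical $\cO\BB_\dR(S,S^+)$-linear isomorphism between the two descriptions (the series converges since $\bigl(s''(x)-s'(x)\bigr)^{[\alpha]}\in\Fil^{\abs\alpha}\cO\BB^+_\dR$ and $\cO\BB^+_\dR$ is $\Fil$-adically complete). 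That this isomorphism carries $\nabla'$ to $\id\otimes\nabla_{\cO\BB_\dR}$ is the standard compatibility of the Taylor isomorphism with the connection of a crystal (cf.\ \cref{crystal-D-mod} and \cite[\href{https://stacks.math.columbia.edu/tag/07J7}{Tag~07J7}]{SP}), using $\nabla_{\cO\BB_\dR}([x_i^\flat])=0$ because $[x_i^\flat]\in\BB^+_\dR = (\cO\BB^+_\dR)^{\nabla=0}$. As in \cref{crystalline-filtration}, the Taylor isomorphism is independent of the auxiliary choices, so these local isomorphisms glue to the natural isomorphism of (i).

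For (ii) we check that the same Taylor isomorphism is filtered. By \cref{crystalline-filtration}, $\Fil^\bullet\BB_\dR(\cE)$ corresponds locally under $s'$ to $\Fil^\bullet\widetilde E\otimes_{R^+,s'}\Fil^\bullet\BB_\dR(S,S^+)$, so $\Fil^\bullet\BB_\dR(\cE)\otimes_{\Fil^\bullet\BB_\dR}\Fil^\bullet\cO\BB_\dR$ corresponds to $\Fil^\bullet\widetilde E\otimes_{R^+,s'}\Fil^\bullet\cO\BB_\dR(S,S^+)$, while the product filtration on $\Fil^\bullet E\otimes_{\cO_{X_\eta}}\Fil^\bullet\cO\BB_\dR$ corresponds to $\Fil^\bullet\widetilde E\otimes_{R^+,s''}\Fil^\bullet\cO\BB_\dR(S,S^+)$. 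For $e\in\Fil^i\widetilde E$, iterated Griffiths transversality gives $\theta^\alpha(e)\in\Fil^{i-\abs\alpha}\widetilde E$ and $\bigl(s''(x)-s'(x)\bigr)^{[\alpha]}\in\Fil^{\abs\alpha}\cO\BB^+_\dR$, so each term $\theta^\alpha(e)\otimes\bigl(s''(x)-s'(x)\bigr)^{[\alpha]}$ lies in $\Fil^i$ of the product filtration; hence the Taylor isomorphism is filtered. Running the same estimate for its inverse (of the same shape with $s'$ and $s''$ exchanged) shows it is a filtered isomorphism, which proves (ii). The only real work is this bookkeeping: checking that the Taylor isomorphism is canonical so that it glues, and matching it against the crystalline description of $\BB_\dR(\cE)$ from \cref{crystalline-filtration}; the underlying input is just the equivalence ``crystal $=$ module with connection'' applied over the thickening $\cO\BB^+_\dR(S,S^+)\twoheadrightarrow S$, together with Griffiths transversality.
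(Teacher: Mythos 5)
Your proof is correct and follows the same strategy as the paper's: both compare the ``structure map'' description of $E\otimes_{\cO_{X_\eta}}\cO\BB_\dR$ with the horizontal description obtained via a Teichm\"uller-style lift factoring through $\BB^+_\dR$, using the crystal/Taylor transition isomorphism for (i) and Griffiths transversality for (ii). The paper packages this in the abstract language of infinitesimal thickenings and the transition isomorphism $c'$ (showing it restricts to the identity on $\cE(\BB^+_\dR)$), whereas you write out the explicit Taylor series in coordinates, but these are two presentations of the same argument.
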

As a consequence, by passing to the sheaves of horizontal sections, we have $\BB_\dR(\cE)=(E\otimes_{\cO_{X_\eta}} \cO\BB_\dR)^{\nabla'=0}$, and this isomorphism is filtered when $\cE$ underlies a filtered isocrystal.
\begin{proof}
	For \ref{OBdR two connections}, we first explicate the product connection $\nabla'$ on $E\otimes_{\cO_{X_\eta}} \cO\BB_\dR$ via the crystal property of $\cE$.
	As a preparation, we start by introducing a diagram of infinitesimal thickenings on which our crystal will evaluate.
	Let $\Spf(R)$ be any affine open subscheme of $X$ and let $U \in \Perfd/\Spf(R)_{\eta, \proet}$ with $\widehat{U} = \Spa(S,S^+)$.
	We define $\cO\AA_\crys(0)(S,S^+)$, $\cO\AA_\crys(1)(S,S^+)$ and $D^+(1)$ to be the $p$-completed divided power envelopes of the surjections $\bigl(R\otimes_W \AA_{\inf}(S,S^+)\bigr)^\wedge_p\to S^+$, $\bigl(R\otimes_W R\otimes_W \AA_{\inf}(S,S^+)\bigr)^\wedge_p \to S^+$ and $(R\otimes_W R)^\wedge_p\to R$, respectively.\footnote{
	We warn the reader that our construction of $\cO\AA_\crys$ is ad hoc and different from that of \cite{TT19}, where $\cO_K$ is assumed to be unramified.}
	They fit into a diagram of $p$-completed pro-pd-thickenings in $(X_{p=0}/V_0)_\conv$
	\begin{equation}\label{diagram of pd thickenings}
	   	\begin{tikzcd}
		D^+(1) \ar[r] & \cO\AA_\crys(1)(S,S^+)\\
		R \arrow[u,shift left,"p_1"] \arrow[u,shift right,"p_2"'] \ar[r] & \cO\AA_\crys(0)(S,S^+) \arrow[u,shift left,"q_1"] \arrow[u,shift right,"q_2"'] \\
		& \AA_{\crys}(S,S^+) \ar[u],
	\end{tikzcd} 
	\end{equation}
	and we can evaluate $\cE$ at every term above.
	After inverting $p$, the diagram admits analogously to our \nameref{list} a map to the following diagram of infinitesimal thickenings
	\begin{equation}\label{diagram of thickenings}
		\begin{tikzcd}
			D(1) \ar[r] & \cO\BB^+_\dR(1)(S,S^+) \\
			R[1/p] \arrow[u,shift left,"p_1"] \arrow[u,shift right,"p_2"'] \ar[r] & \cO\BB_\dR^+(S,S^+) \arrow[u,shift left,"q_1"] \arrow[u,shift right,"q_2"'] \\
			& \BB^+_{\dR}(S,S^+) \arrow[u,"u"],
		\end{tikzcd}    	
	\end{equation}
	where $D(1)$ and $\cO\BB^+_\dR(1)(S,S^+)$ are the formal completions for the surjections $D^+(1)[1/p]\to R$ and $\varphi_{\AA_{\inf}}^*\cO\AA_\crys(1)(S,S^+)[1/p]\to S$, respectively, and we implicitly use the identifications 
	\[ \bigl(\varphi_{\AA_{\inf}}^*\cO\AA_\crys(0)(S,S^+)[1/p]\bigr)^\wedge_{\ker(\tilde{\theta})}\simeq \bigl((R\otimes_W\AA_{\inf}(S,S^+))^\wedge_p[1/p]\bigr)^\wedge_{\ker(\tilde{\theta})}\simeq \cO\BB^+_\dR(S,S^+). \]
	By extension of scalars, we can still evaluate $\cE$ at the infinitesimal thickenings in (\ref{diagram of thickenings}).\footnote{
		More conceptually, (\ref{diagram of thickenings}) forms a diagram of objects in an appropriately defined infinitesimal site and there is a natural infinitesimal crystal associated to $\cE$ that can evaluate on the diagram.}
	
	Returning to the proof of \ref{OBdR two connections}, we claim that the product connection $\nabla'$ is determined by the following transition isomorphism for the crystal $\cE$:
	\[
	c' \colon q_1^*\cE(\cO\BB^+_\dR(S,S^+))\simeq q_2^* \cE(\cO\BB^+_\dR(S,S^+)).
	\]
	To see this, we first note that by the description in the proof of \cite[Thm.~2.15]{Ogu84} (cf.\ also \cite[\href{https://stacks.math.columbia.edu/tag/07J6}{Tag~07J6}]{SP}), the associated flat connection $(E,\nabla_E)$ is determined by the transition isomorphism $c \colon p_1^*\cE(R)\simeq p_2^*\cE(R)$ resulting from the crystal property applied to the left column of (\ref{diagram of pd thickenings}).
	Concretely, on a section $e \in E(R[1/p])$, we have 
	\[
	\nabla_E(e)= c(e\otimes 1) - 1\otimes e.
	\]
	On the other hand, by the discussion after \cite[Rmk.\ 6.9]{Sch13}, the connection $\nabla_{\cO\BB_\dR}$ is constructed by extending the differential map $\cO_X \to \Omega_{X/\cO_K}^1$ uniquely to $\cO\BB^+_\dR$ through the canonical map of rings $\cO_X \to \bigl((\cO_X\otimes_W \AA_{\inf})^\wedge_p[1/p]\bigr)^\wedge_{\ker(\tilde{\theta})}$,
	and is in particular horizontal on $(\AA_{\inf}[1/p])^\wedge_{\ker(\tilde{\theta})}=\BB^+_\dR$.
	Put together, the two facts above show that the tensor product connection $\nabla'$ on $E\otimes_{\cO_{X_\eta}} \cO\BB^+_\dR$ is the unique\footnote{
	The uniqueness follows from the fact that the set of elements $\{e\otimes f \suchthat e\in E(R[1/p]),~f\in \BB^+_\dR(S,S^+)\}$ is dense in $\cO\BB_\dR^+(S,S^+)$, thanks to the explicit formula in \cite[Prop.~6.10]{Sch13}.} 
	$\BB_\dR^+(S,S^+)$-linear continuous connection which satisfies the formula
	\[
	\nabla'(e\otimes f)=c(e\otimes1) \otimes f - 1\otimes e\otimes f\quad \text{for}~e\in E(R[1/p]),~f\in \BB^+_\dR(S,S^+),
	\]
	or in other words, 
	\[
	\nabla'(x) = \Bigl( c\ \otimes_{D(1)} \id_{\cO\BB^+_\dR(1)(S,S^+)} \Bigr) (x \otimes 1) - 1 \otimes x.
	\]
	By further applying the crystal property of $\cE$ to the top square in (\ref{diagram of thickenings}), we can identify the transition isomorphism $c\ \otimes_{D(1)} \id_{\cO\BB^+_\dR(1)(S,S^+)}$ with $c' \colon q_1^*\cE\bigl(\cO\BB^+_\dR(S,S^+)\bigr) \simeq q_2^* \cE\bigl(\cO\BB^+_\dR(S,S^+)\bigr)$.
	Hence, we get the formula
	\[
	\nabla'(x)= c'(x\otimes 1 ) -1\otimes x,\quad \text{for } x\in \cE\bigl(\cO\BB^+_\dR(S,S^+)\bigr).
	\]
	
	To finish the proof of \ref{OBdR two connections}, we apply the crystal property of $\cE$ to the right column in (\ref{diagram of thickenings}) to see that each $q_i^*\cE\bigl(\cO\BB^+_\dR(S,S^+)\bigr)$ is isomorphic to $q_i^*u^*\cE\bigl(\BB^+_\dR(S,S^+)\bigr)$.
	Since $q_1\circ u = q_2 \circ u$ by the construction of those rings, the transition isomorphism $c'$ is the identity on $\cE\bigl(\BB^+_\dR(S,S^+)\bigr)$.
	Hence, its associated connection is horizontal on $\cE\bigl(\BB^+_\dR(S,S^+)\bigr)$ and is isomorphic to the value of the connection $\id_{\BB^+_\dR(\cE)}\otimes_{\BB^+_\dR} \nabla_{\cO\BB^+_\dR}$ on $\BB^+_\dR(\cE)\otimes_{\BB^+_\dR} \cO\BB^+_\dR$ at $(S,S^+)$.
	Lastly, since the identification is natural in all possible choices of $\Spf(R)\subset X$ and $\Spa(S,S^+)$, we can glue them to get the global isomorphism.
	
	Now we check \ref{OBdR two filtrations}.
	Recall from \cref{crystalline-filtration} that the filtration $\Fil^\bullet\BB_\dR(\cE)$ is locally constructed as the product filtration $\Fil^\bullet E \otimes_{R[1/p]} \Fil^\bullet \BB_\dR(S,S^+)$ for a continuous map $R[1/p] \to \BB^+_\dR(S,S^+)$ whose composition with the surjection $\BB^+_\dR(S,S^+)\to S$ is the structure map $R[1/p]\to S$.
	So to prove \ref{OBdR two filtrations}, it suffices to show that the two product filtrations on $E\otimes_{\cO_{X_\eta}} \cO\BB_\dR$ induced by two arbitrary morphisms of pro-pd-thickenings $R^+ \to \cO\AA_\crys(S,S^+)$ are naturally isomorphic, where $R^+$ is the $V_0$-model of $R$ as in the beginning of \cref{crystalline-filtration}.

	Consider the diagram of pro-pd-thickenings in $(X_{p=0}/V_0)_\conv$ 
	\[ \begin{tikzcd}
		\cO\AA_\crys(S,S^+) \arrow[r,"\rho"] & S^+/p \\
		D^+(1) \arrow[u,"\beta"] \arrow[r] & R^+/p \arrow[u] \\
		R^+ \arrow[u,shift left,"p_1"] \arrow[u,shift right,"p_2"'] \arrow[r] & R^+/p. \arrow[u,equal]
	\end{tikzcd} 
	\]
	The crystal property of $\cE$ furnishes an isomorphism $c' \colon (\beta \circ p_2)^* \cE(R) \xrightarrow{\sim} (\beta \circ p_1)^* \cE(R)$ and it suffices to show that $c'$ sends one product filtration into another.
	This follows from Griffiths tranversality for $\Fil^\bullet E$, which is shown exactly in \cite[Rmk.\ 3.20]{TT19}, replacing the ring $\AA_{\crys}$ there by our $\cO\AA_{\crys}(S,S^+)$ with the corresponding divided power filtration. 
	By taking the filtered base change from $\cO\AA_\crys(S,S^+)[1/p]$ to $\cO\BB_\dR(S,S^+)$, we get the compatibility of filtrations as in \ref{OBdR two filtrations}.   
\end{proof}
As a quick application, we can now show the following compatibility.
\begin{corollary}\label{crys-is-dR}
	Let $X$ be a smooth $p$-adic formal scheme over $\cO_K$ and $X_s$, $X_\eta$ as before.
	Let $L$ be a $\widehat{\ZZ}_p$-lisse sheaf on $X_\eta$.
	If $L$ is crystalline in the sense of \cref{crys loc def} and is associated with the filtered $F$-isocrystal $(\cE,\varphi,\Fil^\bullet E)$, then $L$ is de Rham in the sense of \cref{dR-local-system} and is associated with the underlying filtered vector bundle with flat connection $(E,\nabla,\Fil^\bullet E)$.
\end{corollary}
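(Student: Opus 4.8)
The plan is to produce the isomorphism required by \cref{dR-local-system} by transporting the crystalline comparison $\vartheta$ from $\BB_\crys$ to the period sheaf with connection $\cO\BB_\dR$ and then extracting horizontal sections of the zeroth filtration step. Note first that since $\bigl(\cE,\varphi,\Fil^\bullet(E)\bigr)$ is a filtered $F$-isocrystal, its underlying pair $(E,\nabla)$ from \cref{underlying-vb} is already a locally free $\cO_{X_\eta}$-module with a flat connection, and $\Fil^\bullet(E)$ is by definition a separated, exhaustive, locally split, decreasing filtration satisfying Griffiths transversality; hence the structural requirements in \cref{dR-local-system} hold automatically, and all that must be checked is the displayed comparison isomorphism with $E$, $\nabla$, $\Fil^\bullet E$ as just described.

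First I would feed $\vartheta$ through \cref{crys-de-Rham-fil}: by hypothesis $\vartheta\otimes_{K_0}K$ is filtered, so $\vartheta\otimes_{\BB_\crys}\BB_\dR\colon\BB_\dR(\cE)\xrightarrow{\sim}\BB_\dR\otimes_{\widehat{\ZZ}_p}L$ is a filtered $\BB_\dR$-linear isomorphism, where the target carries $\Fil^\bullet\BB_\dR\otimes_{\widehat{\ZZ}_p}L$. Base changing along $\BB_\dR\to\cO\BB_\dR$ gives a filtered $\cO\BB_\dR$-linear isomorphism
\[ \BB_\dR(\cE)\otimes_{\BB_\dR}\cO\BB_\dR\xrightarrow{\sim}L\otimes_{\widehat{\ZZ}_p}\cO\BB_\dR, \]
which is automatically horizontal for the connections $\id\otimes\nabla_{\cO\BB_\dR}$ on both sides, since these are the $\cO\BB_\dR$-linear extensions of the connectionless $\BB_\dR$-modules $\BB_\dR(\cE)$ and $\BB_\dR\otimes_{\widehat{\ZZ}_p}L$ and $\vartheta\otimes_{\BB_\crys}\BB_\dR$ is $\BB_\dR$-linear; this is the one conceptual point that makes the corollary short. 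Now \cref{OBdR isocrystal and tensor product connection}.\ref{OBdR two connections} identifies $\bigl(\BB_\dR(\cE)\otimes_{\BB_\dR}\cO\BB_\dR,\ \id\otimes\nabla_{\cO\BB_\dR}\bigr)$ with $\bigl(E\otimes_{\cO_{X_\eta}}\cO\BB_\dR,\ \nabla'\bigr)$, where $\nabla'$ is the tensor product of $\nabla$ and $\nabla_{\cO\BB_\dR}$, and \cref{OBdR isocrystal and tensor product connection}.\ref{OBdR two filtrations} identifies $\Fil^\bullet\BB_\dR(\cE)\otimes_{\Fil^\bullet\BB_\dR}\Fil^\bullet\cO\BB_\dR$ with the product filtration $\Fil^\bullet E\otimes_{\cO_{X_\eta}}\Fil^\bullet\cO\BB_\dR$; on the target, unwinding tensor filtrations gives $L\otimes_{\widehat{\ZZ}_p}\Fil^\bullet\cO\BB_\dR$ with $L$ in degree $0$. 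Thus I would obtain a filtered, horizontal isomorphism
\[ \bigl(E\otimes_{\cO_{X_\eta}}\cO\BB_\dR,\ \nabla',\ \Fil^\bullet E\otimes\Fil^\bullet\cO\BB_\dR\bigr)\xrightarrow{\ \sim\ }\bigl(L\otimes_{\widehat{\ZZ}_p}\cO\BB_\dR,\ \id_L\otimes\nabla_{\cO\BB_\dR},\ L\otimes\Fil^\bullet\cO\BB_\dR\bigr). \]

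Finally I would apply $\bigl(\Fil^0(-)\bigr)^{\nabla=0}$ to both sides. The left side becomes precisely $\bigl(\Fil^0(E\otimes_{\cO_{X_\eta}}\cO\BB_\dR)\bigr)^{\nabla'=0}$, the object appearing in \cref{dR-local-system}. For the right side, $\Fil^0\bigl(L\otimes_{\widehat{\ZZ}_p}\cO\BB_\dR\bigr)=L\otimes_{\widehat{\ZZ}_p}\Fil^0\cO\BB_\dR$ since $L$ sits in filtration degree $0$, and as $\Fil^0\cO\BB_\dR$ is a sheaf of $\BB^+_\dR$-modules, $L$ is lisse, and the connection acts only through the second factor, one may check pro-\'etale locally that $\bigl(L\otimes_{\widehat{\ZZ}_p}\Fil^0\cO\BB_\dR\bigr)^{\nabla=0}\simeq L\otimes_{\widehat{\ZZ}_p}(\Fil^0\cO\BB_\dR)^{\nabla=0}\simeq L\otimes_{\widehat{\ZZ}_p}\BB^+_\dR$, using the standard identity $(\Fil^0\cO\BB_\dR)^{\nabla=0}\simeq\BB^+_\dR$ from \cite{Sch13}. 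This yields $L\otimes_{\widehat{\ZZ}_p}\BB^+_\dR\simeq\bigl(\Fil^0(E\otimes_{\cO_{X_\eta}}\cO\BB_\dR)\bigr)^{\nabla=0}$, exhibiting $L$ as de Rham and associated with $(E,\nabla,\Fil^\bullet E)$.

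There is no genuine obstacle here; the corollary is a formal consequence of \cref{crys-de-Rham-fil} and \cref{OBdR isocrystal and tensor product connection} together with the period-sheaf computation $(\Fil^0\cO\BB_\dR)^{\nabla=0}\simeq\BB^+_\dR$. The one place demanding care is the bookkeeping of filtrations under the two successive base changes $\BB_{\crys,K}\rightsquigarrow\BB_\dR\rightsquigarrow\cO\BB_\dR$: one must invoke the filtered base change formulas recorded in \cref{crystalline-filtration} and in \cref{OBdR isocrystal and tensor product connection}.\ref{OBdR two filtrations} to ensure that the filtration transported by $\vartheta$ matches the product filtration on the nose rather than up to a shift.
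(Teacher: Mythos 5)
Your proposal is correct and follows essentially the same route as the paper's proof: both argue via \cref{crys-de-Rham-fil} to obtain the filtered $\BB_\dR$-linear isomorphism and then invoke \cref{OBdR isocrystal and tensor product connection} to match the period-sheaf data with the tensor-product connection and filtration on $E\otimes_{\cO_{X_\eta}}\cO\BB_\dR$, finishing by passing to $\Fil^0$ of the horizontal sections. The paper compresses this by directly using the consequence stated after \cref{OBdR isocrystal and tensor product connection}, namely $\Fil^\bullet\BB_\dR(\cE)\simeq\bigl(\Fil'^\bullet(E\otimes\cO\BB_\dR)\bigr)^{\nabla'=0}$, whereas you spell out the intermediate passage through $\cO\BB_\dR$ and the horizontality check; both are sound and rest on the same two ingredients.
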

\begin{proof}
	Let $(E\otimes_{\cO_{X_\eta}} \cO\BB_\dR,\nabla',{\Fil'}^\bullet)$ be the product filtered flat connection of $(E,\nabla,\Fil^\bullet(E))$ and $(\cO\BB_\dR,\nabla_{\cO\BB_\dR}, \Fil^\bullet \cO\BB_\dR)$.
	By \cref{OBdR isocrystal and tensor product connection}, we have a natural identification of filtered horizontal sections
	\[
	\Fil^\bullet \BB_\dR(\cE) \simeq \bigl({\Fil'}^\bullet(E\otimes_{\cO_{X_\eta}} \cO\BB_\dR) \bigr)^{\nabla'=0}.
	\]
	So we can conclude by taking the filtered base change of the isomorphism $\vartheta\otimes_{K_0} K\colon \BB_{\crys,K}(\cE) \xrightarrow{\sim} \BB_{\crys,K} \otimes_{\widehat{\ZZ}_p} L$ in \cref{crys loc def}.\ref{crys loc def filtr} along the filtered morphism $\Fil^\bullet\BB_{\crys,K}\to \Fil^\bullet\BB_{\dR}$, which provides us with the filtered isomorphism
	\[
	\Fil^\bullet \BB_\dR(\cE) \xrightarrow{\sim} \Fil^\bullet \BB_\dR\otimes_{\widehat{\ZZ}_p} L. \qedhere
	\]
\end{proof}
With the help of \cref{OBdR isocrystal and tensor product connection}, \cref{crys-de-Rham-fil} and the work of Liu--Zhu (\cite{LZ17}), we can further show that the additional assumption on the filtration in the definition of crystalline sheaves is redundant.
\begin{proposition}\label{weak crys is crys}
	Let $X$ be a smooth $p$-adic formal scheme over $\cO_K$ and $X_s$, $X_\eta$ as before.
	Then any weakly crystalline lisse sheaf $L$ on $X_{\eta, \proet}$ is crystalline for a uniquely determined filtration on the associated $F$-isocrystal.
\end{proposition}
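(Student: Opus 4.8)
The plan is to show that a weakly crystalline $L$ is automatically de Rham in the sense of \cref{dR-local-system}, and then to transport the canonical filtration on its associated filtered bundle — furnished by the $p$-adic Riemann--Hilbert correspondence of Liu--Zhu \cite{LZ17} — back along the given comparison isomorphism onto the underlying vector bundle of the isocrystal. Concretely, starting from the weakly crystalline data $(\cE,\varphi)$ together with $\vartheta\colon\BB_\crys(\cE)\xrightarrow{\sim}\BB_\crys\otimes_{\widehat{\ZZ}_p}L$, I would first forget the Frobenius and base change along $\BB_\crys\to\BB_\dR$ to obtain $\vartheta_\dR\colon\BB_\dR(\cE)\xrightarrow{\sim}\BB_\dR\otimes_{\widehat{\ZZ}_p}L$, using $\BB_\dR(\cE)=\BB_\crys(\cE)\otimes_{\BB_\crys}\BB_\dR$ from \cref{Acrys-isocrystal}. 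By the consequence of \cref{OBdR isocrystal and tensor product connection} recorded right after its statement, $\BB_\dR(\cE)\cong(E\otimes_{\cO_{X_\eta}}\cO\BB_\dR)^{\nabla'=0}$ with $\nabla'$ the product connection and $E$ the underlying bundle of $\cE$; hence base changing $\vartheta_\dR$ up to $\cO\BB_\dR$ gives an isomorphism $(E\otimes_{\cO_{X_\eta}}\cO\BB_\dR,\nabla')\cong(L\otimes_{\widehat{\ZZ}_p}\cO\BB_\dR,\id\otimes\nabla_{\cO\BB_\dR})$ of $\cO\BB_\dR$-modules with connection. Pushing forward along the projection $\nu$ from the pro-\'etale to the \'etale site of $X_\eta$ and using $\nu_*\cO\BB_\dR=\cO_{X_\eta}$ (valid because $X_\eta$ is smooth over $\Spa(K,\cO_K)$), this identifies $\cD_\dR(L)\colonequals\nu_*(L\otimes_{\widehat{\ZZ}_p}\cO\BB_\dR)$ with $E$; in particular $\rk\cD_\dR(L)=\rk E=\rk L$, so the de Rham comparison is an isomorphism and $L$ is de Rham.

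Next, by \cite{LZ17} the bundle $\cD_\dR(L)$ carries a canonical separated, exhaustive, locally split, decreasing filtration and a flat connection satisfying Griffiths transversality, for which the de Rham comparison $\cD_\dR(L)\otimes_{\cO_{X_\eta}}\cO\BB_\dR\xrightarrow{\sim}L\otimes_{\widehat{\ZZ}_p}\cO\BB_\dR$ is a filtered isomorphism with the source carrying the product filtration. The isomorphism $E\xrightarrow{\sim}\cD_\dR(L)$ obtained above is $\cO_{X_\eta}$-linear and horizontal (it is $\nu_*$ of a horizontal $\cO\BB_\dR$-linear isomorphism), so it transports this data to a filtration $\Fil^\bullet E$ on $E$ such that $\bigl(\cE,\varphi,\Fil^\bullet E\bigr)$ is a filtered $F$-isocrystal in the sense of \cref{filtered-F-isocrystal}, its connection being the one already on $E$ by \cref{OBdR isocrystal and tensor product connection}.\ref{OBdR two connections}. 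With this choice, $\vartheta_\dR\otimes_{\BB_\dR}\cO\BB_\dR$ sends the product filtration of $\Fil^\bullet E$ and $\Fil^\bullet\cO\BB_\dR$ to $L\otimes_{\widehat{\ZZ}_p}\Fil^\bullet\cO\BB_\dR$, so passing to $\nabla'$-horizontal sections and invoking \cref{OBdR isocrystal and tensor product connection}.\ref{OBdR two filtrations} shows that $\vartheta_\dR\colon\BB_\dR(\cE)\xrightarrow{\sim}\BB_\dR\otimes_{\widehat{\ZZ}_p}L$ is filtered. By \cref{crys-de-Rham-fil} this is equivalent to $\vartheta_K=\vartheta\otimes_{K_0}K$ being filtered, which is exactly condition \cref{crys loc def}.\ref{crys loc def filtr}.\textup{(\ref{crys lod def filtr BcrysK})}; hence $L$ is crystalline, associated with $\bigl(\cE,\varphi,\Fil^\bullet E\bigr)$.

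For uniqueness, suppose $\Fil'^\bullet E$ is another filtration on the underlying bundle of $\cE$ making $\bigl(\cE,\varphi,\Fil'^\bullet E\bigr)$ a filtered $F$-isocrystal with $\vartheta_K$ filtered. Then by \cref{crys-de-Rham-fil} again $\vartheta_\dR$ is filtered for the induced filtration on $\BB_\dR(\cE)$, which forces $\Fil'^\bullet\BB_\dR(\cE)=\Fil^\bullet\BB_\dR(\cE)$; by \cref{OBdR isocrystal and tensor product connection}.\ref{OBdR two filtrations} the product filtration on $E\otimes_{\cO_{X_\eta}}\cO\BB_\dR$ is then determined, and applying $\nu_*$ recovers $\Fil'^\bullet E=\Fil^\bullet E$.

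The genuinely substantive input is the step identifying the underlying bundle of $\cE$ with $\cD_\dR(L)$ and using the Liu--Zhu correspondence to equip it with a canonical filtration that is Griffiths transverse, locally split, and compatible with the de Rham comparison; this is where the real theorem is borrowed. Everything else is formal: reducing filtered statements over $\BB_{\crys,K}$ to filtered statements over $\BB_\dR$ via \cref{crys-de-Rham-fil}, and transporting product filtrations through \cref{OBdR isocrystal and tensor product connection}. I would also be careful that the de Rham period sheaves used throughout are the $\ker(\tilde{\theta})$-adic versions (\cref{warn}) and that \cite{LZ17} is applied over the possibly ramified base field $K$.
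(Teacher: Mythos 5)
Your proposal is correct and follows essentially the same route as the paper's proof: reduce via \cref{crys-de-Rham-fil} to making $\vartheta_\dR$ filtered, identify $E$ with $\nu_*(L\otimes\cO\BB_\dR)$ using \cref{OBdR isocrystal and tensor product connection} and a rank count to conclude $L$ is de Rham, then import the Liu--Zhu filtration and check compatibility by passing to horizontal sections. The uniqueness argument you spell out is the same observation the paper compresses into the phrase that the de Rhamness ``forces'' the definition of $\Fil^\bullet E$ as the direct image.
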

\begin{proof}
	Let $(E,\nabla)$ be the vector bundle with flat connection attached to $\cE$ from \cref{underlying-vb}.
	By \cref{crys-de-Rham-fil}, it suffices to show that there is a canonical and unique filtration on $(E,\nabla)$ such that the base change $\vartheta\otimes_{\BB_{\crys}} \BB_\dR \colon \BB_\dR(\cE)\simeq \BB_\dR\otimes_{\widehat{\ZZ}_p} L$ is a filtered isomorphism.
	
	Set $\cO\BB_\dR(\cE) \colonequals \BB_\dR(\cE)\otimes_{\BB_\dR} \cO\BB_\dR$, which is isomorphic to $E\otimes_{\cO_{X_\eta}} \cO\BB_\dR$ as $\cO\BB_\dR$-vector bundles with connection by \cref{OBdR isocrystal and tensor product connection}.\ref{OBdR two connections}.
	Let $\nu \colon X_{\eta, \proet}\to X_{\eta, \et}$ be the canonical map of sites.
	We first note that by \cite{LZ17}, $L[1/p]$ is a de Rham local system, since the direct image $\nu_*(\cO\BB_\dR\otimes_{\widehat{\ZZ}_p} L)\simeq \nu_*(E\otimes_{\cO_{X_\eta}} \cO\BB_\dR)$, which by \cite[Thm.\ 7.6.(i)]{Sch13} is $E$ itself, is of the same rank as $L[1/p]$.
	The above observation then forces us to construct the filtration $\Fil^\bullet E$ on $E$ by taking the direct image of the natural filtration $\Fil^\bullet \cO\BB_\dR\otimes_{\widehat{\ZZ}_p} L$.
	Moreover, by the de Rhamness of $L[1/p]$ and \cite[Thm.\ 3.9.(iii-iv)]{LZ17}, the triple $(\cE,\varphi,\Fil^\bullet(E))$ is a filtered $F$-isocrystal and the isomorphism $\vartheta\otimes_{\BB_\crys} \cO\BB_\dR$ underlies a filtered isomorphism
	\[
	\Fil^\bullet \cO\BB_\dR\otimes_{\widehat{\ZZ}_p} L \simeq \Fil^\bullet E \otimes_{\cO_X} \Fil^\bullet \cO\BB_\dR,
	\]
	which is compatible with the connections on both sides.
	Finally, by taking the horizontal sections, we get a filtered isomorphism
	\[
	\Fil^\bullet \BB_\dR\otimes_{\widehat{\ZZ}_p} L \simeq \bigl(\Fil^\bullet E \otimes_{\cO_X} \Fil^\bullet \cO\BB_\dR \bigr)^{\nabla=0}.
	\]
	By \cref{OBdR isocrystal and tensor product connection}, the right hand side is isomorphic to the filtered sheaf $\Fil^\bullet\BB_\dR(\cE)$, which we constructed from $(\cE,\Fil^\bullet(E))$ in \cref{crystalline-filtration}.
	Hence by \cref{crys-de-Rham-fil}, we see that $L$ is a crystalline sheaf and is associated with $(\cE,\varphi,\Fil^\bullet E)$.
\end{proof}

\section{Prismatic \texorpdfstring{$F$}{F}-crystal over the analytic locus}\label{sec3}
Let $X$ be a quasi-syntomic $p$-adic formal scheme over $\cO_K$.
Recall from \cite{BS19} and \cite{BS21} that we can attach to $X$ the following two ringed sites:
\begin{itemize}
	\item(Absolute prismatic site) The opposite category $X_\Prism$ of bounded prisms $(A,I)$ over $X$ with the $(p,I)$-complete flat topology, equipped with the sheaf of rings 
	\[
	\cO_\Prism \colon (A,I) \longmapsto A.
	\]
	The prismatic structure sheaf $\cO_\Prism$ admits a natural Frobenius action which lifts that of the reduction modulo $p$.
	\item(Quasi-syntomic site) The opposite category $X_\qrsp$ of quasiregular semiperfectoid algebras over $X$ with the quasi-syntomic topology, equipped with the sheaf of rings
	\[
	\Prism_\bullet \colon S\longmapsto \Prism_S,
	\]
	sending $S\in X_\qrsp$ to the initial prism of $\Spf(S)_\Prism$ as in \cite[Prop.~7.2]{BS19}.
	The Frobenius endomorphism of prisms induces a natural endomorphism $\varphi$ of $\Prism_\bullet$.
\end{itemize}
By a slight abuse of notation, we use $\cI_\Prism$ to denote the ideal sheaf over either of the above sites which sends a prism $(A,I)$ to the ideal $I$.
The quasi-syntomic site is equipped with various further period sheaves, which are discussed in our \nameref{list}.
For the reader's convenience, we reproduce a diagram of them below:
\[
\begin{tikzcd}[column sep=small]
	\rA_{\inf}(-) = \Prism_\bullet \ar[r] & \rA_{\crys}(-)=\Prism_\bullet\{\cI_\Prism/p\} \arrow[r, dashed] \arrow[d] \arrow[dr, "\tilde{\varphi}"] & \rB_\dR^+(-)\colonequals\Prism_\bullet[1/p]^\wedge_{\cI_\Prism}\simeq (\Prism_\bullet\langle \cI_\Prism/p\rangle[1/p])^\wedge_{\cI_\Prism}\\
	& \varphi_{\rA_{\inf}(-)}^*\rA_{\crys}(-)=\Prism_\bullet\{\varphi(\cI_\Prism)/p\} \arrow[r,"\iota"']& \Prism_\bullet \langle \cI_\Prism/p\rangle \colonequals \Prism_\bullet[1/p]^\wedge_p \arrow[u].
\end{tikzcd}
\]

\subsection{Analytic prismatic \texorpdfstring{$F$}{F}-crystals}\label{sec3.1}
As described in \cite[Def.~4.1, Constr.~6.2]{BS21}, the ringed sites $X_\Prism$ and $X_\qrsp$ naturally come with a notion of vector bundles and $F$-crystals on them.
Thanks to \cite[Prop.~7.11]{BS19}, they are related by the equivalences
\[ \Vect(X_\Prism,\cO_\Prism) \simeq \Vect(X_\qrsp, \Prism_{\bullet}) \quad \text{and} \quad \Vect^\varphi(X_\Prism, \cO_\Prism) \simeq \Vect^\varphi(X_\qrsp, \Prism_{\bullet}); \]
see \cite[Prop.~2.14, Remark~6.3]{BS21}.
Moreover, following \cite[Prop.~2.7, Prop.~2.13]{BS21} we can describe the above categories using limit presentations:
\[ \Vect(X_\Prism, \cO_\Prism)=\lim_{(A,I)\in X_\Prism} \Vect(A) \quad \text{and} \quad \Vect(X_\qrsp, \Prism_{\bullet}) = \lim_{S\in X_\qrsp} \Vect(\Prism_S). \]
Similarly, one can obtain the derived analogs for perfect complexes.
We will freely use the above equivalent definitions of prismatic crystals and prismatic $F$-crystals throughout the article.

Analogously to the above constructions, we introduce a notion of prismatic ($F$-)crystals on $X_\Prism$ and $X_\qrsp$ that are only defined over the analytic locus $\Spec(A)\setminus V(p,I)$ for each $(A,I)\in X_\Prism$.
We start with vector bundles.
\begin{definition}\label{an crystal}
	 The category of \emph{analytic prismatic crystals (in vector bundles)} over $X$ is defined as
		\[
		\Vect^\an(X_\Prism)\colonequals \lim_{(A,I)\in X_\Prism} \Vect(\Spec(A)\setminus V(p,I)).
		\]
\end{definition}
\begin{remark}\label{an crystal descent}
	By a result of Drinfeld--Mathew \cite[Thm.~7.8]{Mat22}, the analytic vector bundle functor 
	\[
	A \longmapsto \Vect\bigl(\Spec(A^\wedge_{(p,I)})\setminus V(p,I)\bigr)
	\]
	is a sheaf for the $(p,I)$-completely flat topology on $A$ (cf.\ \cite[Def.~7.1]{Mat22}).
	In particular, analytic vector bundles satisfy $(p,I)$-completely flat descent among prisms.
\end{remark}
One can also give a presentation of analytic prismatic crystals in terms of the quasi-syntomic site.
\begin{lemma}\label{equiv-prism-qsyn}
	Define the category $\Vect^\an(X_\qrsp,\Prism_\bullet)$ to be $\lim_{S\in X_\qrsp} \Vect(\Spec(\Prism_S)\setminus V(p,I))$.
	Then there is a natural equivalence of categories
	\[
	\Vect^\an(X_\Prism)\simeq \Vect^\an(X_\qrsp,\Prism_\bullet)
	\]
	and a similar equivalence for the derived ($\infty$-)categories of perfect complexes.
\end{lemma}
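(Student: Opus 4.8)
The plan is to follow the proof of the analogous non-analytic comparison $\Vect(X_\Prism,\cO_\Prism)\simeq\Vect(X_\qrsp,\Prism_\bullet)$ in \cite[Prop.~2.14]{BS21} (cf.\ also \cite[Rem.~6.3]{BS21}), replacing ordinary faithfully flat descent for vector bundles on affine schemes by the descent result for the analytic locus recorded in \cref{an crystal descent}. First recall from \cite[Prop.~7.11]{BS19} that the quasi-syntomic site forms a basis for the prismatic site: every bounded prism $(A,I)$ over $X$ admits a $(p,I)$-completely faithfully flat map to a prism of the form $(\Prism_S,I)$ with $S\in X_\qrsp$, and the $(p,I)$-completed coproducts of prisms of this form again have quasiregular semiperfectoid reduction, so that the \v{C}ech nerve of such a cover stays within the essential image of the natural functor $X_\qrsp\to X_\Prism$, $S\mapsto(\Prism_S,I)$. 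Under this functor $\Prism_\bullet$ is identified with the restriction of $\cO_\Prism$, whence there is a natural restriction functor
\[
\Vect^\an(X_\Prism)\longrightarrow\Vect^\an(X_\qrsp,\Prism_\bullet),
\]
and similarly on perfect complexes.

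To see that this functor is an equivalence, note that by \cref{an crystal descent}, that is, by \cite[Thm.~7.8]{Mat22}, the assignment $(A,I)\mapsto\Vect(\Spec(A)\setminus V(p,I))$ is a sheaf of categories for the $(p,I)$-completely flat topology on $X_\Prism$, and its derived enhancement $(A,I)\mapsto D_\perf(\Spec(A)\setminus V(p,I))$ is a sheaf of $\infty$-categories for the same topology. Since $X_\qrsp$ is a basis of $X_\Prism$ by the previous paragraph, the global sections of such a (pre)sheaf are computed over the basis: given a compatible family of analytic vector bundles over all $(\Prism_S,I)$, $S\in X_\qrsp$, and an arbitrary $(A,I)\in X_\Prism$, one chooses a cover of $(A,I)$ by prisms of the form $(\Prism_S,I)$, observes that its \v{C}ech nerve again consists of such prisms, and applies the descent statement to glue a canonically determined analytic vector bundle over $\Spec(A)\setminus V(p,I)$; compatibility with the transition maps and full faithfulness are verified in the same manner. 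Therefore
\[
\Vect^\an(X_\Prism)=\lim_{(A,I)\in X_\Prism}\Vect\bigl(\Spec(A)\setminus V(p,I)\bigr)\xrightarrow{\ \sim\ }\lim_{S\in X_\qrsp}\Vect\bigl(\Spec(\Prism_S)\setminus V(p,I)\bigr)=\Vect^\an(X_\qrsp,\Prism_\bullet),
\]
and the very same argument with $D_\perf(\blank)$ in place of $\Vect(\blank)$ yields the statement for perfect complexes.

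The only place where this argument differs in substance from \cite[Prop.~2.14]{BS21} --- and the step I expect to be the main obstacle --- is the descent input: since analytic prismatic crystals are vector bundles (or perfect complexes) only on the punctured spectrum $\Spec(A)\setminus V(p,I)$ rather than on $\Spec(A)$ itself, naive faithfully flat descent for modules over rings does not apply, and one genuinely needs the theorem of Drinfeld--Mathew \cite[Thm.~7.8]{Mat22} that $\Vect(\Spec(\blank)\setminus V(p,I))$ and its derived version satisfy $(p,I)$-completely flat descent. Granting this, together with the bookkeeping --- already implicit in the proof of \cite[Prop.~7.11]{BS19} --- that $(p,I)$-completed coproducts of prisms with quasiregular semiperfectoid reduction again have quasiregular semiperfectoid reduction, the proof is entirely parallel to the non-analytic case.
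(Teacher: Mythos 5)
Your proof is essentially correct and follows the same strategy as the paper's: reduce to $(p,I)$-completely flat descent for analytic vector bundles (the Drinfeld--Mathew theorem, \cref{an crystal descent}) together with the fact from \cite[Prop.~7.10--7.11]{BS19} that $\Prism_S$ for a fixed quasi-syntomic qrsp cover $R\to S$ covers the final object of $\Shv(X_\Prism)$. The one place where the paper is a bit more careful than your sketch is the bookkeeping around the \v{C}ech nerve: rather than covering an arbitrary prism $(A,I)$ by a qrsp prism and forming the \v{C}ech nerve over $A$ (which would require verifying that $\Prism_S\widehat\otimes_A\Prism_S$ is again of the form $\Prism_{S'}$ with $S'\in X_\qrsp$), the paper fixes a single quasi-syntomic cover $R\to S$, notes that the \v{C}ech nerve of $h_{\Prism_S}\to *$ in $\Shv(X_\Prism)$ is represented by $\Prism_{S^\bullet}$ with $S^n=S^{\widehat\otimes_R n+1}$ visibly qrsp, and then applies a formal limit-preservation observation to identify both $\Vect^{\an}(X_\Prism)$ and $\Vect^{\an}(X_\qrsp,\Prism_\bullet)$ with $\lim_{[n]}\Vect^{\an}(\Prism_{S^n})$.
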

\begin{proof}
    We argue along the lines of \cite[Prop.~2.14]{BS21}.
    First, a general technical observation:
    Let $(A,I)$ be a prism and let $\cC$ be the site of derived $(p,I)$-complete $A$-algebras equipped with the $(p,I)$-completely flat topology. 
	Then \cite[Thm.~7.8]{Mat22} shows that the functor 
	\[ \Vect^\an \colon A \longmapsto \Vect(\Spec(A)\setminus V(p,I)) \]
	is a sheaf of categories on the site $\cC$.
	It naturally extends to a contravariant functor on the topos $\Shv(\cC)$ that preserves small limits.
	In particular, given an epimorphism $\cF\to \cG$ in the topos $\Shv(\cC)$ with corresponding \v{C}ech nerve $\cF^\bullet$, we have $\cG = \colim_{[n] \in \Delta} \cF^\bullet$ and thus get a natural equivalence of categories 
	\[ \Vect^\an(\cG) \xrightarrow{\sim} \lim_{[n] \in \Delta} \Vect^\an(\cF^\bullet). \]
	
    We use this technical observation to show that $\Vect\bigl(\Spec(\Prism_\blank) \setminus V(p,I)\bigr)$ is a sheaf in categories on the site $X_\qrsp$ equipped with the quasi-syntomic topology, similarly to \cref{an crystal descent}.
    Let $S' \to S$ be a quasi-syntomic morphism in $X_\qrsp$ and $S^n$ be the $p$-completed $(n+1)$-fold tensor product of $S' \to S$.
    Let $\cC$ be the site of derived $(p,I)$-complete $\Prism_{S'}$-algebras with the $(p,I)$-completely flat topology.
	By \cite[Prop.~7.10, Prop.~7.11]{BS19}, the map of initial prisms $\Prism_{S'} \to \Prism_S$ defines under the Yoneda embedding an epimorphism in the topos $\Shv(\cC)$.
	Moreover, since $\Prism_\blank$ commutes with colimits, its \v{C}ech nerve is given by the Yoneda embedding of $\Prism_{S'} \to \Prism_{S^\bullet}$.
	Thus, the first paragraph ensures that the natural map
    \[ \Vect\bigl(\Spec(\Prism_S) \setminus V(p,I)\bigr) \to \lim_{[n] \in \Delta} \Vect\bigl(\Spec(\Prism_{S^n}) \setminus V(p,I)\bigr) \]
    is an isomorphism.
    Since $\Vect\bigl(\Spec(\Prism_\blank) \setminus V(p,I)\bigr)$ also preserves finite products, this shows the sheaf property.

    With that in mind, we can construct the natural equivalence of categories from the statement.
	By Zariski descent for both sides, we may assume that $X=\Spf(R)$ is affine.
	Choose a quasi-syntomic cover $R \to S$ with $S$ in $X_\qrsp$ and let $S^\bullet$ be its \v{C}ech nerve.
	Since the $S^n$ are quasiregular semiperfectoid, we	obtain a diagram of functors
	\[ \Vect^\an(X_\Prism) \longrightarrow \lim_{[n] \in \Delta} \Vect\bigl(\Spec(\Prism_{S^n}) \setminus V(p,I)\bigr) \longleftarrow \Vect^\an(X_\qrsp,\Prism_\bullet). \]
	The right functor is an equivalence of categories thanks to the previous paragraph:
	$\Vect\bigl(\Spec(\Prism_\blank) \setminus V(p,I)\bigr)$ is a sheaf on $X_\qrsp$, so its global sections $\Vect^\an(X_\qrsp,\Prism_\bullet)$ can be computed from the \v{C}ech nerve of the covering object $S$.
	Similarly, by \cref{an crystal descent}, the functor $(A,I) \mapsto \Vect\bigl(\Spec(A) \setminus V(p,I)\bigr)$ is a sheaf of categories on the prismatic site $X_\Prism$ with global sections $\Vect^\an(X_\Prism)$.
	Again, \cite[Prop.~7.10, Prop.~7.11]{BS19} shows that (the sheaf associated with) the initial prism $\Prism_S$ covers the final object of the topos $\Shv(X_\Prism)$, so the left functor is also an equivalence by the observation of the first paragraph.
	The composition of the two equivalences yields the statement.
\end{proof}

To define the notion of analytic prismatic $F$-crystals, we need the following small preparation.
\begin{lemma}\label{Frobenius on open}
	The Frobenius map $\varphi$ of a prism $(A,I)$ induces a natural endomorphism of the closed subscheme $V(p,I)$ and its complement in $\Spec(A)$.
\end{lemma}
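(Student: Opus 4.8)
The plan is to read off the effect of $\varphi$ on the ideal $(p,I)\subseteq A$ directly from the $\delta$-structure. Recall that $\varphi(x)=x^p+p\delta(x)$ for every $x\in A$, and that $\varphi(p)=p$ (the structure map $\ZZ\to A$ is a morphism of $\delta$-rings and $\ZZ$ carries the trivial $\delta$-structure with $\varphi=\id$). First I would check that $\varphi$ maps the ideal $(p,I)$ into itself: if $x\in I$ then $x^p\in I$, so $\varphi(x)=x^p+p\delta(x)\in I+pA$; combined with $\varphi(p)=p$ this gives $\varphi\bigl((p,I)\bigr)\subseteq(p,I)$. Hence $\varphi$ descends to a ring endomorphism of $A/(p,I)$, and $\Spec$ of this endomorphism is the desired endomorphism of the closed subscheme $V(p,I)=\Spec\bigl(A/(p,I)\bigr)$; by construction it is compatible with $\Spec(\varphi)\colon\Spec(A)\to\Spec(A)$ along the closed immersion $V(p,I)\hookrightarrow\Spec(A)$. (Concretely, the induced map on $V(p,I)$ is simply the absolute Frobenius of this $\FF_p$-scheme.)

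For the open complement, the point is that $\Spec(\varphi)^{-1}\bigl(V(p,I)\bigr)=V(p,I)$ as closed subsets. Using $\varphi(p)=p$, this preimage equals $V\bigl((p,\varphi(I)A)\bigr)$. One containment is immediate from the previous paragraph, since $\varphi(I)\subseteq(p,I)$ forces $V(p,I)\subseteq V\bigl((p,\varphi(I)A)\bigr)$. For the reverse, given $x\in I$ one has $x^p=\varphi(x)-p\delta(x)\in(p,\varphi(I)A)$, so $x\in\sqrt{(p,\varphi(I)A)}$; thus $I\subseteq\sqrt{(p,\varphi(I)A)}$ and $V\bigl((p,\varphi(I)A)\bigr)\subseteq V(p,I)$. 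Therefore $\Spec(\varphi)$ carries $\Spec(A)\setminus V(p,I)$ into itself and restricts to an endomorphism of this open subscheme. Naturality in $(A,I)$ is then formal: a morphism of prisms induces a map on spectra intertwining the two Frobenius morphisms and sending the respective vanishing loci of $(p,I)$ to one another, hence it intertwines the two restrictions constructed above.

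I do not expect a real obstacle here; the only mild subtlety is that $I$ need not be principal and $\delta$ is not additive, so one must phrase the argument in terms of the ring homomorphism $\varphi$ applied to ideals rather than in terms of $\delta$. Since the whole argument uses only that $\varphi$ is a ring homomorphism with $\varphi(x)\equiv x^p\pmod{p}$ and $\varphi(p)=p$, this causes no difficulty, and the verification is entirely elementary.
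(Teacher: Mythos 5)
Your proof is correct, and it rests on the same core computation as the paper's — the $\delta$-ring identity $\varphi(d)=d^p+p\delta(d)$ together with $\varphi(p)=p$. The paper's version, however, is considerably more compressed: it states only that it suffices to verify $(p,\varphi(I))\subseteq(p,I)$ and then produces the formula, with no explicit attention to the open complement. You are right to be more careful here. The containment $(p,\varphi(I))\subseteq(p,I)$ gives the descent of $\varphi$ to $A/(p,I)$ and hence the endomorphism of the \emph{closed} subscheme, but for $\Spec(\varphi)$ to restrict to an endomorphism of the \emph{open} complement one needs $\Spec(\varphi)^{-1}\bigl(V(p,I)\bigr)=V\bigl((p,\varphi(I))\bigr)\subseteq V(p,I)$, i.e.\ the radical containment $I\subseteq\sqrt{(p,\varphi(I))}$ in the \emph{other} direction, which is exactly what you extract from $d^p=\varphi(d)-p\delta(d)\in(p,\varphi(I))$. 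Both directions drop out of the same displayed formula, so the paper's argument is salvageable, but as literally written its reduction ("it suffices...") does not cover the open locus, while your write-up correctly identifies and proves both required inclusions and also records the naturality. In short: same idea, same one-line computation, but your version makes the logic for the open subscheme explicit where the paper's proof elides it.
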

\begin{proof}
	It suffices to show that the ideal $(p, \varphi(I))$ is contained in $(p,I)$.
	This follows from the formula
	\[
	\varphi(d)=d^p+p\delta(d)\in (p,d) \quad \forall d\in I. \qedhere
	\]	
\end{proof}
\begin{definition}\label{an Fcrys def}
	\begin{enumerate}[label=\upshape{(\roman*)},leftmargin=*]
			\item Let $(A,I)$ be a prism.
		The category $\Vect^\varphi(\Spec(A)\setminus V(p,I))$ has
		\begin{itemize}
		    \item objects given by pairs $(M,\varphi_M)$, where $M$ is a vector bundle over $\Spec(A)\setminus V(p,I)$ and $\varphi_M$ is an $A$-linear isomorphism $\varphi_A^* M[1/I] \to M[1/I]$;
		    \item morphisms given by maps of vector bundles that are compatible with $\varphi_M$.
		\end{itemize}
		\item Let $X$ be a $p$-adic formal scheme over $\cO_K$.
		The category of \emph{analytic prismatic $F$-crystals} over $X$ is defined as
		\[
		\Vect^{\an,\varphi}(X_\Prism)\colonequals \lim_{(A,I)\in X_\Prism} \Vect^\varphi(\Spec(A)\setminus V(p,I)).
		\]
	\end{enumerate}
\end{definition}
We note that \cref{Frobenius on open} is implicitly used to ensure that the pullback $\varphi_A^*M$ is also a vector bundle on $\Spec(A) \setminus V(p,I)$ and can thus be compared to $M$ via $\varphi_M$ (after inverting $I$).
Similarly to \cref{equiv-prism-qsyn}, analytic prismatic $F$-crystals can be computed over the quasi-syntomic site.
\begin{lemma}\label{prism-qrsp-an-vb}
	Define the category $\Vect^{\an,\varphi}(X_\qrsp)$ to be $\lim_{S\in X_\qrsp} \Vect^\varphi(\Spec(\Prism_S)\setminus V(p,I))$.
	There is a natural equivalence of categories
	\[
	\Vect^{\an,\varphi}(X_\Prism)\simeq \Vect^{\an,\varphi}(X_\qrsp,\Prism_\bullet).
	\]
	Ditto for the derived ($\infty$-)categories of perfect complexes.
\end{lemma}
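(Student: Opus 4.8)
The plan is to run the proof of \cref{equiv-prism-qsyn} essentially verbatim, carrying the Frobenius structure along as extra limit data: forming $F$-equivariant objects is itself an $\infty$-categorical limit construction, and limits commute, so it passes through the limits that compute global sections of a sheaf of categories. Concretely, for a bounded prism $(A,I)$ write $\Vect^\an(A)\colonequals\Vect\bigl(\Spec(A)\setminus V(p,I)\bigr)$ and let $\Vect^\an(A)[1/I]$ denote the category of vector bundles on the smaller open locus $\{I\neq 0\}$, i.e.\ on $\Spec\bigl(A[1/I]\bigr)\setminus V(p)$. By \cref{Frobenius on open} the Frobenius $\varphi_A$ preserves $\Spec(A)\setminus V(p,I)$, so there are two functors $\Vect^\an(A)\rightrightarrows\Vect^\an(A)[1/I]$, namely restriction and ``pull back along $\varphi_A$, then restrict''; by definition $\Vect^\varphi\bigl(\Spec(A)\setminus V(p,I)\bigr)$ is the \emph{iso-inserter} of this pair, the $\infty$-categorical limit of the diagram classifying an object $M$ of $\Vect^\an(A)$ together with an isomorphism in $\Vect^\an(A)[1/I]$ between its restriction and its $\varphi_A$-pullback. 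All of this is functorial in $(A,I)$, and equally in $S\in X_\qrsp$ with $\Prism_S$ in place of $A$.

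The first step is to verify that $(A,I)\mapsto\Vect^\varphi\bigl(\Spec(A)\setminus V(p,I)\bigr)$ is a sheaf of categories on $X_\Prism$, and likewise that $S\mapsto\Vect^\varphi\bigl(\Spec(\Prism_S)\setminus V(p,I)\bigr)$ is a sheaf of categories on $X_\qrsp$. Both $A\mapsto\Vect^\an(A)$ and $A\mapsto\Vect^\an(A)[1/I]$ are sheaves for the $(p,I)$-completely flat topology: the former by \cite[Thm.~7.8]{Mat22}, as already used in \cref{an crystal descent}, and the latter by the same theorem applied to the open subscheme (or by classical faithfully flat descent of vector bundles on $\Spec(A[1/I])\setminus V(p)$ together with noetherian approximation). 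The two functors above assemble into a morphism of sheaves of categories, and an objectwise limit of sheaves of categories is again a sheaf of categories, so $\Vect^{\an,\varphi}(\blank)$ is a sheaf of categories on $X_\Prism$; transporting this along $\Prism_\bullet$, which carries the quasi-syntomic topology to the $(p,I)$-completely flat topology by \cite[Prop.~7.10, Prop.~7.11]{BS19} exactly as in the proof of \cref{equiv-prism-qsyn}, gives the corresponding statement on $X_\qrsp$. The perfect-complex variants are identical, with $\Vect$ replaced by $D_\perf$ and the perfect-complex form of the Drinfeld--Mathew descent invoked in place of \cite[Thm.~7.8]{Mat22}.

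Granting the sheaf property, the equivalence follows as in \cref{equiv-prism-qsyn}: by Zariski descent on both sides, reduce to $X=\Spf(R)$ affine; choose a quasi-syntomic cover $R\to S$ with $S\in X_\qrsp$ and \v{C}ech nerve $S^\bullet$, whose terms are quasiregular semiperfectoid; then both $\Vect^{\an,\varphi}(X_\Prism)$ and $\Vect^{\an,\varphi}(X_\qrsp,\Prism_\bullet)$ are identified with $\lim_{[n]\in\Delta}\Vect^\varphi\bigl(\Spec(\Prism_{S^n})\setminus V(p,I)\bigr)$ — the former because the sheaf attached to $\Prism_S$ covers the final object of $\Shv(X_\Prism)$, the latter because $S$ is a covering object of $X_\qrsp$ (\cite[Prop.~7.10, Prop.~7.11]{BS19}). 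Comparing the two presentations yields the desired equivalence, which is natural by construction, and the same argument handles perfect complexes.

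The only point beyond \cref{equiv-prism-qsyn} that requires a moment's thought — and the only plausible obstacle — is the descent statement for the auxiliary category $\Vect^\an(A)[1/I]$ on the locus $\{I\neq 0\}$; but inverting $I$ is compatible with the flat base changes appearing in \cite[Thm.~7.8]{Mat22}, and on $\Spec(A[1/I])\setminus V(p)$ one additionally has ordinary faithfully flat descent of vector bundles, so this goes through without difficulty and I anticipate no serious obstruction.
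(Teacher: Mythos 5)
There is a genuine gap, and it occurs precisely at the point you flag as "the only plausible obstacle."

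First, a mis-identification of the target of the equalizer: the Frobenius $\varphi_M$ in \cref{an Fcrys def} is an isomorphism $\varphi_A^*M[1/I] \to M[1/I]$ of finite projective $A[1/I]$-modules, equivalently of vector bundles over the entire affine scheme $\Spec(A[1/I]) = \Spec(A)\setminus V(I)$. This is not the same as $\Spec(A[1/I])\setminus V(p)$: the locus $\{I\neq 0\}$ inside $\Spec(A)\setminus V(p,I)$ still contains all points where $p$ vanishes but $I$ does not. So the equalizer (iso-inserter) computing $\Vect^\varphi\bigl(\Spec(A)\setminus V(p,I)\bigr)$ has target $\Vect(A[1/I])$, as in the paper's own proof, not your $\Vect^\an(A)[1/I]$.

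Second, and more seriously, you assert that this auxiliary functor satisfies $(p,I)$-completely flat descent, and this is not justified by either of the two routes you suggest. Mathew's \cite[Thm.~7.8]{Mat22} is specifically about vector bundles on the punctured formal spectrum $\Spec(A^\wedge_{(p,I)})\setminus V(p,I)$; it does not apply to $\Spec(A[1/I])$, which is not of this form. "Classical faithfully flat descent plus noetherian approximation" also does not help, because a $(p,I)$-completely flat map $A\to B$ does \emph{not} induce an honestly faithfully flat map $A[1/I]\to B[1/I]$ — the flatness hypothesis is only after derived $(p,I)$-completion, and inverting $I$ is incompatible with that completion. The paper is aware of this: its proof of \cref{prism-qrsp-an-vb} only establishes that the comparison map $\Vect(A[1/I]) \to \lim_{[n]}\Vect(B^n[1/I])$ is \emph{fully faithful} (via a careful tor-amplitude and $R\lim$ argument using \cite[Lem.~4.22]{BS19}), not that it is an equivalence, and then circumvents the failure of full descent by replacing the bottom-right corner of the relevant square with the essential image of this map before taking the equalizer. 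This maneuver is the nonobvious content of the lemma, and it is missing from your argument. Without it, the claim "an objectwise limit of sheaves of categories is again a sheaf" does not apply, since one of the objectwise functors in your limit diagram is not a sheaf.

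Everything else — the reduction to the affine case, the use of \cite[Prop.~7.10, Prop.~7.11]{BS19} to pass between the prismatic and quasi-syntomic sites, and the observation that Frobenius preserves the relevant open via \cref{Frobenius on open} — matches the paper and is fine.
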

\begin{proof}
    We will explain why the functor
    \[
	\Vect^{\an,\varphi} \colon (A,I) \longmapsto \Vect^\varphi(\Spec(A)\setminus V(p,I))
	\]
	is a sheaf of categories on $X_\Prism$ for the $(p,I)$-completely flat topology;
	granted this, the exact same argument as in the proof of \cref{equiv-prism-qsyn} applies.
	Note that $\Vect^{\an,\varphi}$ can be described as the equalizer
	\[ \begin{tikzcd}
    	\Vect^\varphi(\Spec(A) \setminus V(p,I)) \arrow[r,phantom,"\simeq"] &[-2em] \eq\bigl(\Vect(\Spec(A) \setminus V(p,I)) \arrow[r,shift left,"{(\blank)[1/I]}"] \arrow[r,shift right,"{\varphi^*_A(\blank)[1/I]}"'] &[3em] \Vect(A[1/I])\bigr).
	\end{tikzcd} \]
	Since the two functors
	\[ X_\Prism \ni (A,I) \mapsto \Vect(\Spec(A) \setminus V(p,I)) \quad \text{and} \quad X_\Prism \ni (A,I) \mapsto\Vect(A[1/I]) \]
	preserve finite products, so does $\Vect^{\an,\varphi}$.
	Now let $(A,I)\to (B,IB)$ be a $(p,I)$-completely flat cover in $X_\Prism$ and $(B^\bullet,IB^\bullet)$ be its \v{C}ech nerve.
	By the compatibility of base change along $A \to B$ and Frobenii, we get a commutative diagram
	\[ \begin{tikzcd}[center picture]
    	\Vect^\varphi(\Spec(A) \setminus V(p,I)) \arrow[r,"\sim"] \arrow[d] &[-1em] \Vect(\Spec(A) \setminus V(p,I)) \arrow[r,shift left,"{(\blank)[1/I]}"] \arrow[r,shift right,"{\varphi^*_A(\blank)[1/I]}"'] \arrow[d] &[3em] \Vect(A[1/I]) \arrow[d] \\
    	\lim\limits_{[n] \in \Delta} \Vect^\varphi(\Spec(B^n) \setminus V(p,I)) \arrow[r,"\sim"] & \lim\limits_{[n] \in \Delta}\Vect(\Spec(B^n) \setminus V(p,I)) \arrow[r,shift left,"{(\blank)[1/I]}"] \arrow[r,shift right,"{\varphi^*_{B^n}(\blank)[1/I]}"'] & \lim\limits_{[n] \in \Delta}\Vect(B^n[1/I]).
	\end{tikzcd} \]
	The middle vertical arrow is an equivalence by the sheaf property of $\Vect^\an$ from \cref{an crystal descent}.
	Moreover, the right vertical arrow is fully faithful:
	by taking internal homs, this amounts to showing that for any finite projective $A[1/I]$-module $M$, we have $M \simeq \lim_{[n]\in \Delta} M\otimes_{A[1/I]} B^n[1/I]$.
	By the observation that $M$ has finite tor-amplitude over $A$, the isomorphism above follows from the fact that the natural map $A\to \lim_{[n] \in \Delta} B^n$ is an isomorphism (by completely faithfully flat descent for the structure sheaf), together with \cite[Lem.~4.22]{BS19}.
	Thus, the two functors $(\blank)[1/I]$ and $\varphi^*_{B^n}(\blank)[1/I]$ factor through the essential image of the right vertical map.
	After replacing the bottom right category by this essential image and taking an equalizer of equivalences, it is clear that the left vertical arrow is an equivalence as well.
	This shows that $\Vect^{\an,\varphi}$ is a sheaf on $X_\Prism$, as desired.
\end{proof}
By taking restrictions, one obtains an analytic prismatic $F$-crystal from a prismatic $F$-crystal.
Next, we show that this functor is in fact a fully faithful embedding of categories.
\begin{proposition}\label{restriction-functor}
	The restriction functor induces a fully faithful embedding of categories
	\[
	\Vect^\varphi(X_\Prism, \cO_\Prism) \longrightarrow \Vect^{\an,\varphi}(X_\Prism).
	\]
\end{proposition}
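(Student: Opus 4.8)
The plan is to reduce full faithfulness, by a standard argument with internal Homs, to an ``algebraic Hartogs'' statement over the analytic locus of a perfect prism, which is already available for $\rA_{\inf}$.

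\emph{Reduction to global sections.} Both $\Vect^\varphi(X_\Prism,\cO_\Prism)$ and $\Vect^{\an,\varphi}(X_\Prism)$ are symmetric monoidal with internal Homs, the analytic case because internal Homs of vector bundles on $\Spec(A)\setminus V(p,I)$ are again vector bundles there (and carry a Frobenius after inverting $I$). The restriction functor is symmetric monoidal and commutes with $\iHom$, since restriction along $\Spec(A)\setminus V(p,I)\hookrightarrow\Spec(A)$ is flat. As $\Hom_{\Vect^\varphi(X_\Prism)}(\cE,\cF)=\Gamma\bigl(X_\Prism,\iHom(\cE,\cF)\bigr)^{\varphi=1}$ and likewise in the analytic category, with $\varphi$-invariance tested in $\iHom(\cE,\cF)[1/\cI_\Prism]$ in both cases, it is enough to show that for every prismatic crystal in vector bundles $\cG\in\Vect(X_\Prism,\cO_\Prism)$ the restriction map
\[
\Gamma(X_\Prism,\cG)\longrightarrow\Gamma_{\an}(X_\Prism,\cG)\colonequals\lim_{(A,I)\in X_\Prism}\Gamma\bigl(\Spec(A)\setminus V(p,I),\cG(A,I)\bigr)
\]
is an isomorphism of $\cO_\Prism(X)$-modules; the $\varphi$-equivariant statement then follows formally, the two $\varphi$-actions being induced by the same $\varphi_{\iHom}$ after inverting $\cI_\Prism$.

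\emph{Reduction to a single perfect prism.} Since $p$-torsionfree perfectoid $X$-algebras form a basis of the quasi-syntomic site (\cite{BMS19}, and \cite{BS21} for possibly ramified base fields), and since both $S\mapsto\cG(\Prism_S)$ and --- by \cref{prism-qrsp-an-vb} together with the descent statement of \cref{an crystal descent} --- $S\mapsto\Gamma\bigl(\Spec(\Prism_S)\setminus V(p,I),\cG(\Prism_S)\bigr)$ are sheaves on the quasi-syntomic site, both modules in the displayed map are computed as limits of these functors over that basis. Hence it suffices to prove: for a $p$-torsionfree perfectoid ring $S$ with initial prism $\Prism_S=\rA_{\inf}(S)=W(S^\flat)$ and $I=(d)$, and for $M\colonequals\cG(\rA_{\inf}(S))$ a finite projective $\rA_{\inf}(S)$-module, the canonical map $M\to\Gamma\bigl(\Spec(\rA_{\inf}(S))\setminus V(p,d),\widetilde M\bigr)$ is an isomorphism.

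\emph{The Hartogs input.} Since $S$ is $p$-torsionfree perfectoid, $(p,d)$ is a regular sequence on $\rA_{\inf}(S)$ --- indeed $\rA_{\inf}(S)$ is $(p,d)$-completely flat over $\ZZ_p$ --- so $\rA_{\inf}(S)$ has depth $\ge 2$ along $V(p,d)$: the complex $\bigl[\rA_{\inf}(S)\to\rA_{\inf}(S)[1/p]\oplus\rA_{\inf}(S)[1/d]\to\rA_{\inf}(S)[1/pd]\bigr]$ computing $R\Gamma_{(p,d)}$ is concentrated in cohomological degrees $\ge 2$. For $S=\cO_C$ this is the Hartogs lemma of \cite[\S~4]{BMS18}, and the general perfectoid case is proved in the same way (or deduced by faithfully flat descent). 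Using that $\Spec(\rA_{\inf}(S))\setminus V(p,d)$ is covered by $\Spec(\rA_{\inf}(S)[1/p])$ and $\Spec(\rA_{\inf}(S)[1/d])$ and writing $M$ as a direct summand of a finite free module, one obtains $\Gamma\bigl(\Spec(\rA_{\inf}(S))\setminus V(p,d),\widetilde M\bigr)=\ker\bigl(M[1/p]\oplus M[1/d]\to M[1/pd]\bigr)=M$ (compare \cref{global-sec}), which gives the required isomorphism and finishes the proof. The main obstacle is exactly this last step: it is where the $p$-torsionfreeness of $S$ enters essentially, and the failure of the analogous statement for non-transverse prisms is precisely why the restriction functor is merely fully faithful rather than an equivalence (cf.\ \cref{prism-F-crys-an}).
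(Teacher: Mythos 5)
Your proof is correct and follows essentially the same route as the paper: both reduce via internal Homs and quasi-syntomic descent to showing that global sections of a vector bundle over a single $\Prism_S$ extend across $V(p,I)$, and both then invoke that $(p,\tilde\xi)$ is a regular sequence of length two, so the depth-$\ge 2$ (Hartogs) criterion applies. The only minor difference is that the paper works with arbitrary $\cO_C$-algebras $S \in X_\qrsp$ (using the $(p,\tilde\xi)$-complete flatness of $\Prism_{\cO_C} \to \Prism_S$ from \cite[Prop.~7.10]{BS19}) and cites Stacks Tag~0G7P for the extension step, which sidesteps the need to justify carefully that $p$-torsionfree perfectoid $X$-algebras form a basis of $X_\qrsp$.
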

\begin{proof}
    Thanks to \cref{prism-qrsp-an-vb}, it suffices to show that the restriction functor $\Vect^\varphi(X_\qrsp,\Prism_\bullet) \to \Vect^{\an,\varphi}(X_\qrsp)$ is fully faithful.
    By the proof of \cref{prism-qrsp-an-vb}, both sides are sheaves of categories for the quasi-syntomic topology.
    Therefore, it is enough to see that the restriction $\Vect^\varphi(\Spec(\Prism_S)) \to \Vect^\varphi(\Spec(\Prism_S)\setminus V(p,I))$ is fully faithful for each $\cO_C$-algebra $S\in X_\qrsp$.
	By taking internal Homs, we are reduced to showing that the global sections of each $\cE\in \Vect(\Spec(\Prism_S))$ coincide with those of its restriction. 
	Since $\Prism_{\cO_C} \to \Prism_S$ is $(p,\tilde{\xi})$-completely faithfully flat \cite[Prop.~7.10]{BS19}, $(p,\tilde{\xi})$ is a regular sequence on $\Prism_S$ of length $2$.
	Thus, the statement follows from \cite[\href{https://stacks.math.columbia.edu/tag/0G7P}{Tag~0G7P}]{SP}.
\end{proof}
In the special case when $X=\Spf(\cO_K)$ is a point, the above functor is in fact an equivalence.
That is, the notion of analytic prismatic $F$-crystal is the same as that of prismatic $F$-crystal over $\cO_\Prism$ from \cite{BS21} for $X=\Spf(\cO_K)$.
\begin{proposition}\label{equiv-for-cry-over-point}
	The functor induced by restriction to open subsets
	\[
	\Vect^\varphi(\Spf(\cO_K)_\Prism, \cO_\Prism) \longrightarrow \Vect^{\an, \varphi}(\Spf(\cO_K)_\Prism)
	\] 
	induces an equivalence of categories.
\end{proposition}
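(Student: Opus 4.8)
The plan is to use that the functor is already fully faithful by \cref{restriction-functor}, so only essential surjectivity is at stake, and — since any extension is then unique — it is enough to produce one. Following \cite[\S~7]{BS21} (cf.\ also \cite{DLMS22}), the Breuil--Kisin prism $(\fS,(E))$ with $\fS = W(k)[[u]]$, together with its prismatic self-coproducts $(\fS^{(\bullet)},(E))$, forms a covering of the final object of $\Shv\bigl(\Spf(\cO_K)_\Prism\bigr)$, so that $\Vect^\varphi(\Spf(\cO_K)_\Prism,\cO_\Prism)$ is the category of Breuil--Kisin modules with descent data, while $\Vect^{\an,\varphi}(\Spf(\cO_K)_\Prism)$ (using \cref{prism-qrsp-an-vb} to match the quasi-syntomic description) is the category of triples consisting of a vector bundle $\cM$ on $\Spec(\fS)\setminus V(p,E)$, a Frobenius isomorphism over $\Spec(\fS[1/E])$, and descent data along $\Spec(\fS^{(\bullet)})\setminus V(p,E)$ satisfying the cocycle identity. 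The plan is then to extend each of the three pieces of data across $V(p,E)$.

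First I would extend the vector bundle. Since $E$ is Eisenstein, $E\equiv u^e$ modulo $p$, so $V(p,E)=V(p,u)=\{\fm\}$ is the closed point of the two-dimensional regular local ring $\fS$. For a vector bundle $\cM$ on the punctured spectrum $\Spec(\fS)\setminus\{\fm\}$, the module of global sections $\widetilde{M}\colonequals\Gamma\bigl(\Spec(\fS)\setminus\{\fm\},\cM\bigr)$ is finitely generated and reflexive (it is the intersection of its localizations at the height-one primes), hence free, since a finitely generated reflexive module over a regular local ring of dimension $\le 2$ is free; moreover $\widetilde{M}$ restricts back to $\cM$ and is the unique finite free extension, because for any finite free extension the restriction map on global sections is an isomorphism by a depth argument. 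Thus $\cM$ extends uniquely to a finite free $\fS$-module $\widetilde{\cM}$, and the Frobenius structure extends for free: by \cref{Frobenius on open} (and the computation that $\Spec(\fS[1/E])\subseteq\varphi^{-1}\bigl(\Spec(\fS)\setminus V(p,E)\bigr)$), the given $\varphi_\cE\colon\varphi^*\cM[1/E]\xrightarrow{\sim}\cM[1/E]$ is already an isomorphism of vector bundles over $\Spec(\fS[1/E])$, i.e.\ a Frobenius structure on $\widetilde{\cM}$.

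Next I would extend the descent datum. After the previous step we have finite free $\fS^{(1)}$-modules $p_0^*\widetilde{\cM}$ and $p_1^*\widetilde{\cM}$ and an isomorphism between their restrictions to $\Spec(\fS^{(1)})\setminus V(p,E)$, which must be extended to an isomorphism over all of $\Spec(\fS^{(1)})$, and similarly the cocycle identity must be extended from $\Spec(\fS^{(2)})\setminus V(p,E)$ to $\Spec(\fS^{(2)})$. It suffices for this to know that for a vector bundle on $\Spec(\fS^{(n)})$ the restriction map from global sections on $\Spec(\fS^{(n)})$ to those on $\Spec(\fS^{(n)})\setminus V(p,E)$ is an isomorphism: applied to $\iHom(p_0^*\widetilde{\cM},p_1^*\widetilde{\cM})$ this extends the descent isomorphism (the extension of its inverse is automatically an inverse, by the uniqueness used above), and then its cocycle identity. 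This global-sections statement follows from the structure theory of the rings $\fS^{(n)}$ in \cite[\S~7]{BS21} and \cite[\S~4]{DLMS22}: they are $(p,E)$-completely flat over $\fS$, so $(p,E)$ is a regular sequence on them and they have depth $\ge 2$ along $V(p,E)$; combined with Beauville--Laszlo gluing and the finite presentedness of the global sections of an analytic vector bundle (\cref{global-sec}), one identifies analytic with genuine vector bundles and their morphisms over $\Spec(\fS^{(n)})$. Assembling the extended bundle, Frobenius, and descent datum yields a Breuil--Kisin module with descent data, i.e.\ an object of $\Vect^\varphi(\Spf(\cO_K)_\Prism,\cO_\Prism)$ whose image is $(\cE,\varphi_\cE)$.

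The hard part will be the last step: extending the descent data and the cocycle identity across $V(p,E)$ over the rings $\fS^{(n)}$, which are neither noetherian nor regular, so the clean ``reflexive $=$ free'' argument available over $\fS$ does not apply and one must instead use the explicit description of $\fS^{(n)}$ as $(p,E)$-completely flat $\delta$-$\fS$-algebras together with Beauville--Laszlo gluing; the vector-bundle and Frobenius extensions over $\fS$ itself are comparatively formal.
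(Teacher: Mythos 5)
Your proof is correct, but it takes a genuinely different route from the paper. The paper works over the perfectoid prism $(\rA_{\inf}(\cO_C),\tilde\xi)$ and the initial prism $\Prism_S$ of $S=\cO_C\,\widehat\otimes_{\cO_K}\cO_C$: the bundle extension over $\rA_{\inf}$ is supplied directly by \cite[Lem.~4.6]{BMS18}, and the descent datum is extended using that $\rA_{\inf}\to\Prism_S$ is $(p,\tilde\xi)$-completely flat, so $(p,\tilde\xi)$ is a regular sequence of length two on $\Prism_S$ and \cite[\href{https://stacks.math.columbia.edu/tag/0G7P}{Tag~0G7P}]{SP} makes the restriction map from the finite free modules $\pr_i^*M$ to their sections on the punctured spectrum an isomorphism. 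You instead use the Breuil--Kisin prism $(\fS,(E))$ and its self-coproducts $\fS^{(n)}$, extending the bundle via the ``finite reflexive $\Rightarrow$ free'' argument over the $2$-dimensional regular local ring $\fS$ (which is indeed exactly where $V(p,E)=\{\fm\}$), and extending the descent datum by the same depth-$\ge 2$ argument on $\fS^{(n)}$. Both approaches hinge on the same two facts --- extend the bundle over a single covering prism, then use a codimension-$2$/regular-sequence argument to extend morphisms of the resulting free modules from the punctured spectrum to the full spectrum of the self-products --- so there is no essential difference in difficulty, though the paper's choice of $\rA_{\inf}$ lets it quote a ready-made statement for the bundle extension, while your choice is closer in spirit to the approach of \cite{DLMS22}.

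One imprecision worth fixing: in the last paragraph you invoke \cref{global-sec} and Beauville--Laszlo over $\fS^{(n)}$ and claim to ``identify analytic with genuine vector bundles'' there, but \cref{global-sec} assumes $A$ noetherian, which $\fS^{(n)}$ is not for $n\ge 1$, and the identification of analytic with genuine bundles over $\fS^{(n)}$ is neither established nor needed. What you actually use --- and this is correctly isolated in the preceding sentence --- is only that for a finite free module over $\fS^{(n)}$ (namely $\iHom(p_0^*\widetilde\cM,p_1^*\widetilde\cM)$, and similarly on $\fS^{(2)}$ for the cocycle) the restriction map to sections on $\Spec(\fS^{(n)})\setminus V(p,E)$ is bijective; this follows directly from $(p,E)$ being a regular sequence of length $2$ on $\fS^{(n)}$ via \cite[\href{https://stacks.math.columbia.edu/tag/0G7P}{Tag~0G7P}]{SP}, exactly as the paper does for $\Prism_S$. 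With that citation in place of \cref{global-sec} and Beauville--Laszlo, your argument is complete.
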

\begin{proof}
    By \cref{restriction-functor}, it remains to prove that the functor is essentially surjective.
    Set $\mathrm{A}_{\inf} \colonequals \mathrm{A}_{\inf}(\cO_C)$ and $S \colonequals \cO_C\widehat{\otimes}_{\cO_K}\cO_C$.
	Let $(\cE,\varphi_\cE)$ be an analytic prismatic $F$-crystal over $\Spf(\cO_K)$.
	We want to show that $(\cE,\varphi_\cE)$ extends to a prismatic $F$-crystal over $\Spf(\cO_K)$, or equivalently, an object in $\Vect^\varphi(\mathrm{A}_{\inf})$ together with descent data at $\Prism_{S}$.
	
	The evaluation of $(\cE,\varphi_\cE)$ at the prism $(\mathrm{A}_{\inf},\tilde{\xi})$ gives an object of $\Vect^\varphi(\Spec(\mathrm{A}_{\inf})\setminus V(p,\tilde{\xi}))$.
	Notice that by \cite[Lem.~4.6]{BMS18}, the restriction functor induces an equivalence of categories between vector bundles on $\Spec(\mathrm{A}_{\inf})\setminus V(p,\tilde{\xi})$ and (necessarily trivial) vector bundles on $\mathrm{A}_{\inf}$.
	Thus, we can extend $(\cE,\varphi_\cE)$ to an object $(M,\varphi_M)$ in $\Vect^\varphi(\Spec(\mathrm{A}_{\inf}))$, and it suffices to show that the pullbacks  of $(M,\varphi_M)$ to $\Spec(\Prism_{S})$ admit a descent isomorphism.
	That is, there is a natural isomorphism of $\Prism_S$-modules $\pr_1^* M\simeq \pr_2^* M$ satisfying the cocycle condition, whose restriction to the open subset $\Spec(\Prism_S)\setminus V(p, \tilde{\xi})$ coincides with the given descent data $\alpha \colon \pr_1^*\cE(\cO_C) \simeq \pr_2^* \cE(\cO_C)$.
	
	To proceed, since $M$ is free over $\mathrm{A}_{\inf}$ and  thus $p$-torsionfree, it suffices to show that the restriction of
	\[
	\alpha[1/p] \colon \pr_1^*M[1/p] \simeq \pr_2^*M[1/p]
	\]
	induces an isomorphism of the lattices $\pr_1^*M \xrightarrow{\sim} \pr_2^*M$.
	On the other hand, as the map $\mathrm{A}_{\inf} \rightarrow \Prism_S$ is $(p,\tilde{\xi})$-completely flat (\cite[Prop.~7.10]{BS19}), $(p,\tilde{\xi})$ is a  regular sequence of length $2$ in $\Prism_S$.
	In particular, similar to the proof of \cref{restriction-functor}, the natural restriction map
	\[ \pr_1^*M \simeq \Gamma(\Spec(\Prism_S),\widetilde{\pr_1^*M}) \to \Gamma(\Spec(\Prism_S)\setminus V(p, \tilde{\xi}),\widetilde{\pr_1^*M}) \]
	is an isomorphism.
	From this, the isomorphism $\alpha \colon \pr_1^*M) \simeq \pr_2^*M$ follows from the assumption that the descent data $\alpha$ is defined over the open subset $\Spec(\Prism_S)\setminus V(p, \tilde{\xi})$.
	Thus we are done.
\end{proof}

\subsection{Realization functors}\label{sec3.2}
Analytic prismatic $F$-crystals can be used to recover both \'etale and crystalline information.
This is formalized by the notion of realization functors.
\begin{construction}[Realization functors]\label{realization functors}
	\begin{enumerate}[label=\upshape{(\roman*)},leftmargin=*]
		\item\label{realization functors et} The \emph{\'etale realization functor} is the functor 
		\begin{align*}
			T\colon\Vect^{\an, \varphi}(X_\Prism) &\longrightarrow \Vect^\varphi(X_\Prism, \cO_\Prism[1/\cI_\Prism]^\wedge_p)\simeq \mathrm{Loc}_{\ZZ_p}(X_\eta);\\
			(\cE,\varphi_\cE) & \longmapsto \cE\otimes_{\cO_\Prism} \cO_\Prism[1/\cI_\Prism]^\wedge_p
		\end{align*}
	induced by the open immersion $\Spec(A[1/I])\subset \Spec(A)$ and the equivalence from \cite[Cor.~3.8, Ex.~4.4]{BS21}.
	The \'etale realization functor for $\Vect^\varphi(X_\Prism, \cO_\Prism)$ from \cite[Constr.~4.8]{BS21} naturally factors through $T$.
	\item\label{realization functors crys} 
	The map of $p$-adic formal schemes $X_{p=0}\to X$ induces a natural functor $\Vect^{\an,\varphi}\bigl(X_\Prism\bigr) \to \Vect^{\an,\varphi}\bigl(X_{p=0,\Prism}\bigr)$.
	By composing with the prismatic-crystalline equivalence in \cref{prism-crys-crystal}.\ref{prism-crys-crystal coefficient} below, one can define the \emph{rational crystalline realization functor}
	\[
	D_\crys \colon \Vect^{\an,\varphi}\bigl(X_\Prism\bigr) \to \Vect^{\an,\varphi}\bigl(X_{p=0,\Prism}\bigr) \simeq \Isoc^{\varphi}\bigl(X_{p=0,\crys}\bigr) \simeq \Isoc^{\varphi}\bigl(X_{s,\crys}\bigr).
	\]
	Concretely, for any quasiregular semiperfect algebra $S\in X_{p=0,\qrsp} \subset X_{\qrsp}$ and $(A,I)=(\Prism_S \simeq \rA_\crys(S), I)$, the value of the $F$-isocrystal $D_\crys(\cE, \varphi_{\cE})$ at the $p$-completed divided power thickening $(\rA_\crys(S),S/p)$ is equal to $(\cE_A,\varphi_{\cE_A})$.
	\end{enumerate}
\end{construction}
\begin{remark}\label{et-prism-functor}
	In the special case when $X=\Spf(S^+)$ for an affinoid perfectoid algebra $(S,S^+)$ and thus $\Vect^\varphi(X_\Prism,\cO_\Prism[1/\cI_\Prism]^\wedge_p) \simeq \Vect^\varphi(\mathrm{A}_{\inf}(S^+)[1/I]^\wedge_p)$, the equivalence $\Vect^\varphi(X_\Prism, \cO_\Prism[1/\cI_\Prism]^\wedge_p)\simeq \mathrm{Loc}_{\ZZ_p}(X_\eta)$ from \cite[Cor.~3.8, Ex.~4.4]{BS21} can be obtained as follows:
	\begin{itemize}
		\item Given a $\widehat{\ZZ}_p$-local system $T$ on $X_\eta$, take an {affinoid} pro-\'etale cover $Y$ of $X_\eta$ for which $\restr{T}{Y}$ is trivial.
		The module $\mathrm{A}_{\inf}[1/I]^\wedge_p(Y)\otimes T(Y)$ is equipped with a Frobenius action from the first factor and its descent along the cover is a projective $\mathrm{A}_{\inf}(S^+)[1/I]^\wedge_p$-module together with Frobenius.
		\item Given an $F$-crystal $(M,\varphi_M)$ over $\mathrm{A}_{\inf}(S^+)[1/I]^\wedge_p$, the corresponding lisse \'etale sheaf over $X_\eta$ is the inverse limit (with respect to $n$) of the following sheaves of $\ZZ/p^n$-modules: 
		\[
		\Perfd/X_{\eta, \proet} \ni Y \longmapsto \left(M/p^n\otimes_{\mathrm{A}_{\inf}(S^+)[1/I]} \rA_{\inf}(Y)[1/I] \right)^{\varphi=1}.
		\]
		In other words, the lisse \'etale sheaf is given by the fiber of the map of pro-\'etale sheaves
					\[
					\begin{tikzcd}
						\mathbb{M} \arrow[rr, "\varphi_{\mathbb{M}} - \id"] && \mathbb{M},
					\end{tikzcd}
					\]
		where $(\mathbb{M}, \varphi_{\mathbb{M}})$ is the pro-\'etale sheaf of $\varphi$-modules over $(X_{\eta, \proet}, \AA_{\inf}[1/I]^\wedge_p)$ that is associated with $(M,\varphi_M)$.
	\end{itemize}
\end{remark}
\begin{remark}\label{derived et realization}
    Later, we will consider a derived analog of the \'etale realization functor from \cref{realization functors}.\ref{realization functors et} for prismatic $F$-crystals in perfect complexes, which is defined in the same way (see e.g.\ \cite[Cor.~3.7]{BS21}).
\end{remark}
At the end of this section, we show that an analytic prismatic $F$-crystal naturally gives rise to a crystalline local system, generalizing \cite[Prop.~5.3]{BS21}.
\begin{theorem}\label{pris to crys}
	Let $X$ be a smooth $p$-adic formal scheme over $\cO_K$.
	For any $\cE\in \Vect^{\an,\varphi}(X_\Prism)$, the \'etale realization $T(\cE)$ is a crystalline local system on $X_{\eta,\proet}$ in the sense of \cref{crys loc def}.
\end{theorem}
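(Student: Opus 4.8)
The statement asserts that the étale realization $T(\mathcal{E})$ of an analytic prismatic $F$-crystal is a crystalline $\mathbf{Z}_p$-local system in Faltings's sense, i.e., that there exists a filtered $F$-isocrystal on $X_s$ together with a Frobenius- and filtration-compatible trivialization of the associated $\mathbf{B}_{\mathrm{crys}}$-sheaf after base change. The plan is to produce the $F$-isocrystal directly from the crystalline realization functor $D_{\mathrm{crys}}$ of \cref{realization functors}.\ref{realization functors crys} and to construct the comparison isomorphism $\vartheta$ pro-étale-locally on $X_\eta$ by comparing the two natural ways of evaluating the crystal $\mathcal{E}$ — once over a perfect prism $\mathrm{A}_{\inf}(S^+)$ (giving the étale side) and once over the crystalline prism $\mathrm{A}_{\mathrm{crys}}(S^+)$ (giving the crystalline side) — both of which are values of the same analytic prismatic $F$-crystal. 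The key is that the two prisms receive compatible maps inside the diagram of period sheaves, and the structure-sheaf base-change comparison between $\mathrm{A}_{\inf}[1/I]^\wedge_p$ and $\mathrm{A}_{\mathrm{crys}}$ is exactly what the sheaf $\mathbf{B}_{\mathrm{crys}}$ encodes.

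\textbf{Step 1: The $F$-isocrystal.} Apply $D_{\mathrm{crys}}$ to $(\mathcal{E},\varphi_{\mathcal{E}})$ to obtain an $F$-isocrystal $(\mathcal{E}',\varphi')$ on $X_s$ (equivalently on $X_{p=0}$), using the prismatic–crystalline equivalence $\Vect^{\an,\varphi}(X_{p=0,\Prism}) \simeq \Isoc^\varphi(X_{s,\crys})$ cited in \cref{realization functors}. Concretely, for $S \in X_{p=0,\qrsp}$ quasiregular semiperfect with initial prism $(\Prism_S \simeq \mathrm{A}_{\mathrm{crys}}(S), I)$, the value of $\mathcal{E}'$ at the pd-thickening $(\mathrm{A}_{\mathrm{crys}}(S), S/p)$ is $\mathcal{E}(\mathrm{A}_{\mathrm{crys}}(S))$, with its Frobenius.

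\textbf{Step 2: The comparison isomorphism.} Working pro-étale-locally, restrict to a framed affine open $\Spf(R) \subseteq X$ and an affinoid perfectoid $U$ with $\widehat{U} = \Spa(S,S^+)$ covering $X_\eta$ such that $T(\mathcal{E})|_U$ is trivial; then $(A,I) = (\mathrm{A}_{\inf}(S^+), I)$ is a perfect prism over $X$. By \cref{et-prism-functor}, $T(\mathcal{E})(U)$ is recovered from $(\mathcal{E}(A)[1/I]^\wedge_p, \varphi)$ via $\varphi$-invariants after further pro-étale base change. On the other hand, $\mathbf{B}_{\mathrm{crys}}(\mathcal{E}')(U) = \mathcal{E}'(\mathrm{A}_{\mathrm{crys}}(S^+))[1/p][1/\mu] = \mathcal{E}(\mathrm{A}_{\mathrm{crys}}(S^+))[1/\mu]$ by \cref{Acrys-isocrystal} and Step 1. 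Since $\mathcal{E}$ is an analytic prismatic \emph{crystal}, its value on the open subset $\Spec(A) \setminus V(p,I)$ transforms compatibly under the maps of prisms $\mathrm{A}_{\inf}(S^+) \to \mathrm{A}_{\mathrm{crys}}(S^+)$ appearing in the period-ring diagram; after inverting $\mu$ (which inverts $p$ and $I$-completes appropriately) both sides become modules over $\mathbf{B}_{\mathrm{crys}}(S,S^+)$ and we obtain a natural isomorphism $\vartheta\colon \mathbf{B}_{\mathrm{crys}}(\mathcal{E}') \xrightarrow{\sim} \mathbf{B}_{\mathrm{crys}} \otimes_{\widehat{\mathbf{Z}}_p} T(\mathcal{E})$ by descent along $U$, exactly as in the proof of \cite[Prop.~5.3]{BS21}. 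Frobenius-equivariance is immediate because all the identifications are induced by $\varphi_{\mathcal{E}}$ and the Frobenius on $\Prism_\bullet$, hence compatible with the Frobenius on $\mathbf{B}_{\mathrm{crys}}$ as in \cref{Frob-BcrysE}.

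\textbf{Step 3: Weak crystallinity suffices.} Steps 1–2 show $T(\mathcal{E})$ is \emph{weakly} crystalline in the sense of \cref{crys loc def}.\ref{crys loc def filtr}. By \cref{weak crys is crys}, any weakly crystalline lisse sheaf is automatically crystalline for a uniquely determined filtration on the associated $F$-isocrystal; hence $T(\mathcal{E})$ is a crystalline local system (torsionfreeness of $T(\mathcal{E})$ is clear since it is a $\mathbf{Z}_p$-local system by construction via \cite[Cor.~3.8]{BS21}). \textbf{The main obstacle} is Step 2: one must check carefully that the crystal compatibility of $\mathcal{E}$ over $\Spec(A)\setminus V(p,I)$ — which lives on the analytic locus of a bounded prism — descends correctly along the pro-étale cover $U$ and glues over varying $\Spf(R)$, and that after inverting $\mu$ the two evaluations genuinely agree as $\mathbf{B}_{\mathrm{crys}}$-modules rather than merely after a further base change; this requires knowing that $\mathbf{B}_{\mathrm{crys}}(S,S^+)$ sees both the $p$-completed localization $A[1/I]^\wedge_p$ and the pd-envelope $\mathrm{A}_{\mathrm{crys}}(S^+)$ inside a common overring, which is precisely the content of the period-ring diagram recalled in the \nameref{list} and the identification $\mathbf{B}_{\mathrm{crys}}^+ = \mathrm{A}_{\mathrm{crys}}[1/p]$ together with $\mathbf{B}_{\mathrm{crys}} = \mathbf{B}_{\mathrm{crys}}^+[1/\mu]$.
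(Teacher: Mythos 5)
Your Steps 1 and 3 are exactly the paper's reductions: obtain the $F$-isocrystal via $D_\crys$ (with the identification of its $\BB_\crys$-realization provided by the crystal property of $\cE$ at $\rA_\crys(S^+)$), and then invoke \cref{weak crys is crys} to dispense with the filtration. But the heart of Step 2 is missing, and your ``main obstacle'' paragraph misdiagnoses what that heart is. The \'etale realization is defined in terms of the isomorphism
\[
T(\cE)\otimes_{\widehat{\ZZ}_p} \rA_{\inf}(S^+)[1/\tilde\xi]^\wedge_p \;\simeq\; \cE\bigl(\rA_{\inf}(S^+)\bigr)[1/\tilde\xi]^\wedge_p,
\]
and you want to turn this into an isomorphism over $\BB_\crys(S,S^+) = \rA_\crys(S^+)[1/\mu]$. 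The problem is that $\rA_{\inf}[1/\tilde\xi]^\wedge_p$ and $\rA_\crys$ do \emph{not} sit inside a common overring in any useful way: $\tilde\xi$ is a unit in the former but a topological nilpotent in $\BB_\dR^+$ and only a $\mu$-power up to unit in $\rA_\crys$. So one cannot just ``base change'' the \'etale trivialization to $\BB_\crys$, nor can the period-ring diagram alone supply the comparison. What is actually needed is to show that the trivialization of $T(\cE)$ over the large ring $\rA_{\inf}[1/\tilde\xi]^\wedge_p$ \emph{descends} to the subring $\rA_{\inf}[1/\mu]$ (which then maps into both $\rA_{\inf}[1/\tilde\xi]^\wedge_p$ and $\BB_\crys$); this is precisely \cref{pris to crys lem}, and it is not a formality — the paper proves it by $p$-complete arc-descent to products of valuation rings with algebraically closed fraction field and an argument with Breuil--Kisin--Fargues lattices à la \cite[Lem.~4.26]{BMS18}. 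Your proposal neither cites that lemma nor substitutes a different argument for it, so as written the chain from $T(\cE)\otimes\rA_{\inf}[1/\tilde\xi]^\wedge_p \simeq \AA_{\inf}(\cE)[1/\tilde\xi]^\wedge_p$ to $T(\cE)\otimes\BB_\crys \simeq \BB_\crys(\cE_s)$ is broken. Once \cref{pris to crys lem} is inserted, your outline does reproduce the paper's proof.
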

Before the proof of \cref{pris to crys}, we show an important lemma.
In order to state it, we need the following notation:
Given an analytic prismatic crystal $\cE$ over $X_\Prism$, we define $\AA_{\inf}(\cE)[1/\mu]$ to be the pro-\'etale sheaf on $X_{\eta,\proet}$ given by
\[ \AA_{\inf}(\cE)[1/\mu]\bigl(\Spa(S,S^+)\bigr) \colonequals \cE(\Prism_{S^+},I)[1/\mu] \]
on affinoid perfectoid spaces $\Spa(S,S^+) \in \Perfd/X_{\eta,C,\proet}$.
By $p$-completing, we can also define the sheaf $\AA_{\inf}(\cE)[1/I]^\wedge_p$.
The sheaf property follows from the crystal condition of $\cE$, by the same argument as in \cref{Bcrys-isocrystal-sheaf}.

The \'etale realization functor from \cref{realization functors}.\ref{realization functors et} produces an \'etale $\widehat{\ZZ}_p$-local system $T(\cE)$ over $X_{\eta, \proet}$ such that 
\begin{equation}\label{et-real-isom}
T(\cE) \otimes_{\widehat{\ZZ}_p} \AA_{\inf}[1/I]^\wedge_p \simeq  \AA_{\inf}(\cE)[1/I]^\wedge_p.
\end{equation}
Choose an affinoid perfectoid object $\Spa(S,S^+)$ over $\Spa(C,\cO_C)$ in $X_{\eta,\proet}$ such that $\restr{T}{\Spa(S,S^+)}$ is trivial (such objects form a basis of $X_{\eta, \proet}$).
Evaluated at $\Spa(S,S^+)$, the isomorphism of sheaves above leads to an isomorphism of $\mathrm{A}_{\inf}(S^+)[1/\tilde{\xi}]^\wedge_p$-modules.
We show that it can be descended to an $\rA_{\inf}[1/\mu]$-linear isomorphism for $\mu=[\epsilon]-1$. 
\begin{lemma}\label{pris to crys lem}
	There is a natural isomorphism of $\mathrm{A}_{\inf}(S^+)[1/\mu]$-modules
	\[
	T(\cE)\otimes_{\ZZ_p} \mathrm{A}_{\inf}(S^+)[1/\mu] \simeq \AA_{\inf}(\cE)[1/\mu](S,S^+), 
	\]
	which is functorial with respect to perfectoid $C$-algebras $(S,S^+)\in X_{\eta, \proet}$ for which $\restr{T}{\Spa(S,S^+)}$ is trivial, and is compatible with (\ref{et-real-isom}) after base change along $\mathrm{A}_{\inf}(S^+)[1/\mu]\rightarrow \mathrm{A}_{\inf}(S^+)[1/\tilde{\xi}]^\wedge_p$.
\end{lemma}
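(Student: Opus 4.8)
The plan is to evaluate everything at the perfect prism $(A,I) := (\Prism_{S^+},I) = \bigl(\rA_{\inf}(S^+),\tilde\xi\bigr)$ and to convert the statement into a concrete assertion about the finitely presented $A$-module $M := \cE(\Prism_{S^+},I) = \AA_{\inf}(\cE)(S,S^+)$, whose restriction to $\Spec(A)\setminus V(p,I)$ is the vector bundle obtained by evaluating the analytic prismatic crystal $\cE$ at $(A,I)$ and which carries $\varphi_M\colon \varphi_A^*M[1/I]\xrightarrow{\sim}M[1/I]$ coming from $\varphi_\cE$. The first ingredients are two elementary facts about $A$. Using the relation $\varphi(\mu)=\mu\tilde\xi$ together with the identity $\tilde\xi=[p]_q = p + \binom{p}{2}\mu + \dotsb + \mu^{p-1}$, so that $\tilde\xi\equiv\mu^{p-1}\pmod p$, one gets $(p,I) = (p,\mu^{p-1})$ and hence $V(p,I) = V(p,\mu)$ in $\Spec(A)$. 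Therefore $\Spec(A)\setminus V(p,I)$ is covered by the two affine opens $D(\mu):=\Spec\bigl(A[1/\mu]\bigr)$ and $D(I):=\Spec\bigl(A[1/I]\bigr)$, with intersection $\Spec\bigl(A[1/\mu I]\bigr)$; in particular $M[1/\mu]$ is finitely presented and finite projective over $A[1/\mu]$. Moreover $A[1/\mu]^\wedge_p = A[1/I]^\wedge_p$, as both are canonically identified with $W\bigl(S^{+,\flat}[1/(\epsilon-1)]\bigr)$ (both are $p$-torsionfree, $p$-complete, and reduce modulo $p$ to $S^{+,\flat}[1/(\epsilon-1)]$ by the congruence above). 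Hence the $p$-completion of $M[1/\mu]$ is $\mathbb{M} := M[1/I]^\wedge_p = \AA_{\inf}(\cE)[1/I]^\wedge_p(S,S^+)$.

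Next I would trivialize over the $p$-complete locus: by the explicit description of the étale realization in \cref{et-prism-functor} and the isomorphism \eqref{et-real-isom}, evaluating at the affinoid perfectoid $\Spa(S,S^+)$ — on which $T(\cE)$ is trivial by hypothesis — identifies the $\varphi$-module $(\mathbb{M},\varphi_M)$ over $A[1/I]^\wedge_p$ with the standard one $T(\cE)(\Spa(S,S^+))\otimes_{\ZZ_p}A[1/I]^\wedge_p$, under which the $\varphi$-invariants of $\mathbb{M}$ correspond exactly to $T(\cE)(\Spa(S,S^+))\otimes 1$. (This uses only full faithfulness over the point, i.e.\ \cite[Cor.~3.8]{BS21}, plus the fact that the trivial local system goes to the standard $\varphi$-module.) It then remains to upgrade this to an $A[1/\mu]$-linear isomorphism $T(\cE)(\Spa(S,S^+))\otimes_{\ZZ_p}A[1/\mu]\xrightarrow{\sim}M[1/\mu]$ whose $p$-completion is the above identification; equivalently, to show that the two $A[1/\mu]$-submodules $M[1/\mu]$ and $T(\cE)(\Spa(S,S^+))\otimes_{\ZZ_p}A[1/\mu]$ of $\mathbb{M}$ coincide.

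For this last point I would follow the method of \cite[\S~4]{BMS18}. By Beauville--Laszlo along $p$ over the cover $\{D(\mu),D(I)\}$, a finite projective module on $\Spec(A)\setminus V(p,I)$ is the same datum as a finite projective $A[1/\mu]$-module together with a $\varphi$-module $(\mathbb{M},\varphi_M)$ over $A[1/I]^\wedge_p$ glued over $A[1/\mu I]^\wedge_p$; since the two lattices in question already agree after $p$-completion, it suffices to compare them after inverting $p$, i.e.\ to prove the rational statement $M[1/\mu,1/p]\simeq T(\cE)(\Spa(S,S^+))\otimes_{\ZZ_p}A[1/\mu,1/p]$ over $A[1/\mu,1/p]$. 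Here the invertibility of $\varphi_M$ over $D(I)$ forces $\varphi$-invariant sections of $\mathbb{M}[1/p]$ to extend across $D(\mu)$ without poles, which yields the identification after a dévissage along powers of $p$; this is the analogue, for a general affinoid perfectoid $S^+$, of \cite[Lem.~4.26]{BMS18}. Finally, naturality in $(S,S^+)$ (for perfectoid $C$-algebras on which $T(\cE)$ is trivial) and compatibility with \eqref{et-real-isom} under base change along $A[1/\mu]\to A[1/I]^\wedge_p$ are automatic, since every step — the cover, the identification $A[1/\mu]^\wedge_p = A[1/I]^\wedge_p$, and the trivialization over this completion — is functorial. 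The main obstacle is exactly this rational étale-ness of $M[1/\mu]$: controlling the $\varphi$-invariants over the full locus $D(\mu)$ rather than only over the $p$-complete locus $D(I)$ already handled by \eqref{et-real-isom}.
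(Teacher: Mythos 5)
Your initial observations are correct, and in places sharper than the paper's setup: from $\tilde\xi \equiv \mu^{p-1}\pmod{p}$ one indeed gets $V(p,I)=V(p,\mu)$, hence the affine cover $\Spec(A)\setminus V(p,I)=D(\mu)\cup D(I)$ and the identification $A[1/\mu]^\wedge_p \simeq A[1/I]^\wedge_p$; and trivializing the $\varphi$-module over $A[1/I]^\wedge_p$ via \cref{et-prism-functor} and (\ref{et-real-isom}) is exactly where the paper also begins. But the argument breaks precisely at the point you label the ``main obstacle,'' and the remedy you offer is circular. You assert that invertibility of $\varphi_M$ over $D(I)$ forces $\varphi$-invariant sections of $\mathbb{M}[1/p]$ to extend across $D(\mu)$ ``after a dévissage along powers of $p$,'' and that ``this is the analogue, for a general affinoid perfectoid $S^+$, of \cite[Lem.~4.26]{BMS18}.'' That analogue for general $S^+$ is exactly the content of the lemma you are trying to prove: \cite[Lem.~4.26]{BMS18} is stated and proved only over $\rA_{\inf}(\cO_C)$ for a rank-one perfectoid valuation ring with algebraically closed fraction field, and it does not transport formally to $\rA_{\inf}(S^+)$ for a general perfectoid $S^+$. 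Your Beauville--Laszlo reduction to a statement over $A[1/\mu,1/p]$ is fine as far as it goes but does not remove the difficulty, since there is no soft descent of $\varphi$-invariants from $A[1/I]^\wedge_p[1/p]$ to $A[1/\mu,1/p]$. A side issue: $\cE(\Prism_{S^+},I)$ is a vector bundle on the punctured spectrum of the typically non-noetherian ring $A=\rA_{\inf}(S^+)$, so there is no reason its global sections form a finitely presented $A$-module; the paper instead \emph{chooses} a finitely presented lattice $M\subset\cE(S^+)[1/\tilde\xi]$ and rescales by $\mu^{-n}$ to arrange $\varphi_\cE^{-1}(M)\subseteq M$.

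What your proposal omits is the paper's arc-local reduction, which is the load-bearing step. One passes to a $p$-complete arc-cover $S^+\to\prod_i V_i$ by mixed-characteristic perfectoid valuation rings of rank one with algebraically closed fraction field (\cite[Rmk.~8.9]{BS19}), applies the proof of \cite[Lem.~4.26]{BMS18} factorwise to conclude that $T$ lands in $M_{V_i}[1/\mu]$ for each $i$, and then---this is the nontrivial point---shows that the resulting coordinate expressions $\sum_j a_{ij}e_j$ with $a_{ij}\in\rA_{\inf}(V_i)$, for a fixed finite generating set $\{e_1,\dotsc,e_m\}$ of $M$, assemble over $i$ into an element of $M[1/\mu]\otimes_{\rA_{\inf}(S^+)}\rA_{\inf}(\prod_i V_i)$ rather than only of $\prod_i M_{V_i}[1/\mu]$, using $\rA_{\inf}(\prod_i V_i)=\prod_i\rA_{\inf}(V_i)$. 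This re-assembly is the reason the paper insists on a fixed finitely generated $M$; it cannot be replaced by a black-box appeal to the BMS statement over each factor, and still less by an unspecified ``dévissage.''
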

Here, $T(\cE)\otimes_{\ZZ_p} \mathrm{A}_{\inf}(S^+)$ denotes the sections of the pro-\'etale sheaf $T(\cE)\otimes \AA_{\inf}$ over the affinoid perfectoid space $\Spa(S,S^+)$.
\begin{proof}[Proof of \cref{pris to crys lem}]
	We denote the sections of the local system $T(\cE)$ over $\Spa(S,S^+)$ by $T$.
	To simplify the notation, we abbreviate $ \AA_{\inf}(\cE)[1/\mu](S,S^+)$ by $\cE(S^+)[1/\mu]$ and analogously for $\cE(S^+)[1/\tilde{\xi}]$.
	As $\cE(S^+)[1/\tilde{\xi}]$ is finite projective over $\mathrm{A}_{\inf}(S^+)[1/\tilde{\xi}]$, we can choose a finitely presented $\mathrm{A}_{\inf}(S^+)$-submodule $M$ of $\cE(S^+)[1/\tilde{\xi}]$, generating the latter as an $\rA_{\inf}(S^+)[1/\tilde{\xi}]$-module (\cite[\href{https://stacks.math.columbia.edu/tag/01PI}{Tag~01PI}]{SP}). 
	Let $\{e_1,\ldots, e_m\}\subset M$ be a set of generators of $M$ over $\mathrm{A}_{\inf}(S^+)$.
	The inverse Frobenius $\varphi_{\cE}^{-1}$ defines a $\varphi^{-1}$-semilinear map $M[1/\tilde{\xi}]=\cE(S^+)[1/\tilde{\xi}] \to M[1/\xi]$.
	After multiplying $\cE$ and $M$ with $1/\mu^n$ for $n \gg 0$, we may therefore assume that $\varphi^{-1}_\cE(M) \subseteq M$ because $M$ is finitely generated.
	We show now that the isomorphism $T\otimes_{\ZZ_p} \rA_{\inf}(S^+)[1/\tilde{\xi}]^\wedge_p \simeq \cE(S^+)[1/\tilde{\xi}]^\wedge_p$ induced by \cref{et-real-isom} maps $T$ into $M[1/\mu]$.
	By contemplating the same statement for the duals, this will finish the proof.

	Our strategy is to reduce the statement to the case of valuation rings of rank one;
	we achieve this via descent on the site $\Perfd/S^+$ of integral perfectoid $S^+$-algebras, equipped with the $p$-complete arc-topology of Bhatt--Mathew \cite{BM21} (see also \cite[\S 2.2.1]{CS19}).
	First, we tensor the two finite projective $\mathrm{A}_{\inf}(S^+)[1/\mu]$-modules
	\[
	T\otimes_{\ZZ_p} \mathrm{A}_{\inf}(S^+)[1/\mu], \quad \text{and} \quad \cE(S^+)\otimes_{\mathrm{A}_{\inf}(S^+)} \mathrm{A}_{\inf}(S^+)[1/\mu]
	\]
	with $\AA_{\inf}[1/\mu]$;
	by \cite[Prop.~8.10]{BS19} and the sheafiness of the Witt vector construction, this defines $p$-complete arc-sheaves
	\[
	T\otimes_{\ZZ_p} \AA_{\inf}[1/\mu] \quad \text{and} \quad \cE(S^+)[1/\mu]\otimes_{\mathrm{A}_{\inf}(S^+)[1/\mu]}  \AA_{\inf}[1/\mu]
	\]
	on $\Perfd/S^+$, whose global sections are the two $\mathrm{A}_{\inf}(S^+)[1/\mu]$-modules above.
	
	Similarly, by tensoring with the pro-\'etale structure sheaf $\AA_{\inf}[1/\tilde{\xi}]^\wedge_p$, one obtains two larger, isomorphic arc-sheaves $T\otimes_{\ZZ_p}  \AA_{\inf}[1/\tilde{\xi}]^\wedge_p$ and $\cE[1/\mu]\otimes_{\mathrm{A}_{\inf}(S^+)[1/\mu]}  \AA_{\inf}[1/\tilde{\xi}]^\wedge_p$ which contain the previous two as $\AA_{\inf}[1/\mu]$-linear subsheaves.
	To show the desired inclusion, it suffices to check that the natural isomorphism $T\otimes_{\ZZ_p}  \AA_{\inf}[1/\tilde{\xi}]^\wedge_p \simeq \cE[1/\mu]\otimes_{\mathrm{A}_{\inf}(S^+)[1/\mu]}  \AA_{\inf}[1/\tilde{\xi}]^\wedge_p$ maps the $\AA_{\inf}[1/\mu]$-subsheaf $T\otimes_{\ZZ_p}  \AA_{\inf}[1/\mu]$ into $\cE[1/\mu]\otimes_{\mathrm{A}_{\inf}(S^+)[1/\mu]}  \AA_{\inf}[1/\mu]$.
	By \cite[Rmk.~8.9]{BS19}, we can choose a $p$-complete arc-cover of $\cO_C$-algebras $S^+ \to \prod_i V_i$ such that each $V_i$ is a perfectoid valuation ring of rank one with algebraically closed fraction field.
	Moreover, for those $V_i$ that are of characteristic $p$, the structure map $\rA_{\inf}(\cO_C)\to \rA_{\inf}(V_i)$ will factor through $W(\overline{k})$, where the image of $\mu$ is $0$.
	Thus, both rings $\rA_{\inf}(V_i)[1/\mu]$ and $\rA_{\inf}(V_i)[1/\tilde{\xi}]^\wedge_p$ vanish and we may assume that each factor of $\prod_i V_i$ is of mixed characteristic.
	It then suffices to show that $T\otimes_{\ZZ_p} \mathrm{A}_{\inf}(\prod_i V_i)$ is contained in $\cE(S^+)[1/\mu]\otimes_{\mathrm{A}_{\inf}(S^+)[1/\mu]} \mathrm{A}_{\inf}(\prod_i V_i)[1/\mu]=\bigl(M\otimes_{\mathrm{A}_{\inf}(S^+)} \rA_{\inf}(\prod_i V_i)\bigr)[1/\mu]$.

	For each $V_i$, the base change $M_{V_i} \colonequals M\otimes_{\mathrm{A}_{\inf}(S^+)} \mathrm{A}_{\inf}(V_i)$ is a Breuil--Kisin--Fargues module over $\mathrm{A}_{\inf}(V_i)$ \cite[\S4]{BMS18}.
	By the assumption that $\varphi^{-1}_\cE$ preserves $M$, the base change $M_{V_i}$ is also sent to itself under the map $\varphi^{-1}_{V_i} \colonequals \varphi_{\cE}\otimes_{\mathrm{A}_{\inf}(S^+)} \mathrm{A}_{\inf}(V_i)$.
	By the proof of \cite[Lem.~4.26]{BMS18}, the above condition then implies that the Frobenius invariants of $T\otimes_{\ZZ_p} \mathrm{A}_{\inf}(V_i)[1/\tilde{\xi}]^\wedge_p \simeq M_{V_i}[1/\tilde{\xi}]^\wedge_p$ are contained in the image of $M_{V_i}$ inside $M_{V_i}[1/\mu]\subset M_{V_i}[1/\tilde{\xi}]^\wedge_p$.
	In other words, we have the commutative diagram
	\[
	\begin{tikzcd}
		 T \arrow[r,hook,"\subset"] \arrow[d,hook,"\subset",sloped] & T \otimes \rA_{\inf}(V_i)[1/\mu] \arrow[r,hook,"\subset"] \arrow[d,"\sim",sloped] & T \otimes \rA_{\inf}(V_i)[1/\tilde{\xi}]^\wedge_p \arrow[d,"\sim",sloped] \\
		 \overline{M}_{V_i} \arrow[r,hook,"\subset"] &  M_{V_i}[1/\mu] \arrow[r,hook,"\subset"] & M_{V_i}[1/\tilde{\xi}]^\wedge_p,
	\end{tikzcd}
    \]
    where $\overline{M}_{V_i}$ is the image of $M_{V_i}$ in $M_{V_i}[1/\mu]$.
	In particular, the image of any given element $t\in T$ in $M_{V_i}[1/\mu]$ can be expressed as a finite sum
	\[
	\sum_{j=1}^m a_{ij} e_j,~a_{ij}\in \mathrm{A}_{\inf}(V_i).
	\]
	
	We return to the $\cO_C$-algebra $\prod_i V_i$.
	Since both $T$ and $\cE[1/\mu]$ are locally free, we have natural inclusions of modules 
	\[
	\begin{tikzcd}
	T\otimes_{\ZZ_p} \rA_{\inf}(\prod_i V_i)[1/\mu] \arrow[r,hook,"\subset"] & \prod_i \left(T\otimes_{\ZZ_p} \rA_{\inf}(V_i)[1/\mu]\right) \arrow[d,"\sim",sloped] \\
	M[1/\mu] \otimes_{\AA_{\inf}(S^+)} \rA_{\inf}(\prod_i V_i) \arrow[r,hook,"\subset"] & \prod_i \left( M_{V_i}[1/\mu] \right).	
	\end{tikzcd}
    \]
    As a consequence, by combining the expressions above for all possible $i$, we can write $t\in T$ as an element $\sum_{j=1}^{m} e_j \cdot (a_{ij})_i$ of $\prod_i \left( M_{V_i}[1/\mu] \right)$, where $(a_{ij})_i$ is an element of $\mathrm{A}_{\inf}(\prod_i V_i)=\prod_i \mathrm{A}_{\inf}(V_i)$.
    In this way, we see that $t$ lives in the subset $M[1/\mu] \otimes_{\AA_{\inf}(S^+)} \mathrm{A}_{\inf}(\prod_i V_i)$.
    Thus, we get the inclusion $T\subset M[1/\mu] \otimes \mathrm{A}_{\inf}(\prod_i V_i)$.
\end{proof}
\begin{proof}[Proof of \cref{pris to crys}]
	In order to show the crystallinity of $T(\cE)$, it suffices by \cref{weak crys is crys} to construct an $F$-isocrystal $(\cE_s,\varphi_{\cE_s})$ over $(X_s/V_0)_\crys$ such that the pro-\'etale sheaf $\BB_\crys(\cE_s)$ is isomorphic to $\AA_{\inf}(\cE)[1/\mu] \otimes_{\AA_{\inf}[1/\mu]} \BB_\crys$.
    Base change along $\AA_{\inf}[1/\mu]\to \BB_\crys$ in \cref{pris to crys lem} then gives an isomorphism of sheaves over $X_{\eta, \proet}$
	\[
	T(\cE)\otimes_{\widehat{\ZZ}_p} \BB_\crys  \simeq \AA_{\inf}(\cE)[1/\mu]\otimes_{\AA_{\inf}[1/\mu]} \BB_\crys
	\]
    that is compatible with Frobenius.
	
	By \cref{realization functors}.\ref{realization functors crys}, there is a natural $F$-isocrystal in vector bundles $(\cE_\crys, \varphi_{\cE_\crys}) \colonequals D_\crys(\cE,\varphi_\cE)$ over $(X_{p=0})_\crys=(X_{p=0}/\ZZ_p)_\crys$ with the property that for any perfectoid algebra $S^+$ over $X$ (so that $S^+/p$ is a quasiregular semiperfect algebra over $X_{p=0}$), we have a Frobenius equivariant isomorphism
	\[
	\cE(\rA_{\crys}(S^+),(p))[1/p] \simeq \cE_\crys(\rA_{\crys}(S^+),S^+/p)[1/p].
	\]
	In particular, when $(S,S^+)$ is any affinoid perfectoid $C$-algebra over $X_\eta$, the above induces a $G_{\cO_K}$ and Frobenius equivariant isomorphism of $\BB_\crys(S,S^+)$-modules
	\begin{equation}\label{two crys}
	\cE(\rA_{\crys}(S^+),(p))[1/\mu] \simeq \cE_\crys(\rA_{\crys}(S^+),S^+/p)[1/\mu],
	\end{equation}
	The left-hand side of (\ref{two crys}) is $\AA_{\inf}(\cE)\otimes_{\AA_{\inf}} \BB_\crys(S,S^+)$.
	For the right-hand side, note that using \cref{F-isocrystal-and-conv}, $(\cE_\crys,\varphi_{\cE_\crys})$ can be identified with an $F$-isocrystal $(\cE_s,\varphi_{\cE_s})$ on $X_s$ by the restricting along the inclusion of categories $(X_s/V_0)_\crys\subset (X_{p=0}/V_0)_\crys$.
	Under this identification, the right-hand side of (\ref{two crys}) can be identified as $\BB_\crys(\cE_s)$, so we are done.
\end{proof}
\begin{remark}\label{realizations-naturality}
    Let $f \colon \cE_1 \to \cE_2$ be a morphism in $\Vect^{\an,\varphi}(X_\Prism)$.
    Then for any affinoid perfectoid space $\Spa(S,S^+) \in \Perfd/X_{\eta,C,\proet}$ for which $\restr{T_i}{\Spa(S,S^+)}$ is trivial, the induced diagram
    \[ \begin{tikzcd}
        T(\cE_1) \otimes_{\ZZ_p} \rB_\crys(S^+) \arrow[r,phantom,"="] \arrow[d] &[-2em] \cE_1(\rA_{\inf}(S^+),I) \otimes_{\rA_{\inf}(S^+)} \rB_\crys(S^+) \arrow[r,phantom,"\simeq"] \arrow[d] &[-2em] \cE_{1,\crys}(\rA_\crys(S^+),S^+/p)[1/\mu] \arrow[d] \\
        T(\cE_2) \otimes_{\ZZ_p} \rB_\crys(S^+) \arrow[r,phantom,"="] & \cE_2(\rA_{\inf}(S^+),I) \otimes_{\rA_{\inf}(S^+)} \rB_\crys(S^+) \arrow[r,phantom,"\simeq"] & \cE_{2,\crys}(\rA_\crys(S^+),S^+/p)[1/\mu]
    \end{tikzcd} \]
    commutes.
    After sheafifying, this means that the crystalline comparison isomorphism for $T(\cE)$ and $\cE_s$ resulting from \cref{pris to crys} is natural in $\cE$ in the sense that the diagram
    \[ \begin{tikzcd}
        T(\cE_1) \otimes_{\widehat{\ZZ}_p} \BB_\crys \arrow[r,phantom,"\simeq"] \arrow[d,"T(f) \otimes \BB_\crys"] &[-2em] \AA_{\inf}(\cE_1)[1/\mu] \otimes_{\AA_{\inf}[1/\mu]} \BB_\crys \arrow[r,phantom,"\simeq"] \arrow[d,"{\AA_{\inf}(f)[1/\mu] \otimes \BB_\crys}"] &[-2em] \BB_\crys(\cE_{1,s}) \arrow[d,"\BB_\crys(f_s)"] \\
        T(\cE_2) \otimes_{\widehat{\ZZ}_p} \BB_\crys \arrow[r,phantom,"\simeq"] & \AA_{\inf}(\cE_2)[1/\mu] \otimes_{\AA_{\inf}[1/\mu]} \BB_\crys \arrow[r,phantom,"\simeq"] & \BB_\crys(\cE_{2,s}) 
    \end{tikzcd} \]
    commutes.
    Together with the constructions of \cref{hom-compatibility-filtered-F-isoc} and \cref{comparison-functoriality}, this gives rise to the following commutative diagram relating homomorphisms (filtered after $\otimes_{K_0} K$ and Frobenius equivariant where applicable) between the different realizations:
    \[ \begin{tikzcd}[row sep=tiny,column sep=14ex,center picture]
		& \Hom_{\Loc_{\ZZ_p}(X_\eta)}\bigl(T(\cE_1),T(\cE_2)\bigr) \otimes_{\ZZ_p} \QQ_p \arrow[d,phantom,sloped,"\simeq"] \\
		& \Hom_{\mathrm{fVect}^\varphi(X_\eta, \BB_\crys)}\bigl(\BB_\crys \otimes_{\widehat{\ZZ}_p} T(\cE_1),\BB_\crys \otimes_{\widehat{\ZZ}_p} T(\cE_2)\bigr) \arrow[d,phantom,sloped,"\simeq"] \\
		\Hom_{\Vect^{\an,\varphi}(X_\Prism)}(\cE_1,\cE_2) \arrow[ruu,"T(-)" description,bend left=12] \arrow[ru,"{T(-) \otimes \BB_\crys}" description,bend left=6] \arrow[r] \arrow[rd,"{\BB_{\crys}(D_\crys(-))}" description,bend right=6] \arrow[rdd,"D_\crys(-)" description, bend right=12] & \Hom_{\mathrm{fVect}^\varphi(X_\eta, \BB_\crys)}\bigl(\AA_{\inf}(\cE_1)[1/\mu] \otimes \BB_\crys,\AA_{\inf}(\cE_2)[1/\mu] \otimes \BB_\crys\bigr) \arrow[d,phantom,sloped,"\simeq"] \\
		& \Hom_{\mathrm{fVect}^\varphi(X_\eta, \BB_\crys)}\bigl(\BB_\crys(\cE_{1,s}),\BB_\crys(\cE_{2,s})\bigr) \arrow[d,phantom,sloped,"\simeq"] \\
		& \Hom_{\fIsoc^\varphi(X_s/V_0)}\bigl(\cE_{s,1},\cE_{s,2}\bigr).
	\end{tikzcd} \]
\end{remark}

\section{Proof of \texorpdfstring{\cref{main}}{Theorem~A}}\label{sec4}
In this section, we show the equivalence between the category of crystalline $\ZZ_p$-local systems and the category of analytic prismatic $F$-crystals, generalizing \cite{BS21} to arbitrary smooth $p$-adic formal schemes $X$ over $\cO_K$.
More precisely, we prove that the \'etale realization functor from \cref{realization functors}, which maps an analytic prismatic $F$-crystal to a crystalline $\ZZ_p$-local system (\cref{pris to crys}), is both fully faithful and essentially surjective.
We refer the reader to \cite[Constr.~6.2]{BS21} for a complete summary of various prismatic structure sheaves.

\subsection{Full faithfulness}\label{full-faithful}\label{sec4.1}
First, we show that the \'etale realization functor is fully faithful, modifying the case of a point from \cite[\S~5]{BS21}.
We begin with faithfulness.
\begin{lemma}\label{et-real-faithful}
	Let $X$ be a smooth $p$-adic formal scheme over $\cO_K$ and let $f \colon \cE \to \cE'$ be a morphism of analytic prismatic crystals on $X$.
	If the base change of $f$
	\[ f \otimes \id \colon \cE \otimes_{\cO_\Prism} \cO_\Prism[1/\cI_\Prism]^\wedge_p \to \cE' \otimes_{\cO_\Prism} \cO_\Prism[1/\cI_\Prism]^\wedge_p \]
	is $0$, then so is $f$.
\end{lemma}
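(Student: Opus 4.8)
The plan is to reduce to a single well-chosen prism and then exploit the decomposition of the analytic locus into two standard opens. Note first that $f$ corresponds to a global section of the internal Hom sheaf $\iHom(\cE,\cE')$, which is again an analytic prismatic crystal with global sections $\lim_{(A,I)\in X_\Prism}\Hom_{\Spec(A)\setminus V(p,I)}(\cE_A,\cE'_A)$; by \cref{an crystal descent} the underlying presheaf is a sheaf for the $(p,I)$-completely flat topology, hence in particular separated. Therefore it suffices to check that $f_A=0$ after evaluating at one prism $(A,I)$ which covers the final object of $\Shv(X_\Prism)$. Working Zariski-locally on $X$ (so $X=\Spf R$ is affine and, after choosing a framing via \cref{local-framing}, admits a $p$-torsionfree perfectoid cover $R\to S$ in the quasi-syntomic site), the perfect prism $(A,I)\colonequals\Prism_S=(\rA_{\inf}(S),I)$ covers the final object by \cite[Prop.~7.10, Prop.~7.11]{BS19}. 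So we are reduced to the following claim: if $g\colon M\to M'$ is a morphism of vector bundles on $\Spec(A)\setminus V(p,I)$, for $(A,I)$ a perfect prism with $\overline A=A/I$ a $p$-torsionfree perfectoid algebra and $I=(d)$, and if $g\otimes_A A[1/I]^\wedge_p=0$, then $g=0$.

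Since $\Spec(A)\setminus V(p,I)=\Spec(A[1/p])\cup\Spec(A[1/I])$ is an open cover, it is enough to show $g|_{\Spec(A[1/I])}=0$ and $g|_{\Spec(A[1/p])}=0$. For the first restriction: as $A=W(A/p)$ with $A/p$ perfect, $A$ is $p$-torsionfree and $p$-adically complete, and since $A/I\cong S$ is $p$-torsionfree, the element $d$ is a non-zerodivisor in $A/p^n$ for every $n$. A short computation then shows that $A[1/I]=A[1/d]$ is $p$-adically separated: if $a/d^k\in\bigcap_n p^nA[1/d]$, then $ad^{l_n}\in p^nA$ for suitable $l_n$, so $a\in p^nA$ by the non-zerodivisor property, whence $a=0$. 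Consequently the finite projective $A[1/I]$-module $\Hom_{A[1/I]}(M[1/I],M'[1/I])\cong M[1/I]^\vee\otimes M'[1/I]$ is $p$-adically separated, hence embeds into its $p$-completion, which is $\Hom_{A[1/I]^\wedge_p}(M[1/I]^\wedge_p,M'[1/I]^\wedge_p)$. Since $g|_{\Spec(A[1/I])}$ maps to $g\otimes_A A[1/I]^\wedge_p=0$ under this embedding, it vanishes.

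For the restriction to $\Spec(A[1/p])$: knowing $g|_{\Spec(A[1/I])}=0$, restriction to the overlap gives $g|_{\Spec(A[1/pI])}=0$. Now $d$ is a non-zerodivisor in the localization $A[1/p]$, so $A[1/p]\hookrightarrow A[1/p][1/d]=A[1/pI]$, and therefore also $\Hom_{A[1/p]}(M[1/p],M'[1/p])\hookrightarrow\Hom_{A[1/pI]}(M[1/pI],M'[1/pI])$, the source being finite projective. Hence $g|_{\Spec(A[1/p])}=0$ as well, so $g=0$ on $\Spec(A)\setminus V(p,I)$ and thus $f=0$. The one point requiring care is the $p$-adic separatedness of $A[1/I]$, and thereby of finite projective $A[1/I]$-modules, which is what makes the comparison of $\Hom$-modules with their $p$-completions faithful; once the problem has been localized to a perfect prism, everything else is formal.
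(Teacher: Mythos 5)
Your proposal is correct and uses the same reduction as the paper — localize Zariski-locally on $X$, pass to a $p$-completely flat perfectoid cover $R \to S$, and evaluate at the perfect prism $(\Prism_S, I)$ — but handles the core injectivity step differently. The paper invokes \cite[Lem.~4.11]{BS21} to get an injection of quasicoherent sheaves $\cO_U \hookrightarrow \cO_U[1/I]^\wedge_p$ over $U = \Spec(A)\setminus V(p,I)$ and then appeals to flatness of the vector bundle $\cE_U$ to propagate injectivity. You instead argue directly on the two-element open cover $U = \Spec(A[1/I]) \cup \Spec(A[1/p])$: on the first piece you prove by hand that $A[1/I]$ is $p$-adically separated (using that $d$ stays a nonzerodivisor mod $p^n$ because $A/I$ is $p$-torsionfree), so the finite projective $\Hom$-module embeds into its $p$-completion; on the second piece you reduce to the overlap $\Spec(A[1/pI])$ and use that $d$ is a nonzerodivisor in $A[1/p]$. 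All the individual steps check out: the $\Tor$-vanishing giving the nonzerodivisor claim mod $p^n$, the separatedness argument, the identification of $p$-completion with base change for finite projective modules, and the flatness of finite projectives to preserve injectivity under $A[1/p]\hookrightarrow A[1/pI]$. In effect your argument inlines a proof of (the relevant consequence of) \cite[Lem.~4.11]{BS21} for perfect prisms with $p$-torsionfree $\overline A$, which makes the proof more self-contained at the cost of being slightly longer. Both routes are valid; what yours buys is independence from that external lemma and an explicit view of where $p$-torsionfreeness of $S$ is used, while the paper's is shorter by delegating the ring-theoretic content.
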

\begin{proof}
	We can check locally in the Zariski topology on $X$ whether $f$ is $0$.
	As $X$ is smooth, we can therefore use framings to reduce to the case where $X = \Spf R$ is affinoid and admits a $p$-completely flat cover $Y = \Spf S \to X$ for some integral perfectoid ring $S$.
	Since we endow the prismatic site with the flat topology, it suffices to check that the restriction of $f$ to $Y_\Prism$ is $0$.
		
	We have $\Vect^\an(Y_\Prism) \simeq \Vect(\Spec(\Prism_S) \setminus V(p,I))$ because $(\Prism_S,I)$ is a final object in $Y_\Prism$ and may therefore identify $\restr{\cE}{Y_\Prism}$ and $\restr{\cE'}{Y_\Prism}$ with vector bundles $\cE_U$ and $\cE'_U$ on $U \colonequals \Spec(\Prism_S) \setminus V(p,I)$, respectively.
	Moreover, by regarding $f$ as a global section of the analytic prismatic crystal given by the internal hom $\iHom_{\Vect^\an(Y_\Prism)}(\cE,\cE')$, it suffices to show that for an analytic vector bundle $\cE_U$ over $U$, the natural map $\cE_U\to \cE_U\otimes_{\cO_U} \cO_U[1/I]^\wedge_p$ induces an injection
	\begin{equation}\label{et-real-faithful-H0}
        \Hh^0(U,\cE_U) \longrightarrow \Hh^0(U, \cE_U\otimes_{\cO_U} \cO_U[1/I]^\wedge_p).
	\end{equation}
    
 	Note that $\cO_U[1/I]^\wedge_p$ is the restriction to $U$ of the quasi-coherent sheaf on $\Spec(\Prism_S)$ associated with the $\Prism_S$-module $\Prism_S[1/I]^\wedge_p$. 
	To see the injectivity of (\ref{et-real-faithful-H0}), we then observe that by restricting the map on quasicoherent sheaves induced by \cite[Lem 4.11]{BS21} to the open subset $U$, we obtain an injection of quasicoherent sheaves of rings over $U$
	\[
	\cO_U \longrightarrow \cO_U[1/I]^\wedge_p.
	\]
	Moreover, since  $\cE_U$ is a vector bundle (thus finitely presented and flat over $\cO_U$), its base change along the above map of sheaves of rings gives an injection of quasi-coherent sheaves
	\[
	\cE_U \longrightarrow \cE_U\otimes_{\cO_U} \cO_U[1/I]^\wedge_p.
	\]
	The desired injectivity of (\ref{et-real-faithful-H0}) follows.
\end{proof}
\Cref{et-real-faithful} above implies in particular that the \'etale realization functor is faithful.
Now we turn our eyes toward fullness.
For this, we need to generalize one easy part of Fargues's classification of shtukas with one leg over $\Spa C^\flat$ to any integral perfectoid ring.
\begin{construction}\label{Fargues-realization}
    Let $S$ be a $p$-torsionfree perfectoid $\cO_C$-algebra and $Y \colonequals \Spf S$.
    The \emph{lattice realization} is a functor 
    \[ \Phi \colon \Vect^\varphi(\Spec(\Prism_S) \setminus V(p,I)) \to \{ (M,\Xi) \suchthat M \in \Loc_{\ZZ_p}(Y_\eta),\, \Xi \subset M \otimes_{\widehat{\ZZ}_p} \BB_\dR \text{ is a $\BB^+_\dR$-lattice} \} \]
    given on an object $(\cE,\varphi) \in \Vect^\varphi(\Spec(\Prism_S) \setminus V(p,I))$ as follows:
    \begin{itemize}
        \item $M$ is the \'etale realization of $\cE$ as in \cref{realization functors} and \cref{et-prism-functor}.
        \item Base changing the natural isomorphism of sheaves $M\otimes_{\widehat{\ZZ}_p} \AA_{\inf}[1/\mu]\simeq \AA_{\inf}(\cE)[1/\mu]$ on $Y_{\eta,\proet}$ resulting from \cref{pris to crys lem}, we obtain naturally an isomorphism
        \[ (M \otimes_{\widehat{\ZZ}_p} \BB_\dR) \simeq \AA_{\inf}(\cE)[1/\mu] \otimes_{\AA_{\inf},\varphi} \BB_\dR = \varphi^*\bigl(\AA_{\inf}(\cE)[1/\mu]\bigr) \otimes_{\AA_{\inf}} \BB_\dR . \]
        We set $\Xi \colonequals \AA_{\inf}(\cE)[1/\mu] \otimes_{\AA_{\inf},\varphi} \BB^+_\dR \subset M \otimes_{\widehat{\ZZ}_p} \BB_\dR$.
        This defines a sheaf of $\BB^+_\dR$-lattices $\Xi \subset M \otimes_{\widehat{\ZZ}_p} \BB_\dR$ on $Y_{\eta,\proet}$.
    \end{itemize}
    A morphism of lattice realizations $g \colon (M,\Xi) \to (M',\Xi')$ is a morphism $g \colon M \to M'$ in $\Loc_{\ZZ_p}(Y_\eta)$ such that $(g \otimes \id_{\BB_\dR})(\Xi) \subseteq \Xi'$.
\end{construction}
The following lemma will be used later.
\begin{lemma}\label{intersection formula}
	Let $V_0\langle T_i^{\pm1} \rangle\to R_0$ be a $V_0$-framing of a smooth $p$-adic $\cO_K$-algebra $R=R_0\otimes_{V_0} \cO_K$ (\cref{unramified-model}), let $R_\infty \colonequals (V_0\langle T_i^{\pm 1/p^\infty} \rangle \otimes_{V_0\langle T_i^{\pm1} \rangle} R)^\wedge_p$, and let $S$ be the integral perfectoid $\cO_C$-algebra $(R_\infty \otimes_{\cO_K} \cO_C)^\wedge_p$.
	Then we have 
	\[
	\bigcap_r \frac{\mu}{\varphi^{-r}(\mu)}\Prism_S = \mu\Prism_S, \quad \bigcap_r \Bigl(\frac{\mu}{\varphi^{-r}(\mu)}\Prism_S [1/p]\Bigr) = \mu\Prism_S[1/p], \quad \text{and }  \bigcap_r\Bigl( \frac{\mu}{\varphi^{-r}(\mu)}\Prism_S[1/I]\Bigr) = \mu\Prism_S[1/I].
	\]
\end{lemma}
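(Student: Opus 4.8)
The plan is to set $\xi_j \colonequals \varphi^{-j}(\tilde{\xi})$, so that the relation $\varphi(\mu) = \mu\tilde{\xi}$ yields $\mu = \varphi^{-r}(\mu)\cdot\prod_{j=1}^{r}\xi_j$; thus $a_r \colonequals \mu/\varphi^{-r}(\mu) = \prod_{j=1}^{r}\xi_j$ is an honest element of $\Prism_S$, the ideals $(a_r)$ form a decreasing chain, and $\mu$ lies in every $(a_r)$. This makes the inclusions ``$\supseteq$'' in all three identities obvious, so only the reverse inclusions are at stake, and I would reduce the second and third identities to the first. For the second: given $x \in \bigcap_r (a_r)\Prism_S[1/p]$, clear $p$-denominators to write $x = p^{-N}x'$ with $x' \in \Prism_S$; since (as explained below) each $\Prism_S/(a_r)$ is $p$-torsionfree, one has $(a_r)\Prism_S[1/p]\cap\Prism_S = (a_r)\Prism_S$, so $x' \in \bigcap_r (a_r)\Prism_S = \mu\Prism_S$ by the first identity, hence $x \in \mu\Prism_S[1/p]$ (using that $\Prism_S/\mu$ is $p$-torsionfree too). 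For the third, I would use that modulo $a_r$ the element $\tilde{\xi}$ becomes a unit in $\Prism_S[1/p]$ (so $(a_r)\Prism_S[1/p]$ is already $\tilde{\xi}$-saturated) and, dually, that $\mu \bmod p$ becomes a unit after inverting $\tilde{\xi}$; combining these with the second identity gives the third by the same denominator-clearing.

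The heart of the matter is the first identity, and the key input is its reduction modulo $p$. Since $\Prism_S/p = S^\flat$ is perfect, $\varphi$ acts there as $x\mapsto x^{1/p}$, and $\tilde{\xi} \equiv (\mu\bmod p)^{p-1}\pmod p$ in $S^\flat$; hence $a_r \equiv (\mu\bmod p)^{1-p^{-r}}\pmod p$, where the fractional power is formed using $p^r$-th roots in $S^\flat$. I would then prove $\bigcap_r\bigl((\mu\bmod p)^{1-p^{-r}}\bigr)S^\flat = (\mu\bmod p)S^\flat$ by $p$-complete arc-descent: exactly as in the proof of \cref{pris to crys lem}, \cite[Rmk.~8.9]{BS19} reduces this to the case of a perfectoid valuation ring $V$ of rank $\leq 1$ with algebraically closed fraction field, where it is elementary --- the image of $\mu\bmod p$ is either a unit (nothing to prove), or zero (all ideals in sight vanish), or has positive valuation, in which case density of the value group gives $\sup_r v(\mu\bmod p)(1-p^{-r}) = v(\mu\bmod p)$ and one concludes. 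Feeding this back in: if $x\in\bigcap_r(a_r)\Prism_S$ then $x\equiv\mu x_1\pmod p$ for some $x_1\in\Prism_S$; subtracting $\mu x_1$ (which lies in each $(a_r)$) and dividing by $p$ --- legitimate because $\Prism_S/(a_r)$ is $p$-torsionfree --- produces $x_1'\in\bigcap_r(a_r)\Prism_S$ with $x = \mu x_1 + p x_1'$. Iterating and using $p$-completeness of $\Prism_S$ rewrites $x = \mu\cdot\sum_{i\ge 0}p^i x_{i+1}\in\mu\Prism_S$.

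The remaining ingredients are the torsion-freeness statements, which I would extract from $\delta$-ring bookkeeping. That $\Prism_S/\mu$ is $p$-torsionfree follows because $\cO_C^\flat\to S^\flat$ is flat and $\mu\bmod p = \epsilon-1$ is a nonzerodivisor of $\cO_C^\flat$; that each $\Prism_S/p^n = W_n(S^\flat)$ is $\mu$-torsionfree follows because multiplication by $\mu=[\epsilon]-1$ acts on the $i$-th graded piece $S^\flat$ of the Verschiebung filtration as multiplication by $\epsilon^{p^i}-1 = (\epsilon-1)^{p^i}$, a power of a nonzerodivisor. That each $\Prism_S/(a_r)$ is $p$-torsionfree I would get from the fact --- provable by an induction on $k$ using $\varphi(y) = y^p + p\,\delta(y)$ with $\delta(y)$ a unit, in the spirit of the analysis of distinguished elements of $\rA_{\inf}$ in \cite[\S4]{BMS18} --- that for a distinguished element $y$ one has $\varphi^k(y)\equiv p\cdot(\text{unit})\pmod{y\Prism_S}$ for all $k\ge 1$. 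Applying the automorphism $\varphi^j$, this says $\xi_j$ is a nonzerodivisor modulo $\xi_i$ for all $i>j$, so that $\xi_r,\xi_{r-1},\dotsc,\xi_1$ is a $\Prism_S$-regular sequence and $\Prism_S/(a_r)$ is an iterated extension of copies of the $p$-torsionfree rings $\Prism_S/\xi_j$; the same congruence, after inverting $p$ (resp.\ $\tilde{\xi}$), shows that $\tilde{\xi}$ (resp.\ $p$) is invertible modulo $a_r$, which is what the proof of the third identity requires.

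I expect the main obstacle to be exactly this interface between the three rings: one cannot simply base change the (classical, and in any case also nontrivial) analogue over $\rA_{\inf}(\cO_C)$ along $\rA_{\inf}(\cO_C)\to\Prism_S$, since faithfully flat base change does not commute with infinite intersections. Passing to characteristic $p$ first, where arc-descent to valuation rings is available, and then climbing back up via the $p$-adic induction --- which closes precisely because all the quotients $\Prism_S/(a_r)$ and $\Prism_S/\mu$ turn out to be $p$-torsionfree --- is the device that makes everything fit together.
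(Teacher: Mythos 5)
Your overall route is genuinely different from the paper's: the paper detects exactness of $0 \to \mu\Prism_S \to \Prism_S \to \prod_r \Prism_S/\xi_r\Prism_S$ after reducing modulo $I$, base-changes the known exact sequence over $\cO_C$ (from \cite[Lem.~3.23]{BMS18}) along the flat map $\cO_C \to S$, and then closes the remaining gap by proving that $\bigl(\prod_r\cO_C/\overline{\xi}_r\bigr)\otimes_{\cO_C}S \to \prod_r S/\overline{\xi}_r S$ is injective --- this last step being where the specific structure of $S$ enters, through the opening observation that $S/p$ is free over $\cO_C/p$. You instead reduce modulo $p$, rewrite $a_r \equiv (\mu\bmod p)^{1-p^{-r}}$ using perfectness of $S^\flat$ (a nice simplification), and climb back up $p$-adically. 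The $p$-adic induction, the reduction of the second and third identities to the first, and the $\delta$-ring bookkeeping yielding $p$-torsion-freeness of $\Prism_S/(a_r)$ all look correct as sketched. Both routes, of course, must confront the same obstruction: an infinite intersection of ideals is not preserved by (flat) base change, as you note.

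The gap is in the arc-descent step, and I do not think it works as written. You claim $\bigcap_r\bigl((\mu\bmod p)^{1-p^{-r}}\bigr)S^\flat = (\mu\bmod p)S^\flat$ ``exactly as in the proof of \cref{pris to crys lem},'' but the mechanism there is specific: the objects being descended are finitely generated modules over the ring-valued arc-sheaf $\AA_{\inf}[1/\mu]$, and it is finite generation that lets one reassemble coefficients from the valuation-ring cover $\prod_i V_i$. Your claim is an equality of ideals cut out by an infinite intersection, and neither side is manifestly an arc-sheaf. Concretely: if $x \in \bigcap_r(\mu\bmod p)^{1-p^{-r}}S^\flat$, the valuation-ring computation gives $x = (\mu\bmod p)\cdot(z_i)_i$ with $(z_i)_i \in \prod_i V_i^\flat$; to conclude $x \in (\mu\bmod p)S^\flat$ you must show $(z_i)_i$ actually lies in $S^\flat$. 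The sheaf axiom identifies $S^\flat$ with an equalizer $\eq\bigl(\prod_i V_i^\flat\rightrightarrows\cdots\bigr)$, and to deduce that $(z_i)_i$ lands there from the fact that $(\mu\bmod p)(z_i)_i$ does, you need the target of the two arrows to be $(\mu\bmod p)$-torsion-free --- a saturation statement that does not follow from the sheaf property alone (arc-covers are not flat, so they need not be pure). This is precisely the step at which the paper's proof does real work, and where the freeness of $S/p$ over $\cO_C/p$ is used: the injectivity of $\bigl(\prod_r\cO_C/\overline{\xi}_r\bigr)\otimes_{\cO_C}S \to \prod_r S/\overline{\xi}_r S$ is what substitutes for the saturation you are implicitly invoking. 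Your argument never uses any special feature of $S$ beyond being perfectoid, which is a red flag: the lemma is stated for this particular $S$, not an arbitrary integral perfectoid $\cO_C$-algebra. To repair the argument you would need an analogue, in your mod-$p$ picture, of the explicit free-module computation the paper performs --- the arc-descent by itself does not supply it.
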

\begin{proof}
	As a preparation, we first observe that the ring $S/p$ is free over $\cO_C/p$.
	This is because the reduction $S/p$ is by construction the tensor product $\bigl(V_0\langle T_i^{\pm 1/p^\infty} \rangle \otimes_{V_0\langle T_i^{\pm1} \rangle} R_0\bigr)/p\otimes_k  \cO_C/p$, so its freeness over $\cO_C/p$ follows from the freeness of $\bigl(V_0\langle T_i^{\pm 1/p^\infty} \rangle \otimes_{V_0\langle T_i^{\pm1} \rangle} R_0\bigr)/p$ over $k$.
	
	Now we start the proof of the lemma.
	Following the convention of \cite[\S 3.3]{BMS18}, we set $\xi_r \colonequals \tfrac{\mu}{\varphi^{-r}(\mu)}$.
	As $\xi_r$ divides $\mu$, the ideal $\mu \Prism_S$ is contained in the kernel of $\Prism_S \to \Prism_S/\xi_r \Prism_S$.
	Thus, the first equality in the statement is equivalent to the exactness of the following natural sequence of $\Prism_S$-modules:
	\[
	0 \to \mu \Prism_S \to \Prism_S \to \prod_r \Prism_S / \xi_r\Prism_S.
	\]
	Note that all terms above and the quotient $\Prism_S/\mu \Prism_S$ are $I$-torsionfree and $I$-complete.
	Since the formation of inverse limits is left exact, it therefore suffices to show that the sequence below of the reductions modulo $I$ is left exact:
	\[ 
	0\to \overline{\mu}S \to S \to \prod_r S/\overline{\xi}_rS.
	\]
	
	On the other hand, by \cite[Lem.~3.23]{BMS18}, we have an exact sequence of $\Prism_{\cO_C}$-modules
	\[ 
	0\to \mu\Prism_{\cO_C} \to \Prism_{\cO_C} \to \prod_r \Prism_{\cO_C}/\xi_r\Prism_{\cO_C}. 
	\]
	Using again the $I$-torsionfreeness and $I$-completeness, the reduction modulo $I$ gives an exact sequence
	\[ 
    0\to \overline{\mu}\cO_C \to \cO_C \to \prod_r \cO_C/\overline{\xi}_r\cO_C.
    \]
	So by base changing along the flat map $\cO_C\to S$, we get an exact sequence of $S$-modules
	\[ 
    0\to \overline{\mu}S \to S \to \Bigl( \prod_r \cO_C/\overline{\xi}_r\cO_C \Bigr)\otimes_{\cO_C} S.
    \]
	As a consequence of the previous paragraph, to prove the first equality in \cref{intersection formula}, it thus suffices to show that the natural map below is an injection:
	\begin{equation}\label{intersection formula inj}
	\Bigl( \prod_r \cO_C/\overline{\xi}_r\cO_C \Bigr)\otimes_{\cO_C} S \to \prod_r S/\overline{\xi}_rS.
	\end{equation} 
	
	Since $\overline{\xi}_r$ divides $\overline{\mu}=\zeta_p-1$ in $\cO_C$, the $p$-adic valuation of $\xi_r$ is smaller than $\tfrac{1}{p-1}$ and thus in particular smaller than $1$.
	Therefore, the modules in (\ref{intersection formula inj}) are automatically $p$-torsion and we may rewrite the map (\ref{intersection formula inj}) as  
	\[
	\Bigl( \prod_r \cO_C/\overline{\xi}_r\cO_C \Bigr)\otimes_{\cO_C/p} S/p \to \prod_r S/\bigl(p,\overline{\xi}_r\bigr)S.
	\]
	Moreover, thanks to the freeness of $S/p$ over $\cO_C/p$, we have $S/p = \bigoplus_I \cO_C/p$ for some index set $I$.
	Then the map above can be further rewritten as 
	\[
	\bigoplus_I \prod_r \cO_C/(\overline{\xi}_r, p) \cO_C \longrightarrow 
	\prod_r \bigoplus_I \cO_C/(\overline{\xi}_r, p) \cO_C.
	\]
	This is an injection because of the fact that finite direct sums commute with the infinite product, and for each finite subset $I' \subseteq I$, the submodule $\bigoplus_{I'} \prod_r \cO_C/(\overline{\xi}_r, p) \cO_C$ of the left-hand side maps injectively into $\prod_r \bigoplus_I \cO_C/(\overline{\xi}_r, p) \cO_C$.
	So we are done with the first equality.

	For the two other equalities, we write $f_1 = p$ and $f_2 = \tilde{\xi}$ to streamline the notation.
	Both are nonzerodivisors in $\Prism_S/\xi_r \Prism_S=\Prism_S/ \xi\varphi^{-1}(\xi)\cdots \varphi^{-r+1}(\xi) \Prism_S$, hence the map $\Prism_S / \xi_r\Prism_S \to \Prism_S / \xi_r\Prism_S[1/f_i]$ is an injection.
	Taking the infinite product of the injections for all $r$, the map $(\prod_r \Prism_S / \xi_r\Prism_S)[1/f_i] \to \prod_r (\Prism_S / \xi_r\Prism_S[1/f_i])$ must be an injection as well and we obtain
	\[
	\ker\biggl( \Prism_S[1/f_i] \to \Bigl(\prod_r \Prism_S / \xi_r\Prism_S\Bigr)[1/f_i] \biggr) = \ker\biggl( \Prism_S[1/f_i] \to \prod_r \bigl(\Prism_S / \xi_r\Prism_S[1/f_i]\bigr) \biggr).
	\]
	Since the left hand side is equal to $\mu\Prism_S[1/f_i]$, so is the right hand side.
	This yields the last two equalities of the statement.
\end{proof} 
\begin{proposition}\label{fully-faithful-Fargues}
	Let $S$ be as in \cref{intersection formula}.
    The lattice realization functor from \cref{Fargues-realization} is fully faithful.
\end{proposition}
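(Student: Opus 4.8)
I would follow the strategy of \cite[\S~5]{BS21} for a point, with \cref{intersection formula} playing the role that \cite[Lem.~3.23]{BMS18} plays there. Write $A\colonequals\Prism_S=\rA_{\inf}(S)$, $I=(\tilde\xi)$, $Y\colonequals\Spf S$, and let $U\colonequals\Spec(A)\setminus V(p,I)$ be the analytic locus, covered by the affine opens $U_p\colonequals\Spec(A[1/p])$ and $U_I\colonequals\Spec(A[1/I])$ with $U_p\cap U_I=\Spec(A[1/pI])$. Faithfulness of $\Phi$ is immediate from \cref{et-real-faithful}: a morphism annihilated by $\Phi$ is in particular annihilated by the \'etale realization functor $T$, hence zero. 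So the content is fullness. Both $\Vect^\varphi(U)$ and the category of lattice realizations carry internal Homs, and $\Phi$ is compatible with them --- $T$ is symmetric monoidal by \cref{realization functors}, and the $\BB^+_{\dR}$-lattice attached to $\iHom(\cE,\cE')$ is $\iHom_{\BB^+_{\dR}}(\Xi_\cE,\Xi_{\cE'})$ by \cref{pris to crys lem} together with the finite projectivity of all modules involved. This reduces fullness to showing, for every $\cF\in\Vect^\varphi(U)$ with $\Phi(\cF)=(N,\Theta)$, that the natural map
\[ \Hh^0(U,\cF)^{\varphi=1}=\Hom_{\Vect^\varphi(U)}(\cO_U,\cF)\longrightarrow\{\,n\in\Hh^0(Y_\eta,N)\suchthat n\otimes 1\in\Theta\,\} \]
is surjective.

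Next I would compute both sides concretely. Mayer--Vietoris for $U=U_p\cup U_I$, together with the fact that the $A[1/\star]$-modules $P_\star\colonequals\Hh^0(U_\star,\cF)$ are finite projective (cf.\ \cref{global-sec}), gives $\Hh^0(U,\cF)=P_p\times_{P_{pI}}P_I$, and since $\varphi_\cF$ is defined over $U_I$ the left-hand side becomes $\{(x,y)\in P_p\times_{P_{pI}}P_I\suchthat\varphi(y)=y\}$. On the other side, \cite[Cor.~3.8]{BS21} (see also \cite{MW21}) together with the fact that a $\varphi$-module over $A[1/I]$ is recovered from its $p$-completion (using that $A$ is a perfect prism) identifies $\Loc_{\ZZ_p}(Y_\eta)$ with $\varphi$-modules over $A[1/I]$; since $Y_\eta$ is affinoid perfectoid this presents $\Hh^0(Y_\eta,N)$ as $(P_I)^{\varphi=1}$, and \cref{pris to crys lem} supplies the canonical trivialization $N\otimes_{\ZZ_p}\BB_{\dR}(S)\simeq\cF|_{\Spec(A[1/pI]^\wedge_I)}$ used to define $\Theta$. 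So, given such an $n$, I put $y\colonequals n\in(P_I)^{\varphi=1}$, and it remains to prove that its image $y_{pI}\in P_{pI}$ lies in the submodule $P_p$; then $(y_{pI},y)$ is the desired $\varphi$-fixed global section, and uniqueness of the preimage is again faithfulness.

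The last point is the crux. By Beauville--Laszlo gluing along $V(I)\subset\Spec(A[1/p])$ one has $P_p=P_{pI}\times_{P_{pI}^\wedge_I}\Gamma\bigl(\Spec\BB^+_{\dR}(S),\cF\bigr)$, so $y_{pI}\in P_p$ exactly when $y$ extends to a section over $\BB^+_{\dR}(S)=A[1/p]^\wedge_I$. Here I would exploit the $\varphi$-equivariance: rewriting $y$ through $\varphi_\cF$ and its iterates transports a bound on the pole of $y$ along the entire Frobenius orbit $V(\tilde\xi),V(\varphi^{-1}(\tilde\xi)),V(\varphi^{-2}(\tilde\xi)),\dotsc$ of the leg; since these loci are pairwise disjoint and the $r$-th bound is measured by $\xi_r\colonequals\tfrac{\mu}{\varphi^{-r}(\mu)}=\varphi^{-1}(\tilde\xi)\dotsm\varphi^{-r}(\tilde\xi)$, intersecting over all $r$ and invoking the identity $\bigcap_r\xi_r\Prism_S[1/p]=\mu\Prism_S[1/p]$ of \cref{intersection formula} forces the pole of $y$ at $V(I)$ to have order at most that of $\mu$, which is a unit in $\BB^+_{\dR}(S)$. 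Finally, the hypothesis $n\otimes 1\in\Theta=\AA_{\inf}(\cF)[1/\mu]\otimes_{\AA_{\inf},\varphi}\BB^+_{\dR}$ is precisely the statement that, after untwisting by $\varphi_\cF$, even this residual pole does not occur. Hence $y$ extends over $\BB^+_{\dR}(S)$, so $y_{pI}\in P_p$ and surjectivity follows.

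The main obstacle is this final reconstruction step: controlling the interaction between $\varphi$-invariance and the behavior of $y$ along the whole $\varphi$-orbit of the leg, and descending from $p$-completed coefficients back to $\Prism_S[1/p]$. This is exactly what \cref{intersection formula} is built to handle, and its proof --- reducing by $p$-complete arc-descent to perfectoid valuation rings of rank one and then quoting \cite[Lem.~4.26]{BMS18}, just as in the proof of \cref{pris to crys lem} --- is where the special perfectoid algebra $S$ of \cref{intersection formula} enters in an essential way. I expect the remaining ingredients (compatibility of $\Phi$ with internal Homs, the $p$-complete Frobenius descent for $\varphi$-modules over $A[1/I]$, and the Beauville--Laszlo bookkeeping) to be routine.
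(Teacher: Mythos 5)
Your overall plan is the paper's plan: reduce fullness to a pole-extension question, then use $\varphi$-equivariance together with \cref{intersection formula} to show the pole vanishes. The internal-Hom reformulation (asking when a $\varphi$-fixed element of $\Hh^0(U_{pI},\iHom(\cE,\cE'))$ extends over $U_p$) is an equivalent packaging of the paper's direct approach, which works with a morphism $g\colon M\to M'$ and the induced $\tilde g\colon\cE\to\frac{1}{\mu^n}\cE'$ and asks when $n=0$; no complaint there.

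The gap is in the crux that you yourself flag. You write that ``rewriting $y$ through $\varphi_\cF$ and its iterates transports a bound on the pole of $y$ along the entire Frobenius orbit,'' and then you bring in the hypothesis $n\otimes1\in\Theta$ only at the very end, as if the lattice condition merely removes a leftover ``residual pole.'' That is not how the argument works, and the way you phrase it would not go through. The Frobenius structure of $\cF$ by itself does \emph{not} transport the pole bound from $V(\tilde\xi)$ to $V(\varphi^{-1}(\tilde\xi))$ and onward: the bound $\tilde g\colon\cE\to\frac{1}{\varphi^{-r}(\mu)}\cE'$ must be established by an induction on $r$ in which the lattice condition is invoked at \emph{every} step. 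Concretely, the inductive step requires showing that the composite
\[
\cE\longrightarrow \tfrac{1}{\varphi^{-r+1}(\mu)}\cE'\longrightarrow \cE'/\varphi^{-r}(\tilde\xi)\cE'
\]
vanishes, and the only way to see this is to push the whole diagram through $\varphi^{r}$, use the iterated Frobenius isomorphisms $\varphi^{r-1}_{\cE},\varphi^{r-1}_{\cE'}$ (which are isomorphisms over $\rB_\dR^+(S)$ because $\varphi^s(\tilde\xi)$ is a unit there for $s>0$) to reduce the question to a statement at $V(\tilde\xi)$, and \emph{there} invoke $\Xi\subseteq\Xi'$ together with $\tilde\xi\mid\varphi(\mu)$. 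Without the lattice condition supplying the input at $V(\tilde\xi)$ at each stage, $\varphi$-equivariance alone carries no information across the orbit, and the intersection formula has nothing to intersect. So ``controlling the interaction between $\varphi$-invariance and the behavior of $y$ along the whole $\varphi$-orbit'' is not merely the hardest step; as sketched, the mechanism you describe for it is wrong.

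Two further imprecisions worth noting. First, the statement ``forces the pole of $y$ at $V(I)$ to have order at most that of $\mu$, which is a unit in $\BB^+_{\dR}(S)$'' conflates two rings: the intersection $\bigcap_r\frac{1}{\varphi^{-r}(\mu)}\cF[1/p]=\cF[1/p]$ takes place over $\Prism_S[1/p]$, where $\mu$ is not a unit, and what it yields is that $y$ has \emph{no} pole over $\Prism_S[1/p]$; passing to $\rB^+_\dR(S)$ (where $\mu$ is a unit) is then the Beauville--Laszlo bookkeeping, not the place where the bound is sharpened. Second, the identification $\Hh^0(Y_\eta,N)\simeq P_I^{\varphi=1}$, justified via ``a $\varphi$-module over $A[1/I]$ is recovered from its $p$-completion,'' is not a consequence of \cite[Cor.~3.8]{BS21} or of $A$ being a perfect prism and needs an independent argument (the paper sidesteps the question entirely by never introducing $P_I$); and \cref{global-sec} is stated for noetherian $A$, which $\Prism_S$ is not, so it cannot be cited for the finite presentation of the $P_\star$ --- though the latter is harmless since on the affine opens $U_p$, $U_I$ the sheaf is a genuine vector bundle.
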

\begin{proof}
We adapt the proof of \cite[Rem.~4.29]{BMS18} to our setting.
Set $U \colonequals \Spec(\Prism_S) \setminus V(p,I)$.
The composition of $\Phi$ with the projection onto the first factor is the \'etale realization $\Vect^\varphi(U) \to \Loc_{\ZZ_p}(Y_\eta)$, which is faithful by (the proof of) \cref{et-real-faithful}.
Thus, $\Phi$ must be faithful as well.

For fullness, let $(\cE,\varphi)$ and $(\cE',\varphi')$ be in $\Vect^\varphi(U)$ with lattice realizations $(M,\Xi)$ and $(M',\Xi')$, respectively.
Let $g \colon M \to M'$ be a morphism such that $(g \otimes \id_{\BB_\dR})(\Xi) \subseteq \Xi'$;
we need to show that there exists a morphism of analytic prismatic $F$-crystals $f \colon (\cE,\varphi) \to (\cE',\varphi')$ with $\Phi(f) = g$.

Let $\cO[1/\mu]$ be the quasicoherent sheaf on $U$ corresponding to the $\Prism_S$-module $\Prism_S[1/\mu]$.
By \cref{pris to crys lem}, $g$ induces a natural $\varphi$-equivariant morphism
\[ \tilde{g} \colon \cE \otimes_{\cO} \cO[1/\mu] \to \cE' \otimes_{\cO} \cO[1/\mu] \]
of quasicoherent sheaves on $U$.
Since $\cE' \otimes_{\cO} \cO[1/\mu] = \bigcup_{n \in \ZZ_{\ge 0}} \mu^{-n}\cE'$ and the $\Prism_S[1/\mu]$-module corresponding to $\cE \otimes_{\cO} \cO[1/\mu]$ is finitely presented, there exists an $n \in \ZZ_{\ge 0}$ such that $\tilde{g}$ factors through $\mu^{-n}\cE'$.
Choose $n$ minimal with that property.
It remains to prove that $n = 0$.

Assume $n > 0$.
Replacing $\cE'$ by $\mu^{-n+1}\cE'$, we may assume $n=1$.
We claim that then $\tilde{g}$ also factors through $\frac{1}{\varphi^{-r}(\mu)}\cE'$ for all $r$.
Granting this for the moment, let $f_1$ and $f_2$ be the elements $p$ and $\tilde{\xi}$ of $\Prism_S$, respectively.
By \cref{intersection formula} and the observation that $\cE'[1/f_i]$ is a direct summand of a finite free $\Prism_S[1/f_i]$-module, we have
\[
\bigcap_r \Bigl(\frac{1}{\varphi^{-r}(\mu)}\cE'[1/f_i]\Bigr) = \cE'[1/f_i].
\]
as $\Prism_S[1/f_i]$-submodules of $\tfrac{1}{\mu}\cE'[1/f_i]$.
In particular, by the claim above, the map $\tilde{g}[1/f_i]$ must factor through $\cE'[1/f_i]$ for $i=1,2$.
Since the analytic locus of $\Spec(\Prism_S)$ is covered by $\Spec(\Prism_S[1/f_i])$, it follows that the image of $\tilde{g}$ is also contained in $\cE' \subset \cE'[1/\mu]$, contradicting the minimality of $n$.

To finish the proof, we show by induction on $r \in \ZZ_{\ge 0}$ that $\tilde{g}$ factors through $\frac{1}{\varphi^{-r}(\mu)}\cE'$ for all $r$.
The base case $r = 0$ follows by assumption.
For the inductive step, assume that $\tilde{g}$ factors through $\frac{1}{\varphi^{-r+1}(\mu)}\cE'$ for some $r \in \ZZ_{\ge 0}$.
We want to show that the composition
\[ \cE \xrightarrow{\enspace\tilde{g}\enspace} \tfrac{1}{\varphi^{-r+1}(\mu)}\cE' \longrightarrow \bigl(\tfrac{1}{\varphi^{-r+1}(\mu)}\cE'\bigr)/\bigl(\tfrac{1}{\varphi^{-r}(\mu)}\cE'\bigr) \xrightarrow{\cdot\, \varphi^{-r+1}(\mu)} \cE' /\varphi^{-r}(\tilde{\xi})\cE' \]
is $0$.
Denote the quasicoherent sheaf on $U$ corresponding to the $\Prism_S$-module $\rB^+_\dR(S)$ by $\rB^+_\dR$.
Since $\varphi^s(\tilde{\xi})$ is invertible in $\rB^+_\dR(S)$ for all $s > 0$, the $F$-crystal structure on $\cE$ induces an isomorphism $\varphi^{r-1}_\cE \colon \varphi^{r,*}\cE \otimes \rB^+_\dR \xrightarrow{\sim} \varphi^*\cE \otimes \rB^+_\dR$ and similarly for $\cE'$.
We obtain the commutative diagram
\[ \begin{tikzcd}
    \cE \arrow[r,"\tilde{g}"] \arrow[d] & \tfrac{1}{\varphi^{-r+1}(\mu)}\cE' \arrow[r,"\cdot\,\varphi^{-r+1}(\mu)"] \arrow[d] & \cE' / \varphi^{-r}(\tilde{\xi})\cE' \arrow[d,sloped,"\sim"] \\
    \varphi^{r,*}\cE \arrow[r,"\varphi^{r,*}\tilde{g}"] \arrow[d] & \tfrac{1}{\varphi(\mu)}\varphi^{r,*}\cE' \arrow[r,"\cdot\,\varphi(\mu)"] \arrow[d] &  \varphi^{r,*}\cE'/\tilde{\xi} \arrow[d,sloped,"\sim"] \\
    \varphi^{r,*}\cE \otimes \rB^+_\dR \arrow[r] \arrow[d,"\sim"{sloped},"\varphi^{r-1}_\cE"'] & \tfrac{1}{\varphi(\mu)}\varphi^{r,*}\cE' \otimes \rB^+_\dR \arrow[r,"\cdot\,\varphi(\mu)"] \arrow[d,"\sim"{sloped},"\varphi^{r-1}_{\cE'}"'] &  \varphi^{r,*}\cE' \otimes \rB^+_\dR/\tilde{\xi} \arrow[d,"\sim"{sloped},"\varphi^{r-1}_{\cE'}"'] \\
    \varphi^*\cE \otimes \rB^+_\dR \arrow[r] & \tfrac{1}{\varphi(\mu)}\varphi^*\cE' \otimes \rB^+_\dR \arrow[r,"\cdot\,\varphi(\mu)"] &  \varphi^*\cE' \otimes \rB^+_\dR/\tilde{\xi}
\end{tikzcd} \]
in which the vertical maps from the first two rows are given by the natural base extensions of the coefficient rings.
Since the rightmost column consists of isomorphisms, it suffices to show that the bottom row is $0$.
However, by assumption the bottom left horizontal morphism maps $\Xi=\varphi^*\cE \otimes \rB^+_\dR$ into $\Xi'=\varphi^*\cE' \otimes \rB^+_\dR$.
Therefore, the image of the composition of the two bottom maps lies in $\varphi(\mu)\varphi^*\cE' \otimes \rB^+_\dR/\tilde{\xi}$, which is $0$ because $\tilde{\xi}$ divides $\varphi(\mu)$ in $\rB^+_\dR(S)$.
\end{proof}
Now we show fullness of $T$, following the proof of \cite[Thm.~5.6]{BS21}.
\begin{theorem}
    Let $X$ be a smooth $p$-adic formal scheme over $\cO_K$ and let $(\cE,\varphi_\cE),\,(\cE',\varphi_{\cE'}) \in \Vect^{\an,\varphi}(X_\Prism)$.
    Then any morphism $g \colon T(\cE) \to T(\cE')$ of \'etale realizations is given by $T(f)$ for some $f \colon (\cE,\varphi_\cE) \to (\cE',\varphi_{\cE'})$.
\end{theorem}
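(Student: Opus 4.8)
The plan is to reduce the statement, exactly as in \cite[\S~5]{BS21}, to the full faithfulness of the lattice realization functor \cref{fully-faithful-Fargues} via a \v{C}ech argument along a perfectoid cover. Since $T$ is already faithful by \cref{et-real-faithful}, it suffices to prove fullness, and this can be checked after passing to a Zariski cover of $X$ (both $\Vect^{\an,\varphi}(\blank_\Prism)$ and $\Loc_{\ZZ_p}(\blank_\eta)$ are Zariski sheaves on $X$, and given compatible local lifts of $g$ one glues them using faithfulness). So I may assume $X=\Spf R$ is affine and admits a framing; keeping the notation of \cref{intersection formula} (via \cref{unramified-model}), I take the $(p,I)$-completely flat cover $\Spf S\to X$ with $S=(R_\infty\widehat\otimes_{\cO_K}\cO_C)^\wedge_p$ and let $S^\bullet$ be its \v{C}ech nerve, with each $S^n$ quasi-syntomic over $\cO_C$. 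Because $\Vect^{\an,\varphi}(\blank)$ is a sheaf for the $(p,I)$-completely flat topology (\cref{prism-qrsp-an-vb}) and lisse $\widehat{\ZZ}_p$-sheaves satisfy descent along the pro-\'etale cover $\Spf S^\bullet_\eta\to X_\eta$, both $\Hom$-groups in question are the two-term equalizers
\[
\Hom_{\Vect^{\an,\varphi}(X_\Prism)}(\cE,\cE')=\eq\bigl(\Hom_{S^0}\rightrightarrows\Hom_{S^1}\bigr),\qquad
\Hom_{\Loc_{\ZZ_p}(X_\eta)}(T(\cE),T(\cE'))=\eq\bigl(\Hom^{\Loc}_{S^0_\eta}\rightrightarrows\Hom^{\Loc}_{S^1_\eta}\bigr),
\]
compatibly via $T$, where $\Hom_{S^i}$ abbreviates $\Hom_{\Vect^{\an,\varphi}(\Spf S^i_\Prism)}(\cE|_{S^i},\cE'|_{S^i})$ and similarly for $\Hom^{\Loc}$.

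Over $\Spf S=\Spf S^0$, \cref{prism-qrsp-an-vb} identifies $\Vect^{\an,\varphi}(\Spf S_\Prism)$ with $\Vect^\varphi(\Spec(\Prism_S)\setminus V(p,I))$, and \cref{fully-faithful-Fargues} shows the lattice realization $\Phi$ is fully faithful; hence $\Hom_{S^0}$ is identified with the set of morphisms $g_0\colon T(\cE)|_{S_\eta}\to T(\cE')|_{S_\eta}$ of local systems satisfying $(g_0\otimes\id_{\BB_\dR})(\Xi)\subseteq\Xi'$, where $\Xi,\Xi'$ are the $\BB^+_\dR$-lattices attached by $\Phi$ and the first component of $\Phi(f_S)$ is $T(f_S)$. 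Over $\Spf S^1$, the argument of \cref{et-real-faithful} still applies: since $S^1$ is quasi-syntomic over $\cO_C$, the map $\Prism_{\cO_C}\to\Prism_{S^1}$ is $(p,I)$-completely faithfully flat by \cite[Prop.~7.10]{BS19}, so $(p,I)$ is a length-two regular sequence on $\Prism_{S^1}$, which yields $\cO_U\hookrightarrow\cO_U[1/I]^\wedge_p$ on $U=\Spec(\Prism_{S^1})\setminus V(p,I)$ and hence the faithfulness of $T$ over $\Spf S^1$. Granting the lattice compatibility below, the \v{C}ech argument concludes: given $g\in\Hom_{\Loc_{\ZZ_p}(X_\eta)}(T(\cE),T(\cE'))$, its restriction $g|_{S_\eta}$ satisfies the lattice condition, hence lifts uniquely to $f_S\in\Hom_{S^0}$ with $T(f_S)=g|_{S_\eta}$; then $T(\pr_1^*f_S-\pr_2^*f_S)=\pr_1^*(g|_{S_\eta})-\pr_2^*(g|_{S_\eta})=0$ because $g$ is defined over $X_\eta$, so $\pr_1^*f_S=\pr_2^*f_S$ by faithfulness over $S^1$, and therefore $f_S$ descends to an $f\in\Hom_{\Vect^{\an,\varphi}(X_\Prism)}(\cE,\cE')$ with $T(f)=g$.

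The one remaining and principal point is the \textbf{lattice compatibility}: an arbitrary morphism $g\colon T(\cE)\to T(\cE')$ of \'etale realizations over $X_\eta$ automatically satisfies $(g|_{S_\eta}\otimes\id_{\BB_\dR})(\Xi)\subseteq\Xi'$. (Without such an input this is a genuine condition, as the trivial local system with two different $\BB^+_\dR$-lattices already shows.) Here I would first note that $\Xi$ is the restriction to $\Spf S_\eta$ of a $\BB^+_\dR$-lattice $\widetilde{\Xi}=\varphi^*\AA_{\inf}(\cE)[1/\mu]\otimes_{\AA_{\inf}}\BB^+_\dR\subseteq T(\cE)\otimes_{\widehat{\ZZ}_p}\BB_\dR$ defined over all of $X_{\eta,\proet}$ and functorial in the analytic prismatic $F$-crystal, and likewise $\widetilde{\Xi}'$ for $\cE'$. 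By \cref{pris to crys}, $T(\cE)$ and $T(\cE')$ are crystalline, with associated filtered $F$-isocrystals $\cE_s$, $\cE'_s$; by \cref{comparison-functoriality} (compatibly via \cref{realizations-naturality}) the morphism $g$ induces a morphism $g_s\colon\cE_s\to\cE'_s$ of filtered $F$-isocrystals, intertwined with $g$ under the crystalline — hence, by \cref{crys-is-dR}, de Rham — comparison. Finally, using \cref{OBdR isocrystal and tensor product connection} and \cref{crys-de-Rham-fil} one identifies $\widetilde{\Xi}$ under $T(\cE)\otimes_{\widehat{\ZZ}_p}\BB_\dR\simeq\BB_\dR(\cE_s)$ with the degree-zero step of the tensor-product filtration; since $g_s$ is filtered, $g\otimes\id_{\BB_\dR}$ carries $\widetilde{\Xi}$ into $\widetilde{\Xi}'$, and restricting to $\Spf S_\eta$ gives the claim. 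The delicate part of this last step — and the place I expect the real work to be — is the bookkeeping with the Frobenius twist built into the map $\rA_\crys\to\rB^+_\dR$ in our normalization of the period rings (the $\tilde{\theta}$- versus $\theta$-conventions recalled in the \nameref{list}), which is exactly what makes $\widetilde{\Xi}$ match the Hodge filtration rather than a Tate twist of it.
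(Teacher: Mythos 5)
Your proposal follows the same strategy as the paper: reduce to affine $X$ by Zariski descent and faithfulness of $T$, pass to the perfectoid cover $\Spf S \to X$, use faithfulness over $\Spf(S\widehat\otimes_R S)$ to handle the descent datum, and then reduce the existence of $f_Y$ over $\Spf S$ to the full faithfulness of the lattice realization $\Phi$ from \cref{fully-faithful-Fargues}. The one substantive point in both arguments is the lattice compatibility, and you correctly identify it and attack it the right way, via \cref{pris to crys}, \cref{comparison-functoriality}, and \cref{realizations-naturality}.

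The paper handles that last step more tersely: it simply records the commutative square relating $T$ and $\Phi$ whose right vertical arrow is the functor $L \mapsto (\restr{L}{Y_\eta},\,\BB^+_\crys(\cE)\otimes\BB^+_\dR)$ on crystalline local systems, and invokes \cref{fully-faithful-Fargues}. The well-definedness of that arrow (i.e., functoriality in $L$ of the lattice) is exactly your ``lattice compatibility,'' and the commutativity of the square is your claim that $\Xi$ restricts from the globally-defined $\widetilde\Xi$. Where you present the $\Phi$-lattice as $\Fil^0$ of the tensor-product filtration (via \cref{crystalline-filtration}, \cref{crys-de-Rham-fil}, \cref{OBdR isocrystal and tensor product connection}), the paper presents it as $\BB^+_\crys(\cE_s)\otimes\BB^+_\dR$; these are two descriptions of the same thing, and the bookkeeping you flag at the end — the Frobenius twist $\otimes_{\AA_{\inf},\varphi}$ built into \cref{Fargues-realization} versus the $\tilde\theta$-convention for $\BB^+_\dR$ — is precisely why they agree (compare the $\tilde\varphi$-twist in the construction of $\cM_2$ in \cref{EssSurj1} and the computation in the proof of \cref{EssSurj1}.\ref{EssSurj1-ass to T}). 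So the proposal is correct and in substance the same as the paper's; your version is slightly more explicit about where the filtration and the de Rham comparison enter, the paper's slightly cleaner by packaging everything into the commuting square.
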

\begin{proof}
If we can find $f$ Zariski-locally on an affine open cover of $X$, the different choices must coincide on the intersections by the faithfulness of $T$ (\cref{et-real-faithful}), so we may assume that $X = \Spf R$ is a framed smooth affine formal scheme.
Consider the $p$-completely flat cover $Y = \Spf S \to X$ by the integral perfectoid $\cO_C$-algebra $S$ from \cref{intersection formula}.
If $f_Y \colon \restr{\cE}{Y_\Prism} \to \restr{\cE'}{Y_\Prism}$ is a morphism in $\Vect^{\varphi}(\Spec(\Prism_S) \setminus V(p,I))$ with $T(f_Y) = \restr{g}{Y}$, as the structure map $Y\times_X Y \to X$ is a cover, the pullbacks of $f_Y$ under the two projections $Y \times_X Y \to Y$ coincide again by the faithfulness of $T$ and the analogous statement for the two pullbacks of $\restr{g}{Y}$.
Thus, any such $f_Y$ can be descended to $X$ (cf.\ \cref{an crystal descent}) and it suffices to show the existence of $f_Y$.

However, the \'etale and lattice realizations fit into a commutative diagram
\[ \begin{tikzcd}
    \Vect^{\an,\varphi}(X_\Prism) \arrow[r,"T"] \arrow[d] & \Loc^\crys_{\ZZ_p}(X_\eta) \arrow[d] \\
    \Vect^{\varphi}(\Spec(\Prism_S) \setminus V(p,I)) \arrow[r,"\Phi"] & \{ (M,\Xi) \}
\end{tikzcd} \]
in which the left vertical arrow is given by the evaluation and the right vertical arrow sends a crystalline local system $L$ on $X_\eta$ to the pair $(M,\Xi)$ where
\begin{itemize}
    \item $M$ is the pullback of $L$ to $Y_\eta$
    \item $\Xi \colonequals \BB_\crys^+(\cE)\otimes \BB_\dR^+$, where $\cE$ is the associated $F$-isocrystal of the crystalline local system $L$.
\end{itemize}
The statement therefore follows from \cref{fully-faithful-Fargues}.
\end{proof}
The full faithfulness of $T$ implies a similar full faithfulness for the rational crystalline realization from \cref{realization functors}.\ref{realization functors crys} after passing to the isogeny category $\Isoc^{\an,\varphi}(X_\Prism)$ of analytic prismatic $F$-crystals on $X$; that is, the objects of $\Isoc^{\an,\varphi}(X_\Prism)$ are the objects of $\Vect^{\an,\varphi}(X_\Prism)$ and the morphisms given by
\[ \Hom_{\Isoc^{\an,\varphi}}(\cE,\cE') \colonequals \Hom_{\Vect^{\an,\varphi}(X_\Prism)}(\cE,\cE') \otimes_{\ZZ_p} \QQ_p. \]
\begin{corollary}\label{filtered enhancement}
    The rational crystalline realization functor $D_\crys \colon \Vect^{\an,\varphi}\bigl(X_\Prism\bigr) \to \Isoc^\varphi(X_s/V_0)$ naturally factors through a fully faithful functor $\widetilde{D}_\crys \colon \Isoc^{\an,\varphi}(X_\Prism) \to \fIsoc^\varphi(X_s/V_0)$.
\end{corollary}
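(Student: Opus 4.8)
The plan is to exhibit $\widetilde{D}_\crys$ as the composite of the \'etale realization $T$ with the functor attaching to a crystalline local system its associated filtered $F$-isocrystal, and then to read off full faithfulness from that of $T$ together with \cref{comparison-functoriality} and the naturality diagram in \cref{realizations-naturality}.

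First I would construct $\widetilde{D}_\crys$ on objects. Given $\cE\in\Vect^{\an,\varphi}(X_\Prism)$, by \cref{pris to crys} the \'etale realization $T(\cE)$ is a crystalline $\ZZ_p$-local system, and by \cref{weak crys is crys} it is associated with a filtered $F$-isocrystal $(\cE_s,\varphi_{\cE_s},\Fil^\bullet(E))$ in which the filtration $\Fil^\bullet(E)$ is \emph{uniquely} determined. By the construction in \cref{realization functors}.\ref{realization functors crys} together with the proof of \cref{pris to crys}, the underlying $F$-isocrystal $(\cE_s,\varphi_{\cE_s})$ is canonically identified with $D_\crys(\cE)$, so I set $\widetilde{D}_\crys(\cE):=(\cE_s,\varphi_{\cE_s},\Fil^\bullet(E))\in\fIsoc^\varphi(X_s/V_0)$. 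On morphisms, a map $\cE_1\to\cE_2$ induces $T(\cE_1)\to T(\cE_2)$ and hence, by the naturality of the crystalline comparison recorded in \cref{realizations-naturality}, a morphism $D_\crys(\cE_1)\to D_\crys(\cE_2)$ of $F$-isocrystals which is automatically compatible with the filtrations, i.e.\ a morphism in $\fIsoc^\varphi(X_s/V_0)$; the uniqueness of the filtration in \cref{weak crys is crys} is exactly what makes this assignment functorial. Since $\fIsoc^\varphi(X_s/V_0)$ has $\QQ_p$-linear Hom sets (\cref{hom-compatibility-filtered-F-isoc}), this additive functor factors uniquely through the localization $\Vect^{\an,\varphi}(X_\Prism)\to\Isoc^{\an,\varphi}(X_\Prism)$, which yields $\widetilde{D}_\crys$. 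Composing $\widetilde{D}_\crys$ with the functor forgetting the filtration recovers $D_\crys$ by construction, so $D_\crys$ indeed factors through $\widetilde{D}_\crys$.

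Next I would prove full faithfulness. Fix $\cE_1,\cE_2\in\Vect^{\an,\varphi}(X_\Prism)$. The full faithfulness of $T$ (the theorem immediately preceding this corollary) gives $\Hom_{\Vect^{\an,\varphi}(X_\Prism)}(\cE_1,\cE_2)\xrightarrow{\sim}\Hom_{\Loc_{\ZZ_p}^\crys(X_\eta)}(T(\cE_1),T(\cE_2))$, and tensoring with $\QQ_p$ yields an isomorphism
\[
\Hom_{\Isoc^{\an,\varphi}(X_\Prism)}(\cE_1,\cE_2)\xrightarrow{\ \sim\ }\Hom_{\Loc_{\ZZ_p}^\crys(X_\eta)}(T(\cE_1),T(\cE_2))\otimes_{\ZZ_p}\QQ_p.
\]
By \cref{comparison-functoriality} the right-hand side is identified with $\Hom_{\fIsoc^\varphi(X_s/V_0)}(\widetilde{D}_\crys(\cE_1),\widetilde{D}_\crys(\cE_2))$, since by definition $\widetilde{D}_\crys(\cE_i)$ is the filtered $F$-isocrystal associated with the crystalline local system $T(\cE_i)$. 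Finally, the commutative diagram of \cref{realizations-naturality} identifies the composite of these two isomorphisms with the map on Hom sets induced by $\widetilde{D}_\crys$. Hence $\widetilde{D}_\crys$ is fully faithful.

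I expect no serious obstacle here: all the genuine content is already in place. The hard input is the full faithfulness of $T$, and the remaining ingredients (\cref{comparison-functoriality}, the naturality diagram of \cref{realizations-naturality}, and the uniqueness clause of \cref{weak crys is crys}) are precisely what is needed to assemble $\widetilde{D}_\crys$ and to identify it on Hom sets. The only point requiring a little care is checking that the two chains of identifications on Hom sets agree, which is exactly the commutativity supplied by \cref{realizations-naturality}; so the corollary amounts to a repackaging of the preceding results.
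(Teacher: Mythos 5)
Your proposal is correct and takes essentially the same approach as the paper: construct $\widetilde{D}_\crys$ from the unique filtration supplied by \cref{weak crys is crys}, factor through the isogeny category by $\QQ_p$-linearity of the target, and then read off full faithfulness from the full faithfulness of $T$ via the commutative diagram in \cref{realizations-naturality} together with \cref{comparison-functoriality}. The only minor inaccuracy is that the $\QQ_p$-linearity of $\Hom$ in $\fIsoc^\varphi(X_s/V_0)$ comes from the definition of (filtered) isocrystals rather than from \cref{hom-compatibility-filtered-F-isoc}, but this does not affect the argument.
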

\begin{proof}
    By \cref{pris to crys}, the \'etale realization of any $\cE \in \Vect^{\an,\varphi}\bigl(X_\Prism\bigr)$ is crystalline with associated $F$-isocrystal $D_\crys(\cE)$.
    As explained in \cref{weak crys is crys}, the resulting comparison isomorphism determines a unique filtration on $D_\crys(\cE)$ which is functorial in $\cE$.
    Consequently, this filtration naturally allows us to factor $D_\crys$ through a functor $\Vect^{\an,\varphi}\bigl(X_\Prism\bigr) \to \fIsoc^\varphi(X_s/V_0)$.
    Lastly, the category $\fIsoc^\varphi(X_s/V_0)$ is $\QQ_p$-linear, so we can further factor it naturally through a functor $\widetilde{D}_\crys \colon \Isoc^{\an,\varphi}\bigl(X_\Prism\bigr) \to \fIsoc^\varphi(X_s/V_0)$.
    
    It remains to explain why $\widetilde{D}_\crys$ is fully faithful.
    Let $\cE_1,\cE_2 \in \Vect^{\an,\varphi}\bigl(X_\Prism\bigr)$.
    \Cref{realizations-naturality} induces a commutative diagram
    \[ \begin{tikzcd}[column sep=-.6em]
    & \Hom_{\Vect^{\an,\varphi}(X_\Prism)}(\cE_1,\cE_2) \otimes_{\ZZ_p} \QQ_p \arrow[ld,"T \otimes \QQ_p"'] \arrow[rd,"\widetilde{D}_\crys"] & \\
    \Hom_{\Loc^\crys_{\ZZ_p}(X_\eta)}\bigl(T(\cE_1),T(\cE_2)\bigr) \otimes_{\ZZ_p} \QQ_p \arrow[rr,"\sim"] && \Hom_{\fIsoc^\varphi(X_s/V_0)}(\cE_{1,s},\cE_{2,s}).
    \end{tikzcd} \]
    Since the left arrow is an isomorphism by the full faithfulness of $T$, the right arrow must be as well.
\end{proof}
\begin{remark}\label{unique-filtered-F-isoc}
In combination with \cref{comparison-functoriality} and the proof of \cref{pris to crys}, the statement above shows in particular that for any analytic prismatic $F$-crystal $(\cE,\varphi_\cE)$, the associated $F$-isocrystal of the crystalline local system $T(\cE)$ can be canonically obtained via the filtered enhancement of $D_\crys(\cE)$ and is unique up to unique isomorphism.
\end{remark}

\subsection{Essential surjectivity}\label{sec4.2}
In this subsection, we show the essential surjectivity of the \'etale realization functor.
Our main theorem is the following.
\begin{theorem}\label{EssSurj}
	Let $X$ be a smooth $p$-adic formal scheme over $\cO_K$ and let $T$ be a $\widehat{\ZZ}_p$-crystalline local system over $X_\eta$.
	Then there is a natural analytic prismatic $F$-crystal in $\Vect^{\an,\varphi}(X_\qrsp)$ whose \'etale realization recovers $T$.
\end{theorem}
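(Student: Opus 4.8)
The plan is to adapt the descent argument of \cite[\S~6]{BS21} to higher dimensions: first build the desired object after pulling back to a large enough perfectoid cover, then descend it. By the sheaf property of $\Vect^{\an,\varphi}(X_\qrsp)$ for the Zariski (indeed quasi-syntomic) topology established in the proof of \cref{prism-qrsp-an-vb}, we may assume that $X=\Spf(R)$ is affine and admits a $V_0$-framing. Let $R_\infty$ and $S\colonequals(R_\infty\widehat{\otimes}_{\cO_K}\cO_C)^\wedge_p$ be as in \cref{intersection formula}, so that $R\to S$ is a quasi-syntomic cover of $X$ by a $p$-torsionfree perfectoid $\cO_C$-algebra with $(\Prism_S,I)=(\rA_{\inf}(S),\tilde{\xi})$. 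Write $Y\colonequals\Spf S$. Since $T$ is crystalline, \cref{crys loc def} provides an associated filtered $F$-isocrystal $(\cE,\varphi_\cE,\Fil^\bullet E)$ over $X_s$, and by \cref{crys-is-dR} the local system $T$ is also de Rham with underlying filtered vector bundle with connection $(E,\nabla,\Fil^\bullet E)$.

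\emph{Step 1: construction over $\{\cI_\Prism\le p\ne0\}$ (\cref{EssSurj1}).} Imitating the example following \cref{EssSurj1}, one modifies $\cE(\rA_\crys(S))\simeq\cE(R)\otimes_R\rA_\crys(S)$ at $V(\varphi^{-1}(I))$ using the $\rB_\dR^+(S)$-lattice coming from the de Rham comparison $T\otimes\BB_\dR\simeq(E\otimes_{\cO_{X_\eta}}\cO\BB_\dR)^{\nabla=0}$ and the Beauville--Laszlo theorem, then takes its Frobenius twist; Griffiths transversality of $\Fil^\bullet E$ makes the modification well-defined and endows it with a canonical Frobenius structure, producing an object of $\Vect^\varphi$ on the locus $\{\cI_\Prism\le p\ne0\}\subset\Spec(\Prism_S)\setminus V(p,I)$.

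\emph{Step 2: gluing to an analytic $F$-crystal $\cM_S$ (\cref{EssSurj2}).} On the complementary locus $\{p\le\cI_\Prism\ne0\}$ one takes $T\otimes\cO$ with Frobenius acting through the first factor. On the overlap the two pieces agree: both restrict along $V(I)$ to the common $\rB_\dR^+(S)$-lattice $T\otimes\rB_\dR^+$, and away from $V(I)$ they are identified $\varphi$-equivariantly via the crystalline comparison $T\otimes\rB_\crys\simeq\cE(\rA_\crys(S))[1/\mu]$ furnished by \cref{pris to crys}. Beauville--Laszlo gluing then yields a vector bundle $\cM_S$ on $\Spec(\Prism_S)\setminus V(p,I)$ with a $\varphi$-structure, whose étale realization is $T|_{Y_\eta}$ by construction (compare \cref{et-prism-functor}). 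One also records a crystal-type compatibility: for any map $S\to S'$ of such covers, $\cM_{S'}$ is canonically the pullback of $\cM_S$ — immediate on the generic-fiber part, and a consequence of the crystal property of $\cE$ on the crystalline part.

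\emph{Step 3: descent (\cref{EssSurj3}).} It remains to descend $\cM_S$ along $R\to S$, i.e.\ to produce a descent datum over $\Prism_{S'}$ for $S'\colonequals S\widehat{\otimes}_R S\in X_\qrsp$ satisfying the cocycle condition over $S''\colonequals S\widehat{\otimes}_R S\widehat{\otimes}_R S$. Over the sublocus $\{\cI_\Prism\le p\ne0\}$ of $\Spec(\Prism_{S'})\setminus V(p,I)$ the pullback compatibility of Step 2, applied to the two maps $S\to S'$, yields a canonical isomorphism $\pr_1^*\cM_S\simeq\pr_2^*\cM_S$. The key difficulty is that $S'$ is only quasiregular semiperfectoid, so $\Prism_{S'}$ is \emph{not} a perfect prism, and one cannot extend a weakly admissible filtered $\varphi$-module across $\{p\ne0\}$ as in the zero-dimensional case of \cite[\S~6.4]{BS21}. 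Instead one invokes the Beilinson fiber square of \cite{AMMN22} to analyze $\Prism_{S'}$ — in particular the interplay of $\Prism_{S'}[1/p]^\wedge_I$, $\Prism_{S'}[1/p]^\wedge_p$ and the crystalline specialization — and uses this to extend the descent isomorphism first across $\{p\ne0\}$ and then, by a further gluing along $V(I)$, across the whole analytic locus, all while remaining Frobenius-equivariant. The cocycle condition over $S''$ follows from the same compatibility together with the faithfulness of the étale realization (\cref{et-real-faithful}). By the descent of \cref{prism-qrsp-an-vb} this produces the desired $\cM\in\Vect^{\an,\varphi}(X_\qrsp)$, and its étale realization is $T$ since $\cM_S$ was built directly from $T$ over $\{\cI_\Prism\ne0\}$, on which the étale realization depends only. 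The main obstacle is precisely Step 3: replacing the slope-filtration input of \cite{Ked04} that makes the point case work by the structural analysis of the non-perfect prism $\Prism_{S'}$ via the Beilinson fiber square.
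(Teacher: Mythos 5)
Your overall architecture — build a gluing $\cM_S$ on the perfectoid cover from the étale and crystalline sides, then descend it via the Beilinson fiber square analysis of $\Prism_{S\widehat{\otimes}_R S}$ — matches the paper's. But there is a genuine gap in your choice of cover.

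You take $S=(R_\infty\widehat{\otimes}_{\cO_K}\cO_C)^\wedge_p$, the toric perfectoid cover from \cref{intersection formula}. That lemma, and this $S$, are what the paper uses in the \emph{full-faithfulness} argument (\cref{fully-faithful-Fargues}). They are not suited to the essential surjectivity, because in Step~2 you need $\restr{T}{\Spf(S_\perf)_\eta}$ to be a \emph{trivial} local system: the piece $N'=\restr{T}{\Spf(S_\perf)_\eta}\otimes\cO_{\cY_{[0,\infty)}}$ that you glue with $\cM'$ only makes sense as a vector bundle on the adic space $\cY_{[0,\infty)}=\Spa(\Prism_{S,\perf},\Prism_{S,\perf})\setminus V([p^\flat])$ if $T$ is constant on $\Spf(S_\perf)_\eta$; otherwise $T\otimes\cO$ is only defined pro-étale locally on the generic fiber and does not descend to a bundle on $\cY_{[0,\infty)}$. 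The toric cover does not in general split a crystalline local system (take $T$ cutting out a finite étale cover of $X_\eta$ that is nontrivial after extracting $p$-power roots of the coordinates). The paper avoids this by introducing the subcategory $X^w_\qrsp$ (\cref{abs-int-closed-qrsp}), whose objects map quasi-syntomically to a perfectoid $\cO_C$-algebra on which \emph{every} lisse $\widehat{\ZZ}_p$-sheaf on $X_\eta$ trivializes — e.g.\ the $p$-completed absolute integral closure of $R$ — and \cref{abs-int-closed-qrsp local sys} shows these form a basis. Remark~\ref{EssSurj2-Galois} (and the remark just before it) explicitly flags that the triviality of $T$ on the cover is essential to the gluing construction; your Step~2 needs that hypothesis supplied by choosing $S$ from $X^w_\qrsp$, not by the framing.

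A second, smaller imprecision: \cref{EssSurj3} descends only the $p$-inverted piece $\cM_S[1/p]$ to $\Vect^\varphi(X_\qrsp,\Prism_\bullet[1/p])$; the $[1/I]^\wedge_p$ piece is descended separately via the equivalence $\Vect^\varphi(\Prism_{S'}[1/I]^\wedge_p)\simeq\Vect^\varphi(\Prism_{S'_\perf}[1/I]^\wedge_p)$ and the functoriality of \cref{EssSurj2}, and the two are finally recombined by Beauville--Laszlo. Your Step~3 folds this into a single extension across $\{p\ne0\}$ followed by ``a further gluing along $V(I)$,'' which is closer to a slogan than to the actual two-track descent, but the idea is the same and this by itself would not be a fatal gap. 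The missing $X^w_\qrsp$ input is the real issue.
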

Throughout the subsection, we consider a smooth $p$-adic formal scheme $X$ over $\cO_K$ and a $\ZZ_p$-crystalline local system $T$ over $X_\eta$.
We denote by $(\cE,\varphi_\cE, \Fil^\bullet(E))$ the filtered $F$-isocrystal associated with $T$; namely, $\cE$ is an isocrystal over the crystalline site of the reduced special fiber $X_s$, $\varphi_{\cE}$ is a rational Frobenius action, and $\Fil^\bullet(E)$ is a descending filtration on the underlying vector bundle $E$ on $X_\eta$ (\cref{filtered-F-isocrystal}).

To start, we introduce a class of objects in $X_\qrsp$ on which $T$ is trivializable.
\begin{definition}\label{abs-int-closed-qrsp}
	The category $X_\qrsp^w$ is the full subcategory of $X_\qrsp$ consisting of those quasiregular semiperfectoid rings $S$ for which the structure map $\Spf(S) \to X$ can be factored as
	\[
	\begin{tikzcd}
		\Spf(S) \ar[r, "f"] & \Spf(S') \ar[r, "g"] \ar[r] & X
 	\end{tikzcd}
    \]
   such that both $f$ and $g$ are quasi-syntomic and $S'\in X_\qrsp$ is a perfectoid $\cO_C$-algebra on which the restriction $\restr{T}{\Spf(S')_\eta}$ of every $\widehat{\ZZ}_p$-local system $T$ on $X_{\eta,\proet}$ is trivial.
\end{definition}
\begin{lemma}\label{abs-int-closed-qrsp local sys}
    The subcategory $X^w_\qrsp$ from \cref{abs-int-closed-qrsp} forms a basis of the quasi-syntomic site $X_\qrsp$.
\end{lemma}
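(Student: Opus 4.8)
The plan is to show that every quasiregular semiperfectoid ring $S \in X_\qrsp$ admits a quasi-syntomic cover by an object of $X^w_\qrsp$, which is exactly the definition of forming a basis. Since $X_\qrsp$ is a basis of the quasi-syntomic site of $X$ and being a basis is transitive, it is enough to work with a fixed $S \in X_\qrsp$. So first I would fix $S$ with structure map $\Spf(S) \to X$.

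The key point is to produce an intermediate perfectoid $\cO_C$-algebra $S'$ as in \cref{abs-int-closed-qrsp}. The strategy is to base change to $\cO_C$ and then pass to a large enough perfectoid cover on which all $\widehat{\ZZ}_p$-local systems on $X_{\eta,\proet}$ become trivial. Concretely, since $S$ is quasiregular semiperfectoid, it has a quasi-syntomic cover by a perfectoid ring; base changing along $\cO_K \to \cO_C$ (which is quasi-syntomic), we may assume we have a perfectoid $\cO_C$-algebra $S_0$ with a quasi-syntomic map $\Spf(S_0) \to \Spf(S)$ and a map $\Spf(S_0) \to X_{\cO_C}$. Now the generic fiber $\Spf(S_0)_\eta = \Spa(S_0[1/p],S_0)$ is an affinoid perfectoid space over $\Spa(C,\cO_C)$, and by the theory of pro-\'etale covers (as in \cite[\S3, \S8]{Sch13}), there is a pro-\'etale cover $\Spa(S',S'^+) \to \Spa(S_0[1/p],S_0)$ by an affinoid perfectoid space on which the pullback of a given lisse $\widehat{\ZZ}_p$-sheaf is trivial. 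To make this work uniformly for \emph{every} $\widehat{\ZZ}_p$-local system $T$ on $X_{\eta,\proet}$, one can use the universal such cover: take the pro-\'etale cover corresponding to the inverse limit over all finite \'etale covers of $\Spf(S_0)_\eta$ together with the relevant perfectoidization, i.e., the cover $\Spf(S_0)_\eta$ by its (pro-)universal cover, whose $S'^+$ is the $p$-completed integral closure of $S_0$ in the maximal pro-finite-\'etale extension of $S_0[1/p]$. This is a perfectoid $\cO_C$-algebra (by the almost purity theorem / tilting equivalence), the map $S_0 \to S'^+$ is quasi-syntomic (pro-\'etale maps of affinoid perfectoids are flat and relatively perfectoid, hence quasi-syntomic), and by construction the restriction of any $\widehat{\ZZ}_p$-local system on $X_{\eta,\proet}$ to $\Spa(S'^+[1/p],S'^+)$ is trivial, since any lisse $\widehat{\ZZ}_p$-sheaf is trivialized by a finite \'etale cover in each $\ZZ/p^n$-layer and these are all dominated by the universal cover.

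Setting $S' \colonequals S'^+$, we then need to further cover $S'$ so that it lands over $S$ quasi-syntomically. Here I take $S'' \colonequals S'^+ \widehat{\otimes}_{S_0} S$, the $p$-completed tensor product; since $\Spf(S_0) \to \Spf(S)$ is a quasi-syntomic cover and $S_0 \to S'^+$ is quasi-syntomic, the composite $\Spf(S'^+) \to \Spf(S_0) \to \Spf(S)$ is quasi-syntomic, and $S''$ is quasi-syntomic over $S$ (being a base change of $\Spf(S'^+) \to \Spf(S_0)$) — but actually the cleaner route is: $S'^+$ is already quasi-syntomic over $S$ directly via the composite, and $S'^+$ is quasiregular semiperfectoid as a perfectoid ring, so we may set $f = \id$, $g$ = the composite $\Spf(S'^+) \to \Spf(S)$; wait, we also need $g$ to factor through $X$, which it does since $\Spf(S_0) \to X_{\cO_C} \to X$. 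One should double-check that $S'^+ \in X_\qrsp$, i.e. that it is indeed quasiregular semiperfectoid over $X$, which holds since it is a perfectoid ring with a map to $X$; then in \cref{abs-int-closed-qrsp} one may take $f = \id_{S'^+}$ and $g \colon \Spf(S'^+) \to \Spf(S) \to X$, so $S'^+ \in X^w_\qrsp$ and $\Spf(S'^+) \to \Spf(S)$ is a quasi-syntomic cover of the arbitrary object $S$.

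I expect the main obstacle to be the bookkeeping around making the trivializing cover work \emph{simultaneously} for all local systems and verifying that the universal pro-finite-\'etale cover of $\Spf(S_0)_\eta$ has an integral model $S'^+$ that is (a) perfectoid and (b) \emph{quasi-syntomic} over $S_0$ — the latter requires knowing that pro-\'etale covers of perfectoid affinoids are quasi-syntomic, which follows from the fact that they are $p$-completely flat and the cotangent complex behaves well (indeed the relevant maps are relatively perfectoid). A subtlety worth addressing explicitly: one must ensure $\Spf(S_0) \to \Spf(S)$ can be arranged to be a quasi-syntomic \emph{cover} (not just a quasi-syntomic map), which is automatic since $\Spf(\cO_C) \to \Spf(\cO_K)$ is a quasi-syntomic cover and $S$ already maps to $\Spf(\cO_K)$; and that the various perfectoid rings involved lie in $X_\qrsp$, which is immediate from the definitions. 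None of this requires real computation; it is a matter of assembling standard facts about perfectoid pro-\'etale covers and the quasi-syntomic topology.
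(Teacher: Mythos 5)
The key gap is in your claim that the pro-finite-\'etale cover $S_0 \to S'^+$ is $p$-completely flat, and therefore a quasi-syntomic cover. You justify this by appealing to almost purity plus the vanishing of the cotangent complex for maps of perfectoid rings. But Scholze's almost purity theorem gives only that $S_0 \to B^+$ is \emph{almost} finite \'etale at each finite stage, hence \emph{almost} flat ($\fm \cdot \Tor_1 = 0$), which is strictly weaker than $p$-complete flatness; the integral closure of a perfectoid $\cO_C$-algebra in a finite \'etale extension of its generic fiber need not be flat over it. Nor does the vanishing of $\LL_{S'^+/S_0}{}^\wedge_p$ help: in the definition of quasi-syntomic from \cite{BMS19}, $p$-complete flatness is a hypothesis \emph{separate} from the Tor-amplitude bound on the cotangent complex, so knowing the latter says nothing about the former. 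Without flatness you cannot conclude that $S_0 \to S'^+$ (hence $R \to S'^+$, hence $S \to S'^+$) is quasi-syntomic, and the object $S'^+$ you construct is not known to lie in $X_\qrsp$ at all, let alone in $X^w_\qrsp$, nor to cover $S$ quasi-syntomically.

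The paper avoids this obstruction by organizing the argument differently: rather than first passing to a perfectoid $\cO_C$-cover $S_0$ of $S$ (where one is stuck with the almost-versus-actual flatness problem), it works directly with the $p$-completion $S$ of an absolute integral closure of the \emph{Noetherian smooth} ring $R = \Gamma(X,\cO_X)$, for $X=\Spf(R)$ affine and connected. The decisive input is then Bhatt's theorem \cite[Thm.~5.16]{Bha20} that this $p$-completed absolute integral closure is genuinely (not merely almost) faithfully flat over $R$ --- a substantially deeper result than almost purity, and one that exploits the Noetherianity and regularity of $R$. Quasi-syntomicity of $R \to S$ then follows from the cotangent complex transitivity triangle and smoothness of $R$; perfectoidness of $S$ is \cite[Lem.~4.20]{Bha20}; triviality of all $\widehat\ZZ_p$-local systems on $\Spf(S)_\eta$ is checked by factoring each finite \'etale cover of $\Spf(R)_\eta$ through $S$, much as you do. The general object $R' \in X_\qrsp$ is then covered by the $p$-completed tensor product $S \widehat\otimes_R R'$, which is quasiregular semiperfectoid and lies in $X^w_\qrsp$ by base change and composition. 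The flatness statement you implicitly need is precisely what Bhatt's theorem supplies, and in the form you want it (for pro-finite-\'etale covers of a general perfectoid $S_0$) it is not available.
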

\begin{proof}
We may assume that $X=\Spf(R)$ is affine and connected.
Let $S$ be the $p$-completion of an absolute integral closure of the integral domain $R$;
that is, the $p$-completion of the integral closure of $R$ in an algebraic closure $\overline{\Frac(R)}$ (which is unique up to nonunique isomorphism).
We first show that the natural map $R \to S$ is a quasi-syntomic cover which satisfies the conditions in \cref{abs-int-closed-qrsp}.

Note that $S$ is a perfectoid $\cO_C$-algebra by \cite[Lem.~4.20]{Bha20} (and because $\cO_C$ is the completion of the integral closure of $\cO_K$ in $\overline{K}$).
Furthermore, $S$ is faithfully flat \cite[Thm.~5.16]{Bha20} and thus in particular also quasi-syntomic over $R$:
indeed, by the transitivity triangle on cotangent complexes for $\ZZ_p \to R \to S$ and the smoothness of $R$, one can show that $\LL_{S/R}$ has $p$-complete tor-amplitude in $[-1,0]$.
It remains to check that $\restr{T}{\Spf(S)_\eta}$ is trivial for every $\widehat{\ZZ}_p$-local system $T$ of $X_{\eta,\proet}$.

By \cite[Prop.~8.2]{Sch13}, we have $T = \lim_n T_n$ for (pullbacks to the pro-\'etale site of) \'etale $\ZZ/p^n$-local systems $T_n$ on $X_\eta$.
It suffices to show that $\restr{T_n}{\Spf(S)_\eta}$ can be trivialized for each $n$:
using the surjection $\GL_r(\ZZ/p^{n+1}) \twoheadrightarrow \GL_r(\ZZ/p^n)$ for $r \colonequals \rk T$, one can then make these trivializations compatible recursively on $n$, so that their inverse limit will give the desired trivialization of $T$.

Since $T_n$ is an \'etale $\ZZ/p^n$-local system, we can trivialize $\restr{T_n}{X_n}$ for some finite \'etale morphism of adic spaces $X_n = \Spa(R_n,R^+_n) \to \Spf(R)_\eta$ (i.e., $R[1/p] \to R_n$ is finite \'etale and $R^+_n$ is the integral closure of $R$ in $R_n$).
We may assume that $X_n$ is connected because $X$ is so.
As $R[1/p] \to R_n$ is finite \'etale, we can find an embedding $\Frac(R_n) \hookrightarrow \overline{\Frac(R)}$ extending $\Frac(R) \hookrightarrow \overline{\Frac(R)}$.
Moreover, since $S$ is the $p$-completion of the integral closure of $R$ in $\overline{\Frac(R)}$, the natural map $R \to S$ factors through $R^+_n$.
Consequently, the morphism $\Spf(S)_\eta \to \Spf(R)_\eta$ factors through $\Spa(R_n,R^+_n)$, so that $\restr{T_n}{\Spf(S)_\eta}$ must be trivializable as well.
This concludes the argument that $R \to S$ is a quasi-syntomic cover which satisfies the conditions in \cref{abs-int-closed-qrsp}.

To finish, let $\Spf(R') \to X$ be any cover by a quasiregular semiperfectoid ring.
Then the $p$-complete tensor product $S' \colonequals S \widehat{\otimes}_R R'$ is again quasiregular semiperfectoid by the proof of \cite[Lem.~4.27]{BMS19} and $R' \to S'$ is a quasi-syntomic cover by \cite[Lem.~4.16.(2)]{BMS19}, hence defines an object of $X^w_\qrsp$ covering $R'$.
Further, $X^w_\qrsp$ is closed under base change and composition, so we are done.
\end{proof}
Following \cite[Constr.~6.5]{BS21}, our first step is to extend filtered $F$-isocrystals to prismatic $F$-crystals over the sheaf of rings $\Prism_\bullet\langle \cI_\Prism/p\rangle[1/p]$ (see our \nameref{list}).
\begin{proposition}\label{EssSurj1}	
	Let $(\cE,\varphi_\cE, \Fil^\bullet(E))$ be a filtered $F$-isocrystal over $X_s$.
	There is a natural $F$-crystal $(\cM',\varphi_{\cM'})$ over $(X_\qrsp, \Prism_\bullet \langle \cI_\Prism/p \rangle[1/p])$ such that:
	\begin{enumerate}[label=\upshape{(\roman*)},leftmargin=*]
		\item\label{EssSurj1-original E} For $S\in X_\qrsp$, there are natural isomorphisms of finite projective $F$-modules over $\rB^+_{\crys}(S)=\Prism_S\{I/p\}[1/p]$
		\[
		\cE(\rA_{\crys}(S))[1/p] \simeq \cM'(S) \otimes_{\Prism_S \langle I/p\rangle[1/p]} \Prism_S\{I/p\}[1/p].
		\]
		\item\label{EssSurj1-ass to T} Assume that $(\cE,\varphi_\cE, \Fil^\bullet(E))$ is associated with a $\widehat{\ZZ}_p$-crystalline local system $T$ as in \cref{crys loc def}.
		For a perfectoid algebra $S \in X_\qrsp$, the completed localization of $\cM'(S)$ at $V(I)$ is naturally isomorphic to $(T \otimes_{\widehat{\ZZ}_p} \BB^+_\dR)(S[1/p])$.
	\end{enumerate}
\end{proposition}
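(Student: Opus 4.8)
The plan is to carry \cite[Constr.~6.5]{BS21} over to the relative, higher-dimensional setting, with the filtered $F$-isocrystal $(\cE,\varphi_\cE,\Fil^\bullet(E))$ taking the place of the filtered $\varphi$-module attached to a crystalline representation. Since the functor $(A,I) \mapsto \Vect^\varphi\bigl(A\langle I/p\rangle[1/p]\bigr)$ satisfies $(p,I)$-completely flat descent among prisms — by the argument proving \cref{prism-qrsp-an-vb}, as $\Prism_\bullet\langle \cI_\Prism/p\rangle[1/p]$ is built out of $\Prism_\bullet$ — and since $X^w_{\qrsp}$ is a basis of $X_\qrsp$ by \cref{abs-int-closed-qrsp local sys}, it suffices to produce, functorially in $S \in X^w_{\qrsp}$, objects $\cM'(S) \in \Vect^\varphi\bigl(\Prism_S\langle I/p\rangle[1/p]\bigr)$; that is, an object of $\lim_{S} \Vect^\varphi\bigl(\Prism_S\langle I/p\rangle[1/p]\bigr)$. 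On such $S$ the local system $T$ is in addition trivial, which simplifies the de Rham comparison used below.

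Fix $S \in X^w_{\qrsp}$. I would assemble three inputs. First, the evaluation $N \colonequals \cE(\rA_\crys(S))[1/p]$ is a finite projective module over $\rB^+_\crys(S) = \Prism_S\{I/p\}[1/p]$ carrying the semilinear map $\varphi_\cE$; being the value of an $F$-isocrystal, $\varphi_\cE$ becomes an isomorphism after inverting $\mu$ (equivalently, over $\rB_\crys(S)$), and in fact already away from $V(\varphi^{-1}(I))$, its cokernel being killed by a power of a generator of $\varphi^{-1}(I)$. Functoriality of $N$ in $S$ is the crystal property, exactly as in the proof of \cref{Bcrys-isocrystal-sheaf}. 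Second, the filtration $\Fil^\bullet(E)$ together with the (Frobenius-twisted) de Rham thickenings of $S$ produces, by the method of \cref{crystalline-filtration} — in particular its independence-of-section argument — a canonical lattice $\Xi_S$ modifying $N$ along $V(\varphi^{-1}(I))$; concretely $\Xi_S$ is $\cE$ evaluated at the de Rham thickening and re-indexed by $\Fil^\bullet(E)$, and since $T$ is crystalline, the comparison $\vartheta$ of \cref{crys loc def}.\ref{crys loc def filtr}, read filtered over $\rB_\dR$ by way of \cref{crys-de-Rham-fil}, identifies the Frobenius twist of $\Xi_S$ with the lattice $T\otimes_{\widehat{\ZZ}_p}\rB^+_\dR(S)$. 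Third, Beauville--Laszlo gluing of $N$ over $\Spec(\rB^+_\crys(S))\setminus V(\varphi^{-1}(I))$ with $\Xi_S$ along $V(\varphi^{-1}(I))$ yields a modified finite projective $\rB^+_\crys(S)$-module $\widetilde N$, and I set $\cM'(S) \colonequals \widetilde N \otimes_{\rB^+_\crys(S),\tilde\varphi} \Prism_S\langle I/p\rangle[1/p]$ along the map $\tilde\varphi$ from the \nameref{list}. This Frobenius twist carries the modification locus to $V(I)$ and, via the relation $\varphi(\mu)=\mu\tilde\xi$, upgrades $\varphi_\cE$ to a genuine isomorphism $\varphi_{\cM'}\colon \varphi^*\cM'(S)[1/I] \xrightarrow{\sim} \cM'(S)[1/I]$; the only nontrivial point here is that the gluing is Frobenius equivariant, which follows from the Frobenius equivariance built into \cref{crys loc def}.\ref{crys loc def filtr}.

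It then remains to extract the two properties. For \ref{EssSurj1-original E}, base changing $\cM'(S)$ along the canonical map $\Prism_S\langle I/p\rangle[1/p] \to \Prism_S\{I/p\}[1/p] = \rB^+_\crys(S)$ undoes both the Frobenius twist and the modification — the latter because $\Xi_S$ was built so as to agree with $N$ after this base change — so one recovers $N = \cE(\rA_\crys(S))[1/p]$ with its $F$-module structure, and compatibly in $S$ by the crystal property. For \ref{EssSurj1-ass to T}, when $S$ is perfectoid the completed localization of $\cM'(S)$ at $V(I)$ is the Frobenius twist of that of $\widetilde N$ at $V(\varphi^{-1}(I))$, which by the second input equals $T\otimes_{\widehat{\ZZ}_p}\rB^+_\dR(S) = (T\otimes_{\widehat{\ZZ}_p}\BB^+_\dR)(S[1/p])$.

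The main obstacle will be the middle step: getting the period-ring bookkeeping right — modifying at $V(\varphi^{-1}(I))$ so that the Frobenius twist sends it to $V(I)$ in the $\ker\tilde\theta$-normalization of $\rB^+_\dR$ used in this article — verifying that $\varphi_{\cM'}$ is invertible exactly after inverting $I$ and not merely after inverting $\mu$, and, above all, carrying out the Beauville--Laszlo modification crystal-compatibly — independently of the auxiliary choices of sections, in the spirit of \cref{crystalline-filtration} — so that $\cM'$ is functorial in $S$ and descends to a genuine $F$-crystal over $X_\qrsp$.
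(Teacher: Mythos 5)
Your construction is recognizably a variant of the paper's, but the order of operations is reversed in a way that creates work the paper deliberately avoids, and two of your reductions are either unnecessary or slightly off.

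\textbf{Order of modification and Frobenius twist.} The paper sets $\cM_1(S) = \cE(\rA_\crys(S))[1/p]$, base changes along $\tilde\varphi$ to get $\cM_2$ over $\Prism_S\langle I/p\rangle[1/p]$, and \emph{only then} applies Beauville--Laszlo at $V(\cI_\Prism)$ — where the completed localization is identified (via \cite[Lem.~6.7]{BS21}) with $\rB^+_\dR(S)$, and the modification lattice is $\Fil^0(\cM_2\otimes\rB_\dR)$, built directly from $\Fil^\bullet(E)$ by a tensor-product filtration over $\rB_\dR$. You modify first, inside $\rB^+_\crys(S)$ at $V(\xi)$, and \emph{then} base change along $\tilde\varphi$. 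To obtain the same object, you must check that Beauville--Laszlo gluing commutes with the non-flat map $\tilde\varphi$ (equivalently, that $\tilde\varphi$ factors through $(\rB^+_\crys(S)[1/p])^\wedge_\xi \to \rB^+_\dR(S)$ compatibly with the modification data). This can be made to work — $\tilde\varphi$ sends $\xi$ to a generator of $I$, and $\rB^+_\dR(S)$ is $I$-adically complete — but it requires a lemma that the paper's ordering sidesteps entirely; this is precisely the ``period-ring bookkeeping'' you flag as the main obstacle. The paper's route is cleaner on exactly this point.

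\textbf{Unnecessary restriction to $X^w_\qrsp$.} You restrict to $X^w_\qrsp$ so that $T$ is trivial, but the construction of $\cM'$ must not depend on $T$ at all — and the paper's doesn't: it uses only the filtered $F$-isocrystal, and runs directly over (framed affine, coordinate-$p^\infty$-root admitting) objects of $X_\qrsp$. The local system enters only in property \ref{EssSurj1-ass to T}, where the comparison $\vartheta$ identifies the modification lattice with $T\otimes\rB^+_\dR$ via \cref{crys-de-Rham-fil}. Weaving $T$ into the construction itself, as you do, is harmless by descent from $X^w_\qrsp$, but adds a descent step and obscures the fact that the assignment $(\cE,\varphi,\Fil^\bullet E)\mapsto\cM'$ is functorial before one knows $(\cE,\varphi,\Fil^\bullet E)$ arises from a local system.

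\textbf{A small factual slip.} You write that $\varphi_\cE$ on $N=\cE(\rA_\crys(S))[1/p]$ ``becomes an isomorphism after inverting $\mu$, and in fact already away from $V(\varphi^{-1}(I))$.'' For an $F$-isocrystal (\cref{F-isoc-def}), $\varphi\colon F^*\cE\to\cE$ is an isomorphism of isocrystals by definition, so $\varphi_\cE$ is already a semilinear isomorphism over $\rB^+_\crys(S)$ — no localization needed. What fails to be an isomorphism at $V(I)$ is the \emph{glued} Frobenius on $\cM'$, and that failure is a consequence of the modification, not of anything about $\varphi_\cE$ itself. This doesn't break your construction, but the phrasing invites confusion about where the $[1/\cI_\Prism]$ in $\varphi_{\cM'}$ comes from.
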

\begin{proof}
	We first explain the construction of $(\cM',\varphi_{\cM'})$.
	For each $S\in X_\qrsp$, the ring $\rA_\crys(S)= \Prism_S \{I/p\}$ is a pro-pd-thickening in the crystalline site $(X_{p=0}/W(k))_\crys$, so 
	\[ \cM_1 \colon S \mapsto \cM_1(S) \colonequals \cE\bigl(A_\crys(S)\bigr)[1/p] \]
	naturally defines a sheaf of finite projective $\Prism_\bullet \{\cI_\Prism/p\}[1/p]$-modules on $X_\qrsp$.
	The rational endomorphism $\varphi_{\cE}$ induces a Frobenius isomorphism of $\cM_1$, that is, a $\Prism_\bullet \{\cI_\Prism/p\}[1/p]$-linear isomorphism
	\[
	\varphi_{\cM_1} \colon \varphi_{\Prism_\bullet\{\cI_\Prism/p\}}^*\cM_1 \xrightarrow{\sim} \cM_1.
	\]
	
	The extension of scalars of $\cM_1$ along the natural map $\tilde{\varphi} \colon \Prism_\bullet\{ \cI_\Prism/p\}[1/p] \to \Prism_\bullet\{ \varphi(\cI_\Prism)/p\}[1/p] \xrightarrow{\iota} \Prism_\bullet \langle \cI_\Prism/p \rangle[1/p]$ from our \nameref{list} is a sheaf of $\Prism_\bullet \langle \cI_\Prism/p \rangle[1/p]$-modules
	\[
	\cM_2\colonequals \cM_1 \otimes_{\Prism_\bullet\{ \cI_\Prism/p\}[1/p],\tilde{\varphi}} \Prism_\bullet \langle \cI_\Prism/p \rangle[1/p].
	\]
	It is again equipped with a Frobenius action $\varphi_{\cM_2}$ given by the base change of $\varphi_{\cM_1}$ along $\tilde{\varphi}$;
	the source of the base changed map can be identified with $\varphi_{\Prism_\bullet \langle \cI_\Prism/p\rangle}^* \cM_2$ thanks to the commutativity of the following diagram of Frobenii:
	\[
	\begin{tikzcd}
		\Prism_\bullet \{\cI_\Prism/p\} \arrow[rr, "\varphi_{\Prism_\bullet\{\cI_\Prism/p\}}"] \arrow[d, "\tilde{\varphi}"] && \Prism_\bullet \{\cI_\Prism/p\}  \arrow[d, "\tilde{\varphi}"] \\
		\Prism_\bullet \langle  \cI_\Prism /p \rangle  \arrow[rr, "\varphi_{\Prism_\bullet \langle  \cI_\Prism /p \rangle }"] &&\Prism_\bullet \langle  \cI_\Prism /p \rangle.
	\end{tikzcd}
	\]
	
	Note that by the proof of \cite[Lem.~6.7]{BS21}, $\Prism_\bullet [1/p]^\wedge_{\cI_\Prism} \simeq \Prism_\bullet \langle \cI_\Prism/p \rangle[1/p]^\wedge_{\cI_\Prism} \simeq \rB^+_\dR$ and thus $\Prism_\bullet [1/p]^\wedge_{\cI_\Prism}[1/\cI_\Prism] \simeq \Prism_\bullet \langle \cI_\Prism/p \rangle[1/p]^\wedge_{\cI_\Prism}[1/\cI_\Prism] \simeq \rB_\dR$.
	As in \cref{crystalline-filtration}, we can equip the $\rB_\dR$-vector bundle $\cM_2 \otimes_{\Prism_\bullet\langle \cI_\Prism/p\rangle} \rB_\dR$ over $X_\qrsp$ with a filtration:
	Indeed, for $S\in X_\qrsp$ such that the structure map $\Spf(S)\to X$ factors through a framed affine open subspace $\Spf(R)\subset X$ and $S$ admits a $p^\infty$-roots of the coordinates, let $s \colon R \to \Prism_S$ be a lift of the composition $R\to S\to \overline{\Prism}_S$ (which is constructed using the framing for $R$).
	Then we define the filtration on
	\[ (\cM_2 \otimes_{\Prism_\bullet\langle \cI_\Prism/p\rangle} \rB_\dR)(S) = \cE(\Prism_S\{I/p\})\otimes_{\Prism_S\{I/p\}, \tilde{\varphi}} \Prism_S [1/p]^\wedge_I[1/I]\simeq E(R[1/p])\otimes_{R[1/p],s} \Prism_S [1/p]^\wedge_I[1/I] \]
	as the tensor product filtration $(\Fil^\bullet E(R[1/p])) \otimes_{R[1/p],s} (I^\bullet \cdot \Prism_S[1/p]^\wedge_I)$.
	When $S$ is a perfectoid $\cO_C$-algebra over $X$, this definition is the same as in \cref{crystalline-filtration} and is hence compatible with its pro-\'etale analog.
	Moreover, as shown in \cref{crystalline-filtration}, the filtration is independent of the choice of $s$ and of $S$, which allows us to glue it to a filtration on the $\rB_\dR$-vector bundle $\cM_2 \otimes_{\Prism_\bullet\langle \cI_\Prism/p\rangle} \rB_\dR$.
	
	Finally, since the filtration on $\rB^+_\dR$ is the $\cI_\Prism$-adic one, we can use Beauville--Laszlo gluing to modify the $\Prism_\bullet \langle \cI_\Prism/p\rangle[1/p]$-module $\cM_2$ at $V(\cI_\Prism)$ by the $\Prism_\bullet [1/p]^\wedge_{\cI_\Prism}$-module
	\[ \Fil^0 \bigl(\cM_2 \otimes_{\Prism_\bullet\langle \cI_\Prism/p\rangle} \rB_\dR\bigr) \]
	along the gluing isomorphism
	\begin{align*}
		\Fil^0 \bigl( \cM_2 \otimes_{\Prism_\bullet\langle \cI_\Prism/p\rangle} \rB_\dR \bigr)[1/\cI_\Prism] &\simeq \cE(\Prism_\bullet\{ \cI_\Prism/p\})\otimes_{\Prism_\bullet\{ \cI_\Prism/p\}, \tilde{\varphi} } \Prism_\bullet[1/p]^\wedge_{\cI_\Prism}[1/\cI_\Prism] \\
		& \simeq \cM_2 \otimes_{\Prism_\bullet\langle \cI_\Prism/p\rangle} \rB_\dR. 
	\end{align*}
	This way, we obtain a $\Prism_\bullet \langle \cI_\Prism/p \rangle[1/p]$-vector bundle $\cM'$.
	Moreover, by construction we have an isomorphism $\cM_2[1/\cI_\Prism] \simeq \cM'[1/\cI_\Prism]$.
	Since $\varphi(\cI_\Prism)$ is invertible in $\Prism_\bullet \langle \cI_\Prism/p \rangle$,\footnote{
	Let $d$ be a generator of $I\subset \Prism_S$.
	Then $\delta(d)+p^{p-1} \cdot (d/p)^p$ is invertible in $\Prism_S\langle I/p \rangle$, so the element $\varphi(d)=d^p+p\delta(d) = p(\delta(d)+p^{p-1} \cdot (d/p)^p)$ is automatically invertible in $\Prism_S\langle I/p \rangle[1/p]$.}
	pullback along $\varphi_{\Prism_\bullet \langle \cI_\Prism/p \rangle}$ also gives an isomorphism	$\varphi_{\Prism_\bullet \langle \cI_\Prism/p \rangle}^* \cM_2 \simeq \varphi_{\Prism_\bullet \langle \cI_\Prism/p \rangle}^* \cM'$.
	In particular, the localization $\varphi_{\cM_2}[1/\cI_\Prism]$ induces a Frobenius structure
	\[
	\varphi_{\cM'} \colonequals \varphi_{\cM_2}[1/\cI_\Prism] \colon \varphi_{\Prism_\bullet \langle \cI_\Prism/p \rangle}^* \cM'[1/\cI_\Prism] \xrightarrow{\sim} \cM'[1/\cI_\Prism]
	\]
	on $\cM'$.
	This finishes the construction of $(\cM',\varphi_{\cM'})$.

	Now we check the two properties.
	For \ref{EssSurj1-original E}, we observe that the prismatic ideal $I \Prism_S\{I/p\}$ is invertible on the base change $\cM'(S)\otimes_{\Prism_S \langle I/p\rangle[1/p]} \Prism_S \{I/p\}[1/p]$ because $(\Prism_S\{I/p\},I\Prism_S\{I/p\}) = (\Prism_{S/p},I\Prism_{S/p}) = (\Prism_{S/p},p\Prism_{S/p})$.
	In particular, we have
	\[
	\cM'(S)\otimes_{\Prism_S \langle I/p\rangle[1/p]} \Prism_S \{I/p\}[1/p]\simeq \cM_2(S)\otimes_{\Prism_S \langle I/p\rangle[1/p]} \Prism_S \{I/p\}[1/p],
	\]
	which is further isomorphic to $\varphi_{\Prism_S\{I/p\}}^*\cM_1(S)[1/p]$ and $\varphi_{\rA_{\crys}(S)}^*\cE(\rA_{\crys}(S))[1/p]$ by the construction of $\cM_2$ and $\cM_1$, respectively.
	Composing with the Frobenius isomorphism $\varphi^*\cE[1/p]\simeq \cE[1/p]$ evaluated at $\rA_{\crys}(S)$ leads to the isomorphism from \ref{EssSurj1-original E}.
	In the setting of \ref{EssSurj1-ass to T}, the completed localization of $\cM'$ at $V(\cI_\Prism)$ evaluated at a perfectoid algebra $S \in X^w_\qrsp$ identifies by construction with 
	\begin{align*}
		\Fil^0 \bigl( \cE(\rA_\crys(S))[1/p]\otimes_{\rB^+_\crys(S),\tilde{\varphi}} \Prism_S[1/p]^\wedge_I \bigr) &\simeq \Fil^0(\BB_\crys(\cE)(S[1/p]) \otimes \BB_\dR\bigl(S[1/p])\bigr) \\
		\simeq \Fil^0(T \otimes_{\widehat{\ZZ}_p} \BB_\dR)(S[1/p]) &\simeq (T \otimes_{\widehat{\ZZ}_p} \BB^+_\dR)(S[1/p]),
	\end{align*}
	where the second isomorphism comes from \cref{crys-de-Rham-fil}.
	\end{proof}
\begin{remark}\label{EssSurj1-crys-comp}
    Let $T$ be a $\widehat{\ZZ}_p$-crystalline local system on $X_\eta$ and let $\bigl(\cE, \varphi_{\cE}, \Fil^\bullet(E)\bigr)$ be its associated filtered $F$-isocrystal.
    Let $S\in X^w_\qrsp$.
	Then one has a natural isomorphism of $\Prism_{S,\perf} \{ I/p\}[1/\mu]$-modules
	\[
	T(S[1/p]) \otimes_{\widehat{\ZZ}_p(S[1/p])} \Prism_{S,\perf} \{ I/p\} [1/\mu] \simeq \cE(\Prism_{S,\perf} \{ I/p\}) [1/\mu],
	\]
	which is equivariant with respect to the $F$-structures on both sides.
	In particular, base change along the Frobenius morphism $\tilde{\varphi} \colon \Prism_{\bullet,\perf} \{ I/p \}[1/\mu] \to \Prism_{\bullet,\perf} \{ \varphi(I)/p \}[1/\varphi(\mu)] \to \Prism_{\bullet,\perf} \langle I/p\rangle[1/\varphi(\mu)]$ as before gives an equivariant isomorphism
	\[
	T(S[1/p]) \otimes_{\widehat{\ZZ}_p(S[1/p])} \Prism_{S,\perf} \langle I/p\rangle [1/\varphi(\mu)] \simeq \cE(\Prism_{S,\perf} \{ I/p\})\otimes_{\tilde{\varphi}} \Prism_{S,\perf} \langle I/p\rangle [1/\varphi(\mu)].
	\]
\end{remark}
\begin{remark}\label{EssSurj1-ext}
	Exactly as in \cite[Rem.~6.6]{BS21}, by taking Frobenius pullbacks and gluing in the Hodge-Tate lattice at $V(\cI_\Prism)$, a prismatic $F$-crystal $(\cM,\varphi_\cM)$ over $\Prism_\bullet\langle \cI_\Prism/p\rangle[1/p]$ extends uniquely to a prismatic $F$-crystal over $\Prism_\bullet \langle \varphi^n(\cI_\Prism)/p\rangle [1/p]$ for all $n\in \NN$.
	Explicitly, given $n\in \NN$ and $\cM\in \Vect^\varphi(X_\qrsp, \Prism_\bullet \langle \cI_\Prism/p \rangle[1/p])$, we apply Beauville--Laszlo gluing over $\Spec(\Prism_\bullet\langle \varphi^n(\cI_\Prism)/p \rangle[1/p])$ as below:
	\begin{itemize}
		\item over the open subset $\Spec\bigl(\Prism_\bullet\langle \varphi^n(\cI_\Prism)/p \rangle[1/p]\bigr)\setminus V\bigl(\cI_\Prism \dotsm \varphi^{n-1}(\cI_\Prism)\bigr)$, take the restriction of $(\varphi^{n,*} \cM)$
		\item at each $V(\varphi^{i}(\cI_\Prism))$ for $0\leq i\leq n-1$, the completed localization is modified by $\varphi^{i,*}\cM^\wedge_{\cI_\Prism}$.
	\end{itemize}
    We refer the reader to \cite[Rem.~6.6]{BS21} for more details on this construction.
\end{remark}
\begin{remark}\label{philosophy}
    Philosophically, \cref{EssSurj1} and \cref{EssSurj1-ext} can be thought of as follows:
    The $\Prism_\bullet \langle \cI_\Prism/p \rangle[1/p]$-module $\cM'$ from \cref{EssSurj1} is a ``sheaf on the analytic disk $\{ \abs{I} \le \abs{p} \neq 0 \}$,''
    which is obtained from the sheaf $\cM_2$ by modifying at $V(I)$ with the Hodge--Tate lattice. 
    Moreover, the pullback $\varphi^*\cM'$ considered in \cref{EssSurj1-ext} is a ``sheaf on the bigger analytic disk $\{ \abs{I} \le \abs{p^{1/p}} \neq 0 \}$;
    the Frobenius isomorphism $\varphi_\cM \colon \varphi^*(\cM[1/\varphi^{-1}(I)]) \simeq (\varphi^*\cM)[1/I] \xrightarrow{\sim} \cM[1/I]$ guarantees that the restriction of $\varphi^*\cM'$ to the original disk $\{ \abs{I} \le \abs{p} \neq 0 \}$ agrees with $\cM'$ except at the locus $V(I)$, which is  further modified with the Hodge--Tate lattice.
    Continuing in this manner, $\cM'$ can be extended to a ``sheaf on the entire open analytic disk $\{ \abs{p} \neq 0 \}$.''
    See Figure \ref{extend-open-figure} for a pictorial description.
	\begin{figure}
	\begin{tikzpicture}[auto]
        \draw[->,very thick] (0,0) -- (0,5);
        \draw (-1,2.5) node {$V([p^\flat])$};
        \draw[->,very thick] (0,0) -- (5,0);
        \draw (2.5,-0.5) node {$V(p)$};
        \draw[dotted,very thick,color=red] (65:3) arc [start angle=55, end angle=45,radius=3];
        \draw[thick,color=red] (0,0) -- (45:5);
        \draw[color=red] (32:7.2) node {$V(\varphi^{-1}(I))=V(\xi)$, de Rham};
        \draw[dotted,very thick,color=blue] (18:3) arc [start angle=25, end angle=15,radius=3];
        \draw[thick,color=blue] (0,0) -- (35:5);
        \draw[color=blue] (25:7.2) node {$V(\tilde{\xi})=V(I)$, Hodge--Tate};
        \draw[thick,color=blue] (0,0) -- (25:5);
        \draw[color=blue] (19:7) node {$V(\varphi(I))$, Hodge--Tate};
        \draw[<-] (60:4.5) arc [start angle=60, end angle=15,radius=4.5];
        \draw (40:4.25) node {$\varphi$};
        \draw[dashed] (0,5) arc [start angle=90, end angle=0, radius=5];
    \end{tikzpicture}
    \caption{A cartoon of $\Spa \Prism_S$. The sheaf $\varphi^{n,*}\cM'$ is defined on the area between the vertical coordinate axis and the line $V(\varphi^{n}(I))$. Its completed localizations at the red lines are the de Rham lattice and the completed localizations at the blue lines the Hodge--Tate lattice. Every time the sheaf is pulled back via $\varphi$, everything is shifted down one line. The red line $V(\xi)$ lands on the blue line $V(\tilde{\xi})$ and Beauville--Laszlo gluing happens at $V(\tilde{\xi})$.}
    \label{extend-open-figure}
    \end{figure}
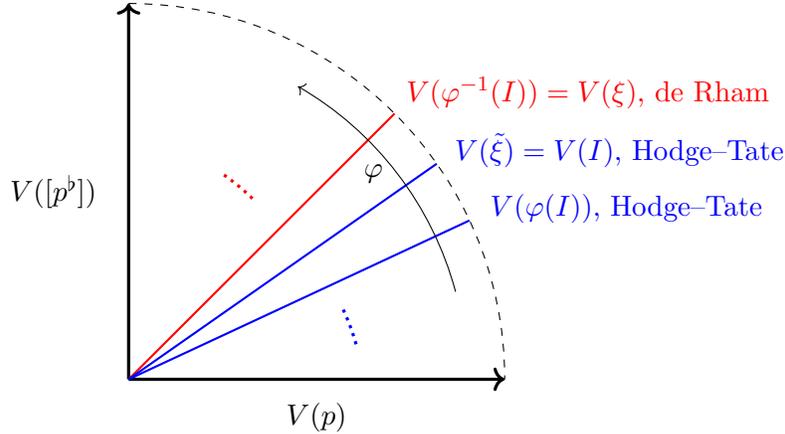
\end{remark}
In the remainder of this section, we extend the $F$-crystal on the open unit disk from \cref{philosophy} to the entire analytic locus, yielding the desired analytic prismatic $F$-crystal.
As a first step in this direction, we perform the construction on $S\in X^w_\qrsp$, using the crystalline local system and its associated $F$-isocrystal.
\begin{theorem}\label{EssSurj2}
	One can associate with each $S\in X^w_\qrsp$ an analytic prismatic $F$-crystal $\cM_S$ in $\Vect^\varphi(\Spec(\Prism_{S,\perf})\setminus V(p,I))$ such that:
	\begin{enumerate}[label=\upshape{(\roman*)},leftmargin=*]
		\item\label{EssSurj2-crystal} The assignment $S \mapsto \cM_S$ is functorial. 
		That is, for any arrow $f\colon S_1 \to S_2$ in $X^w_\qrsp$, there is an isomorphism of vector bundles over $\Spec(\Prism_{S_2,\perf})\setminus V(p,I)$
		\[
		\beta_f\colon f^{*}_{\Spec(\Prism_{\bullet,\perf})\setminus V(p,I)}\cM_{S_1} \longrightarrow \cM_{S_2},
		\]
		which satisfies the cocycle condition for any composition of arrows.
		\item\label{EssSurj2-compatibility} There is a natural equivalence
		\[
		X^w_\qrsp \ni S \longmapsto \left( \cM_S\langle I/p\rangle[1/p] \simeq \cM'(S) \otimes_{\Prism_S} \Prism_{S,\perf} \right)
		\]
		compatible with \ref{EssSurj2-crystal}, where $\cM'\in \Vect^\varphi(X_\qrsp, \Prism_\bullet \langle I/p\rangle[1/p])$ is the prismatic $F$-crystal from \cref{EssSurj1}.
	\end{enumerate}
\end{theorem}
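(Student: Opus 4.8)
The plan is to construct $\cM_S$ by gluing two pieces over a cover of the analytic locus $\cY\colonequals\Spec(\Prism_{S,\perf})\setminus V(p,I)$, in the spirit of the case $X=\Spf(\cO_C)$, $S=\cO_C$ recalled in the introduction. Fix $S\in X^w_\qrsp$ together with a factorization $\Spf(S)\to\Spf(S')\to X$ as in \cref{abs-int-closed-qrsp}, so that the restriction of every $\ZZ_p$-local system to $\Spf(S')_\eta$ is trivial; write $\Lambda\colonequals T(\Spf(S')_\eta)$ for the resulting finite free $\ZZ_p$-module, $A\colonequals\Prism_{S,\perf}$ (a perfect prism, so $A\simeq\rA_{\inf}(R^+)$ for a perfectoid $R^+$; by Kedlaya one may equivalently work with vector bundles on the analytic adic space $\Spa(A,A)\setminus V(p,I)$, on which the ``radius'' regions below become honest rational opens), and $I=(\tilde\xi)$.

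First I would produce the two pieces. For the \emph{crystalline piece}, base change the prismatic $F$-crystal $\cM'$ of \cref{EssSurj1} along $\Prism_\bullet\to\Prism_{S,\perf}$ (using $\Prism_{S'}\to\Prism_S\to\Prism_{S,\perf}$) and extend it across Frobenius twists via \cref{EssSurj1-ext}: as in \cref{philosophy}, the pullbacks $\varphi^{n,*}\cM'$, glued by Beauville--Laszlo at the loci $V(\varphi^i(I))$, assemble into an $F$-crystal $\cM^{\mathrm{dR}}_S$ defined over the region $\cD\colonequals\bigcup_{n\ge 0}\Spec(\Prism_{S,\perf}\langle\varphi^n(I)/p\rangle[1/p])\subseteq\cY$ (the ``open unit disk'', containing all the modification loci $V(\varphi^n(I))$), whose $n=0$ layer over $\Spec(\Prism_{S,\perf}\langle I/p\rangle[1/p])$ is $\cM'(S)\otimes_{\Prism_S}\Prism_{S,\perf}$. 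For the \emph{\'etale piece}, over a neighborhood $\cD'$ of $V(p)\cap\cY$ chosen so that $\cD\cup\cD'=\cY$ (on $\cD'$ the element $\tilde\xi$ is already invertible, since $V(p,\tilde\xi)=V(p,[p^\flat])$ is removed from $\cY$), take the trivial bundle $\cM^{\mathrm{\acute et}}_S\colonequals\Lambda\otimes_{\ZZ_p}\cO_{\cD'}$, with $\varphi$ acting through the structure sheaf.

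On the overlap $\cD\cap\cD'$ the modifications disappear (as $\tilde\xi$ is invertible there), and the relevant localization--completion of $\Prism_{S,\perf}$ receives the crystalline period ring $\rB_\crys$: over the $\varphi^0$-region of $\cD\cap\cD'$ the desired identification $\cM^{\mathrm{dR}}_S\xrightarrow{\sim}\cM^{\mathrm{\acute et}}_S$ is precisely (a base change of) the $F$-equivariant isomorphism of \cref{EssSurj1-crys-comp}, which combines \cref{EssSurj1-original E}, the Frobenius twist along $\tilde\varphi$, and the crystalline comparison $\vartheta\colon\BB_\crys(\cE)\xrightarrow{\sim}\BB_\crys\otimes_{\widehat{\ZZ}_p}T$ supplied by the crystallinity of $T$ (\cref{crys loc def}); applying $\varphi^{n,*}$ spreads it over the remaining Frobenius-twisted regions of the overlap in a compatible way. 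Since the cover $\{\cD,\cD'\}$ has two members the cocycle condition is vacuous, so the pieces glue to $\cM_S\in\Vect(\cY)$; because the Frobenius of the perfect prism $A$ restricts to a self-map of $\cY$ (\cref{Frobenius on open}) and the gluing isomorphism matches $F$-structures, $\cM_S$ acquires an isomorphism $\varphi_{\cM_S}\colon\varphi^*\cM_S[1/I]\xrightarrow{\sim}\cM_S[1/I]$, i.e.\ $\cM_S\in\Vect^\varphi(\Spec(\Prism_{S,\perf})\setminus V(p,I))$. Property \cref{EssSurj2-crystal} then follows because all of the ingredients (the perfectoid covers, hence the prisms $\Prism_{\bullet,\perf}$; the module $\Lambda$; the crystal $\cM'$; the morphism of pro-\'etale sheaves $\vartheta$) are pullback-compatible, so the gluing data are too, yielding the transition isomorphisms $\beta_f$ and the cocycle identity for $\beta$; and property \cref{EssSurj2-compatibility} is immediate, since $\cM_S$ over $\Spec(\Prism_{S,\perf}\langle I/p\rangle[1/p])\subseteq\cD$ is by construction the $n=0$ layer $\cM'(S)\otimes_{\Prism_S}\Prism_{S,\perf}$ of $\cM^{\mathrm{dR}}_S$, and the crystal property of $\cM'$ makes this identification compatible with the $\beta_f$.

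The hard part is the gluing step. One has to pin down the open cover $\{\cD,\cD'\}$ of the analytic locus and the precise shape of the overlap, recognizing $\rB_\crys$ -- and $\rB^+_\dR$ at the loci $V(\varphi^n(I))$ -- inside the appropriate localizations and completions of $\Prism_{S,\perf}$, and then verify two points: that the gluing isomorphism is genuinely Frobenius-equivariant, which rests on the $\varphi$-equivariance built into $\vartheta$ and \cref{EssSurj1-crys-comp}; and that the iterated Hodge--Tate/de Rham modifications used in \cref{EssSurj1-ext} to extend $\cM^{\mathrm{dR}}_S$ across the open unit disk are internally consistent, so that the two pieces really do glue to a vector bundle. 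This last consistency ultimately reduces to the compatibility of the crystalline and de Rham comparisons for the fixed crystalline local system $T$, which is available through \cref{crys-de-Rham-fil} together with \cref{EssSurj1-ass to T}.
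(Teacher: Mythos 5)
Your decomposition differs from the paper's. The paper glues a crystalline piece $M'=\cM'(S_\perf)$ that lives only on $\cY_{[1/p,\infty]}$ (the single disk $\Spec\Prism_{S,\perf}\langle I/p\rangle[1/p]$, \emph{not} the full open unit disk) together with an \'etale piece $N'$ on $\cY_{[0,\infty)}$ that is \emph{not} the trivial bundle $T\otimes\cO$ but its Beauville--Laszlo modification along $V(\mu)=\bigcup_{n\ge1}V(\varphi^{-n}(I))$ by the lattice $(\cM')^\wedge_\mu$. You instead extend $\cM'$ to all of $\{p\neq 0\}$ via \cref{EssSurj1-ext} and keep the \'etale piece unmodified. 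These are genuinely different decompositions of what should be the same bundle, so in principle both could work; however, your version as written has a gap.

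The problem is the sentence ``on the overlap $\cD\cap\cD'$ the modifications disappear (as $\tilde\xi$ is invertible there).'' Inverting $\tilde\xi$ removes only $V(I)=V(\tilde\xi)$; it neither removes the loci $V(\varphi^i(I))$, $i\ge 1$, where $\cM^{\mathrm{dR}}_S$ has been further modified by \cref{EssSurj1-ext}, nor --- and this is the serious issue --- the loci $V(\varphi^{-n}(I))$, $n\ge 1$, i.e.\ $V(\mu)$, where the crystalline comparison $\cM_2\simeq T\otimes\cO$ of \cref{EssSurj1-crys-comp} \emph{fails}: that isomorphism lives over $\Prism_{S,\perf}\{I/p\}[1/\mu]$, so its existence needs $\mu$ to be invertible. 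Since $V(\mu)\subset\cY_{[1/p,\infty]}$ (there $\tilde\xi\in(p)$ hence $\abs{\tilde\xi}\le\abs{p}$) while $\tilde\xi\ne 0$ on $V(\varphi^{-n}(I))$ for $n\ge 1$, the set $V(\mu)$ is contained in your overlap $\cD\cap\cD'=\{p\tilde\xi\ne 0\}$, and there $\cM^{\mathrm{dR}}_S=\cM_2$ (no modification applies) while $\cM^{\mathrm{\acute et}}_S=\Lambda\otimes\cO$; no gluing isomorphism exists between these at $V(\mu)$. The paper circumvents this precisely by building the modification by $(\cM')^\wedge_\mu$ into $N'$, so that the compatibility at $V(\mu)$ becomes automatic.

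Two possible repairs: (a) Shrink $\cD'$ to $\{\tilde\xi\mu\ne 0\}$. This still covers $V(p)\cap\cY$, because mod $p$ one has $\tilde\xi=[p]_q\equiv\mu^{p-1}$, so $V(p,\mu)=V(p,\tilde\xi)$ is already removed from $\cY$ and $\mu$ is a unit on $\{p=0\}$. On the smaller overlap the crystalline comparison supplies the identification on the open complement of the $V(\varphi^i(I))$, $i\ge1$, but you must then still argue --- and this is where \cref{crys-de-Rham-fil} and \cref{EssSurj1-ass to T} enter in an essential way, not just in passing --- that the de Rham lattices $\varphi^{i,*}(\cM')^\wedge_I$ sitting at the $V(\varphi^i(I))$, $i\ge1$, glue to the trivial lattice $\Lambda\otimes\cO^\wedge$ compatibly with this identification. (b) Follow the paper and modify the \'etale piece at $V(\mu)$. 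Either way, the claim that inverting $\tilde\xi$ makes the modifications disappear must be replaced by an argument that handles $V(\mu)$ and the positive-Frobenius modification loci separately.
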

Here $S_\perf$ denotes the perfection of the quasiregular semiperfectoid ring $S\in X^w_\qrsp$ as in \cite[\S8]{BS19}. 
As a consequence, the above defines an analytic $F$-crystal over the perfect prismatic site.
\begin{remark}
The construction of \cref{EssSurj2} makes essential use of the assumption that the restriction of the local system $\restr{T}{S_\perf}$ is trivial, which is true for any object in $X^w_\qrsp$.
This is one of the main motivations for the introduction of the subcategory $X^w_\qrsp$ inside $X_\qrsp$.
\end{remark}
\begin{remark}\label{EssSurj2-Galois}
	Note that when $S$ is itself perfectoid, $\cM_S$ is by construction an analytic prismatic $F$-crystal over $\Spf(S)$. 
	Moreover, any automorphism of the perfectoid ring $S$ over $X$ will induce an automorphism of $\cM_S$, thanks to \cref{EssSurj2}.\ref{EssSurj2-crystal}.
	In particular, when the generic fiber $\Spf(S)_\eta \to X_\eta$ is a Galois pro-\'etale cover of an open subset of $X_\eta$, the Galois group preserves the subring of integral elements and therefore induces a natural action on $\cM_S$. 
\end{remark}
\begin{proof}[{Proof of \cref{EssSurj2}}]
	Let $S\in X^w_\qrsp$ and let $\cY \colonequals \Spa(\Prism_{S,\perf}, \Prism_{S,\perf})\setminus V(p,I)$ be the analytic adic space over $\Spa(\rA_{\inf},\rA_{\inf})\setminus V(p,I)$.
	By the algebraicity of vector bundles on $\cY$ from \cite[Thm.~3.8]{Ked20}, it suffices to build an $F$-vector bundle on the adic space $\cY$.
	We choose a system of $p$-power roots $p^\flat = (p,p^{1/p},p^{1/p^2},\dotsc) \in \cO_{C^\flat}$.
	Given rational numbers $0\leq a=r_1/s_1\leq b=r_2/s_2 \leq \infty$ (where $r_i,s_i \in \NN\cup\{\infty\}$), we consider the open adic subspaces
	\begin{align*} \cY_{[a,b]} & \colonequals  \{ x \in \cY \suchthat  \abs{[p^\flat](x)}^{s_1} \le \abs{p(x)}^{r_1},\ \abs{p(x)}^{r_2} \le  \abs{[p^\flat(x)]}^{s_2} \} \subseteq \cY \quad \text{and} \\
	\cY_{[a,\infty)} & \colonequals  \{ x \in \cY \suchthat  \abs{[p^\flat](x)}^{s_1} \le \abs{p(x)}^{r_1},\ \abs{[p^\flat(x)]} \neq 0 \} \subseteq \cY.
	\end{align*}
	For example, $\cY_{[0,\infty)}$ is the open subset away from $\{[p^\flat] =0\}$, which is equal to the union of subsets $\cY_{[0,n]}$ for $n\in \NN$.
	Moreover, since $\Prism_{S,\perf}$ carries the $(p,[p^\flat])$-adic topology, each $\cY_{[a,b]}$ is a rational subspace of the pre-adic space $\Spa(\Prism_{S,\perf},\Prism_{S,\perf})$ and is thus affinoid by \cite[Prop.\ 1.3]{Hub94}.
	In the special case when $[a,b] = [1/p,\infty]$, we denote by $B_{[1/p,\infty]}$ the global sections of the rational structure sheaf over $\cY_{[1/p, \infty]}$, which by construction is
	\[ B_{[1/p,\infty]} \colonequals \Prism_{S,\perf}\langle I/p \rangle[1/p] \simeq \Prism_{S,\perf}\langle [p^{\flat,p}]/p \rangle[1/p]. \]

	In the following, let $\cY^\circ$ be the open locus $\cY_{[1/p,\infty)}\backslash V(\mu)= \cY_{[1/p,\infty)}\backslash \bigcup_{n>0} V(\varphi^{-n}(I))$.\footnote{
	The fact that the vanishing locus of $V(\mu)$ in $\cY_{[0,\infty)}$ is the union $\bigcup_{n>0} V(\varphi^{-n}(I))$ follows from the intersection formula $\bigcap_r \frac{\mu}{\varphi^{-r}(\mu)}\rA_{\inf}= \mu \rA_{\inf}$ in \cite[Lem.~3.23]{BMS18}:
	for fixed $s\in \NN$ and $m \gg 0$, the ideal $\varphi^{-m}(I)$ is invertible in $\cO_{\cY_{[0,s]}}$, so the sequence of ideals $\frac{\mu}{\varphi^{-r}(\mu)}\cdot \cO_{\cY_{[0,s]}}= \varphi^{-1}(I)\dotsm \varphi^{-r}(I) \cdot \cO_{\cY_{[0,s]}}$ eventually stabilizes for large enough $r$.}
	The idea of the construction is to glue the following two $F$-vector bundles along $\cY_{[1/p,\infty)}$:
	\begin{enumerate}[label=\upshape{(\alph*)}]
		\item\label{EssSurj2-obj1} the $F$-vector bundle $M'$ on $\cY_{[1/p,\infty]}$ defined by the prismatic $F$-crystal $\cM'$ from \cref{EssSurj1} and
		\item\label{EssSurj2-obj2} the $F$-vector bundle $N'$ on $\cY_{[0,\infty)}$ obtained by modifying $\restr{T}{\Spf(S_\perf)_\eta} \otimes \cO_{\cY_{[0,\infty)}}$ at the closed subspace $V(\mu)=\bigcup_{n>0} V(\varphi^{-n}(I))$, 
		using $(\cM')^\wedge_{\mu}$.
	\end{enumerate}
	Note that the loci $\cY_{[1/p,\infty]}$, $\cY_{[0,\infty)}$, $\cY_{[1/p,\infty)}$, and $\bigcup_{n>0} V(\varphi^{-n}(I))$ all admit Frobenius endomorphisms (away from $V(I)$),
	so it makes sense to talk about $F$-vector bundles over them.
	Our input will be the Frobenius equivariant isomorphism 
	\begin{equation}\label{EssSurj2-gluing-I}
	    	\cM'(S_\perf) \otimes_{\cO_{\cY_{[1/p,\infty]}}} \cO_{\cY^\circ} [1/I]
	    	 \simeq \cE(\Prism_{S,\perf} \{ I/p\})[1/p]\otimes \cO_{\cY^\circ} [1/I]
	    	\simeq \restr{T}{\Spf(S_\perf)_\eta} \otimes_{\ZZ_p} \cO_{\cY^\circ} [1/I],
    \end{equation}
    in which the first and the second identifications follow respectively by taking the base changes of \cref{EssSurj1} and \cref{EssSurj1-crys-comp} along the maps of rings
    \[
	    \Prism_{S_\perf}\{I/p\}[1/\mu] \xrightarrow{\tilde{\varphi}}  \Prism_{S_\perf}\langle I/p\rangle[1/\varphi(\mu)] \rightarrow \cO_{\cY^\circ}[1/I].
	\]
 
	Now we start the construction.
    For \ref{EssSurj2-obj1}, it suffices to observe that $\cM'(S_\perf)$ defines a finite projective module $M'$ with Frobenius structure over $B_{[1/p,\infty]}$, which is equivalent to an $F$-vector bundle over the affinoid space $\cY_{[1/p,\infty]}$.
    For \ref{EssSurj2-obj2}, we start with the finite free vector bundle $N \colonequals  \restr{T}{\Spf(S_\perf)_\eta} \otimes \cO_{\cY_{[0,\infty)}}$  over $\cY_{[0,\infty)}$.
	To get $N'$, we use Beauville--Laszlo gluing to modify $N$ at $V(\mu)$ with the lattice $M^{\prime,\wedge}_{\mu}$, via the isomorphism 
	\[
		M^{\prime,\wedge}_{\mu}[1/\mu] \simeq \restr{T}{\Spf(S_\perf)_\eta}\otimes_{\ZZ_p} \cO^\wedge_{\cY_{[1/p,\infty)},\mu}[1/\mu],
	\]
	obtained as the base change of (\ref{EssSurj2-gluing-I}) along the formal completion map $\cO_{\cY^\circ}[1/I] \to \cO^\wedge_{\cY_{[1/p,\infty)}, \mu}[1/\mu]$ (where $I$ is automatically inverted in the latter).
	Moreover, as the isomorphism (\ref{EssSurj2-gluing-I}) is Frobenius equivariant, we obtain a natural Frobenius structure $\varphi_{N'}$ of $N'$ by gluing the Frobenius structure on the structure sheaf on $\cY_{[0,\infty)}\backslash V(\mu)$ with the base change of $\varphi^* M'[1/I] \xrightarrow{\sim} M'[1/I]$ along $\cO_{\cY_{[1/p,\infty]}\backslash V(I)} \to \cO^\wedge_{\cY_{[1/p, \infty)},\mu}$ on the completed localization at $V(\mu)$.
		
	Finally, to show that $M'$ and $N'$ from \ref{EssSurj2-obj1} and \ref{EssSurj2-obj2} indeed glue and their Frobenius structures are compatible, we again look at the their restrictions on $\cY^\circ=\cY_{[1/p,\infty)}\backslash V(\mu)$ and the completed localization at $V(\mu)=\bigcup_{n>0} V(\varphi^{-n}(I))$, respectively, within the overlap $\cY_{[1/p,\infty)}$.
	Since the restriction $\restr{(N',\varphi_{N'})}{\cY^\circ}$ is equal to $\restr{T}{\Spf(S_\perf)_\eta}\otimes_{\ZZ_p} \cO_{\cY^\circ}$ with the Frobenius structure coming from the structure sheaf, the compatibility of $(M',\varphi_{M'})$ and $(N',\varphi_{N'})$ on $\cY^\circ$ follows from (\ref{EssSurj2-gluing-I}).
	Over $V(\mu) \subset \cY_{[1/p,\infty)}$, as the formal completion of $(N',\varphi_{N'})$ is given by the Frobenius equivariant base change of $(M', \varphi_{M'} \colon \varphi^* M'[1/I] \simeq M'[1/I])$ along $\cO_{\cY_{[1/p,\infty]}}[1/I] \to \cO^\wedge_{\cY_{[1/p, \infty)},\mu}$, it is compatible with the restriction of $(M',\varphi_{M'})$ as well.
	This finishes the construction of $\cM_S$ in part \ref{EssSurj2-crystal}.
	The compatibility in \ref{EssSurj2-compatibility} follows automatically from the construction, since the restriction of $\cM_S$ to $\cY_{[1/p, \infty]}$ is by definition $\cM'(S_\perf)$, as mentioned in \ref{EssSurj2-obj1}.
\end{proof}
\begin{remark}\label{EssSurj2-recover-T}
    If we complete the construction above at $V(p)$, then we get for each perfectoid algebra $S\in X^w_\qrsp$ that
    \[
    (\cM_S \otimes_{\cO_\cY} \cO_\cY[1/I] )^\wedge_p = (T\otimes \cO_{\cY}[1/I] )^\wedge_p = T\otimes \Prism_{S}[1/I]^\wedge_p.
    \]
    In particular, by \cite[Cor.~3.8]{BS21} and \cref{EssSurj2}.\ref{EssSurj2-crystal}, the \'etale realization of $\cM$ descends to the original crystalline local system $T$.
    We refer the reader to \cref{et-prism-functor} for the details of the latter correspondence.
\end{remark}
    
We now prove the boundedness of the descent data at the boundary of the open unit disk in the Fargues--Fontaine curve.
As a preparation, we use the Beilinson fiber square as in \cite[\S~6.3]{BS21} to study the structure of the initial prism for a quasiregular semiperfectoid algebra.
\begin{lemma}\label{Beilinson}
	Let $X=\Spf(R)$ be a smooth $p$-adic formal $\cO_K$-scheme, let $S$ be a perfectoid $\cO_C$-algebra in $X^w_\qrsp$, and let $S'$ be the $p$-completed tensor product $S\widehat{\otimes}_R S$.
	Write $q_1, q_2\in \Prism_{S'}$ for the image of $q\in \Prism_{\cO_C}$ along the two structure maps $\cO_C \to S$.
	\begin{enumerate}[label=\upshape{(\roman*)},leftmargin=*]
		\item\label{Beilinson-isom} 
		The structure maps 
		\[
		\Prism_{S'}[1/(p(q_1-1)(q_2-1))]^{\varphi=1} \longrightarrow \Prism_{S'}\langle \varphi^n(I)/p \rangle[1/(p(q_1-1)(q_2-1))]^{\varphi=1} 
		\]
		are isomorphisms on cohomology in degree $0$ for all $n \in \NN$.
		\item\label{Beilinson-inv} The element $(q_1-1)(q_2-1)$ is invertible in $\Prism_{S'}\langle p/(q_1-1)^p \rangle[1/p]$.
		\item\label{Beilinson-ses} The natural maps of rings give rise to a short exact sequence
		\[
		0 \longrightarrow \Prism_{S'} \longrightarrow \Prism_{S'} \langle (q_1-1)^p/p\rangle \oplus \Prism_{S'} \langle p/(q_1-1)^p\rangle \longrightarrow \Prism_{S'} \langle p/(q_1-1)^p,(q_1-1)^p/p\rangle \longrightarrow 0.
		\]
	\end{enumerate}
\end{lemma}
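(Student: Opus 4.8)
The plan is to follow the template of \cite[\S~6.3]{BS21}, which rests on the Beilinson fiber square of \cite{AMMN22}. First I would record the structural input. Since $S'=S\widehat{\otimes}_R S$ is quasiregular semiperfectoid (by the proof of \cite[Lem.~4.27]{BMS19}, as in the proof of \cref{abs-int-closed-qrsp local sys}), the ring $\Prism_{S'}$ is its initial prism, and $\rA_{\inf}(\cO_C)=\Prism_{\cO_C}\to\Prism_{S'}$ is $(p,\tilde{\xi})$-completely faithfully flat by \cite[Prop.~7.10]{BS19}; hence $\Prism_{S'}$ is $p$-torsionfree and $\tilde{\xi}$-torsionfree. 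Moreover $S$ is perfectoid and $p$-completely flat over $R$ (every arrow in the factorization defining $X^w_\qrsp$ is quasi-syntomic), so the images $\mu_i\colonequals q_i-1$ of $\mu$ are nonzerodivisors on $\Prism_{S'}$ and on $\Prism_{S'}/p=S'$. Finally, the multiplication map $m\colon\Prism_{S'}\to\Prism_S$ sends both $q_1$ and $q_2$ to $q$, so $q_1q_2^{-1}-1\in\ker(\theta_{S'})$; I will also use the congruence $\tilde{\xi}\equiv\mu^{p-1}\pmod p$, which follows from $\mu\tilde{\xi}=q^p-1\equiv(q-1)^p=\mu^p\pmod p$ together with the nonzerodivisor property of $\mu$.

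For \ref{Beilinson-ses}, the point is that the square
\[\begin{tikzcd} \Prism_{S'} \arrow[r] \arrow[d] & \Prism_{S'}\langle (q_1-1)^p/p\rangle \arrow[d] \\ \Prism_{S'}\langle p/(q_1-1)^p\rangle \arrow[r] & \Prism_{S'}\langle p/(q_1-1)^p,(q_1-1)^p/p\rangle \end{tikzcd}\]
is a derived pullback square which is, moreover, cartesian on underlying rings. The derived pullback property expresses that the two rational opens $\{\,\abs{(q_1-1)^p}\le\abs{p}\,\}$ and $\{\,\abs{p}\le\abs{(q_1-1)^p}\,\}$ cover the formal blowup of $\Spf(\Prism_{S'})$ along $(p,(q_1-1)^p)$, which is a general fact about formal blowups (cf.\ the proof of \cite[Thm.~1.16]{BS19}). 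To upgrade this to an honest short exact sequence I would reduce modulo $p$, where the sequence becomes the standard exact sequence attached to the blowup of the nonzerodivisor $q_1-1$ on $S'=\Prism_{S'}/p$ (whose relevant localizations have vanishing higher Tor), and then conclude from $p$-completeness and the absence of $p$-torsion on all four terms.

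For \ref{Beilinson-inv}, by construction $p=(q_1-1)^p\cdot\overline{p/(q_1-1)^p}$ inside $\Prism_{S'}\langle p/(q_1-1)^p\rangle$, so $(q_1-1)^p$, and therefore $q_1-1$, becomes a unit once $p$ is inverted. For the invertibility of $q_2-1$ I would exploit the relation from the first paragraph: $q_1-q_2=q_2(q_1q_2^{-1}-1)\in\ker(\theta_{S'})$, so that
\[ q_2-1=(q_1-1)-(q_1-q_2)=(q_1-1)\bigl(1-\tfrac{q_1-q_2}{q_1-1}\bigr) \]
in $\Prism_{S'}\langle p/(q_1-1)^p\rangle[1/p]$, and it suffices to see that the correction term $\tfrac{q_1-q_2}{q_1-1}$ is topologically nilpotent there. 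This I would deduce from the fact that $\ker(\theta_{S'})$ is generated by $\xi=\varphi^{-1}(\tilde{\xi})$, together with $\tilde{\xi}\equiv\mu^{p-1}\pmod p$ (and its Frobenius twists) and the divisibility $(q_1-1)^p\mid p$, which force $\tfrac{q_1-q_2}{q_1-1}$ into arbitrarily high powers of $(q_1-1)$ after inverting $p$ and completing. This step generalizes the point-base computation of \cite[\S~6.4]{BS21}; I expect it to be the main obstacle, because, in contrast to the base $\Spf\cO_K$ treated there, the elements $q_1$ and $q_2$ are genuinely distinct in $\Prism_{S'}$ and one must quantify precisely how their difference sits inside $\ker(\theta_{S'})$ relative to $q_1-1$.

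For \ref{Beilinson-isom}, I would argue as in the proof of \cite[Thm.~1.16]{BS19}. By the congruences relating $\varphi^n(\tilde{\xi})$ to powers of $\mu$ modulo $p$, the localizations $\Prism_{S'}\langle\varphi^n(\cI_\Prism)/p\rangle[1/(p\mu_1\mu_2)]$ are rational localizations of $\Prism_{S'}[1/(p\mu_1\mu_2)]$ along an increasing family of subsets, so by \ref{Beilinson-ses} the claim amounts to the statement that the $\varphi=1$ fixed points of $\Prism_{S'}[1/(p\mu_1\mu_2)]$ do not change under these localizations. This is precisely the content of the Beilinson fiber square of \cite{AMMN22} in this setting: after inverting $p$, the fiber of the comparison between $\Prism_{S'}$ and its Nygaard completion is the (suitably shifted, derived) de Rham cohomology of $S'$ over $\cO_K$, which is $\varphi$-equivariantly contractible after inverting $\mu$ by the structure of its conjugate filtration; passing to $\varphi=1$ parts yields the isomorphisms. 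Granting the estimate in \ref{Beilinson-inv}, part \ref{Beilinson-ses} is essentially formal and \ref{Beilinson-isom} is bookkeeping on top of the Beilinson square, so \ref{Beilinson-inv} is the step I expect to require the most care.
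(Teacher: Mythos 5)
The paper's own proof is essentially a citation: parts~\ref{Beilinson-inv}, \ref{Beilinson-ses}, and \ref{Beilinson-isom} with $n=0$ follow verbatim from \cite[Lem.~6.9]{BS21} with $R$ there replaced by $S'$ here (after observing, as you do, that $S'$ is $p$-torsionfree QRSP and so satisfies the hypotheses of \cite[Prop.~6.8]{BS21}); the only genuinely new content is the $n>0$ case of \ref{Beilinson-isom}, which is handled by a one-line sandwich argument. Your proposal instead attempts a from-scratch re-derivation, and this is where the problems lie.

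The central gap is in part~\ref{Beilinson-inv}. You write $q_2-1=(q_1-1)\bigl(1-\tfrac{q_1-q_2}{q_1-1}\bigr)$ and then assert that $\tfrac{q_1-q_2}{q_1-1}$ is topologically nilpotent in $\Prism_{S'}\langle p/(q_1-1)^p\rangle[1/p]$, deducing this ``from the fact that $\ker(\theta_{S'})$ is generated by $\xi$.'' That assertion is false: $S'$ is quasiregular semiperfectoid but \emph{not} perfectoid, so the kernel of $\Prism_{S'}\twoheadrightarrow S'$ is the Nygaard ideal $\Fil^1_N\Prism_{S'}$, which is strictly larger than $(\xi)$ and not principal in general; and in any case the ideal membership of $q_1q_2^{-1}-1$ in $\ker(\theta_{S'})$ does not by itself control the element $(q_1-q_2)/(q_1-1)$ up to topological nilpotence. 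More to the point, topological nilpotence of that ratio is a far stronger claim than what part~\ref{Beilinson-inv} actually asserts, which is only that $(q_1-1)(q_2-1)$ is a unit; I see no reason for the stronger claim to hold, and you give no argument for it. Finally, your framing is misleading: you describe this estimate as ``the main obstacle'' that is ``in contrast to the base $\Spf\cO_K$ treated there, [where] the elements $q_1$ and $q_2$ are genuinely distinct,'' but in \cite[\S~6.3]{BS21} the ring $S'$ is already a tensor product and $q_1\neq q_2$ there as well — that case is not simpler in this respect, and its proof of part~\ref{Beilinson-inv} (whatever it is) transfers verbatim. So the correct move is to invoke \cite[Lem.~6.9]{BS21} rather than to look for a new argument.

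You also underplay the $n>0$ case of~\ref{Beilinson-isom}, calling it ``bookkeeping.'' The paper's argument here is genuinely needed and worth recording: one has injections $\Prism_{S'}[1/(p\mu_1\mu_2)]^{\varphi=1}\hookrightarrow\Prism_{S'}\langle\varphi^n(I)/p\rangle[1/(p\mu_1\mu_2)]^{\varphi=1}\hookrightarrow\Prism_{S'}\langle I/p\rangle[1/(p\mu_1\mu_2)]^{\varphi=1}$, the composite is an isomorphism by the $n=0$ case, hence so is the first map. Your sketch via \cite[Thm.~1.16]{BS19} and ``an increasing family of subsets'' is not a substitute. (Your outline of~\ref{Beilinson-ses} via the formal blow-up square is plausible in spirit, though it elides the details that make it an honest short exact sequence rather than only a derived pullback.)
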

\begin{proof}
	The entire proof (except for \ref{Beilinson-isom} where we also allow $n > 0$) is identical to that of \cite[Lem.~6.9]{BS21}, replacing the ring $R$ there by our ring $S'$.
	Note that since $R$ is flat over $\cO_K$ and $R \to S$ is a quasi-syntomic map, the $p$-completed tensor product $S'$ is a $p$-torsionfree quasiregular semiperfectoid algebra over $\cO_C$ and hence satisfies the assumptions of \cite[Prop.~6.8]{BS21}.
	
	The only new aspect is the case $n > 0$ in \ref{Beilinson-isom}.
	For this, note that the natural maps form a sequence of inclusions\footnote{
	To see the injectivity, we note that for the Breuil--Kisin prism $(\mathfrak{S},I=(E(u))$ one has the injections $\mathfrak{S} \to \mathfrak{S}\langle \varphi^n(I)/p\rangle \to \mathfrak{S}\langle I/p \rangle$. So the case of $S'$ follows by taking the $p$-complete flat base change along $\mathfrak{S} \to \Prism_{S'}$ and a further localization.}
	\[	\Prism_{S'}[1/(p(q_1-1)(q_2-1))]^{\varphi=1} \hookrightarrow \Prism_{S'}\langle \varphi^n(I)/p \rangle[1/(p(q_1-1)(q_2-1))]^{\varphi=1} \hookrightarrow \Prism_{S'}\langle I/p \rangle[1/(p(q_1-1)(q_2-1))]^{\varphi=1}. \]
	Since the composition is an isomorphism by the case $n=0$, so must be first inclusion.
\end{proof}
Using the above lemma, we are able to extend the descent data from the locus $\{\abs{I} \leq \abs{p}\neq 0\}$ to the whole open subset $\{p\neq 0\}$.
\begin{proposition}\label{EssSurj3}
	Assume $X=\Spf(R)$ is affine over $\cO_K$.
	\begin{enumerate}[label=\upshape{(\roman*)},leftmargin=*]
		\item\label{EssSurj3-descent-data} Let $S$ be a perfectoid $\cO_C$-algebra covering $X$.
		Let $\cM\in \Vect^{\varphi}(\Spec(\Prism_S)\setminus V(I,p))$ be an analytic prismatic $F$-crystal on $S$ whose \'etale realization $T(\cM)$ from \cref{realization functors}.\ref{realization functors et} is trivializable.
		Assume that $\cM\langle I/p\rangle[1/p]$ can be descended to $\Vect^{\varphi}(X_\qrsp, \Prism_\bullet \langle \cI_\Prism/p\rangle[1/p])$.
		Then the submodule $\cM[1/p]$ can be descended to $\Vect^{\varphi}(X_\qrsp, \Prism_\bullet[1/p])$.
		\item\label{EssSurj3-compatibility} The descent process in \ref{EssSurj3-descent-data} is functorial in the following sense: 
		assume $f\colon S_1 \to S_2$ is a map of perfectoid rings in $X^w_\qrsp$ and $\cM_i\in \Vect^{\varphi}(\Spec(\Prism_{S_i})\setminus V(I,p))$ such that
		\begin{itemize}
			\item there is a pullback isomorphism $\beta_f\colon f^{*}_{\Spec(\Prism_{\bullet})\setminus V(p,I)}\cM_1 \xlongrightarrow{\sim} \cM_2$;
			\item both $\cM_i\langle I/p\rangle[1/p]$ descend to the same object of $\Vect^{\varphi}(X_\qrsp, \Prism_\bullet \langle \cI_\Prism/p\rangle[1/p])$ and the evaluations of this object at the structure maps
			\[ R \xrightarrow{\hspace*{2em}} S_1 \quad \text{and} \quad R \xrightarrow{\hspace*{2em}} S_1 \xrightarrow{\hspace*{.9em}f\hspace*{.9em}} S_2 \]
    		are compatible with $\beta_f \langle I/p\rangle[1/p]$.
		\end{itemize}
	Then the descent process in \ref{EssSurj3-descent-data} for $\cM_i[1/p]$ yields the same object and the evaluation maps of this object are compatible with $\beta_f[1/p]$.
	\end{enumerate}
\end{proposition}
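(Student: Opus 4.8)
The argument parallels \cite[\S~6.3--6.4]{BS21}, with the Beilinson fiber square of \cref{Beilinson} playing the role that Kedlaya's slope filtration theorem plays there. By the rational analogue of $(p,\cI_\Prism)$-complete flat descent for analytic prismatic $F$-crystals (\cref{an crystal descent}, \cref{prism-qrsp-an-vb}) together with \cite[Prop.~7.10, Prop.~7.11]{BS19}, descending $\cM[1/p]$ along the cover $\Spf(S)\to X=\Spf(R)$ is the same as: (1) equipping $\cM[1/p]$ with a Frobenius-equivariant isomorphism $\alpha\colon\pr_1^*(\cM[1/p])\xrightarrow{\sim}\pr_2^*(\cM[1/p])$ over $\Prism_{S'}[1/p]$, where $S'\colonequals S\widehat{\otimes}_R S$ and $\pr_1,\pr_2\colon\Prism_S\to\Prism_{S'}$ are the two structure maps; and (2) checking the cocycle condition for $\alpha$ over $\Prism_{S''}[1/p]$ with $S''\colonequals S\widehat{\otimes}_R S\widehat{\otimes}_R S$, which (since $R$ is $\cO_K$-flat and $R\to S$ is quasi-syntomic) is again a $p$-torsionfree quasiregular semiperfectoid $\cO_C$-algebra over $R$. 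Because $\cM\langle\cI_\Prism/p\rangle[1/p]$ descends by hypothesis, its descent datum gives a Frobenius-equivariant isomorphism $\alpha'$ over the completed localization $\Prism_{S'}\langle\cI_\Prism/p\rangle[1/p]$ satisfying the cocycle condition over $\Prism_{S''}\langle\cI_\Prism/p\rangle[1/p]$; what remains is to extend $\alpha'$ uniquely across $\Prism_{S'}[1/p]$ and to upgrade the cocycle condition accordingly.

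\textbf{Extending the descent datum.} I would build $\alpha$ from two pieces. First, away from $V(q_1-1)\cup V(q_2-1)$, i.e.\ over $\Prism_{S'}[1/(p(q_1-1)(q_2-1))]$: by \cref{EssSurj1-ext} applied to the object of $\Vect^\varphi\bigl(X_\qrsp,\Prism_\bullet\langle\cI_\Prism/p\rangle[1/p]\bigr)$ underlying $\cM\langle\cI_\Prism/p\rangle[1/p]$, the isomorphism $\alpha'$ extends uniquely and Frobenius-equivariantly to isomorphisms $\alpha'_n$ over $\Prism_{S'}\langle\varphi^n(\cI_\Prism)/p\rangle[1/p]$ for every $n\in\NN$, compatibly in $n$. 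Viewing $\alpha'_n$ through the internal-Hom crystal $\cN\colonequals\iHom(\pr_1^*\cM,\pr_2^*\cM)$ --- which becomes a genuine $F$-module after inverting sufficiently many Frobenius twists of $\cI_\Prism$ --- it is a $\varphi$-invariant section of $\cN$ over $\Prism_{S'}\langle\varphi^n(\cI_\Prism)/p\rangle[1/(p(q_1-1)(q_2-1))]$. By \cref{Beilinson}.\ref{Beilinson-isom} the natural map
\[ \Prism_{S'}[1/(p(q_1-1)(q_2-1))]^{\varphi=1}\longrightarrow\Prism_{S'}\langle\varphi^n(\cI_\Prism)/p\rangle[1/(p(q_1-1)(q_2-1))]^{\varphi=1} \]
is an isomorphism, and a vector-bundle argument as in \cref{fully-faithful-Fargues} promotes $\alpha'_n$ to a $\varphi$-invariant isomorphism $\alpha_0\colon\pr_1^*\cM\xrightarrow{\sim}\pr_2^*\cM$ over $\Prism_{S'}[1/(p(q_1-1)(q_2-1))]$, independent of $n$. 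Second, over $V(q_1-1)\cup V(q_2-1)$: near these loci $\cI_\Prism$ is generated by an associate of $p$ (as in the proof of \cref{Beilinson}), so this locus lies in $D(\cI_\Prism)$, where $\cM$ is the restriction of $\cM[1/\cI_\Prism]^\wedge_p\simeq T(\cM)\otimes_{\widehat{\ZZ}_p}\Prism_\bullet[1/\cI_\Prism]^\wedge_p$; since $T(\cM)$ is trivializable this object carries its canonical étale descent datum $\alpha_{\et}$, and the injectivity $\Prism_{S'}[1/\cI_\Prism]\hookrightarrow\Prism_{S'}[1/\cI_\Prism]^\wedge_p$ descends $\alpha_{\et}$ to $\Prism_{S'}[1/\cI_\Prism]$. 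On the overlap $\alpha_0$ and $\alpha_{\et}$ agree, since both restrict to $\alpha'$ over $\Prism_{S'}\langle\cI_\Prism/p\rangle[1/p]$ --- this is exactly the compatibility built into the gluing of \cref{EssSurj2} via the Frobenius-equivariant crystalline comparison \cref{EssSurj1-crys-comp}, and in any case follows from the faithfulness of the étale realization (\cref{et-real-faithful}). Patching $\alpha_0$ and $\alpha_{\et}$ over $\Spec(\Prism_{S'}[1/p])$, organized by the Beilinson fiber square \cref{Beilinson}.\ref{Beilinson-ses} and the invertibility in \cref{Beilinson}.\ref{Beilinson-inv}, yields the sought $\varphi$-equivariant isomorphism $\alpha$ over $\Prism_{S'}[1/p]$ extending $\alpha'$; it is the unique such extension.

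\textbf{Cocycle condition and functoriality.} The cocycle identity $\pr_{23}^*\alpha\circ\pr_{12}^*\alpha=\pr_{13}^*\alpha$ over $\Prism_{S''}[1/p]$ holds after restriction to $\Prism_{S''}\langle\cI_\Prism/p\rangle[1/p]$ (it is the cocycle condition for the descent of $\cM\langle\cI_\Prism/p\rangle[1/p]$) and after $p$-completely inverting $\cI_\Prism$ (it is the cocycle condition for the étale descent of $T(\cM)$); running the two-chart argument above over $S''$ --- using the analogue of \cref{Beilinson} for $\Prism_{S''}$ and the injectivity $\Prism_{S''}[1/\cI_\Prism]\hookrightarrow\Prism_{S''}[1/\cI_\Prism]^\wedge_p$ --- forces the difference of the two sides to vanish, giving the cocycle condition. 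This proves \ref{EssSurj3-descent-data}. For \ref{EssSurj3-compatibility}, let $f\colon S_1\to S_2$ be as in the statement and set $S'_i\colonequals S_i\widehat{\otimes}_R S_i$. Uniqueness of the extension shows that the two $\varphi$-equivariant isomorphisms over $\Prism_{S'_2}[1/p]$ --- one coming from $\beta_f[1/p]$ together with the descent of $\cM_1[1/p]$, the other from the direct descent of $\cM_2[1/p]$ --- coincide, because they agree over $\Prism_{S'_2}\langle\cI_\Prism/p\rangle[1/p]$ (by hypothesis) and after $p$-completely inverting $\cI_\Prism$; hence the descent process produces the same object and its evaluation maps are compatible with $\beta_f[1/p]$.

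\textbf{The main obstacle.} The crux is the patching in the second step: verifying that the ``crystalline'' isomorphism $\alpha_0$, obtained from $\varphi$-invariants via \cref{Beilinson}.\ref{Beilinson-isom}, and the ``étale'' isomorphism $\alpha_{\et}$ glue to an \emph{integral} isomorphism over $\Prism_{S'}[1/p]$, i.e.\ that the descent datum develops no poles along $V(q_1-1)\cup V(q_2-1)$. This is precisely where the Beilinson fiber square \cref{Beilinson}.\ref{Beilinson-ses} and the invertibility of $(q_1-1)(q_2-1)$ in $\Prism_{S'}\langle p/(q_1-1)^p\rangle[1/p]$ (\cref{Beilinson}.\ref{Beilinson-inv}) substitute for Kedlaya's slope theorem in \cite[\S~6.4]{BS21}; carrying it through requires tracking the relevant lattices across the two charts $\Prism_{S'}\langle(q_1-1)^p/p\rangle$ and $\Prism_{S'}\langle p/(q_1-1)^p\rangle$ of the Beilinson square and checking, in the spirit of the lattice bookkeeping of \cref{fully-faithful-Fargues} and \cref{pris to crys lem}, that the glued isomorphism restricts correctly on each.
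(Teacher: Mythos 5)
Your overall strategy is the right one (extend the descent datum across the $\langle I/p\rangle$-locus using \cref{Beilinson}), and your main steps — extending $\alpha'$ to $\alpha'_n$ via \cref{EssSurj1-ext}, producing a Frobenius-invariant matrix over $\Prism_{S'}[1/(p(q_1-1)(q_2-1))]^{\varphi=1}$ via \cref{Beilinson}.\ref{Beilinson-isom} and the triviality of $T(\cM)$, and gluing via \cref{Beilinson}.\ref{Beilinson-ses} — all line up with the paper's argument. But there is a genuine gap in your second chart.

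You want to cover $\Spec(\Prism_{S'}[1/p])$ and to place the \'etale descent datum $\alpha_\et$ on the chart "near $V(q_1-1)\cup V(q_2-1)$," which in the Beilinson square is $\Spec(\Prism_{S'}\langle(q_1-1)^p/p\rangle[1/p])$. However, $\alpha_\et$ lives over $\Prism_{S'}[1/\cI_\Prism]^\wedge_p$, and the ideal $\cI_\Prism$ is \emph{not} invertible on that chart: the locus $V(\cI_\Prism)$ (e.g.\ $q_1=\zeta_p$, where $\abs{\zeta_p-1}=\abs{p}^{1/(p-1)}<\abs{p}^{1/p}$) lies inside it, so there is no ring map $\Prism_{S'}[1/\cI_\Prism]^\wedge_p\to\Prism_{S'}\langle(q_1-1)^p/p\rangle[1/p]$ along which to base change $\alpha_\et$. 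The set-theoretic observation that $V(q_1-1)\cup V(q_2-1)\subset D(\cI_\Prism)$ is correct but does not produce a ring map from the chart, and the remark "injectivity of $\Prism_{S'}[1/\cI_\Prism]\hookrightarrow\Prism_{S'}[1/\cI_\Prism]^\wedge_p$ descends $\alpha_\et$" is not a valid descent statement — injectivity of a ring map does not push a morphism of modules back to the subring without an argument that it preserves the relevant sub-lattices, which is exactly the nontrivial content one would need to prove. What the paper does on that chart instead is simply base change the extended datum $\alpha_{\langle\varphi(I)/p\rangle}$ along the ring map $\Prism_{S'}\langle\varphi(I)/p\rangle[1/p]\to\Prism_{S'}\langle(q_1-1)^p/p\rangle[1/p]$, whose existence is the key elementary fact $\varphi(\tilde\xi)/p\in\Prism_{S'}\langle(q_1-1)^p/p\rangle$; your proposal never invokes this and hence has no well-defined map on that chart to glue against.

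A second, smaller point: you spend a separate paragraph verifying the cocycle condition, but this is free in the paper's organization. The paper keeps the descent datum $\alpha_{\langle\varphi(I)/p\rangle}$ (which already satisfies the cocycle condition over $\Prism_{S''}\langle\varphi(I)/p\rangle[1/p]$) and merely shows that it carries the submodule $p_1^*\cM[1/p]$ into $p_2^*\cM[1/p]$; since $\Prism_{S''}[1/p]\hookrightarrow\Prism_{S''}\langle\varphi(I)/p\rangle[1/p]$ is injective, the restricted $\alpha$ automatically inherits the cocycle identity. Framing the argument as "extend the existing datum by showing it preserves a lattice" rather than "construct a new datum by gluing and then check cocycle" both closes the gap above and removes the need for your last verification.
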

\begin{proof}
    We follow the proof outlined in \cite[Prop.~6.10]{BS21} and extend it to the higher-dimensional case.
	First, note that for any quasi-syntomic $p$-adic formal scheme $Y$, a prismatic $F$-crystal on $Y_\qrsp$ over $\Prism_\bullet \langle \cI_\Prism/p\rangle[1/p]$ extends uniquely to one over $\Prism_\bullet \langle \varphi(\cI_\Prism)/p\rangle[1/p]$ by \cref{EssSurj1-ext}.
	In particular, a descent datum for $\cM\langle I/p\rangle[1/p]$ uniquely extends to one for $\cM \langle \varphi(I)/p\rangle[1/p]$.
	
	Let $S'$ be the $p$-completed tensor product $S\widehat{\otimes}_R S$.
	It is a quasiregular semiperfectoid $R$-algebra.
	For $i=1,2$, let $p_i\colon \Spec(\Prism_{S'}) \to \Spec(\Prism_S)$ be the two projection maps.
	Then the extended descent datum gives an isomorphism of $\Prism_{S'} \langle \varphi(I)/p\rangle[1/p]$-modules
	\[
	\alpha_{\langle \varphi(I)/p\rangle} \colon  p_1^* \cM\langle \varphi(I)/p\rangle[1/p] \longrightarrow p_2^* \cM \langle \varphi(I)/p\rangle[1/p],
	\]
	satisfying the cocycle condition for descent data.
	As the natural ring map $\Prism_{S'}[1/p] \to \Prism_{S'} \langle \varphi(I)/p\rangle[1/p]$ is injective for $S'\in X_\qrsp$, we want to show that $\alpha_{\langle \varphi(I)/p\rangle}$ induces an isomorphism of the submodules $p_1^*\cM[1/p]$ and $p_2^*\cM[1/p]$, which means that the descent datum extends to one for $\Prism_S[1/p]$.
	In fact, by contemplating the inverse, it suffices to prove that $\alpha_{\langle \varphi(I)/p\rangle}$ sends $p_1^*\cM[1/p]$ into $p_2^*\cM[1/p]$.

	The inclusion $\alpha_{\langle \varphi(I)/p\rangle} \bigl(p_1^*\cM[1/p]\bigr) \subseteq p_2^*\cM[1/p]$ will prove \ref{EssSurj3-compatibility} as well:
	Indeed, let $S_1 \to S_2$ be a map of perfectoid rings together with $\cM_i\in \Vect^{\varphi}(\Spec(\Prism_{S_i})\setminus V(I,p))$ and a pullback isomorphism $\beta_f$ satisfying the assumption of \ref{EssSurj3-compatibility}.
	Then the pullback of the descent isomorphism $\alpha_{1,\langle \varphi(I)/p\rangle}$ along $f\widehat{\otimes}_R f$ preserves the submodules $p^*_i\cM_2[1/p]$ and is identified with $\alpha_{2,\langle \varphi(I)/p \rangle}$ via the isomorphism $\beta_f$.
	Thus, the compatibility of the descent isomorphism $\alpha_{i,\langle \varphi(I)/p\rangle}$ with the pullback isomorphism $\beta_f$ implies that both $\cM_i[1/p]$ descend to the same object in $\Vect^\varphi(X_\qrsp, \Prism_\bullet[1/p])$.
	
	To obtain the desired inclusion, we will extend the descent data to the rings $\Prism_{S'} \langle (q_1-1)^p/p\rangle[1/p]$ and $\Prism_{S'} \langle p/(q_1-1)^p\rangle[1/p]$ separately and then use the short exact sequence from \cref{Beilinson}.\ref{Beilinson-ses} to glue to one over $\Prism_{S'}[1/p]$.
	We do so in three steps by base changing along certain natural inclusions of subrings, indicated in the diagram below (together with the step in which they are used):
	\begin{equation}\label{EssSurj3-rings} \begin{tikzcd}[row sep=small]
	    \Prism_{S'} \langle \varphi(I)/p\rangle[1/p] \arrow[r,hook,"(2)"] \arrow[d,hook,"(1)"'] & \Prism_{S'}\langle \varphi(I)/p\rangle[1/(p(q_1-1)(q_2-1))] \arrow[dd,hook'] & \Prism_{S'}[1/(p(q_1-1)(q_2-1))] \arrow[l,hook',"(2)"'] \arrow[d,hook',"(2)"] \\
	    \Prism_{S'} \langle (q_1-1)^p/p\rangle[1/p] \arrow[dr,hook,"(3)"] && \Prism_{S'} \langle p/(q_1-1)^p\rangle[1/p] \arrow[dl,hook',"(3)"'] \\
	    & \Prism_{S'} \langle p/(q_1-1)^p,(q_1-1)^p/p\rangle[1/p]. &
	\end{tikzcd} \end{equation}
	Here, the left vertical map exists because the element $\varphi(\tilde{\xi})/p$ is contained in $\Prism_{S'} \langle (q_1-1)^p/p\rangle$ and the right vertical map exists because $(q_1-1)(q_2-1)$ is invertible in $\Prism_{S'}\langle p/(q_1-1)^p \rangle[1/p]$ by \cref{Beilinson}.\ref{Beilinson-inv}.
	\begin{itemize}[font=\itshape,wide]
		\item[Step 1.] To extend the isomorphism $\alpha_{\langle \varphi(I)/p\rangle}$ to $\alpha_{\langle (q_1-1)^p/p\rangle}$ over the ring $\Prism_{S'} \langle (q_1-1)^p/p\rangle[1/p]$, we can take the base change along the natural map of rings 
		\[ \Prism_{S'} \langle \varphi(I)/p\rangle[1/p] \longrightarrow \Prism_{S'} \langle (q_1-1)^p/p\rangle[1/p] \]
		from (\ref{EssSurj3-rings}) above.
			
		\item[Step 2.] To extend the isomorphism $\alpha_{\langle \varphi(I)/p\rangle}$ to the ring $\Prism_{S'} \langle p/(q_1-1)^p\rangle[1/p]$, we use that the $\ZZ_p$-local system $T(\cM)$ on $\Spa(S[1/p],S)$ is trivializable by assumption.
		In particular, $T(\cM)$ is determined by its global sections, which form a finite free $\ZZ_p$-module that we again denote by $T(\cM)$ for simplicity.
		Moreover, by \cref{pris to crys lem}, we have the following Frobenius equivariant isomorphism of $\Prism_S[1/(q_1-1)]$-modules:
		\[
		T(\cM)\otimes_{\ZZ_p} \Prism_S[1/(q_1-1)] \simeq \cM[1/(q_1-1)].
		\]		
		This shows that $\cM[1/(q_1-1)]$ is a finite free $\Prism_S[1/(q_1-1)]$-module; after
		choosing a basis of $T(\cM)$, the localization of $\alpha_{\langle \varphi(I)}/p\rangle$ to $\Prism_{S'}\langle \varphi(I)/p\rangle[1/(p(q_1-1)(q_2-1))]$ from the top row of (\ref{EssSurj3-rings}) corresponds to a matrix in $\Prism_{S'}\langle \varphi(I)/p\rangle[1/(p(q_1-1)(q_2-1))]$ that is invariant under the Frobenius morphism.
		Thanks to \cref{Beilinson}.\ref{Beilinson-isom}, the matrix has entries in the subring
		$\Prism_{S'}[1/(p(q_1-1)(q_2-1))]^{\varphi=1}$ from the top right corner of (\ref{EssSurj3-rings}) and thus gives a descent datum $\alpha'$ over this ring.
		In particular, a further base change of $\alpha'$ along the right vertical map to $\Prism_{S'}\langle p/(q_1-1)^p \rangle[1/p]$ extends the matrix to an isomorphism 
		\[
		\alpha_{\langle p/(q_1-1)^p\rangle}\colon  p_1^*\cM\langle p/(q_1-1)^p\rangle[1/p] \longrightarrow p_2^* \cM \langle p/(q_1-1)^p\rangle[1/p].
		\]
		\item[Step 3.] By the construction in the previous step, the maps $\alpha_{\langle\varphi(I)/p\rangle}$ over $\Prism_{S'} \langle \varphi(I)/p\rangle[1/p]$ in the top left corner of (\ref{EssSurj3-rings}) and $\alpha'$ over $\Prism_{S'}[1/(p(q_1-1)(q_2-1))]$ in the top right corner of (\ref{EssSurj3-rings}) coincide when base changed to $\Prism_{S'}\langle \varphi(I)/p\rangle[1/(p(q_1-1)(q_2-1))]$ in the top middle.
		In particular, their images $\alpha_{\langle (q_1-1)^p/p\rangle}$ and $\alpha_{\langle p/(q_1-1)^p\rangle}$ agree after 
		base change to $\Prism_{S'} \langle p/(q_1-1)^p,(q_1-1)^p/p\rangle[1/p]$.
		As a consequence, by gluing $\alpha_{\langle (q_1-1)^p/p\rangle}$ and $\alpha_{\langle p/(q_1-1)^p\rangle}$ along the short exact sequence from \cref{Beilinson}.\ref{Beilinson-ses},
		we get a unique map $\alpha\colon p_1^*\cM[1/p] \to p_2^*\cM[1/p]$ over $\Prism_{S'}[1/p]$ compatible with all the other descent isomorphisms, which finishes the proof. \qedhere
	\end{itemize}
\end{proof}
An application of \cref{EssSurj3} to the construction of \cref{EssSurj2} immediately yields the following:
\begin{corollary}\label{EssSurj3-aff}
	Assume that $X=\Spf(R)$ is affine over $\cO_K$.
	Let $S\in X^w_\qrsp$ be a ring that covers $X$.
	Then the $p$-inverted prismatic $F$-crystal $\cM_{S}[1/p]\in \Vect^\varphi(\Spec(\Prism_{S,\perf})\setminus V(p,I))$ from \cref{EssSurj2} descends to a prismatic $F$-crystal $\widetilde{\cM}[1/p]\in \Vect^\varphi(X_\qrsp, \Prism_\bullet [1/p])$, and the latter is independent of choice of $S\in X^w_\qrsp$.
\end{corollary}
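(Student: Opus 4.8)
The plan is to deduce this from \cref{EssSurj3}, once we check that its hypotheses apply to the family $\cM_S$ produced in \cref{EssSurj2}. Since the construction of \cref{EssSurj2} depends only on the perfect prism $\Prism_{S,\perf}$ (i.e.\ on the perfectoid ring underlying it), and since by \cref{abs-int-closed-qrsp local sys} perfectoid objects of $X^w_\qrsp$ covering $X$ exist (e.g.\ the $p$-completed absolute integral closure of $R$ used in that proof), I would first reduce to the case that $S$ is itself a perfectoid $\cO_C$-algebra, where $\Prism_{S,\perf}=\Prism_S=\rA_{\inf}(S)$, so that $\cM_S\in\Vect^\varphi\bigl(\Spec(\rA_{\inf}(S))\setminus V(p,I)\bigr)$ lands in exactly the framework of \cref{EssSurj3}.\ref{EssSurj3-descent-data}. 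The two hypotheses there are then verified directly: the \'etale realization $T(\cM_S)$ agrees with $\restr{T}{\Spf(S)_\eta}$ by \cref{EssSurj2-recover-T}, hence is trivializable because $S\in X^w_\qrsp$ (\cref{abs-int-closed-qrsp}); and by \cref{EssSurj2}.\ref{EssSurj2-compatibility} the object $\cM_S\langle I/p\rangle[1/p]$ is the evaluation at $S$ of the prismatic $F$-crystal $\cM'\in\Vect^\varphi(X_\qrsp,\Prism_\bullet\langle\cI_\Prism/p\rangle[1/p])$ from \cref{EssSurj1}, so it descends to $X_\qrsp$. Applying \cref{EssSurj3}.\ref{EssSurj3-descent-data} then yields $\widetilde{\cM}[1/p]\in\Vect^\varphi(X_\qrsp,\Prism_\bullet[1/p])$.

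For independence of the choice, I would compare two perfectoid covers $S_1,S_2\in X^w_\qrsp$ through a common refinement. Since $X^w_\qrsp$ is closed under $p$-completed tensor products and composition (end of the proof of \cref{abs-int-closed-qrsp local sys}) and contains cofinally many perfectoid objects, there is a perfectoid $S_3\in X^w_\qrsp$ with maps $f_i\colon S_i\to S_3$ over $R$. By \cref{EssSurj2}.\ref{EssSurj2-crystal} there are pullback isomorphisms $\beta_{f_i}\colon f_i^*\cM_{S_i}\xrightarrow{\sim}\cM_{S_3}$ satisfying the cocycle condition, and by \cref{EssSurj2}.\ref{EssSurj2-compatibility} these isomorphisms are compatible with the canonical descent datum of $\cM'$ after passing to $\langle\cI_\Prism/p\rangle[1/p]$-coefficients. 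This is precisely the input demanded by \cref{EssSurj3}.\ref{EssSurj3-compatibility}, which therefore forces the descents of $\cM_{S_1}[1/p]$, $\cM_{S_2}[1/p]$ and $\cM_{S_3}[1/p]$ to coincide in $\Vect^\varphi(X_\qrsp,\Prism_\bullet[1/p])$, so $\widetilde{\cM}[1/p]$ is well defined. For a general, possibly non-perfectoid $S\in X^w_\qrsp$ as in the statement, one passes to the perfectoid ring underlying $\Prism_{S,\perf}$, which again lies in $X^w_\qrsp$ and gives the same object $\cM_S$, and then runs the same functoriality argument to identify the descent of $\cM_S[1/p]$ with $\widetilde{\cM}[1/p]$.

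I do not expect an essential obstacle here: the genuinely hard content, in particular the Beilinson-fiber-square analysis of \cref{Beilinson} that underlies the extension of descent data from $\{\abs{I}\le\abs{p}\neq 0\}$ to $\{p\neq 0\}$, is already absorbed into \cref{EssSurj3}, and what remains is to feed the chain of compatibilities packaged in \cref{EssSurj2} into \cref{EssSurj3}.\ref{EssSurj3-compatibility}. The points that require a little care are the reduction to perfectoid covers together with the identification $\Prism_{S,\perf}=\rA_{\inf}(S)$ in that case (so that \cref{EssSurj2} lands exactly where \cref{EssSurj3} expects it), and organizing the ``two-out-of-three'' bookkeeping for the independence statement so as to avoid circularity — which is harmless, since each $T(\cM_S)$ is canonically $\restr{T}{\Spf(S)_\eta}$ and thus pins down all the identifications.
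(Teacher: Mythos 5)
Your proof is correct and follows essentially the same approach as the paper, which simply asserts that the corollary follows by applying \cref{EssSurj3} to the construction of \cref{EssSurj2} and leaves the verification implicit. Your write-up supplies exactly the details that are being elided: the reduction to perfectoid $S\in X^w_\qrsp$ (using that $\cM_S$ depends only on $\Prism_{S,\perf}=\rA_{\inf}(S_\perf)$ and that $S_\perf\in X^w_\qrsp$), the check that \cref{EssSurj2-recover-T} and \cref{EssSurj2}.\ref{EssSurj2-compatibility} deliver the two hypotheses of \cref{EssSurj3}.\ref{EssSurj3-descent-data}, and the use of a common refinement together with \cref{EssSurj2}.\ref{EssSurj2-crystal} and \cref{EssSurj3}.\ref{EssSurj3-compatibility} to obtain independence of $S$.
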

We are finally ready to prove the essential surjectivity theorem, thus finishing the proof of \cref{main}.
\begin{proof}[Proof of \cref{EssSurj}]
We may assume that $X=\Spf(R)$ is affine;
the general case then follows from the full faithfulness of the \'etale realization proved in \cref{full-faithful}.
Let $S\in X^w_\qrsp$ be any perfectoid $\cO_C$-algebra that covers $X$ in the quasi-syntomic site and let $S'=S\widehat{\otimes}_R S$ be the $p$-completed tensor product.
Since $R \to S$ is a quasi-syntomic cover, so are the maps $S\to S'$.
By \cite[Prop.~7.11.(2)]{BS19} the (representable sheaf associated with the) perfection $S'_\perf$ covers the final object of the $p$-complete flat topos of $S$ via either of the two structure maps
\[
p_i\colon \begin{tikzcd}
	S \ar[r, shift left=0.5ex] \ar[r, shift right=0.5ex] & S'_\perf.
\end{tikzcd}
\]
By composing with $R \to S$, the perfectoid ring $S'_\perf$ covers the final object of the $p$-completely flat topos of $R$ as well.
The same also holds for the perfections of iterated $p$-completed tensor products of $S$ over $R$.

Using \cref{EssSurj3-aff}, we obtain a prismatic $F$-crystal $\widetilde{\cM}[1/p] \in \Vect(X_\qrsp, \Prism_\bullet[1/p])$, which is the descent of the two analytic prismatic $F$-crystals $\cM_S\in \Vect^\varphi(\Spec(\Prism_S)\setminus V(p,I))$ and $\cM_{S'_\perf}\in \Vect^\varphi(\Spec(\Prism_{S'_\perf})\setminus V(p,I))$ constructed in \cref{EssSurj2} after inverting $p$.
So to prove that $\cM_S$ descends to an analytic prismatic $F$-crystal on $X$, by Beauville--Laszlo gluing, it suffices to show that $\cM_S[1/I]^\wedge_p$ also descends compatibly with $\widetilde{\cM}[1/p]$.
Moreover, by the equivalence $\Vect^\varphi(\Prism_{S'}[1/I]^\wedge_p) \simeq \Vect^\varphi(\Prism_{S'_\perf}[1/I]^\wedge_p)$ in \cite[Cor.~3.7]{BS21}, we may check the compatibility of the descent isomorphisms after passing to the perfection $S'_\perf$ (which still covers the final object in the $p$-complete flat topos of $R$).
Applying \cref{EssSurj2}.\ref{EssSurj2-crystal} to the two canonical maps $p_i$, we obtain a natural isomorphism of vector bundles over $\Spec(\Prism_{S',\perf})\setminus V(p,I)$
\[
p^*_{1,\Spec(\Prism_{\bullet,\perf})\setminus V(p,I)}\cM_S \simeq p^*_{2,\Spec(\Prism_{\bullet,\perf})\setminus V(p,I)} \cM_S, \tag{$\ast$}
\]
which is compatible with the descent data of $\widetilde{\cM}[1/p]$ by \cref{EssSurj3}.\ref{EssSurj3-compatibility}.
In particular, applying $[-]^\wedge_p$ to the isomorphism in $(\ast)$, we get the following descent isomorphism over $\Prism_{S',\perf}[1/I]^\wedge_p$, which is compatible with the descent data of $\widetilde{\cM}[1/p]$:
\[
p_1^* \cM[1/I]^\wedge_p \simeq p_2^*\cM[1/I]^\wedge_p.
\]
Its \'etale realization recovers the original local system $T$ by \cref{EssSurj2-recover-T}.
So we are done with the affine case.
\end{proof}

\section{Prismatic \texorpdfstring{$F$}{F}-crystals in perfect complexes}\label{sec5}

In the next sections, we will lay the groundwork for the proof of \cref{Ccrys}.
For this, we need to study the cohomology of analytic prismatic $F$-crystals.
In fact, it will be more convenient to work in the larger category $D^\varphi_\perf(X_\Prism) = D^\varphi_\perf(X_\Prism,\cO_\Prism)$, consisting of prismatic $F$-crystals in perfect complexes.
Below in this section, we give various basic results on prismatic $F$-crystals in perfect complexes, including the finiteness and Hodge-Tate comparison of cohomology, and the relationship with analytic prismatic $F$-crystals.

To simplify notation, we will from now on use $f^*\cE$ to denote the derived pullback $Lf^*\cE$ of a perfect complex $\cE$.
Similarly, given a complex $\cE$ of $A$-modules and a nonzerodivisor $x\in A$, we will write $\cE/x$ for the derived reduction $\cE\otimes^L_{A} A/x$.
\subsection{Crystals in complexes and their perfections}

We begin by defining prismatic $F$-crystals in perfect complexes and studying their basic properties.
Throughout this subsection, let $X$ be a bounded $p$-adic formal scheme.
\begin{construction}
    For any $(A,I) \in X_\Prism$, we let $D^\varphi_\perf(A)$ be the $\infty$-category of perfect complexes $\cE$ over $A$ together with an isomorphism $\varphi_\cE \colon \varphi^*_A\cE \otimes^L A[1/I] \to \cE \otimes^L A[1/I]$.
    For any map of prisms $(A,I) \to (B,J)$ in $X_\Prism$, there is a functor $D^\varphi_\perf(A) \to D^\varphi_\perf(B)$ sending $(\cE,\varphi_\cE)$ to $(\cE \otimes^L_A B, \varphi_\cE \otimes \id_B)$;
    here, we use that
    \[ (\varphi^*_A\cE) \otimes^L_A A[1/I] \otimes^L_A B \simeq (\varphi^*_A\cE) \otimes^L_A B[1/J] \simeq \varphi^*_B(\cE \otimes^L_A B) \otimes^L_B B[1/J] \]
    because the map $A \to B$ is Frobenius equivariant and $IB=J$.
    The resulting presheaf $D^\varphi_\perf(A)$ on $X_\Prism$ satisfies $(p,I)$-completely faithfully flat descent (cf.\ the proof of \cite[Prop.~2.7]{BS21}).
    Thus, we can define
    \begin{equation}\label{crystal-perfect-complex}
        D^\varphi_\perf(X_\Prism) \colonequals \lim_{(A,I)\in X_\Prism} D^\varphi_\perf(A),
    \end{equation} 
    and similarly for $D^\varphi_\perf(X_\Prism,\cO_\Prism[1/\cI_\Prism]^\wedge_p)$.
\end{construction}
\begin{remark}
    By \cite[Ex.~4.4]{BS21}, an object of $D^\varphi_\perf(X_\Prism,\cO_\Prism[1/\cI_\Prism]^\wedge_p)$ can equivalently be given by a crystal in perfect complexes $\cE \in D_\perf(X_\Prism,\cO_\Prism[1/\cI_\Prism]^\wedge_p)$ together with an isomorphism $\varphi_\cE \colon \varphi^*\cE \xrightarrow{\sim} \cE$.
    Following \cite[Def.~3.2]{BS21}, we call $(\cE,\varphi_\cE)$ a \emph{Laurent $F$-crystal in perfect complexes}.
\end{remark}
\begin{construction}\label{crystal-perfection}
    Denote by $\varphi$ the natural Frobenius morphism $\cO_\Prism \to \varphi_*\cO_\Prism$ induced by Frobenius on the relevant $\delta$-rings.
    The \emph{perfection} of $\cO_\Prism$ is the derived $\bigl(p,\cI_\Prism\bigr)$-adic completion 
    \[ \cO_{\Prism,\perf} \colonequals \bigl(\colim(\cO_\Prism \to R\varphi_* \cO_\Prism \to R\varphi^2_* \cO_\Prism \to \dotsb )\bigr)^\wedge. \]
    Since the prismatic topos is replete (cf.\ \cite[Rem.~2.4]{BS21}), we have $\cO_{\Prism,\perf}(A,I) = A_\perf$ for all $(A,I) \in X_\Prism$.
    Base change along the natural map $\cO_\Prism[1/\cI_\Prism]^\wedge_p \to \cO_{\Prism,\perf}[1/\cI_\Prism]^\wedge_p$ induces a functor
    \[ D^\varphi_\perf\bigl(X_\Prism,\cO_\Prism[1/\cI_\Prism]^\wedge_p\bigr) \to D^\varphi_\perf\bigl(X_\Prism,\cO_{\Prism,\perf}[1/\cI_\Prism]^\wedge_p\bigr). \]
    On objects, it sends a Laurent $F$-crystal in perfect complexes $(\cE,\varphi_\cE)$ to its \emph{perfection} $\cE_\perf \colonequals \cE \otimes_{\cO_\Prism[1/\cI_\Prism]^\wedge_p} \bigl(\cO_{\Prism,\perf}[1/\cI_\Prism]^\wedge_p\bigr)$ with Frobenius
    \[ \varphi^*\cE_\perf \simeq (\varphi^*\cE) \otimes_{\cO_\Prism[1/\cI_\Prism]^\wedge_p} \bigl(\cO_{\Prism,\perf}[1/\cI_\Prism]^\wedge_p\bigr) \xrightarrow[\varphi_\cE \otimes \id]{\sim} \cE \otimes_{\cO_\Prism[1/\cI_\Prism]^\wedge_p} \bigl(\cO_{\Prism,\perf}[1/\cI_\Prism]^\wedge_p\bigr) = \cE_\perf. \]
\end{construction}
\begin{remark}
    We describe an alternative construction of the perfection of a Laurent $F$-crystal $(\cE,\varphi_\cE) \in D^\varphi_\perf(X_\Prism,\cO_\Prism[1/\cI_\Prism]^\wedge_p)$.
    The adjoint of $\varphi_\cE$ is the composition
    \[ \cE \to R\varphi_*\varphi^*\cE \xrightarrow[\sim]{R\varphi_*(\varphi_\cE)} R\varphi_*\cE.  \]
    The projection formula for perfect complexes on ringed topoi (see e.g.\ \cite[\href{https://stacks.math.columbia.edu/tag/0944}{Tag~0944}]{SP}) and the fact that the affine pushforward $\varphi_*$ is exact yield natural isomorphisms $R\varphi^n_*\varphi^{n,*}\cE \simeq \cE \otimes^L \varphi^n_*\cO_\Prism[1/\cI_\Prism]^\wedge_p$.
    They can be combined to an isomorphism of ind-systems
    \[ \begin{tikzcd}
    \cE \arrow[r] \arrow[d,equal] & \cE \otimes^L \varphi_*\cO_\Prism[1/\cI_\Prism]^\wedge_p \arrow[r] \arrow[d,"\sim"{sloped},"R\varphi_*(\varphi_\cE)"'] & \cE \otimes^L \varphi^2_*\cO_\Prism[1/\cI_\Prism]^\wedge_p \arrow[r] \arrow[d,"\sim"{sloped},"R\varphi^2_*(\varphi_\cE)"'] & \dotsb \\
    \cE \arrow[r] & R\varphi_*\cE \arrow[r] & R\varphi^2_*\cE \arrow[r] & \dotsb.
    \end{tikzcd} \]
    Moreover, tensoring with the perfect complex $\cE$ commutes with derived $(p,\cI_\Prism)$-adic completion, so $\cE \otimes \cO_{\Prism,\perf}[1/\cI_\Prism]^\wedge_p$ can be identified with the $(p,\cI_\Prism)$-adic completion of $\colim R\varphi^n_* \cE$, further justifying the name ``perfection.''
\end{remark}
Next, we show that perfections of $F$-crystals in perfect complexes satisfy a strong arc-descent property, which will be needed in the proof of \cref{etale-comparison}.
To that end, we first give site-theoretic descriptions of the perfections in the following two remarks. 
\begin{remark}\label{perfect-site-perfection}
    Let $X^\perf_\Prism \hookrightarrow X_\Prism$ be the fully faithful inclusion of the perfect prismatic site.
    For any $(p,I)$-completely faithfully flat cover $(A,I) \to (B,J)$ of prisms in $X_\Prism$ with $(A,I)$ perfect, the perfection $(A,I) \to (B,J)_\perf$ from \cite[Lem.~3.9]{BS19} is still $(p,I)$-completely faithfully flat as a colimit of completely faithfully flat maps of prisms.
    Therefore, the inclusion functor is cocontinuous and we obtain an induced morphism of topoi $\pi \colon \Shv(X^\perf_\Prism) \to \Shv(X_\Prism)$ with
    \begin{equation}\label{pushforward-perfect-site}
    (\pi_*F)(A,I) = \lim_{(A,I) \to (B,J) \, \text{perfect}} F(B,J) = F\bigl((A,I)_\perf\bigr)
    \end{equation}
    \cite[\href{https://stacks.math.columbia.edu/tag/00XO}{Tag~00XO}]{SP}.
    Since the tensor product of perfect prisms is perfect, the inclusion functor is moreover continuous and $\pi^{-1}$ is given by restriction \cite[\href{https://stacks.math.columbia.edu/tag/00XR}{Tag~00XR}]{SP}.
    
    Next, we show $R\pi_*\cO_\Prism \simeq \cO_{\Prism,\perf}$.
    To see this, we may work Zariski locally on $X$ and assume that $X = \Spf R$ for some $p$-complete $\ZZ_p$-algebra $R$.
    Consider the diagram of topoi
    \[ \begin{tikzcd}
    \Shv(X^\perf_\Prism) \arrow[r,"\pi"] \arrow[d,"\epsilon^\perf"] & \Shv(X_\Prism) \arrow[d,"\epsilon"] \\
    \Shv(X^\perf_{\Prism,\ch}) \arrow[r,"\pi^\ch"] & \Shv(X_{\Prism,\ch})
    \end{tikzcd} \]
    in which the bottom row consists of sheaves for the chaotic topology and the vertical maps are the natural comparison maps \cite[\href{https://stacks.math.columbia.edu/tag/0EWK}{Tag~0EWK}]{SP}.
    We have $R\epsilon^\perf_* \cO_\Prism \simeq \cO_\Prism$ by (the proof of) \cite[Cor.~3.12]{BS19}.
    Furthermore, since presheaves are the sheaves for the chaotic topology and maps of presheaves are surjective if and only if they are surjective on all sections, $\pi^\ch_*$ is exact.
    In particular, all higher direct images $R^i\pi^\ch_*\cO_\Prism = 0$ and thus
    \[ R\pi_* \cO_\Prism \simeq \epsilon^{-1}R\epsilon_* R\pi_* \cO_\Prism \simeq \epsilon^{-1} R\pi^\ch_* R\epsilon^\perf_* \cO_\Prism \simeq \epsilon^{-1} R\pi^\ch_*\cO_\Prism \simeq \epsilon^{-1} \cO_{\Prism,\perf} \simeq \cO_{\Prism,\perf}, \]
    where the last two isomorphisms follow from the analog of (\ref{pushforward-perfect-site})  for the chaotic site and the fact that $\cO_{\Prism,\perf}$ is already a sheaf for the flat topology, respectively.
    Likewise, the natural map $\cO_{\Prism,\perf}[1/\cI_\Prism]^\wedge_p \to R\pi_*\bigl(\cO_\Prism[1/\cI_\Prism]^\wedge_p\bigr)$ is an isomorphism:
    this can be checked modulo $p$ because $R^1\pi_*\cO_\Prism[1/\cI_\Prism] = 0$. 
    
    The identifications of the preceding paragraph define morphisms of ringed topoi
    \[ \Shv(X^\perf_\Prism,\cO_\Prism) \to \Shv(X_\Prism,\cO_\Prism) \quad \text{and} \quad \Shv\bigl(X^\perf_\Prism,\cO_\Prism[1/\cI_\Prism]^\wedge_p\bigr) \to \Shv\bigl(X_\Prism,\cO_\Prism[1/\cI_\Prism]^\wedge_p\bigr) \]
    (abusively also denoted by $\pi$).
    By the projection formula for perfect complexes on ringed topoi (see e.g.\ \cite[\href{https://stacks.math.columbia.edu/tag/0944}{Tag~0944}]{SP}), the perfection of a Laurent $F$-crystal in perfect complexes $\cE \in D^\varphi_\perf(X_\Prism,\cO_\Prism[1/\cI_\Prism]^\wedge_p)$ from \cref{crystal-perfection} is then given by
    \[ \cE_\perf = \cE \otimes^L \cO_{\Prism,\perf}[1/\cI_\Prism]^\wedge_p \simeq \cE \otimes^L R\pi_* \cO_\Prism[1/\cI_\Prism]^\wedge_p \simeq R\pi_*\pi^*\cE. \]
\end{remark}
\begin{remark}\label{perfectoid-site-perfection}
    Let $\Perfd/X$ be the site of affinoid perfectoid $p$-adic formal schemes, endowed with the arc-topology.
    Via the equivalence of perfectoid rings over $X$ with perfect prisms (\cite[Thm.~3.10]{BS19}), sheaves on $X^\perf_\Prism$ can be identified with sheaves on the category of affinoid perfectoid $p$-adic formal schemes over $X$ with the $p$-completely flat topology;
    under this identification, $\cO_\Prism$ and $\cO_\Prism[1/\cI_\Prism]^\wedge_p$ correspond to the sheaves of $p$-complete rings $\Prism_\bullet$ and $\Prism_\bullet[1/I]^\wedge_p$, respectively.
    Thus, we get a natural morphism of topoi $\alpha \colon \Shv(\Perfd/X) \to \Shv\bigl(X^\perf_\Prism\bigr)$.
    By \cite[Prop.~8.10]{BS19}, the natural map $\Prism_\bullet \to R\alpha_*\alpha^{-1}\Prism_\bullet$ is an equivalence, and since inverting $I$ is exact and derived completion commutes with $R\alpha_*$ (\cite[\href{https://stacks.math.columbia.edu/tag/0A0G}{Tag~0A0G}]{SP}), so is the natural morphism $\Prism_\bullet[1/I]^\wedge_p \to R\alpha_*\alpha^{-1}\Prism_\bullet[1/I]^\wedge_p$.
    In particular, $\Prism_\bullet$ and $\Prism_\bullet[1/I]^\wedge_p$ are already sheaves for the arc-topology and $\alpha$ can be upgraded to a morphism of ringed topoi $\alpha \colon \Shv\bigl(\Perfd/X,\Prism_\bullet[1/I]^\wedge_p\bigr) \to \Shv\bigl(X^\perf_\Prism,\cO_\Prism[1/\cI_\Prism]^\wedge_p\bigr)$ with $\alpha^*\cO_\Prism[1/\cI_\Prism]^\wedge_p \simeq \alpha^{-1}\cO_\Prism[1/\cI_\Prism]^\wedge_p \simeq \Prism_\bullet[1/I]^\wedge_p$.
   
    Let $\pi \colon \Shv\bigl(X^\perf_\Prism,\cO_\Prism[1/\cI_\Prism]^\wedge_p\bigr) \to \Shv\bigl(X_\Prism,\cO_\Prism[1/\cI_\Prism]^\wedge_p\bigr)$ be the morphism from \cref{perfect-site-perfection} and $\beta \colonequals \pi \circ \alpha$.
    Let $\cE \in D^\varphi_\perf\bigl(X_\Prism,\cO_\Prism[1/\cI_\Prism]^\wedge_p\bigr)$.
    Then by \cref{perfect-site-perfection}, the projection formula and the universal property of the tensor product, we deduce yet another description of the perfection $\cE_\perf$ from the chain of isomorphisms
    \begin{align*}
    \cE_\perf & = \cE \otimes^L \cO_{\Prism,\perf}[1/\cI_\Prism]^\wedge_p \simeq \cE \otimes^L R\pi_*\pi^*\cO_\Prism[1/\cI_\Prism]^\wedge_p \simeq \cE \otimes^L R\beta_*\beta^*\cO_\Prism[1/\cI_\Prism]^\wedge_p \\
    & \simeq R\beta_*\bigl(\beta^*\cE \otimes^L \beta^*\cO_\Prism[1/\cI_\Prism]^\wedge_p\bigr) \simeq R\beta_*\bigl(\beta^*(\cE \otimes^L \cO_\Prism[1/\cI_\Prism]^\wedge_p)\bigr) \simeq R\beta_*\beta^*\cE.
    \end{align*}
\end{remark}
We can finally show the promised result on arc-descent.
\begin{proposition}\label{perfection-arc-sheaf}
    Let $X$ be a $p$-adic formal scheme and $\nu \colon \Shv(X_\Prism) \to \Shv(X_\et)$ be the canonical morphism of topoi from \cite[Constr.~4.4]{BS19}.
    Let $\cE \in D^\varphi_\perf\bigl(X_\Prism,\cO_\Prism[1/\cI_\Prism]^\wedge_p\bigr)$.
    Then $R\nu_* \cE_\perf$ satisfies arc-descent on the category $\fSch/X$ of $p$-adic formal schemes over $X$.
    That is, given an arc-hypercover $\alpha \colon X_\bullet \to X$, the natural map below is an isomorphism:
    \[
    R\nu_{X,*} \cE_\perf \longrightarrow R\lim_{[n]\in \Delta} R\nu_{X_n, *} \alpha_{n,\Prism} ^*\cE_{\perf}.
    \]
\end{proposition}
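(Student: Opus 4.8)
The plan is to reduce the statement, via the Leray spectral sequence for $\nu$ and evaluation on affine objects of $X_\et$, to a descent statement for the prismatic cohomology of $\cE_\perf$, and then to transport that statement to the arc‑site of affinoid perfectoid formal schemes, where $p$‑complete arc‑descent for perfect complexes is already available.

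First I would observe that both sides of the claimed map are complexes of sheaves on $X_\et$, so it suffices to check that the map is an equivalence after evaluating on an affine object $U=\Spf A \to X$ of $X_\et$. Since $U \times_X X_\bullet \to U$ is again an arc‑hypercover and $R\Gamma\bigl(U_\et, R\nu_{U,*}(-)\bigr) \simeq R\Gamma\bigl(U_\Prism, -\bigr)$, this reduces us to the case $X=\Spf A$ affine, where we must show that
\[ R\Gamma(X_\Prism, \cE_\perf) \longrightarrow R\lim_{[n]\in\Delta} R\Gamma\bigl((X_n)_\Prism, \alpha_{n,\Prism}^*\cE_\perf\bigr) \]
is an equivalence for an arc‑hypercover $X_\bullet \to X$ in $\fSch/X$.

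Next I would invoke the perfectoid description of the perfection from \cref{perfectoid-site-perfection}: for any $p$‑adic formal scheme $Y$ over $X$, writing $\cE_Y \colonequals \alpha_{Y,\Prism}^*\cE$ and $\beta_Y \colon \Shv(\Perfd/Y) \to \Shv(Y_\Prism)$ for the canonical morphism of topoi (with $\Perfd/Y$ taken with the $p$‑complete arc‑topology), one has $(\cE_Y)_\perf \simeq R\beta_{Y,*}\beta_Y^*\cE_Y$, hence by Leray a natural, functorial-in-$Y$ equivalence $R\Gamma\bigl(Y_\Prism, (\cE_Y)_\perf\bigr) \simeq R\Gamma\bigl(\Perfd/Y, \beta_Y^*\cE_Y\bigr)$. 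The limit presentation~(\ref{crystal-perfect-complex}) of $\cE$ guarantees that the pulled‑back objects $\beta_Y^*\cE_Y$ are the restrictions of a single object $\cF$ on the big perfectoid arc‑site $\Perfd/X$, compatibly with all transition maps. This reduces the problem to showing that the functor $\fSch/X \ni Y \mapsto R\Gamma\bigl(\Perfd/Y,\restr{\cF}{Y}\bigr)$ satisfies arc‑hyperdescent.

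For the last step I would use that affinoid perfectoid $p$‑adic formal schemes form a basis of $\fSch/X$ for the $p$‑complete arc‑topology (\cite[Rmk.~8.9]{BS19}, cf.\ also the proof of \cref{pris to crys lem}), so that the functor above computes the global sections of the arc‑sheafification $\cG$ of $\cF$ extended to $\fSch/X$; and that $\cF$ already satisfies $p$‑complete arc‑descent on perfectoids, since for a perfect prism $(B,J)$ the value $\cE_\perf(B,J)$ is a perfect $B[1/J]^\wedge_p$‑complex and the assignment $B[1/J]^\wedge_p \mapsto \mathrm{Perf}$ satisfies $p$‑complete arc‑descent by \cite[Prop.~8.10]{BS19} and \cite{BM21} (exactly as used in \cref{pris to crys lem}). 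Consequently $\cG$ is an honest arc‑sheaf on $\fSch/X$, which gives the assertion. The step I expect to be the main obstacle is the passage from Čech arc‑descent — which is essentially formal — to hyperdescent along the given arc‑hypercover: this is where one must invoke the hypercompleteness‑type properties of the $p$‑complete arc‑topology of \cite{BM21}, following the strategy of \cite[\S~9]{BS19}; the remaining cost is the bookkeeping of the Frobenius structures and of the compatibility of all the identifications along the simplicial object.
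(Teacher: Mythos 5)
Your proposal is correct and follows essentially the same route as the paper: both rely on the identity $\cE_\perf \simeq R\beta_*\beta^*\cE$ from \cref{perfectoid-site-perfection} together with the fact that $\Perfd/X$ forms a basis of $\fSch/X$ for the arc-topology, so that $R\nu_*\cE_\perf$ becomes the pushforward of the arc-sheaf $\beta^*\cE$ along a morphism of topoi $\Shv(\fSch/X) \to \Shv(X_\et)$ and therefore satisfies arc-hyperdescent. The paper packages this more tersely as a single commutative square of topoi; and your closing concern about upgrading \v{C}ech descent to hyperdescent is not a real obstruction here, since $\cE$ is perfect and hence bounded, so cohomological descent along hypercovers for the bounded-below complex $\beta^*\cE$ in the ordinary topos $\Shv(\fSch/X)$ is automatic.
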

\begin{proof}
    We resume the notation of \cref{perfect-site-perfection} and \cref{perfectoid-site-perfection}.
    Consider the commutative diagram of topoi
    \[ \begin{tikzcd}
    \Shv(\Perfd/X) \arrow[r,"\alpha"] \arrow[d,equal] & \Shv(X^\perf_\Prism) \arrow[r,"\pi"] & \Shv(X_\Prism) \arrow[d,"\nu"] \\
    \Shv(\fSch/X) \arrow[rr,"\lambda"] && \Shv(X_\et).
    \end{tikzcd} \]
    Here, the left vertical equality uses that $\Perfd/X$ forms a basis of $\fSch/X$ for the arc-topology (\cite[Lem.~8.8]{BS19}).
    By \cref{perfectoid-site-perfection},
    \[ R\nu_*\cE_\perf \simeq R\nu_*R\beta_*\beta^*\cE \simeq R\lambda_*\beta^*\cE. \]
    Thus, $R\nu_*\cE_\perf$ is the pushforward of the arc-sheaf $\beta^*\cE$ and hence satisfies arc-descent.
\end{proof}

\subsection{Relation to analytic prismatic \texorpdfstring{$F$}{F}-crystals}
Next, we prove that all analytic prismatic $F$-crystals on smooth formal schemes can be extended to $F$-crystals in perfect complexes.
For this, we will need the following extension statement over prisms of small size;
recall that a prism $(A,I)$ is called \emph{transversal} if $\overline{A}$ is $p$-torsion-free.
\begin{lemma}\label{global-sec}
	Let $(A,I)$ be a transversal prism in $X_\Prism$ such that $A$ is noetherian.
	Then for a vector bundle $\cE$ over $U=\Spec(A)\setminus V(p,I)$, its global sections $\mathrm{H}^0(U, \cE)$ are a finitely presented $A$-module.
\end{lemma}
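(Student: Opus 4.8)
The plan is to reduce to the case $\cE=\cO_U^{\oplus N}$ by realizing an arbitrary vector bundle on $U\colonequals\Spec(A)\setminus V(p,I)$ as a \emph{subbundle} of a finite free one, and then to compute $\mathrm{H}^0(U,\cO_U)$ separately.

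For the first step: since $A$ is noetherian and $U\hookrightarrow\Spec(A)$ is a quasi-compact open immersion, the coherent sheaf $\cE^\vee$ on $U$ extends to a coherent $\cO_{\Spec(A)}$-module (extension of coherent sheaves on a noetherian scheme, e.g.\ \cite[\href{https://stacks.math.columbia.edu/tag/01PD}{Tag~01PD}]{SP}). As $\Spec(A)$ is affine, this extension is a quotient of a finite free module $\cO_{\Spec(A)}^{\oplus N}$, and restricting to $U$ yields a surjection $\cO_U^{\oplus N}\twoheadrightarrow\cE^\vee$. Because $\cE^\vee$ is locally free, this surjection is locally split; hence its kernel $\cF$ is again a vector bundle on $U$ and the exact sequence $0\to\cF\to\cO_U^{\oplus N}\to\cE^\vee\to 0$ stays exact after applying $\iHom(\blank,\cO_U)$. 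Using $\cE^{\vee\vee}\simeq\cE$, this produces a short exact sequence $0\to\cE\to\cO_U^{\oplus N}\to\cF^\vee\to 0$; in particular $\cE$ embeds into $\cO_U^{\oplus N}$, so $\mathrm{H}^0(U,\cE)$ embeds $A$-linearly into $\mathrm{H}^0(U,\cO_U)^{\oplus N}$.

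For the second step I claim $\mathrm{H}^0(U,\cO_U)=A$. Since $I$ is invertible, Zariski-locally on $\Spec(A)$ it is generated by a nonzerodivisor $d$; transversality means $\overline{A}=A/d$ is $p$-torsion free, so $p$ is a nonzerodivisor on $A/d$ and $d,p$ is a regular sequence. Thus $\Spec(A)$ has depth $\ge 2$ along $V(p,I)$, so by the standard relation between depth and local cohomology the sheaves $\mathcal{H}^0_{V(p,I)}(\cO)$ and $\mathcal{H}^1_{V(p,I)}(\cO)$ vanish; as $\Spec(A)$ is affine this gives $\mathrm{H}^0_{V(p,I)}(A)=\mathrm{H}^1_{V(p,I)}(A)=0$, and the four-term exact sequence $0\to \mathrm{H}^0_{V(p,I)}(A)\to A\to \mathrm{H}^0(U,\cO_U)\to \mathrm{H}^1_{V(p,I)}(A)$ forces $A\xrightarrow{\sim}\mathrm{H}^0(U,\cO_U)$.

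Combining the two steps, $\mathrm{H}^0(U,\cE)$ is an $A$-submodule of the finite free module $A^{\oplus N}$, hence finitely generated since $A$ is noetherian, hence finitely presented. I do not expect a serious obstacle here: the only inputs are the (standard) extension of coherent sheaves from a quasi-compact open of a noetherian scheme, the exactness of dualizing a locally split sequence of vector bundles, and the elementary depth estimate $\operatorname{depth}_{V(p,I)}(A)\ge 2$ — which is the single place where transversality is used.
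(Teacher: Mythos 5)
Your proof is correct but takes a genuinely different route from the paper's. The paper pushes forward $\cE$ directly along $j\colon U\hookrightarrow\Spec(A)$ and shows $j_*\cE$ is coherent by combining two observations: the associated points of the vector bundle $\cE$ are generic points of irreducible components of $U$, and $V(p,I)$ has codimension $\ge 2$ in $\Spec(A)$, so the coherence criterion for pushforwards across a closed subset of codimension $\ge 2$ (Stacks Tags 0BK1, 0BJZ) applies; the finite presentation then falls out of $\Hh^0(U,\cE)=\Hh^0(\Spec(A),j_*\cE)$. You instead embed $\cE$ into a finite free $\cO_U^{\oplus N}$ by extending $\cE^\vee$, presenting it as a quotient of a free module, and dualizing the (locally split) kernel sequence; this reduces everything to the Hartogs-type statement $\Hh^0(U,\cO_U)=A$, which you establish via the local cohomology exact sequence and the depth bound $\operatorname{depth}_{(p,I)}(A)\ge 2$. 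The two arguments rely on the same essential hypothesis — the regular sequence $(d,p)$ coming from transversality — but your route invokes it through depth and local cohomology vanishing, which is exactly what transversality delivers, whereas the paper invokes it through a codimension statement about $V(p,I)$. Your version is a bit more elementary and self-contained (it also matches the argument the paper itself uses in the proof of \cref{restriction-functor}, where a regular sequence of length two plus Tag 0G7P does the same job); the paper's version is shorter because it treats $\cE$ directly rather than reducing to $\cO_U$.

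One small remark: make sure you cite the right Stacks tag for extending a coherent sheaf from a quasi-compact open of a noetherian scheme to a coherent sheaf on the whole scheme (the tag you quote, 01PD, is about closed immersions; the relevant one is in the section on quasi-coherent modules on noetherian schemes). This is a bookkeeping point, not a gap.
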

\begin{proof}
    Let $j \colon U \to \Spec(A)$ be the open immersion.
	It suffices to show that $j_*\cE$ is coherent.
	As $\cE$ is a vector bundle over $U$, its only associated points are the generic points of the irreducible components of $U$.
	On the other hand, since $U$ is the complement of the closed subset $V(p,I)$, which is of codimension $2$ in $\Spec(A)$ by assumption, we have that $j_* \cE$ is a coherent sheaf over $\Spec(A)$ by \cite[Tags \href{https://stacks.math.columbia.edu/tag/0BK1}{0BK1}, \href{https://stacks.math.columbia.edu/tag/0BJZ}{0BJZ}]{SP}.
	As a consequence,  the global sections $\mathrm{H}^0(U, \cE)= \mathrm{H}^0(\Spec(A), j_*\cE)$ are a finitely presented $A$-module.
\end{proof}
\begin{remark}
     We do not know if \cref{global-sec} holds more generally when $A$ is not necessarily noetherian.
    However, it will be relevant later that $\Hh^0(U,\cE)$ is even free for certain special nonnoetherian $A$ that cover $X_\Prism$ locally in the arc-topology;
    see \cref{restriction-vb}.
\end{remark}
\begin{theorem}\label{an-to-complex}
	Let $X$ be a smooth $p$-adic formal scheme over $\cO_K$.
	Then pushforward along the inclusions $\Spec(A) \setminus V(p,I) \hookrightarrow \Spec(A)$ for all prisms $(A,I) \in X_\Prism$ induces an essentially unique fully faithful functor 
	\[ \Vect^{\an,\varphi}(X_\Prism) \to D^\varphi_\perf(X_\Prism). \]
\end{theorem}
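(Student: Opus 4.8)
The functor is obtained by \emph{pushing forward along the analytic locus}: to $(\cE,\varphi_\cE)=(\cE_A)_{(A,I)}\in\Vect^{\an,\varphi}(X_\Prism)$ one wants to attach the object of $D^\varphi_\perf(X_\Prism)=\lim_{(A,I)}D^\varphi_\perf(A)$ that over each $(A,I)$ is the pushforward of $\cE_A$ along $j_A\colon\Spec(A)\setminus V(p,I)\hookrightarrow\Spec(A)$, with the Frobenius coming from $\varphi_\cE$ (an isomorphism after inverting $\cI_\Prism$ because $\varphi_\cE$ is). The subtlety is that the naive derived pushforward $Rj_{A,*}\cE_A$ is in general \emph{not} a perfect $A$-complex (already $Rj_{A,*}\cO$ fails to be perfect over a Breuil--Kisin prism, because of non-vanishing local cohomology along the codimension-two locus $V(p,I)$); the point of the theorem is precisely that for $\cE_A$ coming from an analytic prismatic $F$-crystal one can, after a suitable choice (essentially $\tau^{\le 0}Rj_{A,*}\cE_A=j_{A,*}\cE_A$), produce a perfect $\cO_\Prism$-complex. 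I would organize the construction so that the output visibly restricts back to $\cE$ on the analytic loci and visibly recovers the tautological inclusion on the subcategory $\Vect^\varphi(X_\Prism,\cO_\Prism)$ of \cref{restriction-functor} (where $\cE_A$ already extends to a bundle on $\Spec(A)$); essential uniqueness then follows because the functor is determined, up to unique isomorphism, by this compatibility together with flat base change.

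The heart of the proof --- and the step I expect to be the main obstacle --- is showing that this pushforward is indeed a perfect $\cO_\Prism$-complex and assembles into a crystal. By the standard reductions (flat descent lets one assume $(A,I)$ is orientable and transversal, and restrict to a covering family of prisms) this breaks into two ingredients. First, over prisms where every vector bundle on $\Spec(A)\setminus V(p,I)$ extends to a finite projective $A$-module --- which includes perfect prisms, being arc-locally products of copies of $\rA_{\inf}$ of rank-one perfectoid valuation rings, by \cref{restriction-vb} (the analogue of \cite[Lem.~4.6]{BMS18}) together with arc-descent of vector bundles (\cite{BM21}) --- the pushforward is literally that finite projective module, hence perfect, and the whole picture is manifestly base-change compatible and Frobenius equivariant. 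Second, to reach a general prism one uses that $j_A$ is quasi-compact, so $j_{A,*}$ preserves quasi-coherence and commutes with flat base change, that $\Hh^0\big(\Spec(A)\setminus V(p,I),\cE_A\big)$ is finitely presented by \cref{global-sec} (and is perfect once the prism in question is regular, or of finite Tor-dimension over a regular base, which one can arrange for a covering family, e.g.\ of Breuil--Kisin type), and that perfectness and the crystal descent data propagate along $(p,I)$-completely faithfully flat covers. Combining these with Zariski descent on $X$ yields the functor $\Vect^{\an,\varphi}(X_\Prism)\to D^\varphi_\perf(X_\Prism)$.

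For full faithfulness I would pass to internal Homs. Both categories carry internal Hom functors, compatible with base change since $R\mathcal{H}om$ of perfect complexes is perfect, and I claim the functor $\Psi$ just constructed intertwines them: $\Psi\big(\iHom^{\an}(\cE,\cE')\big)\simeq\iHom_{D_\perf}(\Psi\cE,\Psi\cE')$. Reducing to the extending prisms above, where $\cE_A,\cE'_A$ become finite projective modules $M,M'$ in degree $0$, both sides become $\Hom_A(M,M')$ (using once more that the bundle $\mathcal{H}om(M,M')$ extends with no higher cohomology), and this identification is compatible with base change and Frobenius, so it globalizes by descent. Writing $\cH=\iHom^{\an}(\cE,\cE')$, the group $\Hom_{\Vect^{\an,\varphi}(X_\Prism)}(\cE,\cE')$ is the module of $\varphi$-equivariant global sections of $\cH$, i.e.\ $\ker\big(\Hh^0(X_\Prism,\cH)\xrightarrow{1-\varphi}\Hh^0(X_\Prism,\cH[1/\cI_\Prism])\big)$, while $\Hom_{D^\varphi_\perf(X_\Prism)}(\Psi\cE,\Psi\cE')=\Hh^0$ of the homotopy $\varphi$-fixed points of $R\Gamma(X_\Prism,\Psi\cH)$. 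Since $R\Gamma(X_\Prism,\Psi\cH)$ and $R\Gamma(X_\Prism,\Psi\cH[1/\cI_\Prism])$ are coconnective and their $\Hh^0$ agree with the global sections of $\cH$ resp.\ $\cH[1/\cI_\Prism]$ over the analytic loci (left-exactness of $R\lim$ together with the fact that $\Psi\cH$ evaluates at $(A,I)$ to sections of $\cH_A$ over $\Spec(A)\setminus V(p,I)$), the long exact sequence of the fibre of $1-\varphi$ identifies this $\Hh^0$ with the same kernel. Hence $\Psi$ is fully faithful.
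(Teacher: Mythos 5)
Your construction is, in substance, the paper's construction: pick a Breuil--Kisin prism $(A,I)$ with $A$ regular noetherian lifting a framed affine chart, form its Čech nerve, observe that $j_{A,*}\cE_A = \Hh^0(\Spec A \setminus V(p,I),\cE_A)$ is finitely presented by \cref{global-sec} and hence perfect over the regular $A$, and propagate perfectness along the (flat) coface maps $A^0 \to A^n$ by flat base change. Your opening detour through ``extending prisms'' via \cref{restriction-vb} and arc-descent is extraneous --- the paper never needs it here (\cref{restriction-vb} is invoked only for the strong \'etale comparison) --- and the inference ``perfect prisms are arc-locally products of $\rA_{\inf}$ of valuation rings, hence analytic bundles extend'' would need an actual descent-of-extensions argument to make rigorous; luckily your second paragraph already does the real work. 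One thing you gloss over that the paper treats with some care: the functor must be shown independent of the choice of regular $\delta$-lift $(A,I)$. This is not automatic from ``compatibility with the tautological inclusion plus flat base change''; the paper proves it by taking the coproduct $(A_3,I_3)$ of two lifts in $X_\Prism$ and using flatness of $A_i \to A_3$ to identify $(j_{A_1,*}\cE_{A_1})\otimes^L_{A_1} A_3 \simeq (j_{A_2,*}\cE_{A_2})\otimes^L_{A_2} A_3$.

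For full faithfulness your route (internal Hom, then identify $\Hom$ in each category with the $\varphi$-fixed points of global sections and compare via the long exact sequence of $1-\varphi$ using coconnectivity) does work, but the paper's reduction is slicker and worth noting: since $\cE_n$ and $j_{n,*}\cE_n$ agree on $\Spec(A^n)\setminus V(I)$ --- the locus where the Frobenius structure actually lives --- the $\varphi$-equivariance constraint is \emph{literally the same} condition on both sides of the comparison map, so one can drop it entirely and compare the underlying non-equivariant $\Hom$ groups, which then follows immediately from the $(j_n^*,j_{n,*})$-adjunction because everything sits in degree $0$. No fibre-sequence bookkeeping required.
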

\begin{proof}
	We first assume by \cref{local-framing} that $X = \Spf R$ is framed affine.
	Let $\widetilde{X} = \Spf \widetilde{R}$ be a model of $X$ over $V_0$ (cf.\ \cref{unramified-model}).
	By deformation theory, there exists a weakly initial and transversal prism $(A,I)$ with $A$ regular noetherian and $\overline{A}=R$;
	for example, we can take $A \colonequals \widetilde{R}\llbracket u \rrbracket$ with $\delta(u) = 0$ and $I$ to be the kernel of the map $A \to R$ which sends $u$ to a uniformizer $\pi$ of $\cO_K$.\footnote{\label{Breuil-Kisin}
	This is a \emph{Breuil--Kisin prism} in the sense of \cite[Ex.~3.4]{DLMS22}.}
	Moreover, as in \cite[Prop.~3.13]{BS19} can form the \v{C}ech nerve $(A^\bullet,I^\bullet)$ of $A$, such that the maps in this \v{C}ech nerve are $(p,I)$-completely faithfully flat.
	Moreover, since $A^0 = A$ is noetherian, all coface maps $A^0 \to A^n$ are faithfully flat \cite[\href{https://stacks.math.columbia.edu/tag/0912}{Tag~0912}]{SP}.
	
	By $(p,I)$-completely faithfully flat descent for vector bundles and perfect complexes, we have representations as homotopy limits
	\[ \Vect^{\an,\varphi}(X_\Prism) \simeq \lim_{[n] \in \Delta} \Vect^{\an,\varphi}(A^n) \quad \text{and} \quad D^\varphi_\perf(X_\Prism) \simeq \lim_{[n] \in \Delta} D^\varphi_\perf(A^n). \]
	In the case of the $1$-categories $\Vect^{\an,\varphi}(A^n)$, the homotopy limit can be described more concretely in terms of objects of $\Vect^{\an,\varphi}(A^0)$ together with the usual descent data;
	cf.\ \cite[Thm.~4.2.4.1]{Lur09} and \cite[\S 2]{Hol08}.
	To obtain the   functor $F \colon \Vect^{\an,\varphi}(X_\Prism) \to D^\varphi_\perf(X_\Prism)$, we must therefore give compatible functors of $\infty$-categories
	\[ F_n \colon \Vect^{\an,\varphi}(A^n) \to D^\varphi_\perf(A^n) \]
	for all $n$.
	
	Let $(\cE,\varphi_\cE) \in \Vect^{\an,\varphi}(X_\Prism)$ with corresponding $(\cE_n,\varphi_{\cE_n}) \in   \Vect^{\an,\varphi}(A^n)$.
	We have the open immersions $j_n \colon \Spec(A^n) \setminus V(p,I) \hookrightarrow \Spec(A^n)$. 
	When $n=0$, we also use the notations $\cE_A$ for the vector bundle $\cE_0$ over $\Spec(A)\setminus V(p,I)$ and $j_A$ for the open immersion $j_0$.
	\Cref{global-sec} shows that $j_*\cE_A=j_{0,*}\cE_0$ is a finitely presented $\cO$-module and can thus be identified with a perfect complex on $A^0$ because $A^0$ is regular.
	Moreover, the crystal condition of $\cE$ and flat base change for any of the coface maps $A^0 \to A^n$ supplies natural isomorphisms
	\begin{equation}\label{flat-bc}
		j_{0,*}\cE_0 \otimes^L_{A^0} A^n \simeq j_{0,*}\cE_0 \otimes_{A^0} A^n \simeq j_{n,*}(\cE_0 \otimes_{A^0} A^n) \simeq j_{n,*}\cE_n,
	\end{equation}
	thus $j_{n,*}\cE_n$ defines a perfect complex for all $n$.
	Since $j_{n,*}\cE_n \otimes^L A^n[1/I] \simeq \restr{\cE_n}{\Spec(A^n) \smallsetminus V(I)}$, we can then set $F_n(\cE_n,\varphi_{\cE_n}) \colonequals (j_*\cE_n,\varphi_{\cE_n}) \in D^\varphi_\perf(A^n)$.
	Furthermore, given any map $A^n \to A^m$ in the \v{C}ech nerve $A^\bullet$, we have natural isomorphisms
	\[ j_{n,*}\cE_n \otimes^L_{A^n} A^m \simeq j_{0,*}\cE_0 \otimes^L_{A^0} A^n \otimes^L_{A^n} A^m \simeq j_{0,*}\cE_0 \otimes^L_{A^0} A^m \simeq j_{m,*}\cE_m \]
	by \cref{flat-bc}, and under these identifications $\varphi_{\cE_n} \otimes \id_{A^m} = \varphi_{\cE_m}$.
	Thus, we obtain compatible functors $F_n \colon \Vect^{\an,\varphi}(A^n) \to D^\varphi_\perf(A^n)$.
	Their homotopy limit is the desired functor $F \colon \Vect^{\an,\varphi}(X_\Prism) \to D^\varphi_\perf(X_\Prism)$.
	Concretely, given a prism $(B,J)\in X_\Prism$ and a map from $(A,I)$, the evaluation of $F(\cE,\varphi_\cE)$ at $(B,J)$ is equal to 
	\[
	(j_{A,*} \cE_A)\otimes^L_A B.
	\]
	
	We then note that the construction is independent of the choice of the $\delta$-lift $(A,I)$.
	This amounts to show that given any two maps of prisms
	\[
	(A_i,I_1) \longrightarrow (B,J) \longleftarrow (A_2,I_2),
	\]
	where both $(A_i,I_i)$ are regular noetherian prisms lifting  the ring $R$, there is a natural isomorphism of $B$-complexes between $(j_{A_i, *}\cE_{A_i})\otimes^L_{A_i} B$ for $i=1,2$.
	To find this isomorphism, we take the coproduct $(A_3,I_3)$ of the prisms $(A_i,I_i)$ in $X_\Prism$, which is $(p,I_i)$-completely flat and thus flat over $(A_i,I_i)$ for $i=1,2$.
	Moreover, for $i=1,2$, the map  $(A_i, I_i) \to (B,J)$ uniquely extends to a map $(A_3,I_3)\to (B,J)$.
	Notice that by the crystal condition of $\cE$ and the flat base change theorem, we have natural isomorphisms
	\[
	(j_{A_1, *}\cE_{A_1})\otimes^L_{A_1} A_3 \simeq j_{A_3, *} (\cE_{A_1} \otimes^L_{A_1} A_3) \simeq j_{A_3, *} (\cE_{A_2} \otimes^L_{A_2} A_3) \simeq j_{A_2, *} \cE_{A_2}\otimes^L_{A_2} A_3.
	\]
	Thus we finish the proof of the independence, by further pulling back the above isomorphisms along $A_3\to B$.
	
	Lastly, we show the full faithfulness of the functor $F$.
	As the question is Zariski local on $X$, we may still assume $X$ is affine and admits a prism $(A,I)$ lifting $X$, and let $(A^\bullet,IA^\bullet)$ be the \v{C}ech nerve as above.
	Let $(\cE, \varphi_{\cE})$ and $(\cF, \varphi_\cF)$ be two analytic prismatic $F$-crystals over $X$.
	Then we need to show that the natural map of limits below is an isomorphism:
	\[
	\lim_{[n] \in \Delta} \Hom_{\Vect^{\an,\varphi}(A^n)} (\cE_n, \cF_n) \longrightarrow \lim_{[n] \in \Delta} \Hom_{D_\perf^{\varphi}(A^n)} (j_{n,*}\cE_n, j_{n,*}\cF_n).
	\]
	Moreover, as $\cE_n$ and $j_{n,*}\cE_n$ (and similarly for $\cF$) coincide when restricted to the open subset $\Spec(A^n)\setminus V(I)$, it suffices to show that the map betweem the hom groups of underlying vector bundles/perfect complexes is an isomorphism
	\[
	  \Hom_{\Vect^{\an }(A^n)} (\cE_n, \cF_n) \longrightarrow  \Hom_{D_\perf(A^n)} (j_{n,*}\cE_n, j_{n,*}\cF_n).
	\]
	Finally, since both the complexes $j_{n,*}\cE_n,~j_{n,*} \cF_n \in  D_\perf(A^n)$ live in cohomological degree zero, the equality then follows essentially from the adjunction $(j_n^*, j_{n,*})$ as below
	\begin{align*}
			\Hom_{\Vect^{\an }(A^n)} (\cE_n, \cF_n) & = \Hom_{\mathrm{Coh}_{\Spec(A^n)\setminus V(p,I)}}(\cE_n, \cF_n) \\
			& =\Hom_{\mathrm{Coh}_{\Spec(A^n)\setminus V(p,I)}} (j_n^* j_{n,*}\cE_n, \cF_n)\\
			& = \Hom_{\Mod_{A^n}}(j_{n,*} \cE_n, j_{n,*} \cF_n) \\
			& =   \Hom_{D_\perf(A^n)} (j_{n,*}\cE_n, j_{n,*}\cF_n).
	\end{align*}
	So we are done.
\end{proof}

\subsection{Hodge-Tate cohomology and finiteness}\label{perfect-pushforward}
In this subsection, we show the Hodge-Tate comparison of prismatic cohomology and discuss the finiteness and perfectness.

\begin{proposition}\label{perfectness}
	Let $(A,I)$ be a bounded prism, let $X$ be a proper, smooth formal $\overline{A}$-scheme of relative dimension $n$, and let $(\cE,\varphi_\cE)$ be a prismatic crystal in perfect complexes of tor-amplitude $[a,b]$ over $(X/A)_\Prism$.
	Then the relative prismatic cohomology $R\Gamma\bigl((X/A)_\Prism, \cE\bigr)$ is a perfect $A$-complex of tor-amplitude $[a,b+2n]$.
\end{proposition}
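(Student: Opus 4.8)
The Frobenius structure plays no role in the assertion, so throughout I forget it and regard $\cE$ as an object of $D_\perf\bigl((X/A)_\Prism,\cO_\Prism\bigr)$ of tor-amplitude $[a,b]$. The plan is to reduce the statement, over the possibly complicated base $A$, to the Hodge--Tate cohomology over $\overline{A}$, where the conjugate filtration together with the properness of $X/\overline{A}$ will yield both the finiteness and the explicit amplitude bound.

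First I would reduce modulo $\cI_\Prism$. The complex $R\Gamma\bigl((X/A)_\Prism,\cE\bigr)$ is derived $(p,I)$-complete, so by a standard completeness argument (perfectness and tor-amplitude of a derived $(p,I)$-complete $A$-complex may be checked after derived base change along $A\to A/(p,I)$; cf.\ the proof of \cite[Thm.~1.8.(1)]{BS19}) it is enough to show that $R\Gamma\bigl((X/A)_\Prism,\cE\bigr)\otimes^L_A\overline{A}$ is a perfect $\overline{A}$-complex of tor-amplitude $[a,b+2n]$. Now $\cI_\Prism$ is the pullback to $(X/A)_\Prism$ of the invertible ideal $I\subseteq A$, so $A/I$ is a perfect $A$-complex and the projection formula gives
\[
R\Gamma\bigl((X/A)_\Prism,\cE\bigr)\otimes^L_A\overline{A}\;\simeq\;R\Gamma\bigl((X/A)_\Prism,\,\cE\otimes^L_{\cO_\Prism}\overline{\cO}_\Prism\bigr).
\]
Writing $\overline{\cE}\colonequals\cE\otimes^L_{\cO_\Prism}\overline{\cO}_\Prism$, this is the Hodge--Tate cohomology of $X/\overline{A}$ with coefficients in the Hodge--Tate crystal $\overline{\cE}$. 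Evaluating on a $(p,I)$-completely flat cover of $X_\Prism$ by prisms, one sees that $\overline{\cE}$ again has tor-amplitude $[a,b]$ and that the quasi-coherent complex $\overline{\cE}_{\overline{X}}$ on $X$ underlying $\overline{\cE}$ is a perfect $\cO_X$-complex of tor-amplitude $[a,b]$.

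Next I would invoke the Hodge--Tate comparison for crystals in perfect complexes. Since $X\to\Spf\overline{A}$ is smooth of relative dimension $n$, the Hodge--Tate cohomology $R\Gamma\bigl((X/A)_\Prism,\overline{\cE}\bigr)$ carries a finite, exhaustive, increasing conjugate filtration whose $i$-th graded piece is
\[
\gr_i\;\simeq\;R\Gamma\Bigl(X,\ \overline{\cE}_{\overline{X}}\otimes^L_{\cO_X}\wedge^i\Omega^1_{X/\overline{A}}\,\{-i\}\,[-i]\Bigr),\qquad 0\le i\le n
\]
(this is the Hodge--Tate comparison of \cite[\S~4]{BS19} for the structure sheaf, upgraded to perfect-complex coefficients via the stacky description of Hodge--Tate crystals in \cite{BL22b}). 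By smoothness $\wedge^i\Omega^1_{X/\overline{A}}$ is a vector bundle, and $\{-i\}$ is an invertible $\overline{A}$-module, so $\overline{\cE}_{\overline{X}}\otimes^L_{\cO_X}\wedge^i\Omega^1_{X/\overline{A}}\{-i\}$ is a perfect $\cO_X$-complex of tor-amplitude $[a,b]$.

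Finally I would control the graded pieces. As $X\to\Spf\overline{A}$ is proper and smooth of relative dimension $n$, pushing a perfect $\cO_X$-complex of tor-amplitude $[a,b]$ to $\Spf\overline{A}$ produces a perfect $\overline{A}$-complex of tor-amplitude $[a,b+n]$ --- over a non-noetherian $p$-complete $\overline{A}$ this is obtained by writing $R\Gamma(X,\blank)=R\lim_m R\Gamma(X\otimes_{\overline{A}}\overline{A}/p^m,\blank)$, reducing to proper schemes over $\overline{A}/p^m$, and there spreading out to a noetherian base where coherent finiteness applies, the bound $n$ being the cohomological dimension of the fibres. Shifting by $[-i]$, each $\gr_i$ is perfect over $\overline{A}$ of tor-amplitude $[a+i,b+n+i]$. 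Since the conjugate filtration has finitely many steps, $R\Gamma\bigl((X/A)_\Prism,\overline{\cE}\bigr)$ is a finite iterated extension of the $\gr_i$ and is therefore a perfect $\overline{A}$-complex of tor-amplitude contained in $[\,\min_{0\le i\le n}(a+i),\ \max_{0\le i\le n}(b+n+i)\,]=[a,b+2n]$. Combined with the first paragraph, this proves the proposition. I expect the main obstacle to be the Hodge--Tate comparison with \emph{perfect-complex} coefficients used above: one must know that the conjugate filtration, and in particular the identification of its graded pieces with twisted Higgs-type cohomology, persists for crystals in perfect complexes and not only for vector-bundle coefficients, which is exactly where the stacky formalism (quasi-coherent complexes on the Hodge--Tate stack of the smooth $X/\overline{A}$) is needed; the properness finiteness over the non-noetherian base is by contrast routine.
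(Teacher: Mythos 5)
Your argument is correct and follows essentially the same route as the paper: both proofs reduce mod $I$ to Hodge--Tate cohomology using derived $I$-completeness (the mod-$I$ criterion for perfectness and for tor-amplitude), and both then control $R\Gamma\bigl((X/A)_\Prism,\overline{\cE}\bigr)$ via the conjugate/Higgs filtration for Hodge--Tate crystals in perfect complexes, appealing to the stacky description from \cite{BL22b} (this is the paper's \cref{Higgs-coh}).

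The one organizational difference is worth noting. The paper factors through the pushforward $R\nu_*\overline{\cE}$ to the Zariski site of $X$ and bounds \emph{that} first: for affine $X$, $R\nu_*\overline{\cE}$ has a finite filtration with graded pieces $\cE(R)\otimes\Omega^i_{X/\overline{A}}\{-i\}[-i]$ of tor-amplitude $[a+i,b+i]$, whence $R\nu_*\overline{\cE}$ has tor-amplitude $[a,b+n]$ over $\cO_X$; it then invokes proper pushforward once to land in $[a,b+2n]$. You instead apply a global conjugate filtration directly to $R\Gamma\bigl((X/A)_\Prism,\overline{\cE}\bigr)$ and invoke properness for each graded piece $R\Gamma\bigl(X,\overline{\cE}_{\overline{X}}\otimes\Omega^i_{X/\overline{A}}\{-i\}\bigr)[-i]$, each of tor-amplitude $[a+i,b+n+i]$, and take the union over $0\le i\le n$. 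The bookkeeping agrees, $[a,b+2n]$ either way. The paper's route has the small advantage of making the Zariski-local reduction explicit (its \cref{Higgs-coh} is stated only for affine $X$), whereas you implicitly globalize the conjugate filtration; this is harmless since the filtration is constructed naturally from a Koszul complex and so glues, but you should flag the globalization as a (routine) step.
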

\begin{proof}
    Set $\overline{\cE} \colonequals \cE \otimes^L_A \overline{A}$.
	By the derived $I$-completeness of $R\Gamma\bigl((X/A)_\Prism, \cE\bigr)$ and the mod $I$-criterion for perfectness \cite[\href{https://stacks.math.columbia.edu/tag/07LU}{Tag~07LU}]{SP}, it suffices to show that the Hodge--Tate cohomology $R\Gamma\bigl((X/A)_\Prism, \cE\bigr)\otimes^L_A \overline{A}\simeq R\Gamma\bigl((X/A)_\Prism, \overline{\cE}\bigr)$ is a perfect $\overline{A}$-complex in order to get the perfectness of $R\Gamma\bigl((X/A)_\Prism, \cE\bigr)$.
	Moreover, by the proof of \cite[\href{https://stacks.math.columbia.edu/tag/0DJG}{Tag~0DJG}]{SP}, the tor-amplitude of the $A$-complex $R\Gamma\bigl((X/A)_\Prism, \cE\bigr)$ is the same as the tor-amplitude of its reduction mod $I$ (as an $\overline{A}$-complex).
	Thus, it suffices to show that $R\Gamma\bigl((X/A)_\Prism, \overline{\cE}\bigr)$ is perfect of tor-amplitude $[a,b+2n]$.
	
	Consider the morphism of topoi $\nu \colon \Shv\bigl((X/A)_\Prism\bigr) \to \Shv(X_\et)$ from \cite[Constr.~4.4]{BS19}.
	Since $R\Gamma\bigl((X/A)_\Prism, \overline{\cE}\bigr) \simeq R\Gamma(X_\et,R\nu_*\overline{\cE})$, the usual perfectness of proper pushforwards (\cite[\href{https://stacks.math.columbia.edu/tag/09AW}{Tag 09AW}, \href{https://stacks.math.columbia.edu/tag/0B91}{Tag~0B91}]{SP}) reduces us to showing that $R\nu_*\overline{\cE}$ is a perfect $\cO_X$-complex of tor-amplitude $[a,b+n]$.
	For this, we may work Zariski locally on $X$ and assume that $X$ is affine.
	The statement then follows from \cref{Higgs-coh} below, which says that the Hodge--Tate cohomology of $\cE$ is computed by the associated Higgs complex.
\end{proof}
We follow the convention of \cite{BL22b} and call an $\overline{\cO}_\Prism$-linear prismatic crystal over $(X/A)_\Prism$ a \emph{Hodge--Tate crystal}.
\begin{proposition}\label{Higgs-coh}
	Let $(A,I)$ be a bounded prism, let $X=\Spf(R)$ be a smooth affine formal $\overline{A}$-scheme, and let $\cE$ be a Hodge--Tate crystal in perfect complexes over $(X/A)_\Prism$.
	Then the Hodge--Tate cohomology $R\Gamma\bigl((X/A)_\Prism, \cE\bigr)$ admits a finite filtration whose $i$-th graded piece is
	\[
	\cE(R)\otimes_{\cO_X} \Omega_{X/\overline{A}}^i\{-i\}[-i].
	\]
	Here, $\cE(R)$ is a perfect $R$-complex and both $\cE(R)$ and the filtration are functorial with respect to $\cE$.
	In particular, $R\Gamma\bigl((X/A)_\Prism, \cE\bigr)$ is a perfect $R$-complex.
\end{proposition}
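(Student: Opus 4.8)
The plan is to reduce the statement to the case $\cE = \overline{\cO}_\Prism$ treated in \cite{BS19} (Hodge--Tate comparison) and in \cite{BL22b} (the stacky approach), and then bootstrap to general Hodge--Tate crystals in perfect complexes using the crystal property together with the affine covering coming from a framing. First I would recall that by the smoothness of $X$ we may, after possibly shrinking, choose a framing $\square \colon X \to \Spf \overline{A}\langle x_1^{\pm 1},\dotsc,x_d^{\pm 1}\rangle$ (\cref{local-framing}). Over such a framed $X$, the Hodge--Tate comparison of \cite[Thm.~1.8]{BS19} identifies $R\nu_*\overline{\cO}_\Prism$ with the (shifted, twisted) exterior algebra $\bigoplus_i \Omega^i_{X/\overline{A}}\{-i\}[-i]$, and more precisely exhibits $R\Gamma((X/A)_\Prism,\overline{\cO}_\Prism)$ as computed by a Higgs-type complex. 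The point is to promote this to coefficients in $\cE$.

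The key steps, in order, are as follows. (1) Using the stacky description of Hodge--Tate cohomology from \cite{BL22b}: the Hodge--Tate stack $X^{\mathrm{HT}}$ of $X$ over $\overline{A}$ is a gerbe banded by a form of $\mathbb{G}_a^\sharp$ (or $\widehat{\mathbb{G}}_a$), and quasi-coherent complexes on it are exactly Hodge--Tate crystals. A Hodge--Tate crystal $\cE$ in perfect complexes thus corresponds to a perfect complex on $X^{\mathrm{HT}}$, and its cohomology $R\Gamma((X/A)_\Prism,\cE)$ is the pushforward to $\Spf \overline{A}$ along $X^{\mathrm{HT}} \to X \to \Spf\overline{A}$. (2) The module $\cE(R)$ is by definition the evaluation of $\cE$ on the object of $(X/A)_\Prism$ given by $\overline{A}\to R$ with its $\delta$-structure (or equivalently the restriction of the corresponding complex on $X^{\mathrm{HT}}$ along the section $X \to X^{\mathrm{HT}}$); since $\cE$ is a crystal in perfect complexes and $R$ is smooth over $\overline{A}$, this is a perfect $R$-complex, and it is visibly functorial in $\cE$. (3) The conjugate (or Hodge--Tate) filtration on $R\nu_{*}\cE$: pushing forward along the gerbe $X^{\mathrm{HT}}\to X$ yields a complex on $X$ equipped with a finite filtration whose graded pieces are $\cE(R)\otimes_{\cO_X}\Omega^i_{X/\overline{A}}\{-i\}[-i]$ — this is the projection formula applied to the known computation $R(\text{gerbe}\to X)_*\cO \simeq \bigoplus_i \Omega^i_{X/\overline{A}}\{-i\}[-i]$, together with the observation that tensoring by the perfect complex $\cE$ commutes with this pushforward. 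Concretely, one can instead argue via the explicit Higgs complex: the crystal structure of $\cE$ produces on $\cE(R)$ a Higgs field $\Theta_\cE \colon \cE(R) \to \cE(R)\otimes_R \Omega^1_{R/\overline{A}}\{-1\}$ (arising, as in \cref{crystal-D-mod}, from the transition isomorphism of $\cE$ along the two maps $R \to R\widehat{\otimes}_{\overline{A}}R$ to the self-product), and the Higgs complex $\cE(R) \to \cE(R)\otimes\Omega^1\{-1\} \to \cdots$ computes $R\Gamma((X/A)_\Prism,\cE)$ with its stupid filtration having the stated graded pieces. (4) Since each $\Omega^i_{R/\overline{A}}$ is finite free over $R$ and $\cE(R)$ is perfect, each graded piece is perfect and the filtration is finite (length $\le d+1$), so $R\Gamma((X/A)_\Prism,\cE)$ is a perfect $R$-complex; functoriality of everything in $\cE$ is inherited from functoriality of $\cE(R)$ and of the crystal structure.

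I expect the main obstacle to be step (3): pinning down the Higgs field and the comparison isomorphism for general perfect-complex coefficients, rather than for vector bundles, and checking it is genuinely functorial and independent of the framing. For vector bundle coefficients this is classical (and essentially the content of \cite[\S 4]{BS19} with coefficients, or Tsuji's work), but for perfect complexes one must be careful that the transition isomorphism of the crystal, the descent along $R \to R\widehat{\otimes}_{\overline{A}}R$, and the resulting filtered complex are all constructed at the derived level. The cleanest way around this is precisely to adopt the stacky formalism of \cite{BL22b}: one works on $X^{\mathrm{HT}}$ throughout, where ``Hodge--Tate crystal in perfect complexes'' literally means ``perfect complex on $X^{\mathrm{HT}}$,'' the pushforward along the gerbe is computed once and for all, and the projection formula does the rest — avoiding any hand manipulation of Čech complexes with derived coefficients. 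The remaining bookkeeping (identifying the graded pieces, the Breen--Deligne-type twists $\{-i\}$, the finiteness of the filtration) is then routine given the corresponding statements for $\overline{\cO}_\Prism$.
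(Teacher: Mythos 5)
Your proposal is essentially the paper's argument: one chooses a lift $\tilde R$ of $R$ to a prism over $(A,I)$ (this defines $\cE(R)$ as the evaluation of $\cE$ at $(\tilde R,I\tilde R)$, equivalently the restriction of the corresponding complex on the Hodge--Tate stack along the resulting section), identifies Hodge--Tate crystals in perfect complexes with perfect complexes on $B\fT_X\{1\}^\sharp$ as in \cite{BL22b}, and then resolves $R$ over $S=\Sym^*_R(T_R\{1\})$ by a Koszul complex, whose naive filtration produces exactly the stated graded pieces $\cE(R)\otimes_{\cO_X}\Omega^i_{X/\overline{A}}\{-i\}[-i]$. One correction to your step~(3), though: the ``projection formula'' heuristic does not apply as written, because $\cE$ on $X^{\mathrm{HT}}$ is \emph{not} pulled back from $X$ (only constant Hodge--Tate crystals are), so you cannot deduce the answer by tensoring $R\pi_*\cO$ with $\cE$. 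Your concrete fallback --- the Higgs/Koszul complex built from the crystal structure on $\cE(R)$, i.e.\ $R\Hom_S\bigl(\Kos_S(T_R\{1\}\otimes_R S),\cE(R)\bigr)$ --- is exactly the paper's argument and repairs this. (A minor slip: the gerbe $X^{\mathrm{HT}}\to X$ is banded by $\fT_X\{1\}^\sharp$, which has relative dimension $d=\dim(X/\overline{A})$, not by a rank-one group.)
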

\begin{remark}
	In the special case when $\cE=\overline{\cO}_{X/A}$, \cref{Higgs-coh} is exactly the Hodge-Tate comparison theorem from \cite[Thm.~1.8.(2)]{BS19}, whence the name.
\end{remark}
\begin{remark}
	For Hodge--Tate crystals in vector bundles, \cref{Higgs-coh} was proved via a different approach in \cite[\S~5]{Tia21}.
	Below, we give a proof in the stacky language, following \cite{BL22b}.
\end{remark}
\begin{proof}
We begin with the construction of the perfect $R$-complex $\cE(R)$.
As $R$ is $p$-adic smooth over $\overline{A}$, we can find a bounded prism $\tilde{R}$ over $A$ with $\tilde{R} / I\tilde{R} \simeq R$.
In particular, by evaluating the crystal $\cE$ at $(\tilde{R}, I\tilde{R})$, we get a perfect complex over $R$ that is functorial with respect to $\cE$.
We denote this $R$-linear perfect complex by $\cE(R)$.

To proceed, following the same proof as in \cite[Cor.~6.6]{BL22b}, the lift $\tilde{R}$ induces the equivalence of categories
\[
D_\perf\bigl((X/A)_\Prism, \overline{\cO}_\Prism\bigr) \simeq D_{\perf}(B \fT_X\{1\}^\sharp).
\]
Here, $\fT_X\{1\}^\sharp$ is the divided power envelope of the zero section in the twisted (geometric) tangent bundle $\fT_X\{1\}$ and the right-hand side above is the category of perfect complexes over the classifying stack of $\fT_X\{1\}^\sharp$.
Moreover, as in \cite[Lem.~6.7]{BL22b}, the category  $ D_{\perf}(B \fT_X\{1\}^\sharp)$ is equivalent to the category of perfect complexes over the dual vector bundle $(\fT_X\{1\})^\vee$ that are set-theoretically supported at the zero section.
Namely it consists of perfect $R$-complexes $M$ that admit an action by $S=\Sym_R^*(T_R\{1\})$ such that $\Sym_R^{\geq n}(T_R\{1\})$ acts trivially for large $n$.
By \cite[Prop.~5.12]{BL22b}, the composition of the two equivalences above sends the crystal $\cE$ to the perfect complex $M=\cE(R)$ with an action by $S$, which is trivial in large degrees.
As a consequence, we get the following calculation:
\begin{align*}
	R\Gamma\bigl((X/A)_\Prism, \cE\bigr) \simeq &  R\Hom_{B \fT_X\{1\}^\sharp} (\cO_X, \cE(R)) \\
	\simeq & R\Hom_{S}( R, \cE(R)) \\
	\simeq & R\Hom_{S} (\Kos_S(T_R\{1\}\otimes_R S), \cE(R)).
\end{align*}
Here, $\Kos_S(T_R\{1\}\otimes_R S)$ denotes the Koszul complex $\bigl((\bigwedge^{\dim(X)} T_R\{1\} )\otimes_R S \to \cdots \to T_R\{1\}\otimes_R S \to S \bigr)$ which resolves the quotient ring $R=S/(T_R\{1\}\cdot S)$.
In particular, the naive filtration of the Koszul complex induces a filtration on Hodge--Tate cohomology $R\Gamma\bigl((X/A)_\Prism, \cE\bigr)$ such that $i$-th graded piece is $\cE(R) \otimes_R \Omega_R^i\{-i\}[-i]$.
\end{proof}
\begin{corollary}\label{base-change}
	Let $(A,I) \to (B,IB)$ be a map of bounded prisms and let $X$ be a quasicompact quasiseparated smooth formal $\overline{A}$-scheme.
	Assume $\cE\in D_\perf\bigl((X/A)_\Prism\bigr)$ is a prismatic crystal in perfect complexes.
	Then the natural map of cohomology complexes
	\[
	R\Gamma\bigl((X/A)_\Prism, \cE\bigr)\widehat{\otimes}^L_A B \to R\Gamma\bigl((X_{\overline{B}}/B)_\Prism, \cE_B\bigr),
	\]
	where the tensor product is derived $(p,I)$-completed and $\cE_B\in D_\perf(X_{\overline{B}}/B_\Prism)$ is the restriction of $\cE$, is an isomorphism.
\end{corollary}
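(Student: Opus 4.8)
The plan is to reduce, via Zariski descent and derived Nakayama, to a base change statement for Hodge--Tate cohomology, which is then immediate from the Higgs complex description in \cref{Higgs-coh}.

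First I would observe that the base change morphism $(A,I) \to (B,IB)$ induces a morphism of prismatic topoi $(X_{\overline B}/B)_\Prism \to (X/A)_\Prism$ under which $\cO_\Prism$ pulls back to $\cO_\Prism$ and $\cE$ to $\cE_B$; this produces the canonical $A$-linear comparison map $R\Gamma((X/A)_\Prism,\cE) \to R\Gamma((X_{\overline B}/B)_\Prism,\cE_B)$ and hence, by extension of scalars, the map in the statement. Since $X$ is quasicompact and quasiseparated, it admits a finite cover by affine opens; prismatic cohomology satisfies flat (in particular Zariski) descent, and for a finite cover the \v{C}ech complex is a bounded complex of the cohomologies of the finite intersections, hence a finite iterated extension. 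As $\blank\widehat{\otimes}^L_A B$ is an exact functor between stable $\infty$-categories, it commutes with such, so we may assume $X = \Spf R$ is affine and smooth over $\overline A$; correspondingly $X_{\overline B} = \Spf R_{\overline B}$ with $R_{\overline B} \colonequals R\widehat{\otimes}_{\overline A}\overline B$.

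Next, both $R\Gamma((X/A)_\Prism,\cE)\widehat{\otimes}^L_A B$ and $R\Gamma((X_{\overline B}/B)_\Prism,\cE_B)$ are derived $(p,I)$-complete, hence derived $I$-complete, $B$-complexes, so by derived Nakayama it suffices to prove that the map becomes an equivalence after $\blank\otimes^L_B B/IB$. Writing $I = (d)$, reduction modulo $IB$ is a finite colimit and so commutes with the derived completions involved; using that reduction of prismatic cohomology modulo $I$ computes Hodge--Tate cohomology (as in the proof of \cref{perfectness}) and that restriction along $(A,I) \to (B,IB)$ commutes with reduction modulo $I$, one is reduced to proving that for the Hodge--Tate crystal in perfect complexes $\overline\cE \colonequals \cE \otimes^L_{\cO_\Prism}\overline\cO_\Prism$ the natural map
\[ R\Gamma\bigl((X/A)_\Prism,\overline\cE\bigr)\widehat{\otimes}^L_{\overline A}\overline B \longrightarrow R\Gamma\bigl((X_{\overline B}/B)_\Prism,\overline{\cE_B}\bigr) \]
is an equivalence, where $\overline{\cE_B}$ is the restriction of $\overline\cE$ and the tensor product is now derived $p$-completed.

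Finally, I would choose a smooth $p$-adic formal lift $\tilde R$ of $R$ over $A$ and set $\tilde R_B \colonequals \tilde R\widehat{\otimes}_A B$, which is a lift of $R_{\overline B}$ over $B$ and is bounded since it is flat over the bounded prism $B$. Applying \cref{Higgs-coh} on both sides (with respect to these lifts) furnishes finite, functorial filtrations with graded pieces $\overline\cE(R)\otimes_R\Omega^i_{R/\overline A}\{-i\}[-i]$ and $\overline{\cE_B}(R_{\overline B})\otimes_{R_{\overline B}}\Omega^i_{R_{\overline B}/\overline B}\{-i\}[-i]$, respectively; by functoriality the base change map is filtered. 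The crystal property of $\cE$ gives $\overline{\cE_B}(R_{\overline B}) = \cE_B(\tilde R_B)\otimes^L_{\tilde R_B}R_{\overline B} = \cE(\tilde R)\otimes^L_{\tilde R}R_{\overline B} = \overline\cE(R)\otimes^L_R R_{\overline B}$, and $R$ is flat over $\overline A$, so $\overline\cE(R)\widehat{\otimes}^L_{\overline A}\overline B \simeq \overline\cE(R)\otimes^L_R R_{\overline B} = \overline{\cE_B}(R_{\overline B})$; moreover $\Omega^i_{R_{\overline B}/\overline B} = \Omega^i_{R/\overline A}\otimes_R R_{\overline B}$ is finite projective over $R_{\overline B}$ and the Breuil--Kisin twist base changes compatibly since $IB/I^2B = (I/I^2)\otimes_{\overline A}\overline B$. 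Hence the base change map is an equivalence on each graded piece, and since the filtrations are finite, it is an equivalence. I expect the main obstacle to be purely bookkeeping: tracking the interaction of the various derived completions with the mod-$I$ reduction and with the finite \v{C}ech totalizations, and checking that $\tilde R_B$ is again a bounded prism; the genuine content is concentrated in \cref{Higgs-coh}, after which base change holds on the nose because the Higgs complex is assembled from finite projective modules.
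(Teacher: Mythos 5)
Your proof is correct and follows essentially the same route as the paper's: reduce to the Hodge--Tate comparison via derived Nakayama, localize to the framed affine case by Zariski descent, and conclude from the functorial Higgs-complex filtration of \cref{Higgs-coh} together with the crystal property of $\cE$ and the flatness of $R$ over $\overline A$. The only difference is cosmetic (you do Zariski descent before Nakayama, the paper after), and you supply a bit more bookkeeping detail than the paper does.
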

\begin{proof}
	By the derived Nakayama lemma (\cite[\href{https://stacks.math.columbia.edu/tag/0G1U}{Tag~0G1U}]{SP}), it suffices to derived reduce modulo $I$ and show that the base extension map of Hodge-Tate cohomology
	\[
	R\Gamma\bigl((X/A)_\Prism, \cE\bigr)\widehat{\otimes}^L_A \overline{B}\to R\Gamma\bigl((X_{\overline{B}}/B)_\Prism, \cE_B\bigr)\otimes^L_B \overline{B}
	\]
	is an isomorphism,	where the tensor product is derived $p$-completed.
	As both sides satisfy Zariski descent in $X$, we may assume that $X$ is a smooth affine formal $\overline{A}$-scheme which admits a framing.
	Under this assumption, the statement follows from \cref{Higgs-coh} because for each $i$ the natural map
	\[
	\Bigl(\cE(R)\otimes_{\cO_X} \Omega_{X/\overline{A}}^i\{-i\}\Bigr) \widehat{\otimes}_{\overline{A}} \overline{B} \longrightarrow \cE_B(R)\otimes_{\cO_{X_{\overline{B}}}} \Omega_{X_{\overline{B}}/\overline{B}}^i\{-i\}
	\]
	is an isomorphism by the crystal property of $\cE$.
\end{proof}
\begin{corollary}\label{coh-is-crystal}
	Let $f \colon X\to Y$ be a smooth proper morphism of $p$-adic formal schemes. 
	Let $\cE$ be a prismatic crystal in perfect complexes on $X_\Prism$.
	Then the derived pushforward $Rf_{\Prism,*} \cE$ is a prismatic crystal in perfect complexes on $Y_\Prism$.
\end{corollary}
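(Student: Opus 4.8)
The plan is to check the two conditions that characterize an object of $D_\perf(Y_\Prism) = \lim_{(A,I)\in Y_\Prism} D_\perf(A)$: first, that for every bounded prism $(A,I)\in Y_\Prism$ the complex $(Rf_{\Prism,*}\cE)(A,I)$ is a perfect $A$-complex; and second, that these complexes satisfy the crystal condition, namely that for every map $(A,I)\to (B,IB)$ of bounded prisms in $Y_\Prism$ the natural base change map $(Rf_{\Prism,*}\cE)(A,I)\otimes^L_A B \to (Rf_{\Prism,*}\cE)(B,IB)$ is an isomorphism.

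First I would identify the value of $Rf_{\Prism,*}\cE$ on a bounded prism. Given $(A,I)\in Y_\Prism$ with structure map $\Spf(\overline A)\to Y$, set $X_{\overline A}\colonequals X\times_Y \Spf(\overline A)$, which is smooth and proper over $\overline A$ since $f$ is. The slice of the absolute prismatic site $X_\Prism$ over $(A,I)$ is canonically the relative prismatic site $(X_{\overline A}/A)_\Prism$, so by the relative-versus-absolute comparison (cf.\ \cref{rel vs abs coh}) one has
\[ (Rf_{\Prism,*}\cE)(A,I) \simeq R\Gamma\bigl((X_{\overline A}/A)_\Prism, \cE_A\bigr), \]
where $\cE_A \in D_\perf\bigl((X_{\overline A}/A)_\Prism\bigr)$ is the restriction of $\cE$; its tor-amplitude is bounded, which can be checked Zariski-locally on $X$ and hence holds globally since $X$ is quasi-compact. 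Perfectness of this complex over $A$ is then exactly \cref{perfectness}.

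For the crystal condition, let $(A,I)\to (B,IB)$ be a map of bounded prisms in $Y_\Prism$. Under the identification above, the transition map
\[ (Rf_{\Prism,*}\cE)(A,I) \otimes^L_A B \longrightarrow (Rf_{\Prism,*}\cE)(B,IB) \]
becomes the base change map $R\Gamma\bigl((X_{\overline A}/A)_\Prism, \cE_A\bigr)\widehat{\otimes}^L_A B \to R\Gamma\bigl((X_{\overline B}/B)_\Prism, \cE_B\bigr)$ of \cref{base-change}, which is an isomorphism; here one uses that a perfect $A$-complex is automatically derived $(p,I)$-complete, so the $(p,I)$-completed tensor product coincides with the ordinary derived tensor product and no separate completion is needed. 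This finishes the verification, since the two conditions above are precisely what it means for $Rf_{\Prism,*}\cE$ to lie in $D_\perf(Y_\Prism)$.

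I expect the only subtle point to be the identification in the second paragraph of $(Rf_{\Prism,*}\cE)(A,I)$ with relative prismatic cohomology, which amounts to a cofinality statement for the functor from $(X_{\overline A}/A)_\Prism$ into the localized absolute site $X_\Prism/(A,I)$ together with compatibility of the restriction of $\cE$ with structure sheaves; once this bookkeeping is in place the corollary is a formal consequence of \cref{perfectness} and \cref{base-change}. Everything else — boundedness of the tor-amplitude of $\cE_A$ and compatibility of the transition maps with the base change maps — is routine and may be checked Zariski-locally on $Y$ and on $X$.
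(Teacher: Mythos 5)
Your proof follows the approach the paper intends: the paper states \cref{coh-is-crystal} without explicit proof as an immediate consequence of \cref{perfectness} and \cref{base-change}, and your two-step verification (perfectness of the values, then the crystal/base-change condition) is exactly the argument those two results are designed to supply. The reduction to relative prismatic cohomology, the remark that perfect complexes over a prism are automatically derived $(p,I)$-complete, and the Zariski-local bookkeeping for uniform tor-amplitude of $\cE_A$ are all correct.

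One citation is off. In the second paragraph you invoke \cref{rel vs abs coh} for the identification $(Rf_{\Prism,*}\cE)(A,I) \simeq R\Gamma\bigl((X_{\overline A}/A)_\Prism, \cE_A\bigr)$, but that corollary concerns the equivalence between absolute/relative \emph{crystalline} sites and \emph{prismatic} sites over a crystalline prism $(\rA_\crys(S),J)$ attached to a quasiregular semiperfect ring $S$, and plays no role in this step. The identity you need is the general slice-category fact for the pushforward in the prismatic topos: since the cocontinuous functor $X_\Prism \to Y_\Prism$ inducing $f_{\Prism,*}$ sends $(B,J)$ with $\Spf(\overline B)\to X$ to $(B,J)$ with $\Spf(\overline B)\to X\to Y$, the fiber over a fixed $(A,I)\in Y_\Prism$ is the category of prisms over $(A,I)$ with a map $\Spf(\overline B)\to X$ over $\Spf(\overline A)$, which is precisely $(X_{\overline A}/A)_\Prism$. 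This is the implicit identification the paper uses throughout (e.g.\ in the proofs of \cref{perfectness} and \cref{base-change}, and via \cite[Lem.~4.8]{BS19} in the discussion of \cref{etale-comparison}), rather than anything in \cref{rel vs abs coh}.
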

In the following, we use $\varphi_{Z_\Prism}$ to denote the Frobenius morphism on the absolute prismatic site $Z_\Prism$ of a smooth formal $\cO_K$-scheme $Z$.
\begin{corollary}\label{coh-is-crystal-2}
	Let $f \colon X\to Y$ be a smooth proper morphism of $p$-adic formal schemes. 
	Assume $\cE\in D_\perf^\varphi(X_\Prism)$ is a prismatic $F$-crystal in perfect complexes.
	Then both $\varphi_{Y_\Prism}^* Rf_{\Prism*} \cE$ and $Rf_{\Prism *} \varphi_{X_\Prism}^* \cE$ are prismatic crystals in perfect complexes.
\end{corollary}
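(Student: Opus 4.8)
The plan is to deduce this purely formally from \cref{coh-is-crystal}, together with the elementary observation that Frobenius pullback on the absolute prismatic site preserves crystals in perfect complexes.

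First I would recall that the underlying object of $\cE \in D^\varphi_\perf(X_\Prism)$ lies in $D_\perf(X_\Prism) = D_\perf(X_\Prism,\cO_\Prism) = \lim_{(A,I)\in X_\Prism} D_\perf(A)$, so that $\cE$ is given by a compatible family of perfect $A$-complexes $\cE(A,I)$ equipped, for every morphism $(A,I)\to(B,J)$ in $X_\Prism$, with an equivalence $\cE(A,I)\otimes^L_A B \simeq \cE(B,J)$. Next I would check that for any smooth $p$-adic formal $\cO_K$-scheme $Z$, the Frobenius pullback $\varphi_{Z_\Prism}^*$ on $(Z_\Prism,\cO_\Prism)$-modules restricts to an endofunctor of $D_\perf(Z_\Prism)$. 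The point is that $\varphi_{Z_\Prism}$ is induced by the Frobenius lifts $\varphi_A$ carried by the objects $(A,I)\in Z_\Prism$, so that for $\cF \in D_\perf(Z_\Prism)$ one has $(\varphi_{Z_\Prism}^*\cF)(A,I) \simeq \varphi_A^*\cF(A,I) = \cF(A,I)\otimes_{A,\varphi_A} A$; this is again a perfect $A$-complex, and since every transition map $(A,I)\to(B,J)$ in $Z_\Prism$ is a map of $\delta$-rings it commutes with the Frobenii $\varphi_A$ and $\varphi_B$, so the crystal condition for $\cF$ immediately yields the crystal condition for $\varphi_{Z_\Prism}^*\cF$. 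Hence $\varphi_{Z_\Prism}^*\cF \in D_\perf(Z_\Prism)$.

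Combining the two steps finishes the argument. Applying the second step with $Z = X$ to the underlying complex of $\cE$ shows $\varphi_{X_\Prism}^*\cE \in D_\perf(X_\Prism)$, and then \cref{coh-is-crystal} gives $Rf_{\Prism,*}(\varphi_{X_\Prism}^*\cE) \in D_\perf(Y_\Prism)$. On the other hand \cref{coh-is-crystal} directly gives $Rf_{\Prism,*}\cE \in D_\perf(Y_\Prism)$, to which the second step with $Z = Y$ applies, yielding $\varphi_{Y_\Prism}^*(Rf_{\Prism,*}\cE) \in D_\perf(Y_\Prism)$. (The perfectness of these complexes is in any case already contained in \cref{perfectness} and \cref{coh-is-crystal}.)

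The main (and only) point requiring verification is the second step, namely that $\varphi_{Z_\Prism}^*$ preserves the crystal property and perfectness; but this is routine and reduces to the compatibility of Frobenius with the transition maps of the prismatic site, i.e.\ to the fact that the objects of $Z_\Prism$ are $\delta$-rings and the morphisms respect the $\delta$-structure. There is no analytic or cohomological difficulty here: the corollary is a formal consequence of \cref{coh-is-crystal} once this compatibility is recorded.
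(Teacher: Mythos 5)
Your proposal is correct and follows essentially the same route as the paper: reduce via \cref{coh-is-crystal} to the claim that $\varphi_{Z_\Prism}^*$ preserves crystals in perfect complexes, and verify that claim by observing that the transition maps of $Z_\Prism$ are $\delta$-ring maps, hence commute with the Frobenius lifts $\varphi_A$, $\varphi_B$. The paper records precisely this commutativity as the commuting square $A\to A$, $B\to B$ with horizontal arrows $\varphi_A$, $\varphi_B$ and concludes the crystal condition by swapping the order of base change, which is your argument in different words.
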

\begin{proof}
By \cref{coh-is-crystal}, it suffices to show that the Frobenius pullback of a crystal is a crystal.
Let $(A,I) \to (B,IB)$ be a map of prisms in $Y_\Prism$.
We need to show that there is a natural isomorphism of $B$-complexes
\[
(\varphi_A^*\cE(A,I))\otimes_A^L B \simeq \varphi_B^*\cE(B,IB).
\]
This follows by changing the order of the base change in the commutative diagram
\[
\begin{tikzcd}
	A \ar[r, "\varphi_A"] \ar[d] & A \ar[d]\\
	B \ar[r, "\varphi_B"] & B,
\end{tikzcd}
\]
together with the equality $\cE(A,I)\otimes^L_A B\simeq \cE(B,IB)$.
\end{proof}

\section{Comparison theorems}

In this section, we generalize the comparison theorems of prismatic cohomology with \'etale and crystalline cohomology from \cite{BS19} to incorporate coefficients in prismatic $F$-crystals in perfect complexes.

\subsection{Prismatic-\'etale comparison}\label{sub-prismatic-etale}

In this subsection, we prove that the \'etale realization functor is compatible with pushforward.
Let $X$ be a bounded $p$-adic formal scheme with generic fiber $X_\eta$;
in this generality, we consider $X_\eta$ as a locally spatial diamond.
Let $D^b_\lisse(X_\eta,\ZZ_p)$ be the bounded derived category of lisse $\ZZ_p$-sheaves on $X_\eta$ in the sense of \cite[Not.~3.1]{BS21};
that is, the full subcategory of $D^b(X_{\eta,\proet},\ZZ_p)$ consisting of those derived $p$-complete, locally bounded objects $L$ for which $\Hh^i(L\otimes^L_{\ZZ_p}\FF_p)$ is locally constant with finitely generated stalks for all $i \in \ZZ$.
The \'etale realization functor from \cref{realization functors}.\ref{realization functors et} admits a derived extension
\[ T\colon D^\varphi_\perf(X_\Prism, \cO_\Prism[1/\cI_\Prism]^\wedge_p)\xrightarrow{\sim} D^b_\lisse(X_\eta,\ZZ_p); \]
cf.\ \cref{derived et realization}.
First, we extend the \'etale comparison theorem from \cite[Thm.~9.1]{BS19} to more general coefficients.
\begin{theorem}\label{etale-comparison}
    Let $X$ a bounded $p$-adic formal scheme and $\cE \in D^\varphi_\perf\bigl(X_\Prism,\cO_\Prism[1/\cI_\Prism]^\wedge_p\bigr)$.
    Denote by $\mu \colon \Shv\bigl(X_{\eta,\proet}\bigr) \to \Shv\bigl(X_\et\bigr)$ resp.\ $\nu \colon \Shv\bigl(X_\Prism\bigr) \to \Shv\bigl(X_\et\bigr)$ the natural morphisms of topoi coming from ``nearby cycles'' and \cite[Const.~4.4]{BS19}.
    Then there exists a natural isomorphism 
    \[ R\mu_*T(\cE) \xrightarrow{\sim} (R\nu_*\cE)^{\varphi=1}. \]
\end{theorem}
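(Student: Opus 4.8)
The plan is to follow the arc-descent strategy of \cite[\S 9]{BS19}, upgraded to handle coefficients in a Laurent $F$-crystal in perfect complexes. The statement is local on $X_\et$, so I would first reduce to the case where $X = \Spf R$ is affine. The key observation is that both sides of the claimed isomorphism, viewed as complexes of sheaves on the category $\fSch/X$ of bounded $p$-adic formal schemes over $X$, satisfy arc-descent. For the right-hand side, note that since $\varphi$ acts invertibly on $\cO_\Prism[1/\cI_\Prism]^\wedge_p$, the complex $\cE$ is canonically its own perfection, i.e.\ $\cE \simeq \cE_\perf$; hence \cref{perfection-arc-sheaf} shows directly that $R\nu_*\cE$ satisfies arc-descent on $\fSch/X$, and taking $\varphi$-fixed points (a limit) preserves this. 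For the left-hand side, $R\mu_*T(\cE)$ is a pushforward of a lisse $\ZZ_p$-complex on the generic fiber, which satisfies arc-descent by the same reasoning as in \cite[\S 9]{BS19} (using $p$-complete arc-descent for $\ZZ_p$-cohomology on perfectoid spaces, e.g.\ \cite[Prop.~8.10]{BS19} combined with primitive comparison). Therefore it suffices to construct the natural comparison map and check it is an isomorphism after restricting along an arc-cover of $X$ by (spectra of) perfectoid rings — indeed by perfectoid valuation rings of rank one with algebraically closed fraction field, as in \cite[Rmk.~8.9]{BS19}.

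Concretely, after reducing to a perfect prism $(A,I)$ attached to such a valuation ring, the category $D^\varphi_\perf(X_\Prism,\cO_\Prism[1/\cI_\Prism]^\wedge_p)$ becomes $D^\varphi_\perf(A[1/I]^\wedge_p)$, and $X_\eta$ becomes the generic fiber of $\Spf A$. The comparison map is then the natural one sending a $\varphi$-module $(M,\varphi_M)$ over $A[1/I]^\wedge_p$ to the fiber of $\varphi_M - \id$ on the associated pro-\'etale sheaf, exactly as recalled in \cref{et-prism-functor}. On the prismatic side $R\nu_*\cE$ evaluated here is just $M$ (since $(A,I)$ is the final object), and $(R\nu_*\cE)^{\varphi=1} = \fib(\varphi_M - \id \colon M \to M)$; on the \'etale side, $R\mu_* T(\cE)$ is computed by the complex of Galois cohomology of the perfectoid cover, which for a valuation ring with algebraically closed fraction field (hence trivial absolute Galois group of the generic fiber over $C$, or more precisely a cohomologically trivial situation after the perfectoid cover) collapses. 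So the content reduces to the statement that for $M$ a perfect complex over $A[1/I]^\wedge_p$ with $A$ the Witt vectors of a perfectoid valuation ring of rank one, the natural map $\fib(\varphi_M-\id) \to R\Gamma_{\proet}$ of the corresponding sheaf is an isomorphism. For $M$ a finite projective module this is \cite[Cor.~3.8]{BS21} (or \cite[Thm.~9.1]{BS19} with coefficients); for perfect complexes one reduces to the module case by devissage along the tor-amplitude filtration, using that $\fib(\varphi - \id)$ is exact and commutes with the relevant finite limits/shifts.

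The naturality and the construction of the global comparison map before descent is mostly formal: one has the adjoint of $\varphi_\cE$ inducing $\cE \to R\nu^\Prism_*\nu^{\Prism,*}\cE$-type maps, and the pushforward $R\nu_*$ intertwines the étale realization functor with ``taking $\varphi=1$'' by unwinding the definitions in \cref{et-prism-functor} at the level of pro-étale sheaves; this gives a map $R\mu_* T(\cE) \to (R\nu_*\cE)^{\varphi=1}$ functorially in $\cE$ and in $X$, compatible with arc-pullback. I expect the main obstacle to be the bookkeeping needed to verify arc-descent \emph{simultaneously} for both sides as functors on $\fSch/X$ — in particular making the identification $R\nu_*\cE_\perf \simeq R\lambda_* \beta^*\cE$ from the proof of \cref{perfection-arc-sheaf} compatible with the $\varphi$-fixed-point functor and with the étale-realization side — rather than the base case over a valuation ring, which is essentially \cite[Cor.~3.8]{BS21} plus a routine devissage. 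A secondary technical point is confirming that $R\mu_* T(\cE)$ genuinely lies in $D^b_\lisse$ and that its formation commutes with the arc-limit, for which one invokes the boundedness (locally bounded tor-amplitude) of $\cE$ together with the finiteness built into the definition of $D^b_\lisse(X_\eta,\ZZ_p)$.
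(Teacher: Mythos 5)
Your overall strategy — arc-descent to reduce to products of perfectoid valuation rings, then invoke \cref{et-prism-functor} and \cite[Cor.~3.8]{BS21} in the base case — is essentially the paper's strategy. However, there is a genuine error in how you handle the prismatic side. You assert that ``since $\varphi$ acts invertibly on $\cO_\Prism[1/\cI_\Prism]^\wedge_p$, the complex $\cE$ is canonically its own perfection, i.e.\ $\cE \simeq \cE_\perf$.'' The Frobenius on $\cO_\Prism[1/\cI_\Prism]^\wedge_p$ is a $\delta$-ring Frobenius \emph{lift}, hence an endomorphism, not an automorphism: it becomes bijective only after passing to the perfection $\cO_{\Prism,\perf}[1/\cI_\Prism]^\wedge_p$. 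Thus $\cE$ and $\cE_\perf$ live over genuinely different sheaves of rings and are not isomorphic. What is an isomorphism in the data of a Laurent $F$-crystal is $\varphi_\cE \colon \varphi^*\cE \xrightarrow{\sim} \cE$, which is a $\varphi$-\emph{semilinear} statement, not a claim that the Frobenius on the base ring is invertible. So you cannot invoke \cref{perfection-arc-sheaf} directly.

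The missing ingredient is precisely \cref{perfection-fixed-points}: even though $\cE \not\simeq \cE_\perf$, the natural map on derived $\varphi$-fixed points $\cE^{\varphi=1} \to (\cE_\perf)^{\varphi=1}$ \emph{is} an isomorphism, because base change along the perfection is an equivalence of symmetric monoidal $\infty$-categories of Laurent $F$-crystals (this is \cite[Prop.~3.6]{BS21}). After inserting this step, \cref{perfection-arc-sheaf} supplies arc-descent for $R\nu_*\cE_\perf$, and hence for the $\varphi$-fixed points of the original $\cE$; that fixes the right-hand side. Two minor imprecisions elsewhere: on the \'etale side, arc-descent is obtained not by ``primitive comparison'' but by extension by zero along $\Spec S[1/p] \hookrightarrow \Spec S$ together with arc-descent for torsion \'etale cohomology (\cite[Cor.~6.17]{BM21}); and in the base case over products of valuation rings one can cite \cite[Prop.~3.4]{BS21} directly for perfect complexes rather than d\'evissage from vector bundles, though your d\'evissage would also work.
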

Our proof follows that in \cite[Thm.~9.1]{BS19} closely.
For later applications, we further note that the \'etale comparison theorem also work for relative prismatic cohomology over a perfect prism $(A,I)$ as in \emph{loc.\ cit.}, because $X_\Prism \simeq (X/A)_\Prism$ by \cite[Lem.~4.8]{BS19}.
\begin{remark}
    Using different methods, Min--Wang \cite[Thm.~4.1]{MW21} also obtained a prismatic-\'etale comparison statement with abelian coefficients.
    Our \cref{etale-comparison} generalizes their result in two directions:
    $X$ needs not to be smooth over $\Spf(\overline{A})$ and $\cE$ is allowed to be an $F$-crystal in perfect complexes instead of vector bundles.
\end{remark}
\begin{proof}[Proof of \cref{etale-comparison}]
    First, note that $T(\cE) \simeq R\lim_n T(\cE)/p^n$ by definition of $D^b_\lisse(X_\eta,\ZZ_p)$ and $\cE \simeq R\lim_n \cE/p^n$ by derived $p$-completeness of $\cE$.
    Since derived pushforward preserves $p$-complete objects \cite[\href{https://stacks.math.columbia.edu/tag/099J}{Tag~099J}]{SP} and the fiber functor commutes with derived limits, it suffices to find compatible isomorphisms $R\mu_*(T(\cE)/p^n) \xrightarrow{\sim} \bigl(R\nu_*(\cE/p^n)\bigr)^{\varphi=1}$ for all $n \in \NN$.
    In fact, $R\mu_*(T(\cE)/p^n)$ is the sheafification of the presheaf which sends an affine $\Spf S \to X$ to $R\Gamma\bigl((\Spf S)_{\eta,\et},T(\cE)/p^n\bigr)$ and $R\Gamma\bigl((\Spf S)_{\eta,\et},T(\cE)/p^n\bigr) \simeq R\Gamma\bigl((\Spec S[1/p])_\et,T(\cE)/p^n\bigr)$ (\cite[Lem.~15.6]{Sch17} and \cite[Cor.~3.2.2]{Hub96}), so it suffices to give compatible isomorphisms
    \begin{equation}\label{etale-comparison-affinoid}
    R\Gamma\bigl((\Spec S[1/p])_\et,T(\cE)/p^n\bigr) \xrightarrow{\sim} R\Gamma\bigl(S,R\nu_*(\cE/p^n)\bigr)^{\varphi=1}
    \end{equation}
    for all affine $\Spf S \to X$.
    
    We claim that both sides of \cref{etale-comparison-affinoid} satisfy descent for the $p$-complete arc-topology of Bhatt--Mathew \cite{BM21} (see \cite[\S~2.2.1]{CS19} for this particular incarnation).
    For the source, extension by zero along the open immersion $\iota \colon \Spec S[1/p] \hookrightarrow \Spec S$ yields the identification $R\Gamma\bigl((\Spec S[1/p])_\et,T(\cE)/p^n\bigr) \simeq R\Gamma\bigl((\Spec S[1/p])_\et,\iota^{*}\iota_!T(\cE)/p^n\bigr)$.
    Since $\iota_!T(\cE)/p^n$ is a bounded  complex of torsion sheaves on $S_\et$, the functor $S \mapsto R\Gamma\bigl((\Spec S[1/p])_\et,\iota^{*}\iota_!T(\cE)/p^n\bigr)$ is a complex of $p$-complete arc-sheaves by \cite[Cor.~6.17]{BM21} (see also \cite[Thm.~2.2.5]{CS19}).
    
    For the target, an induction on $n$ reduces us to the assertion that $S \mapsto R\Gamma\bigl(S,R\nu_*(\cE/p)\bigr)^{\varphi=1} = R\Gamma\bigl(S_\Prism,\cE/p\bigr)^{\varphi=1}$ is a complex of $p$-complete arc-sheaves.
    We already know from \cref{perfection-arc-sheaf} that this is the case for $S \mapsto R\Gamma\bigl(S_\Prism,\cE_\perf/p\bigr)^{\varphi=1}$.
    Since the map $\varphi - 1$ is preserved under pushforward and commutes with limits, we have
    \[ R\Gamma\bigl(S_\Prism,\cE/p\bigr)^{\varphi=1} \simeq \Bigl(\lim_{(B,J)\in S_\Prism} \cE_B/p \Bigr)^{\varphi=1} \simeq \lim_{(B,J)\in S_\Prism} \bigl((\cE_B/p)^{\varphi=1}\bigr), \]
    where the $\cE_B \in D^\varphi_\perf(B[1/J]^\wedge_p)$ corresponds to $\cE$ under \cref{crystal-perfect-complex}.
    Replacing $\cE_B/p$ by $\cE_B/p \otimes^L_{B[1/J]/p} B_\perf[1/J]/p$, we get a similar formula for $R\Gamma\bigl(S_\Prism,\cE_\perf/p\bigr)^{\varphi=1}$.
    Thus it suffices to show that the natural maps $(\cE_B/p)^{\varphi=1} \to (\cE_B/p \otimes^L_{B[1/J]/p} B_\perf[1/J]/p)^{\varphi=1}$ are isomorphisms, which follows from \cref{perfection-fixed-points} below.
    
    By the preceding paragraphs, it suffices to prove \cref{etale-comparison} arc-locally on $S$.
    Therefore, we may assume that $S = \prod_i R_i$ is a product of absolutely integrally closed $p$-complete valuation rings of rank $\le 1$ (\cite[Prop.~3.30]{BM21}.
    In that case, the compatible isomorphisms (\ref{etale-comparison-affinoid}) are given by \cref{et-prism-functor} for perfect complexes (cf.\ \cite[Prop.~3.4]{BS21}).
\end{proof}
We used the following consequence of the invariance of Frobenius fixed elements of Laurent $F$-crystals under completed perfections \cite[Prop.~3.6]{BS21}.
\begin{lemma}\label{perfection-fixed-points}
    Let $R$ be an $\FF_p$-algebra which is derived complete with respect to an element $t \in R$.
    Denote by $R_\perf$ the derived $t$-adic completion of $\colim_\varphi R$.
    Let $(E,\varphi_E) \in D^\varphi_\perf(R[1/t])$ be a Laurent $F$-crystal in perfect complexes and $E_\perf \colonequals E \otimes^L_{R[1/t]} R_\perf[1/t]$ the induced Laurent $F$-crystal on $R_\perf[1/t]$.
    Then the natural map $E^{\varphi=1} \to (E_\perf)^{\varphi=1}$ is an isomorphism.
\end{lemma}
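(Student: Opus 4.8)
\textbf{Proof proposal for \cref{perfection-fixed-points}.}
The plan is to reduce the statement to \cite[Prop.~3.6]{BS21}, which is precisely the analogous assertion for the underlying Laurent $F$-crystals \emph{without} inverting $t$ — that is, for the structure sheaves on the $t$-complete arc-topology. The point is that in the excerpt we are allowed to invoke exactly that statement, since $R$ is derived $t$-complete and $R_\perf$ is the $t$-adic derived completion of $\colim_\varphi R$, so the pair $(R, t)$ and its perfection fit the hypotheses of \cite[Prop.~3.6]{BS21}. The subtlety is purely that our $E$ lives over $R[1/t]$ rather than over $R$ itself, so I must massage the situation so that the cited result applies.

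First I would recall the cohomological formula for Frobenius fixed points: for any Laurent $F$-crystal $(E,\varphi_E)$ in perfect complexes over $R[1/t]$, the fixed points $E^{\varphi=1}$ are computed as the fiber of $\varphi_E-1\colon E\to E$, i.e.\ $E^{\varphi=1}\simeq \fib(E\xrightarrow{\varphi_E-1} E)$, and likewise $(E_\perf)^{\varphi=1}\simeq \fib(E_\perf\xrightarrow{\varphi_{E_\perf}-1}E_\perf)$. Since the functor $\fib(-\xrightarrow{\varphi-1}-)$ commutes with the base change $R[1/t]\to R_\perf[1/t]$ being applied, and the natural map in question is induced functorially, it suffices to identify $\fib(\varphi_E - 1)$ with the fixed points of the structure sheaf applied to a perfect complex. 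Concretely, the adjoint-to-$\varphi_E$ picture gives $E^{\varphi=1}\simeq \eq\bigl(E \rightrightarrows R\varphi_* E\bigr)$, and by the projection formula for perfect complexes on the relevant (arc-)ringed topos, $R\varphi_*^n E \simeq E\otimes^L \varphi_*^n \cO$; passing to the ind-system and completing, the whole equalizer is computed from the corresponding equalizer for the perfection $\cO_\perf$ of the structure sheaf, tensored with the perfect complex $E$.

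So the key step is: write $E^{\varphi=1}$ as $E \otimes^L_{R[1/t]}\bigl(R[1/t]\bigr)^{\varphi=1}$-like expression — more carefully, observe that $E^{\varphi=1}\to (E_\perf)^{\varphi=1}$ is obtained by applying $E\otimes^L_{R[1/t]}(-)$ to the map $\bigl(R[1/t]\bigr)\to \bigl(R_\perf[1/t]\bigr)$ at the level of the perfection functor, and that by \cite[Prop.~3.6]{BS21} this map becomes an equivalence after taking $\varphi=1$-fixed points. Here one has to be a little careful: \cite[Prop.~3.6]{BS21} is stated for $\cO_\Prism[1/\cI_\Prism]^\wedge_p$ over the prismatic site, but its proof (or the statement in the generality of a derived $t$-complete $\FF_p$-algebra, which is the ``$p=0$'' incarnation it reduces to) applies verbatim to the pair $(R,t)$; indeed this is exactly how it is used in the proof of \cref{etale-comparison} just above. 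Since tensoring the (perfect, hence dualizable) complex $E$ with an equivalence yields an equivalence, we conclude.

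\textbf{Main obstacle.} I expect the only real friction to be bookkeeping: making precise the claim that ``$E^{\varphi=1}$ is $E$ tensored with the $\varphi=1$-part of the structure sheaf,'' since fixed points of a Frobenius module are not literally a base change but a limit, and one must commute this limit past the perfect complex $E$ (legitimate because $E$ is perfect, hence compact, and because $\fib(\varphi-1)$ is a finite limit). Equivalently one phrases everything as fibers of $\varphi-1$ and checks the evident square
\[
\begin{tikzcd}
E \arrow[r] \arrow[d,"\varphi_E-1"'] & E_\perf \arrow[d,"\varphi_{E_\perf}-1"] \\
E \arrow[r] & E_\perf
\end{tikzcd}
\]
induces an equivalence on horizontal fibers, and this is where \cite[Prop.~3.6]{BS21} (applied to the ``coefficient'' $\cO$ and then tensored up by $E$, using that $E$ has finite $\Tor$-amplitude so the tensor product is exact enough to commute with the relevant completions) does all the work. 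No serious new input is needed beyond being careful that the cited proposition genuinely covers the $\FF_p$-algebra $(R,t)$ and not only its prismatic avatar.
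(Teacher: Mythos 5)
Your instinct to deduce the lemma from \cite[Prop.~3.6]{BS21} is exactly right — this is what the paper does. But the mechanism you propose for extracting the conclusion has a genuine gap, not merely a bookkeeping one.

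You repeatedly suggest that $E^{\varphi=1}$ can be written as ``$E$ tensored with the $\varphi=1$-part of the structure sheaf,'' so that the comparison $E^{\varphi=1}\to (E_\perf)^{\varphi=1}$ would follow by applying $E\otimes^L_{R[1/t]}(-)$ to the structure-sheaf statement. This is not true. The semilinear endomorphism $\tilde\varphi_E\colon E\to E$ obtained from $\varphi_E$ depends on the $F$-structure of $E$, not just on the Frobenius of $R[1/t]$: in a local basis it is ``multiply by a matrix $M$ and then apply $\varphi$ coordinatewise,'' and $M$ is genuinely part of the data. Consequently $\fib(\tilde\varphi_E - 1)$ is not of the form $E \otimes^L_{R[1/t]} \fib(\varphi-1\colon R[1/t]\to R[1/t])$, and the commuting-limits-past-a-perfect-complex step you flag as ``friction'' cannot be carried out — the two maps you would need to identify simply differ. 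You also seem to conflate two distinct constructions along the way: the perfection $E_\perf$ really is $E\otimes^L(\colim\varphi_*^n\cO)^\wedge$ by the projection formula (a colimit), but the fixed points $E^{\varphi=1}$ are an equalizer/fiber, and the latter does not become a base change of the former.

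The fix is to stay at the level of categories rather than trying to push the $F$-structure through a tensor product. \cite[Prop.~3.6]{BS21} asserts that base change along $R[1/t]\to R_\perf[1/t]$ is an equivalence of stable $\infty$-categories $D^\varphi_\perf(R[1/t])\simeq D^\varphi_\perf(R_\perf[1/t])$ sending the unit to the unit. Since $E^{\varphi=1}$ is precisely $R\Hom_{D^\varphi_\perf(R[1/t])}(R[1/t],E)$ (with trivial $F$-structure on the source), and any equivalence of $\infty$-categories preserves mapping complexes, the desired isomorphism is immediate — no projection formula, no tensoring up, and crucially no claim that fixed points commute with $E\otimes(-)$.
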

\begin{proof}
    By \cite[Prop.~3.6]{BS21}, derived base change along $R[1/t] \to R_\perf[1/t]$ yields an equivalence of stable $\infty$-categories
    \[
    (\blank)_\perf \colon D^\varphi_\perf(R[1/t]) \simeq D^\varphi_\perf(R_\perf[1/t]).
    \]
    In particular, this induces an isomorphism
    \[
    R\Hom_{D^\varphi_\perf(R[1/t])}(R[1/t],E) \simeq R\Hom_{D^\varphi_\perf(R_\perf[1/t])}(R_\perf[1/t], E_\perf).
    \]
    Since $R\Hom_{D^\varphi_\perf(R[1/t])}(R[1/t],E)$ is naturally identified with the derived $\varphi$-invariants $E^{\varphi=1}$, and similarly for $E_\perf$, we obtain the desired isomorphism.
\end{proof}

\subsection{Prismatic-crystalline comparison}\label{sub-prismatic-crystalline}

In this section, we show the comparison theorem between the cohomology of an analytic prismatic F-crystal and the crystalline cohomology of its associated F-isocrystal, after a common base change.

Throughout the section, we work with the big crystalline site with the $p$-complete flat topology as in \cite[App.~F]{BL22}.
Given a $p$-adically separated and complete divided power-algebra $(A,J)$ over $\bigl(\ZZ_p,(p)\bigr)$ and an $A/J$-scheme $Z$, we denote by $\bigl(Z/(A,J)\bigr)_\crys$ or $(Z/A)_\crys$ the big crystalline site of $Z$ over $(A,J)$ together with the $p$-completely flat topology.
When $(A,J)=(\ZZ_p, (p))$, we use $Z_\crys$ to denote $(Z/\ZZ_p)_\crys$.
When the meaning of notation is clear, we sometimes use $(A,A/J)$ to denote the thickening $(A,J)$.

First, we observe that the integral version of the crystalline realization functor in \cref{realization functors}.\ref{realization functors crys} gives an equivalence between prismatic crystals and crystalline crystals for quasi-syntomic schemes in characteristic $p > 0$ and induces a comparison theorem for the corresponding absolute cohomology complexes.
	\begin{theorem}[Absolute crystalline comparison]
	\label{prism-crys-crystal}
		Let $Z$ be a quasi-syntomic scheme in characteristic $p > 0$.
		\begin{enumerate}[label=\upshape{(\roman*)},leftmargin=*]
			\item \textup{({\cite[Ex.~4.7]{BS21}})}\label{prism-crys-crystal coefficient} There are natural equivalences of $\infty$-categories
			\[
			D^{(\varphi)}_\perf(Z_\Prism) \simeq D^{(\varphi)}_\perf(Z_{\crys})
			\]
			between ($F$)-crystals in perfect complexes on both sides.
			Similarly, there are natural equivalences of categories
			\[ \Vect^{\an,(\varphi)}(Z_\Prism) \simeq \Isoc^{(\varphi)}(Z_\crys). \]
			\item\label{prism-crys-crystal cohomology} 
			Let $\cE\in D_\perf(Z_\Prism) $ and let $\cE'$ be the associated object in $D_\perf(Z_{\crys})$ as in \ref{prism-crys-crystal coefficient}.
			Then there is a natural isomorphism of cohomology complexes
			\[
			R\Gamma(Z_\Prism, \cE) \simeq R\Gamma(Z_\crys, \cE'),
			\]
			which is Frobenius equivariant when $\cE$ admits an $F$-structure.
		\end{enumerate}
	\end{theorem}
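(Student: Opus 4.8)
The plan is to reduce both parts, via quasi-syntomic descent, to the case of a quasiregular semiperfect ring $S$ in characteristic $p$, where both the prismatic and the crystalline site become ``affine'' with a common weakly initial object, namely the $p$-completed divided power envelope $\rA_{\crys}(S)\simeq\Prism_S$ (with $I=(p)$) from \cite[Cor.~2.39, Prop.~7.10]{BS19}. For \cref{prism-crys-crystal coefficient}, the equivalence for crystals in perfect complexes (with or without $\varphi$-structure) is \cite[Ex.~4.7]{BS21}, so it remains to treat the analytic case. Since $Z$ lives in characteristic $p$, every prism $(A,I)\in Z_\Prism$ has $p\in I$, whence $\Spec(A)\setminus V(p,I)=\Spec(A[1/I])$; combining this with \cref{equiv-prism-qsyn} and \cref{prism-qrsp-an-vb} gives $\Vect^{\an,(\varphi)}(Z_\Prism)\simeq\lim_{S\in Z_\qrsp}\Vect^{(\varphi)}\bigl(\Spec(\Prism_S[1/p])\bigr)$. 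On the crystalline side, quasi-syntomic descent for crystalline crystals — the coefficient extension of \cite[\S~4.6]{BL22}, which descends to $\Isoc$ after inverting $p$ — reduces $\Isoc^{(\varphi)}(Z_\crys)$ to $\lim_{S\in Z_\qrsp}\Isoc^{(\varphi)}(S_\crys)$, and for quasiregular semiperfect $S$ the ring $\rA_{\crys}(S)$ is weakly initial in $(S/\ZZ_p)_\crys$, so $\Isoc^{(\varphi)}(S_\crys)\simeq\Vect^{(\varphi)}\bigl(\Spec(\rA_{\crys}(S)[1/p])\bigr)$. As the identification $\Prism_S\simeq\rA_{\crys}(S)$ is compatible with the \v{C}ech nerves of a quasi-syntomic cover and with Frobenius, the two $\lim$-diagrams of categories would then be naturally (and $\varphi$-equivariantly) equivalent, compatibly with \cite[Ex.~4.7]{BS21}.

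For \cref{prism-crys-crystal cohomology}, I would fix a quasi-syntomic cover $\Spf(S^0)\to Z$ with $S^0$ quasiregular semiperfect and let $S^\bullet$ be its \v{C}ech nerve in $Z_\qrsp$, so that each $S^n$ is again quasiregular semiperfect. Because the initial prism $\Prism_{S^0}$ covers the final object of $\Shv(Z_\Prism)$ by \cite[Prop.~7.11]{BS19}, one gets $R\Gamma(Z_\Prism,\cE)\simeq R\lim_{[n]\in\Delta}\cE(\Prism_{S^n})$. Dually, quasi-syntomic descent for the \emph{absolute} crystalline cohomology of crystals in perfect complexes — extending \cite[\S~4.6]{BL22} to coefficients — together with the computation $R\Gamma(S_\crys,\cE')\simeq\cE'(\rA_{\crys}(S))$ for quasiregular semiperfect $S$, gives $R\Gamma(Z_\crys,\cE')\simeq R\lim_{[n]\in\Delta}\cE'(\rA_{\crys}(S^n))$. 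Under the equivalence of \cref{prism-crys-crystal coefficient} one has $\cE(\Prism_{S^n})\simeq\cE'(\rA_{\crys}(S^n))$ naturally in $[n]$, so applying $R\lim$ produces the desired isomorphism $R\Gamma(Z_\Prism,\cE)\simeq R\Gamma(Z_\crys,\cE')$. When $\cE$ carries an $F$-structure, the isomorphism $\Prism_{S^n}\simeq\rA_{\crys}(S^n)$ intertwines the two Frobenii, so the comparison would be Frobenius equivariant.

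The hard part will be the crystalline-side input: establishing quasi-syntomic descent for the absolute crystalline cohomology of crystals in perfect complexes, together with the reduction $R\Gamma(S_\crys,\cE')\simeq\cE'(\rA_{\crys}(S))$ for $S$ quasiregular semiperfect — that is, the coefficient enhancement of \cite[\S~4.6]{BL22}. Once this and the identification of the affine building blocks $\Prism_S\simeq\rA_{\crys}(S)$ are in hand, the remaining points — matching the stratification/crystal data on the two sites over the common object and tracking the Frobenius structures through the limits — should be formal.
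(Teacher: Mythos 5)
Your proposal is correct and follows essentially the same route as the paper's proof: reduce to the quasiregular semiperfect case via quasi-syntomic covers and descent, and use that $\rA_\crys(S)\simeq\Prism_S$ with its natural map to $S$ (resp.\ $S/p$) is weakly initial in both sites, so that the \v{C}ech--Alexander complexes on the two sides are termwise identified, Frobenius-equivariantly. The paper fills in what you flag as ``the hard part'' by explicitly constructing the weakly initial divided power thickening $\rA_\crys(R\otimes_P P^0)\to R\otimes_P P^0$ in $Z_\crys$ from a surjection $P=\FF_p[x_s]\twoheadrightarrow R$ and its perfection $P^0$, verifying that its $(n+1)$-fold self-coproduct is $\rA_\crys(R\otimes_P P^n)$, and then applying $p$-completely faithfully flat descent along the resulting \v{C}ech nerve rather than invoking quasi-syntomic descent as a black box.
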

	\begin{proof}
	As both statements are Zariski local in $Z$, we may assume that $Z=\Spec(R)$ is an affine scheme.
	We first prove \ref{prism-crys-crystal coefficient}.
	If $R$ is quasiregular semiperfect, the initial property of $\rA_{\crys}(R)$ in both the prismatic site $Z_\Prism$ (\cite[Prop.~7.2]{BS19}, \cite[Lem.~4.6.13]{BL22}) and the crystalline site $Z_\crys$ (well-known, see e.g.\ \cite[Rmk.~F.6]{BL22}) yields the equivalences
	\begin{align*}
		& D_\perf(Z_\Prism) \simeq D_\perf\bigl(\rA_\crys(R)\bigr) \simeq D_\perf(Z_\crys),\\
		& D_\perf^\varphi(Z_\Prism) \simeq D_\perf(\rA_\crys(R)[1/p])^{\varphi=1} \simeq D_\perf^\varphi(Z_\crys),
	\end{align*}
	where $D_\perf^\varphi(Z_\crys)$ is the $\infty$-category of pairs $(\cE', \varphi_{\cE'})$, consisting of crystals of perfect complexes $\cE'$ over $Z_\crys$ together with isomorphisms $\varphi_{\cE'} \colon F_Z^* \cE'[1/p] \xrightarrow{\sim}\cE'[1/p]$.

	In general, we take a surjection $P=\FF_p[x_s] \twoheadrightarrow R$ from a polynomial ring over $\FF_p$.
	Let $P^0 \colonequals \FF_p[ x_s^{1/p^\infty}]$ be the perfection of $P$ and let $P^n$ denote the $(n+1)$-fold tensor product of $P^0$ over $P$.
	Then the ring $R\otimes_P P^0$ is quasiregular semiperfect and is a quasi-syntomic cover of $R$.
	Since the crystalline site $Z_\crys$ is equipped with the $p$-completely flat topology, one can show as in the proof of \cite[Prop.~F.4]{BL22} that the divided power thickening $\rA_{\crys}(R\otimes_P P^0) \to R\otimes_P P_0$ is weakly initial in $Z_\crys$.\footnote{
	More precisely, given a divided power thickening $(A,J)$ of $R$, we can lift the composition $P \twoheadrightarrow R \to A/J$ to a map $\ZZ_p[ x_s]\to A$ by the freeness of $P$.
	The base change of $(A,J)$ along the quasi-syntomic cover $A\to (A\otimes_{\ZZ_p [ x_s]} \ZZ_p [x_s^{1/p^\infty}])^\wedge_p$ forms a divided power thickening of $R\otimes_P P^0$, and thus admits a map from $\rA_{\crys}(R\otimes_P P^0)$.}
	Moreover, the $p$-completed $(n+1)$-fold self coproduct of the thickening $\rA_{\crys}(R\otimes_P P^0) \to R\otimes_P P_0$ in $Z_\crys$ is isomorphic to the thickening $(\rA_{\crys}(R\otimes_P P^n) \to R\otimes_P P^n)$;
	note that $R\otimes_P P^n$ is quasiregular semiperfect because it is quasi-syntomic by \cite[Lem.~4.16]{BMS19} and admits a surjection from the perfect ring $(P^0)^{\otimes_{\FF_p} n+1}$.
	As a consequence, by the definition of crystals and $p$-completely flat descent for perfect complexes, we get a natural equivalence
	\[
	D_\perf(Z_\crys) \simeq \lim_{[n]\in \Delta} D_\perf(\rA_{\crys}(R\otimes_P P^n)).
	\]
	Similarly, by the crystal condition and $p$-completely flat descent for perfect complexes over the generic fiber (\cite[Thm.~7.8]{Mat22}), we have the equivalence 
	\[
	D_\perf^\varphi(Z_\crys) \simeq \lim_{[n]\in \Delta} D_\perf^\varphi(\rA_{\crys}(R\otimes_P P^n)).
	\]
	On the other hand, both $D_\perf(Z_\Prism)$ and $D_\perf^\varphi(Z_\Prism)$ satisfy quasi-syntomic descent with respect to $Z$ by \cite[Prop.~2.14]{BS21} and \cite[Thm.~7.8]{Mat22}.
	Thus, we also have
	\[ D_\perf(Z_\Prism) \simeq \lim_{[n]\in \Delta} D_\perf(\Prism_{R\otimes_P P^n}) \simeq \lim_{[n]\in \Delta} D_\perf\bigl(\rA_{\crys}(R\otimes_P P^{n})\bigr), \]
	where the second equivalence uses the identification $\Prism_{R\otimes_P P^n}\simeq \rA_\crys(R\otimes_P P^n)$.
	Hence, we get the Frobenius equivariant equivalence $D_\perf(Z_\crys)\simeq D_\perf(Z_\Prism)$ from the statement.
	
	The statement about analytic prismatic ($F$)-crystals and ($F$)-isocrystals is proven in exactly the same way:
	When $R$ is quasiregular semiperfect, we have 
	\[ \Vect^{\an,(\varphi)}(Z_\Prism) \simeq \Vect(\rA_\crys(R)[1/p])^{(\varphi=1)} \simeq \Isoc^{(\varphi)}(Z_\crys) \]
	by the initial property of $\rA_\crys(R)$.
	Since both sides satisfy again $p$-completely faithfully flat descent by \cite[Thm.~7.8]{Mat22}, this yields the statement for arbitrary quasi-syntomic schemes in characteristic $p>0$.
	
	Next we compare cohomology complexes from \ref{prism-crys-crystal cohomology}.
	On the one hand, the divided power thickening  $\rA_{\crys}(R\otimes_P P^0) \to R\otimes_P P^0$ from the last paragraph is weakly initial in $Z_\crys$, so $(\rA_{\crys}(R\otimes_P P^{\bullet}) \to R\otimes_P P^\bullet)$ forms a hypercover of the final object on $Z_\crys$.
	Taking its \v{C}ech--Alexander complex, we obtain
	\[
		R\Gamma(Z_\crys, \cE') \simeq \lim_{[n] \in \Delta} \cE'(\rA_{\crys}(R\otimes_P P^{\bullet}), R\otimes_P P^\bullet).
	\]
	On the other hand, by \cite[Prop.~7.11]{BS19}, the prism $(\Prism_{R\otimes_P P^0},p)$ is weakly initial in $Z_\Prism$.
	Moreover, by checking the universal property, the $p$-completed $(n+1)$-fold self coproduct of the prism $(\Prism_{R\otimes_P P^0},p)$ is naturally identified with $(\Prism_{R\otimes_P P^n},p)$.
	Therefore, the \v{C}ech--Alexander complex for prismatic cohomology yields
	\[
		R\Gamma(Z_\Prism, \cE) \simeq \lim_{[n] \in \Delta} \cE(\Prism_{R\otimes_P P^\bullet},p).
	\]
	Finally, as the equivalence $D_\perf(Z_\Prism)\simeq D_\perf(Z_\crys)$ sends the $\Prism_{R\otimes_P P^\bullet}$-module $\cE(\Prism_{R\otimes_P P^\bullet},p)$ to the $\rA_{\crys}(R\otimes_P P^{\bullet})$-module $\cE'(\rA_{\crys}(R\otimes_P P^{\bullet}),R\otimes_P P^\bullet)$, we get
	\[
		R\Gamma(Z_\Prism, \cE) \simeq \lim_{[n] \in \Delta} \cE(\Prism_{R\otimes_P P^0},p)  \simeq \lim_{[n] \in \Delta} \cE'(\rA_{\crys}(R\otimes_P P^{\bullet}) \simeq 
		R\Gamma(Z_\crys, \cE') , R\otimes_P P^\bullet),
	\]
	which is Frobenius equivariant when $\cE$ underlies an $F$-crystal.
	We also remark that this comparison is independent of the chosen surjection $P \to R$ because any two such surjections can be enlarged to a common one.
	So we are done.
\end{proof}
\begin{remark}\label{prism-crys-crystal-Frob}
	Let $S$ be a quasiregular semiperfect ring in characteristic $p$, let $F_S$ be the absolute Frobenius of $\Spec(S)$, and let $M$  be a perfect complex over $\rA_\crys(S)$.
	Denote by $\cE$ and $\cE'$ the associated crystals over $\Spec(S)_\Prism$ and $\Spec(S)_\crys$, respectively.
	Then the pullback $F_{S,\crys}^*\cE'$ along the map of crystalline sites $F_S \colon \Spec(S)_\crys \to \Spec(S)_\crys$ satisfies the equality
	\[
	\bigl(F_{\Spec(S)_\crys}^*\cE'\bigr)(\rA_\crys(S), S)=\varphi_{\rA_{\crys}(S)}^* M= \varphi_{\Prism_S}^*\cE\bigl(\Prism_S,(p)\bigr).
	\]
	As a consequence, the equivalence of \cref{prism-crys-crystal} carries the prismatic crystal $\varphi_{Z_\Prism}^* \cE$ to the crystalline crystal $F_{Z_\crys}^*\cE'$.
\end{remark}
\begin{remark}\label{prism-crys-crystal-rel}
    Using the same strategy of proof, one can obtain a relative version of \cref{prism-crys-crystal}, for the relative prismatic site $(Z/A)_\Prism$ and the crystalline site $(Z/A)_\crys$, where $A=\rA_\crys(S)$ is the initial divided power thickening for a quasi-regular semiperfect ring $S$ and $Z$ is a quasi-syntomic scheme over $A/p$.
\end{remark}
Recall from \cite[Thm.\ 12.2]{BS19} that for a quasiregular semiperfect ring $S$ in characteristic $p$, the Frobenius map $\varphi_{\Prism_S/p}$ can be factored as $\Prism_S/p\twoheadrightarrow S \hookrightarrow \Prism_S/p$, where the kernel of the surjection is a divided power ideal.
The following observation will be used to compare crystalline cohomology with prismatic cohomology.
\begin{lemma}\label{rel vs abs sites}
	Let $S$ be a quasiregular semiperfect ring, let $A=\rA_{\crys}(S) \simeq \Prism_S$, and let $J$ be the divided power ideal $\ker(\rA_{\crys}(S)\to S))$.
	Let $S'$ be an algebra over $S$.
	\begin{enumerate}[label=\upshape{(\roman*)},leftmargin=*]
		\item\label{rel vs abs crys} 
		The forgetful functor induces an equivalence of crystalline sites $(S'/(A,J))_\crys \to S'_\crys$.
		\item\label{rel vs abs prism}
		The forgetful functor induces an equivalence of prismatic sites $(S'\otimes_S \overline{A}/(A,p))_\Prism \to S'_\Prism$, where the map $S\hookrightarrow \overline{A}$ comes from the factorization of $\varphi_{\overline{A}} \colon \overline{A}\to \overline{A}$.
	\end{enumerate}
    In particular, the above induces equivalences of categories of crystals (resp.\ ($F$-)isocrystals) and their cohomology over the corresponding absolute and relative sites.
\end{lemma}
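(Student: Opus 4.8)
The plan is to prove both parts by the same recipe: show that the forgetful functor is fully faithful, essentially surjective, and both continuous and cocontinuous (so that the two sets of coverings correspond), the engine in each case being the universal property of an appropriate \emph{initial object}. The statement about crystals, isocrystals, $F$-isocrystals and their cohomology will then follow formally, once one checks that the forgetful functor identifies the structure sheaves.

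For part (i), I would first recall that, since $S$ is quasiregular semiperfect, the divided power thickening $(\rA_\crys(S) \twoheadrightarrow S, \gamma_{\mathrm{can}})$ is the initial $p$-complete pd-thickening of $S$ over $(\ZZ_p,(p))$ (\cite[Rmk.~F.6, Lem.~4.6.13]{BL22}); concretely, for any $p$-complete pd-thickening $(B, J_B, \gamma_B)$ over $(\ZZ_p,(p))$ equipped with a ring map $S \to B/J_B$, there is a unique pd-map $A = \rA_\crys(S) \to B$ over $\ZZ_p$ inducing it (to reduce the case of a general $S$-algebra $B/J_B$ to the case $B/J_B = S$, one forms the pd-thickening $B \times_{B/J_B} S \twoheadrightarrow S$). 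Granting this, take any object $(U, T, \gamma)$ of $S'_\crys$; then $U$ is an $S'$-scheme, hence an $S$-scheme via $S \to S'$, and working Zariski-locally on $T = \Spf B$ with $U = \Spec(B/J_B)$ the universal property produces a \emph{unique} pd-map $A \to B$ compatible with $J = \ker(A \to S)$ and with the structure maps. Uniqueness makes these local $A$-structures glue, equipping $(U,T,\gamma)$ canonically and functorially with the structure of an object of $(S'/(A,J))_\crys$; this gives essential surjectivity, while uniqueness of the $A$-structure forces every morphism of the underlying $\ZZ_p$-thickenings to be $A$-linear, giving full faithfulness. Finally, coverings on either site are exactly the families whose underlying maps of $p$-adic formal schemes are $p$-completely faithfully flat covers, a condition insensitive to the $A$-structure, so the forgetful functor is an equivalence of sites.

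For part (ii) the argument is parallel, with $(\Prism_S,(p)) \simeq (\rA_\crys(S),(p))$ replacing the crystalline thickening. The relevant input now is that for $S$ quasiregular semiperfect of characteristic $p$, the prism $(\Prism_S,(p))$ is the initial object of $S_\Prism$, where the structure map $\Spf \overline{A} = \Spf(\Prism_S/p) \to \Spec S$ is the section $S \hookrightarrow \overline{A}$ of the Frobenius factorization $\Prism_S/p \twoheadrightarrow S \hookrightarrow \Prism_S/p$ from \cite[Thm.~12.2]{BS19}; equivalently, every bounded prism $(B, IB)$ with a map $S \to B/IB$ receives a unique prism map from $(\Prism_S,(p))$ inducing it. Given an object $(B, IB)$ of $S'_\Prism$, the ideal $IB$ is automatically $(p)$ because $S'$ has characteristic $p$ (a distinguished element dividing $p$ in a $p$-complete $\delta$-ring generates the same ideal as $p$), so nothing is lost by forgetting. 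Restricting $S' \to B/IB$ along $S \to S'$ and applying the universal property equips $(B, IB)$ with a canonical map from $(\Prism_S,(p))$, and base change along $S \to \overline{A}$ promotes $S' \to B/IB$ to a map $S' \otimes_S \overline{A} \to B/IB$ over $\overline{A}$; this is precisely why the twisted base $S' \otimes_S \overline{A}$ appears, whereas in the crystalline case the analogous twist is invisible since $A/J = S$. As before, uniqueness yields full faithfulness and gluing, and the topologies correspond. The one genuine point to get right here is the bookkeeping of this Frobenius twist (and the compatibility of the twisted base with morphisms); everything else is a careful transcription of the universal-property argument.

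The final assertion is then formal: an equivalence of sites induces an equivalence of topoi, and the forgetful functors visibly carry $\cO_\crys$ to $\cO_\crys$ and $\cO_\Prism$ to $\cO_\Prism$ — on any object both return $\Gamma(T,\cO_T)$, resp.\ $B$ — compatibly with the Frobenius endomorphisms. Hence one obtains equivalences of ringed topoi, and therefore equivalences between the respective categories of crystals, isocrystals and $F$-isocrystals, in vector bundles or in perfect complexes (using the exactness of base change along $\rA_\crys(S)$ as in \cref{prism-crys-crystal} and the discussion of \cref{F-isocrystal-and-conv} for the $F$-structures), as well as canonical Frobenius-equivariant identifications of the cohomology complexes $R\Gamma$ computed over the absolute and the relative sites.
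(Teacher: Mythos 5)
Your proof is correct and follows essentially the same approach as the paper: precompose the structure map along $S \to S'$, invoke the initiality of $(\rA_\crys(S), J)$ in the crystalline case resp.\ $(\Prism_S, (p))$ in the prismatic case, and observe that in the prismatic case the Frobenius section $S \hookrightarrow \overline{A}$ combined with the structure map $S' \to \overline{B}$ yields the required map out of the twisted base $S' \otimes_S \overline{A}$. Your added remarks (that $IB = (p)$ is automatic in characteristic $p$, the explicit treatment of topologies, and the gluing remark) are all sound elaborations that the paper leaves implicit, though the fiber-product reduction $B \times_{B/J_B} S$ is superfluous since the initiality in the big crystalline site already applies to arbitrary $S$-algebras $B/J_B$.
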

\begin{proof}
	For \ref{rel vs abs crys}, it suffices to show that any $p$-complete $p$-adic divided power thickening $(B,L)$ of $S'$ can be uniquely enhanced to a thickening over $(S'/(A,J))_\crys$ that is compatible with the forgetful functor.
	To see this, notice that the structure map $S'\to B/L$ can be precomposed with $S\to S'$, making $(B,L)$ a $p$-complete $p$-adic divided power thickening over $S$.
	Since $(A,J)$ is the initial $p$-complete $p$-adic divided power thickening of $S$, there is a unique map of thickenings $(A,J)\to (B,L)$ compatible with the reduction $A/J=S\to S'\to B/L$.
	
	For \ref{rel vs abs prism}, it suffices to show that any bounded prism over $S'$ can be uniquely enhanced to a prism over $(S'\otimes_S \overline{A}/(A,p))_\Prism$ that is compatible with the forgetful functor.
	Let $(B,(p))$ be a bounded prism over $S'$.
	The precomposition of the structure map $S'\to \overline{B}$ with $S\to S'$ then makes $(B,(p))$ into a prism over $S$.
	As $(A,p)$ is the initial prism of $S$, there is a unique map of prisms $(A,p)\to (B,(p))$ such that the composition $S\to \overline{A} \to \overline{B}$ is the same as $S\to S'\to \overline{B}$.
	Thus this uniquely extends to a map $S'\otimes_S \overline{A} \to \overline{B}$, making $(B,(p))$ a prism over $\bigl(S'\otimes_S \overline{A}/(A,p)\bigr)_\Prism$.
\end{proof}

The above observation about sites can be used to translate \cref{prism-crys-crystal} into comparison theorems for relative cohomology.
\begin{corollary}\label{rel vs abs coh}
	    Let $S$, $A$ and $J$ be as in \cref{rel vs abs sites}.
	    Let $Z$ be a quasi-syntomic scheme over $S$, let $\cE\in D_\perf(Z_\Prism)$, and let $\cE'\in D_\perf(Z_\crys)$ be the associated object from \cref{prism-crys-crystal}.\ref{prism-crys-crystal coefficient}.
	    Then $\cE$ (resp.~$\cE'$) is naturally identified with a crystal over $(Z_{\overline{A}}/A)_\Prism$ (resp.~$(Z/(A,J))_\crys$) and 
	    there are natural isomorphisms of cohomology complexes
	    \[
	     \gamma_\crys \colon R\Gamma((Z/(A,J))_\crys, \cE') \simeq R\Gamma(Z_\crys, \cE') \simeq  R\Gamma(Z_\Prism, \cE)  \simeq R\Gamma((Z_{\overline{A}}/A)_\Prism, \cE).
	    \]
	    When $\cE$ admits an Frobenius structure, they are Frobenius equivariant.
\end{corollary}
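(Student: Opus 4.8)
The plan is to obtain $\gamma_\crys$ by concatenating three natural isomorphisms: the two outer ones coming from the site comparisons of \cref{rel vs abs sites}, and the middle one from the absolute comparison \cref{prism-crys-crystal}.\ref{prism-crys-crystal cohomology}. Recall that $A=\rA_\crys(S)\simeq \Prism_S$, that $J=\ker\bigl(\rA_\crys(S)\twoheadrightarrow S\bigr)$, and that the map $S\hookrightarrow \overline{A}$ entering $(Z_{\overline{A}}/A)_\Prism$ is the one furnished by the factorization $\varphi_{\overline{A}}\colon \overline{A}\twoheadrightarrow S\hookrightarrow \overline{A}$ of the Frobenius of $\overline{A}$, as recalled just before \cref{rel vs abs sites}. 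With this in mind, \cref{rel vs abs sites}.\ref{rel vs abs crys} and \cref{rel vs abs sites}.\ref{rel vs abs prism} will supply the identifications of $\cE'$ and $\cE$ as crystals on the relative sites $(Z/(A,J))_\crys$ and $(Z_{\overline{A}}/A)_\Prism$, together with the first and last isomorphisms of cohomology complexes, while the middle isomorphism is literally \cref{prism-crys-crystal}.\ref{prism-crys-crystal cohomology}.

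First I would reduce the two outer comparisons to the affine case. Both sides of each one satisfy Zariski descent in $Z$ (standard for crystalline cohomology; for prismatic cohomology this is \cite[Prop.~2.14]{BS21} and its derived analog, and the base change $Z\mapsto Z_{\overline A}$ is itself Zariski-local), so we may take $Z=\Spec(S')$ with $S'$ a quasi-syntomic $S$-algebra. In that situation \cref{rel vs abs sites}.\ref{rel vs abs crys} says the forgetful functor $(S'/(A,J))_\crys\to S'_\crys$ is an equivalence of sites, and \cref{rel vs abs sites}.\ref{rel vs abs prism} says $(S'\otimes_S\overline{A}/(A,p))_\Prism\to S'_\Prism$ is an equivalence of sites. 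An equivalence of sites induces an equivalence of the associated topoi under which the structure sheaves and all transition maps correspond; hence it induces an equivalence on the categories of crystals in perfect complexes and an equality of the cohomology $R\Gamma$ of corresponding objects. This identifies $\cE'\in D_\perf(Z_\crys)$ with a crystal on $(Z/(A,J))_\crys$ and $\cE\in D_\perf(Z_\Prism)$ with a crystal on $(Z_{\overline A}/A)_\Prism$, and yields the outer isomorphisms in $\gamma_\crys$. (Here $Z$ is quasi-syntomic over $S$, hence over $\FF_p$ since $S$ is, so \cref{prism-crys-crystal} indeed applies to $Z$.)

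For the Frobenius-equivariance when $\cE$ admits an $F$-structure, I would observe that the middle isomorphism is already Frobenius equivariant by \cref{prism-crys-crystal}.\ref{prism-crys-crystal cohomology}, while the two outer site equivalences are manifestly compatible with the forgetful functors and therefore transport the Frobenius endomorphisms of the absolute sites $Z_\crys$ and $Z_\Prism$ to the expected Frobenius structures on the relative sites; the precise matching of $F$-structures under the prismatic–crystalline identification is recorded in \cref{prism-crys-crystal-Frob}. The main obstacle here is organizational rather than conceptual: one must verify that the several identifications of the underlying objects (the crystal $\cE$ on $Z_\Prism$ versus on $(Z_{\overline A}/A)_\Prism$, and similarly for $\cE'$) are mutually compatible and that the Frobenius structures line up at every step, so that the concatenated $\gamma_\crys$ is well-defined, natural in $\cE$, and Frobenius equivariant. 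An alternative, which I would adopt if it streamlines the bookkeeping, is to invoke the relative form of the comparison alluded to in \cref{prism-crys-crystal-rel} in place of the detour through the absolute sites.
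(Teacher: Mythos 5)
Your proposal is correct and follows the same route as the paper's proof, which is simply a two-sentence citation of \cref{rel vs abs sites}.\ref{rel vs abs crys} and \cref{rel vs abs sites}.\ref{rel vs abs prism} for the outer isomorphisms and \cref{prism-crys-crystal}.\ref{prism-crys-crystal cohomology} for the middle one. Your version spells out the implicit bookkeeping (Zariski reduction to the affine case to apply \cref{rel vs abs sites}, and the passage from equivalences of sites to equivalences of crystal categories and cohomology), which is a harmless elaboration rather than a different argument.
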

Note that when $\cE = \cO_\Prism$ and $Z=\Spec(R)$ is affine, the composition of the isomorphisms above recovers \cite[Thm.~5.2]{BS19} applied to the the divided power ideal $J\subset A$ and the Frobenius map $\psi \colon A/J\to A/p$, where $Z_{\overline{A}}$ is by construction equal to $\Spec R^{(1)}$ for the ring $R^{(1)}$ from \textit{loc.\ cit}.
\begin{proof}
	The first and the last isomorphisms follow from \cref{rel vs abs sites}.\ref{rel vs abs crys} and \cref{rel vs abs sites}.\ref{rel vs abs prism}, respectively, and the second isomorphism is \cref{prism-crys-crystal}.\ref{prism-crys-crystal cohomology}.
\end{proof}
\begin{remark}[Alternative \v{C}ech nerve]\label{alt-Cech}
	For future use, we describe alternative \v{C}ech--Alexander complexes which can be used to prove \cref{rel vs abs coh} when $R$ admits an (enlarged) framing.
	Let $\Sigma$ be a finite set of units in $R$.
	Assume the map of $S$-algebras $P=S[x_s^{\pm1};~s\in \Sigma] \to R$ that sends $x_s$ to $s$ is a surjection.
	Let $P^0$ be the relative perfection $S[x_s^{\pm1/p^\infty}]$ and let $P^\bullet$ be the \v{C}ech nerve of $P\to P^0$.
	As before, we set $R^\bullet \colonequals P^\bullet\otimes_P R$.
	
	First, we claim that $(\rA_{\crys}(R^0), R^0)$ is a weakly initial object in $\bigl(Z/(A,J)\bigr)_\crys \simeq Z_\crys$.
	By the universal property of the thickening $\bigl(\rA_{\crys}(R^0), R^0\bigr)$, the claim follows if we show that for any $(B,L)\in \bigl(Z/(A,J)\bigr)_\crys$, there is a quasi-syntomic cover $(B,L)\to (B',LB')$ such that $R\to B'/LB'$ factors through the quasiregular semiperfect ring $R^0$.
	For this, we choose a lift of $P \to R \to B/J$ to $A\langle x_s^{\pm1}\rangle \to B$.\footnote{
	This is possible by the $p$-completeness of $B$ and the observation that any lift of a unit of $B/(J,p^n)$ is a unit in $B/p^n$, since $J$ is a divided power ideal.}
	Since $A\langle x_s^{\pm1}\rangle \to A\langle  x_s^{\pm1/p^\infty}\rangle$ is a quasi-syntomic cover, the reduction mod $L$ of the base change $B'\colonequals B\otimes_{A\langle x_s^{\pm1}\rangle} A\langle  x_s^{\pm1/p^\infty}\rangle$ 
	then admits a map from $R^0$, hence $(B',LB')$ satisfies the requirement of the claim.
	
	Next, one checks easily that the cosimplicial objects $(\rA_{\crys}(R^\bullet), R^\bullet)$ and $(\Prism_{R^\bullet\otimes_S \overline{A}/A} ,(p))$ are hypercovers of $\bigl(Z/(A,J)\bigr)_\crys$ and $Z_{\overline{A}}/A_\Prism$, respectively.
	So by proceeding with the rest of the proof as in \cref{prism-crys-crystal} and \cref{rel vs abs coh}, we get isomorphisms
	\[
	R\Gamma\bigl((Z/(A,J))_\crys, \cE'\bigr) \simeq \lim_{[n]\in \Delta} \cE'\bigl(\rA_{\crys}(R^\bullet)\bigr) \longrightarrow \lim_{[n]\in \Delta} \cE(\Prism_{R^\bullet\otimes_S \overline{A}/A}) \simeq R\Gamma\bigl((Z_{\overline{A}}/A)_\Prism, \cE\bigr).
	\]
\end{remark}
\begin{remark}\label{alt-Cech-ft-rings}
	For future use, we give yet another \v{C}ech--Alexander complex that computes crystalline cohomology and only uses finite type algebras.
	We keep the notation from \cref{alt-Cech}.
	Let $D^0$ be the $p$-completed divided power envelope of the composition $A \langle t_s^{\pm1} \rangle \to P \to R$, which is also weakly initial in $\bigl(Z/(A,J)\bigr)_\crys$.
	Then the $(n+1)$-fold self product $(D^n,R)$ of the thickening $(D^0,R)$ is the divided power envelope of the surjection $A \langle t_s^{\pm1} \rangle^{\widehat{\otimes}_A n+1} \to R$.
	Moreover, $D^0$ admits a natural map to $\rA_\crys(R^0)$, induced by the natural map $A \langle t_s^{\pm1} \rangle \to  \rA_\crys(R^0)$ which sends $t_s$ to the Teichm\"uller lift $[(x_s, x_s^{1/p},\dotsc)]$.
	This map is compatible with the surjections onto $R$, thus induces a map of weakly initial objects in $\bigl(Z/(A,J)\bigr)_\crys$.
	The associated maps of values of $\cE'$ at the cosimplicial objects $(D^\bullet,R)$ therefore give a natural isomorphism
	\[
	\begin{tikzcd}
		& \lim_{[n]\in \Delta} \cE'(D^\bullet, R) \arrow[dd] \\
		R\Gamma\bigl((Z/(A,J))_\crys, \cE'\bigr) \arrow[ru,"\sim",sloped] \arrow[rd,"\sim",sloped] \\
		& \lim_{[n]\in \Delta} \cE'(\rA_\crys(R^\bullet), R).
	\end{tikzcd}
	\]
	It is functorial with respect to the choice of enlarged framing $P=\overline{A}[x_s^{\pm1}]\to R$ and the set of units $\Sigma$ inducing the surjection of $S$-algebras.
\end{remark}
\begin{remark}
	In \cref{rel vs abs coh}, we restrict to bases of the form $\bigl(\rA_{\crys}(S),\ker(\rA_\crys(S) \to S)\bigr)$, for the simplicity of the \v{C}ech--Alexander complexes constructed in the proof.
	By $p$-completely flat descent, we can further extend the statement to bases associated with thickenings $(A,A/J)$ that admit $p$-completely flat covers by $(\rA_{\crys}(S),S)$ for some quasiregular semiperfect $S$.
	
	One can also give a different proof of the relative crystalline comparison theorem in \cref{rel vs abs coh} using the construction of \cite[Thm.~5.2]{BS19}, in which the base is allowed to be an arbitrary bounded crystalline prism.
\end{remark}
\begin{remark}\label{ogus}
	In the recent work \cite{Ogu22}, Ogus gives a proof of the relative comparison theorem in \cref{rel vs abs coh} for crystals on the abelian level and more general bases, via a different approach studying the associated de Rham/Higgs complexes for crystalline and prismatic cohomology. 
\end{remark}
Our next result describes the $p$-inverted crystalline cohomology of a given $F$-isocrystal $(\cE',F_{\cE'})$ in terms of the sections of a derived pushforward of the associated convergent isocrystal.
\begin{theorem}\label{higher-direct-image-crys}
	Let $f \colon Z_1\to Z_2$ be a smooth proper map of smooth $\cO_K/p$-schemes, let $f_s \colon Z_{1,s} \to Z_{2,s}$ be the induced map of reduced special fibers, and let $\cE'$ be a crystal in perfect complexes on $Z_{1,\crys}$ which underlies an $F$-isocrystal $(\cE'[1/p], \varphi_{\cE'})$ after inverting $p$.
	\begin{enumerate}[label=\upshape{(\roman*)},leftmargin=*]
		\item\label{higher-direct-image-crys-isocrystal} The sheaf of complexes $Rf_{s,\crys,*} \cE'[1/p]$ underlies a (convergent) $F$-isocrystal in perfect complexes over $Z_{2,\crys}$.
		\item\label{higher-direct-image-crys-cohom} For any $p$-complete, $p$-torsionfree divided power thickening $(A, A/J)\in Z_{2,\crys}$, there is a natural isomorphism
		\[
		Rf_{s, \crys,*} \cE'[1/p] (A, A/J) \simeq R\Gamma\bigl((Z_{1, A/J}/A)_\crys, \cE'\bigr)[1/p].
		\]
		Here, we regard the $Rf_{s,\crys,*} \cE'[1/p]$ as the associated crystal of complexes over the convergent site $Z_{2, \conv}$.
	\end{enumerate}
\end{theorem}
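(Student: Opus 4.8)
The plan is to reduce both parts to statements that we have already established about crystals over a point, using a local-to-global strategy and the comparison between the crystalline and the convergent sites. First I would recall that $Rf_{s,\crys,*}\cE'[1/p]$ should be interpreted, via \cref{F-isocrystal-and-conv} and \cref{prism-crys-crystal-rel}, as living on the convergent site $Z_{2,\conv}$; concretely, the claim in \ref{higher-direct-image-crys-isocrystal} is that there is a (necessarily unique) convergent $F$-isocrystal in perfect complexes whose value on a $p$-complete $p$-torsionfree divided power thickening $(A,A/J)$ of an open of $Z_{2,s}$ is given by the formula in \ref{higher-direct-image-crys-cohom}. Since the statement is local on $Z_2$ and on the thickening, I would work Zariski-locally on $Z_2$ and, after choosing a quasi-syntomic cover, reduce to the case where the base of the thickening is an affine scheme that is quasi-syntomic over a quasiregular semiperfect ring $S$. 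In that situation \cref{prism-crys-crystal}, \cref{rel vs abs coh}, and the alternative \v{C}ech--Alexander complexes of \cref{alt-Cech} and \cref{alt-Cech-ft-rings} translate the relative crystalline cohomology $R\Gamma\bigl((Z_{1,A/J}/A)_\crys,\cE'\bigr)$ into the relative prismatic cohomology $R\Gamma\bigl((Z_{1,\overline{A}}/A)_\Prism,\cE\bigr)$, where $\cE$ is the prismatic crystal corresponding to $\cE'$.

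Next I would invoke \cref{base-change}: for a smooth proper $\overline{A}$-scheme and a prismatic crystal in perfect complexes, relative prismatic cohomology commutes with $(p,I)$-completely faithfully flat base change along maps of bounded prisms. Combined with \cref{perfectness}, this gives that the presheaf
\[
(A,A/J)\longmapsto R\Gamma\bigl((Z_{1,A/J}/A)_\crys,\cE'\bigr)[1/p]
\]
is a crystal in perfect complexes on the relevant site (the crystal transition isomorphisms are precisely the base-change isomorphisms), and by \cref{coh-is-crystal} it is a crystal; the $F$-structure comes from $\varphi_{\cE'}$ via the Frobenius-equivariance part of \cref{prism-crys-crystal}.\ref{prism-crys-crystal cohomology}. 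This already shows that $Rf_{s,\crys,*}\cE'[1/p]$ is a crystal in perfect complexes on the \emph{crystalline} site. To upgrade it to a \emph{convergent} isocrystal — i.e., to get the additional convergence needed to extend it from $(Z_{1,s}/V_0)_\crys$ to the convergent site — I would use that the $F$-structure forces convergence, exactly as in \cref{F-isocrystal-and-conv}: Dwork's Frobenius trick (cf.\ \cite[Prop.~2.18]{Ogu84}) applies verbatim to perfect complexes once one knows the underlying object is a crystal with compatible $F$-structure, which is what the previous step provides. This gives \ref{higher-direct-image-crys-isocrystal}, and \ref{higher-direct-image-crys-cohom} is then just the defining formula, with naturality in $(A,A/J)$ coming from the functoriality of the \v{C}ech--Alexander constructions in \cref{alt-Cech-ft-rings}.

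The main obstacle I anticipate is bookkeeping around the Frobenius twist: the relative crystalline site $(Z_{1,A/J}/A)_\crys$ involves the base $A/J$, while the relative prismatic site involves $\overline{A}$, and these are identified only after composing with the factorization $A/J \hookrightarrow \overline{A}$ of the Frobenius of $\overline{A}$ (this is precisely the content of \cref{rel vs abs sites} and the remark after \cref{rel vs abs coh} relating to \cite[Thm.~5.2]{BS19}). One must check that these twists are compatible as $(A,A/J)$ varies, so that the resulting object is genuinely a crystal on $Z_{2,\conv}$ and not merely a compatible family; here the key input is that \cref{prism-crys-crystal-Frob} identifies the Frobenius pullback on the prismatic side with $F^*$ on the crystalline side, so the twist is absorbed uniformly. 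A secondary point requiring care is that $(A,A/J)$ is an \emph{arbitrary} $p$-torsionfree $p$-complete divided power thickening, not necessarily of the form $\rA_\crys(S)$; this is handled by $p$-completely flat descent as noted in the remark following \cref{alt-Cech-ft-rings}, reducing to the quasiregular semiperfect case where all our comparison isomorphisms are available. Once these compatibilities are in place, the perfectness and tor-amplitude bounds on the stalks follow from \cref{perfectness} applied fiberwise.
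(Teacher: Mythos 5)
The paper's proof of this theorem is essentially a sequence of citations: part \ref{higher-direct-image-crys-isocrystal} invokes Shiho \cite[Thm.~2.36, Cor.~2.37]{Shi07} for perfectness of the direct image over $p$-torsionfree thickenings and Xu \cite[Cor.~8.3, Thm.~3.22]{Xu19} for the convergent $F$-isocrystal structure, while part \ref{higher-direct-image-crys-cohom} is the comparison between rational crystalline cohomology and convergent cohomology from Ogus \cite[Thm.~0.7.7]{Ogu90} and Shiho \cite[Thm.~2.36]{Shi07}. Your proposal takes a different — and in principle attractive, because more self-contained — route through the prismatic machinery of \cref{prism-crys-crystal}, \cref{rel vs abs coh}, \cref{base-change}, and \cref{perfectness}. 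However, it has genuine gaps in its current form.

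First, the prismatic-crystalline comparison \cref{rel vs abs coh} is only established for relative crystalline sites whose base is a pd-thickening of the form $\bigl(\rA_\crys(S), \ker(\rA_\crys(S)\to S)\bigr)$ with $S$ quasiregular semiperfect. You propose to extend to arbitrary $p$-torsionfree $p$-complete divided power thickenings $(A,A/J)$ by $p$-completely flat descent, but this is not merely a comment: the crystal property you want to extract must be checked to descend along covers whose intermediate terms interact with the Frobenius twist in \cref{rel vs abs sites}.\ref{rel vs abs prism} — the prismatic base is $Z_{1,\overline A}$ rather than $Z_{1,A/J}$, and this twist varies across morphisms of thickenings. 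Establishing that these twists are uniformly absorbed, as you suggest via \cref{prism-crys-crystal-Frob}, is plausible but is itself a nontrivial verification that you leave open.

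Second, and more seriously, the assertion that ``Dwork's Frobenius trick applies verbatim to perfect complexes'' is not a remark one can make in passing: producing a convergent $F$-isocrystal in perfect complexes from a crystalline crystal in perfect complexes with $F$-structure requires $p$-adic radius-of-convergence estimates at the level of complexes, and this is precisely the hard content of Xu \cite[Thm.~3.22]{Xu19}, which itself builds on Shiho's careful finiteness theory. Your route does not escape this input; it would have to reprove it. Relatedly, part \ref{higher-direct-image-crys-cohom} is not ``just the defining formula'': the value of the convergent isocrystal at $(A,A/J)$ is a priori the convergent cohomology $R\Gamma\bigl((Z_{1,A/J}/A)_\conv,\cE'[1/p]\bigr)$, and identifying this with rational crystalline cohomology $R\Gamma\bigl((Z_{1,A/J}/A)_\crys,\cE'\bigr)[1/p]$ is exactly Ogus's and Shiho's theorem, which your argument neither proves nor cites. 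Until the descent reduction, the perfect-complex Dwork trick, and the convergent-versus-crystalline comparison are addressed head-on, the proposal does not constitute a complete proof.
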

\begin{proof}
	The result essentially follows from work of Xu \cite{Xu19} and Shiho \cite{Shi07}, where we regard $Z_i$ as finite type $\fS \colonequals W(k)$-schemes via the structure map $W(k)\to \cO_K$.
	For \ref{higher-direct-image-crys-isocrystal}, first note that by \cite[Thm.~2.36,~Cor.~2.37]{Shi07},\footnote{
	To reduce to the setting of \cite[Cor.~2.37]{Shi07}, it suffices to notice that each cohomology sheaf $H^i(\cE'[1/p])$ is an isocrystal in vector bundles, which produces a finite filtration on $Rf_{s,\crys,*} \cE'[1/p]$ such that each graded piece is a perfect complex.}
	the restriction $(Rf_{s,\crys,*} \cE'[1/p])_T$ to a divided power thickening $(U,T)$ (regarded as an object of the convergent site $Z_{2,\conv}$) for which $T$ is $p$-torsionfree is a perfect complex of $\cO_T[1/p]$-modules.
	Moreover, by \cite[Cor.~8.3,~Thm.~3.22]{Xu19}, the derived pushforward $Rf_{s,\crys,*} \cE'[1/p]$ is a convergent $F$-isocrystal.
	By the proof of \cite[Cor.~8.3]{Xu19} (or explicitly in the proof of \cite[Thm.~5.10]{Xu19}), its $F$-structure is given by the natural isomorphisms
	\[
	\begin{tikzcd}
	\varphi_{Z_2}^*(Rf_{s,\crys,*}\cE'[1/p]) \ar[r, "\simeq"] & Rf_{s,\crys,*} (\varphi_{Z_1}^* \cE'[1/p]) \arrow{r}{\simeq}[swap]{F_{\cE'}} & Rf_{s,\crys,*}\cE'[1/p].
	\end{tikzcd}
    \]
    
    For \ref{higher-direct-image-crys-cohom}, observe that the convergent $F$-isocrystal attached to
    $Rf_{s,\crys,*} \cE'[1/p]$ sends an object $(U,T)\in Z_{2,\conv}$ to the perfect complex over $\cO_T[1/p]$
    \[
    R\Gamma\bigl((Z_{1,U}/T)_\conv, \cE'[1/p]\bigr).
    \]
    Now, we assume further that $(U,T)$ is a $p$-torsionfree divided power thickening in the crystalline site $Z_{2,\crys}$.
    In this case, the integral crystalline cohomology $R\Gamma\bigl((Z_{1,U}/T)_\crys, \cE'\bigr)$ is bounded over $T$ thanks to the assumption that $\cE'$ is a crystal in perfect complexes and $Z_{1,U}$ is proper and smooth over $U$.
    Thus, by the isomorphism between rational crystalline cohomology and convergent cohomology (\cite[Thm.~0.7.7]{Ogu90} when the base is a point and \cite[Thm.~2.36]{Shi07} in general), we have
    \[
    R\Gamma\bigl((Z_{1,U}/T)_\conv, \cE'[1/p]\bigr) \simeq R\Gamma\bigl((Z_{1,U}/T)_\crys, \cE'\bigr)[1/p]. \qedhere
    \]
\end{proof}
Combining the results in this section, we can reinterpret the prismatic cohomology of the restriction $\restr{\cE}{X_{p=0}}$ of an analytic prismatic $F$-crystal $\cE$ in terms of the convergent $F$-isocrystal $Rf_{s,\crys,*} \cE'[1/p]$, where $\cE'$ is the associated crystalline crystal in perfect complexes over $X_{p=0,\crys}$.

\begin{corollary}\label{prism crys comp reduced-special-fib}
	Let $f \colon X \to Y$ be a smooth proper morphism of smooth formal schemes over $\cO_K$, let $\cE \in D_\perf^\varphi(X_{p=0,\Prism})$, and let $\cE'$ be the associated crystal in perfect complexes over $X_{p=0, \crys}$.
	Let $(A=\rA_{\crys}(S), J)$ be a divided power thickening associated to a quasiregular semiperfect ring $S$ over $Y_{p=0}$ and let $X_{\overline{A}} \colonequals X\times_Y \Spec(A/p)$, where $\Spec(A/p)\to \Spec(S)\to Y$ is the structure map for the prism.
	Then there is a natural, Frobenius equivariant isomorphism of perfect $A[1/p]$-complexes
	\[
	R\Gamma\bigl((X_{\overline{A}}/A)_\Prism,\cE\bigr)[1/p] \simeq (Rf_{s,\crys,*} \cE'[1/p]) (A, A/J).
	\]
\end{corollary}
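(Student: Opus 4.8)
The plan is to interpolate between the two comparison statements already at hand: the relative crystalline comparison \cref{rel vs abs coh} on the prismatic side, and \cref{higher-direct-image-crys}, which computes the sections of the convergent pushforward $Rf_{s,\crys,*}\cE'[1/p]$ via relative crystalline cohomology. The bridge is the identity $A/J = S$: then the fiber product $X_{p=0}\times_{Y_{p=0}}\Spec(A/J)$ is just the smooth proper $S$-scheme obtained by base change, and its further base change along the factorization-of-Frobenius embedding $S\hookrightarrow\overline{A}$ (of \cite[Thm.~12.2]{BS19}, as used in \cref{rel vs abs sites}.\ref{rel vs abs prism}) recovers $X_{\overline{A}}$.

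Concretely, set $Z := X_{p=0}\times_{Y_{p=0}}\Spec(S)$; it is smooth and proper over $\Spec(S)$, hence a quasi-syntomic scheme, so \cref{rel vs abs coh} applies. Pulling $\cE$ back along $Z\to X_{p=0}$ and using the compatibility of the equivalence of \cref{prism-crys-crystal}.\ref{prism-crys-crystal coefficient} with such pullbacks, the crystal on $Z_\crys$ attached to $\restr{\cE}{Z_\Prism}$ is $\restr{\cE'}{Z_\crys}$; moreover the Frobenius on $\cE'$ induced by $\varphi_\cE$ (cf.\ \cref{prism-crys-crystal-Frob}) becomes an isomorphism after inverting $p$, since $\cI_\Prism=(p)$ over a crystalline prism, so $\cE'[1/p]$ is an $F$-isocrystal in perfect complexes and \cref{F-isocrystal-and-conv} lets us regard it on the convergent site. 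Now \cref{rel vs abs coh}, applied to $Z$ over $S$ with base $(A,J)$, together with the identification $Z_{\overline{A}}=X_{\overline{A}}$ from the previous paragraph, yields a Frobenius-equivariant isomorphism
\[ R\Gamma\bigl((X_{\overline{A}}/A)_\Prism,\cE\bigr)\;\xrightarrow{\ \sim\ }\;R\Gamma\bigl((Z/(A,J))_\crys,\cE'\bigr), \]
which we invert $p$.

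On the other side, $(A,A/J)=(\rA_\crys(S),S)$ is a $p$-complete, $p$-torsionfree divided power thickening in $Y_{p=0,\crys}$ and $X_{p=0}\times_{Y_{p=0}}\Spec(A/J)=Z$, so \cref{higher-direct-image-crys}.\ref{higher-direct-image-crys-cohom}, applied to $X_{p=0}\to Y_{p=0}$, provides a natural isomorphism
\[ (Rf_{s,\crys,*}\cE'[1/p])(A,A/J)\;\xrightarrow{\ \sim\ }\;R\Gamma\bigl((Z/(A,J))_\crys,\cE'\bigr)[1/p]. \]
It is Frobenius-equivariant because the $F$-structure on $Rf_{s,\crys,*}\cE'[1/p]$ constructed in \cref{higher-direct-image-crys}.\ref{higher-direct-image-crys-isocrystal} is induced from $\varphi_{\cE'}$ in precisely the way that defines the Frobenius on $R\Gamma((Z/(A,J))_\crys,\cE')$. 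Splicing the two displayed isomorphisms — each natural in $(A,I)$ and in $\cE$ — gives the corollary.

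The work here is not a deep obstacle but careful bookkeeping of base points and structures: one checks that the map $\Spec(A/p)\to\Spec(S)$ defining $X_{\overline{A}}$ is the factorization-of-Frobenius embedding of \cref{rel vs abs sites}.\ref{rel vs abs prism}, that under it the two fiber products literally agree, and that the crystal together with its Frobenius transports correctly through the chain of equivalences $Z_\Prism\leftrightarrow(X_{\overline{A}}/A)_\Prism$, $Z_\crys\leftrightarrow(Z/(A,J))_\crys$, and the convergent-site reinterpretation. The one genuinely technical point is the $p$-torsionfreeness of $A=\rA_\crys(S)$ needed for \cref{higher-direct-image-crys}.\ref{higher-direct-image-crys-cohom}; if one prefers to avoid assuming it, one first replaces $S$ by a $p$-completely flat cover by a transversal quasiregular semiperfect ring and descends, as in the proof of \cref{higher-direct-image-crys}.
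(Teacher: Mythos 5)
Your proof is correct and uses the same two ingredients the paper's one-line proof invokes, namely \cref{rel vs abs coh} and \cref{higher-direct-image-crys}, spliced at $R\Gamma\bigl((Z/(A,J))_\crys,\cE'\bigr)$. You have in fact been slightly more careful than the paper in identifying the auxiliary scheme $Z = X_{p=0}\times_{Y_{p=0}}\Spec(S)$ (the paper's parenthetical ``applied to $Z = X_{\overline{A}}$'' is best read as shorthand for this), since it is this $Z$ that is quasi-syntomic over $S$ and satisfies $Z_{\overline{A}} = X_{\overline{A}}$ under the factorization-of-Frobenius embedding of \cref{rel vs abs sites}.\ref{rel vs abs prism}.
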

\begin{proof}
    This follows from \cref{rel vs abs coh} (applied to $Z=X_{\overline{A}}$) and \cref{higher-direct-image-crys}.
\end{proof}

\section{Poincar\'e duality for prismatic cohomology}\label{Poincare-duality}
In this section, we prove that relative prismatic cohomology which arises from a geometric setting satisfies Poinca\'e duality.

\subsection{Trace-free duality}
We first consider the case of a morphism $f \colon X\to Y$ such that the canonical map of the structure sheaves is an isomorphism, namely $f_*\cO_X=\cO_Y$.
We will prove that there is a canonical Poincar\'e duality for the prismatic derived pushforward along $f$, without referring to any trace maps.

We first show that the top prismatic higher direct image is locally free of rank one over the base.
\begin{proposition}\label{top-coh-free}
	Let $f \colon X \to Y$ be a smooth proper morphism of smooth formal $\cO_K$-schemes which is of relative equidimension $n$.
	Assume that $f_*\cO_X=\cO_Y$. 
	Then the top prismatic higher direct image $R^{2n}f_{\Prism,*}\cO_{X_\Prism}\{n\}$ is a crystal in line bundles over $\cO_{Y_\Prism}$.
\end{proposition}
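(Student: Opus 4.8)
The plan is to reduce the statement to the Hodge--Tate level, where it becomes a computation in classical algebraic geometry. By \cref{coh-is-crystal}, we already know that $Rf_{\Prism,*}\cO_{X_\Prism}$ is a prismatic crystal in perfect complexes, so $R^{2n}f_{\Prism,*}\cO_{X_\Prism}\{n\}$ is itself a prismatic crystal in coherent sheaves once we know it is a vector bundle; hence it suffices to check the rank-one property after evaluating at a single weakly initial transversal prism $(A,I)$ with $A$ noetherian (e.g.\ a Breuil--Kisin prism as in \cref{an-to-complex}, or more simply work locally on $Y$ and use that transversal prisms are weakly final). Concretely, I would show that the perfect $A$-complex $C \colonequals R\Gamma\bigl((X_{\overline A}/A)_\Prism, \cO_\Prism\bigr)\{n\}$ has $H^{2n}(C)$ a locally free $A$-module of rank one and that this is compatible with base change, which by the crystal property of the cohomology (\cref{base-change}) and the fact that the formation of $H^{2n}$ of a perfect complex concentrated in degrees $[0,2n]$ commutes with base change in top degree, will give the crystal structure.

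First I would reduce modulo $I$: by \cref{base-change} (or the mod-$I$ criterion for perfectness and the base-change compatibility of top cohomology), it is enough to prove that $H^{2n}\bigl(R\Gamma((X_{\overline A}/A)_\Prism, \cO_\Prism)\otimes^L_A \overline A\bigr)\{n\} \simeq H^{2n}\bigl(R\Gamma((X_{\overline A}/A)_\Prism, \overline\cO_\Prism)\{n\}\bigr)$ is a line bundle over $\overline A$. By the Hodge--Tate comparison (\cref{Higgs-coh} applied Zariski-locally on $X_{\overline A}$, or directly \cite[Thm.~1.8.(2)]{BS19} for $\cO_\Prism$), $R\nu_*\overline\cO_\Prism$ carries a finite filtration with graded pieces $\Omega^i_{X_{\overline A}/\overline A}\{-i\}[-i]$, so there is a conjugate spectral sequence computing $H^{2n}$ in terms of $H^j(X_{\overline A}, \Omega^i_{X_{\overline A}/\overline A})$ with $i+j = 2n$. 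Since $f$ is smooth proper of relative equidimension $n$, the only surviving term in total degree $2n$ is $i=j=n$, giving $H^{2n}\bigl(R\Gamma((X_{\overline A}/A)_\Prism, \overline\cO_\Prism)\bigr)\{n\} \simeq R^nf_{*}\Omega^n_{X_{\overline A}/\overline A}\{-n\}\{n\} = R^nf_{*}\omega_{X_{\overline A}/\overline A}$. The hypothesis $f_*\cO_X = \cO_Y$ implies (after the flat base change $\cO_Y \to \overline A$) that $f_*\cO_{X_{\overline A}} = \overline A$; relative Serre/Grothendieck duality for the smooth proper morphism $X_{\overline A}\to \Spec \overline A$ (say, étale-locally on $Y$ so that $\overline A$ is noetherian, then spread out) then identifies $R^nf_{*}\omega_{X_{\overline A}/\overline A}$ with the $\overline A$-linear dual of $f_*\cO_{X_{\overline A}} = \overline A$, which is free of rank one.

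The main obstacle I anticipate is bookkeeping rather than conceptual: making sure the Hodge--Tate comparison and the Grothendieck-duality identification are functorial enough to glue into a genuine \emph{crystal} structure on $R^{2n}f_{\Prism,*}\cO_{X_\Prism}\{n\}$, and handling the non-noetherian prisms $(A,I)$ that occur in the prismatic site (so that Grothendieck duality is not directly available). For the latter I would work with the weakly initial noetherian transversal prism and its \v{C}ech nerve $(A^\bullet, I^\bullet)$ (as in the proof of \cref{an-to-complex}), observing that all coface maps out of $A^0$ are flat, so the rank-one locally free module $H^{2n}(C^0)\otimes \overline{A^0}$ and its canonical descent datum propagate to $A^\bullet$; alternatively one invokes that transversal noetherian prisms cover $Y_\Prism$ and that $R^{2n}f_{\Prism,*}\cO_{X_\Prism}\{n\}$, being already a crystal in perfect complexes by \cref{coh-is-crystal}, is determined by its values there. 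The $\{n\}$-twist, which is a line bundle over each $(A,I)$, is harmless and is inserted precisely so that the Hodge--Tate graded piece $\Omega^n\{-n\}$ becomes $\Omega^n$ after twisting, matching the dualizing sheaf; this also makes the eventual trace map land in $\cO_{Y_\Prism}$ (cf.\ the discussion preceding \cref{main-PD}).
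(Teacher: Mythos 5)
Your Hodge--Tate computation and the Grothendieck duality argument are essentially what the paper does (it cites \cite[\S~6.1]{Tia21} for the trace trivialization of $\Hh^{2n}\bigl((X_{\overline A}/A)_\Prism,\overline\cO_\Prism\{n\}\bigr)\simeq\overline A$), and your reduction to a single prism covering the final object is also correct. However, there is a genuine gap in your claim that ``it is enough to prove that $H^{2n}\bigl(R\Gamma((X_{\overline A}/A)_\Prism, \overline\cO_\Prism)\{n\}\bigr)$ is a line bundle over $\overline A$.'' This is not sufficient: a finitely presented $A$-module $M$ with $M \otimes_A \overline A$ free of rank one is, by Nakayama, only a \emph{cyclic} module $M \simeq A/J$ for some ideal $J \subseteq I$, and nothing forces $J = 0$. (For example $M = A/\widetilde\xi^2$ has $M/\widetilde\xi M = \cO_C$ free of rank one over $\overline A = \cO_C$.) The ``mod-$I$ criterion for perfectness'' you invoke is a criterion for perfectness of a \emph{complex}, not for freeness of a top cohomology group, so it does not supply the missing implication, and \cref{base-change} only gives compatibility of cohomology with base change, not freeness. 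The paper spends the second half of its argument precisely on showing $J=0$: it base-changes to $\rA_{\inf}(V)$ for perfectoid valuation rings $V$ of rank $\le 1$ with algebraically closed fraction field, invokes the freeness of the top $\rA_{\inf}$-cohomology from \cite[Thm.~14.5, Rem.~14.4]{BMS18}, and concludes via a $p$-complete arc-cover and the injectivity of $A \to \prod_i \rA_{\inf}(V_i)$. Your proposal has no analogue of this step.

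A secondary point: working at a noetherian Breuil--Kisin prism, as you propose, is at odds with the available tools for filling this gap. The arc-cover argument in the paper relies on evaluating at a \emph{perfect} prism $(A,I)$ (non-noetherian), where one has a good arc-cover $\overline A \to \prod_i V_i$ by perfectoid valuation rings and the input from \cite{BMS18}. If you insist on the noetherian transversal prism you would need to either first pass to its perfection before running the vanishing argument, or find some other reason that the ideal $J$ vanishes; as written, the proof stops one step too early.
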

\begin{proof}
	From \cref{perfectness} and \cref{coh-is-crystal}, we see that $Rf_{\Prism,*} \cO_{X_\Prism}\{n\}$ is a crystal in perfect complexes of tor-amplitude $[0,2n]$.
	That is, it sends a prism $(A,I)\in Y_\Prism$ to the perfect $A$-complex $R\Gamma\bigl((X_{\overline{A}}/A)_\Prism,\cO_\Prism\{n\}\bigr)$ of tor-amplitude $[0,2n]$ and for any maps of prisms $(A,I) \to (B,IB)$ the natural base change map
	\[
	R\Gamma\bigl((X_{\overline{A}}/A)_\Prism,\cO_\Prism\{n\}\bigr) \otimes_A^L B \simeq R\Gamma\bigl((X_{\overline{B}}/B)_\Prism,\cO_\Prism\{n\}\bigr)
	\]
	is an isomorphism.
	In particular, the top cohomology group $\Hh^{2n}\bigl((X_{\overline{A}}/A)_\Prism,\cO_\Prism\{n\}\bigr)$ is finitely presented over $A$ and satisfies
	\begin{align*}
		\Hh^{2n}\bigl((X_{\overline{A}}/A)_\Prism,\cO_\Prism\{n\}\bigr)\otimes_A B  \simeq \Hh^{2n}\bigl((X_{\overline{B}}/B)_\Prism,{\cO}_\Prism\{n\}\bigr).
	\end{align*}
    This implies that
    \[ R^{2n}f_{\Prism,*}\cO_{X_\Prism}\{n\} \colon (A,I) \mapsto \Hh^{2n}\bigl((X_{\overline{A}}/A)_\Prism,\cO_\Prism\{n\}\bigr) \]
    is a (classical) crystal in finitely presented modules over $Y_\Prism$.
    
    It remains to show that $R^{2n}f_{\Prism,*}\cO_{X_\Prism}\{n\}$ takes values in line bundles over $\cO_{Y_\Prism}$.
    This can be checked locally on $Y_\Prism$.
    Since $Y$ is smooth over $\cO_K$, may therefore assume that $Y$ is a framed smooth affine formal schemes over $\cO_K$.
    Further, in this case $Y_\Prism$ contains a perfect prism $(A,I)$ which covers the final object of $\Shv(Y_\Prism)$ (with the faithfully flat topology);
    for example, such an $(A,I)$ can be constructed by taking the perfection of the Breuil--Kisin prisms from \cref{an-to-complex}.
    Hence, it suffices to prove that $\Hh^{2n}\bigl((X_{\overline{A}}/A)_\Prism,\cO_\Prism\{n\}\bigr)$ is free of rank $1$ over $A$ for any perfect prism $(A,I) \in Y_\Prism$.
    
    Relative prismatic cohomology comes with a natural Hodge--Tate reduction isomorphism
    \[
    R\Gamma\bigl((X_{\overline{A}}/A)_\Prism,\cO_\Prism\{n\}\bigr) \otimes_A^L \overline{A} \simeq R\Gamma\bigl((X_{\overline{A}}/A)_\Prism,\overline{\cO}_\Prism\{n\}\bigr) \quad \in D^{[0,2n]}(\overline{A}).
    \]	
    Taking cohomology in top degree, we obtain
    \begin{equation}\label{top-coh-free-red}
		\Hh^{2n}\bigl((X_{\overline{A}}/A)_\Prism,\cO_\Prism\{n\}\bigr) \otimes_A \overline{A} \simeq \Hh^{2n}\bigl((X_{\overline{A}}/A)_\Prism,\overline{\cO}_\Prism\{n\}\bigr).
    \end{equation}
    Thanks to the condition that $f_*\cO_X=\cO_Y$ and thus $\Hh^0(X_{\overline{A}},\cO)=\overline{A}$, the Grothendieck--Serre trace gives a trivialization
    \[ \Hh^{2n}\bigl((X_{\overline{A}}/A)_\Prism,\overline{\cO}_\Prism\{n\}\bigr) \simeq \overline{A} \]
    of the top Hodge--Tate cohomology;
    see e.g.\ \cite[\S~6.1]{Tia21}. 
    Combined with (\ref{top-coh-free-red}) and the (classical) Nakayama lemma, we therefore obtain a surjection
    \[ A \twoheadrightarrow A/J \simeq \Hh^{2n}\bigl((X_{\overline{A}}/A)_\Prism,\cO_\Prism\{n\}\bigr) \]
    for some ideal $J \subset I$.
    It remains to show that this surjection is an isomorphism, or in other words, that $J = 0$.
    
	For any perfectoid valuation ring $V$ of rank $\le 1$ with algebraically closed fraction field and any morphism $A \to \rA_{\inf}(V)$,
	\[ \Hh^{2n} \bigl((X_{\overline{A}}/A)_\Prism,\cO_\Prism\{n\}\bigr) \otimes_A \rA_{\inf}(V) \simeq \Hh^{2n}\bigl((X_V/\rA_{\inf}(V))_\Prism,\cO_\Prism\{n\}\bigr) \]
	is free of rank $1$ over $\rA_{\inf}(V)$ by \cite[Thm.~14.5, Rmk. 14.4]{BMS18}.
	That is, after base change along $A \to \rA_{\inf}(V)$, the natural map $A \twoheadrightarrow A/J \simeq \Hh^{2n}\bigl((X_{\overline{A}}/A)_\Prism,\cO_\Prism\{n\}\bigr)$ becomes a surjective endomorphism and thus an isomorphism (see e.g.\ \cite[Prop.~1.2]{Vas69}), so $J \subseteq \ker\bigl(A \to \rA_{\inf}(V)\bigr)$.
	By \cite[Rmk.~8.9]{BS19}, there exists a $p$-complete arc-cover $\overline{A} \to \prod_i V_i$ such that each $V_i$ is a perfectoid valuation ring of the above form.
	As $\overline{A}$ is perfectoid, this map is injective (cf.\ e.g.\ \cite[Prop.~8.10]{BS19}).
	After an induction on $n$, it follows that the product $A/I^n \to \prod_i \rA_{\inf}(V_i)/I^n$ is injective as well;
	hence, so is $A \to \prod_i \rA_{\inf}(V_i)$.
	In particular, $J \subseteq \bigcap_i \ker\bigl(A \to \rA_{\inf}(V_i)\bigr) = 0$ as desired.
\end{proof}
\begin{remark}
    With more work, one can show that for any bounded prism $(A,I)$ and any smooth proper formal scheme $X \to \Spf(\overline{A})$ of dimension $n$ with $\Hh^0(X,\cO_X)=\overline{A}$, a trace construction gives $\Hh^{2n}\bigl((X/A)_\Prism,\cO_\Prism\{n\}\bigr) \simeq A$;
    see \cite[Rem.~6.4]{Tang}.
    For our purposes, the statement in \cref{top-coh-free} will be sufficient, so we leave the more general case to the interested reader.
\end{remark}
Our next result is the Poincar\'e duality for prismatic cohomology without referring to any trace map, under the assumption that $f_*\cO_X=\cO_Y$.
\begin{construction}[Pairings]
\label{cup-product}
\begin{enumerate}[label=\upshape{(\roman*)},leftmargin=*]
    \item\label{cup-product-fiber} Let $(A,I)$ be a bounded prism, let $Z \to \Spf(\overline{A})$ be a $p$-adic formal scheme, and let $\cE_1$ and $\cE_2$ be prismatic $F$-crystals in perfect complexes on $(Z/A)_\Prism$.
    For any prism $(B,IB) \in (Z/A)_\Prism$, sheafification induces a natural map
    \[ \cE_1(B,IB) \otimes^L \cE_2(B,IB) \to (\cE_1 \otimes^L \cE_2)(B,IB). \]
    Taking limits over all prisms $(B,IB) \in (Z/A)_\Prism$, we obtain a natural map
    \begin{align*}
        R\Gamma&\bigl((Z/A)_\Prism,\cE_1\bigr) \otimes^L_A R\Gamma\bigl((Z/A)_\Prism,\cE_2\bigr) \simeq \underset{(B,IB) \in (Z/A)_\Prism}{R\lim} \cE_1(B,IB) \otimes^L_A \underset{(B,IB) \in (Z/A)_\Prism}{R\lim}\cE_2(B,IB) \\
        & \longrightarrow \underset{(B,IB) \in (Z/A)_\Prism}{R\lim} (\cE_1 \otimes^L \cE_2)(B,IB) \simeq R\Gamma\bigl((Z/A)_\Prism,\cE_1 \otimes^L \cE_2 \bigr).
    \end{align*}
    \item\label{cup-product-morphism} Let $f \colon X \to Y$ be a smooth proper morphism of   $p$-adic formal schemes and $\cE_1$ and $\cE_2$ be prismatic $F$-crystals in perfect complexes on $X_\Prism$.
    For varying $(A,I) \in Y_\Prism$, the maps from \ref{cup-product-fiber} are compatible with the base change isomorphism from \cref{base-change} and thus assemble to a natural map of prismatic crystals in perfect complexes
    \[ Rf_{\Prism,*}\cE_1 \otimes^L_{\cO_{Y_\Prism}} Rf_{\Prism,*}\cE_2 \to Rf_{\Prism,*}(\cE_1 \otimes^L \cE_2). \]
    We call this map the \emph{cup product map for $\cE_1$ and $\cE_2$}.
    \item\label{cup-product-pairing} Let $f \colon X \to Y$ be a smooth proper morphism of   $p$-adic formal schemes.
    Let $\cE \in D^\varphi_\perf(X_\Prism)$ be a prismatic $F$-crystal in perfect complexes on $X_\Prism$ and $\cE^\vee \colonequals R\iHom_{\cO_{X_\Prism}}(\cE,\cO_{X_\Prism})$ its dual.
    Then there is a natural pairing
    \[ Rf_{\Prism,*} \cE \otimes^L_{\cO_{Y_\Prism}} Rf_{\Prism,*} \cE^\vee\{n\} \to Rf_{\Prism,*}(\cE \otimes^L_{\cO_{X_\Prism}} \cE^\vee\{n\}) \to Rf_{\Prism,*}\cO_{X_\Prism}\{n\} \to R^{2n}f_{\Prism,*}\cO_{X_\Prism}\{n\}[-2n], \]
    where the first morphism is the cup product map from \ref{cup-product-morphism}, the second morphism is the derived pushforward of the evaluation map for the internal hom
    \[ \epsilon_{\cE} \colon \cE \otimes^L_{\cO_{X_\Prism}} \cE^\vee\{n\} \simeq \cE \otimes^L_{\cO_{X_\Prism}} R\iHom(\cE,\cO_{X_\Prism}\{n\}) \to \cO_{X_\Prism}\{n\}, \]
    and the last map is the truncation map.
\end{enumerate}
\end{construction}
In what follows, we need an extra piece of notation:
Given morphisms $\alpha \colon \cF_1 \to \cE_1$ and $\beta \colon \cE_2 \to \cF_2$ in $D^\varphi_\perf(X_\Prism)$, the contra- and covariant functoriality of hom complexes in the first and second variable, respectively, gives us a natural map
\[ R\iHom(\cE_1,\cE_2) \to R\iHom(\cF_1,\cF_2). \]
We will denote this map by $(\alpha^\vee,\beta)$.
\begin{construction}[$F$-structures on dual crystals]
\label{Frobenius-dual}
    Let $X$ be a   $p$-adic formal scheme and $\cE \in D^\varphi_\perf(X_\Prism)$ be a prismatic $F$-crystal in perfect complexes on $X_\Prism$.
    Then we endow $\cE^\vee$ with the $F$-crystal structure given by
    \begin{align*} 
    \varphi_{\cE^\vee} \colon \varphi^*\cE^\vee[1/\cI_\Prism] & \simeq \varphi^*R\iHom_{\cO_{X_\Prism}}(\cE,\cO_{X_\Prism})[1/\cI_\Prism] \to R\iHom_{\cO_{X_\Prism}}(\varphi^*\cE[1/\cI_\Prism],\varphi^*\cO_{X_\Prism}[1/\cI_\Prism]) \\
    & \xrightarrow{((\varphi^{-1}_\cE)^\vee, \varphi_{\cO_{X_\Prism}})} R\iHom_{\cO_{X_\Prism}}(\cE[1/\cI_\Prism],\cO_{X_\Prism}[1/\cI_\Prism]) \simeq \cE^\vee[1/\cI_\Prism],
    \end{align*}
    where the second isomophism follows from the perfectness of $\cE$.
\end{construction}
We now explain why the $F$-structures on an $F$-crystal and its dual from \cref{Frobenius-dual} are compatible with the pairings from \cref{cup-product}.
\begin{lemma}\label{dual-pairing-Frob}
    Let $X$ be a   $p$-adic formal scheme and let $\cE \in D^\varphi_\perf(X_\Prism)$.
    Denote by
    \[ \epsilon_{\cE} \colon \cE \otimes^L_{\cO_{X_\Prism}} \cE^\vee\{n\} \simeq \cE \otimes^L_{\cO_{X_\Prism}} R\iHom(\cE,\cO_{X_\Prism}\{n\}) \to \cO_{X_\Prism}\{n\} \]
    the evaluation map for the internal hom.
    Then the following natural diagram commutes:
    \begin{equation}\label{eval-Frob-equiv} \begin{tikzcd}[column sep=huge]
        \varphi^*\cE[1/\cI_\Prism] \otimes^L \varphi^*\cE^\vee\{n\}[1/\cI_\Prism] \arrow[r,"{\varphi^*\epsilon_\cE[1/\cI_\Prism]}"] \arrow[d,"\varphi_\cE \otimes^L \varphi_{\cE^\vee\{n\}}"] & \varphi^*\cO_{X_\Prism}\{n\}[1/\cI_\Prism] \arrow[d,"\varphi_{\cO_{X_\Prism}\{n\}}"] \\
        \cE[1/\cI_\Prism] \otimes^L \cE^\vee\{n\}[1/\cI_\Prism] \arrow[r,"{\epsilon_\cE[1/\cI_\Prism]}"] & \cO_{X_\Prism}\{n\}[1/\cI_\Prism].
    \end{tikzcd} \end{equation}
\end{lemma}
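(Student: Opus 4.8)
The plan is to reduce the commutativity of (\ref{eval-Frob-equiv}) to a purely formal identity about evaluation maps of dualizable objects, using that the Frobenius pullback $\varphi^*$ is symmetric monoidal. First, since $\cO_{X_\Prism}\{n\}$ is a line bundle and $\cE^\vee\{n\} \simeq R\iHom_{\cO_{X_\Prism}}(\cE,\cO_{X_\Prism}\{n\})$, the entire diagram (\ref{eval-Frob-equiv}) is obtained from the corresponding diagram with $n=0$ by tensoring with the Frobenius-equivariant line bundle $\bigl(\cO_{X_\Prism}\{n\},\varphi_{\cO_{X_\Prism}\{n\}}\bigr)$; as tensoring with a fixed object carries commuting squares to commuting squares, we may assume $n=0$. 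The remaining maps are morphisms of sheaves of $\cO_\Prism[1/\cI_\Prism]^\wedge_p$-modules, and commutativity may be checked in the derived $\infty$-category $D\bigl(X_\Prism,\cO_\Prism[1/\cI_\Prism]^\wedge_p\bigr)$, or equivalently after restriction to each prism $(A,I)\in X_\Prism$, where $\cE$ evaluates to a perfect, hence dualizable, $A[1/I]^\wedge_p$-complex with dual $\cE^\vee$ and evaluation morphism $\epsilon_\cE[1/\cI_\Prism]$ (cf.\ \cite[\href{https://stacks.math.columbia.edu/tag/0944}{Tag~0944}]{SP}).

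Next I would unwind \cref{Frobenius-dual}. By construction $\varphi_{\cE^\vee}[1/\cI_\Prism]$ is the composite of the canonical identification
\[ \varphi^*R\iHom_{\cO_{X_\Prism}}(\cE,\cO_{X_\Prism})[1/\cI_\Prism] \simeq R\iHom_{\cO_{X_\Prism}}\bigl(\varphi^*\cE[1/\cI_\Prism],\varphi^*\cO_{X_\Prism}[1/\cI_\Prism]\bigr), \]
valid because $\cE$ is perfect (so the symmetric monoidal functor $\varphi^*$ commutes with forming internal homs into $\cO_{X_\Prism}$), followed by $\bigl((\varphi_\cE^{-1})^\vee,\varphi_{\cO_{X_\Prism}}\bigr)$. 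Under the very same identification, the pulled-back evaluation map $\varphi^*\epsilon_\cE[1/\cI_\Prism]$ is carried to the evaluation map
\[ \mathrm{ev}\colon \varphi^*\cE[1/\cI_\Prism] \otimes^L R\iHom_{\cO_{X_\Prism}}\bigl(\varphi^*\cE[1/\cI_\Prism],\varphi^*\cO_{X_\Prism}[1/\cI_\Prism]\bigr) \longrightarrow \varphi^*\cO_{X_\Prism}[1/\cI_\Prism], \]
this being precisely the compatibility of a symmetric monoidal functor with evaluation morphisms of dualizable objects. Substituting these descriptions into (\ref{eval-Frob-equiv}) turns it into the outer rectangle of the following assertion, which I would isolate as a lemma: for any isomorphism $f\colon M \xrightarrow{\sim} N$ and any morphism $g\colon P \to Q$ in a closed symmetric monoidal ($\infty$-)category, the square
\[ \begin{tikzcd}[column sep=huge]
M \otimes R\iHom(M,P) \arrow[r,"\mathrm{ev}_M"] \arrow[d,"{f \otimes ((f^{-1})^{\vee},\,g)}"'] & P \arrow[d,"g"] \\
N \otimes R\iHom(N,Q) \arrow[r,"\mathrm{ev}_N"] & Q
\end{tikzcd} \]
commutes, where $\bigl((f^{-1})^{\vee},g\bigr)$ denotes the induced map ``precompose with $f^{-1}\colon N\to M$, postcompose with $g$''; here $f$ and $g$ specialize to $\varphi_\cE$ and $\varphi_{\cO_{X_\Prism}}$ respectively, and $\varphi_\cE$ is invertible after inverting $\cI_\Prism$ by hypothesis, so the inverse appearing in \cref{Frobenius-dual} is exactly what is needed.

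To prove that square I would invoke the adjunction characterization of the evaluation map: $\mathrm{ev}_M\colon M\otimes R\iHom(M,P)\to P$ is adjoint to $\id_{R\iHom(M,P)}$, and by definition $\bigl((f^{-1})^{\vee},g\bigr)\colon R\iHom(M,P)\to R\iHom(N,Q)$ is adjoint to $g\circ\mathrm{ev}_M\circ(f^{-1}\otimes\id)$. The triangle identity for the tensor--hom adjunction then yields $\mathrm{ev}_N\circ\bigl(\id_N\otimes((f^{-1})^{\vee},g)\bigr)=g\circ\mathrm{ev}_M\circ(f^{-1}\otimes\id)$, and precomposing both sides with $f\otimes\id$ and using $f^{-1}\circ f=\id_M$ gives exactly the commutativity of the square. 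I expect no genuine obstacle here: the argument is formal, and the only points demanding care are the coherence of the three identifications in play (the monoidal comparison $\varphi^*R\iHom\simeq R\iHom(\varphi^*-,\varphi^*-)$, the line-bundle twist, and the localization at $\cI_\Prism$) together with the verification --- standard from the formalism of perfect complexes on ringed topoi --- that $\cE[1/\cI_\Prism]$ is dualizable with the expected dual and evaluation map.
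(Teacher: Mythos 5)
Your argument is correct, and its mechanism is the same as the paper's—both proofs characterize the evaluation map via the tensor--hom adjunction and then use that adjunction (equivalently, the triangle identity) to verify the square—but you organize it differently and, to my taste, more cleanly. The paper sets up its diagram~(\ref{dual-pairing-Frob diagram}), notes that both compositions in~(\ref{eval-Frob-equiv}) are the images of $\id$ under two chains of adjunction isomorphisms, reduces to the commutativity of the lower square of that diagram, and then verifies it by unwinding the adjunction through the unit map $\eta_\cE$ and a three-square chase. You instead extract the general statement—that for an isomorphism $f\colon M\to N$ and a morphism $g\colon P\to Q$ in a closed symmetric monoidal $\infty$-category, the evaluation square built from $f$ and $((f^{-1})^\vee,g)$ commutes—and prove it in a few lines from the fact that $R\iHom(f^{-1},g)$ is by definition adjoint to $g\circ\mathrm{ev}_M\circ(f^{-1}\otimes\id)$. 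Once one notices that $\varphi_{\cE^\vee\{n\}}$ as defined in \cref{Frobenius-dual} is literally the composite ``monoidal comparison followed by $((\varphi_\cE^{-1})^\vee,\varphi_{\cO\{n\}})$,'' your lemma applies with $f=\varphi_\cE$, $g=\varphi_{\cO\{n\}}$ and gives the claim. The gain of your packaging is that the categorical content is isolated once and for all; the cost is that you must (and do) check the compatibility of the monoidal comparison $\varphi^*R\iHom(\cE,-)\simeq R\iHom(\varphi^*\cE,\varphi^*(-))$ with the evaluation map of the dualizable object $\cE[1/\cI_\Prism]$, which is the standard statement that a symmetric monoidal functor preserves duality data but is the hinge of the reduction and should be stated explicitly in a final write-up. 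Your two preliminary reductions (tensoring away the twist $\{n\}$ and passing to sections over a single prism) are both valid but unnecessary—your abstract lemma applies equally well at the sheaf level with the twist in place, as the paper's version of the argument shows.
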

Before we jump into the proof, we mention that the complications in the proof below come from our assumption that $(\cE,\varphi_{\cE})$ is an $F$-crystal in \emph{perfect complexes}, so extra caution is needed in order to produce the canonical homotopy for the diagram.
\begin{proof}
    Recall that $\epsilon_\cE$ is the image of $\id$ under the adjunction isomorphism
    \[ \Hom(\cE^\vee\{n\},\cE^\vee\{n\}) \simeq \Hom(\cE^\vee\{n\},R\iHom(\cE,\cO_{X_\Prism}\{n\})) \simeq \Hom(\cE \otimes^L \cE^\vee\{n\},\cO_{X_\Prism}\{n\}), \]
    and similarly for $\varphi^*\epsilon_\cE$.
    Thus, the two compositions in (\ref{eval-Frob-equiv}) are the images of the element $\id\in \Hom(\varphi^*\cE^\vee\{n\}[1/\cI_\Prism],\varphi^*\cE^\vee\{n\}[1/\cI_\Prism])$ under the maps in the left column and the right column (from bottom to top) respectively, of the following diagram of isomorphisms:
   \begin{equation}\label{dual-pairing-Frob diagram}
       \begin{tikzcd}[scale cd=.8,column sep= large,row sep=scriptsize]
        \Hom(\varphi^*\cE[1/\cI_\Prism] \otimes^L \varphi^*\cE^\vee\{n\}[1/\cI_\Prism],\cO_{X_\Prism}\{n\}[1/\cI_\Prism]) 
        \arrow[rr,equal] &  & 
        \Hom(\varphi^*\cE[1/\cI_\Prism] \otimes^L \varphi^*\cE^\vee\{n\}[1/\cI_\Prism],\cO_{X_\Prism}\{n\}[1/\cI_\Prism])  \\
        \Hom(\varphi^*\cE[1/\cI_\Prism] \otimes^L \varphi^*\cE^\vee\{n\}[1/\cI_\Prism],\varphi^*\cO_{X_\Prism}\{n\}[1/\cI_\Prism]) 
        \arrow[rr,"{\bigl(( \varphi^{-1}_\cE \otimes \varphi^{-1}_{\cE^\vee\{n\}})^\vee , \varphi_{\cO_{X_\Prism}\{n\}} \bigr)} "'] 
        \arrow[u,"{\bigl(\id^\vee , \varphi_{\cO_{X_\Prism}\{n\}}\bigr)}"] 
        \arrow[d,"\sim"',sloped] && 
        \Hom(\cE[1/\cI_\Prism] \otimes^L \cE^\vee\{n\}[1/\cI_\Prism],\cO_{X_\Prism}\{n\}[1/\cI_\Prism]) 
        \arrow[u,"{\bigl((\varphi_\cE \otimes^L \varphi_{\cE^\vee\{n\}})^\vee, \id\bigr)}"'] \arrow[d,"\sim"',sloped] \\
        \Hom(\varphi^*\cE^\vee\{n\}[1/\cI_\Prism],\varphi^*\cE^\vee\{n\}[1/\cI_\Prism]) 
        \arrow[rr,"{\bigl(  (\varphi^{-1}_{\cE^\vee\{n\}})^\vee , \varphi_{\cE^\vee\{n\}} \bigr)}"'] && \Hom(\cE^\vee\{n\}[1/\cI_\Prism],\cE^\vee\{n\}[1/\cI_\Prism]). \\
    \end{tikzcd}
   \end{equation}
    Since the bottom map sends $\id$ to $\id$, it now suffices to prove that this diagram commutes.
    The commutativity of the upper square is clear, so we address that of the lower square.
    The right vertical adjunction isomorphism sends $\alpha \colon \cE[1/\cI_\Prism]  \otimes^L \cE^\vee\{n\}[1/\cI_\Prism] \rightarrow \cO_{X_\Prism}\{n\}[1/\cI_\Prism]$ to
	\begin{align*}
		\cE^\vee\{n\}[1/\cI_\Prism]  \xlongrightarrow{\eta_\cE} \cE^\vee&[1/\cI_\Prism] \otimes^L \cE[1/\cI_\Prism] \otimes^L \cE^\vee\{n\}[1/\cI_\Prism] \\
		& \xlongrightarrow{\id\otimes \alpha} \cE^\vee[1/\cI_\Prism] \otimes^L \cO_\Prism\{n\} [1/\cI_\Prism] = \cE^\vee\{n\}[1/\cI_\Prism],
	\end{align*}
    where $\eta_\cE$ is induced by the unit map $\cO_\Prism \to \cE^\vee\otimes^L \cE$.
    The left vertical isomorphism in (\ref{dual-pairing-Frob diagram}) is given by replacing the $\cE[1/\cI_\Prism]$ above by $\varphi^*\cE[1/\cI_\Prism]$.
    Using these formulas, the image of $\beta \in \Hom(\varphi^*\cE[1/\cI_\Prism] \otimes^L \varphi^*\cE^\vee\{n\}[1/\cI_\Prism],\varphi^*\cO_{X_\Prism}\{n\}[1/\cI_\Prism])$ in $\Hom(\cE^\vee\{n\}[1/\cI_\Prism],\cE^\vee\{n\}[1/\cI_\Prism])$ under the clockwise maps (resp.\ counterclockwise maps) is given by the left column (resp.\ the composition of the right column with the outer horizontal maps) below:
    \[ \begin{tikzcd}[scale cd=.95,column sep=7em]
		\cE^\vee\{n\}[1/\cI_\Prism] \arrow[r,"\varphi^{-1}_{\cE^\vee\{n\}}"] 
		\arrow[d,"\eta"] & 
		\varphi^*\cE^\vee\{n\}[1/\cI_\Prism] \arrow[d,"\eta_{\varphi^*\cE}"] \\
		\cE^\vee[1/\cI_\Prism]\otimes^L \cE[1/\cI_\Prism] \otimes^L \cE^\vee\{n\}[1/\cI_\Prism]  
		\arrow[r, "{ \varphi_{\cE^\vee}^{-1} \otimes \varphi_{\cE}^{-1} \otimes \varphi_{\cE^\vee\{n\}}^{-1} }"] 
		\arrow[d, "{ \id_{\cE^\vee} \otimes \bigl( \varphi_{\cO_\Prism\{n\}} \circ \beta \circ (\varphi_\cE^{-1} \otimes \varphi_{\cE^\vee\{n\}}^{-1}) \bigr) } "]& 
		\varphi^*\cE^\vee[1/\cI_\Prism]\otimes^L \varphi^*\cE[1/\cI_\Prism] \otimes^L \varphi^*\cE^\vee\{n\}[1/\cI_\Prism] \arrow[d, "\id_{\varphi^*\cE^\vee} \otimes \beta"] \\
		\cE^\vee\{n\} & \varphi^*\cE^\vee\{n\}[1/\cI_\Prism] \arrow[l, " \varphi_{\cE^\vee\{n\}}"].
	\end{tikzcd} \]   
    Since each square commutes, the two images of $\beta$ must be equal, finishing the proof.
\end{proof}
Next, we construct the Frobenius structure on the derived pushforward of a prismatic $F$-crystal.
\begin{construction}[$F$-structure on cohomology]
\label{Frobenius-on-coh-construction}
	\begin{enumerate}[label=\upshape{(\roman*)},leftmargin=*]
		\item\label{Frobenius-on-coh-construction-prism} Let $(A,I)$ be a bounded prism, let $Z \to \Spf(\overline{A})$ be a smooth, proper morphism, and let  $(\cE,\varphi_\cE)$ be a prismatic $F$-crystal in perfect complexes on $(Z/A)_\Prism$.
		For any prism $(B,IB) \in (Z/A)_\Prism$, the compatibility of $\varphi_A$ and $\varphi_B$ induces a natural $A$-linear map of complexes
		\[
		\varphi_A^*\cE(B,IB) \longrightarrow \varphi_B^* \cE(B,IB).
		\]
		Ranging over all prisms $(B,IB)\in (Z/A)_\Prism$, we obtain the following map of $A$-complexes:
		\[
		\varphi_A^* R\Gamma\bigl((Z/A)_\Prism, \cE\bigr) \longrightarrow  \underset{(B,IB)\in (Z/A)_\Prism}{R\lim} \varphi_A^*\cE(B,IB) \longrightarrow \underset{(B,IB)\in (Z/A)_\Prism}{R\lim} \varphi_B^*\cE(B,IB) = R\Gamma\bigl((Z/A)_\Prism, \varphi^*\cE\bigr).
		\]
		By inverting $I$ and postcomposing with the derived global sections of the isomorphism $\varphi_\cE \colon \varphi^* \cE[1/\cI_\Prism] \xrightarrow{\sim} \cE[1/\cI_\Prism]$, we get a natural map
		\[
		\varphi_A^* R\Gamma\bigl((Z/A)_\Prism, \cE\bigr)[1/I] \longrightarrow R\Gamma\bigl((Z/A)_\Prism, \cE\bigr)[1/I]. 
		\]
		We call this the \emph{Frobenius structure on the prismatic cohomology of $(\cE, \varphi_\cE)$}.
		\item Let $f \colon X\to Y$ be a smooth, proper morphism of $p$-adic formal schemes and let $(\cE,\varphi_\cE)$ be a prismatic $F$-crystal in perfect complexes on $X_\Prism$.
		For any prism $(A,I)\in Y_\Prism$, we have from \ref{Frobenius-on-coh-construction-prism} a natural $A$-linear map 
		\[
		\varphi_A^* R\Gamma\bigl((X_{\overline{A}}/A)_\Prism, \cE\bigr)[1/I] \longrightarrow R\Gamma\bigl((X_{\overline{A}}/A)_\Prism, \cE\bigr)[1/I]. 
		\]
		These maps are functorial in $(A,I)\in Y_\Prism$ and compatible with base change by \cref{coh-is-crystal-2}.
		As a consequence, by ranging over all prisms $(A,I) \in Y_\Prism$, they induce the following map of prismatic crystals in perfect complexes:
		\begin{equation}\label{Frobenius-on-coh-construction-map}
		(\varphi_{Y_\Prism}^* Rf_{\Prism,*} \cE)[1/\cI_\Prism]  \longrightarrow Rf_{\Prism,*} \cE[1/\cI_\Prism].
		\end{equation}
		We call this map the \emph{Frobenius structure on the derived pushforward of $(\cE,\varphi_\cE)$}.
	\end{enumerate}
\end{construction}
Using the constructions and observations above, we show that the Frobenius structures on prismatic cohomology are compatible with the pairings discussed before.
\begin{proposition}\label{cup-Frob-equiv}
    Let $f \colon X \to Y$ be a smooth proper morphism of   $p$-adic formal schemes and let $\cE \in D^\varphi_\perf(X_\Prism)$.
    The cup product pairing from \cref{cup-product}.\ref{cup-product-pairing} is equivariant for the Frobenius structures coming from \cref{Frobenius-dual} and \cref{Frobenius-on-coh-construction};
    that is, the following diagram commutes:
    \begin{equation}\label{cup-Frob-equiv-diagram} \begin{tikzcd}
        \varphi^*Rf_{\Prism,*} \cE[1/\cI_\Prism] \otimes^L \varphi^*Rf_{\Prism,*} \cE^\vee\{n\}[1/\cI_\Prism] \arrow[r] \arrow[d,"\varphi_{Rf_{\Prism,*}\cE} \otimes^L \varphi_{Rf_{\Prism,*}\cE^\vee\{n\}}"] &  \varphi^*Rf_{\Prism,*}\cO_{X_\Prism}\{n\}[1/\cI_\Prism] \arrow[d,"\varphi_{Rf_{\Prism,*}\cO_{X_\Prism}\{n\}}"] \\
        Rf_{\Prism,*} \cE[1/\cI_\Prism] \otimes^L Rf_{\Prism,*} \cE^\vee\{n\}[1/\cI_\Prism] \arrow[r] &  Rf_{\Prism,*}\cO_{X_\Prism}\{n\}[1/\cI_\Prism]
    \end{tikzcd} \end{equation}
\end{proposition}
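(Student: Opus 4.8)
The plan is to reduce the assertion to a prism-by-prism check and then to decompose the cup product pairing into its two constituent maps, each of which I will verify to be Frobenius equivariant by a direct diagram chase using the constructions recalled above.

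First I would note that both horizontal arrows in \textup{(\ref{cup-Frob-equiv-diagram})} and both vertical arrows are morphisms of prismatic crystals in perfect complexes on $Y_\Prism$, the Frobenius pullbacks being crystals by \cref{coh-is-crystal-2}; since a map of prismatic crystals in perfect complexes is determined by its values at all prisms, it suffices to check commutativity after evaluating at an arbitrary bounded prism $(A,I) \in Y_\Prism$. By \cref{base-change} this evaluation computes the relative prismatic cohomology $R\Gamma\bigl((X_{\overline A}/A)_\Prism, -\bigr)$ equipped with the Frobenius structure of \cref{Frobenius-on-coh-construction}.\ref{Frobenius-on-coh-construction-prism}, and since $\varphi_A^*$ is a base change functor it commutes with $\otimes^L_A$, so the top horizontal and left vertical arrows are obtained from the corresponding maps for $R\Gamma\bigl((X_{\overline A}/A)_\Prism, -\bigr)$ by applying $\varphi_A^*$ and inverting $I$. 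We are thus reduced to showing, for fixed $(A,I)$, that the cup product pairing $R\Gamma\bigl((X_{\overline A}/A)_\Prism, \cE\bigr)[1/I] \otimes^L_{A[1/I]} R\Gamma\bigl((X_{\overline A}/A)_\Prism, \cE^\vee\{n\}\bigr)[1/I] \to R\Gamma\bigl((X_{\overline A}/A)_\Prism, \cO_\Prism\{n\}\bigr)[1/I]$ intertwines the Frobenius structures.

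Next I would factor this pairing, following \cref{cup-product}.\ref{cup-product-pairing}, as the composite of the cup product map $c \colon R\Gamma(\cE) \otimes^L_{A[1/I]} R\Gamma(\cE^\vee\{n\}) \to R\Gamma(\cE \otimes^L \cE^\vee\{n\})$ with $R\Gamma(\epsilon_\cE)$, where $\epsilon_\cE \colon \cE \otimes^L \cE^\vee\{n\} \to \cO_\Prism\{n\}$ is the evaluation map; it then suffices to prove that each of the two resulting squares commutes after inverting $\cI_\Prism$. For the square coming from $R\Gamma(\epsilon_\cE)$, the key input is that $\epsilon_\cE$ is itself Frobenius equivariant as a map of $\cO_\Prism[1/\cI_\Prism]$-linear perfect complexes, which is precisely \cref{dual-pairing-Frob}; together with the observation that the Frobenius structure of \cref{Frobenius-on-coh-construction}.\ref{Frobenius-on-coh-construction-prism} on $R\Gamma\bigl((X_{\overline A}/A)_\Prism, -\bigr)$ is manifestly functorial in the coefficient $\varphi$-crystal — it is assembled from the prism-wise comparison maps $\varphi_A^*\cF(B,IB) \to \varphi_B^*\cF(B,IB)$, which are natural in $\cF$, postcomposed with $\varphi_\cF$ — this gives the commutativity of that square.

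For the square coming from the cup product map $c$, I would check that over each prism $(B,IB) \in (X_{\overline A}/A)_\Prism$ the sheafification maps $\cE_1(B,IB) \otimes^L_B \cE_2(B,IB) \to (\cE_1 \otimes^L \cE_2)(B,IB)$ that define $c$ commute with the canonical comparison maps $\varphi_A^*(-) \to \varphi_B^*(-)$ — this is the naturality of sheafification with respect to the morphism of prisms $(A,I) \to (B,IB)$ and its Frobenius — and that, by construction of the symmetric monoidal structure on $D^\varphi_\perf$, one has $\varphi_{\cE_1 \otimes^L \cE_2} = \varphi_{\cE_1} \otimes^L \varphi_{\cE_2}$ after inverting $\cI_\Prism$. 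Passing to the limit over $(B,IB)$ and inverting $I$ then yields commutativity. The main obstacle I anticipate is exactly this last square: since $\otimes^L_A$ does not commute with the homotopy limit $R\lim$ defining $R\Gamma$, the chase must be carried out at the level of the cosimplicial diagrams computing the two limits, checking compatibly the commutativity of all the relevant structure maps — sheafification, Frobenius pullback along $A \to B$, and the coefficient Frobenii. Working with an explicit \v{C}ech--Alexander model for $R\Gamma\bigl((X_{\overline A}/A)_\Prism, -\bigr)$, as in the comparison theorems of \cref{sub-prismatic-crystalline}, makes each of these verifications routine.
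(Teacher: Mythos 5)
Your proposal is correct and takes essentially the same approach as the paper: you decompose the pairing into the cup product map $c$ followed by $R\Gamma(\epsilon_\cE)$ and verify each of the two resulting squares, which is exactly the paper's three sub-diagrams $(\alpha)$, $(\beta)$, $(\gamma)$ regrouped (your square for $c$ is $(\alpha)$, and your square for $R\Gamma(\epsilon_\cE)$ is $(\beta)$ plus $(\gamma)$, with the same inputs of \cref{dual-pairing-Frob} and the naturality in $\cF$ of the map $\varphi_A^*R\Gamma(\cF) \to R\Gamma(\varphi^*\cF)$). The one place you over-anticipate a difficulty is the interaction of $\otimes^L$ with $R\lim$: the cup product map $c$ is by construction the composite of the lax symmetric monoidal structure map of $R\lim$ with the limit of the sheafification maps, and since that lax structure map is natural in its inputs, the relevant square commutes formally without any explicit \v{C}ech--Alexander model.
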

\begin{proof}
    Untangling \cref{cup-product} and \cref{Frobenius-on-coh-construction}, we expand the diagram (\ref{cup-Frob-equiv-diagram}) as follows:
    \[ \begin{tikzcd}[scale cd=.9,row sep=small,column sep=small]
        \varphi^*Rf_{\Prism,*} \cE[1/\cI_\Prism] \otimes^L \varphi^*Rf_{\Prism,*} \cE^\vee\{n\}[1/\cI_\Prism] \arrow[r] \arrow[d] & \varphi^*Rf_{\Prism,*}(\cE \otimes^L \cE^\vee\{n\})[1/\cI_\Prism] \arrow[r] \arrow[d] \arrow[rd,phantom,"(\beta)"] & \varphi^*Rf_{\Prism,*}\cO_{X_\Prism}\{n\}[1/\cI_\Prism] \arrow[d] \\
        Rf_{\Prism,*} \varphi^*\cE[1/\cI_\Prism] \otimes^L Rf_{\Prism,*} \varphi^*\cE^\vee\{n\}[1/\cI_\Prism] \arrow[r,shift left=1.5ex,phantom,"(\alpha)"] \arrow[d] & Rf_{\Prism,*}\varphi^*(\cE \otimes^L \cE^\vee\{n\})[1/\cI_\Prism] \arrow[r] \arrow[d] \arrow[rd,phantom,"(\gamma)"] & Rf_{\Prism,*}\varphi^*\cO_{X_\Prism}\{n\}[1/\cI_\Prism] \arrow[d] \\
        Rf_{\Prism,*} \cE[1/\cI_\Prism] \otimes^L Rf_{\Prism,*} \cE^\vee\{n\}[1/\cI_\Prism] \arrow[r] & Rf_{\Prism,*}(\cE \otimes^L \cE^\vee\{n\})[1/\cI_\Prism] \arrow[r] &  Rf_{\Prism,*}\cO_{X_\Prism}\{n\}[1/\cI_\Prism].
    \end{tikzcd} \]
    We show now that the three sub-diagrams $(\alpha)$, $(\beta)$, and $(\gamma)$ commute individually.
    \begin{itemize}
    \item[$(\alpha)$:]
    It suffices to check the commutativity after evaluating at all $(A,I) \in Y_\Prism$.
    Then the maps in $(\alpha)$ arise in \cref{cup-product}.\ref{cup-product-fiber} and \cref{Frobenius-on-coh-construction}.\ref{Frobenius-on-coh-construction-prism} from canonical limits over $(B,IB) \in \bigl(X_{\overline{A}}\bigr)_\Prism$ of the maps in the commutative diagram 
    \[ \begin{tikzcd}
        \varphi^*_A \cE(B,IB)[1/IB] \otimes^L \varphi^*_A \cE^\vee\{n\}(B,IB)[1/IB] \arrow[r] \arrow[d] & \varphi^*_A (\cE \otimes^L \cE^\vee\{n\})(B,IB)[1/IB] \arrow[d] \\
        \varphi^*_B\cE(B,IB)[1/IB] \otimes^L \varphi^*_B\cE^\vee\{n\}(B,IB)[1/IB] \arrow[d] & \varphi^*_B(\cE \otimes^L \cE^\vee\{n\})(B,IB)[1/IB] \arrow[d] \\
        \cE(B,IB)[1/IB] \otimes^L \cE^\vee\{n\}(B,IB)[1/IB] \arrow[r] & (\cE \otimes^L \cE^\vee\{n\})(B,IB)[1/IB].
    \end{tikzcd} \]
   
    \item[$(\beta)$:]
    The commutativity follows from the fact that the map $\varphi_A^* R\Gamma\bigl((Z/A)_\Prism, \cE\bigr) \to R\Gamma\bigl((Z/A)_\Prism, \varphi^*\cE\bigr)$ in \cref{Frobenius-on-coh-construction}.\ref{Frobenius-on-coh-construction-prism} is natural in $\cE$.
    
    \item[$(\gamma)$:]
    Apply $Rf_{\Prism,*}$ to \cref{dual-pairing-Frob}. \qedhere
    \end{itemize}
\end{proof}
After these preliminaries, we can finally state the promised first version of Poincar\'e duality.
\begin{theorem}\label{duality}
	Let $f \colon X \to Y$ be a smooth proper morphism of smooth formal $\cO_K$-schemes which is of relative equidimension $n$ and let $\cE$ be a prismatic crystal in perfect complexes over $X_\Prism$.
	Assume that $X$ satisfies $f_*\cO_X=\cO_Y$.
	\begin{enumerate}[label=\upshape{(\roman*)},leftmargin=*]
		\item\label{duality-cohom} Let $(A,I)$ be a bounded prism in $Y_\Prism$.
		Then the cup product pairing from \cref{cup-product}
		\[
		R\Gamma\bigl((X_{\overline{A}}/A)_\Prism, \cE\bigr) \otimes_A^L R\Gamma\bigl((X_{\overline{A}}/A)_\Prism,\cE^\vee\{n\}\bigr) \longrightarrow R\Gamma\bigl((X_{\overline{A}}/A)_\Prism,\cO_\Prism\{n\}\bigr) \longrightarrow \Hh^{2n}\bigl((X_{\overline{A}}/A)_\Prism, \cO_\Prism\{n\}\bigr)[-2n]
		\]
		induces a natural isomorphism
		\[
		R\Gamma\bigl((X_{\overline{A}}/A)_\Prism, \cE\bigr) \longrightarrow R\Hom_A\bigl(R\Gamma\bigl((X_{\overline{A}}/A)_\Prism, \cE^\vee\{n\}\bigr), \Hh^{2n}\bigl((X_{\overline{A}}/A)_\Prism, \cO_\Prism\{n\}\bigr)[-2n]\bigr).
		\]
		Moreover, the pairing and the isomorphism are Frobenius equivariant when $\cE$ underlies a prismatic $F$-crystal (in perfect complexes).
		\item\label{duality-relative}
		The cup product pairing from \cref{cup-product}
		\[
		Rf_{\Prism,*} \cE \otimes_{\cO_{Y_\Prism}}^L Rf_{\Prism,*} \cE^\vee\{n\} \longrightarrow Rf_{\Prism,*} \cO_{X_\Prism}\{n\} \longrightarrow R^{2n}f_{\Prism,*} \cO_{X_\Prism}\{n\}[-2n]
		\]
		induces a natural isomorphism
		\[
		Rf_{\Prism,*} \cE \longrightarrow R\iHom_{\cO_{Y_\Prism}}(Rf_{\Prism,*} \cE^\vee\{n\}, R^{2n}f_{\Prism,*} \cO_{X_\Prism}\{n\}[-2n]).
		\]	
		Moreover, the pairing and the isomorphism are Frobenius equivariant when $\cE$ underlies a prismatic $F$-crystal (in perfect complexes).
	\end{enumerate}
\end{theorem}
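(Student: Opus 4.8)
Here is my plan for proving \cref{duality}.

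The overarching strategy is a reduction to the Hodge--Tate level via the derived Nakayama lemma, followed by a further reduction to classical Grothendieck--Serre duality for the top Hodge--Tate cohomology. \textbf{Step 1: Reduce (i) to (ii) and both to a statement over perfect prisms.} Since $R^{2n}f_{\Prism,*}\cO_{X_\Prism}\{n\}$ is a line bundle over $\cO_{Y_\Prism}$ by \cref{top-coh-free}, evaluating the pairing of (ii) at a prism $(A,I)\in Y_\Prism$ and using the base change formula (\cref{base-change}) together with \cref{perfectness} (which guarantees that $R\Gamma\bigl((X_{\overline{A}}/A)_\Prism,\cE^\vee\{n\}\bigr)$ is perfect over $A$, so that $R\iHom$ commutes with base change) recovers the pairing of (i) with $L\colonequals \Hh^{2n}\bigl((X_{\overline{A}}/A)_\Prism,\cO_\Prism\{n\}\bigr)$ a rank-one free $A$-module. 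Conversely, (ii) can be checked locally on $Y_\Prism$, so we may assume $Y$ is framed affine and pass to a perfect prism $(A,I)\in Y_\Prism$ covering the final object, exactly as in the proof of \cref{top-coh-free}; this reduces everything to the cohomological statement (i) over a perfect prism $(A,I)$.

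\textbf{Step 2: Reduce to Hodge--Tate via derived Nakayama.} The pairing of (i) is a map of perfect $A$-complexes
\[
R\Gamma\bigl((X_{\overline{A}}/A)_\Prism,\cE\bigr)\longrightarrow R\Hom_A\bigl(R\Gamma\bigl((X_{\overline{A}}/A)_\Prism,\cE^\vee\{n\}\bigr),L[-2n]\bigr),
\]
where $L\simeq A$. Both sides are derived $(p,I)$-complete perfect $A$-complexes, so by the derived Nakayama lemma (\cite[\href{https://stacks.math.columbia.edu/tag/0G1U}{Tag~0G1U}]{SP}) it suffices to check that the map is an isomorphism after $\otimes^L_A \overline{A}$. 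By the Hodge--Tate reduction isomorphism and compatibility of $R\Hom$ with the reduction (using perfectness again), this becomes the assertion that the Hodge--Tate cup product pairing
\[
R\Gamma\bigl((X_{\overline{A}}/A)_\Prism,\overline{\cE}\bigr)\otimes^L_{\overline{A}}R\Gamma\bigl((X_{\overline{A}}/A)_\Prism,\overline{\cE^\vee\{n\}}\bigr)\longrightarrow R\Gamma\bigl((X_{\overline{A}}/A)_\Prism,\overline{\cO}_\Prism\{n\}\bigr)\longrightarrow \overline{L}[-2n]
\]
is a perfect duality, where $\overline{L}\simeq\overline{A}$ is the Grothendieck--Serre trivialization of the top Hodge--Tate cohomology (as in the proof of \cref{top-coh-free}).

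\textbf{Step 3: Hodge--Tate duality via the Higgs complex.} Here I invoke \cref{Higgs-coh}: working Zariski-locally on a framed affine chart $X=\Spf R$, the Hodge--Tate complex $R\Gamma\bigl((X_{\overline{A}}/A)_\Prism,\overline{\cE}\bigr)$ is computed by the Higgs complex associated with the perfect $R$-complex $\cE(R)$ with its $\Sym_R^*(T_R\{1\})$-action, and similarly for $\cE^\vee\{n\}$. The cup product on Hodge--Tate cohomology is identified with the pairing of Higgs (equivalently twisted de Rham/Koszul) complexes. Perfectness of the resulting pairing then follows from Grothendieck--Serre duality on $X_{\overline{A}}\to\Spf\overline{A}$ combined with the self-duality of the Koszul/de Rham complex of the tangent bundle (the twist $\{n\}$ accounting for the determinant of $\Omega^1_{X/\overline{A}}$); this is exactly the classical mechanism used, e.g., in \cite{Tia21}. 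Since $\cE^\vee$ is genuinely the $\cO_{X_\Prism}$-linear dual and $\cE(R)$ is a perfect $R$-complex, the Higgs complex of $\cE^\vee\{n\}$ is the dual Higgs complex of $\cE$ twisted by $\{n\}$, so the two pairings match up.

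\textbf{Step 4: Frobenius equivariance.} When $\cE$ underlies an $F$-crystal, \cref{cup-Frob-equiv} shows the cup product map $Rf_{\Prism,*}\cE\otimes^L Rf_{\Prism,*}\cE^\vee\{n\}\to Rf_{\Prism,*}\cO_{X_\Prism}\{n\}$ is compatible with the Frobenius structures from \cref{Frobenius-dual} and \cref{Frobenius-on-coh-construction}; composing with the $\varphi$-equivariant truncation to $R^{2n}f_{\Prism,*}\cO_{X_\Prism}\{n\}[-2n]$ and passing to the adjoint gives the Frobenius equivariance of the duality isomorphism on the locus where $I$ is inverted, which is all that is claimed. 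The main obstacle I anticipate is \textbf{Step 3}: carefully matching the cup-product pairing on prismatic (Hodge--Tate) cohomology with the standard pairing on Higgs/de Rham complexes, and tracking the Tate twist $\{n\}$ through the identification $\det\Omega^1_{X/\overline{A}}\{n\}\simeq\overline{\cO}_\Prism\{n\}\otimes(\text{something trivial})$ so that the trace lands in the Grothendieck--Serre–normalized top cohomology; the stacky formalism of \cite{BL22b} should make the self-duality of the relevant Koszul complex clean, but the bookkeeping with twists and the comparison of the two a priori different pairings is where the real content lies. A secondary subtlety is ensuring all the base-change and $R\iHom$ compatibilities in Steps 1--2 genuinely hold at the level of perfect complexes, which is where \cref{perfectness} is essential.
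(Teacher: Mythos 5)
Your proposal follows essentially the same strategy as the paper's: deduce (ii) from (i) by evaluation at prisms, reduce (i) via derived Nakayama modulo $I$ to a Hodge--Tate duality statement, and prove that Hodge--Tate duality using the Higgs/Koszul description from \cref{Higgs-coh} combined with Grothendieck--Serre duality, with Frobenius equivariance handled by \cref{cup-Frob-equiv}. The one organizational divergence is your extra reduction to perfect prisms in Step 1. This is harmless but unnecessary: the paper proves (i) for an arbitrary bounded prism $(A,I)$ directly, using only that $\Hh^{2n}\bigl((X_{\overline{A}}/A)_\Prism,\cO_\Prism\{n\}\bigr)$ is a finite projective rank-one $A$-module (this is the content of \cref{top-coh-free}, which internally passes to perfect prisms, but that detour is confined there). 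You do not need to trivialize the line bundle to a free module $L \simeq A$; projectivity alone is enough to push $\Hh^{2n}$ through the $R\Hom$, the base change along $A \to \overline{A}$, and the reduction (\ref{top-coh-free-red}). The paper records this as an explicit chain of isomorphisms to make the $R\Hom$/reduction compatibility airtight at the level of perfect complexes — the "secondary subtlety" you flag is real and worth spelling out.

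The place where your proposal is thinner than the paper is precisely your Step 3, which you correctly identify as the crux. The paper isolates this into a separate statement, \cref{duality-for-HT}, which handles both the local (topos-level, over the structure sheaf of $X$) and the global (cohomology, using properness and $\Hh^0(X,\cO)=\overline{A}$) Hodge--Tate duality. The mechanism is exactly what you sketch: in a framed affine chart, represent $R\Gamma$ as $R\Hom_S(\Kos_S(T_R\{1\}\otimes_R S),\cE(R))$, identify the $n$-th Ext group with $\Omega^n_{R/\overline{A}}$, pass through tensor-hom adjunction, and finish with Grothendieck duality for $f$ and the self-duality of the Koszul complex of the tangent bundle. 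The twist $\{n\}$ is absorbed because $\Ext^n_S\bigl(\Kos_S(T_R\{1\}\otimes_R S),\overline{\cO}_\Prism\{n\}(R)\bigr)\simeq\Omega^n_{R/\overline{A}}$, so no additional bookkeeping beyond that identification is required. Your proposal would be complete once this is carried out; as written it is a correct outline that defers the real content of Step 3 to the reader.
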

The Frobenius equivariance in \cref{duality} is with respect to the Frobenius structures from \cref{Frobenius-on-coh-construction}.
As a preparation for the proof of \cref{duality}, we extend the Poincar\'e duality for Hodge--Tate crystals in vector bundles from \cite[Thm.~6.2]{Tia21} to Hodge--Tate crystals in perfect complexes.
In the following, we use again $\nu \colon \Shv\bigl((X/A)_\Prism\bigr) \to \Shv(X_\et)$ to denote the natural map of topoi from \cite[Const.~4.4]{BS19} attached to a formal scheme $X$ over $\overline{A}$ for a prism $(A,I)$.
\begin{proposition}\label{duality-for-HT}
	Let $(A,I)$ be a bounded prism, let $X$ be a smooth formal $\overline{A}$-scheme that is of relative equidimension $n$ over $\overline{A}$, and let $\cF$ be a Hodge--Tate crystal over $(X/A)_\Prism$.
	\begin{enumerate}[label=\upshape{(\roman*)},leftmargin=*]
	\item\label{duality-for-HT-pushforward} The cup product of derived pushforwards
	\[
	R\nu_* \cF \otimes_{\cO_X}^L R\nu_* \cF^\vee\{n\} \longrightarrow R\nu_* \overline{\cO}_{\Prism}\{n\} \longrightarrow R^{n}\nu_* \overline{\cO}_{\Prism}\{n\}[-n]
	\]
	induces a natural isomorphism
	\begin{equation}\label{duality-for-HT-topos}
	R\nu_* \cF \longrightarrow R\iHom_{\cO_X}(R\nu_* \cF^\vee\{n\}, R^{n}\nu_* \overline{\cO}_{\Prism}\{n\}[-n]).
	\end{equation}
	\item\label{duality-for-HT-proper} Assume $X$ is proper over $\Spf(\overline{A})$ and has $\Hh^0(X,\cO)=\overline{A}$.
	Then the cup product of cohomology complexes
	\[
	R\Gamma\bigl((X/A)_\Prism, \cF\bigr) \otimes_{\overline{A}}^L 	R\Gamma\bigl((X/A)_\Prism, \cF^\vee\{n\}\bigr) \longrightarrow 	R\Gamma\bigl((X/A)_\Prism, \overline{\cO}_{\Prism}\{n\}\bigr) \longrightarrow \Hh^{2n}\bigl((X/A)_\Prism, \overline{\cO}_{\Prism}\{n\}\bigr)[-2n]
	\]
	induces a natural isomorphism
	\begin{equation}\label{duality-for-HT-cohom}
	R\Gamma\bigl((X/A)_\Prism, \cF\bigr)  \longrightarrow R\Hom_{\overline{A}}\bigl(R\Gamma\bigl((X/A)_\Prism, \cF^\vee\{n\}\bigr) , \Hh^{2n}\bigl((X/A)_\Prism, \overline{\cO}_{\Prism}\{n\}\bigr)[-2n]\bigr).
	\end{equation}
\end{enumerate}
\end{proposition}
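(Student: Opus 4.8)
The plan is to establish \ref{duality-for-HT-pushforward} first by a local computation with Higgs complexes, and then to deduce \ref{duality-for-HT-proper} from it together with Grothendieck--Serre duality for $X\to\Spf(\overline{A})$. For \ref{duality-for-HT-pushforward}, the cup-product morphism underlying (\ref{duality-for-HT-topos}) is functorial in $\cF$ and compatible with Zariski localization on $X$, so it suffices to show it is an isomorphism after restriction to a framed affine open $\Spf(R)\subseteq X$ equipped with a prism lift $(\widetilde{R},I\widetilde{R})$ of $R$. Over such an open I would use the stacky description of Hodge--Tate cohomology from \cite{BL22b} already exploited in \cref{Higgs-coh}: $D_\perf\bigl((X/A)_\Prism,\overline{\cO}_\Prism\bigr)$ is equivalent to the category of perfect $R$-complexes $M$ carrying a nilpotent action of $S\colonequals \Sym^\bullet_R(T_R\{1\})$, under which $R\nu_*\cF$ becomes the Higgs (equivalently Koszul) complex $\bigl[M\to M\otimes_R\Omega^1_{R/\overline{A}}\{-1\}\to\dotsb\to M\otimes_R\Omega^n_{R/\overline{A}}\{-n\}\bigr]$, i.e.\ $R\Hom_S(R,M)$ for $R=S/\Sym^{\ge 1}_R(T_R\{1\})$. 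This equivalence is compatible with duals and Breuil--Kisin twists, so $\cF^\vee$ corresponds to $R\Hom_R(M,R)$ with the contragredient $S$-action, $\overline{\cO}_\Prism\{j\}$ to $R$ twisted by $\cO_X\{j\}$ with trivial $S$-action, and $R^n\nu_*\overline{\cO}_\Prism\{n\}\simeq \Omega^n_{R/\overline{A}}$ by \cref{Higgs-coh}.

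Granting this, the assertion reduces to the perfectness of the pairing
\[ R\Hom_S(R,M)\otimes^L_R R\Hom_S\bigl(R,R\Hom_R(M,R)\{n\}\bigr) \to R\Hom_S(R,R\{n\}) \to \Omega^n_{R/\overline{A}}[-n] \]
coming from the evaluation $M\otimes_R R\Hom_R(M,R)\to R$ and the wedge product of forms. I would deduce this from Grothendieck duality for the regular closed immersion $i\colon \Spec R\hookrightarrow \Spec S$: its conormal module is $\Sym^1_R(T_R\{1\})=T_R\{1\}$, so $N_i\simeq \Omega^1_{R/\overline{A}}\{-1\}$ and $i^!\cO_S\simeq \det(N_i)[-n]\simeq \Omega^n_{R/\overline{A}}\{-n\}[-n]$; since $R$ is a dualizable object of $D(S)$ via its Koszul resolution, this upgrades the self-duality of the Koszul complex to the displayed perfect pairing, with the twists against $\overline{\cO}_\Prism\{n\}$ cancelling exactly. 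For $\cF$ a crystal in vector bundles this recovers \cite[Thm.~6.2]{Tia21}, and the point is that the perfect-complex case reduces to the same self-duality. The main obstacle is precisely the bookkeeping in this step: verifying that the prismatic cup product of \cref{cup-product} matches the Koszul/Higgs pairing under the equivalence with $S$-modules, and keeping track of all the Breuil--Kisin twists $\{1\}$. Once (\ref{duality-for-HT-topos}) holds locally, functoriality of the cup-product map propagates it to all of $X$.

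For \ref{duality-for-HT-proper}, assume $X$ proper over $\Spf(\overline{A})$ with $\Hh^0(X,\cO_X)=\overline{A}$. Then $R\Gamma\bigl((X/A)_\Prism,\cF\bigr)\simeq R\Gamma(X_\et,R\nu_*\cF)$, and \ref{duality-for-HT-pushforward} gives
\[ R\Gamma\bigl((X/A)_\Prism,\cF\bigr)\simeq R\Gamma\Bigl(X,R\iHom_{\cO_X}\bigl(R\nu_*\cF^\vee\{n\},\Omega^n_{X/\overline{A}}[-n]\bigr)\Bigr). \]
By \cref{Higgs-coh} the complex $R\nu_*\cF^\vee\{n\}$ is perfect over $\cO_X$, and $f\colon X\to\Spf(\overline{A})$ is smooth proper of relative dimension $n$ with relative dualizing complex $\omega_{X/\overline{A}}[n]=\Omega^n_{X/\overline{A}}[n]$, so Grothendieck--Serre duality for $f$ yields
\[ R\Gamma\Bigl(X,R\iHom_{\cO_X}\bigl(R\nu_*\cF^\vee\{n\},\Omega^n_{X/\overline{A}}[n]\bigr)\Bigr)\simeq R\Hom_{\overline{A}}\Bigl(R\Gamma\bigl((X/A)_\Prism,\cF^\vee\{n\}\bigr),\overline{A}\Bigr). \]
Shifting by $[-2n]$ turns the left-hand side into the previous display. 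Finally, the hypothesis $\Hh^0(X,\cO_X)=\overline{A}$ together with relative Serre duality identifies $\Hh^{2n}\bigl((X/A)_\Prism,\overline{\cO}_\Prism\{n\}\bigr)\simeq \Hh^n(X,\Omega^n_{X/\overline{A}})\simeq \overline{A}$ compatibly with the trace, so $R\Hom_{\overline{A}}(-,\overline{A})[-2n]\simeq R\Hom_{\overline{A}}\bigl(-,\Hh^{2n}((X/A)_\Prism,\overline{\cO}_\Prism\{n\})[-2n]\bigr)$, which gives (\ref{duality-for-HT-cohom}); a diagram chase then confirms that the resulting isomorphism is the one induced by the cup product.
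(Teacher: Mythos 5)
Your proposal is essentially the paper's proof, both in structure and in the key technical steps. For \ref{duality-for-HT-pushforward}, the paper also reduces Zariski-locally to a framed affine $\Spf(R)$, uses the stacky description from \cite{BL22b} to write $R\Gamma((X/A)_\Prism,\cF)$ as $R\Hom_S\bigl(\Kos_S(T_R\{1\}\otimes_R S),\cF(R)\bigr)$ with $S=\Sym^*_R(T_R\{1\})$, and then verifies the perfectness directly by expanding the Koszul complex and using $\Hom_R(\Omega^i,\Omega^n)\simeq\Omega^{n-i}$; your reframing of that Koszul self-duality as Grothendieck duality for the regular closed immersion $\Spec R\hookrightarrow\Spec S$ is a clean and equivalent packaging of the same computation (the paper's explicit expansion is precisely the verification that the pairing coming from $i^!\cO_S\simeq\det(N_i)[-n]$ is perfect termwise), but it does not change the mathematical content. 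For \ref{duality-for-HT-proper}, both arguments apply $R\Gamma(X,-)$ to (i), identify $R^n\nu_*\overline{\cO}_\Prism\{n\}$ with $\Omega^n_{X/\overline{A}}$ via the Hodge--Tate comparison, invoke Grothendieck--Serre duality for $X\to\Spf(\overline{A})$, and use $\Hh^0(X,\cO)=\overline{A}$ to trivialize the top cohomology; the paper simply tracks this through a larger commutative diagram where you compress it. The one step you flag but do not carry out --- matching the prismatic cup product of \cref{cup-product} with the Koszul/Higgs pairing under the equivalence --- is handled in the paper by \emph{defining} the local pairing as the image of the cup product on values under $R\Hom_S(\Kos_S(\blank),-)$, so there is no discrepancy.
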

\begin{proof}
	\ref{duality-for-HT-pushforward}.
	As the question is Zariski local on $X$, we may assume that $X=\Spf(R)$ is affine and admits a framing over $\Spf(\overline{A})$.
	Under that assumption, the proof of \cref{Higgs-coh} shows that we can represent $R\Gamma\bigl((X/A)_\Prism, \cF\bigr)$ as 
	\[
	R\Hom_S(\mathrm{Kos}_{S}(T_R\{1\}\otimes_R S), \cF(R)),
	\]
	where $S$ is the symmetric algebra $\Sym_R^*(T_R\{1\})$ and $\cF(R)$ is a perfect $R$-complex with an $S$-action, which is functorial in $\cF$.
	The perfect pairing is constructed from the value of the functor $R\Hom_S( \mathrm{Kos}_{S}(T_R\{1\}\otimes_R S), -)$ on the cup product
	\[
	\cF(R) \otimes_R^L \cF^\vee\{n\}(R) \longrightarrow \overline{\cO}_\Prism\{n\}(R)=R\{n\},
	\]
    composed with the cohomological truncation 
	\begin{align*}
		R\Hom_S(\mathrm{Kos}_{S}(T_R\{1\}\otimes_R S), \overline{\cO}_\Prism\{n\}(R)) \longrightarrow & \Ext^n_S(\mathrm{Kos}_{S}(T_R\{1\}\otimes_R S), \overline{\cO}_\Prism\{n\}(R)) [-n].
	\end{align*}
    Moreover, from the natural identification of the right-hand side above with $\Omega^n_{R/\overline{A}}[-n]$ and tensor-hom adjunction, we obtain an induced map
    \[
		R\Hom_S\bigl(\mathrm{Kos}_{S}(T_R\{1\}\otimes_R S), \cF(R)\bigr) \longrightarrow R\Hom_R\left( 	R\Hom_S\bigl(\mathrm{Kos}_{S}(T_R\{1\}\otimes_R S), \cF^\vee\{n\}(R)\bigr), \Omega^n_{R/\overline{A}}[-n] \right).
	\]
    By expanding the Koszul complexes and the natural isomorphisms $\Hom_R(\Omega^i_{R/\overline{A}}, \Omega^n_{R/\overline{A}}) \simeq \Omega^{n-i}_{R/\overline{A}}$, this is an isomorphism.
    Unwinding the previous identifications, so is
	\[
	R\nu_* \cF \longrightarrow R\iHom_{\cO_X}(R\nu_* \cF^\vee\{n\}, R^{n}\nu_* \overline{\cO}_{\Prism}\{n\}[-n]) \longrightarrow R\iHom_{\cO_X}(R\nu_* \cF^\vee\{n\}, \Omega_{X/\overline{A}}^n[-n]).
	\]
	Since the second arrow in this composition is also an isomorphism, we conclude
	\[
	R\nu_* \cF \simeq R\iHom_{\cO_X}(R\nu_* \cF^\vee\{n\}, R^{n}\nu_* \overline{\cO}_{\Prism}\{n\}[-n]).
	\]
	
	\ref{duality-for-HT-proper}. 
	We first apply the derived global sections functor $R\Gamma(X,-)$ to the map (\ref{duality-for-HT-topos}) in the statement to get the isomorphism
	\begin{equation}\label{duality-for-HT-topos-cohom}
	\begin{tikzcd}
	R\Gamma\bigl((X/A)_\Prism, \cF\bigr) \arrow[r, "\sim"]&  R\Hom_{\cO_X}(R\nu_*\cF^\vee\{n\},R^n\nu_*\overline{\cO}_\Prism\{n\}[-n]).
	\end{tikzcd}
	\end{equation}
	The right-hand side above admits the following natural maps:
	\[
	\begin{tikzcd}[column sep=small,font=\small,center picture]
		R\Hom_{\cO_X}(R\nu_*\cF^\vee\{n\},R^n\nu_*\overline{\cO}_\Prism\{n\}[-n]) \arrow[d] \arrow[r,"\sim","\mathrm{HT}"'] & R\Hom_{\cO_X}(R\nu_*\cF^\vee\{n\},\Omega^n_{X/\overline{A}}[{-n}]) \arrow[d] \arrow[ddd, bend left=53, "(\ast)"] \\
		R\Hom_{\overline{A}} \left(R\Gamma\bigl((X/A)_\Prism, \cF^\vee\{n\}\bigr),R\Gamma(X,R^n\nu_*\overline{\cO}_\Prism\{n\}[-n]) \right) \arrow[d] \arrow[r, "\sim"] & R\Hom_{\overline{A}} \bigl(R\Gamma\bigl((X/A)_\Prism, \cF^\vee\{n\}\bigr),R\Gamma(X,\Omega^n_{X/\overline{A}}[-n]) \bigr) \arrow[d]  \\
		R\Hom_{\overline{A}} \left(R\Gamma\bigl((X/A)_\Prism, \cF^\vee\{n\}\bigr),\Hh^n(X,R^n\nu_*\overline{\cO}_\Prism\{n\}[-2n]) \right) \arrow[d,sloped,"\sim"] \arrow[r,"\sim"] & R\Hom_{\overline{A}} \bigl(R\Gamma\bigl((X/A)_\Prism, \cF^\vee\{n\}\bigr),\Hh^{n}(X, \Omega^n_{X/\overline{A}}) [-2n] \bigr) \arrow[d, "(\ast\ast)"'] \\
		 R\Hom_{\overline{A}} \left(R\Gamma\bigl((X/A)_\Prism, \cF^\vee\{n\}\bigr),\Hh^{2n}\bigl((X/A)_\Prism, \overline{\cO}_\Prism\{n\}\bigr)[-2n] \right) \arrow[r] & R\Hom_{\overline{A}} \left(R\Gamma\bigl((X/A)_\Prism, \cF^\vee\{n\}\bigr),\overline{A}[-2n] \right).
	\end{tikzcd}
    \]
    Here, the top horizontal arrow $\mathrm{HT}$ is induced by the inverse of the Hodge--Tate comparison map $\eta_X \colon R^n\nu_*\overline{\cO}_\Prism\{n\} \to \Omega^n_{X/\overline{A}}$ as in \cite[Thm.~4.11]{BS19}.
    By Grothendieck duality, the composition $(\ast)$ from the top right corner to the bottom right corner is an isomorphism.
    Moreover, by the assumption that $\Hh^0(X,\cO)=\overline{A}$, the arrow $(\ast\ast)$ is an isomorphism.
    Thus, we get from the commutativity of the diagram above that
    \[
     R\Hom_{\cO_X}(R\nu_*\cF^\vee\{n\},R^n\nu_*\overline{\cO}_\Prism\{n\}[-n]) \simeq R\Hom_{\overline{A}} \left(R\Gamma\bigl((X/A)_\Prism, \cF^\vee\{n\}\bigr),\Hh^{2n}\bigl((X/A)_\Prism, \overline{\cO}_\Prism\{n\}\bigr)[-2n] \right).
    \]
    Finally, by precomposing with (\ref{duality-for-HT-topos-cohom}), we get the isomorphism
    \[
    R\Gamma\bigl((X/A)_\Prism, \cF\bigr) \longrightarrow  R\Hom_{\overline{A}} \left(R\Gamma\bigl((X/A)_\Prism, \cF^\vee\{n\}\bigr),\Hh^{2n}\bigl((X/A)_\Prism, \overline{\cO}_\Prism\{n\}\bigr)[-2n] \right).
    \]
    We leave it to the reader to check that the map constructed above coincides with that of (\ref{duality-for-HT-cohom}) in the statement. \qedhere
\end{proof}
\begin{remark}
	The assumption that $\Hh^0(X,\cO)=\overline{A}$ is needed in order to avoid choosing a trace map.
	As is clear from the proof, by choosing the trace map from Grothendieck duality as in \cite[Thm.~6.2]{Tia21}, we can obtain Poincar\'e duality for the cohomology of a Hodge--Tate crystal in perfect complexes, without assuming $\Hh^0(X,\cO)=\overline{A}$.
\end{remark}

\begin{proof}[Proof of \cref{duality}]
We want to show that the natural map induced by the cup product
\[
Rf_{\Prism,*} \cE \longrightarrow R\iHom_{\cO_{Y_\Prism}}(Rf_{\Prism,*} \cE^\vee\{n\}, R^{2n}f_{\Prism,*} \cO_{X_\Prism}\{n\}[-2n])
\]
is an isomorphism.
For this, it suffices to show that after evaluating at each  bounded prism $(A,I)\in Y_\Prism$, the cup product pairing induces the following isomorphism of perfect $A$-complexes:
\[
R\Gamma\bigl((X_{\overline{A}}/A)_\Prism, \cE\bigr) \longrightarrow R\Hom_A\bigl(R\Gamma\bigl((X_{\overline{A}}/A)_\Prism, \cE^\vee\{n\}\bigr), \Hh^{2n}\bigl((X_{\overline{A}}/A)_\Prism,  \cO_{X_\Prism}\{n\}\bigr)[-2n]\bigr).
\]
{In other words, the first part of \ref{duality-relative} follows from the first part of \ref{duality-cohom}.}
Moreover, by the derived Nakayama lemma \cite[\href{https://stacks.math.columbia.edu/tag/0G1U}{Tag~0G1U}]{SP}, it suffices to reduce mod $I$ and show that the following map of perfect $\overline{A}$-complexes, which is induced by the cup product pairing for the cohomology of Hodge--Tate crystals, is an isomorphism:
\[
R\Gamma\bigl((X_{\overline{A}}/A)_\Prism, \overline{\cE}\bigr) \longrightarrow R\Hom_{\overline{A}}\bigl(R\Gamma\bigl((X_{\overline{A}}/A)_\Prism, \overline{\cE}^\vee\{n\}\bigr), \Hh^{2n}\bigl((X_{\overline{A}}/A)_\Prism,  \overline{\cO}_{X_\Prism}\{n\}\bigr)[-2n]\bigr).
\]
Here, we implicitly used the equalities
\begin{align*}
	& R\Hom_A\left(R\Gamma\bigl((X_{\overline{A}}/A)_\Prism, \cE^\vee\{n\}\bigr), \Hh^{2n}\bigl((X_{\overline{A}}/A)_\Prism,  \cO_{X_\Prism}\{n\}\bigr)[-2n]\right) \otimes^L_A \overline{A} \\
	& \simeq R\Hom_A \left(R\Gamma\bigl((X_{\overline{A}}/A)_\Prism, \cE^\vee\{n\}\bigr), \Hh^{2n}\bigl((X_{\overline{A}}/A)_\Prism,  \cO_{X_\Prism}\{n\}\bigr)\otimes_A^L \overline{A}[-2n] \right) \\
	& \simeq R\Hom_A \left(R\Gamma\bigl((X_{\overline{A}}/A)_\Prism, \cE^\vee\{n\}\bigr), \Hh^{2n}\bigl((X_{\overline{A}}/A)_\Prism,  \cO_{X_\Prism}\{n\}\bigr)\otimes_A \overline{A}[-2n] \right) \\
	& \simeq R\Hom_A \left(R\Gamma\bigl((X_{\overline{A}}/A)_\Prism, \cE^\vee\{n\}\bigr), \Hh^{2n}\bigl((X_{\overline{A}}/A)_\Prism,  \overline{\cO}_{X_\Prism}\{n\}\bigr)[-2n] \right) \\
	& \simeq R\Hom_{\overline{A}} \left(R\Gamma\bigl((X_{\overline{A}}/A)_\Prism, \cE^\vee\{n\}\bigr)\otimes^L_A \overline{A}, \Hh^{2n}\bigl((X_{\overline{A}}/A)_\Prism,  \overline{\cO}_{X_\Prism}\{n\}\bigr)[-2n] \right) \\
	& \simeq  R\Hom_{\overline{A}} \left(R\Gamma\bigl((X_{\overline{A}}/A)_\Prism, \overline{\cE}^\vee\{n\}\bigr), \Hh^{2n}\bigl((X_{\overline{A}}/A)_\Prism,  \overline{\cO}_{X_\Prism}\{n\}\bigr)[-2n] \right),
\end{align*}
where {the first isomorphism follows from the perfectness of $R\Gamma\bigl((X_{\overline{A}}/A)_\Prism, \cE^\vee\{n\}\bigr)$ (\cref{Higgs-coh}),} the second isomorphism follows from the projectivity of $\Hh^{2n}(X_{\overline{A}}/A_\Prism,  \cO_{X_\Prism}\{n\})$ over $A$ (\cref{top-coh-free}), and the third isomorphism is (\ref{top-coh-free-red}).
The first part of \ref{duality-cohom} now follows from \cref{duality-for-HT}.\ref{duality-for-HT-proper}, applied to the Hodge-Tate crystal $\cF=\overline{\cE}$.
Finally, the Frobenius equivariance follows from \cref{cup-Frob-equiv}.
\end{proof}

\subsection{Non-canonical duality}
As we saw in \cref{duality}, when a smooth proper map $f \colon X\to Y$ of smooth formal $\cO_K$-schemes has geometrically connected fibers, prismatic cohomology admits a canonical Poincar\'e duality without reference to any trace morphism.
However, a trace morphism for prismatic cohomology is necessary when $f$ does not meet the condition $f_*\cO_X=\cO_Y$.
In this subsection, we prove a non-canonical Poincar\'e duality for relative prismatic cohomology and comment on the relationship between prismatic and \'etale trace maps.

Given a finite \'etale morphism of affine schemes $\pi \colon \Spec(B)\to \Spec(A)$, we let in the following $t_{B/A} \colon \pi_*B \simeq \pi_*\pi^!A\to A$ be the standard trace map for an \'etale morphism \cite[\href{https://stacks.math.columbia.edu/tag/0BVH}{Tag~0BVH}, \href{https://stacks.math.columbia.edu/tag/02DV}{Tag~02DV}]{SP}, as a map of finite locally free $A$-modules.
We begin with the following observation which allows to ``push forward'' a perfect pairing along a finite \'etale morphism.
\begin{lemma}\label{tr-fet}
	Let $\pi \colon \Spec(B)\to \Spec(A)$ be a finite \'etale morphism of affine schemes.
	Then a perfect pairing of perfect $B$-complexes
	\[
	E_1 \otimes^L_B E_2 \longrightarrow B.
	\]
	induces a natural perfect pairing of perfect $A$-complexes
	\[
	\pi_*E_1 \otimes^L_A \pi_*E_2 \longrightarrow A.
	\]
\end{lemma}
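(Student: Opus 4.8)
The plan is to construct the pairing $\pi_*E_1 \otimes^L_A \pi_*E_2 \to A$ as the composition of three natural maps and then verify perfectness by a standard localization argument. First I would build the natural ``lax monoidality'' map for pushforward along the affine morphism $\pi$: since $\pi$ is affine, $\pi_*$ is exact, and for perfect (in fact any quasi-coherent) $B$-complexes there is a canonical map $\pi_* E_1 \otimes^L_A \pi_* E_2 \to \pi_*(E_1 \otimes^L_B E_2)$. Concretely, on global sections this is just the $A$-bilinear map $E_1 \otimes_A E_2 \to E_1 \otimes_B E_2$ coming from the $A$-algebra structure on $B$. Composing with $\pi_*$ applied to the given pairing $E_1 \otimes^L_B E_2 \to B$ yields a map $\pi_* E_1 \otimes^L_A \pi_* E_2 \to \pi_* B$, and then postcomposing with the \'etale trace $t_{B/A}\colon \pi_* B \to A$ produces the desired pairing.

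The second step is to check that this pairing is perfect, i.e.\ that the adjoint map $\pi_* E_1 \to R\Hom_A(\pi_* E_2, A)$ is an isomorphism. Perfectness can be checked locally on $\Spec(A)$, and after localizing we may assume $B$ is a standard-\'etale, hence (after further localization) a finite free $A$-algebra, or even split: $B \simeq \prod_{i=1}^m A$, under which $\pi_*E_j \simeq \bigoplus_i E_{j}$ becomes a finite direct sum of copies of perfect $A$-complexes, the trace $t_{B/A}$ becomes the sum-of-coordinates map, and the induced $A$-pairing decomposes as the orthogonal direct sum of the original pairings viewed over $A$. Thus perfectness over $A$ reduces to perfectness of $E_1 \otimes^L_B E_2 \to B$ over $B$, which holds by hypothesis; equivalently, one invokes the projection/duality formula $\pi_* R\Hom_B(E_2, B) \simeq R\Hom_A(\pi_* E_2, \pi_* \pi^! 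A)$ together with $\pi^! A \simeq B$ for \'etale $\pi$ (so that $\pi_*\pi^! A \simeq \pi_* B$, and the trace realizes the counit $\pi_* \pi^! A \to A$ as a perfect self-duality of $\pi_* B$ over $A$). Either route shows the adjoint is an isomorphism.

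The main technical point — and the only step requiring genuine care rather than bookkeeping — is the identification, under the \'etale trace, of $\pi_* B$ as a \emph{self-dual} rank-one-ish object: precisely, that $t_{B/A}$ induces an isomorphism $\pi_* B \xrightarrow{\sim} R\Hom_A(\pi_* B, A)$. This is exactly the statement that the trace form on a finite \'etale algebra is nondegenerate, which is \cite[\href{https://stacks.math.columbia.edu/tag/0BVH}{Tag~0BVH}]{SP} (or can be deduced from $\pi^! A \simeq B$). Given this, the compatibility of the lax monoidal structure of $\pi_*$ with the trace and with the adjunction isomorphisms is formal. I would spell out the adjunction chain
\[
R\Hom_A(\pi_* E_2, A) \simeq R\Hom_A(\pi_* E_2, \pi_*\pi^! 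A) \simeq \pi_* R\Hom_B(E_2, \pi^! A) \simeq \pi_* R\Hom_B(E_2, B) \simeq \pi_* E_1,
\]
where the middle isomorphism is the adjunction $(\pi^*, \pi_*)$ combined with $(\pi_*, \pi^!)$ (finite $\pi$), the last is perfectness of the given $B$-pairing, and one checks the composite agrees with the adjoint of our constructed $A$-pairing. Naturality in $E_1, E_2$ is immediate from the construction, so no further argument is needed there.

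Finally, I would remark that in the application $\pi$ will be the finite \'etale cover underlying a connected component decomposition (e.g.\ coming from a finite \'etale base change in a relative prismatic setup), so in practice one really is in the locally-split situation and the second paragraph's reduction is the operative one; the general \'etale case is recorded for completeness and follows by \'etale-local descent of the isomorphism statement, which is legitimate since ``being an isomorphism'' is Zariski-local (indeed fpqc-local) on $\Spec(A)$.
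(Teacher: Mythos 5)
Your overall route is the same as the paper's: construct the pairing via the lax monoidal structure of $\pi_*$ composed with the \'etale trace, then prove perfectness by reducing to Grothendieck duality for the finite flat morphism $\pi$ together with the identification $\pi^!A \simeq B$. The paper phrases this by going the other direction — it takes the isomorphism $E_1 \xrightarrow{\sim} R\Hom_B(E_2,B)$, pushes it forward, applies duality plus trace to land in $R\Hom_A(\pi_*E_2,A)$, and only at the end takes the tensor–hom adjoint to recover the pairing — but this is just the adjoint packaging of the same argument.

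Two small imprecisions are worth flagging. First, in your adjunction chain the initial step $R\Hom_A(\pi_*E_2,A) \simeq R\Hom_A(\pi_*E_2,\pi_*\pi^!A)$ is not an isomorphism (the trace $\pi_*\pi^!A = \pi_*B \to A$ is genuinely not invertible when $\deg\pi>1$); the correct statement of sheafy duality for finite $\pi$ is the single isomorphism $\pi_* R\Hom_B(E_2,\pi^!A) \xrightarrow{\sim} R\Hom_A(\pi_*E_2,A)$, which already has $A$ (not $\pi_*\pi^!A$) on the right. What the paper does is factor this isomorphism as the canonical lax map $\pi_*R\Hom_B(E_2,\pi^!A) \to R\Hom_A(\pi_*E_2,\pi_*\pi^!A)$ followed by post-composition with $t_{B/A}$, and asserts (by duality) that this \emph{composite} is an isomorphism — neither factor is one individually. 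Second, your ``localize and split'' reduction cannot be achieved by Zariski localization on $\Spec A$ alone (a nontrivial finite separable field extension never splits Zariski-locally); you need a finite \'etale base change, and you do correctly note at the end that this is legitimate by fpqc descent of the isomorphism property. That alternate reduction is a perfectly good second route that the paper does not spell out, and it has the virtue of making the self-duality of $(\pi_*B, t_{B/A})$ completely transparent; just state the localization as \'etale, not Zariski.
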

We refer the reader to \cref{def-perfect-pair}.\ref{def-perfect-pair-object} below for the definition of perfect pairings of perfect $B$-complexes.
\begin{proof}
	By assumption, the given pairing induces an isomorphism of $B$-complexes 
	\[
	E_1 \longrightarrow R\Hom_B(E_2, B). 
	\]
	Taking the direct image along $\pi$, we get the induced isomorphism
	\[
	\pi_* E_1 \longrightarrow \pi_* R\Hom_B(E_2,B).
	\]
	So via the identification $B \simeq \pi^!A$, Grothendieck duality for the map $\pi \colon A\to B$ implies that the composition
	\[
	\begin{tikzcd}
		\pi_* R\Hom_B(E_2,B) \simeq \pi_* R\Hom_B(E_2,\pi^! A) \ar[r]& R\Hom_A(\pi_* E_2, \pi_*\pi^! A) \arrow[r, "t_{B/A}"] &R\Hom_A(\pi_* E_2, A)
	\end{tikzcd}
    \]
    is an isomorphism.
	A combination of the two isomorphisms therefore gives a natural isomorphism of perfect $A$-complexes
	\[
	\pi_* E_1 \longrightarrow R\Hom_A(\pi_* E_2, A).
	\]
	By taking the tensor-hom adjoint, we obtain the desired perfect pairing over $A$:
	\[
	\pi_* E_1 \otimes^L_A \pi_* E_2 \longrightarrow A. \qedhere
	\]
\end{proof}
We also record a useful lemma which shows that the relative prismatic cohomology of $(Z/A)_\Prism$ remains the same if we replace the base prism $(A,I)$ by any finite etale extension between $Z$ and $\Spf(\overline{A})$.

Let $(A,I)$ be a bounded prism.
It follows from \cite[\href{https://stacks.math.columbia.edu/tag/039R}{Tag~039R}]{SP} that given a finite \'etale $\overline{A}$-algebra $\overline{B}$, there exists an essentially unique finite \'etale $A$-algebra $B$ whose reduction mod $I$ is $\overline{B}$.
By \cite[Lem.~2.18]{BS19}, there is a unique $\delta$-structure on $B$ that is compatible with $A$.
Moreover, since $A$ is derived $(p,I)$-complete and satisfies $p\in I+\phi(I)A$, by the finiteness, the ring $B$ is also derived $(p,I)$-complete and has $p\in IB+\phi(I)B$.
Thus, we get a natural prism $(B,I)$ out of the finite \'etale $\overline{A}$-algebra $\overline{B}$.
\begin{lemma}\label{prismatic-site-change-base}
	Let $(A,I)$ be a bounded prism.
	Let the composition $f \colon Z\to \Spf(\overline{B}) \to \Spf(\overline{A})$ be a smooth morphism of formal schemes such that $\overline{B}$ is a finite \'etale $\overline{A}$-algebra.
	Let $(B,IB)$ be the prism associated with the finite \'etale $A$-algebra $B$ whose mod $I$ reduction is $\overline{B}$.
	Then the forgetful functor $(Z/B)_\Prism \to (Z/A)_\Prism$ is an isomorphism of sites.
\end{lemma}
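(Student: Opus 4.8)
The plan is to show that the forgetful functor $u\colon (Z/B)_\Prism \to (Z/A)_\Prism$ is an equivalence on underlying categories which moreover matches the two Grothendieck topologies, so that it is an isomorphism of sites. The topologies agree for a trivial reason: a map of prisms over $B$ with source $(C,IC)$ is $(p,IB)$-completely faithfully flat if and only if it is $(p,IC)$-completely faithfully flat (as $IBC=IC$), which is exactly the covering condition for the image in $(Z/A)_\Prism$; hence $u$ is continuous and cocontinuous, and it remains to construct an inverse equivalence of underlying categories. Following the pattern of the proof of \cref{rel vs abs sites}, I will show that every object of $(Z/A)_\Prism$ admits an essentially unique enhancement to an object of $(Z/B)_\Prism$ compatible with $u$, and likewise on morphisms.

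First I would describe the enhancement. An object of $(Z/A)_\Prism$ is a prism $(C,IC)$ over $(A,I)$ together with a structure map $g\colon \Spf(C/IC)\to Z$ over $\Spf(\overline A)$; composing $g$ with $Z\to\Spf(\overline B)$ gives a map of $\overline A$-algebras $\overline B\to C/IC$. The heart of the argument is to lift this uniquely to a $\delta$-$A$-algebra map $B\to C$. Since $(C,IC)$ is bounded, $C$ is classically $(p,I)$-complete (cf.\ \cite{BS19}), so the pair $\bigl(B\otimes_A C,\ (p,I)(B\otimes_A C)\bigr)$ is Henselian; as $B$ is finite \'etale over $A$, an $A$-algebra map $B\to C$ is the same as a section of the finite \'etale $C$-algebra $B\otimes_A C$, i.e.\ an idempotent of a prescribed type, and such idempotents lift uniquely along a Henselian pair \cite{SP}. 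The analogous statement over the Henselian pair $(\overline B\otimes_{\overline A}\overline C,(p))$ — using that $\overline C=C/IC$ is classically $p$-complete — together with the identification $B\otimes_A C/(p,I)=\overline B\otimes_{\overline A}\overline C/(p)$ yields a bijection $\Hom_{A\text{-alg}}(B,C)\xrightarrow{\sim}\Hom_{\overline A\text{-alg}}(\overline B,\overline C)$. Finally, the resulting $A$-algebra map $B\to C$ is automatically a map of $\delta$-rings: the $\delta$-structure on the \'etale $A$-algebra $B$ is the unique one compatible with that of $A$ \cite[Lem.~2.18]{BS19}, and the same rigidity, applied after passing to length-two Witt vectors, shows that any $A$-algebra map out of $B$ into a $\delta$-$A$-algebra is a $\delta$-map. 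Equipping $C$ with this $B$-structure, and keeping $g$ (now viewed as a map over $\Spf(\overline B)$), produces the desired object of $(Z/B)_\Prism$, visibly inverse to $u$ on objects.

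For morphisms nothing further is needed once the lifting is available: a $\delta$-$A$-algebra map $C\to C'$ compatible with the structure maps to $Z$ is automatically $B$-linear, because $B\to C\to C'$ and the structure map $B\to C'$ are both $A$-algebra lifts of $\overline B\to C/IC\to C'/IC'$, hence equal by the uniqueness above. Thus $u$ is fully faithful and essentially surjective, and combined with the matching of topologies it is an isomorphism of sites; the stated consequences for categories of crystals, ($F$-)isocrystals, and their cohomology then follow formally, as after \cref{rel vs abs sites}. The only nonformal ingredient is the unique lifting of $\overline B\to\overline C$ to a $\delta$-$A$-algebra map $B\to C$; this is where both the finite \'etale hypothesis and the boundedness of $(A,I)$ enter, and if one prefers it can be packaged once and for all as the equivalence between finite \'etale algebras over a bounded prism $(A,I)$ and finite \'etale $\overline A$-algebras, which already incorporates the $\delta$-compatibility.
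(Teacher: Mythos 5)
Your proposal is correct and follows essentially the same strategy as the paper: reduce the claim to showing that the structure map $\overline{B}\to C/IC$ lifts uniquely to a $\delta$-$A$-algebra map $B\to C$, establish the unique lifting via \'etaleness of $A\to B$ and $(p,I)$-completeness of $C$, and check $\delta$-compatibility by the $W_2$-lifting characterization. The only cosmetic difference is that you phrase the lifting step as idempotent lifting along a Henselian pair, whereas the paper invokes the infinitesimal lifting criterion for the pro-nilpotent (equivalently Henselian) thickening $C\to C/IC$; these are interchangeable.
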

Before we give the proof, we mention that the forgetful functor induces in particular an equivalence of topoi, and any sheaf $\cF$ over $(Z/A)_\Prism$ has the same cohomology as its restriction to $(Z/B)_\Prism$.
\begin{proof}
	It suffices to show that one can functorially lift any prism $(C,IC)$ of $(Z/A)_\Prism$ to a prism in $(Z/B)_\Prism$.
	For this, we may assume that $Z=\Spf(R)$ is affine.
	To proceed, we recall that the object $(C,IC)\in (Z/A)_\Prism$ is given by a commutative diagram
	\[
	\begin{tikzcd}
		C \ar[r] & C/IC & R \ar[l] \\
		A \ar[r] \ar[u] & \overline{A}. \ar[u] \arrow[ur]&
	\end{tikzcd}
	\]
	As the map $\overline{A} \to R$ factors through $\overline{B}$ by assumption, one can factor $\overline{A} \to C/IC$ uniquely as $\overline{A} \to \overline{B} \to C/IC$ so that all the relevant diagrams commute.
	Moreover, since the surjection $C \to C/IC$ is pro-infinitesimal, the composition $B \to \overline{B} \to C/IC$ uniquely lifts to an $A$-algebra morphism $\alpha \colon B \to C$ by the \'etaleness of $A \to B$.
	Thus, we can enlarge the diagram above to the following commutative diagram:
	\[
	\begin{tikzcd}
		C \ar[r] & C/IC & R \ar[l] \\
		B \ar[r] \ar[u,"\alpha"] & \overline{B}=B/IB \ar[u] \arrow[ru] \\
		A \ar[r] \ar[u] & \overline{A}. \ar[u] \ar[ruu]
	\end{tikzcd}
	\]
	
	Finally, it remains to check that $\alpha$ is a map of $\delta$-rings.
	For this, we use that a $\delta$-structure on a ring $S$ is equivalent to a section of the surjection $W_2(S) \to S$ \cite[Rmk.~2.4]{BS19}.
	By the proof of \cite[Lem.~2.18]{BS19}, the $\delta$-structure on $B$ is given by the unique lift $B\to W_2(B)$ of $A \to W_2(A)$ which makes the following diagram commute:
	\[
	\begin{tikzcd}
		B \ar[r] & W_2(B) \ar[r] & B \\
		A \ar[u] \ar[r] & W_2(A) \ar[u] \ar[r] & A \ar[u].
	\end{tikzcd}
	\]
	It thus suffices to show that the following dashed arrows induced by $\alpha \colon B \to C$ make the diagram
	\[
	\begin{tikzcd}
		C \ar[r] & W_2(C) \ar[r] & C\\
		B \ar[r] \arrow[u, dashed, "\alpha"] & W_2(B) \ar[r]  \arrow[u, dashed, "W_2(\alpha)"]& B  \arrow[u, dashed, "\alpha"]\\
		A \ar[u] \ar[r] & W_2(A) \ar[u] \ar[r] & A \ar[u]
	\end{tikzcd}
	\]
	commute.
	This results from the uniqueness part of the infinitesimal lifting criterion for the \'etaleness of $A \to B$.
	We are done.   
\end{proof}
After the above preparations, we obtain  a non-canonical Poincar\'e duality for relative prismatic cohomology.
\begin{proposition}\label{non-can-duality}
	Let $f \colon X\to Y$ be a smooth proper morphism of smooth formal $\cO_K$-schemes which is of relative equidimension $n$.
	Let $\cE \in D_\perf(X_\Prism)$ be a prismatic crystal in perfect complexes.
	For each $(A,I)\in Y_\Prism$, there is a non-canonical trace map $\Hh^{2n}((X_{\overline{A}}/A)_\Prism,\cO_\Prism\{n\}) \to A$, inducing a perfect pairing of perfect $A$-complexes
	\[
	R\Gamma\bigl((X_{\overline{A}}/A)_\Prism, \cE\bigr) \otimes^L_A R\Gamma\bigl((X_{\overline{A}}/A)_\Prism, \cE^\vee\{n\}\bigr) \longrightarrow A[-2n].
	\]
\end{proposition}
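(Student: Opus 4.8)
The plan is to reduce to the trace-free duality of \cref{duality} by means of a Stein-type factorization of $f$, and then to transport the resulting pairing down along a finite \'etale morphism using \cref{tr-fet}.

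First I would factor $f$ through its Stein factorization $X \xrightarrow{f'} Y' \xrightarrow{\pi} Y$, where $Y'$ is the relative formal spectrum of the $\cO_Y$-algebra $f_*\cO_X$. Since $f$ is smooth and proper, $f_*\cO_X$ is a finite \'etale $\cO_Y$-algebra (reduce mod $p^m$ and invoke the classical statement for proper flat morphisms with geometrically reduced fibers), so $\pi$ is finite \'etale and $Y'$ is again smooth over $\cO_K$; moreover $f'$ is smooth and proper of relative equidimension $n$ with $f'_*\cO_X = \cO_{Y'}$, and this identity persists under arbitrary base change. Fixing $(A,I)\in Y_\Prism$, the base change $Y'\times_Y \Spf(\overline{A})$ is finite \'etale over $\Spf(\overline{A})$, hence equals $\Spf(\overline{B})$ for a finite \'etale $\overline{A}$-algebra $\overline{B}$; as recalled before \cref{prismatic-site-change-base}, $\overline{B}$ lifts to an essentially unique prism $(B,IB)$ with $A\to B$ finite \'etale, and $(B,IB)$ lies in $Y'_\Prism$ via $\Spf(\overline{B})\to Y'$. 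Since $X\times_{Y'}\Spf(\overline{B}) = X\times_Y\Spf(\overline{A}) = X_{\overline{A}}$, \cref{prismatic-site-change-base} identifies $\bigl(X_{\overline{A}}/B\bigr)_\Prism \simeq \bigl(X_{\overline{A}}/A\bigr)_\Prism$, so that $R\Gamma\bigl((X_{\overline{A}}/A)_\Prism,\cE\bigr)$ and $R\Gamma\bigl((X_{\overline{A}}/A)_\Prism,\cE^\vee\{n\}\bigr)$ carry natural structures of perfect $B$-complexes, compatibly with their $A$-complex structures and with the cup product pairings of \cref{cup-product}.

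Next I would apply \cref{duality} to $f'\colon X\to Y'$ at the prism $(B,IB)\in Y'_\Prism$. As $f'_*\cO_X=\cO_{Y'}$, this produces a canonical perfect pairing of perfect $B$-complexes
\[ R\Gamma\bigl((X_{\overline{A}}/B)_\Prism,\cE\bigr) \otimes_B^L R\Gamma\bigl((X_{\overline{A}}/B)_\Prism,\cE^\vee\{n\}\bigr) \longrightarrow \Hh^{2n}\bigl((X_{\overline{A}}/B)_\Prism,\cO_\Prism\{n\}\bigr)[-2n], \]
whose target is, by \cref{top-coh-free}, a shift of a line bundle $L$ over $B$. I would then fix a trivialization $L\simeq B$, which exists because $\Hh^0(X_{\overline{A}},\cO) = \overline{B}$ (for instance via the trace construction mentioned after \cref{top-coh-free}); it is precisely this choice that accounts for the non-canonicity. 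Composing, one obtains a perfect pairing of perfect $B$-complexes valued in $B[-2n]$, and applying \cref{tr-fet} (harmlessly shifted by $[-2n]$) to the finite \'etale morphism $\Spec(B)\to\Spec(A)$ yields a perfect pairing of perfect $A$-complexes valued in $A[-2n]$. Unwinding the proof of \cref{tr-fet} and the identification $\pi_*R\Gamma\bigl((X_{\overline{A}}/B)_\Prism,-\bigr) = R\Gamma\bigl((X_{\overline{A}}/A)_\Prism,-\bigr)$, this $A$-pairing is the cup product
\[ R\Gamma\bigl((X_{\overline{A}}/A)_\Prism,\cE\bigr) \otimes_A^L R\Gamma\bigl((X_{\overline{A}}/A)_\Prism,\cE^\vee\{n\}\bigr) \longrightarrow \Hh^{2n}\bigl((X_{\overline{A}}/A)_\Prism,\cO_\Prism\{n\}\bigr)[-2n] \]
followed by the $A$-linear trace map $\tau$ defined as the composite $\Hh^{2n}\bigl((X_{\overline{A}}/A)_\Prism,\cO_\Prism\{n\}\bigr) = \pi_*L \xrightarrow{\;\pi_*(\mathrm{triv})\;} \pi_*B \xrightarrow{\;t_{B/A}\;} A$.

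The main work, and the step I expect to be the most delicate, is the bookkeeping in the last paragraph: one must verify that the finite \'etale pushforward of the cup product pairing over $B$ coincides with the cup product pairing over $A$, and that the perfect $A$-pairing furnished by \cref{tr-fet} genuinely factors through $\Hh^{2n}\bigl((X_{\overline{A}}/A)_\Prism,\cO_\Prism\{n\}\bigr)$ via the honest $A$-linear map $\tau$ above. Concretely this reduces to the compatibility of cup products with restriction of scalars along $A\to B$ (routine, but requiring care with the lax-monoidal structure of $\pi_*$) together with the fact that the Grothendieck-duality isomorphism underlying \cref{tr-fet} commutes with truncation to top degree. A secondary point is to confirm that the Stein factorization behaves well in the $p$-adic formal setting, i.e.\ that $f_*\cO_X$ is finite \'etale and that $f'_*\cO_X=\cO_{Y'}$ survives the base change $Y\leftarrow\Spf(\overline{A})$, which in both cases follows by reduction mod $p^m$ from the classical statements.
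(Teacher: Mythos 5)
Your proposal follows essentially the same route as the paper's proof: factor $f$ as $X \xrightarrow{g} Y' \xrightarrow{h} Y$ with $g_*\cO_X = \cO_{Y'}$ and $h$ finite \'etale (the paper invokes \cite[\href{https://stacks.math.columbia.edu/tag/0G7Y}{Tag~0G7Y}]{SP} mod $p^n$ and passes to the limit, which is exactly your ``Stein factorization'' observation), lift $\overline{B}$ to a prism $(B,IB)\in Y'_\Prism$ finite \'etale over $(A,I)$, identify the two relative prismatic sites via \cref{prismatic-site-change-base}, apply \cref{top-coh-free} and \cref{duality} over $(B,IB)$, trivialize the resulting rank-one module, and push forward along $\pi$ via \cref{tr-fet}. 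You spell out more explicitly where the \emph{non}-canonicity lives (the choice of trivialization $L \simeq B$) and correctly flag that one must still check the pushed-forward $A$-pairing really is the cup product followed by the trace map $\tau = t_{B/A}\circ\pi_*(\mathrm{triv})$ — a compatibility the paper glosses over (it simply asserts the perfect pairing, leaving the factorization through $\Hh^{2n}$ implicit). That bookkeeping is routine for the reasons you describe, so your proposal is sound; it just articulates a verification the paper leaves to the reader.
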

\begin{proof}
    As each $X\times_{\Spf(\ZZ_p)} \Spec(\ZZ_p/p^n)$ is a smooth and proper scheme over $Y\times_{\Spf(\ZZ_p)} \Spec(\ZZ_p/p^n)$, we can apply \cite[\href{https://stacks.math.columbia.edu/tag/0G7Y}{Tag~0G7Y}]{SP} for each $n$ and take the limit over $n$ to get a factorization of the map $f \colon X\to Y$ as a composition $X \xrightarrow{g} Y' \xrightarrow{h} Y$, where $g$ is proper smooth such that $g_*\cO_X=\cO_{Y'}$ and $h$ is finite \'etale.
    Let $\overline{B}$ be the base change of $\overline{A}$ along $Y'\to Y$ and denote the map $\Spec(\overline{B}) \to \Spec(\overline{A})$ by $\pi$. 
    By taking the $p$-complete base change of the above maps along $\Spf(\overline{A}) \to Y$, we see that the composition $X_{\overline{A}} \to \Spf(\overline{B}) \to \Spf(\overline{A})$ satisfies  $\Hh^0(X_{\overline{A}},\cO_{X_{\overline{A}}})=\overline{B}$.
    Moreover, by the discussion before \cref{prismatic-site-change-base}, the map $\overline{A} \to \overline{B}$ lifts to an essentially unique map of prisms $(A,I) \to (B,IB)$ for a natural prism $(B,IB) \in Y'_\Prism$.
	\Cref{prismatic-site-change-base} then gives a natural equality
	\begin{align*}
		R\Gamma\bigl((X_{\overline{A}}/A)_\Prism, \cE\bigr) & = \pi_*R\Gamma\bigl((X_{\overline{A}}/B)_\Prism, \cE\bigr),
	\end{align*}
	which is functorial in $\cE$.

	Now we consider the relative prismatic cohomology for $(X_{\overline{A}}/B)_\Prism$.
	Keeping in mind the identity $X_{\overline{A}} \simeq X\times_{Y'} \Spf(\overline{B})$, \cref{top-coh-free} applied to $g \colon X \to Y'$ and the prism $(B,I)\in Y'_\Prism$ shows that the top cohomology group $\Hh^{2n}\bigl((X_{\overline{A}}/B)_\Prism, \cO_{X_\Prism}\{n\}\bigr)$ is free of rank one over $B$.
	By choosing a trivialization and applying \cref{duality}, we get a perfect pairing of $B$-complexes
	\[
	R\Gamma\bigl((X_{\overline{A}}/B)_\Prism, \cE\bigr) \otimes^L_B R\Gamma\bigl((X_{\overline{A}}/B)_\Prism, \cE^\vee\{n\}\bigr) \longrightarrow B[-2n].
	\]
	As a consequence, by taking the direct image along $\pi$ and \cref{tr-fet}, the standard trace map $t_{B/A}$ for the finite \'etale extension produces a perfect paring of $A$-complexes
	\[
	R\Gamma\bigl((X_{\overline{A}}/A)_\Prism, \cE\bigr) \otimes^L_A R\Gamma\bigl((X_{\overline{A}}/A)_\Prism, \cE^\vee\{n\}\bigr) \longrightarrow A[-2n]. \qedhere
	\]
\end{proof}
\begin{remark}\label{trace-comment}
	In the proof of \cref{non-can-duality} above, the \emph{non-canonicity} of the statement comes from the choice of the trivialization of $\Hh^{2n}\bigl((X_{\overline{A}}/A)_\Prism, \cO_{X_\Prism}\{n\}\bigr)$.
	In particular, such a choice may not be compatible with the base change along $(A,I)\to (A',IA')$ in $Y_\Prism$.
\end{remark}
    The non-canonicity from \cref{non-can-duality} and \cref{trace-comment} can be be improved using the full faithfulness in \cref{main}, once we fix a trace map for the \'etale cohomology of the generic fibers.
    In \cref{Berkovich-can-duality} below, we illustrate what properties of the \'etale trace maps are needed in order to obtain a well-behaved prismatic trace map for smooth proper morphisms of smooth formal $\cO_K$-schemes.
    As we will see in \cref{Berkovich-trace}, Berkovich's trace map in \'etale cohomology is an example of a trace map with such properties and thus induces a well-behaved trace map for prismatic cohomology.
    
    First, we need the following compatibility result.
\begin{lemma}\label{et-can-duality}
    Let $f \colon X \to Y$ be a finite \'etale morphism of smooth formal $\cO_K$-schemes.
    Then the standard trace map for \'etale algebras induces a map $f_* \cO_{X_\Prism} \to \cO_{Y_\Prism}$, whose \'etale realization is isomorphic to the counit map $f_{\eta,*} \widehat{\ZZ}_{p,X_\eta} \to \widehat{\ZZ}_{p,Y_\eta}$ for the adjunction $(f_{\eta,*}=f_{\eta,!}, f_\eta^*)$.
\end{lemma}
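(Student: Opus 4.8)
The plan is to construct the trace morphism prism by prism from the classical \'etale trace, observe that it is the counit of a finite \'etale adjunction, and then compute its \'etale realization by reducing to a pro-\'etale perfectoid cover on which everything splits.

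\emph{Construction of the trace.} Fix $(A,I)\in Y_\Prism$. Since $f$ is finite \'etale, $X_{\overline{A}}\colonequals X\times_Y\Spf(\overline{A})$ is finite \'etale over $\Spf(\overline{A})$, hence equal to $\Spf(\overline{B})$ for a finite \'etale $\overline{A}$-algebra $\overline{B}$. As recalled before \cref{prismatic-site-change-base}, $\overline{B}$ lifts to an essentially unique finite \'etale $A$-algebra $B$ with a unique compatible $\delta$-structure, and $(B,IB)\in X_\Prism$. By \cref{prismatic-site-change-base} together with the identification $X_{\overline{A}}=\Spf(B/IB)$, the relative prismatic cohomology $R\Gamma((X_{\overline{A}}/A)_\Prism,\cO_\Prism)$ equals $B$, concentrated in degree $0$; thus $Rf_{\Prism,*}\cO_{X_\Prism}=f_*\cO_{X_\Prism}$ is the prismatic $F$-crystal in vector bundles $(A,I)\mapsto B$ (with Frobenius induced by the $\delta$-structure on $B$). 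The \'etale trace $t_{B/A}\colon B\to A$ is $A$-linear, compatible with base change, and, by uniqueness of \'etale lifts, compatible with the assignment $(A,I)\mapsto B$, so the $t_{B/A}$ glue to a morphism of $\cO_{Y_\Prism}$-modules $\tr\colon f_*\cO_{X_\Prism}\to\cO_{Y_\Prism}$. Viewing $f$ as a finite \'etale morphism of ringed topoi, we have $f_{\Prism,*}=f_{\Prism,!}$ and $f_\Prism^*\cO_{Y_\Prism}=\cO_{X_\Prism}$, and $\tr$ is precisely the counit $f_{\Prism,!}f_\Prism^*\cO_{Y_\Prism}\to\cO_{Y_\Prism}$ of the adjunction $f_{\Prism,!}\dashv f_\Prism^*$, since the counit of $f_!$ along a finite \'etale morphism is the trace map.

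\emph{Computation of the realization.} The \'etale realization $T$ is compatible with pullback, $T(f_\Prism^*\cE)\simeq f_\eta^*T(\cE)$, directly from its definition, and I claim it also intertwines finite \'etale pushforward, $T(f_{\Prism,*}\cE)\simeq f_{\eta,*}T(\cE)$. Granting the claim, applying $T$ to $\tr$ and using naturality of adjunctions together with $T(\cO_{Y_\Prism})=\widehat{\ZZ}_{p,Y_\eta}$ exhibits $T(\tr)$ as the counit $f_{\eta,!}f_\eta^*\widehat{\ZZ}_{p,Y_\eta}=f_{\eta,*}\widehat{\ZZ}_{p,X_\eta}\to\widehat{\ZZ}_{p,Y_\eta}$ of $f_{\eta,!}\dashv f_\eta^*$, which is the assertion. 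To prove the claim one may work pro-\'etale locally on $Y_\eta$. Over an affinoid perfectoid $\Spa(S,S^+)\in\Perfd/Y_{\eta,\proet}$ with associated perfect prism $(\rA_{\inf}(S^+),I)$, the ring $\overline{B}=\cO(X)\widehat{\otimes}_{\cO(Y)}S^+$ is finite \'etale over $S^+$, hence perfectoid, $X_\eta\times_{Y_\eta}\Spa(S,S^+)=\Spa(\overline{B}[1/p],\overline{B})$, and since $B$ is finite \'etale over the perfect prism $\rA_{\inf}(S^+)$ one has $B=\rA_{\inf}(\overline{B})$. Then \cref{et-prism-functor} computes $T(f_{\Prism,*}\cO_{X_\Prism})$, restricted to $\Spa(S,S^+)$, as $\lim_n$ of the sheaves $\Spa(S_1,S_1^+)\mapsto(\rA_{\inf}(\overline{B})/p^n\otimes_{\rA_{\inf}(S^+)/p^n}\rA_{\inf}(S_1^+)/p^n)[1/I]^{\varphi=1}$; as $\rA_{\inf}(\overline{B})$ is finite \'etale over $\rA_{\inf}(S^+)$, this is $\rA_{\inf}$ of the finite \'etale $S_1^+$-algebra $\overline{B}\otimes_{S^+}S_1^+$, and Artin--Schreier--Witt theory (in its tilted form) identifies the $\varphi$-fixed points with the sections of $f_{\eta,*}(\ZZ/p^n)$. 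Passing to the limit gives $T(f_{\Prism,*}\cO_{X_\Prism})\simeq f_{\eta,*}\widehat{\ZZ}_p$ over $\Spa(S,S^+)$, under which $t_{\rA_{\inf}(\overline{B})/\rA_{\inf}(S^+)}$ corresponds to the \'etale trace of $\overline{B}[1/p]/S$, i.e.\ to the counit.

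\emph{Main obstacle.} The delicate point is the last compatibility in the previous paragraph: that the equivalence $T$ carries the $\rA_{\inf}$-linear \'etale trace $t_{B/A}$ to the \'etale trace of $\ZZ_p$-sheaves, equivalently that $T$ intertwines the two finite \'etale pushforwards compatibly with their trace counits. I would settle this by reducing to the split case via a further pro-\'etale cover $\Spa(S_1,S_1^+)\to\Spa(S,S^+)$ trivializing $f_\eta$: there $\overline{B}\otimes_{S^+}S_1^+\cong\prod_{i=1}^{d}S_1^+$ with $d=\deg f$, whence $\rA_{\inf}(\overline{B})\otimes_{\rA_{\inf}(S^+)}\rA_{\inf}(S_1^+)\cong\prod_{i=1}^{d}\rA_{\inf}(S_1^+)$, both sides of the desired isomorphism become $\widehat{\ZZ}_p^{\oplus d}$, and both traces become the sum-of-coordinates map; it then remains to match the descent data along the cover, which agree, each being the permutation action of the covering automorphisms on the fiber of $f_\eta$. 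Lining up the prismatic splitting with the \'etale splitting is the only genuinely fiddly part of the argument.
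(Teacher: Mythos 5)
Your proposal follows the same route as the paper: construct the prismatic trace prism-by-prism from the standard \'etale trace $t_{B/A}$ on the lift $(B,IB)$, identify the \'etale realizations of $\cO_{Y_\Prism}$ and $f_{\Prism,*}\cO_{X_\Prism}$ via the prismatic-\'etale comparison, and check compatibility of the two traces over affinoid perfectoids where $f_\eta$ splits, where both become sum-of-coordinates. One organizational difference: the paper reduces at the outset to a \emph{basis} of affinoid perfectoids $\Spa(R,R^+)$ over which $f_\eta$ is already split into $\Spa(\prod_J R,\prod_J R^+)$ \emph{and} (after a further cover) $A[1/I]^\wedge_p$ has no nontrivial Artin--Schreier cover, so the square of sections is manifestly a square of free $\widehat{\ZZ}_p$-modules with summation maps, and one only needs to remark that the vertical isomorphisms respect the direct summands (which holds by naturality of the comparison). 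You instead fix a perfectoid, compute both sides via Artin--Schreier--Witt, and then pass to a further splitting cover, leaving you the residual ``match the descent data'' step you flag as the fiddly part; that step is the same content as the paper's compatibility-with-summands observation, so your argument goes through, but the paper's order of reductions lets it drop out for free. Two small cautions: (a) the Artin--Schreier--Witt identification of $\varphi$-fixed points with sections of $f_{\eta,*}\widehat{\ZZ}_p$ is the prismatic-\'etale comparison (\cref{etale-comparison}) for finite \'etale $f$ — you may as well cite it rather than re-derive it, which also makes the Artin--Schreier reduction explicit; and (b) the aside that $\tr$ is the counit of an adjunction $f_{\Prism,!}\dashv f_\Prism^*$ on the prismatic site is not used and would require setting up $f_{\Prism,!}$, so it is best removed in favor of working with $t_{B/A}$ directly, as the paper does.
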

\begin{proof}
    The trace map $f_* \cO_{X_\Prism} \to \cO_{Y_\Prism}$ is defined as follows:
    For each $(A,I) \in Y_\Prism$, set $\overline{B} \colonequals \Hh^0(X_{\overline{A}})$.
    By the discussion before \cref{prismatic-site-change-base}, the finite \'etale map $\overline{A} \to \overline{B}$ lifts to an essentially unique map of prisms $(A,I) \to (B,IB)$.
    \Cref{prismatic-site-change-base} and the standard trace map $\tr_{B/A}$ for the finite \'etale morphism $A \to B$ then induce a natural map
    \[ \tr^\Prism \colon f_* \cO_{X_\Prism}(A,I) \simeq R\Gamma\bigl((X_{\overline{A}}/A)_\Prism,\cO_\Prism\bigr) \simeq R\Gamma\bigl((\Spf(\overline{B})/A)_\Prism,\cO_\Prism\bigr) \simeq B \xrightarrow{\tr_{B/A}} A \simeq \cO_{Y_\Prism}(A,I). \]
    
    The inclusion of the sheaves $\ZZ_{p,Y} \hookrightarrow \cO_{Y_\Prism}$ gives an isomorphism $\widehat{\ZZ}_{p,Y_\eta} \xrightarrow{\sim} T(\cO_{Y_\Prism})$.
    Likewise, the prismatic-\'etale comparison in \cref{etale-comparison} induces a natural identification
    \[ f_{\eta,*} \widehat{\ZZ}_{p,X_\eta} \xrightarrow{\sim} f_{\eta,*} T(\cO_{X_\Prism}) \xrightarrow{\sim} T(f_{\Prism,*} \cO_{X_\Prism}). \]
    We need to prove that the resulting diagram
    \begin{equation}\label{etale-prismatic-trace} \begin{tikzcd}
    f_{\eta,*} \widehat{\ZZ}_{p,X_\eta} \arrow[r,"\tr^\et"] \arrow[d,sloped,"\sim"] & \widehat{\ZZ}_{p,Y_\eta} \arrow[d,sloped,"\sim"] \\
    T(f_{\Prism,*} \cO_{X_\Prism}) \arrow[r,"T(\tr^\Prism)"] & T(\cO_{Y_\Prism})
    \end{tikzcd} \end{equation}
    commutes.
    It suffices to check this after evaluating at every affinoid perfectoid $\Spa(R,R^+) \to X$.
    
    For any such $\Spa(R,R^+)$, the base change $X_\eta \times_{Y_\eta} \Spa(R,R^+)$ is finite \'etale over $\Spa(R,R^+)$ and thus comes from a finite \'etale map of Huber pairs $(R,R^+) \to (S,S^+)$.
    Given a further finite \'etale cover $(S,S^+) \to (\widetilde{R},\widetilde{R}^+)$ such that $R \to \widetilde{R}$ and $S \to \widetilde{R}$ are Galois with Galois groups $G$ and $H$ respectively, the canonical map $S \otimes_R \widetilde{R} \simeq \prod_{G/H} \widetilde{R}$ is an isomorphism:
    this can be checked after tensoring with the faithfully flat extension $S \to \widetilde{R}$.
    Moreover, the integral closure of $\widetilde{R}^+$ in $S \otimes R$ is $\prod_{G/H} \widetilde{R}^+$.
    It is therefore enough to show the commutativity of (\ref{etale-prismatic-trace}) for those affinoid perfectoid $\Spa(R,R^+) \to Y_\eta$ such that $X_\eta \times_{Y_\eta} \Spa(R,R^+) \simeq \Spa(\prod_J R,\prod_J R^+)$ for some finite index set $J$.
    Moreover, by taking a further pro-\'etale cover of $\Spa(R,R^+)$ if necessary and the tilting correspondence, we may assume $A[1/I]^\wedge_p$ has no nontrivial Artin--Schreier cover.
    
    After these assumptions, we have 
    \[
    f_{\eta,*} \widehat{\ZZ}_{p,X_\eta}\bigl(\Spa(R,R^+)\bigr) \simeq \widehat{\ZZ}_{p,X_\eta}\bigl(\bigsqcup_I \Spa(R,R^+)\bigr) \simeq \bigoplus_J \widehat{\ZZ}_{p,Y_\eta}\bigl(\Spa(R,R^+)\bigr).
    \]
    If $(A,I)$ is the perfect prism associated with $R^+$, then again $\overline{B} \simeq \prod_I R^+$ with \'etale lift $(B,IB) \simeq \prod_I (A,I)$ and $f_{\Prism,*} \cO_{X_\Prism}(A,I) \simeq \bigoplus_J \cO_{Y_\Prism}(A,I) \simeq \bigoplus_J A$.
    Under these identifications, (\ref{etale-prismatic-trace}) becomes
    \[ \begin{tikzcd}
    \bigoplus_J \widehat{\ZZ}_{p,Y_\eta}\bigl(\Spa(R,R^+)\bigr) \arrow[r,"\tr^\et"] \arrow[d,sloped,"\sim"] & \widehat{\ZZ}_{p,Y_\eta}\bigl(\Spa(R,R^+)\bigr) \arrow[d,sloped,"\sim"] \\
    \bigoplus_J \bigl(A[1/I]^\wedge_p\bigr)^{\varphi=1} \arrow[r,"T(\tr^\Prism)"] & \bigl(A[1/I]^\wedge_p\bigr)^{\varphi=1};
    \end{tikzcd} \]
    the left vertical isomorphism preserves the direct summands and is given by the right vertical isomorphism on each summand.
    On the other hand, by the assumption of $\tr^\et$ and $\tr^\Prism$, both trace maps are additive on components and by functoriality are the identity on each summand.
    As a consequence, both $\tr^\et$ and $T(\tr^\Prism)$ are given by summation of the components, thus yielding the desired commutativity.
\end{proof}
\begin{theorem}\label{Berkovich-can-duality}
    For every smooth proper morphism of formal $\cO_K$-schemes $f \colon X \to Y$ of relative equidimension $n$, choose a $\ZZ_p$-linear map $\tr^\et_{f_\eta} \colon R^{2n}f_{\eta,*}\widehat{\ZZ}_{p,X_\eta}(n) \to \widehat{\ZZ}_{p,Y_\eta}$ such that
    \begin{enumerate}[label=\upshape{(\alph*)}]
        \item\label{Berkovich-can-duality-et-composition} the collection of $\tr^\et_{f_\eta}$ is compatible with compositions;
        \item\label{Berkovich-can-duality-et-fin} when $n=0$, the map $\tr^\et_{f_\eta}$ is the counit map of the adjunction $(f_{\eta,*} = f_{\eta,!},f^*_\eta)$; and
        \item\label{Berkovich-can-duality-et-surj} the map $\tr^\et_{f_\eta}$ is surjective if all fibers are nonempty.
    \end{enumerate}
    Then there are Frobenius equivariant trace maps in prismatic cohomology $\tr^\Prism_f \colon R^{2n}f_*\cO_{X_\Prism}\{n\} \to \cO_{Y_\Prism}$ such that $T(\tr^\Prism_f) = \tr^\et_{f_\eta}$ and for any prismatic crystal $\cE$ in perfect complexes over $X_\Prism$, the induced map below is a perfect pairing
    \begin{equation}\label{Berkovich-can-duality-pairing}
        Rf_{\Prism,*} \cE \otimes_{\cO_{Y_\Prism}}^L Rf_{\Prism,*} \cE^\vee\{n\} \longrightarrow Rf_{\Prism,*} \cO_{X_\Prism}\{n\} \longrightarrow R^{2n}f_{\Prism,*} \cO_{X_\Prism}\{n\}[-2n] \xrightarrow{\tr^\Prism_f[-2n]} \cO_{Y_\Prism}[-2n].
    \end{equation}
	When $\cE$ underlies a prismatic $F$-crystal (in perfect complexes), the pairing is Frobenius equivariant.
    Moreover,
    \begin{enumerate}[label=\upshape{(\alph*')}]
        \item\label{Berkovich-can-duality-prism-composition} the collection of $\tr^\Prism_f$ is compatible with compositions;
        \item\label{Berkovich-can-duality-prism-fin} when $n=0$, the map $\tr^\Prism_f$ is given by the standard trace map for finite \'etale morphisms of algebras from \cref{et-can-duality}; and
        \item\label{Berkovich-can-duality-prism-isom} when $f_*\cO_X = \cO_Y$, the map $\tr^\Prism_f$ is an isomorphism.
    \end{enumerate}
\end{theorem}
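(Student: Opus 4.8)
The plan is to construct $\tr^\Prism_f$ from the given \'etale trace by transporting along the full faithfulness of $T$ (\cref{main}), first under the hypothesis $f_*\cO_X=\cO_Y$ and then in general via the Stein factorization; perfectness of the pairing and all the listed properties will then follow from \cref{duality}, \cref{et-can-duality} and \cref{tr-fet}.

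\textbf{Step 1: the case $f_*\cO_X=\cO_Y$.} In this case $L\colonequals R^{2n}f_{\Prism,*}\cO_{X_\Prism}\{n\}$ is a crystal in line bundles over $\cO_{Y_\Prism}$ by \cref{perfectness} and \cref{top-coh-free}, and \cref{Frobenius-on-coh-construction}.\ref{Frobenius-on-coh-construction-prism} equips $Rf_{\Prism,*}\cO_{X_\Prism}\{n\}[1/\cI_\Prism]$ with a Frobenius structure. Since Frobenius acts as an isogeny on prismatic cohomology of the structure sheaf (\cite[Thm.~1.16]{BS19}), passing to $\Hh^{2n}$ gives an isomorphism $\varphi^*L[1/\cI_\Prism]\xrightarrow{\sim}L[1/\cI_\Prism]$, so $(L,\varphi_L)\in\Vect^\varphi(Y_\Prism,\cO_\Prism)$, which by \cref{restriction-functor} also lives in $\Vect^{\an,\varphi}(Y_\Prism)$. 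Using that $T$ intertwines $Rf_{\Prism,*}$ and $Rf_{\eta,*}$ (a consequence of the prismatic-\'etale comparison \cref{etale-comparison} together with the identification of nearby cycles), one gets $T(L)\simeq R^{2n}f_{\eta,*}\widehat{\ZZ}_{p,X_\eta}(n)$ as $\ZZ_p$-local systems, and in particular $T(L)$ is a rank-one $\widehat{\ZZ}_p$-local system, being $T$ of a line bundle. As all fibers of $f$ are nonempty, $\tr^\et_{f_\eta}$ is surjective by \ref{Berkovich-can-duality-et-surj}; a surjection of rank-one locally free $\widehat{\ZZ}_p$-sheaves is an isomorphism, so $\tr^\et_{f_\eta}\colon T(L)\xrightarrow{\sim}\widehat{\ZZ}_{p,Y_\eta}=T(\cO_{Y_\Prism})$ is an isomorphism of crystalline local systems. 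By the full faithfulness of $T$ (\cref{main}) it lifts uniquely to an isomorphism $\tr^\Prism_f\colon L\to\cO_{Y_\Prism}$ in $\Vect^{\an,\varphi}(Y_\Prism)$, which is a morphism of honest prismatic $F$-crystals by \cref{restriction-functor}. This already gives $T(\tr^\Prism_f)=\tr^\et_{f_\eta}$, the Frobenius equivariance of $\tr^\Prism_f$, and \ref{Berkovich-can-duality-prism-isom}; composing the canonical perfect pairing of \cref{duality}.\ref{duality-relative} with $\tr^\Prism_f[-2n]$ produces the perfect pairing \ref{Berkovich-can-duality-pairing}, which is Frobenius equivariant when $\cE$ carries an $F$-structure by \cref{duality}.

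\textbf{Step 2: the general case.} For arbitrary $f$ I would use the Stein factorization $X\xrightarrow{g}Y'\xrightarrow{h}Y$ with $g$ proper smooth, $g_*\cO_X=\cO_{Y'}$, and $h$ finite \'etale, exactly as in the proof of \cref{non-can-duality}. Since $h_{\Prism,*}$ is exact, $R^{2n}f_{\Prism,*}\cO_{X_\Prism}\{n\}\simeq h_{\Prism,*}L_g$ with $L_g\colonequals R^{2n}g_{\Prism,*}\cO_{X_\Prism}\{n\}$ a line bundle over $\cO_{Y'_\Prism}$, and I define $\tr^\Prism_f\colonequals\tr^\Prism_h\circ h_{\Prism,*}(\tr^\Prism_g)$, where $\tr^\Prism_g$ is the isomorphism of Step~1 and $\tr^\Prism_h$ is the standard finite-\'etale trace of \cref{et-can-duality}. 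Both factors are Frobenius equivariant, hence so is $\tr^\Prism_f$. The identity $T(\tr^\Prism_f)=\tr^\et_{f_\eta}$ follows by combining $T(\tr^\Prism_g)=\tr^\et_{g_\eta}$ (Step~1), $T(\tr^\Prism_h)=\tr^\et_{h_\eta}$ (\cref{et-can-duality} together with \ref{Berkovich-can-duality-et-fin}), the compatibility of $T$ with $h_{\Prism,*}=h_{\eta,*}$, and the compatibility of $\tr^\et$ with the composition $f=h\circ g$ (\ref{Berkovich-can-duality-et-composition}). For the perfect pairing \ref{Berkovich-can-duality-pairing}: compose the canonical perfect pairing of \cref{duality}.\ref{duality-relative} for $g$ with the isomorphism $\tr^\Prism_g$ to obtain a perfect pairing of $\cO_{Y'_\Prism}$-complexes $Rg_{\Prism,*}\cE\otimes_{\cO_{Y'_\Prism}}^L Rg_{\Prism,*}\cE^\vee\{n\}\to\cO_{Y'_\Prism}[-2n]$, push it forward along the finite \'etale $h$ and apply \cref{tr-fet} with the standard trace $t_{(\blank)}$; a comparison of the relative cup products of \cref{cup-product} under $f=h\circ g$ identifies the result with \ref{Berkovich-can-duality-pairing}. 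Frobenius equivariance is inherited when $\cE$ has an $F$-structure.

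\textbf{Step 3: the remaining properties and the main obstacle.} Property \ref{Berkovich-can-duality-prism-fin} is immediate: when $n=0$ one has $g=\id$, $h=f$, so $\tr^\Prism_f=\tr^\Prism_h$ is the standard finite-\'etale trace; property \ref{Berkovich-can-duality-prism-isom} is immediate since $f_*\cO_X=\cO_Y$ forces $h=\id$. For \ref{Berkovich-can-duality-prism-composition}, given $f\colon X\to Y$ and $f'\colon Y\to W$ of relative equidimensions $n$ and $m$, both $\tr^\Prism_{f'\circ f}$ and $\tr^\Prism_{f'}\circ R^{2m}f'_{\Prism,*}\bigl((\tr^\Prism_f)\{m\}\bigr)$ --- where I use the top-degree Leray identification $R^{2(n+m)}(f'\circ f)_{\Prism,*}\cO_{X_\Prism}\{n+m\}\simeq R^{2m}f'_{\Prism,*}\bigl(R^{2n}f_{\Prism,*}\cO_{X_\Prism}\{n\}\bigr)\{m\}$, whose source is again locally free by Step~2 --- are morphisms of prismatic $F$-crystals with the same source and target, so by the full faithfulness of $T$ it suffices to check that their \'etale realizations coincide, which is \ref{Berkovich-can-duality-et-composition}. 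The main obstacle I anticipate is twofold: first, justifying carefully that $R^{2n}f_{\Prism,*}\cO_{X_\Prism}\{n\}$ (and the analogous higher direct images) carries a genuine isogeny-Frobenius structure making it a prismatic $F$-crystal to which \cref{main} applies, and that its \'etale realization is the expected \'etale higher direct image --- this rests on the Frobenius isogeny for the structure sheaf and the relative form of \cref{etale-comparison}; second, the bookkeeping identifying the relative cup product of \cref{cup-product} for $f$ with the composite of those for $g$ and $h$ (and with the finite-\'etale trace), and likewise for iterated compositions, which is conceptually routine but requires care with the diagrams of \cref{cup-product} and \cref{tr-fet}.
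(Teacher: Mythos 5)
Your proposal is correct, but it organizes the construction of $\tr^\Prism_f$ differently from the paper and thereby creates more verification work than necessary. The paper defines $\tr^\Prism_f$ for \emph{arbitrary} smooth proper $f$ directly: since both $R^{2n}f_{\eta,*}\widehat{\ZZ}_{p,X_\eta}(n) \simeq T\bigl(R^{2n}f_{\Prism,*}\cO_{X_\Prism}\{n\}\bigr)$ and $\widehat{\ZZ}_{p,Y_\eta} = T(\cO_{Y_\Prism})$ are \'etale realizations (and hence crystalline), $\tr^\et_{f_\eta}$ lifts uniquely through the fully faithful functor $T$ to a Frobenius equivariant $\tr^\Prism_f$, with no case distinction and no Stein factorization in the \emph{definition}. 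With that definition, $T(\tr^\Prism_f) = \tr^\et_{f_\eta}$ is tautological, (a') and (b') fall out immediately from (a), (b) and \cref{et-can-duality} by another appeal to full faithfulness, and (c') is the only place where the hypothesis $f_*\cO_X = \cO_Y$ enters. Stein factorization is then used only once, inside the proof that the induced pairing is perfect (as in \cref{non-can-duality}). Your route instead builds $\tr^\Prism_f$ from the $f_*\cO_X=\cO_Y$ case via Stein factorization, which means you must \emph{prove} that your constructed map has $T(\tr^\Prism_f)=\tr^\et_{f_\eta}$ (via compatibilities of $T$ with $h_{\Prism,*}$ and of $\tr^\et$ with compositions), and then for (a') you again need to reconcile your Stein-factorized construction with the one for $f'\circ f$. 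That is exactly the bookkeeping you flag at the end as the ``main obstacle''; it is all correct but it is self-inflicted — the paper's definition-by-lifting makes these compatibilities immediate from (a), (b), (c), and the only place where cup-product and finite-\'etale-trace diagrams need comparing is the perfectness claim, which both approaches handle identically.

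One small remark on your Step 1: you re-derive that $L=R^{2n}f_{\Prism,*}\cO_{X_\Prism}\{n\}$ is a prismatic $F$-crystal from the Frobenius isogeny for the structure sheaf plus flatness of $\varphi_A$ on a Breuil--Kisin prism; the paper simply cites [BS21, Ex.~4.6] for this. Your argument is fine, but note the derived-to-underived passage $\Hh^{2n}(\varphi^*Rf_{\Prism,*}\cO_{X_\Prism}\{n\}) \simeq \varphi^*\Hh^{2n}(Rf_{\Prism,*}\cO_{X_\Prism}\{n\})$ does use flatness of $\varphi_A$, so it only checks the isomorphism on a flat-locally covering subcategory of $Y_\Prism$; that is enough because $R^{2n}f_{\Prism,*}\cO_{X_\Prism}\{n\}$ is already known to be a crystal in line bundles, but it is worth being explicit.
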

Recall that (\ref{Berkovich-can-duality-pairing}) is a perfect pairing means that its adjoint 
\[ Rf_{\Prism,*} \cE \longrightarrow R\iHom_{\cO_{Y_\Prism}}(Rf_{\Prism,*} \cE^\vee\{n\}, \cO_{Y_\Prism}[-2n]) = \bigl(Rf_{\Prism,*} \cE^\vee\{n\}\bigr)^\vee[-2n] \]
is an isomorphism.
The pairing is Frobenius equivariant when $\cE$ underlies a prismatic $F$-crystal (in perfect complexes) (\cref{cup-Frob-equiv}).
\begin{remark}\label{Berkovich-trace}
    One collection of trace maps $\tr_f$ that satisfy the hypotheses of \cref{Berkovich-can-duality} are the Berkovich trace maps;
    see \cite[Thm.~7.2.1]{Ber93}, and \cite[Thm.~5.3.3]{Zav21} for a construction in the adic space formalism.
\end{remark}
\begin{proof}[{Proof of \cref{Berkovich-can-duality}}]
	Note that by \cref{top-coh-free} and \cite[Ex.~4.6]{BS21}, we know $R^{2n}f_{\Prism,*} \cO_{X_\Prism}\{n\}$ is an $F$-crystal in vector bundles.
    Moreover, the prismatic-\'etale comparison \cref{etale-comparison} induces natural isomorphisms $R^{2n}f_{\eta,*} \widehat{\ZZ}_{p,X_\eta}(n) \simeq R^{2n}f_{\eta,*} T(\cO_{X_\Prism}\{n\}) \simeq T(R^{2n}f_{\Prism,*} \cO_{X_\Prism}\{n\})$ (see \cite[Ex.~4.9]{BS21} for the compatibility of twists). 
    By the full faithfulness of the \'etale realization functor (\cref{sec4.1}) and of the restriction functor $\Vect^\varphi\bigl(Y_\Prism, \cO_{Y_\Prism}\bigr) \rightarrow \Vect^{\an,\varphi}(Y_\Prism)$ (\cref{restriction-functor}), the map $\tr^\et_{f_\eta}$ then defines a Frobenius equivariant map $\tr^\Prism_f$.
    Property \ref{Berkovich-can-duality-et-composition} immediately gives \ref{Berkovich-can-duality-prism-composition} and a combination of \ref{Berkovich-can-duality-et-fin} and \cref{et-can-duality} yields \ref{Berkovich-can-duality-prism-fin}.
    When $f_*\cO_X = \cO_Y$, \cref{top-coh-free} shows that $R^{2n}f_{\eta,*} \widehat{\ZZ}_{p,X_\eta}(n)$ is a $\ZZ_p$-local system of rank one over $Y_\eta$.
    In this case, all fibers of $f_\eta$ are nonempty, so by \ref{Berkovich-can-duality-et-surj}, $\tr^\et_{f_\eta}$ is a surjective map of $\ZZ_p$-\'etale local systems of rank one and hence an isomorphism, resulting in \ref{Berkovich-can-duality-prism-isom}.
    Furthermore, when $\cE$ underlies a prismatic $F$-crystal, the cup product pairing $Rf_{\Prism,*} \cE \otimes_{\cO_{Y_\Prism}}^L Rf_{\Prism,*} \cE^\vee\{n\} \to  R^{2n}f_{\Prism,*} \cO_{X_\Prism}\{n\}[-2n]$ is Frobenius equivariant by \cref{cup-Frob-equiv}.
    Thus, the Frobenius equivariance of the pairing in (\ref{Berkovich-can-duality-pairing}) follows from that of $\tr_f^\Prism$.
    
    Finally, it remains to prove that (\ref{Berkovich-can-duality-pairing}) is a perfect pairing.
    By evaluating at $(A,I)\in Y_\Prism$, it suffices to show that the induced pairing of $A$-perfect complexes below is a perfect pairing:
    \[
    	R\Gamma\bigl((X_{\overline{A}}/A)_\Prism, \cE\bigr) \otimes^L_A R\Gamma\bigl((X_{\overline{A}}/A)_\Prism, \cE^\vee\{n\}\bigr) \longrightarrow \Hh^{2n}\bigl((X_{\overline{A}}/A)_\Prism, \cO_\Prism\{n\}\bigr)[-2n] \xrightarrow{\tr^\Prism_f(A,I)[-2n]} A[-2n].
    \]
    For this, the same proof as in \cref{non-can-duality} applies, with the only difference that we instead choose the trivialization that comes from \ref{Berkovich-can-duality-prism-isom} and that we need to use \ref{Berkovich-can-duality-prism-composition} and \ref{Berkovich-can-duality-prism-fin} to get the compatibility between the trace map in \cref{tr-fet} and $\tr^\Prism_f(A,I)$.
\end{proof}

\section{Frobenius isogeny}\label{Frobenius-pushforward-isogeny}
In this section, we show that the relative Frobenius morphism is an isogeny on the cohomology of a prismatic $F$-crystal.
The essential ingredient in the proof is the Poincar\'e duality from \cref{Poincare-duality}.
\begin{theorem}\label{DDI}
	Let $f \colon X \to Y$ be a smooth proper morphism of smooth formal $\cO_K$-schemes and let $(\cE,\varphi_\cE)$ be a prismatic $F$-crystal in perfect complexes on $X_\Prism$.
	Then $Rf_{\Prism,*} \cE$ is a prismatic $F$-crystal in perfect complexes on $Y_\Prism$.
\end{theorem}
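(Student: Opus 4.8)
The plan is to reduce \cref{DDI} to a statement over a single prism and then to extract the isomorphism from Poincar\'e duality. First I would set up the reductions: by \cref{coh-is-crystal} the complex $Rf_{\Prism,*}\cE$ is a prismatic crystal in perfect complexes on $Y_\Prism$, and by \cref{coh-is-crystal-2} so are $\varphi_{Y_\Prism}^*Rf_{\Prism,*}\cE$ and $Rf_{\Prism,*}\varphi_{X_\Prism}^*\cE$; the Frobenius map \eqref{Frobenius-on-coh-construction-map} is a morphism of such crystals after inverting $\cI_\Prism$, and "being an isomorphism'' is Zariski-local on $Y$ and can be checked over any prism covering $Y_\Prism$. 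Working Zariski-locally, assume $Y$ framed affine, pass to connected components so that $f$ has relative equidimension $n$, and twist $\cE$ by a power of $\cI_\Prism$ so that $(\cE,\varphi_\cE)$ becomes effective (this does not affect the claim, the twist being a line-bundle $F$-crystal). Then, using the transversal regular noetherian Breuil--Kisin prism $(A,I)$ of \cref{an-to-complex} (with $\varphi_A$ faithfully flat), it suffices to prove that
\[ \varphi_A^* R\Gamma\bigl((X_{\overline A}/A)_\Prism,\cE\bigr)[1/I] \longrightarrow R\Gamma\bigl((X_{\overline A}/A)_\Prism,\cE\bigr)[1/I] \]
is an isomorphism of perfect $A[1/I]$-complexes; here both $R\Gamma((X_{\overline A}/A)_\Prism,\cE)$ and $R\Gamma((X_{\overline A}/A)_\Prism,\cE^\vee\{n\})$ have tor-amplitude $[0,2n]$ by \cref{perfectness}.

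The core step is to invoke Poincar\'e duality. By \cref{main-PD} (in the form of \cref{Berkovich-can-duality}) together with \cref{cup-Frob-equiv}, the cup product gives a perfect pairing of perfect $A$-complexes $R\Gamma((X_{\overline A}/A)_\Prism,\cE)\otimes^L_A R\Gamma((X_{\overline A}/A)_\Prism,\cE^\vee\{n\})\to L[-2n]$, where $L=R^{2n}f_{\Prism,*}\cO_{X_\Prism}\{n\}(A,I)$ is a line bundle over $A$ by \cref{top-coh-free} and \cite[Ex.~4.6]{BS21}, and the pairing is Frobenius equivariant after inverting $I$. The crucial input is that, by \cite[Thm.~1.16]{BS19}, the Frobenius on $L[1/I]$ is an isomorphism. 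Passing to adjoints and using that $\varphi_A^*$ carries perfect pairings to perfect pairings and that the target Frobenius is invertible, a diagram chase shows that the Frobenius endomorphism $\Phi_\cE$ of $R\Gamma((X_{\overline A}/A)_\Prism,\cE)[1/I]$ is a \emph{split monomorphism} of perfect $A[1/I]$-complexes, and that its cone is identified, via $R\iHom(-,L[1/I])$, with the cone of the analogous split monomorphism $\Phi_{\cE^\vee\{n\}}$ (recall that $\cE^\vee\{n\}$ is again a prismatic $F$-crystal in perfect complexes by \cref{Frobenius-dual}). Concretely, this recovers the mechanism from the introduction: pairing a nonzero class annihilated by $\Phi_\cE$ with a dual class produces a nonzero class of $L[1/I]$ annihilated by its Frobenius, a contradiction, so $\Phi_\cE$ is injective on cohomology.

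It then remains to upgrade the split monomorphism $\Phi_\cE$ to an isomorphism, i.e. to show that its cone $K$ vanishes. Since $\Phi_\cE$ is a split monomorphism between complexes of tor-amplitude $[0,2n]$, the cone $K$ again has tor-amplitude in $[0,2n]$; combined with the self-duality $K\simeq R\iHom\bigl(\operatorname{cone}(\Phi_{\cE^\vee\{n\}}),L[1/I]\bigr)$ and the same amplitude bound for $\operatorname{cone}(\Phi_{\cE^\vee\{n\}})$, this squeezes $K$ into a narrow range of degrees. To collapse it entirely one invokes once more the structure-sheaf case \cite[Thm.~1.16]{BS19}, \cite[Ex.~4.6]{BS21} through the trace map $\tr^\Prism_f$, together with a direct check that the Frobenius is an isogeny on $\Hh^0$ of relative prismatic cohomology with $F$-crystal coefficients --- e.g. via the weak \'etale comparison \cref{etale-comparison}, which identifies global sections over the prismatic site with Frobenius invariants on the generic fiber, after a finite \'etale descent reducing to a point. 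Feeding these inputs into the amplitude bookkeeping forces $K=0$, proving the theorem.

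I expect the main obstacle to be exactly this last upgrade from split monomorphism to isomorphism: the duality argument is symmetric in $\cE$ and $\cE^\vee\{n\}$ and so cannot by itself break the tie, and genuinely new input is required; locating it cleanly --- the structure-sheaf isogeny theorem routed through the trace map, or a hands-on analysis of $\Hh^0$ --- is the delicate point. A secondary source of technical friction is the interaction of the derived Frobenius pullback $\varphi_A^*$ with the perfect pairing and with the cone, which is the reason for reducing to transversal prisms with $\varphi_A$ flat before running the bookkeeping; the remaining steps are routine in light of \cref{sec5} and \cref{Poincare-duality}.
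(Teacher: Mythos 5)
Your first half is exactly the paper's route: reduce to a transversal Breuil--Kisin prism $(A,I)$ and use the Frobenius-equivariant Poincar\'e pairing from \cref{Berkovich-can-duality} together with \cref{cup-Frob-equiv}, feeding the isomorphism $\varphi$ on $L[1/I]=R^{2n}f_{\Prism,*}\cO_{X_\Prism}\{n\}(A,I)[1/I]$ into \cref{split-inj} to obtain that the Frobenius map $\alpha$ on $R\Gamma((X_{\overline A}/A)_\Prism,\cE)[1/I]$ is a split monomorphism with complement $C=\operatorname{cone}(\alpha)$. (The incidental suggestion of twisting $\cE$ to make it effective is neither needed nor used in the paper.) The genuine gap is in the last step, forcing $C=0$, and the argument you sketch there does not close it.

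Concretely, the ``amplitude bookkeeping'' gives nothing: if $\operatorname{cone}(\Phi_{\cE})$ and $\operatorname{cone}(\Phi_{\cE^\vee\{n\}})$ both live in degrees $[0,2n]$, then the duality $\operatorname{cone}(\Phi_\cE)\simeq R\iHom(\operatorname{cone}(\Phi_{\cE^\vee\{n\}}),L[1/I])[-2n]$ places each in $[0,2n]$ again — the shift by $2n$ exactly undoes the flip, so the range is not narrowed at all. And the proposed extra input — the weak \'etale comparison \cref{etale-comparison} — gives $\varphi$-invariants, not invertibility: as the introduction warns, one cannot recover $R\Gamma((Z/A)_\Prism,\cE)\otimes_A A[1/I]^\wedge_p$ from $R\Gamma(\cE[1/\cI_\Prism]^\wedge_p)^{\varphi=1}$ a priori, so this does not establish that $\varphi$ is surjective even on $\Hh^0$. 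The paper's actual mechanism for killing $C$ is quite different and uses three independent inputs: (a) the noetherian ascending-chain argument of \cref{direct-sum}, applied over the $p$-complete noetherian ring $A[1/I]^\wedge_p$ with faithfully flat $\varphi$, shows $C^\wedge_p=0$; (b) \cref{vanishing of C}.\ref{vanishing of C locus}, a finiteness-of-associated-primes argument in $\Spec A[1/I]$, shows that $\varphi^{n,*}C$ vanishes after inverting $I\cdots\varphi^{n-1}(I)$ for $n\gg 0$, localizing the support of $\Hh^{top}(C)[1/p]$ to a formal neighborhood of $V(\varphi^{-n}(I)\cdots\varphi^{-1}(I))$; and (c) \cref{vanishing of C}.\ref{vanishing of C Acrys}, which base-changes to a crystalline prism $\rA_\crys(S)$ and invokes \cref{prism crys comp reduced-special-fib} together with \cref{higher-direct-image-crys} — i.e.\ the theorem of Xu and Shiho that Frobenius acts as an isogeny on the crystalline cohomology of an $F$-isocrystal — to kill that neighborhood. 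Input (c), a genuinely deep external theorem about crystalline cohomology, is what breaks the symmetry that you correctly noted the duality argument alone cannot break; nothing in your sketch substitutes for it.
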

Recall that the Frobenius structure on the derived pushforward of $(\cE,\varphi_\cE)$ was defined in \cref{Frobenius-on-coh-construction}.
\Cref{DDI} asserts that this Frobenius map is an isomorphism.
\begin{remark}
    A combination of \cref{DDI} for crystalline prisms $(A,p)\in Y_\Prism$ and \cref{rel vs abs coh} shows that Frobenius acts as an isogeny on crystalline cohomology.
\end{remark}
\begin{remark}
	When $\cE$ is the prismatic structure sheaf, this is explained in \cite[Ex.~4.6]{BS21}.
\end{remark}
\begin{remark}
Let $(\cE,\varphi_\cE)$ be a prismatic $F$-crystal in vector bundles on the relative prismatic site $(\Spf(R)/A)_\Prism$, where $(A,[p]_q)$ is a $q$-crystalline prism and $R$ is a framed smooth algebra over $\overline{A}$.
Using an explicit local calculation, Morrow--Tsuji proved in \cite{MT20} that the $q$-Higgs complex and the $q$-de Rham complex of $\cE$ are quasi-isomorphic to each other (\cite[Thm.~2.20]{MT20}). 
Moreover, in his forthcoming work Tsuji shows that the $q$-Higgs complex of $\cE$ computes prismatic cohomology  $R\Gamma((\Spf(R)/A)_\Prism, \cE)$.
By combing these two results, one can obtain a different proof of the Frobenius isogeny property for prismatic cohomology.

In \cref{FI-alternative}, we will sketch yet another proof due to Bhatt, which uses descendability for the prismatic cohomology ring and the Frobenius isogeny for prismatic cohomology of the structure sheaf.
\end{remark}
First, we record some general results on abstract perfect pairings.
In the following, for a ring $A$ and an $A$-complex $E$, we denote by $E^\vee$ the linear dual $R\Hom_A(E,A) \colonequals R\Hom_{D(A)}(E, A)$.
If $E$ is a perfect complex, then so is $E^\vee$, and $E^{\vee\vee} \simeq E$ \cite[\href{https://stacks.math.columbia.edu/tag/07VI}{Tag~07VI}]{SP}.
\begin{definition}\label{def-perfect-pair}
	Let $A$ be a ring.
	\begin{enumerate}[label=\upshape{(\roman*)},leftmargin=*]
		\item A \emph{perfect pairing} is a map
		\[
		E_1 \otimes_A^L E_2 \longrightarrow E_3[n]
		\]
        in $D_\perf(A)$ such that
		\begin{itemize}
			\item\label{def-perfect-pair-object} both $E_1$ and $E_2$ are perfect $A$-complexes, and $E_3$ is a locally free $A$-module of rank one;
			\item the map below induced by the adjunctions $R\Hom_A(E_1\otimes_A^L E_2, E_3) \simeq R\Hom_A (E_1\otimes_A^L E_2\otimes_A^L E_3^\vee, A) \simeq R\Hom_A(E_2\otimes_A^L E_3^\vee, R\Hom_A(E_1, A))$ is an isomorphism:
			\[
			E_2\otimes_A^L E_3^\vee[-n] \longrightarrow E_1^\vee.
			\]
		\end{itemize}		
		\item\label{def-perfect-pair-map} A map between two perfect pairings $E_1 \otimes_A^L E_2 \to E_3[n]$ and $F_1 \otimes_A^L F_2 \to F_3[n]$ is given by a commutative diagram in $D_\perf(A)$
		\[
		\begin{tikzcd}
			E_1 \otimes_A^L E_2 \arrow[d, "\alpha\otimes \beta"] \ar[r] & E_3[n] \arrow[d, "\gamma"] \\
			F_1 \otimes_A^L F_2 \arrow[r] & F_3[n]
		\end{tikzcd}
	    \]
        such that the left vertical arrow is the tensor product of two given maps $\alpha \colon E_1 \to F_1$ and $\beta \colon E_2 \to F_2$.
	\end{enumerate}
\end{definition}
In nice situations, a map of perfect pairings encodes a split injection from the source into the target.
\begin{proposition}\label{split-inj}
	Let $A$ be a ring.
	Let there be a map of perfect pairings of perfect $A$-complexes as in \cref{def-perfect-pair}.\ref{def-perfect-pair-map}.
	Assume $\gamma \colon E_3 \to F_3$ is an isomorphism.
	Via the isomorphism $E_2\otimes_A^L E_3^\vee \simeq E_1^\vee$ as in \cref{def-perfect-pair}.\ref{def-perfect-pair-object} (and similarly for $F_i$), we have 
	\[
	(\beta \otimes (\gamma^\vee)^{-1}[-n])^\vee \circ \alpha \simeq \id_{E_1}.
	\]
	In particular, $E_1$ is a direct summand of $F_1$.	
\end{proposition}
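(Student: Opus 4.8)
The plan is to unwind the two perfect pairings via the isomorphisms from \cref{def-perfect-pair}.\ref{def-perfect-pair-object} and show that, under these identifications, the dual of the map $\beta \otimes (\gamma^\vee)^{-1}[-n]$ is a retraction of $\alpha$. Concretely, write the adjointness isomorphisms $\iota_E \colon E_2 \otimes_A^L E_3^\vee[-n] \xrightarrow{\sim} E_1^\vee$ and $\iota_F \colon F_2 \otimes_A^L F_3^\vee[-n] \xrightarrow{\sim} F_1^\vee$ coming from the two perfect pairings. First I would observe that the commuting square defining a map of perfect pairings says exactly that the composite
\[
E_2 \otimes_A^L E_3^\vee[-n] \xrightarrow{\beta \otimes (\gamma^\vee)^{-1}[-n]} F_2 \otimes_A^L F_3^\vee[-n] \xrightarrow{\iota_F} F_1^\vee \xrightarrow{\alpha^\vee} E_1^\vee
\]
equals $\iota_E$. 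This is a direct diagram chase: both sides are obtained from the same bilinear datum after applying the adjunction $R\Hom_A(E_1 \otimes_A^L E_2, E_3) \simeq R\Hom_A(E_2 \otimes_A^L E_3^\vee, E_1^\vee)$ (and its analog for $F$), and the hypothesis that the pairing square commutes together with $\gamma$ being an isomorphism lets one match the two.

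Next I would rewrite this identity. Since $\gamma$ is an isomorphism, $(\gamma^\vee)^{-1}$ makes sense and the map $\beta \otimes (\gamma^\vee)^{-1}[-n] \colon E_2 \otimes_A^L E_3^\vee[-n] \to F_2 \otimes_A^L F_3^\vee[-n]$ is, after transporting through $\iota_E$ and $\iota_F$, identified with a map $E_1^\vee \to F_1^\vee$; call it $\psi$. The previous paragraph shows $\alpha^\vee \circ \psi = \id_{E_1^\vee}$, i.e.\ $\psi$ is a section of $\alpha^\vee$. Now dualize: applying $(-)^\vee$ and using the canonical double-duality isomorphism $E_1^{\vee\vee} \simeq E_1$ for perfect complexes (\cite[\href{https://stacks.math.columbia.edu/tag/07VI}{Tag~07VI}]{SP}), the relation $\alpha^\vee \circ \psi = \id$ becomes $\psi^\vee \circ \alpha = \id_{E_1}$. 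Unwinding the definition of $\psi$, its dual $\psi^\vee$ is precisely $(\beta \otimes (\gamma^\vee)^{-1}[-n])^\vee$ computed with respect to the identifications $E_2 \otimes_A^L E_3^\vee \simeq E_1^\vee$ and $F_2 \otimes_A^L F_3^\vee \simeq F_1^\vee$, which is the map named in the statement. Hence $(\beta \otimes (\gamma^\vee)^{-1}[-n])^\vee \circ \alpha \simeq \id_{E_1}$.

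Finally, the last assertion is formal: a morphism $\alpha \colon E_1 \to F_1$ in an idempotent-complete additive (here, triangulated) category admitting a retraction $r$ with $r \circ \alpha \simeq \id_{E_1}$ exhibits $E_1$ as a direct summand of $F_1$, via the idempotent $\alpha \circ r$ on $F_1$; since $D_\perf(A)$ is idempotent complete, $E_1$ is a direct summand of $F_1$ inside $D_\perf(A)$.

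The main obstacle I anticipate is purely bookkeeping: keeping the various adjunction isomorphisms and the placement of the shift $[-n]$ and the twist $E_3^\vee$ straight, so that the composite $\alpha^\vee \circ \iota_F \circ (\beta \otimes (\gamma^\vee)^{-1}[-n])$ really is $\iota_E$ on the nose (not merely up to an automorphism of $E_1^\vee$). This amounts to checking that the commuting square in \cref{def-perfect-pair}.\ref{def-perfect-pair-map} transports to the expected square after adjunction, which is a routine but slightly fiddly naturality argument for the hom-tensor adjunction; no deeper input is needed beyond reflexivity of perfect complexes.
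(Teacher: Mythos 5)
Your proof is correct, and it takes a genuinely different route from the paper's. You work directly with the general pairings, verify that the commuting square in \cref{def-perfect-pair}.\ref{def-perfect-pair-map} transports under the tensor-hom adjunction to the identity $\iota_E = \alpha^\vee \circ \iota_F \circ (\beta \otimes (\gamma^\vee)^{-1}[-n])$, and then dualize using $E_1^{\vee\vee} \simeq E_1$ to get the retraction. The paper instead first reduces to the special case $E_3 = F_3 = A$, $\gamma = \id$, $E_2 = E_1^\vee$, $F_2 = F_1^\vee$ (by tensoring the two rows with $E_3^\vee[-n]$ and $F_3^\vee[-n]$, and then using the invertibility of $\gamma$ to absorb the right-hand column), and in that special case runs an explicit argument with a bounded complex of vector bundles representing $E_1$: it proves as a separate Claim that the adjoint $A \to R\Hom_A(E,E)$ of the evaluation pairing sends $1_A$ to $\id_E$ (effectively a triangle identity for duality, verified on chain complexes via \cite[Tags 0A66, 0A8H]{SP}), and then chases $\id_{F_1}$ through the arrow $\alpha^\vee \otimes \beta^\vee$ on $\Hh^0$ to land on $\id_{E_1}$. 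So the paper's load-bearing step is the explicit chain-level description of the unit of duality, while yours is the naturality of the adjunction $R\Hom(E_1 \otimes E_2, E_3[n]) \simeq R\Hom(E_2 \otimes E_3^\vee[-n], E_1^\vee)$ in all three slots together with reflexivity of perfect complexes. Both facts are standard consequences of dualizability; your route is shorter and avoids the reduction and the model-level computation, but in exchange you need to justify the adjunction naturality you invoke in the first paragraph, which is precisely the kind of thing the paper's explicit Claim is there to handle without appeal to general nonsense. One small remark: once you have $\alpha^\vee \circ \psi = \id_{E_1^\vee}$, your dualization step is the same algebra as the paper's final identity $\id_{E_1} = \beta^\vee \circ \alpha$ (with your $\psi$ playing the role of the paper's $\beta \colon E_1^\vee \to F_1^\vee$ in the reduced special case), so the two proofs converge at the end.
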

\begin{proof}
	We first give the argument for maps of perfect pairings of the form
\[
\begin{tikzcd}
	E_1 \otimes_A^L E_1^\vee \arrow[d, "\alpha\otimes \beta"] \ar[r] & A \arrow[d,equals] \\
	F_1 \otimes_A^L F_1^\vee \arrow[r] & A.
\end{tikzcd}
\]
In this special case, the dual diagram is
\[
\begin{tikzcd}
	R\Hom_A(E_1,E_1)=E_1^\vee \otimes_A^L E_1   & A   \arrow[l] \\
	R\Hom_A(F_1,F_1)=F_1^\vee \otimes_A^L F_1 \arrow[u, "\alpha^\vee\otimes \beta^\vee"] & A  \arrow[l] \arrow[u,equals].
\end{tikzcd}
\]
We claim that on zeroth cohomology groups, the identity element $1_A$ from the bottom right is sent to the identity morphisms as below: 
\[
\begin{tikzcd}
	\id_{E_1} & 1_A \arrow[l, mapsto]\\
	\id_{F_1} \arrow[u, mapsto] & 1_A \arrow[l, mapsto] \arrow[u, mapsto].
\end{tikzcd}
\]
This follows from the following statement.
\begin{claim}
	Let $E$ be a perfect complex over $A$.
	Then the $A$-linear dual $A \to E^\vee\otimes_A^L E=R\Hom_A(E,E)$ of the canonical evaluation pairing sends $1_A\in A$ to $\id_E\in \Hh^0(R\Hom_A(E,E))=\Hom_{D(A)}(E,E)$.
\end{claim}
\begin{proof}
	Consider the natural identification of the hom-groups
	\begin{equation}\label{perfect-pairing-equal}
		\Hom_{D(A)}(E\otimes^L_A E^\vee ,A) = \Hom_{D(A)}(E,E) = \Hom_{D(A)}(A, R\Hom_{D(A)}(E,E)).
	\end{equation}
	The second equality in \cref{perfect-pairing-equal} sends $g\in  \Hom(A,R\Hom(E,E))$ to $\Hh^0(g)(1_A)\in \Hom(E,E)$.
	In particular, the identity map $\id_E\in \Hom(E,E)$ corresponds to the map $A \to R\Hom(E,E)$ sending $1_A$ onto $\id_E$.
	So it is left to show that under the first equality in \cref{perfect-pairing-equal}, the identity $\id_E\in \Hom(E,E)$ is sent onto the canonical perfect pairing in $\Hom(E\otimes^L_A E^\vee,A)$.

    Represent $E$ by a bounded complex of vector bundles $E^\bullet$.
	By \cite[\href{https://stacks.math.columbia.edu/tag/0A66}{Tag~0A66}, \href{https://stacks.math.columbia.edu/tag/0A8H}{Tag~0A8H}]{SP}, we have
	\begin{align*}
		&\Hom_{D(A)}(E,E)=\Hom_{K(A)}(E^\bullet,E^\bullet),\\
		&\Hom_{D(A)}(E\otimes^L_A E^\vee, A)=\Hom_{K(A)}( \mathrm{Tot}(E^\bullet\otimes_A (E^\bullet)^\vee), A),
	\end{align*}
	where $K(A)$ is the homotopy category of cochain complexes over $A$.
	Moreover, the equality $\Hom_{D(A)}(E,E)=\Hom_{D(A)}(E\otimes^L_A E^\vee, A)$ sends $h=h^\bullet\in \Hom_{K(A)}(E^\bullet,E^\bullet)$ to the map of cochain complexes
	\[	\mathrm{Tot}(E^\bullet\otimes_A (E^\bullet)^\vee) \longrightarrow A, \quad E^i \otimes (E^j)^\vee \ni e\otimes f \longmapsto \begin{cases}
			0, & i\neq j \\
			f(h^i(e)), & i=j.
		\end{cases} \]
    Plugging in $h = h^\bullet = \id_{E^\bullet}$, we see that the corresponding element in $\Hom_{K(A)}( \mathrm{Tot}(E^\bullet\otimes_A (E^\bullet)^\vee), A)$ is the evaluation map.
\end{proof}
By a similar argument, the arrow $\alpha^\vee\otimes \beta^\vee \colon F_1^\vee \otimes_A^L F_1 \to E_1^\vee \otimes_A^L E_1$ induces the following map on zeroth cohomology groups:
\[
\Hom_A(F_1,F_1) \to \Hom_A(E_1,E_1), \quad s \longmapsto \beta^\vee\circ s \circ (\alpha^\vee)^\vee.
\]
Plugging in $s=\id_{F_1}$ and using the observation above that $\id_{F_1} \mapsto \id_{E_1}$, we conclude
\[
\id_{E_1} = \beta^\vee \circ \id_{F_1} \circ  (\alpha^\vee)^\vee =\beta^\vee \circ \alpha,
\]
so $\alpha$ and its section $\beta^\vee$ identify $E_1$ as a direct summand of $F_1$.

To deal with the general case as in the statement, we first tensor the two rows by $E_3^\vee[-n]$ and $F_3^\vee[-n]$, respectively.
This produces the following natural commutative diagram:
\[
\begin{tikzcd}
	E_1 \otimes_A^L (E_2\otimes_A^L E_3^\vee[-n])  \arrow[d, "{\alpha\otimes (\beta\otimes (\gamma^\vee)^{-1}[-n])}"]  \arrow[r] & E_3\otimes_A E_3^\vee \arrow[d, "\gamma\otimes (\gamma^\vee)^{-1}"] \\
	F_1 \otimes_A^L (F_2\otimes_A^L F_3^\vee[-n]) \arrow[r] & F_3\otimes_A F_3^\vee.
\end{tikzcd}
\]
Note that since the map $\gamma$ is (up to a cohomological shift) an isomorphism of line bundles, the arrow $E_3\otimes_A E_3^\vee \to F_3\otimes_A F_3^\vee$ is naturally homotopic to $\id_A \colon A \to A$ via the functorial evaluation morphisms.
Thus, we can extend the previous diagram to
\[
\begin{tikzcd}
	E_1 \otimes^L_A E^\vee_1 \arrow[d] & E_1 \otimes_A^L (E_2\otimes_A^L E_3^\vee[-n])  \arrow[d, "{\alpha\otimes (\beta\otimes (\gamma^\vee)^{-1}[-n])}"]  \arrow[l,"\sim"'] \arrow[r] & E_3\otimes_A E_3^\vee \arrow[d, "\gamma\otimes (\gamma^\vee)^{-1}"] \arrow[r,"\sim"] & A \arrow[d,equals] \\
	F_1 \otimes^L_A F^\vee_1 & F_1 \otimes_A^L (F_2\otimes_A^L F_3^\vee[-n]) \arrow[l,"\sim"'] \arrow[r] & F_3\otimes_A F_3^\vee \arrow[r,"\sim"] & A.
\end{tikzcd}
\]
As a consequence, by applying the functorial isomorphism $E_2\otimes_A^L E_3^\vee \simeq E_1^\vee$ as in \cref{def-perfect-pair}.\ref{def-perfect-pair-object} (and similarly for $F_i$), we reduce to the special case from before.
\end{proof}
Another preparation is the following useful observation.
\begin{lemma}\label{direct-sum}
	Let $A$ be a noetherian ring.
	Let $\varphi \colon A\to A$ be a faithfully flat endomorphism of $A$.
	Let $F\in D(A)$ be a pseudo-coherent $A$-complex and assume that there is an $A$-linear isomorphism
	\[
	F\simeq \varphi^* F \oplus C
	\]
	in $D(A)$.
	Then $C=0$ and we have $F\simeq \varphi^* F$.
\end{lemma}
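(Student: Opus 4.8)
The plan is to reduce the statement, degree by degree on cohomology, to an elementary claim about finitely generated modules, which is then settled by a Noetherian ascending-chain argument.

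First, since $A$ is noetherian and $F$ is pseudo-coherent, $F$ is cohomologically bounded above with finitely generated cohomology modules; the same holds for $C$, since a direct summand of a pseudo-coherent complex is again pseudo-coherent. Because $\varphi$ is faithfully flat, the base change functor $\varphi^*$ is exact, so $\Hh^j(\varphi^*F) \simeq \varphi^*\Hh^j(F)$ for every $j$. Applying $\Hh^j(-)$ to the given isomorphism $F \simeq \varphi^*F \oplus C$ therefore yields, for each $j$, an isomorphism of finitely generated $A$-modules
\[ \Hh^j(F) \simeq \varphi^*\Hh^j(F) \oplus \Hh^j(C). \]
Hence it suffices to prove the following: if $M$ is a finitely generated $A$-module with an $A$-linear isomorphism $M \simeq \varphi^*M \oplus N$, then $N = 0$. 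Granting this and applying it with $M = \Hh^j(F)$ and $N = \Hh^j(C)$, we obtain $\Hh^j(C) = 0$ for all $j$, so $C \simeq 0$.

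To prove the module claim, iterate the decomposition. Starting from $M = M^{(0)} \oplus P_0$ with $P_0 = 0$, and using that $\varphi^{i*}M \simeq \varphi^{(i+1)*}M \oplus \varphi^{i*}N$ (obtained by applying $\varphi^{i*}$ to $M \simeq \varphi^*M \oplus N$), one inductively realizes $M$ as an internal direct sum $M = M^{(i+1)} \oplus P_{i+1}$ with $M^{(i)} \simeq \varphi^{i*}M$, with $P_{i+1} = P_i \oplus N^{(i)}$, and with $N^{(i)} \simeq \varphi^{i*}N$. This produces an ascending chain of submodules $0 = P_0 \subseteq P_1 \subseteq P_2 \subseteq \cdots$ of $M$ whose successive quotients are $P_{i+1}/P_i \simeq \varphi^{i*}N$. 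If $N \neq 0$, then $\varphi^{i*}N \neq 0$ for all $i$, since $\varphi$ is faithfully flat and hence $\varphi^*$ sends no nonzero module to zero; thus the chain is strictly increasing. But $M$ is a finitely generated module over a noetherian ring, so it is noetherian and satisfies the ascending chain condition — a contradiction. Therefore $N = 0$, which completes the proof.

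There is no serious obstacle here; the only points requiring (routine) care are that a direct summand of a pseudo-coherent complex is pseudo-coherent, so that over the noetherian ring $A$ its cohomology modules are finitely generated, and that $\varphi^*$ commutes with taking cohomology, which is exactly the flatness of $\varphi$. The substantive content is the ascending-chain argument, and it is self-contained.
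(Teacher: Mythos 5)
Your proof is correct and takes essentially the same approach as the paper. The only difference is one of organization: you pass to cohomology first and then iterate the decomposition at the module level, while the paper iterates on the complex level and then takes cohomology; both build the same strictly ascending chain of submodules of $\Hh^i(F)$ (using faithful flatness to keep the quotients $\varphi^{i*}N$ nonzero) and conclude by the noetherian ascending chain condition.
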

\begin{proof}
	By iteratively substituting $(\varphi^* F \oplus C)$ for $F$, we get a string of isomorphisms
	\[
	F \simeq \varphi^* F \oplus C \simeq  \cdots \simeq  (\varphi^n)^* F \oplus \left( (\varphi^{n-1})^* C \oplus \cdots \oplus C \right)\simeq \cdots.
	\]
	If $C \neq 0$, there exists an integer $i$ such that $\Hh^i(C)\neq 0.$
	Applying the $i$-th cohomology functor to the system above, we see from the flatness of $\varphi$ that 
	\[
	\Hh^i(F) \simeq (\varphi^n)^* \Hh^i(F) \oplus \left( (\varphi^{n-1})^* \Hh^i(C) \oplus \cdots \oplus \Hh^i(C) \right) \quad \forall\: n\in \NN.
	\]
	In particular, the collection of submodules of $\Hh^i(F)$
	\[
	\Hh^i(C) \subsetneq \varphi^* \Hh^i(C) \oplus \Hh^i(C) \subsetneq \cdots \left( (\varphi^{n-1})^* \Hh^i(C) \oplus \cdots \oplus \Hh^i(C) \right) \subsetneq \cdots
	\]
	is an ascending chain of submodules in $\Hh^i(F)$, which never stabilizes by the faithful flatness of $\varphi$.
	However, since $F$ is pseudo-coherent, $\Hh^i(F)$ is a finitely generated module over the noetherian ring $A$, which satisfies the ascending chain condition.
	Thus, we get a contradiction.
\end{proof}
Building on these algebraic preparations, we are now ready to attack the main result of this section.
We start with the following primitive version of the vanishing result for the cone of the Frobenius map.
\begin{proposition}\label{vanishing of C}
	Let $f\colon X\to Y$ and $(\cE, \varphi_\cE)$ be as in \cref{DDI}.
	Let $(A,I)$ be a Breuil--Kisin prism (in the sense of \cref{Breuil-Kisin}) in $Y_\Prism$.
	Let $S$ be a quasiregular semiperfect ring over $Y_{p=0}$ and let $(\rA_\crys(S),p)$ be the associated crystalline prism in $Y_\Prism$. 
	Furthermore, assume that the Frobenius structure $\varphi_A^* R\Gamma\bigl((X_{\overline{A}}/A)_\Prism, \cE\bigr)[1/I]\to R\Gamma\bigl((X_{\overline{A}}/A)_\Prism, \cE\bigr)[1/I]$ identifies the source as a direct summand of the target.
	Let $C$ be its complement.
	Then:
	\begin{enumerate}[label=\upshape{(\roman*)},leftmargin=*]
		\item\label{vanishing of C locus} There is a positive integer $n$ such that $\varphi^{n,*}C[1/I\cdots \varphi^{n-1}(I)]$ vanishes.
		\item\label{vanishing of C Acrys} The base change $C\otimes^L_{A[1/\varphi(I)]} \rA_\crys(S)[1/p]$ vanishes.
	\end{enumerate}
\end{proposition}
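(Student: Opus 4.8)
The plan is to deduce both vanishing statements from the split‑injectivity hypothesis by tracking the complement $C$ through the crystal structure of $Rf_{\Prism,*}\cE$. Write $M\colonequals R\Gamma\bigl((X_{\overline{A}}/A)_\Prism,\cE\bigr)$, a perfect $A$‑complex; by hypothesis $M[1/I]\simeq\varphi^{*}_AM[1/I]\oplus C$, so $C$ is a perfect $A[1/I]$‑complex, a direct summand of $M[1/I]$. Both parts use that by \cref{coh-is-crystal-2} the objects $\varphi^{*}_{Y_\Prism}Rf_{\Prism,*}\cE$ and $Rf_{\Prism,*}\varphi^{*}_{X_\Prism}\cE$ are prismatic crystals in perfect complexes on $Y_\Prism$, so the two–step factorization of \cref{Frobenius-on-coh-construction} globalizes to a morphism of such crystals whose cone $\cC$ is again such a crystal and whose restriction to the analytic locus is (a shift of) $C$.

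For part \ref{vanishing of C locus} I would first iterate the splitting: applying $\varphi^{*}_A$ repeatedly to $M[1/I]\simeq\varphi^{*}_AM[1/I]\oplus C$ and successively inverting $\varphi(I),\varphi^{2}(I),\dotsc$ gives, for every $n$, a decomposition
\[
M[1/(I\varphi(I)\cdots\varphi^{n-1}(I))]\;\simeq\;\varphi^{n,*}M[1/(I\varphi(I)\cdots\varphi^{n-1}(I))]\;\oplus\;\bigoplus_{i=0}^{n-1}\varphi^{i,*}C[1/(I\varphi(I)\cdots\varphi^{n-1}(I))].
\]
It then remains to bound the support of $C$. Since $\varphi_\cE$ is an isomorphism after inverting $\cI_\Prism$, the factorization of \cref{Frobenius-on-coh-construction} identifies $C$ (up to the isomorphism $R\Gamma(\varphi_\cE)[1/I]$) with the cone of the base–change map $\varphi^{*}_AM\to R\Gamma\bigl((X_{\overline{A}}/A)_\Prism,\varphi^{*}\cE\bigr)$ localized at $I$. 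A dévissage from the structure‑sheaf case — where this is the Frobenius isogeny \cite[Thm.~1.16]{BS19} — via the Hodge--Tate comparison (\cref{Higgs-coh}) shows that $C$ is set‑theoretically supported on $\bigcup_{k=1}^{N}V\bigl(\varphi^{-k}(I)\bigr)$ inside $\Spec A[1/I]$, where $N$ depends only on the tor‑amplitude of $\cE$ and the relative dimension $n$, and $\varphi^{-k}(I)$ denotes the relevant ideal on the locus where $\varphi^{k}$ is an isomorphism onto its image (as in the footnote to \cref{EssSurj2}). For $n\ge N$ the map $\varphi^{n,*}$ carries this support into $\bigcup_{j=0}^{n-1}V\bigl(\varphi^{j}(I)\bigr)=V\bigl(I\varphi(I)\cdots\varphi^{n-1}(I)\bigr)$, so $\varphi^{n,*}C[1/(I\varphi(I)\cdots\varphi^{n-1}(I))]=0$.

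For part \ref{vanishing of C Acrys} I would base change the splitting along $A[1/\varphi(I)]\to\rA_\crys(S)[1/p]$, which is legitimate because the Frobenius structure and its complement are values of crystals in perfect complexes on $Y_\Prism$, hence compatible with the base change induced by $(A,I)\to(\rA_\crys(S),(p))$ (after passing to a common refinement if necessary). By the crystalline comparison (\cref{rel vs abs coh}, \cref{prism crys comp reduced-special-fib}) together with \cref{higher-direct-image-crys}, the base change $M\otimes^{L}_A\rA_\crys(S)[1/p]$ is identified, Frobenius‑equivariantly, with the crystalline cohomology of the associated $F$‑isocrystal $\cE'=D_\crys(\cE)$; by the Frobenius isogeny for crystalline cohomology of $F$‑isocrystals over noetherian bases \cite{Xu19}, its Frobenius structure becomes an isomorphism after inverting $p$. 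Thus the base change of the split injection $\varphi^{*}_AM[1/I]\hookrightarrow M[1/I]$ is simultaneously a split injection and an isomorphism, which forces its complement $C\otimes^{L}_{A[1/\varphi(I)]}\rA_\crys(S)[1/p]$ to vanish.

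The main obstacle is the support estimate in part \ref{vanishing of C locus}: one must control precisely where the Frobenius structure on relative prismatic cohomology fails to be an isomorphism. The cleanest route is the dévissage to \cite[Thm.~1.16]{BS19} sketched above, using that on the relevant loci an analytic prismatic $F$‑crystal is a successive extension of Breuil--Kisin twists of the structure sheaf, with the number of twists (equivalently, the Hodge--Tate amplitude of $\cE$) controlling $N$; an alternative is to run the Beilinson fiber square analysis of \cref{EssSurj3} directly on the cone $\cC$. The remaining bookkeeping — compatibility of all base changes with \cref{coh-is-crystal-2} and careful tracking of the localizations $[1/\varphi^{j}(I)]$ under Frobenius pullback — is routine.
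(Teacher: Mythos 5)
Your treatment of part~\ref{vanishing of C Acrys} is essentially the paper's: base change to $\rA_\crys(S)$ using \cref{base-change}, identify prismatic cohomology with crystalline cohomology of $\cE'$ via \cref{prism crys comp reduced-special-fib}, and invoke the Frobenius isogeny for crystalline cohomology of $F$-isocrystals (\cref{higher-direct-image-crys}, ultimately \cite{Xu19}). That part is fine.

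For part~\ref{vanishing of C locus}, the iteration of the splitting is the same as the paper, but your resolution of ``what remains'' has a real gap. You claim a dévissage that ``an analytic prismatic $F$-crystal is a successive extension of Breuil--Kisin twists of the structure sheaf'' with the number of pieces controlled by the Hodge--Tate amplitude. This is false in general: already over $X=\Spf(\cO_K)$, an irreducible crystalline representation of $G_K$ of rank $\geq 2$ corresponds under \cref{BS main} to a prismatic $F$-crystal that admits no such filtration, and \cref{Higgs-coh} filters the \emph{cohomology} complex $R\Gamma$ (via the Koszul resolution, with graded pieces $\cE(R)\otimes\Omega^i\{-i\}$, where $\cE(R)$ itself is an arbitrary perfect complex), not the crystal $\cE$ by twists of $\cO_\Prism$. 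Without a valid dévissage, the claimed set-theoretic support bound on $C$ — which, note, is logically \emph{equivalent} to part~\ref{vanishing of C locus}, not a stronger intermediate statement — is unjustified. The alternative you mention (running the Beilinson fiber square of \cref{EssSurj3} on the cone) is also not obviously applicable: that argument extends descent data across a boundary of a disk, rather than bounding supports of cones of Frobenius.

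The paper avoids any structure theory for $\cE$ and instead uses a finiteness/pigeonhole argument with associated primes. Write $E=R\Gamma\bigl((X_{\overline{A}}/A)_\Prism,\cE\bigr)$ and $C_n=\varphi^{n,*}C[1/I\cdots\varphi^{n-1}(I)]$. Each $C_n$ is a direct summand of $E[1/I\cdots\varphi^n(I)]$, so $\Ass(C_n)\subseteq\Ass(E)\cap\Spec A[1/I\cdots\varphi^n(I)]$. Since $A$ is noetherian and $E$ is perfect, $\Ass(E)$ is finite, and the sets $\Ass(E[1/I\cdots\varphi^n(I)])$ stabilize to a finite set $P$ disjoint from every $V(\varphi^i(I))$. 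If infinitely many $C_n$ were nonzero, some $\fp\in P$ would lie in $\Ass(C_n)$ for infinitely many $n$; localizing at $\fp$ and reducing to $k(\fp)$, one would exhibit a perfect complex over a field with infinitely many nonzero direct summands, which is impossible. This counting argument is what actually closes the gap in your part~\ref{vanishing of C locus}.
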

\begin{proof}
	\ref{vanishing of C locus}.
	Let $E$ be the $A$-perfect complex $R\Gamma\bigl((X_{\overline{A}}/A)_\Prism, \cE\bigr)$, which satisfies by assumption
	\[
	E[1/I] \simeq \varphi^*E[1/I] \oplus C
	\]
	for some perfect $A[1/I]$-complex $C$.
	Given a positive integer $n$, we can invert the ideal $\varphi(I)\cdots \varphi^n(I)$ and substitute similarly as in \cref{direct-sum} the Frobenius twists of the above isomorphism $n$ times to get
	\begin{equation}\label{n th substituted equation}
		E[1/I\cdots \varphi^n(I)] \simeq \varphi^{n+1,*} E[1/I\cdots \varphi^n(I)] \oplus \bigl( \bigoplus_{i=0}^n  \varphi^{i,*}C[1/I\cdots \varphi^{i-1}(I)\varphi^{i+1}(I)\cdots\varphi^n(I)] \bigr).
	\end{equation}
	
    In the following, for any ring $R$ and $R$-complex $M$, we let $\Ass(M)$ be the union $\bigcup_{i\in \ZZ} \Ass(\Hh^i(M))$, where $\Ass(\Hh^i(M))$ is the set of associated prime ideals of $\Hh^i(M)$ for each $i\in \ZZ$ (\cite[\href{https://stacks.math.columbia.edu/tag/00LA}{Tag~00LA}]{SP}).
    When $R$ is noetherian and $M$ is perfect, by \cite[\href{https://stacks.math.columbia.edu/tag/00LC}{Tag~00LC}]{SP} the set $\Ass(M)$ is a finite subset of prime ideals in $\Spec(R)$.
    Now back to the previous setting, we denote by $C_n$ the $A[1/I\cdots \varphi^n(I)]$-perfect complex $\varphi^{n,*}C[1/I\dotsm\varphi^{n-1}(I)]$.
    Since $C_n$ is a direct summand of $E[1/I\cdots\varphi^n(I)]$, by \cite[\href{https://stacks.math.columbia.edu/tag/02M3}{Tag~02M3}, \href{https://stacks.math.columbia.edu/tag/05BZ}{Tag~05BZ}]{SP} we have
    \[
    \Ass(C_n) \subset \Ass(E[1/I\cdots\varphi^n(I)]) = \Ass(E) \cap \Spec(A[1/I\cdots\varphi^n(I)]).
    \]
    Moreover, by the perfectness of $E$ over $A$ and the fact that $A$ is noetherian, we know $\Ass(E)$ is a finite set as well.
    So the subsets $\Ass(E[1/I\cdots\varphi^n(I)])$ eventually stabilize when $n\gg0$ and we pick $n_0 \in \NN$ such that $\Ass(E[1/I\cdots\varphi^n(I)])=\Ass(E[1/I\cdots\varphi^{n_0}(I)])$ for all $n\geq n_0$.
    We denote $P$ the stabilized subset $\Ass(E[1/I\cdots\varphi^{n_0}(I)])$, which by assumption satisfies that the intersection $P\cap \Spec(A/\varphi^i(I)) =\emptyset$ for each $i\geq 0$.
    
    Now if $C_n\neq 0$ for infinite many $n\geq n_0$, by \cite[\href{https://stacks.math.columbia.edu/tag/0587}{Tag~0587}]{SP} and the finiteness of the set $P$, there is a prime ideal $\fp \in \Ass(C_n) \subset P$ for infinitely many $n\geq n_0$.
    Here we note that since $P\cap \bigl(\bigcup_{i\geq 0} \Spec(A/\varphi^i(I)) \bigr) =\emptyset$, we have $\fp\notin \bigcup_{i\geq 0} \Spec(A/\varphi^i(I)) $.
    In particular, by localizing the isomorphism (\ref{n th substituted equation}) at the prime ideal $\fp$, we get
    \[
    E_\fp \simeq \varphi^{n+1,*}E_\fp \oplus \bigl( \bigoplus_{i=0}^n  \varphi^{i,*}C_\fp \bigr),~\forall~n\in\NN.
    \]
    As a consequence, there are infinitely many $n\geq n_0$, such that $\varphi^{n,*}C_\fp$ is a nonzero direct summand of perfect $A_\fp$-complex $E_\fp$.
    Notice that since $\fp\in \Ass(\varphi^{n,*}C_\fp) \subset \Ass(C_n)$ (\cite[\href{https://stacks.math.columbia.edu/tag/05BZ}{Tag~05BZ}]{SP}), by another application of \cite[\href{https://stacks.math.columbia.edu/tag/0587}{Tag~0587}]{SP} to the top nonzero cohomology and its base change, the complex $\varphi^{n,*}C_\fp\otimes^L_{A_\fp} k(\fp)$ is also nonzero, where $k(\fp)$ is the residue field $A/\fp$.
    Hence the base change $E_\fp\otimes^L_{A_\fp} k(\fp)$ is a perfect complex over the field $k(\fp)$ with infinite many nonzero direct summands $\varphi^{n,*}C_\fp \otimes^L_{A_\fp} k(\fp)$,
    which is impossible for dimension reasons.
    Hence $C_n \neq 0$ for at most finitely many $n\in \NN$, finishing the proof of \ref{vanishing of C locus}.
    
    \ref{vanishing of C Acrys}.
    Note that by the base change property of relative prismatic cohomology (\cref{base-change}), the statement is equivalent to the assertion that the Frobenius structure
    \[ \varphi_{A_\crys(S)}^* R\Gamma\bigl((X_{\overline{A}_\crys(S)}/A_\crys(S))_\Prism, \cE\bigr)[1/p] \to R\Gamma\bigl((X_{\overline{A}_\crys(S)}/A_\crys(S))_\Prism, \cE\bigr)[1/p] \]
    is an isomorphism.
    Moreover, by \cref{prism crys comp reduced-special-fib}, it suffices to show that the Frobenius structure on the crystalline derived pushforwards $Rf_{s, \crys,*} \cE'$ for the map of special fibers $f_s \colon X_s\to Y_s$ is an $p$-isogeny, which follows from \cref{higher-direct-image-crys}.\ref{higher-direct-image-crys-isocrystal}.
\end{proof}
\begin{proof}[Proof of \cref{DDI}]
	We need to show that for any bounded prism $(A,I) \in Y_\Prism$, the natural map 
	\[ \varphi_A^* R\Gamma\bigl((X_{\overline{A}}/A)_\Prism, \cE\bigr)[1/I] \longrightarrow R\Gamma\bigl((X_{\overline{A}}/A)_\Prism, \cE\bigr)[1/I] \]
	from \cref{Frobenius-on-coh-construction}.\ref{Frobenius-on-coh-construction-prism} is an isomorphism.
	To simplify the writing, we use the following notation:
	\begin{align*}
		E_1 &=\varphi_A^* R\Gamma\bigl((X_{\overline{A}}/A)_\Prism, \cE\bigr)[1/I], & F_1 &=R\Gamma\bigl((X_{\overline{A}}/A)_\Prism, \cE\bigr)[1/I],\\
		E_2 &=\varphi_A^* R\Gamma\bigl((X_{\overline{A}}/A)_\Prism, \cE^\vee\{n\}\bigr)[1/I], & F_2 &=R\Gamma\bigl((X_{\overline{A}}/A)_\Prism, \cE^\vee\{n\}\bigr)[1/I],\\
		E_3 &=\varphi_A^* A[1/I] \simeq A[1/I], & F_3 &=A[1/I].
	\end{align*}
	We further denote the Frobenius structures on these prismatic cohomologies from \cref{Frobenius-on-coh-construction} and \cref{Frobenius-dual} by $\alpha \colon E_1 \to F_1$ and $\beta \colon E_2 \to F_2$, respectively.
	Our goal is to show that $\alpha$ is an isomorphism.
	
	To proceed, we use the Poincar\'e duality for prismatic cohomology from \cref{Berkovich-can-duality}, which gives the perfect pairing of $A[1/I]$-complexes
	\begin{equation}\label{Frobenius-on-coh-pairing}
		F_1 \otimes_A^L F_2 \longrightarrow F_3[-2n].
	\end{equation}
	Furthermore, the Frobenius equivariance in \cref{Berkovich-can-duality} yield the diagram
	\[
	\begin{tikzcd}
		E_1 \otimes_A^L E_2 \arrow[d, "\alpha\otimes \beta"] \ar[r] & E_3[-2n] \arrow[d, "\id"] \\
		F_1 \otimes_A^L F_2 \arrow[r] & F_3[-2n],
	\end{tikzcd}
	\]
	where both rows are perfect pairings of $A$-complexes and both vertical arrows are the Frobenius structure maps on prismatic cohomology.
	As a consequence, we get a map of perfect pairings of perfect $A[1/I]$-complexes in the sense of \cref{def-perfect-pair}.
	Now, since Frobenius is an isogeny on the prismatic cohomology of the structure sheaf (\cite[Ex.~4.6]{BS21}), $\gamma \colon E_3 \to F_3$ is an isomorphism.
	\Cref{split-inj} therefore shows that the Frobenius structure $\alpha \colon E_1\to F_1$ sends $E_1$ to a direct summand of $F_1$.
	
	Let $C\colonequals \mathrm{Cone}(\alpha)$ be the complement of $E_1$ in $F_1$.
	We want to show that $C = 0$.
	By the crystal property of prismatic derived pushforwards (\cref{coh-is-crystal} and \cref{coh-is-crystal-2}), it suffices to prove this when $(A,I)$ is a higher dimensional Breuil--Kisin prism, in which case $A$ is a regular ring and $\varphi_A$ is faithfully flat.
	Assume that $C\neq 0$.
	Let $\Hh^r(C)$ be its highest nonzero cohomology group, which is a finitely presented $A[1/I]$-module.
	We will show that in fact $\Hh^r(C) = 0$, yielding a contradiction to our assumption.
	
	First, after base change along $A[1/I]\to A[1/I]^\wedge_p$, we can apply \cref{direct-sum} over the pair $(A[1/I]^\wedge_p,\varphi_{A[1/I]^\wedge_p})$.
	This implies that  the perfect $A[1/I]^\wedge_p$-complex $C\otimes^L_{A[1/I]} A[1/I]^\wedge_p \simeq C^\wedge_p$ and hence $\Hh^r(C)^\wedge_p=\Hh^r(C)\otimes_{A[1/I]} A[1/I]^\wedge_p$ vanish.
	It is now left to show that $\Hh^r(C)[1/p] = 0$.
	
	Let $(A_1,IA_1)$ be an $\rA_{\inf}$-linear perfect prism that admits a faithfully flat map from $(A,I)$;
	for example, one can take the base change of the perfection of $(A,I)$ along a flat cover $\bigl(W \llbracket u \rrbracket,(E(u))\bigr) \to (\rA_{\inf},\ker(\tilde{\theta}))$).
	Let $C_1\colonequals C\otimes^L_A A_1$.
	By the faithful flatness of $A\to A_1$, it suffices to show the vanishing of $\Hh^r(C_1)[1/p]$.
    Thanks to the perfectness of the prism $(A_1,IA_1)$, we can twist the conclusion of \cref{vanishing of C}.\ref{vanishing of C locus} by $\varphi^{-n,*}$ and obtain
    \[
    \Hh^r(C_1)[1/\varphi^{-n}(I)\cdots \varphi^{-1}(I)] \simeq 0.
    \]
    Therefore, the finitely presented $A_1[1/pI]$-module $\Hh^r(C_1)[1/p]$ is supported on the closed subscheme $V(\varphi^{-n}(I)\cdots \varphi^{-1}(I)) \subset \Spec(A_1[1/pI])$.
    In other words, $\Hh^r(C_1)[1/p]$ is a finitely generated $A_1[1/pI]/(\varphi^{-n}(I)\cdots \varphi^{-1}(I))^N$-module for some $N \gg 0$.
    Since the natural map
    \[
    (A_1[1/pI])^\wedge_{\varphi^{-n}(I)\cdots \varphi^{-1}(I)} \xrightarrow{\sim} \bigl( A_1\langle \varphi^{-1}(I)/p \rangle[1/p] \bigr)^\wedge_{\varphi^{-n}(I)\cdots \varphi^{-1}(I)}
    \]
    is an isomorphism (cf.\ \cite[Lem.~6.7]{BS21}), the description of the support yields a natural isomorphism of modules
    \[
    \Hh^r(C_1)[1/p] \simeq \Hh^r(C_1)[1/p]\otimes_{A_1[1/pI]} A_1\langle \varphi^{-1}(I)/p \rangle[1/p].
    \]
    
    On the other hand, by base changing the vanishing result of \cref{vanishing of C}.\ref{vanishing of C Acrys} along the natural inclusion map $A_1\{I/p\} \to A_1\langle \varphi^{-1}(I)/p \rangle$ (cf.\ \cite[Cor.~2.39]{BS19}), we get
	\[
	    C_1\otimes^L_{A_1[1/I]} A_1\{I/p\}[1/p] \otimes^L_{A_1\{I/p\}} A_1\langle \varphi^{-1}(I)/p \rangle[1/p]=0.
	\]
	Taking top cohomology, we obtain
    \[
    \Hh^r(C_1)\otimes_{A_1[1/I]} A_1\langle \varphi^{-1}(I)/p \rangle[1/p]=0.
    \]
    In this way, the module $\Hh^r(C_1)[1/p]$ and thus $\Hh^r(C)$ vanishes, which contradicts the assumption that $C\neq 0$ and hence finishes the proof.
\end{proof}
\begin{remark}\label{FI-alternative}
	We sketch an alternative proof of the Frobenius isogeny property which was suggested to us by Bhargav Bhatt and uses descendability of the prismatic cohomology ring instead of Poincar\'e duality.
	Let $(A,I)$ be a bounded prism, let $X=\Spf(R)$ be an affine smooth formal scheme over $\overline{A}$, and let $\Prism_{X/A}\colonequals R\Gamma((X/A)_\Prism, \cO_\Prism)$ be the prismatic cohomology of the structure sheaf with its natural $E_\infty$-ring structure.
	Assume that $R\to S$ is a quasi-syntomic cover such that $S$ is a large quasi-syntomic algebra over $\overline{A}$ (in the sense of \cite[Def.~15.1]{BS19}).
	Denote by $S^\bullet$ the $p$-complete \v{C}ech nerve of this cover.
	The proof of \cite[Thm.~7.20]{BL22b} shows that the map of $E_\infty$-rings $\Prism_{X/A} \to \Prism_{S/A}$ is $(p,I)$-complete descendable.
	This implies in particular that there is a natural equivalence of symmetric monoidal $\infty$-categories
	\begin{align*}
		\Pi \colon D_{\mathrm{comp}}(\Prism_{X/A}) & \simeq \lim_{[n]\in \Delta} D_{\mathrm{comp}}\bigl(\Prism_{S^n/A}\bigr) \lon R\Gamma,\\
		 M & \mapsto \bigl(M\otimes^L_{\Prism_{X/A}} \Prism_{S^n/A}\bigr)_n,\\
		 R\lim_{[n] \in \Delta} N_n & \mapsfrom (N_n)_n,
	\end{align*}
    where $D_{\mathrm{comp}}(-)$ denotes the derived ($\infty$-)category of derived $(p,I)$-complete modules; cf.\ \cite[Prop.~3.22]{Mat16}.
    We observe that the right-hand side is equivalent to the category of prismatic crystals in $(p,I)$-complete complexes over $(X/A)_\Prism$.

    Now let $(\cE,\varphi_\cE)$ be a prismatic $F$-crystal in perfect complexes.
    By the construction of the Frobenius structure on $R\Gamma((X/A)_\Prism, \cE)$ (cf.\ \cref{Frobenius-on-coh-construction}), it suffices to show that the relative Frobenius map below is an isomorphism:
    \begin{equation}\label{FI-alternative-Frob}
    \varphi_A^*R\Gamma((X/A)_\Prism, \cE)[1/I] \longrightarrow R\Gamma((X/A)_\Prism, \varphi_{(X/A)_\Prism}^*\cE)[1/I].
    \end{equation}
    Note that since $R\Gamma$ is a lax symmetric monoidal functor whose homotopy inverse $\Pi$ is a symmetric monoidal equivalence, $R\Gamma$ must also be a symmetric monoidal equivalence\footnote{
    Using the perspective on symmetric monoidal $\infty$-categories as certain coCartesian fibrations $\cC^{\otimes} \to \Fin_*$ (see e.g.\ \cite[Def.~2.0.0.7]{HA}), recall that a functor $F^{\otimes} \colon \cC^{\otimes} \to \cD^{\otimes}$ over $\Fin_*$ is lax symmetric monoidal if it preserves locally coCartesian lifts of inert maps (\cite[Def.~2.1.1.8]{HA}) and symmetric monoidal if it preserves all locally coCartesian lifts.
    It is an equivalence if it is additionally an equivalence on the underlying $\infty$-categories (\cite[\S~2.1.3]{HA}).
    One can check easily that if $F$ is lax symmetric monoidal and has a homotopy inverse $(F^{\otimes})^{-1}$ which is a symmetric monoidal equivalence, then $F$ must also preserve all locally coCartesian lifts and hence be a symmetric monoidal equivalence.}
    and we obtain the natural isomorphism
    \[
     R\Gamma\bigl((X/A)_\Prism, \varphi_{(X/A)_\Prism}^*\cE\bigr) \simeq \Prism_{X/A}\otimes^L_{\varphi_{\Prism_{X/A}}, \Prism_{X/A}} R\Gamma\bigl((X/A)_\Prism, \cE\bigr).
     \]
    Moreover, the relative Frobenius map (\ref{FI-alternative-Frob}) is equivalent to the base change of $R\Gamma\bigl((X/A)_\Prism, \cE\bigr)$ (over the $E_\infty$-ring $\Prism_{X/A}$) along the relative Frobenius map of $\Prism_{X/A}$ 
    \[
    \varphi_A^* \Prism_{X/A} [1/I]\longrightarrow \Prism_{X/A} [1/I].
    \]
    Since the last map above is an isomorphism (\cite[Thm.~15.3]{BS19}), this finishes the proof of the Frobenius isogeny.
\end{remark}

\section{The \texorpdfstring{$\rC_\crys$}{Ccrys}-conjecture for crystalline local systems}\label{Ccrys-conjecture}
After assembling all ingredients, we are now finally ready to give a prismatic proof of the $\rC_\crys$-conjecture which compares the \'etale cohomology and the crystalline cohomology of general crystalline local systems in the relative setting.

Throughout this section, we fix a $p$-adic field $K$, as always discretely valued with perfect residue field.
We also fix a smooth proper morphism of smooth formal $\cO_K$-schemes $f \colon X \to Y$.

\subsection{Strong \'etale comparison}\label{sec-strong-et}
First, we find a variant of the prismatic-\'etale comparison \cref{etale-comparison} over a smaller period ring;
see \cref{strong-et-comp}.
One of the main ingredients is a more general version of \cref{pris to crys lem} (\cref{strong-etale-perfect}).

Let $(\cE,\varphi_\cE) \in D^\varphi_\perf(X_\Prism)$ a prismatic $F$-crystal in perfect complexes on $X$ with \'etale realization $T \colonequals T(\cE) \in D^b_\lisse(X_\eta,\ZZ_p)$.
We will again work with a completed algebraic closure $C$ of $K$ and use the notation $\rA_{\inf} \colonequals \rA_{\inf}(\cO_C)$, $q=[\epsilon]$ (where as before $\epsilon=(\epsilon_i)$ is a compatible system of $p$-power roots of unity in $\cO^\flat_C$), $\mu = q - 1$, and $\widetilde{\xi} = [p]_q$.
Following our previous conventions, we let $I \subset A$ be the ideal generated by the image of $[p]_q \in \rA_{\inf}$.

For a prismatic crystal $\cF$ over $Y_\Prism$, we define as before $\AA_{\inf}(\cF)$ to be the pro-\'etale sheaf on $Y_{\eta,\proet}$ given by
\[ \AA_{\inf}(\cF)\bigl(\Spa(S,S^+)\bigr) \colonequals \cF(\Prism_{S^+})\]
on affinoid perfectoid spaces $\Spa(S,S^+) \in \Perfd/Y_{\eta,\proet}$.
Using the crystal condition of $\cF$ and a derived version of \cref{Bcrys-isocrystal-sheaf}, one checks again that this defines a sheaf of perfect complexes over the pro-\'etale site $Y_{\eta, \proet}$ (cf.\ \cite[Thm.~6.5]{Sch13}).
\begin{theorem}[Strong \'etale comparison]\label{strong-et-comp}
    Let $X \to Y$ and $(\cE,\varphi_\cE)$ as before.
    There is a natural isomorphism of sheaves of complexes on $Y_{\eta,C,\proet}$
    \[ (Rf_{\eta,C,\proet,*}T )\otimes_{\widehat{\ZZ}_p} \AA_{\inf}[1/\mu] \xrightarrow{\sim} \AA_{\inf}(Rf_{\Prism,*}\cE)[1/\mu]. \]
\end{theorem}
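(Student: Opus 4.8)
The plan is to reduce the statement, by $p$-complete arc-descent, to a structural fact about ``Breuil--Kisin--Fargues complexes'' over a large perfect prism, and to prove that fact by extending the analysis of \cite{BMS18}. Concretely, write $\cG \colonequals Rf_{\Prism,*}\cE$. By the Frobenius isogeny theorem \cref{DDI}, $\cG$ is a prismatic $F$-crystal in perfect complexes on $Y_\Prism$. For an affinoid perfectoid $\Spa(S,S^+) \in \Perfd/Y_{\eta,C,\proet}$, let $(A,I) = \Prism_{S^+} = (\rA_{\inf}(S^+),\widetilde{\xi})$ be the associated perfect prism and set $N_A \colonequals R\Gamma\bigl((X_{\overline A}/A)_\Prism,\cE\bigr)$. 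The base change theorem \cref{base-change} gives $\AA_{\inf}(\cG)(\Spa(S,S^+)) = \cG(A,I) \simeq N_A$, a perfect $A$-complex, and \cref{DDI} says that $\varphi_A^*N_A[1/I] \xrightarrow{\sim} N_A[1/I]$; moreover, applying the weak \'etale comparison \cref{etale-comparison} to (the Laurent crystal associated to) $\cE$ over the perfect prism $(A,I)$ — using $(X_{\overline A})_\Prism \simeq (X_{\overline A}/A)_\Prism$ — identifies $R\Gamma(X_{\eta,\Spa(S,S^+),\proet}, T)$ with $(N_A[1/I]^\wedge_p)^{\varphi=1}$. The theorem therefore amounts to producing, functorially in $\Spa(S,S^+)$, an isomorphism $(N_A[1/I]^\wedge_p)^{\varphi=1} \otimes_{\ZZ_p} A[1/\mu] \xrightarrow{\sim} N_A[1/\mu]$ refining the natural multiplication map.

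Next I would reduce this to the case where $A$ is a particularly simple ``large'' perfect prism. Both presheaves of complexes on $\Perfd/Y_{\eta,C,\proet}$ involved — the target $\AA_{\inf}(\cG)[1/\mu]$ and the source $\bigl(\AA_{\inf}(\cG)[1/\cI_\Prism]^\wedge_p\bigr)^{\varphi=1} \otimes_{\widehat\ZZ_p} \AA_{\inf}[1/\mu]$ — satisfy $p$-complete arc-descent: the target by \cref{perfection-arc-sheaf} together with the description of $\AA_{\inf}(\cG)$ on perfectoids from \cref{perfectoid-site-perfection}, and the source because $\bigl(\AA_{\inf}(\cG)[1/\cI_\Prism]^\wedge_p\bigr)^{\varphi=1} \simeq T(\cG)$ is $\ZZ_p$-lisse on $Y_{\eta,\proet}$, hence pro-\'etale-locally constant with perfect stalks, and tensoring such a complex with the arc-sheaf $\AA_{\inf}[1/\mu]$ again yields an arc-sheaf by \cite{BS19} and the sheafiness of the Witt-vector construction (exactly as in the proof of \cref{pris to crys lem}). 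It thus suffices to verify the isomorphism after a $p$-complete arc-cover, and by \cite{BS19} we may choose $S^+ \to \prod_i V_i$ with each $V_i$ an absolutely integrally closed perfectoid valuation ring of rank $\le 1$. Over the factors with $V_i$ of characteristic $p$ both sides vanish, since the structure map factors through $W(\overline k)$, on which $\mu$ maps to $0$; discarding these, we are reduced to $S^+ = \prod_i V_i$ with all $V_i$ of mixed characteristic, and then $A = \rA_{\inf}(S^+) = \prod_i \rA_{\inf}(V_i)$ because $p$-typical Witt vectors commute with products.

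The remaining and central task is the structural input, which I would package as two lemmas (\cref{strong-etale-fp}, \cref{strong-etale-perfect}) generalizing the $\rA_{\inf}(\cO_C)$-analysis of \cite[\S4]{BMS18}: for $A = \prod_i \rA_{\inf}(V_i)$ as above, $I = (\widetilde{\xi})A$, and any $N \in D_\perf^\varphi(A)$ with $\varphi^*N[1/I] \xrightarrow{\sim} N[1/I]$, the natural map $(N[1/I]^\wedge_p)^{\varphi=1} \otimes_{\ZZ_p} A[1/\mu] \to N[1/\mu]$ is an isomorphism. (Here $\widetilde{\xi}$ is invertible in $A[1/\mu]$ from $\varphi(\mu) = \mu\,\widetilde{\xi}$ and the fact that $\varphi$ is an automorphism of each $\rA_{\inf}(V_i)$.) One first handles $N$ finitely presented and concentrated in a single degree — equivalently a vector bundle on $\Spec A \setminus V(p,I)$ together with an $A$-lattice of its restriction to $\Spec A[1/I]$, using that each $\rA_{\inf}(V_i)$ is, away from its closed point, of cohomological dimension one and a B\'ezout domain, as in \cite{BMS18}. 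The reduction of this case to the individual factors uses that $N$ has tor-amplitude over $A$ bounded independently of $i$ (automatic by perfectness), which lets one commute the formation of $\varphi$-fixed points, $p$-adic completion, and inversion of $\mu$ past the product and invoke \cite[Lem.~4.26]{BMS18} over each $\rA_{\inf}(V_i)$. One then bootstraps to arbitrary perfect $N$ by a finite d\'evissage along the (bounded) Postnikov tower of $N$, whose graded pieces are finitely presented $A$-modules. Finally, the isomorphisms produced this way are canonical — being assembled from the comparison maps of \cref{etale-comparison} and \cref{DDI} — hence functorial in $\Spa(S,S^+)$ and compatible with arc-covers, so they descend to the asserted isomorphism of sheaves on $Y_{\eta,C,\proet}$.

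The main obstacle is precisely this last structural step: porting the theory of Breuil--Kisin--Fargues modules over $\rA_{\inf}(\cO_C)$ from \cite[\S4]{BMS18} to the non-noetherian ring $A = \prod_i \rA_{\inf}(V_i)$, and keeping careful track of how infinite products interact with perfect complexes, $p$-adic completion, Frobenius invariants, and localization at $\mu$. Concretely one must show that $N \mapsto (N[1/I]^\wedge_p)^{\varphi=1}$ and $N \mapsto N[1/\mu]$ are computed factor by factor on uniformly bounded perfect complexes, and that over each mixed-characteristic $\rA_{\inf}(V_i)$ a Breuil--Kisin--Fargues complex splits, after inverting $\mu$, into an ``\'etale'' part whose $\varphi$-fixed points recover it — this is the content of \cite[Lem.~4.26]{BMS18}. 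Everything else, namely the reduction to the affinoid statement, the arc-descent, and the gluing, is by comparison formal and rests on results already established above.
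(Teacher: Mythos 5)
Your high‑level strategy matches the paper's: reduce by $p$‑complete arc‑descent to a product $A = \prod_i \rA_{\inf}(V_i)$ of Witt vectors of perfectoid valuation rings, identify the source with $(N_A[1/I]^\wedge_p)^{\varphi=1}\otimes A[1/\mu]$ via \cref{etale-comparison}, and prove the comparison by extending the Breuil--Kisin--Fargues analysis of \cite[\S4]{BMS18} to the non‑noetherian ring $A$. However, your statement of the key structural lemma is too strong and hides a genuine gap.

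You claim that for \emph{any} $N \in D^\varphi_\perf(A)$ with $\varphi^*N[1/I]\xrightarrow{\sim}N[1/I]$, the natural map $(N[1/I]^\wedge_p)^{\varphi=1}\otimes_{\ZZ_p} A[1/\mu]\to N[1/\mu]$ is an isomorphism. That is not what \cref{strong-etale-perfect} says: it carries the crucial extra hypothesis that $\Hh^q(N)[1/p]$ is locally free of constant rank over $A[1/p]$ for all $q$. This hypothesis is what drives the \cref{free-reduction}/\cref{restriction-vb} mechanism and the Witt‑vector affine Grassmannian extension argument underlying the dévissage; without it, a finitely presented $\varphi$‑module over $A$ need not be ``equivalently a vector bundle on $\Spec A\setminus V(p,I)$ together with an $A$‑lattice'' as you assert, and the reduction to \cite[Lem.~4.26]{BMS18} on each factor does not go through. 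More importantly, you never verify that the hypothesis holds for the $N_A = R\Gamma\bigl((X_{\overline{A}}/A)_\Prism,\cE\bigr)$ you actually need. This is a nontrivial input in the paper: one shows that $R^if_{\Prism,*}\cE[1/p]$ is a vector bundle over $(Y_\Prism,\cO_\Prism[1/p])$, and the proof goes via evaluating on a Breuil--Kisin prism $(\widetilde{R}\llbracket u\rrbracket,(E(u)))$, using the Frobenius isogeny (\cref{DDI}) to exhibit $\Hh^i\bigl((Rf_{\Prism,*}\cE)(A,I)\bigr)$ (after a Breuil--Kisin twist) as a $\varphi$‑module of finite $E$‑height, and then invoking \cite[Prop.~4.13]{DLMS22} to obtain projectivity after inverting $p$; base change and crystal descent then propagate this to all prisms. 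Nothing in your proposal recognizes that this local freeness is needed, let alone supplies a proof. Until that step is filled in, the central structural lemma is not available in the generality you need.
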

Here, we restrict to the pro-\'etale site of $Y_{\eta,C}$ (instead of $Y_\eta$) in order to make sense of the element $\mu \in \AA_{\inf}$.
\begin{proof}
    We show that both source and target satisfy descent for the $p$-complete arc-topology.
    For the source, since $Rf_{\eta,C,\proet,*}T$ is lisse (\cite[Thm.~10.5.1]{SW20}), we have $Rf_{\eta,C,\proet,*}T \simeq R\lim_n Rf_{\eta,C,\proet,*}T/p^n$ and each $R^if_{\eta,C,\proet,*}T/p^n$ is pro-\'etale locally a finite successive extension of finitely generated $\ZZ/p^n$-modules.
    It thus suffices to prove that $\AA_{\inf}/p^n$ satisfies $p$-complete arc-descent on $\Perfd/Y_{\cO_C}$, which then follows from \cite[Prop.~8.10]{BS19}.
    For the target, since $Rf_{\Prism,*}\cE$ is a crystal in perfect complex over $Y_{\Prism}$ (\cref{coh-is-crystal}), the arc-descent of $\AA_{\inf}(Rf_{\Prism,*}\cE)[1/\mu]$ on $\Perfd/Y_{\cO_C}$ follows from that of $\AA_{\inf}$.
    
    As in the proof of \cref{pris to crys lem}, every $S \in \Perfd/Y_{\cO_C}$ admits a $p$-complete arc-cover of $\cO_C$-algebras $S \to \prod_{j\in J} V_j$ such that each $V_j$ is a perfectoid valuation ring of rank $\le 1$ with algebraically closed fraction field by \cite[Lem.~2.2.3]{CS19}.
    Let $J'\subset J$ be the subset of indexes for which $V_j$ is of mixed characteristic.
    For $j \in J \smallsetminus J'$, the natural map $\rA_{\inf}(\cO_C)\to \rA_{\inf}(V_j)$ factors through $W(\overline{k})$, where the image of the element $\mu$ and the ideal $I$ are all $0$.
    In particular,
    \[
    \rA_{\inf}(\prod_{j\in J} V_j)[1/\mu] = \rA_{\inf}(\prod_{j\in J'} V_j)[1/\mu], \quad \text{and similarly} \quad \rA_{\inf}(\prod_{j\in J} V_j)[1/I]^\wedge_p = \rA_{\inf}(\prod_{j\in J'} V_j)[1/I]^\wedge_p.
    \]
    Thus, in order to prove the isomorphism of $\AA_{\inf}[1/\mu]$-complexes from the statement, we may evaluate the two $\AA_{\inf}[1/\mu]$-complexes over a perfectoid ring $V=\prod_{j\in J} V_j$ over $Y$ and show that there exists a functorial isomorphism
    \[
    R\Gamma(X_{V[1/p],\proet}, T)\otimes_{\ZZ^J_p} \rA_{\inf}(V)[1/\mu] \longrightarrow R\Gamma\bigl((X_V/\rA_{\inf}(V))_\Prism, \cE\bigr)[1/\mu],
    \]
    where the $V_j$ are $p$-torsionfree and $p$-complete rank one valuation rings over $\cO_C$ whose fraction fields are algebraically closed.
    The statement then follows from the prismatic-\'etale comparison \cref{etale-comparison} and the purely algebraic calculation in \cref{strong-etale-perfect} (applied to $M=R\Gamma\bigl((X_V/\rA_{\inf}(V))_\Prism, \cE\bigr)$) as soon as we know that the $\Hh^i((X_V/\rA_{\inf}(V))_\Prism, \cE)[1/p]$ are vector bundles over $\rA_{\inf}(V)[1/p]$.
 
    We conclude by showing more generally that $R^if_{\Prism,*}\cE[1/p]$ is a vector bundle over $(Y_\Prism,\cO_\Prism[1/p])$
    for all $i \in \ZZ$.
    More concretely, we prove that whenever $\Spf R \subseteq Y$ is a framed affinoid open subspace and $(A,I) = \bigl(\widetilde{R}\llbracket u \rrbracket,E(u)\bigr)$ is a Breuil--Kisin type prism,
    \begin{enumerate}[label=\upshape{(\alph*)}]
        \item\label{direct-image-vb-mod} $\Hh^i\bigl((Rf_{\Prism,*}\cE)(A,I)\bigr)[1/p]$ is a finite projective $A[1/p]$-module and
        \item\label{direct-image-vb-sheaf} $R^if_{\Prism,*}\cE_{(A,I)}[1/p] \simeq \Hh^i\bigl((Rf_{\Prism,*}\cE)(A,I)\bigr)[1/p] \otimes_{A[1/p]} \cO_{(Y_\Prism)_{/(A,I)}}[1/p]$,
    \end{enumerate}
    where $R^if_{\Prism,*}\cE_{(A,I)}[1/p]$ is the restriction of $R^if_{\Prism,*}\cE[1/p]$ onto $(Y_\Prism)_{/(A,I)}$.
    Since prisms of this form cover the final object of $Y_\Prism$, a combination of \ref{direct-image-vb-mod} and \ref{direct-image-vb-sheaf} then yields the statement.

    For \ref{direct-image-vb-mod}, note that we already know from \cref{DDI} that $\Hh^i\bigl((Rf_{\Prism,*}\cE)(A,I)\bigr)$ is a finitely generated $A$-module, and since $\varphi_A$ is flat, that it is equipped with a Frobenius isomorphism
    \[ \Hh^i(\varphi_{Rf_{\Prism,*}\cE})(A,I) \colon \varphi^*_A\Hh^i\bigl((Rf_{\Prism,*}\cE)(A,I)\bigr)[1/I] \simeq \Hh^i\bigl(\varphi^*_A(Rf_{\Prism,*}\cE)(A,I)[1/I]\bigr) \xrightarrow{\sim} \Hh^i\bigl((Rf_{\Prism,*}\cE)(A,I)\bigr)[1/I]. \]
    Moreover, after tensoring with the invertible Breuil--Kisin twist $A\{-n\}$ for $n \gg 0$ (see e.g.\ \cite[\S~2.2]{BL22}), $\Hh^i(\varphi_{Rf_{\Prism,*}\cE})(A,I)\{-n\}$ is induced by a monomorphism $\varphi^*_A\Hh^i\bigl((Rf_{\Prism,*}\cE)(A,I)\bigr)\{-n\} \hookrightarrow \Hh^i\bigl((Rf_{\Prism,*}\cE)(A,I)\bigr)\{-n\}$ whose cokernel is killed by $E(u)^r$ for some $r > 0$;
    that is, $\Hh^i\bigl((Rf_{\Prism,*}\cE)(A,I)\bigr)\{-n\}$ is a $\varphi$-module of finite $E$-height in the sense of \cite[Def.~3.14]{DLMS22}.
    Therefore, \cite[Prop.~4.13]{DLMS22}\footnote{
    The statement in \cite{DLMS22} contains the assumption that the $\varphi$-module is torsionfree, which is not used in the proof.}
    shows that $\Hh^i\bigl((Rf_{\Prism,*}\cE)(A,I)\bigr)\{-n\}[1/p]$ is a finite projective $A[1/p]$-module, and thus so is $\Hh^i\bigl((Rf_{\Prism,*}\cE)(A,I)\bigr)[1/p]$ by the invertibility of $A\{-n\}$.
    
    For \ref{direct-image-vb-sheaf}, we use that $R^if_{\Prism,*}\cE_{(A,I)}[1/p]$ is the sheafification of
    \[ (Y_\Prism)_{/(A,I)} \ni (B,J) \mapsto \Hh^i\bigl((Rf_{\Prism,*}\cE)(B,J)\bigr)[1/p]. \]
    Since $Rf_{\Prism,*}\cE$ is a prismatic crystal in perfect complexes, $(Rf_{\Prism,*}\cE)(B,J) \simeq (Rf_{\Prism,*}\cE)(A,I) \otimes^L_A B$ for all $(B,J) \in (Y_\Prism)_{/(A,I)}$.
    The flatness of $\Hh^i\bigl((Rf_{\Prism,*}\cE)(A,I)\bigr)[1/p]$ from \ref{direct-image-vb-mod} then implies that the resulting spectral sequence
    \[ E^{i,j}_2 = \Tor^{A[1/p]}_{-j}\bigl(\Hh^i\bigl((Rf_{\Prism,*}\cE)(A,I)\bigr)[1/p],B[1/p]\bigr) \Longrightarrow \Hh^{i+j}\bigl((Rf_{\Prism,*}\cE)(B,J)\bigr)[1/p] \]
    collapses to isomorphisms $\Hh^i\bigl((Rf_{\Prism,*}\cE)(B,J)\bigr)[1/p] \simeq \Hh^i\bigl((Rf_{\Prism,*}\cE)(A,I)\bigr)[1/p] \otimes_{A[1/p]} B[1/p]$ and that the assignment
    \[ (Y_\Prism)_{/(A,I)} \ni (B,J) \mapsto \Hh^i\bigl((Rf_{\Prism,*}\cE)(A,I)\bigr)[1/p] \otimes_A B = \Hh^i\bigl((Rf_{\Prism,*}\cE)(A,I)\bigr) \otimes_A \cO_{(Y_\Prism)_{/(A,I)}}(B,J) \]
    is already a sheaf.
    The desired identity follows.
\end{proof}
In the remainder of this subsection, we prove \cref{strong-etale-perfect}.
To do so, we need to generalize some results for Breuil--Kisin--Fargues modules from \cite[\S~4]{BMS18} to more general base rings of the following form:
\begin{convention}\label{arc-local-setup}
    Let $V_j$, $j \in J$, be a family of $p$-complete $p$-torsionfree valuation rings  of rank one over $\cO_C$, such that their fraction fields are all algebraically closed.
    Let $V \colonequals \prod_{j \in J} V_j$, $A_j \colonequals \rA_{\inf}(V_j)$ and $A \colonequals \prod_{j \in J} A_j \simeq \rA_{\inf}\bigl(\prod_{j\in J} V_j\bigr)$.
    For $i \in J$, let $e_i \colonequals (\delta_{ij})_{j \in J} \in A$ (where $\delta_{ij}$ denotes the Kronecker delta) be the idempotent corresponding to the factor $A_i$.
    By assumption, the structure map $\rA_{\inf} \colonequals \rA_{\inf}(\cO_C) \to A$ is $p$-completely faithfully flat.
    
    Abusing notation, we denote the image of $I = [p]_q \cdot A$ in $A/p = V^\flat = \prod_{j \in J} V^\flat_j$ by $I$ as well.
\end{convention}
As a preparation, we first consider various properties of the ring $A$ and modules over it.
We begin by recalling the structure of finitely presented modules over products of rings;
cf.\ e.g.\ \cite[Lem.~4.2.8]{CS19}.
\begin{lemma}\label{fp-modules-product}
    Let $A_j$, $j \in J$ be a family of rings and $A \colonequals \prod_{j \in J} A_j$.
    Let $M$ be a finitely presented $A$-module and set $M_j \colonequals e_j \cdot M$ for all $j \in J$.
    Then $M_j$ is a finitely presented $A_j$-module for all $j \in J$ and the natural map
    \[ M \to \prod_{j \in J} M_j, \quad m \mapsto (e_j \cdot m)_{j \in J} \]
    is an isomorphism.
\end{lemma}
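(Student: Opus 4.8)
The plan is to reduce the statement to the case of finite free modules, where the decomposition is immediate, and then exploit the exactness of arbitrary products of abelian groups.

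First I would record the ring-theoretic picture: the projection $\mathrm{pr}_j \colon A \to A_j$ has kernel $(1-e_j)A = \prod_{i \neq j} A_i$, so it identifies $A_j$ with the quotient $A/(1-e_j)A$; consequently $M_j = e_j \cdot M = M/(1-e_j)M \simeq M \otimes_A A_j$. Since finite presentability of a module is preserved under arbitrary base change of rings, each $M_j$ is a finitely presented $A_j$-module, which is the first assertion.

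For the isomorphism, choose a finite presentation
\[
A^{r_1} \xrightarrow{\ \Phi\ } A^{r_0} \longrightarrow M \longrightarrow 0,
\]
with $\Phi$ an $r_0 \times r_1$ matrix over $A$. Applying the right-exact functor $e_j \cdot (-) \simeq (-) \otimes_A A_j$ yields a presentation $A_j^{r_1} \xrightarrow{\Phi_j} A_j^{r_0} \to M_j \to 0$, where $\Phi_j$ has entries the images of those of $\Phi$ under $\mathrm{pr}_j$. Now take the product over $j \in J$: arbitrary products of abelian groups are exact, and because $r_0,r_1$ are finite the canonical maps $\prod_{j \in J} A_j^{r_i} \to A^{r_i}$ are isomorphisms, under which $\prod_{j \in J}\Phi_j$ corresponds to $\Phi$. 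Hence $\prod_{j \in J} M_j \simeq \operatorname{coker}\bigl(\prod_j \Phi_j\bigr) \simeq \operatorname{coker}(\Phi) \simeq M$. Finally I would verify, by chasing the images of the standard generators of $A^{r_0}$, that this isomorphism is inverse to the natural map $m \mapsto (e_j \cdot m)_{j \in J}$ of the statement.

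The only points that require any attention are the commutation of \emph{finite} direct sums with the product — precisely where the finite-presentation hypothesis is used, and the reason the analogous statement fails for general modules — and the identification of the constructed isomorphism with the natural evaluation map. Neither is a genuine obstacle, so I do not anticipate any serious difficulty; the argument is essentially a formal consequence of right-exactness of base change and exactness of products.
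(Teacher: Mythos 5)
Your argument is correct and is essentially the paper's own proof: both present $M$ by a map of finite free modules, apply the idempotent $e_j$ (equivalently base change to $A_j$), and conclude by exactness of infinite products of abelian groups together with the commutation of finite direct sums with products. The only cosmetic difference is that you additionally spell out the identification $M_j \simeq M \otimes_A A_j$ and the verification that the resulting isomorphism agrees with $m \mapsto (e_j m)_j$, which the paper leaves implicit.
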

\begin{proof}
	By assumption, we may write $M$ as the cokernel of a map of finite free $A$-modules
	\[
	\pi \colon P_1 \longrightarrow P_0.
	\]
	As finite direct sums commute with infinite products, we may write the above map as the product of maps $P_{1,j}\to P_{0,j}$, where $P_{i,j}$ is $P_i\cdot e_j$.
	Moreover, by taking the base change along $A\to A_j$, we see that $M_j$ is the cokernel of $\pi\cdot e_j \colon P_{1,j}\to P_{0, j}$.
	Finally, since taking infinite products of abelian groups is an exact functor, we get a right exact sequence
	\[
	\prod_j P_{1,j} \longrightarrow \prod_j P_{0,j} \longrightarrow \prod_j M_j \longrightarrow 0
	\]
	in which the first product map coincides with the map $\pi \colon P_1\to P_0$ from before.
	Thus, the equality $M=\prod M_j$ follows because $M=\coker(\pi)$.
\end{proof}
The next two statements are generalizations of \cite[Prop.~3.24]{BMS18} and \cite[Lem.~4.9]{BMS18}, respectively.
\begin{lemma}\label{arc-local-coherence}
	Let $A$ be the ring from \cref{arc-local-setup}.
	Then each $A/p^m$ is a coherent ring.
\end{lemma}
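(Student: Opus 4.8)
The plan is to realize $A/p^m$ as an infinite product of coherent rings of a very uniform shape and to control finitely generated ideals componentwise. Recall that a ring is \emph{coherent} precisely when every finitely generated ideal is finitely presented. By \cref{arc-local-setup} we have $A = \prod_{j\in J}\rA_{\inf}(V_j)$, so $A/p^mA \simeq \prod_{j\in J} W_m(V_j^\flat)$, where $W_m \colonequals \rA_{\inf}(\blank)/p^m$ and each $V_j^\flat$ is the tilt of the perfectoid valuation ring $V_j$, hence a (perfect, rank one) valuation ring. Each factor $W_m(V_j^\flat)$ is coherent by \cite[Prop.~3.24]{BMS18}; the point is that a product of coherent rings need not be coherent, so I must upgrade this to a statement with bounds that are uniform in $j$.

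First I would run a dévissage in $m$, as in the proof of \cite[Prop.~3.24]{BMS18}. Since $A$ is $p$-torsionfree, multiplication by $p^{m-1}$ identifies $A/pA$ with the square-zero ideal $p^{m-1}(A/p^mA)\subseteq A/p^mA$, and $A/pA$ is finitely presented as a module over $A/p^{m-1}A$ (it is the cyclic module $(A/p^{m-1}A)/(p)$, and $\operatorname{Ann}_{A/p^{m-1}A}(p)=(p^{m-2})$ is finitely generated). Assuming $A/p^{m-1}A$ coherent by induction, the standard fact that a square-zero extension of a coherent ring by a finitely presented module is again coherent reduces everything to the base case $m=1$, i.e.\ to showing that $A/pA = \prod_{j\in J} V_j^\flat$ is coherent. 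For this I would argue directly with the product. For any family of rings $R_j$, a finitely generated ideal $\mathfrak{a}=(a_1,\dotsc,a_n)\subseteq \prod_j R_j$ satisfies $\mathfrak{a}=\prod_j\mathfrak{a}_j$ with $\mathfrak{a}_j\colonequals(a_{1j},\dotsc,a_{nj})\subseteq R_j$, and since $\prod_j$ is exact the syzygy module $K\colonequals\ker\bigl((\prod_j R_j)^n\to\mathfrak{a}\bigr)$ decomposes as $\prod_j K_j$ with $K_j=\ker(R_j^n\to\mathfrak{a}_j)$; assembling componentwise generators shows $K$ is finitely generated as soon as the $K_j$ are generated by a number of elements bounded independently of $j$. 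Thus it suffices to bound, for each $n$, the number of generators of the syzygy module of any $n$ elements $a_1,\dotsc,a_n$ of an arbitrary valuation ring $W$. This is elementary: one of the $a_i$, say $a_{i_0}$, divides all the others, say $a_i=u_ia_{i_0}$; if $a_{i_0}=0$ the syzygy module is all of $W^n$, and otherwise (using that $W$ is a domain) it coincides with the syzygy module of $(u_i)_i$ normalized so that $u_{i_0}=1$, which is freely generated by the $n-1$ relations $e_i-u_ie_{i_0}$. So $n$ generators always suffice, and $\prod_j V_j^\flat$ is coherent, completing the induction.

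The main obstacle here is conceptual rather than computational: coherence is a purely qualitative property, whereas passing to an infinite product forces one to track uniform bounds on generators of syzygy modules, which coherence of the individual factors alone does not provide. The dévissage isolates the one place where genuine uniformity is needed --- the valuation rings $V_j^\flat$ --- and there the explicit syzygy description above does the job with the optimal bound. The remaining things to check carefully are the module-theoretic bookkeeping in the square-zero dévissage step (in particular that the relevant square-zero ideal is finitely \emph{presented}, not merely finitely generated, and that the square-zero extension lemma applies), and the exactness used to decompose ideals and syzygy modules along the product; both are routine.
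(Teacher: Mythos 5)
Your proof is correct, and its overall skeleton (establish coherence of $A/p$, then bootstrap to $A/p^m$) matches the paper's. But the two base-case arguments are genuinely different, and it is worth comparing them. The paper's proof of coherence of $A/p = \prod_j V_j^\flat$ exploits the valuation-ring structure more aggressively: it observes that \emph{every} finitely generated ideal of the product is already \emph{principal}, generated by the pointwise-max element $b = (\max_i\{a_{i,j}\})_j$, and then resolves $(b)$ by the explicit two-term complex built from the product idempotent supported on $\{j : b_j = 0\}$. Your argument instead keeps the arbitrary $n$-generated ideal, decomposes the syzygy module $K = \prod_j K_j$ componentwise, and bounds each $K_j$ by $n$ generators via the concrete description of syzygies among $n$ elements of a valuation ring ($n-1$ free relations $e_i - u_i e_{i_0}$ after normalizing against a divisor $a_{i_0}$, or all of $W^n$ if everything vanishes). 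The paper's route is more economical and avoids any choice of generating set; yours is more elementary and, as a bonus, applies verbatim to finitely generated \emph{submodules of finite free modules}, not just ideals. For the dévissage in $m$, the paper simply cites \cite[Lem.~3.26]{BMS18}, whereas you re-derive it in place via the square-zero extension $0 \to A/p \to A/p^m \to A/p^{m-1} \to 0$; this is the same mechanism that underlies the cited lemma, so that part is ``the same proof, unrolled.'' One small remark on the write-up: to see that $A/pA = (A/p^{m-1}A)/(p)$ is finitely presented over $A/p^{m-1}A$ you only need the ideal $(p)$ to be finitely \emph{generated} (it is principal); the computation of $\operatorname{Ann}(p) = (p^{m-2})$ is what you would need to show $(p)$ is finitely \emph{presented}, which is more than the dévissage requires at that point, though of course it is true and harmless.
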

\begin{proof}
	We first prove that the quotient $A/p=\prod_{j\in J} V^\flat_j$ is a coherent ring.
	To see this, note that since $V^\flat_j$ is a valuation ring of rank $\leq 1$, it is coherent:
	each finitely generated ideal $(a_1,\dotsc,a_m) \subseteq V^\flat_j$ is equal to $(\max\{a_1,\dotsc,a_m\}) \subseteq V^\flat_j$, which is a principal ideal of a domain and thus finitely presented.
	By the same argument, any finitely generated ideal $((a_{1,j})_j,\dotsc,(a_{m,j})_j)$ of $\prod V^\flat_j$ is equal to the principal ideal generated by $(\max\{a_{1,j},\dotsc, a_{m,j}\})_j$.
	Let $\Lambda\subset J$  be the subset of those indices for which $\max\{a_{1,j},\dotsc, a_{m,j}\}=0$.
	Then the ideal $((\max\{a_{1,j},\dotsc, a_{m,j}\})_j)$ can be resolved by the perfect complex
	\[
	\begin{tikzcd}
		\prod V^\flat_j \arrow{rr}{1_j\mapsto 1_j,\, j\in \Lambda}[swap]{1_j\mapsto 0_j,\, j\notin \Lambda} & & \prod V^\flat_j  \arrow{rrr}{1_j \mapsto \max\{a_{i,j},\dotsc, a_{m,j}\}} &&&((\max\{a_{1,j},\dotsc, a_{m,j}\})_j).
	\end{tikzcd}
	\]
	Hence, $((\max\{a_{1,j},\dotsc, a_{m,j}\})_j)$ is finitely presented and the ring $A/p$ is coherent.
	Finally, each $A/p^m$ is coherent by \cite[Lem.~3.26]{BMS18}.
\end{proof}
Using the coherence from \cref{arc-local-coherence}, we see that any finitely presented $A$-module is perfect.
\begin{proposition}\label{arc-local-module}
	Let $A$ be the ring from \cref{arc-local-setup} and let $M$ be a finitely presented $A$-module such that $M[1/p]$ is a vector bundle.
	Then
	\begin{enumerate}[label=\upshape{(\roman*)},leftmargin=*]
		\item\label{arc-local-module-perfect} the module $M$ is perfect over $A$;
		\item\label{arc-local-module-torsion} the submodule $M[p^\infty]$ is finitely presented (and thus perfect) over $A$.
	\end{enumerate}
\end{proposition}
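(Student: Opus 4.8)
The plan is to reduce the statement, via \cref{fp-modules-product}, to the structure theory of finitely presented $\rA_{\inf}(V)$-modules with vector-bundle generic fibre, and then to reassemble over the product. By \cref{fp-modules-product} we may write $M \simeq \prod_{j\in J} M_j$ with $M_j = e_j M$ finitely presented over $A_j = \rA_{\inf}(V_j)$; since $M_j$ is the base change of $M$ along $A \to A_j$, the generic fibre $M_j[1/p] \simeq M[1/p]\otimes_{A[1/p]} A_j[1/p]$ is a finite projective $A_j[1/p]$-module, of rank $r_j$ bounded by the number $a$ of generators in a fixed presentation $A^b \xrightarrow{\Phi} A^a \to M \to 0$. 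Each $V_j$ is a $p$-torsionfree perfectoid valuation ring of rank $\le 1$ with algebraically closed fraction field, so the arguments of \cite[\S 4]{BMS18} carry over with $\rA_{\inf}(V_j)$ in place of $\rA_{\inf}(\cO_C)$ and yield a decomposition $M_j \simeq A_j^{r_j} \oplus T_j$ with $T_j = M_j[p^\infty]$ finitely presented over $A_j$ and $p$-power torsion.

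Next I would show the $T_j$ are killed by a single power of $p$, independently of $j$. Because $M[1/p]$ is finite projective, the surjection $A[1/p]^a \twoheadrightarrow M[1/p]$ splits, hence $\im(\Phi[1/p])$ is finite projective over $A[1/p]$ and the surjection $\Phi[1/p]\colon A[1/p]^b \twoheadrightarrow \im(\Phi[1/p])$ admits a section $t = p^{-N}T_0$ with $T_0$ a matrix over $A$. For each $j$ and each $x\in A_j^a$ with $p^m x\in \im(\Phi_j)$ for some $m$, one then has $p^N x = \Phi_j(T_{0,j}\,x)\in \im(\Phi_j)$; since $T_j$ is the quotient of the $p$-saturation of $\im(\Phi_j)$ in $A_j^a$ by $\im(\Phi_j)$, it is killed by $p^N$ for all $j$. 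Consequently $T \colonequals \prod_j T_j$ is a finitely presented, $p^N$-torsion $A$-module.

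The free parts then glue: letting $\varepsilon_k\in A$ be the idempotent supported on $\{\,j : r_j = k\,\}$ for $0\le k\le a$, we get $M'' \colonequals \prod_j A_j^{r_j} \simeq \bigoplus_{k=0}^a (\varepsilon_k A)^k$, a finitely generated projective $A$-module. The natural exact sequence $0 \to T \to M \to M'' \to 0$ splits since $M''$ is projective, so $M \simeq M'' \oplus T$. In particular $T = M[p^\infty]$ is the quotient of the finitely presented module $M$ by the finitely generated submodule $M''$, hence finitely presented over $A$; this proves the finite-presentation assertion of (ii). For perfectness I would induct on $N$. When $N=1$, choose a surjection $(A/p)^s \twoheadrightarrow T$; its kernel $K$ is finitely generated (as $T$ is finitely presented over $A/p$), and $K \simeq \prod_j K_j$ with each $K_j$ a finitely generated torsionfree, hence free, module over the valuation ring $V_j^\flat$ of bounded rank, so $K$ is finitely generated projective over $A/p$ by the idempotent argument above. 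Thus $T$ has a length-one resolution by finitely generated projective $A/p$-modules and is perfect over $A/p$, hence over $A$. For $N>1$, the submodule $pT$ is the image of a map of finitely presented modules over the coherent ring $A/p^N$ (\cref{arc-local-coherence}), hence finitely presented and $p^{N-1}$-torsion, so perfect by induction, while $T/p$ is perfect by the base case; therefore $T$ is perfect. Finally (i) is immediate from $M \simeq M'' \oplus T$.

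The main work lies in the two inputs from the non-Noetherian world: transporting the structure results of \cite[\S 4]{BMS18} to each $\rA_{\inf}(V_j)$ (which is routine but must be checked), and organizing the infinite products over the coherent — but not regular — rings $A/p^m$ via the orthogonal idempotents $\varepsilon_k$. The point that might look like the real obstacle, the uniform annihilation of the torsion, dissolves once one uses a section of $\Phi[1/p]$ onto its image with bounded denominator.
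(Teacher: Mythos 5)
The load-bearing step of your argument is the claimed splitting $M_j \simeq A_j^{r_j} \oplus T_j$ with $T_j = M_j[p^\infty]$ for each factor $M_j = e_j M$. This is not what \cite[\S~4]{BMS18} provides, and it is in fact false. Take $M_j = (p, [\pi^\flat]) \subset A_j$ for any nonzero $\pi^\flat$ in the maximal ideal of $V_j^\flat$: the Koszul resolution $0 \to A_j \to A_j^{\oplus 2} \to M_j \to 0$ for the regular sequence $(p, [\pi^\flat])$ shows $M_j$ is finitely presented, $M_j[1/p] = A_j[1/p]$ is free of rank one, and $M_j[p^\infty] = 0$ since $A_j$ is $p$-torsionfree; yet $M_j$ is not free, because the differential $A_j \to A_j^{\oplus 2}$ has entries $(-[\pi^\flat],\,p)$ in the maximal ideal of $A_j$, so $M_j$ has exactly two minimal generators while its rank is one. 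Running this construction in each factor yields a finitely presented $A$-module with $M[1/p]$ free of rank one and $M[p^\infty] = 0$ that is not projective, contradicting your $M \simeq M'' \oplus T$ with $M''$ projective. The hypothesis ``$M[1/p]$ is a vector bundle'' leaves room for deficiency precisely along the codimension-two locus $V(p,I)$, and that deficiency cannot be packaged into a free-plus-torsion splitting. Everything downstream of this step — the uniform torsion bound, the reassembly via the rank idempotents $\varepsilon_k$, and the identification of $M[p^\infty]$ as a complement — collapses with it.

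The paper instead works with a \emph{non-split} short exact sequence: Zariski-locally on $\Spec A$ it chooses an injection $A_f^{\oplus r} \hookrightarrow M_f$ that becomes an isomorphism after inverting $p$, whose cokernel $N$ is finitely presented and $p^n$-torsion for some $n$. Crucially $N \neq M_f[p^\infty]$ in general — in the example above $N \neq 0$ while $M_j[p^\infty] = 0$ — so $N$ records exactly the part your decomposition discards. The paper then proves $N$ is perfect by reducing mod $p$ and exploiting the product structure of $A/p$ together with the coherence from \cref{arc-local-coherence}, and deduces perfectness of $M_f$ from the triangle, with no splitting claim. Your inductive argument that a finitely presented $p$-power-torsion module is perfect is essentially the right idea and parallels the paper's treatment of $N$; redirect it to $N$ rather than to a putative torsion direct summand of $M$. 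For the finite presentation of $M[p^\infty]$ in (ii), the paper identifies $M_f[p^\infty]$ with $\Hh^{-1}(M_f \otimes^L_{\ZZ_p} \ZZ/p^n)$, a cohomology module of a perfect complex over the coherent ring $A_f/p^n$, and so needs no structure theory for $M$ at all.
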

\begin{proof}
	We first deal with \ref{arc-local-module-perfect}.
	As being perfect is a Zariski local property, let us take $f\in A$ such that $f\notin (p)$ and $M_f[1/p]$ is free of rank $r$ over $A_f[1/p]$, where $M_f\colonequals M[1/f]$ and $A_f\colonequals A[1/f]$.
	We can then choose an injective morphism $A_f^{\oplus r} \to M_f$ which becomes an isomorphism upon inverting $p$;
	the quotient $N\colonequals \coker(A_f^{\oplus r} \to M_f)$ is finitely presented over $A_f$.
	Moreover, as $A_f^{\oplus r}[1/p]=M_f[1/p]$, we see that $N$ is $p^\infty$-torsion.
	Thus, there exists $n\in \NN$ such that $N=N[p^\infty]=N[p^n]$.
	
	It remains to show that a finitely presented $p^n$-torsion $A_f$-module $N$ is perfect over $A_f$.
	As $N$ is $p^n$-torsion and finitely presented over $A_f$, it is a finitely presented $A_f/p^n$-module \cite[Lem.~3.25]{BMS18}.
	Moreover, one can form the short exact sequence
	\[
	0\longrightarrow pN \longrightarrow N \longrightarrow N/pN \longrightarrow 0,
	\]
	in which $pN$ is a finitely generated submodule of $N$ that is killed by $p^{n-1}$, thus finitely presented over $A_f$ by the coherence of $A_f/p^n$ (proof of \cref{arc-local-coherence} and \cite[Lem.~3.25,~Lem.~3.26]{BMS18}). 
	Furthermore, the module $N/pN$ is a quotient of a coherent module by a coherent submodule and hence also finitely presented over $A_f$ \cite[\href{https://stacks.math.columbia.edu/tag/05CW}{Tag~05CW}]{SP}.
	By induction on $n$, we may therefore assume that $n=1$ and prove that a finitely presented $A_f/p=\prod V^{\flat,\prime}_j$-module $N$ is perfect over $A_f$.
	Here, $V^{\flat,\prime}_j$ is the localization of $V^\flat_j$ at the image of $f$ in $V^\flat_j$, which is also a valuation ring of dimension $\leq 1$ with algebraically closed fraction field.
	
	To prove that $N$ is perfect over $A_f$, we first note that since $N$ is finitely presented and $A_f/p$ is coherent as before, we have a surjection
	\[
	 P=\Bigl(\prod V^{\flat,\prime  }_j\Bigr)^{\oplus s} \longrightarrow N,
	\]
	whose kernel $Q$ is finitely presented and torsionfree over $A_f/p$.
	The $A_f$-module $P$ can be resolved by the perfect $A_f$-complex 
	\[\begin{tikzcd}
		A_f^{\oplus s} \arrow[r, "p"] & A_f^{\oplus s}.
	\end{tikzcd}\]
    Thus, it suffices to show the perfectness of $Q$ over $A_f$.
	Write $Q$ as the product of finitely presented $V^{\flat,\prime}_j$-modules $Q_j$ (\cref{fp-modules-product}).
	Since each $Q_j\subset P_j=(V^{\flat,\prime}_j)^{\oplus s}$ is torsionfree over the valuation ring $V^{\flat,\prime}_j$, it is a free $V^{\flat,\prime}_j$-module.
	In particular, the kernel $Q$ is a product of free $V^{\flat,\prime}_j$-modules $Q_j$ whose ranks are bounded uniformly by $s$.
	Hence, $Q$ is a vector bundle over $A_f/p$, given by a finite direct sum of modules of the form $\prod V^{\flat,\prime}_j\cdot u_j$, where $u_j$ is either $1_j$ or $0_j$.
	As each $A_f$-module $\prod V^{\flat,\prime}_j\cdot u_j$ can be resolved by the perfect $A_f$-complex
	\[
	\begin{tikzcd}
		(\prod A_j\cdot u_j)_f \arrow[r, "p"] & (\prod A_j\cdot u_j)_f,
	\end{tikzcd}
    \]
    we get the perfectness of $Q$, finishing the proof of \ref{arc-local-module-perfect}.
     
    Lastly, we keep the same notation as before and prove \ref{arc-local-module-torsion}.
    Fix $n \in \NN$ such that $M[p^\infty] = M[p^n]$.
    Since being finitely presented is a Zariski local property, it suffices to show that each $M_f[p^\infty]$ is finitely presented in order to prove the finite presentedness of $M[p^\infty]$.
    As in \cite[Lem.~4.9]{BMS18}, we consider the $A_f/p^n$-complex $K\colonequals M_f\otimes^L_{\ZZ_p} \ZZ_p/p^n$.
    As $M_f$ is perfect over $A_f$ thanks to \ref{arc-local-module-perfect}, the complex $K$ is also perfect over $A_f/p^n$.
    In particular, by the coherence of $A_f/p^n$ and \cite[\href{https://stacks.math.columbia.edu/tag/05CW}{Tag~05CW}]{SP}, each cohomology module of $K$ is a coherent $A_f/p^n$-module, thus finitely presented over $A_f$.
    As a consequence,
    \[
    M_f[p^\infty]=M_f[p^n]=\Hh^{-1}(K)
    \]
    is finitely presented (and hence perfect) over $A_f$.
\end{proof}
\begin{corollary}\label{perfect-complex-arc-local}
	Let $A$ be the ring from \cref{arc-local-setup}.
	Let $M \in D_\perf(A)$ be a perfect $A$-complex.
	Assume that $\Hh^q(M)[1/p]$ is a locally free $A[1/p]$-module for all $q \in \ZZ$.
	Then for all $q \in \ZZ$
	\begin{enumerate}[label=\upshape{(\roman*)},leftmargin=*]
		\item\label{perfect-complex-arc-local-cohom} the cohomology groups $\Hh^q(M)$ are perfect as $A$-complexes, and in particular finitely presented $A$-modules;
		\item\label{perfect-complex-arc-local-torsion} the $p$-power torsion submodules $\Hh^q(M)[p^\infty] \subset \Hh^q(M)$ are perfect and thus finitely presented over $A$.
	\end{enumerate}
\end{corollary}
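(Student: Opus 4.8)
The plan is to bootstrap from \cref{arc-local-module} by an induction on the amplitude of the perfect complex $M$, exactly along the lines of the corresponding passage in \cite[\S~4]{BMS18}. First I would reduce to the case where $M$ is represented by a bounded complex of finite free $A$-modules concentrated in degrees $[a,b]$, and induct on $b-a$. The base case $b-a=0$ is immediate: then $M\simeq \Hh^a(M)[-a]$ is itself finite free, so its only nonzero cohomology is finite free and has vanishing $p$-power torsion, so both \ref{perfect-complex-arc-local-cohom} and \ref{perfect-complex-arc-local-torsion} are trivial.

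For the inductive step, I would use the stupid truncation triangle $\tau^{\le a}M \to M \to \tau^{>a}M$, or rather work with the top cohomology: consider the truncation triangle $\sigma^{<b}M \to M \to \Hh^b(M)[-b]$ is not quite available since $\Hh^b(M)$ need not be free, so instead I would proceed as in \cite[Lem.~4.9]{BMS18} by looking at $N\colonequals \tau^{\ge a+1}M$, which is perfect of smaller amplitude, together with the triangle $\Hh^a(M)[-a]\to M\to N$. Here the key input is that $\Hh^a(M)$ is the bottom cohomology of a perfect complex, hence a finitely presented $A$-module, and $\Hh^a(M)[1/p]$ is locally free by hypothesis; thus \cref{arc-local-module} applies directly to $\Hh^a(M)$, giving that it is perfect over $A$ and that $\Hh^a(M)[p^\infty]$ is finitely presented (hence perfect) over $A$. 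Then $N$ is perfect with amplitude $[a+1,b]$, and its cohomology groups $\Hh^q(N)=\Hh^q(M)$ for $q\ge a+2$ while $\Hh^{a+1}(N)$ surjects onto $\Hh^{a+1}(M)$; more precisely the long exact sequence of the triangle reads $\Hh^a(N)=0$, then $\Hh^{a+1}(M)\hookrightarrow\Hh^{a+1}(N)$ with cokernel a submodule of $\Hh^{a+1}\big(\Hh^a(M)[-a+1]\big)=0$, so in fact $\Hh^{a+1}(M)\simeq\Hh^{a+1}(N)$ and $\Hh^q(M)\simeq\Hh^q(N)$ for all $q\ge a+1$. The inductive hypothesis applied to $N$ then gives \ref{perfect-complex-arc-local-cohom} and \ref{perfect-complex-arc-local-torsion} for all $q\ge a+1$, and the case $q=a$ was already handled; for $q<a$ there is nothing to prove.

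The one subtlety worth flagging is the verification that $\Hh^q(N)[1/p]$ remains locally free for the complexes $N$ arising in the induction, so that \cref{arc-local-module} keeps applying at each stage. This follows because $\Hh^q(N)[1/p]\simeq\Hh^q(M)[1/p]$ for $q\ge a+1$ as noted above, and over $A[1/p]$ inverting $p$ is exact, so the hypothesis on $M$ transfers verbatim. I expect this to be the only point requiring care; the rest is a formal dévissage. Finally, the assertion ``perfect, and in particular finitely presented'' uses that a perfect complex concentrated in a single degree has finitely presented cohomology there (since $A/p^m$ is coherent by \cref{arc-local-coherence}, perfect complexes have finitely presented cohomologies, compatibly with the $p$-completeness, as in \cite[Lem.~4.9]{BMS18}); I would invoke \cref{arc-local-coherence} and \cite[\href{https://stacks.math.columbia.edu/tag/05CW}{Tag~05CW}]{SP} at this last step just as in the proof of \cref{arc-local-module}.
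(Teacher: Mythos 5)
Your argument runs an ascending dévissage from the bottom degree, and it breaks at exactly the point you flagged as ``the key input'': you assert that $\Hh^a(M)$, the \emph{bottom} cohomology of the perfect complex $M$, is automatically a finitely presented $A$-module. This is not true over a general ring. Representing $M$ by a bounded complex $M^\bullet$ of finite free $A$-modules in degrees $[a,b]$, the bottom cohomology is $\ker(d^a\colon M^a\to M^{a+1})$, a \emph{kernel} of a map of finite free modules, which need not be finitely generated (let alone finitely presented) unless $A$ is coherent --- and the paper does not establish coherence of $A$ itself, only of $A/p^m$ (\cref{arc-local-coherence}). So you cannot invoke \cref{arc-local-module} at the first step of the induction, because its hypothesis (finite presentation) is not available for $\Hh^a(M)$.

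The paper sidesteps this by running a \emph{descending} induction: one starts from the top nonzero degree $b$, where $\Hh^b(M)=\coker(d^{b-1}\colon M^{b-1}\to M^b)$ \emph{is} automatically finitely presented, applies \cref{arc-local-module} to conclude $\Hh^b(M)$ is perfect, then uses the canonical truncation triangle $\tau^{\le b-1}M\to M\to \Hh^b(M)[-b]$ and closure of perfect complexes under fibers to see that $\tau^{\le b-1}M$ is again perfect, and recurses. Note that your stated objection to this route --- ``$\Hh^b(M)[-b]$ is not quite available since $\Hh^b(M)$ need not be free'' --- is not an actual obstruction: the canonical truncation triangle exists regardless, and all one needs is that $\Hh^b(M)$ be \emph{perfect}, which \cref{arc-local-module} supplies once finite presentation is in hand. (The input that $\Hh^q(N)[1/p]$ remains locally free at every stage, which you correctly verified, carries over unchanged to the descending version.) Replacing your ascending truncation by this descending one repairs the proof.
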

\begin{proof}
	    Since the perfect complexes are closed under fibers, the triangle
		\[
		\tau^{\leq q-1} M\longrightarrow \tau^{\leq q} M \longrightarrow \Hh^q(M)[-q],
		\]
		shows that if both $\tau^{\leq q} M$ and $\Hh^q(M)$ are perfect, so is $\tau^{\le q-1}M$.
		In particular, by the perfectness (thus the boundedness) of $M$, one can show \ref{perfect-complex-arc-local-cohom} and \ref{perfect-complex-arc-local-torsion} via descending induction on $q$.
		Thus, it suffices to assume $q$ is the top nonzero degree of $M$ and show that $\Hh^q(M)$ is perfect, which follows from \cref{arc-local-module}.
\end{proof}
For the rest of this subsection, we denote by $\iota \colon A[1/\mu] \to A[1/\varphi(\mu)]$ the natural localization map.
To study $\varphi$-modules over the ring $A$, the following calculations will prove useful.
\begin{lemma}\label{properties-arc-cover}
	Let $A$ be a ring as in \cref{arc-local-setup}.
	Then
	\begin{enumerate}[label=\upshape{(\roman*)},leftmargin=*]
		\item\label{properties-arc-cover-inj} the natural maps  $V^\flat \to V^\flat[1/I] \to \prod (V_j^\flat[1/I])$ and $A \to A[1/I]^\wedge_p \to \prod (A_j[1/I]^\wedge_p)$ are injective; 
		\item\label{properties-arc-cover-flat} the natural maps $\ZZ^J_p \to A$  and $\ZZ^J_p \to A[1/I]^\wedge_p$ are faithfully flat;
		\item\label{properties-arc-cover-Frob-inv-AI} the natural maps $\ZZ^J_p \to \fib\bigl(A  \xrightarrow{\varphi - \id} A \bigr) \to \fib\bigl(A[1/I]^\wedge_p \xrightarrow{\varphi - \id} A[1/I]^\wedge_p \bigr)$ are isomorphisms of $\ZZ_p$-complexes;
		\item\label{properties-arc-cover-Frob-inv-Am} the natural map $\ZZ^J_p \to \fib\bigl(A[1/\mu] \xrightarrow{\varphi - \iota} A[1/\varphi(\mu)] \bigr)$ is an isomorphism.
	\end{enumerate}
\end{lemma}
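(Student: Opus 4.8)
The plan is to prove the four statements essentially by reducing each assertion to the known case of a single $\rA_{\inf}(\cO_C)$ (or rather $A_j = \rA_{\inf}(V_j)$) from \cite[\S~3.3, \S~4]{BMS18}, using that $A = \prod_{j \in J} A_j$ is a product and that the relevant constructions (completion, localization, taking fibers of $\varphi - \id$) interact well with countable/arbitrary products. Throughout I will repeatedly use \cref{fp-modules-product} and the exactness of arbitrary products of abelian groups.

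For \ref{properties-arc-cover-inj}, I would first treat $V^\flat \to \prod_j (V^\flat_j[1/I])$. Since $V^\flat = \prod_j V^\flat_j$ and products are exact, it is enough to see each $V^\flat_j \to V^\flat_j[1/I]$ is injective, i.e.\ that the image of $[p]_q$ in $V^\flat_j$ is a nonzerodivisor; this holds because $V^\flat_j$ is a domain (being a valuation ring with algebraically closed fraction field) and $[p]_q \neq 0$ in it (as $V_j$ has mixed characteristic when the image is nonzero, and otherwise $V^\flat_j[1/I] = 0$ and injectivity is vacuous; in fact one should note $\overline{I} \cdot V^\flat_j$ is either the whole ring or a nonzero principal ideal). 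The map $V^\flat[1/I] \to \prod_j (V^\flat_j[1/I])$ is injective because localization of a product injects into the product of localizations. For $A$: the map $A \to \prod_j (A_j[1/I]^\wedge_p)$ factors through $A[1/I]^\wedge_p$, so it suffices to show $A_j \to A_j[1/I]^\wedge_p$ is injective for each $j$, which is \cite[Lem.~3.23 and its proof]{BMS18} or follows from $I$-torsionfreeness and $I$-adic separatedness of $A_j$ plus $p$-completeness; then the factored map into the product is injective by the same product-localization argument, using that $[1/I]^\wedge_p$ commutes with the product up to the natural injection.

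For \ref{properties-arc-cover-flat}: the map $\ZZ_p^J \to A$ is the product over $j$ of $\ZZ_p \to A_j$, and $\ZZ_p \to \rA_{\inf}(V_j)$ is faithfully flat (it is even $(p)$-completely faithfully flat and $\rA_{\inf}(V_j)$ is $p$-torsionfree, so flat; faithfulness because the maximal ideal is hit). A product of faithfully flat maps is faithfully flat (flatness: products of flat modules over a noetherian-enough base — here $\ZZ_p$ — need care, but over $\ZZ_p$ a module is flat iff $p$-torsionfree, and $A$ is $p$-torsionfree since each $A_j$ is; faithfulness: $A/pA = \prod V^\flat_j \neq 0$). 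Then $\ZZ_p^J \to A[1/I]^\wedge_p$ is flat since $A \to A[1/I]^\wedge_p$ is flat (localization followed by $p$-completion, using \cite[Lem.~4.22]{BS19}-type statements) and faithfully flat because $A[1/I]^\wedge_p / p = \prod (V^\flat_j[1/I]) \neq 0$ (the mixed characteristic factors survive).

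For \ref{properties-arc-cover-Frob-inv-AI} and \ref{properties-arc-cover-Frob-inv-Am}: these are the key points, and I expect \ref{properties-arc-cover-Frob-inv-Am} to be the main obstacle because $\varphi$ does not preserve the idempotents $e_i$ when the $V_j$ get permuted — but here each $A_j$ has its \emph{own} Frobenius and $\varphi$ acts factorwise, so actually $\varphi - \id$ (resp.\ $\varphi - \iota$) is a product of the corresponding maps over $j$, and again I can pass to the single-factor case. For \ref{properties-arc-cover-Frob-inv-AI}, $\fib(A \xrightarrow{\varphi-\id} A) = \prod_j \fib(A_j \xrightarrow{\varphi - \id} A_j)$ and for each $j$ this fiber is $\ZZ_p$ concentrated in degree zero: in degree $0$, $A_j^{\varphi=1} = W(V_j^\flat)^{\varphi=1} = \ZZ_p$ since $V_j^\flat$ is a perfect $\FF_p$-algebra that is a domain with algebraically closed fraction field hence has no nontrivial idempotents and $(V_j^\flat)^{\varphi=1} = \FF_p$; surjectivity of $\varphi - \id$ on $A_j$ (vanishing of $\Hh^1$) is the Artin–Schreier-type statement, provable by successive approximation mod $p^n$ using that $\varphi - \id$ is surjective on $V^\flat_j$ (algebraically closed fraction field) — this is \cite[Lem.~4.26]{BMS18}-adjacent. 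The same for $A_j[1/I]^\wedge_p$ by the same argument over $V^\flat_j[1/I]$, and since products of such acyclic-in-degree-$1$ complexes with $\Hh^0 = \ZZ_p$ give $\prod_j \ZZ_p = \ZZ_p^J$. Finally \ref{properties-arc-cover-Frob-inv-Am}: $\fib(A[1/\mu] \xrightarrow{\varphi - \iota} A[1/\varphi(\mu)]) = \prod_j \fib(A_j[1/\mu] \xrightarrow{\varphi - \iota} A_j[1/\varphi(\mu)])$, and for each $j$ one shows this fiber is $\ZZ_p$ in degree zero: this is exactly the computation behind the classical ``fundamental exact sequence'' / the identification of $\rA_{\inf}(\cO_C)[1/\mu]^{\varphi=1}$, and I would cite or reprove the rank-one case, noting that $\varphi \colon A_j[1/\mu] \to A_j[1/\varphi(\mu)]$ is injective with the image characterized so that $\varphi(x) = \iota(x)$ forces $x \in \ZZ_p$ (using $\bigcap_r \frac{\mu}{\varphi^{-r}(\mu)} A_j = \mu A_j$, cf.\ \cite[Lem.~3.23]{BMS18} and \cref{intersection formula}), while surjectivity onto the fiber in degree $1$ again uses algebraic closedness of $\Frac(V_j)$. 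Assembling the product over $j$ gives $\ZZ_p^J$ concentrated in degree $0$, as claimed.
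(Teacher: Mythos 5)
Your treatment of \ref{properties-arc-cover-inj} is essentially correct and close to the paper's, but there is a genuine gap in your handling of \ref{properties-arc-cover-Frob-inv-AI} and \ref{properties-arc-cover-Frob-inv-Am}, and a subtler slip in \ref{properties-arc-cover-flat}.

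The central problem is the decomposition you assert at the heart of your argument for (iii) and (iv): you claim $\fib(A[1/I]^\wedge_p \to A[1/I]^\wedge_p) = \prod_j \fib(A_j[1/I]^\wedge_p \to A_j[1/I]^\wedge_p)$, and likewise $\fib(A[1/\mu] \to A[1/\varphi(\mu)]) = \prod_j \fib(A_j[1/\mu] \to A_j[1/\varphi(\mu)])$. This is false when $J$ is infinite: localization and $p$-completion do not commute with infinite products, so $A[1/\mu] \subsetneq \prod_j A_j[1/\mu]$ (an element like $(\mu^{-j})_j$ lives in the product but not in $A[1/\mu]$, which requires a uniform bound on the denominator) and $A[1/I]^\wedge_p \subsetneq \prod_j A_j[1/I]^\wedge_p$ (this is exactly the injection of part (i), which is not surjective). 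So the factorwise computation you perform does compute $\fib$ over $\prod_j A_j[1/I]^\wedge_p$, but that is not the complex the lemma is about, and the short exact sequence relating the two rings does not let you conclude $\Hh^1$-vanishing for the subring from $\Hh^1$-vanishing for the product. The paper's argument is specifically designed to avoid this: $\Hh^0$ is pinned down by the $\varphi$-equivariant sandwich $\ZZ^J_p \hookrightarrow \Hh^0(\fib_A) \hookrightarrow \Hh^0(\fib_{\prod}) = \ZZ^J_p$, but $\Hh^1$-vanishing is proved by writing an arbitrary element of $V^\flat[1/I]$ as $a/t$ with $a \in V^\flat$ and $t \in I\cdot V^\flat$ and substituting $x \mapsto x/t^{1/p}$ (resp.\ $x \mapsto x/\mu^n$ in (iv)) to clear the denominator, which reduces the Artin--Schreier equation to one with all data in $A = \prod_j A_j$ itself, where one \emph{can} solve factorwise. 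Without that substitution trick, the reduction to the single-factor case does not go through.

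There is also a confusion in (ii): you argue that $A$ is flat over $\ZZ_p$ because it is $p$-torsionfree, but the claim is flatness over $\ZZ^J_p$. Flatness over $\ZZ^J_p$ is strictly stronger and requires a different verification: the paper notes that every finitely generated ideal of $\ZZ^J_p$ is principal, generated by some $(p^{n_j})_j$, and then checks injectivity of multiplication by this element on $A = \prod A_j$, which follows from $p$-torsionfreeness of each $A_j$ but is not a statement about flatness over $\ZZ_p$. Similarly, faithful flatness is not just ``$A/pA \neq 0$'' but requires that the fiber over \emph{every} maximal ideal of $\FF^J_p$ be nonzero; the paper checks this via the idempotent structure of $\FF^J_p$. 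Your Part (i) is fine in substance, though the paper gets $I$-torsionfreeness of $V^\flat_j$ from flatness of $\cO^\flat_C \to V^\flat_j$ rather than from the domain property, which is a minor variation.
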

\begin{proof}
	\ref{properties-arc-cover-inj}. 
	Since $A[1/I]^\wedge_p$ is $p$-completely flat over $A$ and $A$ is $p$-torsionfree, $A[1/I]^\wedge_p$ is also $p$-torsionfree (\cite[Lem.~4.7]{BMS19}) and thus flat over $\ZZ_p$.
	Moreover, $A_j[1/I]^\wedge_p$ is the ring of Witt vectors of the perfect field $V_j^\flat[1/I]$, so each $A_j[1/I]^\wedge_p$ and hence the whole product $\prod A_j[1/I]^\wedge_p$ is $p$-complete and flat over $\ZZ_p$.
	Thus, to show the injectivity of the two maps $A\to A[1/I]^\wedge_p \to \prod (A_j[1/I]^\wedge_p)$, it suffices to show the injectivity for the mod-$p$ reduction 
	\[
	V^\flat = \prod V_j^\flat \longrightarrow V^\flat[1/I] \longrightarrow \prod (V_j^\flat[1/I]).
	\]
	By the assumption in \cref{arc-local-setup}, each $V_j$ is a $p$-complete valuation ring of rank one containing $\cO_C$, so $V_j$ is flat over $\cO_C$ and the induced map $\cO_C^\flat\to V_j^\flat$ is flat as well.
	In particular, the localization map $V_j^\flat\to V_j^\flat[1/I]$ is injective;
	that is, $V_j^\flat$ has no $I$-torsion.
	As a consequence, the map of product rings $V^\flat=\prod V_j^\flat \to \prod (V_j^\flat[1/I])$ is injective by the exactness of the product functor. 
	This also implies that the ring $V^\flat=\prod V_j^\flat$ is $I$-torsionfree, so
	we get the injectivity of $V^\flat \to V^\flat[1/I]$.
	
	\ref{properties-arc-cover-flat}.
	First, we show the flatness of the maps.
	Note that as in the proof of \cref{arc-local-coherence}, any finitely generated ideal $(a_1,\ldots, a_m) \subseteq \ZZ^J_p$ is in fact principal:
	this is because each factor $\ZZ_p$ is a valuation ring, so the ideal can be generated by the element $(\max\{a_{1,j},\ldots, a_{m,j}\})_j$.
	To get the flatness of $\ZZ^J_p\to A$, it therefore suffices to check that for every element $(p^{n_j})_j\in \ZZ^J_p$, the map
	\[\begin{tikzcd}
		A=\prod A_j \arrow[r, "\cdot (p^j)_j"] & A
	\end{tikzcd}\]
	is injective.
    This follows from the fact that each $A_j=W(V_j^\flat)$ is the ring of Witt vectors of a perfect ring in characteristic $p$, hence $p$-torsionfree.
    
    Similarly, the flatness of $\ZZ^J_p \to A[1/I]^\wedge_p$ reduces to the injectivity of 
    \[
    \begin{tikzcd}
    	A[1/I]^\wedge_p \arrow[r, "\cdot (p^j)_j"] & A[1/I]^\wedge_p.
    \end{tikzcd}\]
    Since $A[1/I]^\wedge_p$ is further contained in $\prod A_j[1/I]^\wedge_p$ and each $A_j[1/I]^\wedge_p$ is $p$-torsionfree, this follows from the injectivity of
    \[\begin{tikzcd}
    	\prod (A_j[1/I]^\wedge_p) \arrow[r, "\cdot (p^j)_j"] & (\prod A_j[1/I]\wedge_p).
    \end{tikzcd}\]
    
    It remains to prove the faithful flatness of the maps.
    Since $p$ is contained in the Jacobson radical of $\ZZ^J_p$, we only have to verify that for every maximal ideal $\fm \subset \FF^J_p = \ZZ^J_p/p$, the fiber $V^\flat[1/I]/\fm \not\simeq 0$ (\cite[\href{https://stacks.math.columbia.edu/tag/00HP}{Tag~00HP}]{SP}).
    Any ideal of $\FF^J_p$ is generated by an idempotent $(\delta_j)_j$, where $\delta_j$ is either $0$ or $1$ for all $j \in J$.
    If the idempotent is nontrivial (i.e., $\delta_j = 0$ for some but not all $j \in J$), then $V^\flat[1/I]/\bigl((\delta_j)_j\bigr)$ contains the nontrivial component $V^\flat_j[1/I]$ and is therefore nonzero, so we win.
    
    \ref{properties-arc-cover-Frob-inv-AI}.
    Since $\ZZ^J_p$, $A$, and $A[1/I]^\wedge_p$ are $p$-torsionfree and $p$-complete, it suffices to show that the induced maps of reductions mod $p$ are isomorphisms of $\FF_p$-complexes.
    Since $V_j^\flat[1/I]$ and $V_j^\flat$ are absolutely integrally closed domains for each $j\in J$, the natural maps $\FF_p \to \fib(V_j^\flat \xrightarrow{\varphi - \id} V_j^\flat \bigr)$ and $\FF_p \to \fib(V_j^\flat[1/I] \xrightarrow{\varphi - \id} V_j^\flat[1/I] \bigr)$ are both isomorphisms.
    Taking their products, this implies that the natural map
    \[
    \FF^J_p \longrightarrow \fib \bigl(V^\flat  \xrightarrow{\varphi - \id} V^\flat  \bigr)
    \]
    is an isomorphism.
    Thanks to the $\varphi$-equvariant injections $V^\flat \to V^\flat[1/I] \to \prod (V_j^\flat[1/I])$, so is the natural map
    \[
    \FF^J_p \simeq \Hh^0\bigl(\fib (V^\flat[1/I] \xrightarrow{\varphi - \id} V^\flat[1/I])\bigr).
    \]
    
    We are thus left to show the vanishing of $\Hh^1\bigl(\fib (V^\flat[1/I] \xrightarrow{\varphi - \id} V^\flat[1/I])\bigr)$, which means that any equation $x^p-x=a/t$ is solvable within  $V^\flat[1/I]$ for any $a=(a_j)_j \in V^\flat$ and $t \in I\cdot V^\flat \subset V^\flat$.
    Replacing $x$ by $x/t^{1/p}$, we further reduce to solving
    \[
    x^p-t^{\frac{p-1}{p}}x=a.
    \]
    But again by the assumption on $V_j^\flat$ and $V_j^\flat[1/I]$, the $j$-th factor $x^p-t_j^{\frac{p-1}{p}} x=a_j$ admits a solution $x_j \in V_j^\flat$.
    Thus, $x = (x_j)_j\in V^\flat$ satisfies $x^p-t^{\frac{p-1}{p}}x=a$ and we get the desired vanishing of $\Hh^1(T)$.
    
    \ref{properties-arc-cover-Frob-inv-Am}.
    As the elements $\mu$ and $\varphi(\mu)$ are invertible in $A[1/I]^\wedge_p$, by \ref{properties-arc-cover-inj} the subring $A$ is both $\mu$-torsionfree and $\varphi(\mu)$-torsionfree.
    In particular, since the natural inclusions $A\to A[1/\mu] \to A[1/I]_p^\wedge$ are compatible with the $\varphi$-structures, by \ref{properties-arc-cover-Frob-inv-AI} we have injections
    \[
    \ZZ^J_p \hookrightarrow \Hh^0\bigl(\fib(A[1/\mu] \xrightarrow{\varphi - \iota} A[1/\varphi(\mu)])\bigr) \hookrightarrow \ZZ^J_p,
    \]
    which implies $\ZZ^J_p \simeq \Hh^0\bigl(\fib\bigl(A[1/\mu] \xrightarrow{\varphi - \iota} A[1/\varphi(\mu)])\bigr)$.
    It remains to show that $\Hh^1\bigl(\fib(A[1/\mu] \xrightarrow{\varphi - \iota} A[1/\varphi(\mu)])\bigr)$ vanishes.
    For this, it suffices to solve the equations $\varphi(x)-x=a/\varphi(\mu)^n$ for any $a=(a_j)_j\in A$.
    Replacing $x$ by $x/\mu^n$, this amounts to solving the following equation in $A$:
    \[
    \varphi(x)-\tilde{\xi}^n \cdot x = a.
    \]
    But note that for each $j\in J$, the equation $\varphi(x)-\tilde{\xi}^n \cdot x=a_j$ admits a solution $x_j$ in $A_j=\rA_{\inf}(V_j)$.
    Thus the element $x=(x_j)_j\in A$ satisfies the equation $\varphi(x)-\tilde{\xi}^n \cdot x = a$ and we get the vanishing of $\Hh^1\bigl(\fib(A[1/\mu] \xrightarrow{\varphi - \iota} A[1/\varphi(\mu)])\bigr)$.
\end{proof}
Next lemma concerns the $\varphi$-invariant of a perfect $A[1/I]^\wedge_p$-complex with a $\varphi$-structure.
\begin{lemma}\label{arc-local-AS}
    Let $A$ be a ring as in \cref{arc-local-setup}.
    Let $N$ be a perfect $A[1/I]^\wedge_p$-complex together with an $A[1/I]^\wedge_p$-isomorphism $\varphi_N \colon \varphi^*_{A[1/I]^\wedge_p} N \simeq N$.
    Let $\widetilde{\varphi}_N \colon N \to N$ be its Frobenius semilinear adjoint.
    \begin{enumerate}[label=\upshape{(\roman*)},leftmargin=*]
        \item\label{arc-local-AS-perfect} 
        The $\ZZ^J_p$-complex $T \colonequals \fib(\widetilde{\varphi}_N - \id \colon N \to N)$ is perfect and the natural $\varphi$-equivariant map $T \otimes^L_{\ZZ^J_p} A[1/I]^\wedge_p \to N$ is an isomorphism.
        \item\label{arc-local-AS-fp}
        Assume that $N$ is concentrated in degree $0$.
        Then $T \simeq \ker(\widetilde{\varphi}_N - \id \colon N \to N)$ and the natural map $T \otimes_{\ZZ^J_p} A[1/I]^\wedge_p \to N$ is an isomorphism.
    \end{enumerate}
\end{lemma}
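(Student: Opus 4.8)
The plan is to show that, writing $B \colonequals A[1/I]^\wedge_p$, the pair of adjoint functors
\[ F\colon D(\ZZ_p^J) \longrightarrow D^{\varphi}(B),\ \ M \longmapsto \bigl(M \otimes^L_{\ZZ_p^J} B,\, \id_M \otimes \varphi_B\bigr), \qquad G\colon (N,\varphi_N) \longmapsto \fib\bigl(\widetilde{\varphi}_N - \id\colon N \to N\bigr) \]
restricts to mutually inverse equivalences between $D_\perf(\ZZ_p^J)$ and $D^{\varphi}_\perf(B)$, with the counit $F(G(N)) \to N$ being exactly the map in the statement; here $\varphi_B$ is the ring Frobenius, which is $\ZZ_p^J$-linear and an automorphism since the prism $(A,I) = \bigl(\prod_j \rA_{\inf}(V_j),I\bigr)$ is perfect, and $G$ is right adjoint to $F$ by the extension–restriction adjunction together with the exactness of $R\Hom_{\ZZ_p^J}(M,-)$. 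Granting this, part (i) is the assertion that $G(N)$ is perfect and that the counit is an isomorphism, and part (ii) is its degree-$0$ refinement.

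First I would check $G\circ F \simeq \id_{D(\ZZ_p^J)}$ and the full faithfulness of $F$. Since $\ZZ_p^J \to B$ is faithfully flat (\cref{properties-arc-cover}.\ref{properties-arc-cover-flat}) and $\varphi_B$ is $\ZZ_p^J$-linear, one has $\widetilde{\varphi}_{F(M)} - \id = \id_M \otimes^L_{\ZZ_p^J}(\varphi_B - \id_B)$, so
\[ G\bigl(F(M)\bigr) \simeq M \otimes^L_{\ZZ_p^J} \fib(\varphi_B - \id_B) \simeq M \otimes^L_{\ZZ_p^J} \ZZ_p^J \simeq M \]
by the Artin–Schreier identification in \cref{properties-arc-cover}.\ref{properties-arc-cover-Frob-inv-AI}. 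In particular $F$ is fully faithful, the unit $\id \to GF$ is an equivalence, and by the triangle identities $G$ applied to the counit $\varepsilon_N\colon F(G(N)) \to N$ is an equivalence for every $N \in D^{\varphi}_\perf(B)$. It therefore remains to prove that $G$ is conservative on $D^{\varphi}_\perf(B)$, i.e.\ that $\widetilde{\varphi}_N - \id\colon N \to N$ being an isomorphism forces $N = 0$; then $\varepsilon_N$ is an isomorphism, so $N \simeq G(N) \otimes^L_{\ZZ_p^J} B$, and faithfully flat descent of perfectness along $\ZZ_p^J \to B$ shows $G(N)$ is perfect over $\ZZ_p^J$, which is part (i).

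For conservativity: since $N$ is derived $p$-complete and $B$ is $p$-torsion-free (\cref{properties-arc-cover}.\ref{properties-arc-cover-inj}), derived Nakayama reduces us to the mod-$p$ situation over $\overline{B} \colonequals B/p = (A/p)[1/I] = V^\flat[1/I]$, a perfect $\FF_p$-algebra which embeds into $\prod_j \bigl(V^\flat_j[1/I]\bigr) = \prod_j \overline{k}_j$ with each $\overline{k}_j$ algebraically closed (\cref{properties-arc-cover}.\ref{properties-arc-cover-inj} and its proof). An isomorphism of complexes induces isomorphisms on cohomology, so if $N/p \neq 0$ then its top cohomology module $M \colonequals \Hh^t(N/p)$ is a nonzero finitely presented $\overline{B}$-module carrying a $\varphi$-semilinear isomorphism (flat base change along the Frobenius of the perfect ring $\overline{B}$ turns $\varphi_{N/p}$ into one) on which $\widetilde{\varphi} - \id$ is an isomorphism. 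A Fitting-ideal computation over the perfect, hence reduced, ring $\overline{B}$ shows that the Fitting ideals of $M$ are finitely generated radical ideals, hence generated by the idempotents $1_S$ with $S \subseteq J$, so $M$ is finite projective and splits according to a finite partition of $J$ into loci of constant rank; picking $j$ in a locus where the rank is $\geq 1$ and base-changing to $\overline{k}_j$ yields a nonzero finite-dimensional module with a $\varphi$-semilinear isomorphism over an algebraically closed field, on which $\widetilde{\varphi} - \id$ is not injective by Lang's theorem — contradicting that $\widetilde{\varphi} - \id$ is an isomorphism on $M$. Hence $N = 0$.

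Finally, for part (ii), with $N$ finite projective over $B$, I would additionally show that $\widetilde{\varphi}_N - \id$ is surjective — again reducing mod $p$ and then using that, over each algebraically closed field $\overline{k}_j$, $\widetilde{\varphi} - \id$ is surjective on a free module with a $\varphi$-semilinear isomorphism (Hensel's lemma plus Lang), with the descent back to $\overline{B}$ carried out through the same idempotent/Fitting-ideal structure (alternatively, one can deduce all of part (ii) from \cite[Cor.~3.7, Cor.~3.8]{BS21} applied to the perfectoid formal scheme $\Spf\bigl(\prod_j V_j\bigr)$, whose prismatic structure sheaf evaluates to $A$). Then $T = G(N)$ is the honest kernel $\ker(\widetilde{\varphi}_N - \id)$, concentrated in degree $0$; it is $p$-torsion-free as a submodule of $N$ and perfect over $\ZZ_p^J$ by part (i), hence finite projective, and the counit $T \otimes_{\ZZ_p^J} B \to N$ is the (now underived) isomorphism. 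The step I expect to be the main obstacle is precisely this descent used in the proofs of conservativity and of the surjectivity of $\widetilde{\varphi}_N - \id$: the natural map $\overline{B} = V^\flat[1/I] \to \prod_j \overline{k}_j$ is only injective, not faithfully flat (the source is not von Neumann regular), so the consequences of Lang's theorem over the $\overline{k}_j$ must be transported back to $\overline{B}$ using the rigidity of the $\varphi$-module structure rather than flat descent.
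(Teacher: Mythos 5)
Your proposal takes a genuinely different route from the paper.  The paper works directly: it reduces mod $p$ by derived Nakayama, observes that $\Hh^q(N/p)$ is a finitely presented module over the coherent ring $\overline{B}=V^\flat[1/I]$, and then cites \cite[Prop.~3.2.13]{KL15} (projectivity of finitely presented $\varphi$-modules over perfect rings) and \cite[Lem.~3.2.6]{KL15} (existence of a Frobenius-fixed basis, going back to Katz) to see that the long exact sequence for the fiber of $\widetilde{\varphi}_N-\id$ breaks into short exact sequences with finitely presented $\FF^J_p$-kernels; part (ii) is then a two-line deduction from faithful flatness of $\ZZ^J_p\to B$.  You instead set up an adjunction $(F,G)$ and try to reduce part (i) to conservativity of $G$ on $D^\varphi_\perf(B)$, and to re-prove the Kedlaya--Liu/Katz inputs from scratch via Fitting ideals and Lang's theorem.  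This framing is clean, but it has a real gap and also underestimates how much of the KL/Katz rigidity you still need.

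The gap in the conservativity reduction: from $G(\varepsilon_N)$ being an isomorphism you want to conclude that $\varepsilon_N\colon F(G(N))\to N$ is an isomorphism, equivalently that $\mathrm{cofib}(\varepsilon_N)=0$.  But $F(G(N))$, and hence $\mathrm{cofib}(\varepsilon_N)$, is not known a priori to lie in $D^\varphi_\perf(B)$ — that is exactly what proving $G(N)$ perfect would give you, so the reasoning is circular.  Conservativity restricted to perfect $\varphi$-complexes does not apply to a cofiber whose source is only known to be some $B$-complex.  To make this run you would have to prove the vanishing statement on a larger category containing $\mathrm{cofib}(\varepsilon_N)$ — say, bounded complexes whose mod-$p$ cohomology is finitely presented over $\overline{B}$ — but then you would first need to know $G(N)/p$ has finitely presented cohomology, which is again essentially the content of the lemma.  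The paper sidesteps all of this by computing the long exact sequence of the fiber and showing directly that it breaks up.

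On the technical core: your Fitting-ideal argument (phrased with ``radical'' where you presumably mean ``idempotent'', i.e.\ $I=I^{[p]}\Rightarrow I=I^2\Rightarrow I=(e)$ for a finitely generated ideal) does re-derive the projectivity statement of \cite[Prop.~3.2.13]{KL15}.  But you then need a $\varphi$-fixed basis to see that $\widetilde{\varphi}-\id$ is surjective with finitely presented kernel — Lang's theorem gives this factor-by-factor over the $\overline{k}_j$, but, as you yourself observe at the end, $\overline{B}\to\prod_j\overline{k}_j$ is injective and not faithfully flat, so this does not descend by flatness and the real content is Katz's trivialization \cite[Lem.~3.2.6]{KL15}, which the paper simply invokes.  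So the proposal, as written, both has a logical gap in the reduction to conservativity and leaves the essential rigidity input as a gesture rather than a proof.  Lastly, the surjectivity argument you add for part (ii) is unnecessary once part (i) is in hand: $\Hh^1(T)\otimes_{\ZZ^J_p}B\simeq\Hh^1(N)=0$ plus faithful flatness of $\ZZ^J_p\to B$ already forces $\coker(\widetilde{\varphi}_N-\id)=0$, which is exactly how the paper concludes.
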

\begin{proof}
    \ref{arc-local-AS-perfect}.
    Since $\widetilde{\varphi}_N - \id$ is $\ZZ_p$-linear, the (derived) reductions mod $p^n$ satisfy
    \[ T / p^n \simeq \fib\bigl(\overline{\widetilde{\varphi}}_N - \id \colon N/p^n \to N/p^n\bigr) \]
    and $T$ is (derived) $p$-complete.
    By the perfectness of $\FF^J_p$ over $\ZZ^J_p$ and the derived Nakayama lemma \cite[\href{https://stacks.math.columbia.edu/tag/0G1U}{Tag~0G1U}]{SP}, it therefore suffices to check the statement after reduction mod $p$, replacing $A[1/I]^\wedge_p$ by $V^\flat[1/I]$, $N$ by $N/p$, and $T$ by $T/p$.
    
    By \cref{arc-local-coherence} and \cite[Cor.~3.1]{Har66}, $V^\flat[1/I]$ is coherent, so $\Hh^q(N)$ is a finitely presented $V^\flat[1/I]$ module for all $q \in \NN$.
    Thus, \cite[Prop.~3.2.13]{KL15} shows that $\Hh^q(N)$ is locally free.
    In fact, $\Hh^q(N)$ admits a basis which is fixed by $\widetilde{\varphi}_N$;
    see \cite[Lem.~3.2.6]{KL15} (which is based on \cite[Prop.~4.1.1]{Kat73}).
    On the other hand, by \cref{properties-arc-cover}.\ref{properties-arc-cover-Frob-inv-AI}, the natural map $\FF^J_p \to  \fib\bigl(V^\flat[1/I]\xrightarrow{\varphi - \id} V^\flat[1/I] \bigr)$ is an isomorphism.
    As a consequence, the long exact sequence associated with the fiber sequence $T \to N \xrightarrow{\widetilde{\varphi}_N - \id} N$ breaks up into short exact sequences
    \[ 0 \to \Hh^q(T) \to \Hh^q(N) \xrightarrow{\widetilde{\varphi}_N - \id} \Hh^q(N) \to 0. \]
    Hence $\Hh^q(T)$ are finitely presented $\FF^J_p$-modules and the induced maps
    \[ \Hh^q(T) \otimes_{\FF^J_p} V^\flat[1/I] \to \Hh^q(N) \]
    are isomorphisms.
    This yields \ref{arc-local-AS-perfect}.
    
    \ref{arc-local-AS-fp}.
    By \ref{arc-local-AS-perfect}, we have $T \otimes^L_{\ZZ^J_p} A[1/I]^\wedge_p \simeq N$.
    Taking cohomology groups, this shows that $\Hh^0(T)\otimes_{\ZZ^J_p} A[1/I]^\wedge_p \simeq N$ and $\Hh^q(T) = 0$ for $q \neq 0$ by the faithful flatness of $\ZZ^J_p \to A[1/I]^\wedge_p$ (\cref{properties-arc-cover}.\ref{properties-arc-cover-flat}).
    Thus, $T \simeq \ker(\widetilde{\varphi}_N - \id \colon N \to N)$ and $T \otimes_{\ZZ^J_p} A[1/I]^\wedge_p \simeq T \otimes^L_{\ZZ^J_p} A[1/I]^\wedge_p \to N$ is an isomorphism.
\end{proof}
We will need the following counterpart of \cref{global-sec} for our arc-local ring $A$.
\begin{proposition}\label{restriction-vb}
    Let $A$ be a ring as in \cref{arc-local-setup} and $U \colonequals \Spec(A) \smallsetminus V(p,I)$.
    Let $\cE \in \Vect^\an(A) = \Vect(U)$ be an analytic vector bundle of constant rank $r$ over $A$.
    Then $\Hh^0(U,\cE)$ is a free $A$-module of rank $r$.
\end{proposition}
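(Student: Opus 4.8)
The plan is to reduce the statement to purely ring-theoretic facts about modules over $A=\prod_{j\in J}A_j$ with $A_j=\rA_{\inf}(V_j)$, and then to run a componentwise argument comparing with \cite[Lem.~4.6]{BMS18}. Write $g\in A$ for the image of $[p]_q$, so that $I=(g)$, $g$ and $p$ are nonzerodivisors in $A$, and $U=D(p)\cup D(g)$ with $D(p),D(g),D(pg)$ affine. Since $\cE$ is a vector bundle, $\Hh^{>0}(D(f),\cE)=0$, so Mayer--Vietoris gives $\Hh^0(U,\cE)=\ker\bigl(\cE(A[1/p])\oplus\cE(A[1/g])\to\cE(A[1/pg])\bigr)$ with each term finitely presented over the corresponding localization of $A$. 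For each $j$, the idempotent $e_j$ realizes $\Spec A_j$ as a clopen subscheme of $\Spec A$, hence $U_j\colonequals U\cap\Spec A_j=\Spec A_j\setminus V(p,I)$ is clopen in $U$, $\cE_j\colonequals\cE|_{U_j}$ is a rank-$r$ vector bundle on $U_j$, and $e_j\cdot\Hh^0(U,\cE)=\Hh^0(U_j,\cE_j)$. Because $V_j$ is a perfectoid valuation ring of rank $\le 1$ with algebraically closed fraction field, \cite[Lem.~4.6]{BMS18} applies to $A_j$: restriction $\Vect(A_j)\xrightarrow{\sim}\Vect(U_j)$ is an equivalence and $\Hh^0(U_j,\cE_j)=\widetilde{\cE}_j(A_j)$ for the extension $\widetilde\cE_j$ of $\cE_j$; since $A_j=W(V_j^\flat)$ is local (its quotient $V_j^\flat$ is local and $p$ lies in the maximal ideal), $\widetilde\cE_j$ is finite free of rank $r$, so $\Hh^0(U_j,\cE_j)\cong A_j^{\oplus r}$.

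Next I would assemble the components. Clearing denominators and applying \cref{fp-modules-product} to finitely presented integral models (using \cref{arc-local-module} to remove $p$-power torsion, and \cref{arc-local-coherence}), together with the injectivity of localizations of $A$ into the corresponding products of localizations of the $A_j$ (as in the proof of \cref{properties-arc-cover}), yields compatible $A$-linear embeddings $\cE(A[1/p])\hookrightarrow\prod_j\cE_j(A_j[1/p])$ and likewise over $A[1/g]$ and $A[1/pg]$; taking kernels and using exactness of products gives an $A$-linear embedding $\Hh^0(U,\cE)\hookrightarrow\prod_j\Hh^0(U_j,\cE_j)\cong A^{\oplus r}$. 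Choosing an $A_j$-basis of each $\Hh^0(U_j,\cE_j)$ produces $r$ elements of $\prod_j\Hh^0(U_j,\cE_j)=A^{\oplus r}$ forming an $A$-basis, and it suffices to check that each such element lies in the submodule $\Hh^0(U,\cE)$; concretely, one must show that, with respect to the trivializations $\widetilde\cE_j\cong A_j^{\oplus r}$, the finitely presented modules $\cE(A[1/p])$ and $\cE(A[1/g])$ sit inside $\prod_jA_j[1/p]^{\oplus r}$ and $\prod_jA_j[1/g]^{\oplus r}$ as the lattices $A[1/p]^{\oplus r}$ and $A[1/g]^{\oplus r}$, whose intersection inside $\prod_jA_j[1/pg]^{\oplus r}$ is $A^{\oplus r}$ (using $A=A[1/p]\cap A[1/g]$ inside $A[1/pg]$, which follows directly from the fact that each $A/g^m=\prod_jA_j/\widetilde\xi_j^m$ is $p$-torsionfree). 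This identifies $\Hh^0(U,\cE)\cong A^{\oplus r}$. Once $\Hh^0(U,\cE)$ is known to be a finitely presented $A$-module that is projective of constant rank $r$, its freeness is automatic from \cref{fp-modules-product} and the locality of the $A_j$; so this assembly is the whole content.

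I expect the main obstacle to be exactly the last step: promoting ``$\cE$ is trivial on each clopen $U_j$'' — which we get for free from the locality of $A_j$ and \cite[Lem.~4.6]{BMS18} — to a statement controlling $\cE$ over all of $U$. The geometric reason for the difficulty is that $\Spec A$ is the Stone--\v Cech-type compactification of $\bigsqcup_j\Spec A_j$ and carries ``ultrafilter'' points lying in no $U_j$, so componentwise triviality does not formally imply global triviality; the real work is to control, uniformly in $j$, the denominators of a compatible family of componentwise trivializations, which is where finite presentedness of $\cE$ over the affines $D(p),D(g)$ and \cref{fp-modules-product} (an analogue of \cref{global-sec} for the nonnoetherian ring $A$) genuinely enter. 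An alternative route would be to extend $\cE$ to a finite free bundle on $\Spec A$ directly via $(p,I)$-completely flat descent (\cref{an crystal descent}), but since $A=\prod_jA_j$ is already terminal for such covers this again collapses to the same componentwise analysis, so I would pursue the argument above rather than look for a genuinely different mechanism.
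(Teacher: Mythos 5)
Your reduction via Mayer--Vietoris and the componentwise identification $\Hh^0(U_j,\cE_j)\cong A_j^{\oplus r}$ via \cite[Lem.~4.6]{BMS18} are both fine, but the assembly step --- which you correctly flag as ``the real work'' --- is exactly where the proposal stops being a proof. You write that one ``must show'' that $\cE(A[1/p])$ and $\cE(A[1/g])$ sit inside $\prod_jA_j[1/p]^{\oplus r}$ resp.\ $\prod_jA_j[1/g]^{\oplus r}$ as the lattices $A[1/p]^{\oplus r}$ resp.\ $A[1/g]^{\oplus r}$, but you never supply a mechanism for producing such a \emph{compatible} family of trivializations. The componentwise isomorphisms $\cE_j(A_j[1/p])\cong A_j[1/p]^{\oplus r}$ involve arbitrary choices of bases, and the denominators appearing in those bases (relative to some fixed integral model of $\cE$) could grow without bound in $j$. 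Since localization does not commute with infinite products --- for instance $(\prod_j\ZZ_p)[1/p]\subsetneq\prod_j\QQ_p$, because elements on the left have uniformly bounded $p$-adic denominators --- a coordinate-by-coordinate trivialization produces an element of $\prod_j A_j[1/p]^{\oplus r}$ that has no reason to lie in the subring $A[1/p]^{\oplus r}$. Finite presentation of $\cE(A[1/p])$ over $A[1/p]$ alone gives you a presentation, not freeness, and \cref{fp-modules-product} tells you finitely presented modules decompose into component modules, which is the direction you already have; it does not let you recombine arbitrary componentwise data into a global module.

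The paper's proof supplies precisely the two uniformity inputs your proposal is missing, and neither is a componentwise argument. First, it invokes \cite[Thm.~6.1]{Iva20} to conclude that $\cE[1/p]$ is free over $A[1/p]$ outright, fixing a single global trivialization; this is not a formal consequence of each $\cE_j[1/p]$ being free. Second, to extend $\cE[1/I]^\wedge_p$ across $V(p)$, it reformulates the problem as lifting a point of the Witt vector affine Grassmannian $\Gr$ over $V^\flat[1/I]$ to $V^\flat$, and then crucially uses the boundedness $[\cE[1/I]^\wedge_p]\in\Gr_n(V^\flat[1/I])$ for a single $n$, the valuative criterion of properness for the finitely presented algebraic space $\Gr_n$, and the identity $\Gr_n(\prod_jV^\flat_j)\simeq\prod_j\Gr_n(V^\flat_j)$ from \cite{Bha16}. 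The bound $n$ is what gives uniform control over $j$; the separatedness of $\Gr_n$ together with the reducedness of $V^\flat[1/I]$ then shows the lift descends from $\prod_jV^\flat_j$ to $V^\flat$ itself (injectivity of $V^\flat[1/I]\to\prod_jV^\flat_j[1/I]$ is used here, not to embed modules but to see a dominant map of schemes). You also dismiss extending $\cE$ to a free bundle on $\Spec(A)$ as ``collapsing to the same componentwise analysis,'' but this is exactly the route the paper takes, and the Grassmannian argument is not a componentwise analysis: it is a properness-and-boundedness argument whose entire purpose is to avoid having to control denominators coordinate by coordinate. As it stands, your proposal identifies the right difficulty but does not resolve it.
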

\begin{proof}
    To ease notation, we identify $\cE[1/p]$ and $\cE[1/I]$ with their sections over $\Spec(A[1/p])$ and $\Spec(A[1/I])$, respectively.
    First, we show that $\cE$ extends to a vector bundle on $\Spec(A)$.
    This task can be phrased in slightly different terms:
    By \cite[Thm.~6.1]{Iva20}, $\cE[1/p$] is in fact free over $A[1/p]$; let us fix a trivialization $\cE[1/p] \simeq A[1/p]^{\oplus r} = W(V^\flat)[1/p]^{\oplus r}$.
    Moreover, every vector bundle on the local ring $A$ is free and $\cE[1/I]$ is constructed from $\cE[1/pI]$ and $\cE[1/I]^\wedge_p$ via Beauville--Laszlo gluing along $V(p) \subset \Spec(A[1/I])$.
    Thus, we need to find a free $A$-module $N$ which makes the left half of the following diagram into a pullback square:
    \begin{equation}\label{restriction-vb-extend} \begin{tikzcd}
        N \arrow[r,hook,dotted] \arrow[d,hook,dotted] & \cE[1/p] \arrow[d,hook] \arrow[r,phantom,"\simeq"] &[-20pt] W(V^\flat)[1/p]^{\oplus r} \arrow[d,hook] \\
        \cE[1/I]^\wedge_p \arrow[r,hook] & \cE[1/I]^\wedge_p[1/p] \arrow[r,phantom,"\simeq"] & W(V^\flat[1/I])[1/p]^{\oplus r}.
    \end{tikzcd} \end{equation}
    Here, we also use that the natural map $A[1/I]^\wedge_p \to W(V^\flat[1/I])$ is an isomorphism because both source and target are $p$-torsionfree and $p$-complete and the map is an isomorphism mod $p$.
    
    Recall that the Witt vector affine Grassmannian is the functor on perfect $\FF_p$-algebras given by
    \[ \Gr \colon R \mapsto \left\{ \text{finite projective $W(R)$-modules } N \subseteq W(R)[1/p]^{\oplus r} \text{ with } N[1/p] = W(R)[1/p]^{\oplus r} \right\}. \]
    Extending the diagram (\ref{restriction-vb-extend}) amounts to lifting the point $\bigl[\cE[1/I]^\wedge_p\bigr] \in \Gr(V^\flat[1/I])$ corresponding to the bottom horizontal arrow along the natural map $\Gr(V^\flat) \to \Gr(V^\flat[1/I])$.
    By \cite[Thm.~0.1]{Zhu17}, $\Gr$ is an increasing union of perfections of proper finitely presented algebraic spaces $\Gr_n$ over $\FF_p$ (and even $\FF_p$-schemes by \cite[Thm.~1.1, Thm.~8.3]{BS17}) given by
    \[ \Gr_n \colon R \mapsto \bigl\{ N \in \Gr(R) \suchthat p^n W(R)^{\oplus r} \subseteq N \subseteq p^{-n} W(R)^{\oplus r} \bigr\}. \]
    Since $W(V^\flat[1/I])[1/p]^{\oplus r} = \bigcup_{n \in \NN} p^{-n} \cdot W(V^\flat[1/I])^{\oplus r}$ and $\cE[1/I]^\wedge_p$ is a finitely presented $A[1/I]^\wedge_p$-module, there exists an $n \gg 0$ such that $\cE[1/I]^\wedge_p \subseteq p^{-n} \cdot W(V^\flat[1/I])^{\oplus r}$.
    Likewise, $W(V^\flat[1/I])^{\oplus r} \subseteq p^{-n} \cdot \cE[1/I]^\wedge_p$ and thus $\cE[1/I]^\wedge_p \subseteq p^n \cdot W(V^\flat[1/I])^{\oplus r}$ for some $n \gg 0$ because $\cE[1/I]^\wedge_p[1/p] = \bigcup_{n \in \NN} p^{-n} \cdot \cE[1/I]^\wedge_p$.
    Therefore, we can fix an $n \gg 0$ such that $\bigl[\cE[1/I]^\wedge_p\bigr] \in \Gr_n(V^\flat[1/I])$.
    
    For each $j \in J$, set as before $\cE_j \colonequals e_j \cdot \cE$.
    By functoriality, we still have $\bigl[\cE_j[1/I]^\wedge_p\bigr] \in \Gr_n(V^\flat_j[1/I])$.
    The valuative criterion for $\Gr_n$ applied to each valuation ring $V^\flat_j$ then shows that $\bigl[\cE_j[1/I]^\wedge_p\bigr] \in \Gr_n(V^\flat_j[1/I])$ can be lifted uniquely along the map $\Gr_n(V^\flat_j) \to \Gr_n(V^\flat_j[1/I]))$.
    After taking products and using the isomorphisms from \cite[Thm.~6.1, Thm.~7.1]{Bha16}, this shows that the image $\bigl[\prod_j (\cE_j[1/I])^\wedge_p\bigr] \in \Gr_n(\prod_j (V^\flat_j[1/I]))$ can be lifted uniquely along
    \[ \prod_{j \in J} \Gr_n(V^\flat_j) \simeq \Gr_n(V^\flat) \to \Gr_n(V^\flat[1/I]) \to \Gr_n\Bigl(\prod_{j \in J} (V^\flat_j[1/I])\Bigr) \simeq \prod_{j \in J} \Gr_n(V^\flat_j[1/I]) \]
    to some $[N] \in \Gr_n(V^\flat)$.
    Both $\cE[1/I]^\wedge_p$ and $N$ now determine maps $\Spec(V^\flat[1/I]) \to \Gr_n$ which agree after precomposing with $\Spec\bigl(\prod_{j \in J} (V^\flat_j[1/I])\bigr) \to \Spec(V^\flat[1/I])$, which is dominant  because the natural map $V^\flat[1/I] \to \prod_{j \in J} (V^\flat_j[1/I])$ is injective  (\cref{properties-arc-cover}.\ref{properties-arc-cover-inj}).
    Since $V^\flat[1/I]$ is reduced and $\Gr_n$ is separated, the two maps must therefore coincide on all of $\Spec(V^\flat[1/I])$ (see e.g.\ \cite[\href{https://stacks.math.columbia.edu/tag/01KM}{Tag~01KM}]{SP});
    in other words, $N$ is the promised extension for (\ref{restriction-vb-extend}).
    
    Lastly, since $N$ is finitely presented, the natural map $N \to \prod N_j$ from \cref{fp-modules-product} is an isomorphism.
    Each $N_j$ is a projective module over the local ring $A_j$ and thus free of rank $r$, so $N$ is free of rank $r$ as well.
    Since $p,\widetilde{\xi}$ is a regular sequence of length $2$ on $A$, we conclude
    \[ \Hh^0(U,\cE) = \Hh^0(U,\restr{N}{U}) = \Hh^0(U,\cO^{\oplus r}) \simeq A^{\oplus r}. \qedhere \]
\end{proof}
\begin{corollary}\label{free-reduction}
    Let $A$ be a ring as in \cref{arc-local-setup}.
    Let $(M,\varphi_M)$ be an object in  $D_\perf^\varphi(A)$ such that $M=\Hh^0(M)$ and $M[1/p]$ is locally free of constant rank.
    Then there exists an object $(N, \varphi_N)\in D_\perf^\varphi(A)$ and a morphism $(M,\varphi_M) \to (N,\varphi_N)$, such that $N=\Hh^0(N)$ is a free $A$-module and the kernel and cokernel of the map are finitely presented and $p^n$-torsion for $n \gg 0$.
\end{corollary}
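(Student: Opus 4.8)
The plan is to realize $N$ as the module of global sections of a suitable analytic vector bundle on $U \colonequals \Spec(A) \smallsetminus V(p,I)$ and to deduce its freeness from \cref{restriction-vb}. First I would reduce to the case that $M$ is $p$-torsionfree. Since $M[1/p]$ is locally free, \cref{arc-local-module}.\ref{arc-local-module-torsion} shows that $M[p^\infty] \subseteq M$ is finitely presented, hence equals $M[p^n]$ for some $n$, and $M^\circ \colonequals M/M[p^\infty]$ is a finitely presented $p$-torsionfree $A$-module carrying the Frobenius isomorphism $\varphi_A^*M^\circ[1/I] \simeq M^\circ[1/I]$ inherited from $\varphi_M$, with $M^\circ[1/p] = M[1/p]$ still locally free of constant rank $r$. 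The canonical surjection $M \to M^\circ$ is $\varphi$-equivariant with kernel $M[p^n]$ (finitely presented and $p^n$-torsion), so it suffices to treat $M^\circ$; hence I may assume $M$ is $p$-torsionfree.

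Next I would analyze the two halves of $U$. Over $\Spec(A[1/p]) \subseteq U$ the restriction of $M$ is the locally free module $M[1/p]$ of rank $r$. The complement $V(p)\cap U$ lies in the affine open $\Spec(A[1/I]) \subseteq U$, whose $p$-adic completion is $\Spf(A[1/I]^\wedge_p)$. Because $M$ is $p$-torsionfree and perfect over $A$, the complex $M[1/I]^\wedge_p$ is a perfect $A[1/I]^\wedge_p$-complex concentrated in degree $0$, with Frobenius isomorphism obtained by $p$-completing $\varphi_M[1/I]$. Applying \cref{arc-local-AS}.\ref{arc-local-AS-fp} gives $T \colonequals \ker(\widetilde\varphi - \id)$, a perfect $\ZZ^J_p$-complex in degree $0$ with $T \otimes_{\ZZ^J_p} A[1/I]^\wedge_p \xrightarrow{\sim} M[1/I]^\wedge_p$; as a submodule of $M[1/I]^\wedge_p$ it is $p$-torsionfree, so by \cref{fp-modules-product} it is $\prod_j T_j$ with each $T_j$ finitely presented and $p$-torsionfree over $\ZZ_p$, hence free, of rank bounded by perfectness. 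Comparing ranks with $M[1/p]$ over each domain $A_j[1/p]$ forces $\mathrm{rk}(T_j) = r$, whence $T \cong (\ZZ^J_p)^{\oplus r}$ and $M[1/I]^\wedge_p \cong (A[1/I]^\wedge_p)^{\oplus r}$ is free of rank $r$. Glueing (Beauville--Laszlo along $V(p) \cap U$) the bundle $M[1/p]$ on $\Spec(A[1/p])$ to $M[1/I]^\wedge_p$ on $\Spf(A[1/I]^\wedge_p)$ along the canonical identification of both with $M \otimes_A A[1/I]^\wedge_p[1/p]$ over $\Spec(A[1/I]^\wedge_p[1/p])$ produces an analytic vector bundle $\cE \in \Vect^\an(A) = \Vect(U)$ of constant rank $r$, and the Frobenius structures on the two pieces glue to an isomorphism $\varphi_A^*\cE[1/I] \simeq \cE[1/I]$.

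Finally I would set $N \colonequals \Hh^0(U,\cE)$. By \cref{restriction-vb} this is a free $A$-module of rank $r$, so $\varphi_A^* N$ is again free and $(N,\varphi_N) \in D^\varphi_\perf(A)$ with the $\varphi$-structure inherited from $\cE$ (using that $N[1/I]$ is the module of sections of $\cE$ over $\Spec(A[1/I]) \subseteq U$). The glueing identifies $N$ with the fiber product $M[1/p] \times_{M \otimes_A A[1/I]^\wedge_p[1/p]} M[1/I]^\wedge_p$, into which $M$ maps compatibly through $M \to M[1/p]$ and $M \to M[1/I]^\wedge_p$. Since $M$ is $p$-torsionfree, $M \to M[1/p]$ is injective, hence so is $M \to N$; inverting $p$ gives $N[1/p] = M[1/p]$, so the cokernel is $p$-power torsion, and it is finitely presented as the cokernel of a map of finitely presented $A$-modules, hence $p^n$-torsion. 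Tracking the $\varphi$-structures through the glueing shows $M \to N$ is $\varphi$-equivariant; composing with $M \to M^\circ$ disposes of the general case.

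I expect the main obstacle to be the organization of the Frobenius-equivariant glueing: one must check that $\varphi_M[1/p]$ on $M[1/p]$, the $p$-completion of $\varphi_M[1/I]$ on $M[1/I]^\wedge_p$, and the comparison isomorphisms on the overlap are all mutually compatible, so that they genuinely descend to an object of $D^\varphi_\perf(A)$ and the map from $M$ is a morphism there; this is bookkeeping but needs care since the relevant localizations and completions do not commute on the nose. The other point requiring real input is the identification of $M[1/I]^\wedge_p$ as a free $A[1/I]^\wedge_p$-module of rank $r$, which is exactly the combination of the $p$-torsionfree reduction, \cref{arc-local-AS}, and the rank comparison above, and which is what ultimately makes \cref{restriction-vb} applicable with constant rank.
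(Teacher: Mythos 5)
Your proposal is correct and follows essentially the same route as the paper: reduce to the $p$-torsionfree case using \cref{arc-local-module}.\ref{arc-local-module-torsion}, show that $M[1/I]^\wedge_p$ is free by applying \cref{arc-local-AS}.\ref{arc-local-AS-fp}, deduce via Beauville--Laszlo gluing along $V(p) \subset \Spec(A[1/I])$ that $\widetilde{M}$ restricts to an analytic vector bundle of constant rank on $U$, and then invoke \cref{restriction-vb} to get the free module $N = \Hh^0(U,\widetilde{M})$. Your rank comparison establishing $\mathrm{rk}(T_j)=r$ for all $j$ makes explicit a point the paper leaves implicit when it asserts that the finitely presented $p$-torsionfree $\ZZ_p^J$-module $T$ is free (rather than merely projective).
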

\begin{proof}
    First, since $M$ is finitely presented by assumption, the $p^\infty$-torsion submodule $M[p^\infty] \subset M$ is still finitely presented over $A$ by \cref{arc-local-module}.\ref{arc-local-module-torsion}.
    As quotients of finitely presented by finitely generated modules are finitely presented (\cite[\href{https://stacks.math.columbia.edu/tag/0519}{Tag~0519}]{SP}) and $\varphi_M$ restricts to an isomorphism $\varphi^*_{A[1/I]}M[p^\infty][1/I] \simeq M[p^\infty][1/I]$, we may replace $M$ by $M/M[p^\infty]$ and assume that $M$ is $p$-torsionfree.
    In this case, we claim that $M$ restricts to a vector bundle over $U = \Spec(A[1/p]) \cup \Spec(A[1/I])$.
    By the local freeness of $M[1/p]$ and Beauville--Laszlo gluing along $V(p) \subset \Spec(A[1/I])$, it suffices to prove that $M[1/I]^\wedge_p$ is a vector bundle over $A[1/I]^\wedge_p$ (necessarily of rank $r$).
    \Cref{arc-local-AS}.\ref{arc-local-AS-fp} shows that there is an isomorphism $T \otimes_{\ZZ^J_p} A[1/I]^\wedge_p \xrightarrow{\sim} M[1/I]^\wedge_p$ for a finitely presented $\ZZ^J_p$-module $T$.
    Since $M[1/I]^\wedge_p$ is still $p$-torsionfree, $T$ must be a free $\ZZ^J_p$-module, so $M[1/I]^\wedge_p$ is a free $A[1/I]^\wedge_p$-module, as desired.
 
    By \cref{restriction-vb}, $N \colonequals \Hh^0(U,\widetilde{M})$ is a free $A$-module.
    The cokernel of the natural injection $M \hookrightarrow N$ is a quotient of finitely presented modules, hence finitely presented.
    Moreover, since $U \cap \Spec A[1/p] = \Spec A[1/p]$, we have $\Hh^0(U,\widetilde{M})[1/p] = M[1/p]$, so the cokernel is killed after inverting $p$, hence $p^n$-torsion for $n \gg 0$.
    Lastly, $\varphi^{-1}(U) = U$ and $U \cap \Spec A[1/I] = \Spec A[1/I]$, so $\varphi_M$ and (flat) base change along the isomorphism $\varphi_A$ induce an isomorphism
    \begin{align*}
        \varphi_N \colon \varphi^*N[1/I] & = \varphi^*_A\Hh^0\bigl(U,\widetilde{M}\bigr)[1/I] \simeq \Hh^0\bigl(\varphi^{-1}(U),\varphi^*\widetilde{M}\bigr)[1/I] \simeq \Hh^0\bigl(U,\tilde{\varphi^*_AM[1/I]}\bigr) \\
        & \xrightarrow[\varphi_M]{\sim} \Hh^0\bigl(U,\widetilde{M[1/I]}\bigr) \simeq \Hh^0\bigl(U,\widetilde{M}\bigr)[1/I] \simeq N[1/I]. \qedhere
    \end{align*}
\end{proof}
Assembling all of our algebraic preparations, we are finally ready to prove the technical heart of the strong \'etale comparison.
We start with the calculation on the abelian level.
\begin{theorem}\label{strong-etale-fp}
    Let $A$ be a ring as in \cref{arc-local-setup}.
    Let $M$ be a finitely presented $A$-module together with an $A$-isomorphism $\varphi_M \colon \varphi_A^*M[1/I] \simeq M[1/I]$ such that $M[1/p]$ is locally free of finite constant rank and the $p^\infty$-torsion submodule $M[p^\infty]$ is finitely presented.
    Let $\widetilde{\varphi}_M \colon M[1/\varphi^{-1}(I)] \to M[1/I]$ be the Frobenius semilinear adjoint of $\varphi_M$.
    Set
	\begin{align*}
    	T'(M) & \colonequals \ker( \widetilde{\varphi}_M - \iota \colon  M[1/\mu] \to M[1/\varphi(\mu)]) \quad \text{and} \\
    	T(M) & \colonequals \ker(\widetilde{\varphi}_M \otimes \varphi_{A[1/I]^\wedge_p} - \id \colon M \otimes_A A[1/I]^\wedge_p \to M \otimes_A A[1/I]^\wedge_p).
	\end{align*}
    \begin{enumerate}[label=\upshape{(\roman*)},leftmargin=*]
        \item\label{strong-etale-fp-fp} The $\ZZ^J_p$-modules $T(M)$ and $T'(M)$ are finitely presented. 
        \item\label{strong-etale-fp-lattice} The natural map of $\ZZ^J_p$-modules $T'(M) \to T(M)$ is an isomorphism.
        \item\label{strong-etale-fp-complex} The natural map of $A[1/\mu]$-modules $T'(M) \otimes_{\ZZ^J_p} A[1/\mu] \to M[1/\mu]$ is an isomorphism.
    \end{enumerate}
\end{theorem}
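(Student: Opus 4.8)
The plan is to prove all three assertions first when $M$ is a finite free $A$-module, then when $M$ is killed by a power of $p$, and finally to deduce the general case by dévissage along the factorization furnished by \cref{free-reduction}. It is convenient to introduce the two functors $M \mapsto R\Gamma'(M) \colonequals \fib\bigl(\widetilde{\varphi}_M - \iota \colon M[1/\mu] \to M[1/\varphi(\mu)]\bigr)$ and $M \mapsto R\Gamma^\wedge(M) \colonequals \fib\bigl(\widetilde{\varphi}_M\otimes\varphi - \id \colon M\otimes_A A[1/I]^\wedge_p \to M\otimes_A A[1/I]^\wedge_p\bigr)$ on the category $\cC$ of finitely presented $A$-modules $M$ equipped with an isomorphism $\varphi^*_A M[1/I] \simeq M[1/I]$ whose $[1/p]$-localization is locally free of constant rank, together with the natural transformation $R\Gamma'(M) \to R\Gamma^\wedge(M)$ induced by base change along $A[1/\mu] \to A[1/I]^\wedge_p$. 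By \cref{arc-local-module}, every $M \in \cC$ is perfect over $A$ with bounded $p$-power torsion, so the derived base changes above are concentrated in degree $0$; hence $R\Gamma'$ and $R\Gamma^\wedge$ are exact functors on $\cC$, we have $T'(M) = \Hh^0 R\Gamma'(M)$ and $T(M) = \Hh^0 R\Gamma^\wedge(M)$, and \cref{arc-local-AS}.\ref{arc-local-AS-fp} applied to $M\otimes_A A[1/I]^\wedge_p$ already gives that $R\Gamma^\wedge(M)$ is concentrated in degree $0$, that $T(M)$ is finitely presented over $\ZZ^J_p$, and that $T(M)\otimes_{\ZZ^J_p} A[1/I]^\wedge_p \xrightarrow{\sim} M\otimes_A A[1/I]^\wedge_p$. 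It therefore remains to compare $R\Gamma'$ with $R\Gamma^\wedge$.

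For $M = N$ finite free over $A = \prod_j A_j$, write $N = \prod_j N_j$ with $N_j$ finite free over $A_j = \rA_{\inf}(V_j)$ of the constant rank $r$ of $M[1/p]$. After a Breuil--Kisin--Fargues twist by a power of $\mu$, which changes none of the $[1/\mu]$-localizations, we may invoke \cite[Lem.~4.26, Rem.~4.29]{BMS18} for each $N_j$ (exactly as in the proof of \cref{pris to crys lem}): $T(N_j)$ is finite free over $\ZZ_p$ of rank $r$ and $T(N_j)\otimes_{\ZZ_p} A_j[1/\mu] \xrightarrow{\sim} N_j[1/\mu]$, compatibly with $\widetilde{\varphi}_{N_j}$ and $\id\otimes\varphi$. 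Since everything is free, taking products is harmless: one obtains $T'(N) \simeq \prod_j T(N_j)$ finite free over $\ZZ^J_p$, the natural map $T'(N) \to T(N)$ is an isomorphism, $T'(N)\otimes_{\ZZ^J_p} A[1/\mu] \xrightarrow{\sim} N[1/\mu]$, and — using \cref{properties-arc-cover}.\ref{properties-arc-cover-Frob-inv-Am} to identify $\fib(\varphi-\iota \colon A[1/\mu] \to A[1/\varphi(\mu)])$ with $\ZZ^J_p$ — the complex $R\Gamma'(N)$ is concentrated in degree $0$. This settles the free case. For $M = K$ killed by $p^n$, we have $K[1/p] = 0$, so $K \in \cC$, $K$ is perfect over $A$, and $K\otimes_A A[1/I]^\wedge_p = K[1/I]$. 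Since $\bar\mu$, $\overline{\widetilde{\xi}}$ and $\overline{\varphi(\mu)}$ have the same radical in $A/p^n$ (because $\overline{\widetilde{\xi}} \equiv \bar\mu^{p-1}$ modulo the nilpotent ideal $pA/p^n$ and $\varphi(\mu) = \mu\widetilde{\xi}$), we get $K[1/\mu] = K[1/\varphi(\mu)] = K[1/I] = K\otimes_A A[1/I]^\wedge_p$, under which $\iota$ becomes the identity; hence $R\Gamma'(K) \xrightarrow{\sim} R\Gamma^\wedge(K)$, which by \cref{arc-local-AS}.\ref{arc-local-AS-fp} is $T(K)$ in degree $0$, finitely presented over $\ZZ^J_p$, with $T(K)\otimes_{\ZZ^J_p} A[1/I]^\wedge_p \xrightarrow{\sim} K[1/I]$; via the same radical identity this reads $T'(K) = T(K)$ and $T'(K)\otimes_{\ZZ^J_p} A[1/\mu] \xrightarrow{\sim} K[1/\mu]$.

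For general $M \in \cC$, apply \cref{free-reduction} to get a morphism $(M,\varphi_M) \to (N,\varphi_N)$ with $N$ finite free over $A$ and kernel $K$, cokernel $Q$ finitely presented and killed by $p^n$; factor it as $M \twoheadrightarrow M' \hookrightarrow N$, giving short exact sequences $0 \to K \to M \to M' \to 0$ and $0 \to M' \to N \to Q \to 0$ in $\cC$ (the Frobenius structures on $K[1/I]$, $M'[1/I]$, $Q[1/I]$ being the restrictions of $\varphi_M$, $\varphi_N$). Since $M' \in \cC$, \cref{arc-local-AS}.\ref{arc-local-AS-fp} shows $R\Gamma^\wedge(M')$ is concentrated in degree $0$; combining this with the exactness of $R\Gamma'$, $R\Gamma^\wedge$ on $\cC$ and the isomorphisms $R\Gamma'(-) \xrightarrow{\sim} R\Gamma^\wedge(-)$ on $N$, $Q$ and $K$ from the previous paragraph, dévissage along the two short exact sequences yields $R\Gamma'(M) \xrightarrow{\sim} R\Gamma^\wedge(M)$. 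Hence $R\Gamma'(M)$ is concentrated in degree $0$ and $T'(M) = \Hh^0 R\Gamma'(M) \xrightarrow{\sim} T(M)$ is finitely presented over $\ZZ^J_p$, proving \ref{strong-etale-fp-fp} and \ref{strong-etale-fp-lattice}. For \ref{strong-etale-fp-complex}, observe that on the five objects $K,M,M',N,Q$ both $P \mapsto T'(P)\otimes_{\ZZ^J_p} A[1/\mu]$ (exact there, as $R\Gamma'(P)$ is in degree $0$ and $\ZZ^J_p \to A[1/\mu]$ is flat) and $P \mapsto P[1/\mu]$ are exact functors, and the natural transformation between them is an isomorphism on the free object $N$ and on the $p^n$-torsion objects $K$, $Q$; dévissage along the same two sequences then gives $T'(M)\otimes_{\ZZ^J_p} A[1/\mu] \xrightarrow{\sim} M[1/\mu]$. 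The main obstacle is the free case: the assertion that the Frobenius-fixed vectors of $N_j\otimes_{A_j} A_j[1/\widetilde{\xi}]^\wedge_p$ are already $\mu$-integral is \cite[Lem.~4.26]{BMS18} and ultimately rests on Kedlaya's slope filtrations over rank-one valuation rings; everything else is formal bookkeeping with localizations and dévissage.
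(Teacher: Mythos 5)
Your overall plan — free case, $p$-power-torsion case, then dévissage via \cref{free-reduction} — matches the paper's, and the dévissage, the $p$-power-torsion case (via the radical identity for $\bar\mu$, $\bar{\tilde\xi}$, $\overline{\varphi(\mu)}$ modulo $p^n$), and the free case of part \ref{strong-etale-fp-lattice} are all sound. The genuine gap is in the free case of part \ref{strong-etale-fp-complex}, where you assert that $T'(N)\otimes_{\ZZ^J_p} A[1/\mu]\to N[1/\mu]$ is an isomorphism because ``everything is free, taking products is harmless.'' This step is not justified. For infinite $J$ the natural map $A[1/\mu]\to\prod_j A_j[1/\mu]$ is a strict inclusion (localization at a single element does not commute with an infinite product), so the componentwise isomorphisms $T(N_j)\otimes_{\ZZ_p}A_j[1/\mu]\xrightarrow{\sim} N_j[1/\mu]$ supplied by \cite[Rem.~4.29]{BMS18} give an isomorphism over $\prod_j A_j[1/\mu]$, not over the subring $A[1/\mu]$: the $\mu$-denominators of the inverse transition matrix could a priori grow without bound as $j$ varies. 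You would have to show these denominators are uniformly bounded, which is precisely the kind of uniformity the $j$-by-$j$ slope-theory argument does not provide.

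The paper circumvents this with a different argument for \ref{strong-etale-fp-complex} in the free case. Having established \ref{strong-etale-fp-lattice} and the fact that the functor $T(\,\cdot\,)$ commutes with duals of free modules (which uses the isomorphism over $A[1/I]^\wedge_p$ and \cref{properties-arc-cover}.\ref{properties-arc-cover-Frob-inv-AI}), one forms the natural map of perfect pairings from $\bigl(T'(M)\otimes_{\ZZ^J_p}A[1/\mu]\bigr)\otimes^L\bigl(T'(M)^\vee\otimes_{\ZZ^J_p}A[1/\mu]\bigr)\to A[1/\mu]$ to $M[1/\mu]\otimes^L M[1/\mu]^\vee\to A[1/\mu]$ and applies \cref{split-inj} to deduce that $T'(M)\otimes_{\ZZ^J_p}A[1/\mu]\to M[1/\mu]$ is a \emph{split} injection; since both sides are free $A[1/\mu]$-modules of the same rank $r$ (by \ref{strong-etale-fp-lattice} and the isomorphism over $A[1/I]^\wedge_p$), the complement is a rank-zero projective module and therefore vanishes. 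Replacing your product argument by this pairing argument closes the gap, and the rest of your proof then goes through.
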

\begin{proof}
    By \cref{arc-local-module}, $M$ is perfect considered as a complex and has cohomology only in degree $0$.
    Likewise, $M \otimes^L_A A[1/I]^\wedge_p$ is a perfect $A[1/I]^\wedge_p$-complex \cite[\href{https://stacks.math.columbia.edu/tag/066W}{Tag 066W}]{SP}.
    Let
    \[ F \colonequals \fib(\widetilde{\varphi}_M \otimes \varphi_{A[1/I]^\wedge_p} - \id \colon M \otimes^L_A A[1/I]^\wedge_p \to M \otimes^L_A A[1/I]^\wedge_p). \]
    \Cref{arc-local-AS} shows that $F$ is a perfect $\ZZ^J_p$-complex and there is a $\varphi$-equivariant isomorphism
    \[ F \otimes^L_{\ZZ^J_p} A[1/I]^\wedge_p \to M \otimes^L_A A[1/I]^\wedge_p. \]
    Since $\ZZ^J_p \to A[1/I]^\wedge_p$ is flat (\cref{properties-arc-cover}.\ref{properties-arc-cover-flat}),  we get the following $\varphi$-equivariant isomorphism on $\Hh^0$:
    \[ \Hh^0(F) \otimes_{\ZZ^J_p} A[1/I]^\wedge_p \to M \otimes_A A[1/I]^\wedge_p \simeq M[1/I]^\wedge_p. \]
    Thanks to \cref{properties-arc-cover}.\ref{properties-arc-cover-Frob-inv-AI}, we now obtain the isomorphism
    \[ T(M) \simeq \ker(\id \otimes \varphi - \id \colon \Hh^0(F) \otimes_{\ZZ^J_p} A[1/I]^\wedge_p \to \Hh^0(F) \otimes_{\ZZ^J_p} A[1/I]^\wedge_p) \simeq \Hh^0(F). \]
    Thus, by the coherence of $\ZZ^J_p$, the module $T(M)\simeq \Hh^0(F)$ is finitely presented over $\ZZ^J_p$ and
    \begin{equation}\label{strong-etale-fp-AI}
    T(M) \otimes_{\ZZ^J_p} A[1/I]^\wedge_p \to M[1/I]^\wedge_p
    \end{equation}
    is an isomorphism.
    
    If $M=M[p^\infty]$ is $p$-power torsion, one has natural isomorphisms
	\begin{equation}\label{strong-etale-fp-torsion}
    	M[1/\mu] \simeq M[1/\varphi(\mu)] \simeq M[1/I]^\wedge_p
	\end{equation}
	and the claim follows from (\ref{strong-etale-fp-AI}) and \cref{arc-local-AS}.
	Applying $T$ and $T'$ to the short exact sequence
	\[ 0 \to M[p^\infty] \to M \to M/M[p^\infty] \to 0 \]
	yields a commutative diagram
	\[ \begin{tikzcd}
    	0 \arrow[r] & T'(M[p^\infty]) \arrow[r] \arrow[d,"\sim",sloped] & T'(M) \arrow[r] \arrow[d] & T'(M/M[p^\infty]) \arrow[r] \arrow[d] & \coker(\widetilde{\varphi}_{M[p^\infty]} - \iota) \arrow[d] \\
    	0 \arrow[r] & T(M[p^\infty]) \arrow[r] & T(M) \arrow[r] & T(M/M[p^\infty]) \arrow[r] & 0
	\end{tikzcd} \]
	with exact rows.
	By \cref{arc-local-AS}.\ref{arc-local-AS-fp} and (\ref{strong-etale-fp-torsion}), the cokernel in the right upper corner is $0$.
	Thus, we may pass from $M$ to $M/M[p^\infty]$ and assume from now on that $M$ is $p$-torsionfree.
	
    Next, \cref{free-reduction} produces a short exact sequence
    \[ 0 \to M \to N \to N/M \to 0 \]
    with $N$ a free $A$-module of constant rank and $N/M$ $p$-power torsion.
    We obtain again a commutative diagram with exact rows
    \[ \begin{tikzcd}
        0 \arrow[r] & T'(M) \arrow[r] \arrow[d] & T'(N) \arrow[r] \arrow[d] & T'(N/M) \arrow[d,"\sim",sloped] \\
        0 \arrow[r] & T(M) \arrow[r] & T(N) \arrow[r] & T(N/M).
    \end{tikzcd} \]
    Since the right vertical arrow is an isomorphism by the previously treated case of $p$-power torsion modules, a diagram chase implies that  \ref{strong-etale-fp-lattice} for $M$ follows from \ref{strong-etale-fp-lattice} for $N$.
    Moreover, we notice that the functor $T$ is exact by \cref{arc-local-AS}.\ref{arc-local-AS-fp}.
    As a consequence, once \ref{strong-etale-fp-lattice} (for $N$) is proven, it follows that the functor $T'$ is exact;
    in particular, the statement \ref{strong-etale-fp-complex} for $N$ and $N/M$ would imply the same statement for $M$.
    Thus, we may pass from $M$ to $N$ and assume that $M$ is free of constant rank for the rest of the proof.
	Further, after tensoring $M$ with the invertible $A$-module $\mu^{-n}\cdot A$ for $n \gg 0$, we assume that $\widetilde{\varphi}_M^{-1}(M) \subseteq M$ as submodules of $M[1/\varphi^{-1}(I)]$ because $M$ is finitely generated.

	By \cref{fp-modules-product}, the map $M \to \prod_{j \in J} M_j$ is an isomorphism, where $M_j \colonequals e_j \cdot M$.
	For each $j \in J$, the factor $M_j$ is a Breuil--Kisin--Fargues module over $A_j$ in the sense of \cite[Def.~4.22]{BMS18}.
	Moreover, by the freeness of $M$ and \cref{properties-arc-cover}.\ref{properties-arc-cover-inj}, we have natural injections
	\[ M \to M[1/I]^\wedge_p \to \prod_{j \in J} M_j[1/I]^\wedge_p \quad \text{and} \quad M \to M[1/\mu]. \]
	Thus, we obtain a diagram of inclusions
	\[ \begin{tikzcd}
	    & T'(M) \arrow[r,hook] \arrow[d,hook] & T(M) \arrow[r,hook] \arrow[d,hook] & \prod_{j \in J} T_j(M) \arrow[d,hook] \\
	    \prod_{j \in J} M_j \simeq M \arrow[r,hook] & M[1/\mu] \arrow[r,hook] & M[1/I]^\wedge_p \arrow[r,hook] & \prod_{j \in J} M_j[1/I]^\wedge_p,
	\end{tikzcd} \]
	where $T_j(M) \colonequals \ker(\widetilde{\varphi}_{M_j} - \id \colon M_j[1/I]^\wedge_p \to M_j[1/I]^\wedge_p)$ for all $j \in J$.
	However, since $\widetilde{\varphi}^{-1}_{M_j}(M_j) \subseteq M_j$, the proof of \cite[Lem.~4.26]{BMS18} shows that $\prod_{j \in J} T_j(M)$ actually lies in the submodule $M$, and therefore in $T'(M)$.
	In other words, the inclusions of sub-$\ZZ^J_p$-modules in the top row are all equalities.
	This concludes the proof of \ref{strong-etale-fp-lattice} and thus also of \ref{strong-etale-fp-fp} by the finite presentedness of $T(M)$ from the first paragraph.
	
	Finally, we show \ref{strong-etale-fp-complex} when $M$ is a free $A$-module of constant rank.
	We start by explaining why the functor $T(\blank)$ (and thus $T'(\blank)$ by \ref{strong-etale-fp-lattice}) commutes with taking duals in this case.
	Let $M^\vee = \Hom(M,A)$ be the dual of $M$ together with the dual Frobenius isomorphism (cf.\ \cref{Frobenius-dual})
	\[ \varphi_{M^\vee} \colon \varphi^*_A M^\vee[1/I] \simeq \Hom(\varphi^*_A M[1/I],\varphi^*_A A[1/I]) \xrightarrow{\bigl( (\varphi^{-1}_M)^\vee,\varphi_A \bigr)} \Hom(M[1/I],A[1/I]) \simeq M^\vee[1/I]. \]
	Set $T(M)^\vee \colonequals \Hom_{\ZZ^J_p}(T(M),\ZZ^J_p)$.
	Using (\ref{strong-etale-fp-AI}) and the freeness of $M$, we obtain isomorphisms of complexes with Frobenius structure 
	\begin{align*}
    	& \bigl(M^\vee[1/I]^\wedge_p,\varphi_{M^\vee}\bigr) \simeq \bigl(\Hom(M[1/I]^\wedge_p,A[1/I]^\wedge_p),\varphi_{M^\vee}\bigr) \\
    	\simeq & \bigl(\Hom(T(M) \otimes_{\ZZ^J_p} A[1/I]^\wedge_p,A[1/I]^\wedge_p),\varphi_{(T(M) \otimes_{\ZZ^J_p} A[1/I]^\wedge_p)^\vee}\bigr) \simeq \bigl(T(M)^\vee \otimes_{\ZZ^J_p} A[1/I]^\wedge_p,\id \otimes \varphi_{A[1/I]^\wedge_p}\bigr).
	\end{align*}
	Taking adjoints, this shows that under the isomorphism $M^\vee[1/I]^\wedge_p \simeq T(M)^\vee \otimes_{\ZZ^J_p} A[1/I]^\wedge_p$, the semilinear Frobenius action $\widetilde{\varphi}_{M^\vee}\otimes \varphi_{A[1/I]^\wedge_p}$ is identified with $\id \otimes \varphi_{A[1/I]^\wedge_p}$.
	In particular,
	\begin{align*}
    	T(M^\vee) & = \ker\bigl(\widetilde{\varphi}_{M^\vee}\otimes \varphi_{A[1/I]^\wedge_p}-\id \colon M^\vee[1/I]^\wedge_p \to M^\vee[1/I]^\wedge_p\bigr) \\
    	& \simeq \ker\bigl(\id \otimes \varphi_{A[1/I]^\wedge_p} - \id \otimes \id \colon T(M)^\vee \otimes_{\ZZ^J_p} A[1/I]^\wedge_p \to T(M)^\vee \otimes_{\ZZ^J_p} A[1/I]^\wedge_p\bigr) \simeq T(M)^\vee,
    \end{align*}
    where the last isomorphism follows from \cref{properties-arc-cover}.\ref{properties-arc-cover-Frob-inv-AI}.
	
	To finish the proof of \ref{strong-etale-fp-complex} for free $M$, we consider the natural map of perfect pairings 		
	\[ \begin{tikzcd}
		(T'(M) \otimes_{\ZZ^J_p} A[1/\mu]) \otimes^L_{A[1/\mu]} (T'(M)^\vee \otimes_{\ZZ^J_p} A[1/\mu]) \arrow[d] \arrow[r] & A[1/\mu] \arrow[d,equals] \\
		M[1/\mu] \otimes^L_{A[1/\mu]} M[1/\mu]^\vee \arrow[r] & A[1/\mu]
	\end{tikzcd} \]
	in the sense of \cref{def-perfect-pair}.\ref{def-perfect-pair-map}.
	Note that the tensor products are in fact underived:
	for the second row this follows from the freeness of $M$ and of $M^\vee$ and for the first row it follows from the freeness together with the faithful flatness of $\ZZ_p^J\to A[1/I]^\wedge_p$.
	By \cref{split-inj}, the natural map $T'(M) \otimes_{\ZZ^J_p} A[1/\mu] \to M[1/\mu]$ now induces a decomposition $M[1/\mu] \simeq (T'(M) \otimes_{\ZZ^J_p} A[1/\mu]) \oplus C$ for some $A[1/\mu]$-module $C$.
	However, due to our reduction, $M[1/\mu]$ and $T'(M) \otimes_{\ZZ^J_p} A[1/\mu]$ are free $A[1/\mu]$-modules and the isomorphism (\ref{strong-etale-fp-AI}) together with \ref{strong-etale-fp-lattice} shows that they are of the same rank.
	So $C$ is a projective $A[1/\mu]$-module of rank $0$, hence $C = 0$.
	This finishes the proof.
\end{proof}
As a quick application, the above result can be extended without much effort to the derived level. 
\begin{proposition}\label{strong-etale-perfect}
    Let $A$ be the ring from \cref{arc-local-setup}.
    Let $(M,\varphi_M)$ be an object of $D^\varphi_\perf(A)$ such that $\Hh^q(M)[1/p]$ is locally free of constant rank over $A[1/p]$ for all $q$.
    Let $\widetilde{\varphi}_M \colon M[1/\varphi^{-1}(I)] \to M[1/I]$ be the Frobenius semilinear adjoint of $\varphi_M$.
    Set
	\begin{align*}
    	T' & \colonequals \fib(\widetilde{\varphi}_M - \iota \colon  M[1/\mu] \to M[1/\varphi(\mu)]) \quad \text{and} \\
    	T & \colonequals \fib(\widetilde{\varphi}_M \otimes \varphi_{A[1/I]^\wedge_p} - \id \colon M \otimes^L_A A[1/I]^\wedge_p \to M \otimes^L_A A[1/I]^\wedge_p).
	\end{align*}
    \begin{enumerate}[label=\upshape{(\roman*)},leftmargin=*]
        \item\label{strong-etale-perfect-lattice} The natural map of perfect $\ZZ^J_p$-complexes $T' \to T$ is an isomorphism.
        \item\label{strong-etale-perfect-complex} The natural map of perfect $A[1/\mu]$-complexes $T' \otimes^L_{\ZZ^J_p} A[1/\mu] \to M[1/\mu]$ is an isomorphism.
    \end{enumerate}
\end{proposition}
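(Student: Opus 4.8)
The plan is to deduce \cref{strong-etale-perfect} from its abelian incarnation \cref{strong-etale-fp} by dévissage along the Postnikov filtration of $M$, with \cref{arc-local-AS} taking care of the part of the argument that already functions at the derived level.

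First I would set up the formal framework. Since $\varphi_A$ is an automorphism of $A$ (each $\rA_{\inf}(V_j) = W(V^\flat_j)$ has bijective Frobenius), the assignments $M \mapsto T'$, $M \mapsto T$, $M \mapsto M[1/\mu]$, and $M \mapsto M \otimes^L_A A[1/I]^\wedge_p$ are all exact functors on $D^\varphi_\perf(A)$, and the comparison morphisms $T' \to T$ and $T' \otimes^L_{\ZZ^J_p} A[1/\mu] \to M[1/\mu]$ are natural transformations of exact functors (they are the evident ones, extending those of \cref{strong-etale-fp}). Hence the full subcategory of $M \in D^\varphi_\perf(A)$ on which a fixed one of them is an isomorphism is thick. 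I would then check that truncation stays inside $D^\varphi_\perf(A)$: by \cref{perfect-complex-arc-local} each $\Hh^q(M)$ is a perfect $A$-module with $\Hh^q(M)[1/p]$ locally free of constant rank, and since $\varphi^*_A$ and $(\blank)[1/I]$ are exact, the truncations $\tau^{\le q}M$ and the single-degree objects $\Hh^q(M)[-q]$ all lie in $D^\varphi_\perf(A)$ and inherit the hypotheses of the proposition. Running through the finitely many truncation triangles $\tau^{\le q-1}M \to \tau^{\le q}M \to \Hh^q(M)[-q]$ and shifting then reduces \ref{strong-etale-perfect-lattice} and \ref{strong-etale-perfect-complex} to the case $M = \Hh^0(M)$ concentrated in degree $0$.

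In that case $M$ is a finitely presented $A$-module with $M[1/p]$ locally free of constant rank, so $M[p^\infty]$ is finitely presented by \cref{arc-local-module}.\ref{arc-local-module-torsion}, and \cref{strong-etale-fp} applies: it provides the finitely presented $\ZZ^J_p$-modules $T'(M) = \ker(\widetilde\varphi_M - \iota)$ and $T(M) = \ker(\widetilde\varphi_M \otimes \varphi - \id)$, the isomorphism $T'(M) \xrightarrow{\sim} T(M)$, and the isomorphism $T'(M) \otimes_{\ZZ^J_p} A[1/\mu] \xrightarrow{\sim} M[1/\mu]$. The remaining point is that the derived fibers $T$ and $T'$ are already concentrated in degree $0$. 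For $T$ this follows from \cref{arc-local-AS}.\ref{arc-local-AS-fp} applied to $N = M \otimes^L_A A[1/I]^\wedge_p$, which sits in degree $0$ since $A \to A[1/I]^\wedge_p$ is flat; this simultaneously re-derives that $T$ is perfect over $\ZZ^J_p$. For $T'$ I would transport $\widetilde\varphi_M - \iota$ through the Frobenius-equivariant identification $M[1/\mu] \simeq T'(M) \otimes_{\ZZ^J_p} A[1/\mu]$ (and its localization at $\widetilde{\xi}$), under which it becomes $\id_{T'(M)} \otimes (\varphi - \iota \colon A[1/\mu] \to A[1/\varphi(\mu)])$; its cokernel is then $T'(M) \otimes_{\ZZ^J_p} \coker(\varphi - \iota)$, which vanishes by \cref{properties-arc-cover}.\ref{properties-arc-cover-Frob-inv-Am}. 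Consequently $T' = T'(M)$ and $T = T(M)$ are finitely presented $\ZZ^J_p$-modules in degree $0$, so $T' \to T$ is an isomorphism by \cref{strong-etale-fp}.\ref{strong-etale-fp-lattice}, giving \ref{strong-etale-perfect-lattice}; and since $\ZZ^J_p \to A[1/\mu]$ is flat (a localization of the faithfully flat map of \cref{properties-arc-cover}.\ref{properties-arc-cover-flat}), $T' \otimes^L_{\ZZ^J_p} A[1/\mu] = T'(M) \otimes_{\ZZ^J_p} A[1/\mu] \xrightarrow{\sim} M[1/\mu]$ by \cref{strong-etale-fp}.\ref{strong-etale-fp-complex}, giving \ref{strong-etale-perfect-complex}.

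I do not expect a genuine obstacle here: all the substantial analysis of the ring $A$ and of Breuil--Kisin--Fargues modules is already contained in \cref{strong-etale-fp}, \cref{arc-local-AS}, and \cref{restriction-vb}, and the derived statement is a formal upgrade. The only mildly delicate points are that truncations of perfect complexes over the non-noetherian ring $A$ remain perfect — which is exactly \cref{perfect-complex-arc-local} — and the cokernel-vanishing that pins the derived fibers down in degree $0$, for which \cref{arc-local-AS}.\ref{arc-local-AS-fp} and \cref{properties-arc-cover}.\ref{properties-arc-cover-Frob-inv-Am} are the key inputs.
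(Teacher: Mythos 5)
Your proposal takes a genuinely different route from the paper. The paper first deduces \ref{strong-etale-perfect-lattice} from \ref{strong-etale-perfect-complex} by a short formal diagram chase (comparing the fiber sequence defining $T$ with the one obtained by tensoring $T'$ against $\ZZ^J_p \to A[1/I]^\wedge_p \to A[1/I]^\wedge_p$), and then proves \ref{strong-etale-perfect-complex} cohomology-by-cohomology: it identifies $\Hh^q(T')$ with the kernel $T^{\prime,q}$ of \cref{strong-etale-fp} via the long exact sequence of the fiber (the degree-one contribution from $\Hh^{q-1}(M)$ is killed by exactly the $\coker(\varphi-\iota)=0$ mechanism you use) and then applies \cref{strong-etale-fp}.\ref{strong-etale-fp-complex} to each $\Hh^q(M)$. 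Your dévissage along the Postnikov tower packages the same underlying computation more conceptually, and, as you note, the fact that truncations stay in $D^\varphi_\perf(A)$ is exactly what \cref{perfect-complex-arc-local} secures. Both arguments ultimately rest on \cref{strong-etale-fp}, \cref{arc-local-AS}, and \cref{properties-arc-cover}.

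There is, however, one point where you invoke something the paper does not establish: the flatness of $A \to A[1/I]^\wedge_p$, used to put $N = M \otimes^L_A A[1/I]^\wedge_p$ in degree $0$ so that \cref{arc-local-AS}.\ref{arc-local-AS-fp} identifies $T$ with $T(M)$. The paper only proves that this map is \emph{$p$-completely} flat (see the proof of \cref{properties-arc-cover}.\ref{properties-arc-cover-inj}) and therefore consistently works with derived tensors and their $\Hh^0$'s, never asserting $M \otimes^L_A A[1/I]^\wedge_p$ is discrete. The flatness is in fact true — $A[1/I]^\wedge_p = W(V^\flat[1/I])$ is $p$-complete, $p$-torsionfree, its reduction mod $p$ is a localization of $V^\flat$, and $A/p = V^\flat$ is perfect, so one may appeal to the principle that over a $p$-torsionfree $p$-complete ring with perfect reduction, a $p$-complete $p$-torsionfree module whose mod-$p$ reduction is flat is itself flat (cf.\ \cite[Lem.~4.31]{BMS18}) — but you should cite it, as it does genuinely use perfectness. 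Alternatively, notice that your argument for \ref{strong-etale-perfect-complex} does not need this at all: once $T' = T'(M)$ sits in degree $0$ (which you prove via $\coker(\varphi-\iota)=0$ from \cref{properties-arc-cover}.\ref{properties-arc-cover-Frob-inv-Am}), the desired isomorphism is just \cref{strong-etale-fp}.\ref{strong-etale-fp-complex}. You can then obtain \ref{strong-etale-perfect-lattice} from \ref{strong-etale-perfect-complex} by the paper's purely formal diagram argument, bypassing any claim about the degree of $T$ or of $N$.
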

\begin{proof}
    First, we show that \ref{strong-etale-perfect-complex} implies \ref{strong-etale-perfect-lattice}.
    Consider the commutative diagram
    \[ \begin{tikzcd}[column sep=huge]
		T' \arrow[r] \arrow[dd,dotted] & T' \otimes^L_{\ZZ^J_p} A[1/I]^\wedge_p \arrow[d] \arrow[r, "\id \otimes \varphi - \id \otimes \id"] & T' \otimes^L_{\ZZ^J_p} A[1/I]^\wedge_p \arrow[d] \\
		& M[1/\mu] \otimes^L_{A[1/\mu]} A[1/I]^\wedge_p \arrow[d] \arrow[r, "\widetilde{\varphi}_M \otimes \varphi - \iota \otimes \id"] & M[1/\varphi(\mu)] \otimes^L_{A[1/\varphi(\mu)]} A[1/I]^\wedge_p \arrow[d] \\
		T \arrow[r] & M[1/I]^\wedge_p \arrow[r, "\widetilde{\varphi}_M \otimes \varphi_{A[1/I]^\wedge_p} - \id"] & M[1/I]^\wedge_p.
	\end{tikzcd} \]
	The top row is a fiber sequence by \cref{properties-arc-cover}.\ref{properties-arc-cover-Frob-inv-AI}.
	Thus, the universal property of the fiber $T$ induces the essentially unique dotted arrow in the diagram.
	Moreover, the middle and right vertical arrows are isomorphisms:
	in the top, they are base changes of the isomorphism from \ref{strong-etale-perfect-complex} along $A[1/\mu] \to A[1/I]^\wedge_p$, and in the bottom, this follows from the perfectness of $M$.
    Therefore, the dotted arrow is an isomorphism as well, yielding \ref{strong-etale-perfect-lattice}.
    
    To prove \ref{strong-etale-perfect-complex}, we need to show that the top arrows in the commutative diagrams
    \[ \begin{tikzcd}
        \Hh^q(T' \otimes^L_{\ZZ^J_p} A[1/\mu]) \arrow[r] & \Hh^q(M[1/\mu]) \\
        \Hh^q(T') \otimes_{\ZZ^J_p} A[1/\mu] \arrow[u] \arrow[r] & \Hh^q(M) \otimes_A A[1/\mu] \arrow[u]
    \end{tikzcd} \]
    are isomorphisms for all $q \in \ZZ$.
    Since the natural maps $\ZZ^J \to A$ and $A \to A[1/\mu]$ are flat (\cref{properties-arc-cover}.\ref{properties-arc-cover-flat}), the two vertical maps are isomorphisms and it suffices to show that the bottom arrows are isomorphisms.
    Set
    \[ T^{\prime,q} \colonequals \ker(\widetilde{\varphi}_{\Hh^q(M)} - \iota \colon  \Hh^q(M)[1/\mu] \to \Hh^q(M)[1/\varphi(\mu)]). \]
    By \cref{perfect-complex-arc-local}, $\Hh^q(M)$ and $\Hh^q(M)[p^\infty]$ are finitely presented $A$-modules.
    Since $\varphi_A$ is an isomorphism and hence flat, the Frobenius structure $\varphi_M$ induces isomorphisms $\varphi_{\Hh^q(M)} \colon \varphi_A^*\Hh^q(M)[1/I] \simeq \Hh^q(M)[1/I]$.
    Thus, applying \cref{properties-arc-cover}.\ref{properties-arc-cover-Frob-inv-Am}, \cref{properties-arc-cover}.\ref{properties-arc-cover-flat}, and \cref{strong-etale-fp}.\ref{strong-etale-fp-complex} to $\Hh^q(M)$ and the induced $\varphi$-structure, we see that
    \begin{align*}
        T^{\prime,q} &\simeq T^{\prime,q} \otimes^L_{\ZZ^J_p} \fib\bigl(A[1/\mu] \xrightarrow{\varphi - \iota} A[1/\varphi(\mu)] \bigr) \\
        &\simeq \fib\bigl(T^{\prime,q} \otimes_{\ZZ^J_p} A[1/\mu] \xrightarrow{\id \otimes \varphi_{A[1/I]^\wedge_p} - \iota \otimes \id} T^{\prime,q} \otimes_{\ZZ^J_p} A[1/\varphi(\mu)] \bigr) \\
        &\simeq \fib\bigl(\Hh^q(M)[1/\mu] \xrightarrow{\widetilde{\varphi}_{\Hh^q(M)} - \iota} \Hh^q(M)[1/\varphi(\mu)]\bigr).
    \end{align*}
    In particular, the short exact sequence
    \begin{align*}
        0 \longrightarrow &\Hh^1\bigl(\fib\bigl(\Hh^{q-1}(M)[1/\mu] \xrightarrow{\widetilde{\varphi}_{\Hh^{q-1}(M)} - \iota} \Hh^{q-1}(M)[1/\varphi(\mu)]\bigr)\bigr) \longrightarrow \Hh^q(T') \\
        &\longrightarrow \Hh^0\bigl(\fib\bigl(\Hh^q(M)[1/\mu] \xrightarrow{\widetilde{\varphi}_{\Hh^q(M)} - \iota} \Hh^q(M)[1/\varphi(\mu)]\bigr)\bigr) \longrightarrow 0
    \end{align*}
    reduces to an isomorphism $\Hh^q(T') \xrightarrow{\sim} T^{\prime,q}$.
    Another application of \cref{strong-etale-fp}.\ref{strong-etale-fp-complex} then yields the desired isomorphism $\Hh^q(T') \otimes_{\ZZ^J_p} A[1/\mu] \xrightarrow{\sim} \Hh^q(M) \otimes_A A[1/\mu]$.
\end{proof}

\subsection{Proof of \texorpdfstring{\cref{Ccrys}}{Theorem~B}}\label{sec-proof-ccrys}
Next, we assemble the results of the previous sections to compare the derived pushforwards of the \'etale and crystalline realizations of a prismatic $F$-crystal;
a combination with \cref{main} will then yield the desired prismatic proof of the $\rC_\crys$-conjecture.
To formulate our comparison statement, we need the following derived version of \cref{Acrys-isocrystal}.
\begin{definition}\label{Acrys-isocrystal-complex}
    Let $X$ be a smooth $p$-adic formal scheme over $\Spf \cO_K$ and let $\cE_s$ be an $F$-isocrystal in perfect complexes over $X_s$.
    Then as in \cref{Acrys-isocrystal}, we can define sheaves of complexes $\AA_\crys(\cE_s)$ and $\BB_\crys(\cE_s)$ on $X_{\eta,\proet}$ by setting for each $U \in \Perfd/X_{\eta,\proet}$ with $\widehat{U} = \Spa(S,S^+)$
    \begin{itemize}
        \item $\BB^+_\crys(\cE_s)(U) \colonequals \cE_s(\AA_\crys(S,S^+),S^+/p)[1/p]$ and
        \item $\BB_\crys(\cE_s)(U) \colonequals \AA_\crys(\cE_s)(U)[1/\mu]$.
    \end{itemize}
\end{definition}
As in the proof of \cref{Bcrys-isocrystal-sheaf} and \cref{Frob-BcrysE}, one can use the crystal property of $\cE_s$ to show that $\BB_\crys(\cE_s)$ is a sheaf of perfect complexes on $X_{\eta,\proet}$ and is equipped with a Frobenius isomorphism.
\begin{remark}\label{Acrys-isocrystal-cohomology}
    \Cref{Acrys-isocrystal-complex} is compatible with \cref{Acrys-isocrystal} in the sense that $H^i\bigl(\BB^+_\crys(\cE_s)\bigr) \simeq \BB^+_\crys\bigl(H^i(\cE_s)\bigr)$ for any convergent isocrystal in perfect complexes $\cE_s$ over $X_s$ and any $i \in \ZZ$.
\end{remark}
\begin{theorem}\label{derived-Ccrys}
    Let $(\cE,\varphi_\cE) \in D^\varphi_\perf(X_\Prism)$ be a prismatic $F$-crystal in perfect complexes on $X$ with \'etale realization $T \colonequals T(\cE) \in D^b_\lisse(X_\eta,\ZZ_p)$ and corresponding crystalline $F$-crystal  $\cE'$ over $X_{p=0,\crys}$ (\cref{prism-crys-crystal}).
    Let $\cE_s\colonequals \restr{\cE'[1/p]}{X_s}$ be the associated $F$-isocrystal over $(X_s/W(k))_\crys$.
    Then there is a natural isomorphism of sheaves of complexes on $Y_{\eta,\proet}$
    \[ \BB_\crys(Rf_{s,*}\cE_s) \xrightarrow{\sim} \BB_\crys \otimes_{\widehat{\ZZ}_p} Rf_{\eta,*}T \]
    which is compatible with the Frobenius actions on both sides.
\end{theorem}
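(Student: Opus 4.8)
The plan is to obtain the isomorphism by splicing together the strong \'etale comparison \cref{strong-et-comp} with the crystalline comparison results of \cref{sub-prismatic-crystalline}, using the relative prismatic cohomology of $\cE$ evaluated at crystalline prisms as the common middle term; this is the derived, integral incarnation of the combination of \cref{main-strong-et} and \cref{main-comp} announced in the introduction (where it appears as \cref{main-et-crys}). Since the two sides of the desired isomorphism are both sheaves of complexes on $Y_{\eta,\proet}$, it suffices to produce a functorial isomorphism over a basis and sheafify. I would first pass to $Y_{\eta,C,\proet}$, where the element $\mu$ and \cref{strong-et-comp} are available, and work over the basis of affinoid perfectoid objects $U$ with $\widehat U = \Spa(S,S^+)$ for $S$ a perfectoid $C$-algebra; the constructed morphism is $G_K$-equivariant by naturality, so it descends to $Y_{\eta,\proet}$. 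Because $Rf_{\eta,*}T$ is only locally (not globally) trivial, the verification that the resulting map of sheaves is a quasi-isomorphism is to be carried out on cohomology sheaves, which are lisse by smooth proper base change \cite[Thm.~10.5.1]{SW20} and hence constant on a suitable further cover.

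The heart of the argument is a chain of natural, Frobenius-equivariant identifications for a fixed such $U$. Write $\Prism_{S^+} = \rA_\inf(S^+)$ and let $A = \rA_\crys(S^+) \simeq \Prism_{S^+/p}$ be the associated crystalline prism over $Y_\Prism$, with $\overline A = A/p$ and $X_{\overline A} = X \times_Y \Spec(A/p)$. Then:
\begin{align*}
\BB_\crys(Rf_{s,*}\cE_s)(U)
&\simeq \bigl(Rf_{s,\crys,*}\cE'[1/p]\bigr)\bigl(\rA_\crys(S^+),S^+/p\bigr)[1/\mu] \\
&\simeq R\Gamma\bigl((X_{\overline A}/\rA_\crys(S^+))_\Prism,\cE\bigr)[1/\mu] \\
&\simeq \AA_\inf(Rf_{\Prism,*}\cE)(U)\otimes^L_{\Prism_{S^+}}\BB_\crys(S^+) \\
&\simeq (Rf_{\eta,*}T)(U)\otimes_{\widehat{\ZZ}_p}\BB_\crys(S^+).
\end{align*}
The first isomorphism is essentially the definition of $\BB_\crys(Rf_{s,*}\cE_s)(U)$ combined with \cref{higher-direct-image-crys}.\ref{higher-direct-image-crys-cohom} and the identification $Rf_{s,*}\cE_s \simeq Rf_{s,\crys,*}\cE'[1/p]$ from \cref{F-isocrystal-and-conv}. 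The second is \cref{prism crys comp reduced-special-fib} (that is, \cref{rel vs abs coh} combined with \cref{higher-direct-image-crys}), inverting $\mu$ on the $[1/p]$-level isomorphism. The third applies the base-change theorem \cref{base-change} along the canonical map of bounded prisms $\Prism_{S^+} \to \rA_\crys(S^+)$, using that $\AA_\inf(Rf_{\Prism,*}\cE)(U) = R\Gamma\bigl((X_{S^+}/\Prism_{S^+})_\Prism,\cE\bigr)$ is a perfect $\Prism_{S^+}$-complex by \cref{perfectness}, so that the completed base change coincides with the underived one and commutes with inverting $\mu$ (together with $\BB^+_\crys = \rA_\crys[1/p]$, $\BB_\crys = \BB^+_\crys[1/\mu]$). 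The fourth is \cref{strong-et-comp}, base changed along $\rA_\inf[1/\mu] \to \BB_\crys$; and $(Rf_{\eta,*}T)(U)\otimes_{\widehat{\ZZ}_p}\BB_\crys(S^+) = \bigl(\BB_\crys \otimes_{\widehat{\ZZ}_p} Rf_{\eta,*}T\bigr)(U)$ on a small enough $U$. Assembling these functorial isomorphisms and sheafifying gives the desired map of $\BB_\crys$-linear sheaves of complexes.

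Throughout, the Frobenius actions to be matched are: on $\AA_\inf(Rf_{\Prism,*}\cE)$ and $R\Gamma\bigl((X_{\overline A}/\rA_\crys(S^+))_\Prism,\cE\bigr)$, the Frobenius of \cref{Frobenius-on-coh-construction} induced by $\varphi_\cE$ (which is moreover an isogeny by \cref{DDI}); on $\rA_\inf$ and $\BB_\crys$, the Witt-vector Frobenius; and on $\bigl(Rf_{s,\crys,*}\cE'[1/p]\bigr)\bigl(\rA_\crys(S^+),S^+/p\bigr)$, the crystalline Frobenius of \cref{Frob-BcrysE} and \cref{higher-direct-image-crys}.\ref{higher-direct-image-crys-isocrystal}. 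The compatibility of the crystalline-comparison links with these is recorded in \cref{prism-crys-crystal}.\ref{prism-crys-crystal cohomology} and \cref{rel vs abs coh}, while the Frobenius-equivariance of \cref{strong-et-comp}, although not made explicit in its statement, is immediate from its construction via the prismatic--\'etale comparison \cref{etale-comparison}.

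Given that all the deep inputs are already established, I expect the main obstacle to be organizational rather than conceptual. The delicate point is to match the base rings correctly, and in particular the Frobenius twist in the passage $\Prism_{S^+} \to \rA_\crys(S^+)$: one must ensure that the map along which \cref{base-change} is applied is precisely the canonical one $\rA_\inf \to \rA_\crys$ for which the composite with \cref{prism crys comp reduced-special-fib} reproduces the crystalline comparison --- which is exactly what \cref{rel vs abs coh}, resting on the factorization of $\varphi_{\overline A}$ through $S = S^+/p$, is designed to handle --- and one must check that inverting $\mu$ commutes with the derived and completed tensor products above, which follows from the perfectness of the complexes involved (\cref{perfectness}) together with $\BB^+_\crys = \rA_\crys[1/p]$ and $\BB_\crys = \BB^+_\crys[1/\mu]$.
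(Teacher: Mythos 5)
Your proposal is correct and follows essentially the same route as the paper's proof: over the basis of affinoid perfectoid $U = \Spa(S,S^+) \in \Perfd/Y_{\eta,C,\proet}$, chain the definition of $\BB_\crys(Rf_{s,*}\cE_s)$ through \cref{prism crys comp reduced-special-fib}, then base change along $(\rA_\inf(S^+),\tilde\xi) \to (\rA_\crys(S^+),(p))$ via \cref{base-change}, then invoke \cref{strong-et-comp}, finally base changing to $\rB_\crys$. The only cosmetic difference is that the paper combines your first two isomorphisms into a single application of \cref{prism crys comp reduced-special-fib}, and it phrases the final assembly simply as a functorial isomorphism of sections over a basis rather than worrying separately about trivialization of $Rf_{\eta,*}T$.
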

Note that $Rf_{s,*}\cE_s$ is again an $F$-isocrystal in perfect complexes on $Y_s$ (\cref{higher-direct-image-crys}.\ref{higher-direct-image-crys-isocrystal}), so $\BB_\crys(Rf_{s,*}\cE_s)$ is well-defined.
\begin{proof}
Let $U \in \Perfd/Y_{\eta,\proet}$ with $\widehat{U} = \Spa(S,S^+)$ that admits a map to $\Spa(C,\cO_C)$.
It suffices to produces an isomorphism of complexes
\[ \BB_\crys(Rf_{s,*}\cE_s)(U) \xrightarrow{\sim} \BB_\crys(U) \otimes_{\widehat{\ZZ}_p(U)} Rf_{\eta,*}T(U) \]
which is functorial in $U$.
To ease notation, we temporarily set $\rA_{\inf} \colonequals \rA_{\inf}(S^+)$ and $\rA_\crys \colonequals \rA_\crys(S^+)$.

Recall that \cref{prism crys comp reduced-special-fib} yields a Frobenius equivariant isomorphism 
\[ Rf_{s,*}\cE_s(\AA_\crys(S,S^+),S^+/p)[1/p] \simeq R\Gamma\bigl((X_{\rA_\crys/p}/\rA_\crys)_\Prism,\cE\bigr)[1/p], \]
where $X_{\rA_\crys/p}$ is the base change of $X$ along the map $\Spec(\rA_{\crys}/p) \to Y$.
Furthermore, we can apply base change along the map of prisms $(\rA_{\inf},\tilde{\xi}) \to (\rA_\crys,(p))$ which is induced by the inclusion $\rA_{\inf} \hookrightarrow \rA_\crys$ to obtain a Frobenius-equivariant isomorphism
\[ R\Gamma\bigl((X_{S^+}/\rA_{\inf})_\Prism,\cE\bigr) \otimes^L_{\rA_{\inf}} \rA_\crys \simeq R\Gamma\bigl((X_{\rA_\crys/p}/\rA_\crys)_\Prism,\cE\bigr).\footnote{The tensor product is automatically complete because $R\Gamma\bigl((X_{S^+}/\rA_{\inf})_\Prism,\cE\bigr)$ is perfect (\cref{perfectness}).} \]
Combining the two isomorphisms and inverting $\mu$, we get
\[ \BB_\crys(Rf_{s,*}\cE_s)(U) = Rf_{s,*}\cE_s(\AA_\crys(S,S^+),S^+)[1/\mu] \simeq R\Gamma\bigl((X_{S^+}/\rA_{\inf})_\Prism,\cE\bigr) \otimes^L_{\rA_{\inf}} \rB_\crys. \]

On the other hand, the strong \'etale comparison over $\rA_{\inf}[1/\mu]$ in \cref{strong-et-comp} shows that 
\[ R\Gamma\bigl((X_{S^+}/\rA_{\inf})_\Prism,\cE\bigr)[1/\mu] \simeq R\Gamma\bigl((X_{S^+})_{\eta,\proet},T\bigr) \otimes^L_{\widehat{\ZZ}_p(S,S^+)} \rA_{\inf}[1/\mu]. \]
Base changing along $\rA_{\inf}[1/\mu] \to \rB_\crys$ and combining with the previous isomorphism, we obtain the desired statement.
\end{proof}
As a consequence, we finally obtain the promised prismatic proof of the $\rC_\crys$-conjecture.
\begin{proof}[{Proof of \cref{Ccrys}}]
As a formal consequence of \cite[Thm.~10.5.1]{SW20}, the higher direct image $R^if_{\eta,*}L$ is again a lisse $\widehat{\ZZ}_p$-sheaf.
Thus, we only need to find the isomorphism
\begin{equation}\label{Ccrys-isomorphism}
    \BB_\crys(R^if_{s,\crys,*}\cE_s) \xrightarrow{\sim} \BB_\crys \otimes_{\widehat{\ZZ}_p} R^if_{\eta,*}L
\end{equation}
which is compatible with the Frobenius isomorphisms and filtrations on both sides.
First, since $p$ is invertible in $\BB_\crys$ and higher direct images of torsion sheaves are torsion, we may replace $L$ by its torsionfree quotient and thus assume that $L$ is a local system.
Then \cref{main} guarantees that there is an analytic prismatic $F$-crystal $\cE \in \Vect^{\an,\varphi}(X_\Prism)$ whose \'etale realization is $L$.
Moreover, under the fully faithful functor of \cref{an-to-complex}, we may regard $\cE$ as a prismatic $F$-crystal in perfect complexes.

By \cref{unique-filtered-F-isoc} and \cref{realization functors}.\ref{realization functors crys}, the associated filtered $F$-isocrystal $\bigl(\cE_s,\varphi_{\cE_s},\Fil^\bullet(E)\bigr)$ is obtained via the filtered enhancement $\widetilde{D}_\crys(\cE)$ from \cref{filtered enhancement} of the equivalence in \cref{prism-crys-crystal}.
As a consequence, \cref{derived-Ccrys} gives a natural Frobenius equivariant isomorphism
\[ \BB_\crys(Rf_{s,*}\cE_s) \xrightarrow{\sim} \BB_\crys \otimes_{\widehat{\ZZ}_p} Rf_{\eta,*}L. \]
Via \cref{crys-de-Rham-fil}, we check in \cref{final-fil} below that it is compatible with the filtrations on both sides.
By \cref{Acrys-isocrystal-cohomology} and the flatness of $\widehat{\ZZ}_p \to \BB_\crys$, we therefore obtain the desired isomorphism (\ref{Ccrys-isomorphism}) after taking $i$-th cohomology. 
\end{proof}
\begin{remark}
    In the proof of \cref{Ccrys}, the maximal $p^\infty$-torsionfree quotient
    \[ (R^if_{\Prism,*}\cE)_\tf \colonequals (R^if_{\Prism,*}\cE)/(R^if_{\Prism,*}\cE[p^\infty]) \]
    is an analytic prismatic $F$-crystal with \'etale realization the crystalline local system $(R^if_{\eta,*}L)_\tf$.
    By Beauville--Laszlo gluing, this follows from the following two obervations:
    \begin{enumerate}[wide]
        \item \emph{$(R^if_{\Prism,*}\cE)_\tf[1/p] \simeq (R^if_{\Prism,*}\cE)[1/p]$ is locally free.}
        This is shown in the proof of \cref{strong-et-comp}.
        \item \emph{The \'etale realization \cite[Cor.~3.7]{BS21} of the Laurent $F$-crystal $(R^if_{\Prism,*}\cE)_\tf[1/\cI_\Prism]^\wedge_p$ is the local system $(R^if_{\eta,*}L)_\tf$ on $Y_\eta$ (and thus in particular $(R^if_{\Prism,*}\cE)_\tf[1/\cI_\Prism]^\wedge_p$ is locally free).}
        This can be checked locally in the pro-\'etale topology on $Y_\eta$.
        Let $\Spa(S,S^+) \in \Perfd/Y_{\eta,\proet}$ and let $(A,I)$ be the associated prism.
        Then after evaluating the isomorphism in \cref{strong-et-comp} at $\Spa(S,S^+)$ and taking $i$-th cohomology, we get
        \[
        (R^if_{\Prism,*}\cE)_A[1/\mu] \simeq R^if_{\eta,*}L(S,S^+)\otimes_{\ZZ_p(S,S^+)} A[1/\mu].
        \]
        This implies in particular that $(R^if_{\Prism,*}\cE)_A\otimes_A A[1/I]^\wedge_p \simeq R^if_{\eta,*}L(S,S^+)\otimes_{\ZZ_p(S,S^+)} A[1/I]^\wedge_p$.
        Thus, by taking the $p$-torsionfree quotient and noticing that both $A$ and $A[1/I]^\wedge_p$ are flat over $\ZZ_p$, we get
        \[
        \bigl( (R^if_{\Prism,*}\cE)_A \bigr)_\tf\otimes_A A[1/I]^\wedge_p \simeq (R^if_{\eta,*}L)(S,S^+)_\tf\otimes_{\ZZ_p(S,S^+)} A[1/I]^\wedge_p.
        \]
    \end{enumerate}
\end{remark}

\section{Compatibility with filtration}\label{sec-fil}
In this section, we show that isomorphisms in \cref{derived-Ccrys} underlies a filtered isomorphism, after the base change to the de Rham period sheaf $\BB_\dR$.

Throughout this section, we fix a $p$-adic field $K$ as before.

\subsection{Review of infinitesimal cohomology over \texorpdfstring{$\rB_\dR$}{BdR}}
In order to prove the compatibility with filtrations in \cref{Ccrys}, we will use infinitesimal cohomology over $\rB_\dR$, which was introduced in \cite[\S~13]{BMS18} (under the name of crystalline cohomology over $\rB_\dR$) and \cite{Guo21}.
In this subsection, we briefly recall some of the basics and explain how the definitions can be extended to the relative setting.

Let $\rA_{\inf, K}$ be the finite $\rA_{\inf}$-algebra $\rA_{\inf}\otimes_{W(k)} \cO_K$.
Fix a homomorphism $\cO_K\to\Bp = \BB^+_{\dR}(\cO_C)$ that is compatible with the map $K \to C$ under the natural surjection $\Bp \to C$.
Denote by $\widetilde\theta_{K} \colon \rA_{\inf, K}\to \cO_C$ the natural surjection induced by $\tilde{\theta} \colon \rA_{\inf}\to \cO_C$ and $\cO_K \to \cO_C$.
Then the map $\rA_{\inf} \to \rA_{\inf,K}$ induces an isomorphism (c.f. proof of \cref{injection of period sheaves})
\[
(\rA_{\inf,K}[1/p])^\wedge_{\ker(\tilde{\theta}_{K})} \simeq \Bp.
\]
We denote by $\Be$ the quotient ring $\Bp/I^e$.
Let $f \colon X\to Y$ be a smooth morphism of smooth formal $\cO_K$-schemes.
We denote by $X_\Be$ (resp.\ $Y_\Be$) the completed base change of $X$ (resp.\ $Y$) along $\cO_K \to \rA_{\inf, K} \to \Be$.
Concretely, the completed base change of an affine open subset $U=\Spf(R)\subset Y$ along $\cO_K\to \Be$ is given by the affinoid adic space associated with
\[
(R\otimes_{\cO_K} \rA_{\inf,K})^\wedge_p[1/p]\otimes_{\rA_{\inf,K}} \Be.
\]
\begin{definition}
	The infinitesimal site $X/Y_{\Bp,\inf}$ is defined as follow:
	\begin{itemize}
		\item An object is a pair $(U,T)$, where $U$ is an open subset of the rigid space $X_C$ and $T$ is an adic space which is topologically of finite type over $Y_\Be$ for some $e\in \NN$, together with a Zariski closed immersion $U\to T$ defined by a nilpotent ideal. 
		\item A morphism between two objects $(U_1,T_1) \to (U_2,T_2)$ is given by an open immersion $U_1\to U_2$ and a compatible map of adic spaces $T_1 \to T_2$  over $Y_\Be$ for some $e \gg 0$. 
		\item A covering of $(U,T)$ is family of morphisms $\{(U_i,T_i) \to (U,T)\}$, such that the maps $\{T_i \to T\}$ and $\{U_i \to U\}$ are analytic coverings.
	\end{itemize}
\end{definition}
We call the pair $(U,T)$ an \emph{infinitesimal thickening} of $X_C$ (over $Y_\Bp$).

The infinitesimal site $X/Y_{\Bp,\inf}$ is naturally equipped with a structure sheaf $\cO_{\inf}$ which sends a thickening $(U,T)$ to the ring $\cO_T(T)$.
Moreover, there is a natural notion of \emph{crystals} in vector bundles (resp.\ perfect complexes):
their value on each thickening $(U,T)$ is a vector bundle (resp.\ perfect complex) over $\cO_T$ subject to the base change isomorphism for morphisms $(U_1,T_1) \to (U_2,T_2)$.
The corresponding categories are denoted by $\Vect(X/Y_{\Bp, \inf})$ (resp.\ $D_\perf(X/Y_{\Bp, \inf})$).
\begin{remark}
In the following, we will consider an ind-system  of thickenings $\{(U_i,T_i)\}_i$ of $X_C$.
We will often identify this ind-system with the colimit of the associated representable sheaves.
For example, we will say that an ind-system $\{(U_i,T_i)\}_i$ of objects in the infinitesimal topos is weakly final if its colimit admits a surjection onto the final object of the topos;
this is compatible with the usual notion of weakly final objects in a topos.
\end{remark}
\begin{lemma}\label{inf-weakly-final}
	Let $Z$ be a smooth adic space over $Y_K$ and assume there is a closed immersion $X_C \to Z_C$.
	\begin{enumerate}[label=\upshape{(\roman*)},leftmargin=*]
		\item Denote by $Z_{m,e}$ the $m$-th infinitesimal neighborhood of $X_C$ in $Z_\Be$.
		Then the ind-system $D_Z(X)\colonequals \{Z_{m,e}\}_{m,e}$ is a weakly final object in $X/Y_{\Bp,\inf}$.
		\item\label{inf-weakly-final-product} The $(n+1)$-th self product of the ind-system $\{Z_{m,e}\}_{m,e}$ is representable by the ind-system $\{Z^n_{m,e}\}_{m,e}$, where $Z^n$ denotes the $(n+1)$-th self product of $Z$ over $Y_K$, and $Z^n_{m,e}$ the $m$-th infinitesimal neighborhood of $X_C$ in $Z^n_\Be$. 
	\end{enumerate}
\end{lemma}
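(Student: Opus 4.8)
The plan is to deduce both statements from the infinitesimal lifting criterion applied to the smooth morphism $Z_\Be \to Y_\Be$ (smooth because $Z$ is smooth over $Y_K$), following the standard pattern for constructing \v{C}ech--Alexander resolutions in crystalline-type cohomology theories.

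\emph{Part (i).} Unwinding the definition, $D_Z(X) = \{Z_{m,e}\}_{m,e}$ being weakly final means that every object $(U,T)$ of $X/Y_{\Bp,\inf}$ admits a covering $\{(U_i,T_i) \to (U,T)\}$ such that each $(U_i,T_i)$ receives a morphism from some $Z_{m,e}$. So let $(U,T)$ be such an object: $T$ is topologically of finite type over $Y_\Be$ for some $e$, and $U \hookrightarrow T$ is the Zariski closed immersion cut out by a nilpotent ideal $J$, say $J^N = 0$. The composition $U \to X_C \hookrightarrow Z_C \hookrightarrow Z_\Be$ is a $Y_\Be$-morphism (it agrees with $U \to T \to Y_\Be$ after composing to $Y_\Be$), so by the infinitesimal lifting criterion for the smooth morphism $Z_\Be \to Y_\Be$ there is, locally on $T$, a $Y_\Be$-morphism $g \colon T \to Z_\Be$ restricting on $U$ to the chosen map. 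Since $g(U) \subseteq X_C$ and $J^N = 0$, the $g$-pullback of the ideal sheaf of $X_C$ in $Z_\Be$ has vanishing $N$-th power, so $g$ factors through the $N$-th infinitesimal neighborhood $Z_{N,e}$ of $X_C$. Combined with the canonical open immersion $U \hookrightarrow X_C$, this produces, after passing to a suitable open cover $\{T_i\}$ of $T$, morphisms $(U \cap T_i, T_i) \to Z_{N,e}$ in the site, which is exactly weak finality.

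\emph{Part (ii).} Fix $e$. By the nilpotence argument of Part (i), the colimit of representable sheaves $\colim_m h_{Z_{m,e}}$ evaluated at an object $(U,T)$ with $T$ over $Y_\Be$ is the set of $Y_\Be$-morphisms $T \to Z_\Be$ restricting on $U$ to $U \to X_C \hookrightarrow Z_C$, since any such morphism factors through a unique finite neighborhood $Z_{m,e}$ and the transition maps $h_{Z_{m,e}} \to h_{Z_{m',e}}$ are injective. Because $Z^n_\Be$ is the $(n+1)$-fold fiber product of $Z_\Be$ over $Y_\Be$, its universal property identifies $(n+1)$-tuples of such morphisms $T \to Z_\Be$ with a single $Y_\Be$-morphism $T \to Z^n_\Be$ restricting on $U$ to the diagonal $U \to X_C \hookrightarrow Z^n_C$; these likewise factor through the finite infinitesimal neighborhoods $Z^n_{m,e}$ of the diagonal. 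Since filtered colimits commute with finite products in the infinitesimal topos, taking colimits over $e$ as well yields
\[ \Bigl(\colim_{m,e} h_{Z_{m,e}}\Bigr)^{\times(n+1)} \simeq \colim_{m,e} h_{Z^n_{m,e}}. \]
Finally, each $Z^n_{m,e}$ is a closed subspace of $Z^n_\Be$, hence topologically of finite type over $Y_\Be$, so it is a legitimate object of $X/Y_{\Bp,\inf}$, and the projections $Z^n \to Z$ base-change to the structural maps $Z^n_{m,e} \to Z_{m,e}$ realizing the projections of the product. This proves the representability claim.

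\emph{Main obstacle.} The one genuine point is that the lift $g$ in Part (i) exists only locally on $T$ in general, its obstruction living in an $H^1$ with coefficients in the pullback of $\iHom(\Omega^1_{Z_\Be/Y_\Be}, J)$; this is precisely why one obtains a \emph{weakly} final object and not a final one. The rest is routine bookkeeping: checking that the double index system $(m,e)$ is filtered with the evident transition maps, that the identification in Part (ii) is compatible with enlarging $e$, and that every infinitesimal neighborhood appearing is indeed topologically of finite type over the relevant $Y_\Be$ and so a valid object of the site.
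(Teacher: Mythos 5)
Your proof is correct and follows the same route as the paper's: part (i) is the infinitesimal lifting criterion for the smooth morphism $Z_\Be \to Y_\Be$ combined with the nilpotence of the thickening ideal, and part (ii) is the universal property of $Z^n_\Be$ as the $(n+1)$-fold fiber product over $Y_\Be$ (the paper cites \cite[Prop.~2.2.7]{Guo21} for this step rather than spelling it out). The only point you flesh out more than the paper is the explicit observation that the lift in (i) exists only analytically locally (which is precisely why one gets \emph{weak} finality) and the bookkeeping that finite products of ind-systems commute with filtered colimits in the topos; both are correct and harmless additions.
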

We call the system $\{(X,Z_{m,e})\}_{m,e}$ (or simply $\{Z_{m,e}\}_{m,e}$) the \emph{envelope} of $X_C$ in $Z$.
We also denote $X_\Bp$ (resp.\ $Y_\Bp$) to be the ind-system $\{X_\Be\}_e$ (resp.\ $\{Y_\Be\}_e$). 
\begin{proof}
	Given an object $(U,T)\in X/Y_{\Bp,\inf}$, there exists by definition some $e \gg 0$ such that $T$ is an adic space over $Y_\Be$.
	In particular, by restricting to the subcategory of thickenings $(U,T)$ for which $T$ is an adic space over $Y_\Be$, we may prove the claims for $\{Z_{m,e}\}_m$ and $\{Z^n_{m,e}\}_m$ instead.
	Assuming this, as $U\to T$ is an infinitesimal thickening, there exists by the smoothness of $Z_\Be$ over $Y_\Be$ (cf.\ \cite[Def.~1.6.5]{Hub96}) analytically locally a map from $T$ to $Z_\Be$ which is compatible with the closed immersions from $U$.
	Thus, the ind-system $\{Z_{m,e}\}_m$ is weakly final for the collection of thickenings $(U,T)$ over $Y_\Be$. 
	
	For \ref{inf-weakly-final-product}, note that giving $(n+1)$ different maps from $T$ to $Z_\Be$ over $Y_\Be$ which are compatible with the closed immersion $U\to Z_\Be$ and $U\to T$ is equivalent to giving a map from $(U,T)$ to an infinitesimal neighborhood of $U$ in $Z^n_\Be$ (cf.\ \cite[Prop.~2.2.7]{Guo21} and also \cite[Def.~2.2.1]{Guo21}).
	Thus, by taking the union of $Z_{m,e}$ for all $m$, we get the claims in the statement when restricted to thickenings over $Y_\Be$.
	This finishes the proof.
\end{proof}
In some of the later discussions, we will need the following assumption.
\begin{convention}\label{inf-affine-assump}
Let $R \to R'$ be  a smooth morphism of smooth topologically finite type $\cO_K$-algebras  of relative dimension $d$ and let $X=\Spf(R')\to Y=\Spf(R)$ be the associated morphism of $\cO_K$-formal schemes.
We denote $R_\Bp$ and $R_\Be$ to be the complete base change of $R$ along $\cO_K\to \rA_{\inf, K} \to \Bp$ and $\cO_K\to \rA_{\inf, K} \to \Be$ separately; and similarly for $R'$.
\end{convention}
The following example will be used later.
\begin{example}\label{inf-coh-eg}
Assume the setup in \cref{inf-affine-assump}.
	\begin{enumerate}[(1),leftmargin=*]
		\item\label{inf-coh-eg-id} By assumption, the formal scheme $X$ is smooth over $Y$.
		In particular, the rigid space $Z\colonequals X_K$ is smooth over $Y_K$, and after base change to the field $C$, the identity map gives a closed immersion $X_C \to Z_C$.
		In this case, $Z_{m,e}$ is equal to $X_\Be$ and $Z^n_{m,e}$ is equal to $m$-th infinitesimal neighborhood of $X_C$ in $(X^{\times_{R} n+1})_\Be$.
		
		\item\label{inf-coh-eg-fram} Assume $X=\Spf(R')$ admits an integral \emph{enlarged framing} $\Sigma$:
		that is, a finite subset $\Sigma$ of $R'^\times$ such that the induced map $R\langle x_u^{\pm1}; u\in \Sigma\rangle \to R'$ is a surjection and there exists a subset of $d$ elements in $\Sigma$ which induces is an \'etale map of $p$-adic formal schemes $X \to \TT^d_{Y}$.
		This can always be arranged Zariski locally on $X$, and the induced maps on generic fiber make $X_C$ \emph{very small} in the sense of \cite[Def.~13.5]{BMS18}.

		In this setting, we take $Z$ to be the generic fiber of $\Spf(R\langle x_u^{\pm1}; u\in \Sigma\rangle)$.
		Then the value of the infinitesimal structure sheaf $\cO_{\inf}$ on the ind-system $\{Z_{m,e}\}_{m,e}$ is equal to the formal completion ring $D_\Sigma(R')$ of the surjection
		\[
		R\langle x_u^{\pm1};\; u\in \Sigma\rangle_\Bp \longrightarrow R'_C,
		\]
		which is isomorphic to the power series ring $R'_\Bp \llbracket x_u - u; u\in \Sigma \rrbracket$ by the same proof as for \cite[Lem.~13.12]{BMS18}.
		Moreover, for each $n\in \NN$, the ring $D_{\Sigma^n}(R')\colonequals\cO_{\inf}(\{Z^n_{m,e}\}_{m,e})$ is equal to the formal completion of the surjection (along the diagonal)
		\[
		(R\langle x_u^{\pm1} \rangle^{\widehat{\otimes}_R n+1})_\Bp \to R'_C.
		\]
	\end{enumerate}
\end{example}
To compute cohomology, we introduce two natural constructions, one via \v{C}ech--Alexander complexes and another via de Rham complexes.
\begin{construction}\label{inf-CA-dR-construction}
	Assume the setup in \cref{inf-affine-assump}.
	Let $\cF$ be a crystal in vector bundles over $X/Y_{\Bp, \inf}$.
	Let $Z$ be a smooth affinoid adic space over $Y_K$, whose base change $Z_C$ admits a closed immersion $X_C\to Z_C$ over $Y_C$.
	Let $\Sigma\subset R'^\times$ be an enlarged framing as in \cref{inf-coh-eg}.\ref{inf-coh-eg-fram}.
	\begin{enumerate}[(1),leftmargin=*]
		\item The \v{C}ech--Alexander complex of $\cF$ for the covering $D_Z(X)$ is the cosimplicial complex
		\[
		\cF_{D_{Z^\bullet}(X)},
		\]
		where each $\cF_{D_{Z^n}(X)}$ is the evaluation of $\cF$ at the ind-system of thickenings $\{Z^n_{m,e}\}_{m,e}$ (again identified with $\lim_{m,e}\cF(Z^n_{m,e})$), as a vector bundle over $\cO_{\inf}(\{Z^n_{m,e}\}_{m,e})$.		
		
		When $Z$ comes from an enlarged framing $\Sigma$ as in \cref{inf-coh-eg}.\ref{inf-coh-eg-fram}, we get the cosimplicial complex
		\[
		\cF_{D_{\Sigma^\bullet}(R')},
		\]
		where each $\cF_{D_{\Sigma^n}(R')}$ is a vector bundle over the ring $D_{\Sigma^n}(R')$ as in \cref{inf-coh-eg}.\ref{inf-coh-eg-fram}.
		
		By \cref{inf-weakly-final} and the vanishing of higher cohomology of vector bundles over affinoid rigid spaces, there is a natural isomorphism
		\[
		R\Gamma(X/Y_{\Bp, \inf}, \cF) \simeq \lim_{[n]\in \Delta}\cF_{D_{Z^n}(X)},
		\]
		which is functorial with respect to the choice of $Z$ (and thus the choice of enlarged framing in the second case).
		
		\item\label{inf-CA-dR-construction-Kos} By \cite[Thm.~3.3.1]{Guo21}, the vector bundle $\cF_{D_Z(X)}$ over $\cO_{\inf}(D_Z(X))$ is equipped with a natural flat connection $\nabla \colon \cF_{D_Z(X)} \to \cF_{D_Z(X)}\otimes_{\cO_Z} \Omega^1_{Z/Y_K}$, which is continuous with respect to the natural topology on $\cO_{\inf}(D_Z(X))$.
		In particular, one can define the associated \emph{de Rham complex}
		\[
		\dR(\cF_{D_Z(X)} \colonequals \left(\cF_{D_Z(X)}) \longrightarrow \cF_{D_Z(X)}\otimes_{\cO_Z} \Omega^1_{Z/Y_K} \longrightarrow \cdots \longrightarrow \cF_{D_Z(X)}\otimes_{\cO_Z} \Omega^d_{Z/Y_K} \right).
		\]
		
		When $Z$ comes from an enlarged framing $\Sigma$ as in \cref{inf-coh-eg}.\ref{inf-coh-eg-fram} we can further represent $\dR(\cF_{D_\Sigma(R')})\colonequals \dR(\cF_{D_Z(X)}) $ by a Koszul complex because the relative differentials $\Omega^1_{R\langle x_u^{\pm1}\rangle/R}$ are trivialized,.
	\end{enumerate}
\end{construction}

To relate the two constructions above in a canonical way, we follow the strategy of Bhatt--de Jong \cite{BdJ} and construct a natural double complex consisting of \v{C}ech-Alexander complexes in one direction and of de Rham complexes in the other.
\begin{theorem}[{\cite[Thm.~2.12]{BdJ}, \cite[Thm.~4.1.1]{Guo21}}]\label{inf-CA-dR-comp}
	Assume the setup in \cref{inf-affine-assump}.
	Let $\cF$ be a crystal in vector bundles over $X/Y_{\Bp, \inf}$.
	Then for a given choice of smooth affinoid space $Z$ over $Y_K$ together with a closed immersion $X_K \to Z_K$, there is a natural double complex
	\[
	M_\Sigma^{a,b} \colonequals \cF_{D_{Z^{a+1}}(X)} \otimes_{\cO_{Z^{a+1}}} \Omega^b_{Z^{a+1}/Y_K},
	\]
	which is functorial with respect to $\cF$ and the choice of $Z$ (and with respect to enlarging the framing $\Sigma$ when $Z$ is as in \cref{inf-coh-eg}.\ref{inf-coh-eg-fram}).
	It satisfies the following degeneracies:
	\begin{enumerate}[label=\upshape{(\roman*)}]
		\item\label{inf-CA-dR-comp-acycl} For each $b>0$, the cosimplicial complex $M_\Sigma^{\bullet, b}$ is acyclic.
		\item\label{inf-CA-dR-comp-isom} Any degeneracy map $[a] \to [0]$ in $\Delta$ induces an isomorphism of de Rham complexes $M_\Sigma^{0,\bullet} \to M_\Sigma^{a, \bullet}$.
	\end{enumerate}
	In particular, the total complex of $M_\Sigma^{a,b}$ is isomorphic to both $M_\Sigma^{\bullet,0}=  \lim_{[n]\in \Delta}\cF_{D_Z(X)^\bullet}$ and $M_\Sigma^{0,\bullet}=\dR(\cF_{D_Z(X)})$.
\end{theorem}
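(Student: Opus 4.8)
The plan is to follow the construction and the degeneracy arguments of Bhatt--de Jong \cite[Thm.~2.12]{BdJ} and Guo \cite[Thm.~4.1.1]{Guo21}, verifying that every step survives the passage to the relative situation over $Y_K$ (equivalently over the ind-system $Y_\Bp$). First I would construct the bigraded object $M_\Sigma^{a,b}$. For each $a$, the ind-system $\{Z^{a+1}_{m,e}\}_{m,e}$ is, by \cref{inf-weakly-final}.\ref{inf-weakly-final-product}, the $(a+1)$-fold self-product of the weakly final object $D_Z(X)$ in $X/Y_{\Bp,\inf}$, so evaluating the crystal $\cF$ there produces a vector bundle $\cF_{D_{Z^{a+1}}(X)}$ over $\cO_{\inf}\bigl(D_{Z^{a+1}}(X)\bigr)$; by the crystal property (the relative analogue of \cite[Thm.~3.3.1]{Guo21}) this bundle carries a canonical, topologically flat connection relative to $Y_K$, and $M_\Sigma^{a,\bullet}\colonequals\dR\bigl(\cF_{D_{Z^{a+1}}(X)}\bigr)$ is its associated de Rham complex. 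The cosimplicial structure of $[a]\mapsto Z^{a+1}$ over $Y_K$ makes $[a]\mapsto M_\Sigma^{a,\bullet}$ into a cosimplicial object in cochain complexes, and $M_\Sigma^{\bullet,\bullet}$ is its associated double complex; functoriality in $\cF$, in $Z$, and — when $Z$ comes from an enlarged framing — in $\Sigma$ is immediate from the construction, and the identifications $M_\Sigma^{\bullet,0}=\lim_{[n]\in\Delta}\cF_{D_{Z^n}(X)}$ and $M_\Sigma^{0,\bullet}=\dR\bigl(\cF_{D_Z(X)}\bigr)$ hold by definition.

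Next I would prove \ref{inf-CA-dR-comp-acycl}, acyclicity of the cosimplicial complex $M_\Sigma^{\bullet,b}$ for $b>0$. Both the double complex and the claimed acyclicity are analytic-local on $X_C$, so one may assume $X=\Spf(R')$ admits an integral enlarged framing $\Sigma$ as in \cref{inf-coh-eg}.\ref{inf-coh-eg-fram}, and compare the given $Z$ with the coordinate embedding coming from $\Sigma$. In this local situation $X_C\hookrightarrow Z_C$ is a closed immersion of smooth rigid spaces cut out by a locally regular sequence, and $\cO_{\inf}\bigl(D_{Z^{a+1}}(X)\bigr)$ is a completed power-series algebra over $\cO_{\inf}\bigl(D_Z(X)\bigr)$ in the $a\cdot(\dim Z/Y_K)$ difference-of-coordinate variables; correspondingly $\cF_{D_{Z^{a+1}}(X)}$ is $\cF_{D_Z(X)}$ base-changed along this extension, with the expected relative de Rham differential in those directions. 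The acyclicity of $M_\Sigma^{\bullet,b}$ for $b>0$ then reduces to the relative, analytic Poincar\'e-lemma / extra-degeneracy argument for these power-series directions exactly as in \cite[Thm.~2.12]{BdJ} and \cite[Thm.~4.1.1]{Guo21}; the only genuinely new point is to check that the completions and the convergence conditions on the envelopes $D_{Z^{a+1}}(X)$, now over $Y_K$, behave as in \emph{loc.\ cit.} This is the step I expect to be the main obstacle, since it is where the bookkeeping of the analytic topology on the envelopes and of the relative differentials actually enters.

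For \ref{inf-CA-dR-comp-isom} I would argue that for every $a$ the ind-system $\{Z^{a+1}_{m,e}\}_{m,e}$ is again weakly final in $X/Y_{\Bp,\inf}$ by \cref{inf-weakly-final}.\ref{inf-weakly-final-product}, so the infinitesimal-to-de-Rham comparison (the relative form of \cite[Thm.~3.3.1]{Guo21}) shows that both $M_\Sigma^{0,\bullet}$ and $M_\Sigma^{a,\bullet}$ compute $R\Gamma(X/Y_{\Bp,\inf},\cF)$; since the map induced by a codegeneracy $[a]\to[0]$ is compatible with these comparisons, it is an isomorphism in the derived category. Finally, granting \ref{inf-CA-dR-comp-acycl} and \ref{inf-CA-dR-comp-isom}, I would compute $\Tot(M_\Sigma^{\bullet,\bullet})$ twice. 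On one hand, $\bigoplus_{b\ge 1}M_\Sigma^{\bullet,b}$ is a subcomplex of $\Tot(M_\Sigma^{\bullet,\bullet})$ whose quotient is the \v{C}ech--Alexander complex $M_\Sigma^{\bullet,0}$ and which is acyclic by \ref{inf-CA-dR-comp-acycl}; hence $\Tot(M_\Sigma^{\bullet,\bullet})\simeq M_\Sigma^{\bullet,0}$. On the other hand, viewing $\Tot(M_\Sigma^{\bullet,\bullet})$ as the totalization of the cosimplicial cochain complex $[a]\mapsto M_\Sigma^{a,\bullet}$, condition \ref{inf-CA-dR-comp-isom} exhibits this cosimplicial object as levelwise equivalent to the constant one with value $M_\Sigma^{0,\bullet}$, whose totalization is $M_\Sigma^{0,\bullet}$ itself; hence $\Tot(M_\Sigma^{\bullet,\bullet})\simeq M_\Sigma^{0,\bullet}=\dR\bigl(\cF_{D_Z(X)}\bigr)$. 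All the identifications used are natural in $\cF$ and $Z$, which yields the asserted functoriality and, in particular, the claimed isomorphism $R\Gamma(X/Y_{\Bp,\inf},\cF)\simeq\dR(\cF_{D_Z(X)})$ used in \cref{inf-CA-dR-construction}.
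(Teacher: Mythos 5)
Your construction of the double complex and your totalization argument at the end are essentially the paper's, but the proof of part \ref{inf-CA-dR-comp-isom} has a genuine circularity, and the proof of part \ref{inf-CA-dR-comp-acycl} misattributes the argument.

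For \ref{inf-CA-dR-comp-isom}, you propose to deduce that the codegeneracy $M_\Sigma^{0,\bullet}\to M_\Sigma^{a,\bullet}$ is a quasi-isomorphism from the fact that ``both complexes compute $R\Gamma(X/Y_{\Bp,\inf},\cF)$ via the infinitesimal-to-de-Rham comparison.'' But the statement that the de Rham complex $\dR(\cF_{D_Z(X)})$ of \emph{any} envelope computes infinitesimal cohomology is precisely the conclusion drawn from this theorem (it is \cref{inf-dR-comp}, which follows by comparing the two boundary identifications of $\Tot(M_\Sigma^{\bullet,\bullet})$); it is not an independent input you can invoke. The paper avoids this by giving a direct, non-circular argument following \cite[Lem.~2.13]{BdJ}: equip $\dR(\cF_{D_Z(X)})$ with the Hodge filtration, pull it back to a filtration on $\dR(\cF_{D_{Z^{a+1}}(X)})$, and observe that the $i$-th graded piece is $\cF_{D_Z(X)} \otimes \Omega^i_{Z/Y_K} \otimes \Omega^\bullet_{D_{Z^{a+1}}(X)/D_Z(X)}$; since $\Omega^\bullet_{D_{Z^{a+1}}(X)/D_Z(X)}$ is the de Rham complex of a completed power-series ring over $\cO_{\inf}(D_Z(X))$, the formal Poincar\'e lemma \cite[Lem.~4.1.10]{Guo21} gives $\cO_{\inf}(D_Z(X))\xrightarrow{\sim}\Omega^\bullet_{D_{Z^{a+1}}(X)/D_Z(X)}$, and the quasi-isomorphism follows graded piece by graded piece.

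For \ref{inf-CA-dR-comp-acycl}, your reduction to the framed local case and invocation of a ``Poincar\'e-lemma / extra-degeneracy argument for the power-series directions'' conflates the two parts. The Poincar\'e lemma in the power-series directions is the ingredient of \ref{inf-CA-dR-comp-isom}, not \ref{inf-CA-dR-comp-acycl}: part \ref{inf-CA-dR-comp-acycl} is about acyclicity in the \emph{cosimplicial} index $a$, not the de Rham index $b$. The correct argument, which the paper uses, is the purely combinatorial contraction of \cite[Lem.~2.15, Ex.~2.16]{BdJ}: $M_\Sigma^{\bullet,b}$ is the termwise tensor product of the cosimplicial module $\cF_{D_{Z^{\bullet+1}}(X)}$ with the $b$-th exterior power of the cosimplicial module $\Omega^1_{Z^{\bullet+1}/Y_K}$, so it suffices to exhibit a null-homotopy of $\Omega^1_{Z^{\bullet+1}/Y_K}$ as a cosimplicial $\cO_{\inf}(D_{Z^{\bullet+1}}(X))$-module, which local triviality of $\Omega^1_{Z/Y_K}$ provides by the cited combinatorics. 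No power-series completion and no Poincar\'e lemma enter into \ref{inf-CA-dR-comp-acycl}.
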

\begin{proof}
	We follow the strategy in \cite[\S~2]{BdJ}.
	For \ref{inf-CA-dR-comp-acycl}, each row $M_\Sigma^{\bullet, b}$ is the termwise tensor product of cosimplicial $\cO_{\inf}(D_{Z^{\bullet+1}}(X))$-modules $\cF_{D_{Z^{\bullet+1}}(X)}$ and $\Omega^b_{Z^{\bullet+1}/Y_K}$, where $\Omega^b_{Z^{\bullet+1}/Y_K}$ is the $b$-th exterior power of $\Omega^1_{Z^{\bullet+1}/Y_K}$.
	So to show the vanishing of $M_\Sigma^{\bullet,b}$, it suffices to show that $\Omega^1_{Z^{\bullet+1}/Y_K}$ is homotopic to  zero as a $\cO_{\inf}(D_{Z^{\bullet+1}}(X))$-module.
	This follows from local triviality of each $\Omega^1_{Z^{a+1}/Y_K}$ together with a combinatorial calculation as in \cite[Lem.~2.15, Ex.~2.16]{BdJ}.
	
	For \ref{inf-CA-dR-comp-isom}, as in \cite[Lem.~2.13]{BdJ}, the Hodge-filtration of the source of
	\[
	\dR( \cF_{D_{Z}(X)} ) \longrightarrow \dR( \cF_{D_{Z^{a+1}}(X)})
	\]
	induces a filtration on the target whose $i$-th graded piece is
	\[
	\cF_{D_{Z}(X)} \otimes_Z \Omega^i_{Z/Y_K} \otimes \Omega^\bullet_{D_{Z^{a+1}}(X)/D_Z(X)}.
	\]
	Here $\Omega^\bullet_{D_{Z^{a+1}}(X)/D_Z(X)}$ is the inverse limit over $m$ and $e$ of the relative de Rham complexes of $Z^{a+1}_{m,e}/Z_{m,e}$ with respect to $m$ and $e$, which is naturally equivalent to the de Rham complex of a power series ring over $\cO_{\inf}(D_Z(X))$.
	Thus by the Poincar\'e lemma for power series rings (\cite[Lem.~4.1.10]{Guo21}), we get the isomorphism
	\[
	\cO_{\inf}(D_Z(X)) \simeq \bigl( \Omega^\bullet_{D_{Z^{a+1}}(X)/D_Z(X)} \bigr),
	\]
	which finishes the proof of \ref{inf-CA-dR-comp-isom}.
\end{proof}
The above discussion allows us to compute infinitesimal cohomology in two different ways, by using the two constructions for $Z$ from \cref{inf-coh-eg}.
\begin{corollary}\label{inf-dR-comp}
	Let $\cF$ be a crystal in vector bundles over $X/Y_{\Bp, \inf}$.
	There are natural isomorphisms
	\[
	R\Gamma(X_\Bp, \dR(\cF_{X_\Bp})) \simeq R\Gamma(X/Y_{\Bp, \inf}, \cF) \simeq \lim_{\substack{\textup{affine open }U=\Spf(R')\\ \textup{enlarged framing }\Sigma~\text{of}~U}} \lim_{[n]\in \Delta} 
	\cF_{D_{\Sigma^n}(R')}.
	\]
\end{corollary}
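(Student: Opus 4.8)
The plan is to deduce both isomorphisms from the double complex of \cref{inf-CA-dR-comp} together with Zariski descent, so that the corollary becomes essentially a bookkeeping statement on top of the comparison established there.

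First I would reduce to the affine setting of \cref{inf-affine-assump}. All three terms satisfy Zariski descent on $X$: for the middle term because $X/Y_{\Bp,\inf}$ is a topos; for the right-hand side it is built into the formula; and for $R\Gamma(X_\Bp, \dR(\cF_{X_\Bp}))$ because the value $\cF_{X_\Bp}$ of the crystal on the envelope $\{X_\Be\}$, equipped with its canonical flat connection (\cref{inf-CA-dR-construction}, via \cite[Thm.~3.3.1]{Guo21}), is a vector bundle with integrable connection on the ind-adic space $X_\Bp$ whose restriction to any affine open $U_\Bp$ is $\cF_{U_\Bp}$ with its connection, so that $\dR(\cF_{X_\Bp})$ is a complex of sheaves on $X_\Bp$ restricting to $\dR(\cF_{U_\Bp})$. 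By \cref{local-framing} (see also \cref{inf-coh-eg}) the affine opens $U = \Spf(R')$ admitting an enlarged framing $\Sigma$ form a basis of $X$, and for a fixed finite such cover the three terms are the totalizations of the corresponding \v{C}ech double complexes; hence it suffices to produce, compatibly with restriction of opens and refinement of framings, natural isomorphisms
\[
R\Gamma\bigl(U_\Bp, \dR(\cF_{U_\Bp})\bigr) \;\simeq\; R\Gamma(U/Y_{\Bp,\inf}, \cF|_U) \;\simeq\; \lim_{[n]\in\Delta} \cF_{D_{\Sigma^n}(R')},
\]
after which the displayed limit over pairs $(U,\Sigma)$ in the corollary recovers $R\Gamma(X/Y_{\Bp,\inf},\cF)$ by a cofinality argument.

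For the second isomorphism I would apply \cref{inf-CA-dR-construction}(1) with the ambient space $Z = Z^\Sigma$ coming from the enlarged framing as in \cref{inf-coh-eg}(2): by \cref{inf-weakly-final} the envelope $D_{Z^\Sigma}(U)$ is weakly final in $U/Y_{\Bp,\inf}$ and its $(n+1)$-fold self product in the infinitesimal site is $D_{(Z^\Sigma)^n}(U)$, so the \v{C}ech--Alexander computation gives $R\Gamma(U/Y_{\Bp,\inf},\cF|_U) \simeq \lim_{[n]\in\Delta} \cF_{D_{(Z^\Sigma)^n}(U)} = \lim_{[n]\in\Delta}\cF_{D_{\Sigma^n}(R')}$. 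For the first isomorphism I would instead take $Z = U_K$ with the identity closed immersion $U_C \to U_C$ (\cref{inf-coh-eg}(1)), so that $Z_{m,e} = U_\Be$ and $\cF_{D_{U_K}(U)} = \cF_{U_\Bp}$; since $U_K$ is affinoid, $U_\Bp$ is ind-affinoid and $R\Gamma(U_\Bp, \dR(\cF_{U_\Bp}))$ is computed by the honest de Rham complex $\dR(\cF_{U_\Bp}) = M^{0,\bullet}$ of the double complex $M^{a,b}$ from \cref{inf-CA-dR-comp} associated with this $Z$. By the degeneracy and acyclicity recorded in \cref{inf-CA-dR-comp}, the total complex of $M^{a,b}$ is isomorphic both to $M^{0,\bullet} = \dR(\cF_{U_\Bp})$ and to $M^{\bullet,0} = \lim_{[n]\in\Delta}\cF_{D_{(U_K)^n}(U)}$, and the latter is $R\Gamma(U/Y_{\Bp,\inf},\cF|_U)$ again by \cref{inf-CA-dR-construction}(1). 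Chaining these gives the first isomorphism.

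Finally I would check naturality: the \v{C}ech--Alexander complexes and the double complex $M^{a,b}$ are functorial in the choice of ambient space $Z$ and with respect to enlarging $\Sigma$ by the statements of \cref{inf-CA-dR-construction} and \cref{inf-CA-dR-comp}, and restriction to opens induces the corresponding restrictions of the infinitesimal site, so the local isomorphisms above are compatible and glue. The hard part is not any new geometric input --- the Poincar\'e lemma and the acyclicity of the rows of $M^{a,b}$ are already packaged in \cref{inf-CA-dR-comp} --- but this last bookkeeping step: confirming that $R\Gamma(X_\Bp, \dR(\cF_{X_\Bp}))$ is computed locally by the honest de Rham complex and satisfies Zariski descent, that the indexing category of framed affine opens is cofinal enough that the outer limit recovers global sections, and that all of the identifications in the previous paragraph are natural enough to pass from the affine case to the general statement.
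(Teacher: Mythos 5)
Your proof is correct and follows essentially the same approach as the paper, which in fact gives no explicit proof of \cref{inf-dR-comp} — it is presented as an immediate consequence of applying \cref{inf-CA-dR-comp} to the two choices of $Z$ from \cref{inf-coh-eg} (namely $Z=U_K$ with the identity immersion for the de Rham side, and $Z=Z^\Sigma$ from an enlarged framing for the \v{C}ech--Alexander side). The extra bookkeeping you supply — Zariski descent to reduce to framed affines, Kiehl acyclicity to identify $R\Gamma(U_\Bp,\dR(\cF_{U_\Bp}))$ with the honest de Rham complex, and the cofinality remark explaining why the limit over pairs $(U,\Sigma)$ recovers global infinitesimal cohomology — is exactly what the paper leaves implicit, and all of it is sound.
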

Note that the reduction of $R\Gamma(X_\Bp, \dR(\cF_{X_\Bp}))$ mod $I$ is the relative de Rham cohomology of the associated vector bundle with flat connection of $\cF$ over  $X_C/Y_C$.
In particular, when $X\to Y$ is proper and smooth, the relative de Rham cohomology is perfect over $Y_C$. 
By the $I$-completeness, we thus have the finiteness of relative $\Bp$-cohomology as well.
\begin{corollary}\label{inf-finite}
	Assume that $X\to Y$ is smooth and proper.
	Let $\cF$ be a crystal in vector bundles on $X/Y_{\Bp, \inf}$.
	Then $R\Gamma(X/Y_{\Bp, \inf}, \cF)$ is a perfect $R_\Bp$-complex.
\end{corollary}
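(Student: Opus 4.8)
The plan is to reduce the statement to the finiteness of relative de Rham cohomology via \cref{inf-dR-comp} and then to descend modulo $I$. Recall that $\Bp$ is $I$-adically complete with $\Bp/I \simeq C$, so $R_\Bp$ is $I$-adically complete with $R_\Bp/I R_\Bp \simeq R_C$. By \cref{inf-dR-comp} we have $R\Gamma(X/Y_{\Bp,\inf},\cF) \simeq R\Gamma(X_\Bp,\dR(\cF_{X_\Bp}))$, the relative de Rham cohomology of the vector bundle with flat connection $\cF_{X_\Bp}$ over $X_\Bp/Y_\Bp$ produced by \cref{inf-CA-dR-construction} with $Z = X_K$ (cf.\ \cref{inf-coh-eg}.\ref{inf-coh-eg-id}). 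Since $f \colon X \to Y$ is proper and smooth, $\dR(\cF_{X_\Bp})$ is a bounded complex of locally free $\cO_{X_\Bp}$-modules which are flat over $Y_\Bp$, so $R\Gamma(X_\Bp,\dR(\cF_{X_\Bp}))$ is a bounded pseudo-coherent $R_\Bp$-complex; moreover, being the $R\lim$ over $e$ of the complexes $R\Gamma(X_\Be,\dR(\cF_{X_\Be}))$, each killed by $I^e$, it is derived $I$-complete. By the mod-$I$ criterion for perfectness \cite[\href{https://stacks.math.columbia.edu/tag/07LU}{Tag~07LU}]{SP}, exactly as in the proof of \cref{perfectness}, it therefore suffices to show that the fibre $R\Gamma(X_\Bp,\dR(\cF_{X_\Bp})) \otimes^L_{R_\Bp} R_C$ is a perfect $R_C$-complex.

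To this end I would identify the fibre with ordinary relative de Rham cohomology over $X_C/Y_C$. The crystal property of $\cF$ applied to the morphism of thickenings $(X_C,X_C) \to (X_C,X_\Be)$ gives $\cF_{X_\Bp} \otimes^L_{\cO_{X_\Bp}} \cO_{X_C} \simeq \cF_{X_C}$, a vector bundle on $X_C$ with flat connection relative to $Y_C$; together with $\Omega^\bullet_{X_\Bp/Y_\Bp} \otimes \cO_{X_C} \simeq \Omega^\bullet_{X_C/Y_C}$ this shows that the reduction of $\dR(\cF_{X_\Bp})$ modulo $I$ is the relative de Rham complex $\dR(\cF_{X_C})$. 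Since $X_\Bp \to Y_\Bp$ is proper and $\dR(\cF_{X_\Bp})$ is bounded with coherent, $Y_\Bp$-flat terms, derived proper base change along the closed immersion $Y_C \hookrightarrow Y_\Bp$ yields $R\Gamma(X_\Bp,\dR(\cF_{X_\Bp})) \otimes^L_{R_\Bp} R_C \simeq R\Gamma(X_C,\dR(\cF_{X_C}))$. Now $f_C \colon X_C \to Y_C$ is a proper morphism of rigid analytic spaces and $\dR(\cF_{X_C})$ is a bounded complex of coherent $\cO_{X_C}$-modules that are flat over the smooth $X_C$, hence over $Y_C$; by Kiehl's proper mapping theorem together with cohomology and base change --- equivalently, the perfectness of relative de Rham cohomology of a proper smooth morphism recorded after \cref{inf-dR-comp} --- $R\Gamma(X_C,\dR(\cF_{X_C}))$ is a perfect complex over the affinoid algebra $R_C$. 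This proves the claim and hence the corollary.

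The crux is the base-change step: one must know that forming de Rham cohomology commutes with reduction along the infinitesimal thickenings $Y_C \hookrightarrow \dotsb \hookrightarrow Y_\Be \hookrightarrow Y_\Bp$, i.e.\ a derived proper base change statement in the rigid analytic (or adic) setting, and it is precisely the properness of $f$ and the coherence and $Y$-flatness of the de Rham terms that make it available; once granted, the rest is formal, combining Kiehl's finiteness theorem with the derived Nakayama lemma \cite[\href{https://stacks.math.columbia.edu/tag/0G1U}{Tag~0G1U}]{SP}. Alternatively, one can sidestep working over the ind-system $X_\Bp$ and instead prove by induction on $e$ that each $R\Gamma(X_\Be,\dR(\cF_{X_\Be}))$ is a perfect $R_\Be$-complex --- the case $e=1$ being Kiehl's theorem, and the inductive step using that perfectness may be checked modulo the nilpotent ideal $\ker(R_\Be \to R_\Bp/I^{e-1}R_\Bp)$ --- and then assemble the statement by passing to $R\lim_e$ and invoking derived $I$-completeness.
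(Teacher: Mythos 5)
Your proof takes essentially the same approach as the paper's (very brief) argument preceding the corollary: identify the infinitesimal cohomology with relative de Rham cohomology over $X_\Bp$ via \cref{inf-dR-comp}, observe derived $I$-completeness, reduce mod $I$ to relative de Rham cohomology over $X_C/Y_C$, and invoke Kiehl's finiteness theorem. You supply welcome extra detail on the base-change step that the paper leaves implicit (including the alternative induction on $e$), but the overall plan is identical.
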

For our future application, we finish this subsection by constructing an infinitesimal crystal out of every crystalline crystal.
\begin{proposition}\label{inf-from-crys}
	There is a natural functor 
	\[
	D_\perf(X_{p=0,\crys}) \longrightarrow D_\perf(X/Y_{\Bp,\inf}),
	\] whose restriction to $\Vect(X_{p=0,\crys})$ has image in $\Vect(X/Y_{\Bp,\inf})$.
\end{proposition}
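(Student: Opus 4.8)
The plan is to reduce to an explicit computation with the infinitesimal envelope rings of \cref{inf-coh-eg}.\ref{inf-coh-eg-fram} and to evaluate $\cE'$ on a divided-power hypercover built out of $\rA_\crys$. Crystals in perfect complexes satisfy Zariski descent on $X$, and integral enlarged framings exist Zariski-locally (\cref{inf-coh-eg}.\ref{inf-coh-eg-fram}), so I would fix an affine open $V=\Spf R\subseteq Y$, an affine open $U=\Spf R'\subseteq X$ with $f(U)\subseteq V$, and an integral enlarged framing $\Sigma\subseteq(R')^\times$ of $U$ over $V$. By \cref{inf-dR-comp} together with the connection/stratification formalism of \cite{Guo21}, an object of $D_\perf\bigl(U/V_{\Bp,\inf}\bigr)$ (resp.\ of $\Vect$) is the same as a perfect complex (resp.\ finite projective module) over the envelope ring $D_\Sigma(R')=R'_\Bp\llbracket x_u-u\rrbracket$ equipped with a stratification over the self-products $D_{\Sigma^\bullet}(R')$, functorially in $\Sigma$. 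It therefore suffices to produce such data out of $\cE'$, naturally in $\cE'$ and in $X\to Y$, and to check it glues over changes of chart.

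For the construction I would choose compatible smooth $W(k)$-models $\widetilde R\to\widetilde R'$ of $R\to R'$ (\cref{unramified-model}) and form, inside $\widetilde R\langle x_u^{\pm1}\rangle\,\widehat{\otimes}_{W(k)}\rA_\crys$, the $p$-completed divided-power envelope $D$ of the kernel of the surjection onto $R'\widehat{\otimes}_{\cO_K}\cO_C/p$. Then $D$ is a divided-power thickening of $X_{\cO_C}$ modulo $p$ over $W(k)$ (the divided powers along $\ker(\rA_\crys\to\cO_C/p)$ being built in), its self-products $D^\bullet$ form a hypercover of the final object of the crystalline site exactly as in \cref{alt-Cech} and \cref{alt-Cech-ft-rings}, and evaluating the pullback of $\cE'$ to $X_{\cO_C}$ on $D^\bullet$ yields a cosimplicial perfect complex $\cE'(D^\bullet)$ carrying the canonical transition isomorphisms of the crystal condition. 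Now invert $p$ and $\xi$-complete: since $\rA_\crys[1/p]^\wedge_\xi=\Bp$, the explicit description of $D^\bullet$ identifies $D^n[1/p]^\wedge_\xi$ with $D_{\Sigma^n}(R')$ (the relative, enlarged-framing analogue of \cite[Lem.~13.12]{BMS18}), and I would set the value of the sought infinitesimal crystal on $D_{\Sigma^n}(R')$ to be $\cE'(D^n)\otimes^L_{D^n}D_{\Sigma^n}(R')$, with stratification induced by that of $\cE'(D^\bullet)$. Functoriality of divided-power envelopes and of $\cE'$ makes this compatible with enlarging $\Sigma$.

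The remaining checks are routine and should be carried out in this order. First, independence of the $W(k)$-models, of the framing, and of the auxiliary embedding $Z$: two choices are dominated by a common divided-power thickening, so the crystal property of $\cE'$ identifies the two outputs canonically and coherently, producing a well-defined functor, visibly natural in $X\to Y$. When $\cE'$ is a vector bundle the modules $\cE'(D^n)$ are finite projective, hence so are their base changes, giving the last assertion. The derived enhancement is immediate: the divided-power envelope, the evaluation of $\cE'$, and the base change $D^n\to D_{\Sigma^n}(R')$ (which is flat after inverting $p$) are all functorial on $D_\perf$, so the crystal condition in perfect complexes follows from the abelian one. The main obstacle — and the step I would spend the most care on — is the coefficient change in the middle paragraph: matching the $p$-adic, scheme-theoretic divided-power thickening $D$ over $W(k)$ with the rigid-analytic, $\QQ_p$-linear envelope $D_\Sigma(R')$ over $\Bp$. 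Forming the envelope inside $\widetilde R\langle x_u^{\pm1}\rangle\,\widehat{\otimes}_{W(k)}\rA_\crys$ is precisely what bridges $W(k)$ and $\Bp=\rA_\crys[1/p]^\wedge_\xi$; on the site level this amounts to using \cref{rel vs abs sites}.\ref{rel vs abs crys} to regard the pullback of $\cE'$ to $X_{\cO_C}$ as a crystal on the relative crystalline site over $\bigl(\rA_\crys,\ker(\rA_\crys\to\cO_C/p)\bigr)$, and then observing that the $p$-inverted, $\xi$-completed divided-power thickenings of that relative site are exactly the infinitesimal thickenings over $\Bp$ occurring in $X/Y_{\Bp,\inf}$. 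Equivalently one may package the whole construction as a morphism of ringed topoi from $X/Y_{\Bp,\inf}$ to the crystalline topos $(X_{p=0}/\ZZ_p)_\crys$ and take the functor of the proposition to be pullback along it; the content is then the construction of the underlying continuous functor of sites, which is the same matching problem.
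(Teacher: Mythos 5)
Your proposal follows essentially the same route as the paper's proof: fix an affine framed chart, form a cosimplicial $p$-completed divided-power thickening of $R'_{\cO_C/p}$ inside $R_{\rA_{\inf}}\langle x_u^{\pm 1}\rangle^{\widehat\otimes_{R_{\rA_{\inf}}} n+1}$ (your passage through a $W(k)$-model $\widetilde R$ and $\rA_\crys$ is a cosmetic variant), evaluate $\cE'$ on it, base-change along the canonical map to $D_{\Sigma^\bullet}(R')$, and glue by functoriality in the chart and framing. One small inaccuracy to flag: $D^n[1/p]^\wedge_\xi$ is not isomorphic to $D_{\Sigma^n}(R')$ — one must complete $D^n[1/p]$ along the full kernel ideal $J$ of the surjection onto $R'_C$ (the $\ker(\tilde\theta_K)$-adic completion being an intermediate ring, which the paper spells out via the finite quotients $D_{pd,\Sigma,m}(R')$) — but since your definition only uses the natural map $D^n\to D_{\Sigma^n}(R')$ and not that identification, the construction is unaffected.
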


\begin{proof}
    We first assume the setup from \cref{inf-affine-assump}.
	Let $\cE'$ be a crystal in perfect complexes on $X_{p=0,\crys}$.
	Below, we compute and compare the envelopes in both the crystalline and the infinitesimal site for a fixed enlarged framing $\Sigma$;
	the same calculation works for arbitrary envelopes as in \cref{inf-weakly-final}.
	
	Let $D_{pd,\Sigma}(R')$ be the $p$-complete $p$-adic divided power envelope of the surjection
	\[
	R_{\rA_{\inf}} \langle x_u^{\pm1} \rangle \longrightarrow R'_{\cO_C/p}.
	\]
	This surjection naturally factors through
	\[
		R_{\rA_{\inf}} \langle x_u^{\pm1} \rangle \longrightarrow R'_{\cO_C},
	\]
	whose kernel ideal is denoted as $J$.
	By construction, $D_{pd, \Sigma}(R')$ is equal to the $p$-completed divided power envelope of $R_{\rA_{\inf}} \langle x_u^{\pm1}\rangle$ with respect to the ideal $J$.
	We also define the ring $D_{pd,\Sigma,m}(R')$ to be the finite $R_{\rA_{\inf}} \langle x_u^{\pm1} \rangle$-algebra obtained by taking the quotient of the divided power algebra of $J\subset R_{\rA_{\inf}} \langle x_u^{\pm1}\rangle$ by the ideal $\{\frac{f^i}{i!},~f\in J,~i\geq m\}$).
	By the finiteness, the ring $D_{pd, \Sigma,m}(R')$ is automatically $p$-complete, and there is a natural surjection $D_{pd,\Sigma}(R')\to D_{pd, \Sigma,m}(R')$.
	Moreover, upon inverting $p$, we obtain
	\[
	D_{pd, \Sigma,m}(R')[1/p] = D_{pd,\Sigma}(R')[1/p]/J^m=R_{\rA_{\inf}} \langle x_u^{\pm1} \rangle[1/p]/J^m.
	\]
	Thus, there is a natural map
	\[
	D_{pd, \Sigma}(R')\longrightarrow R_{\rA_{\inf}} \langle x_u^{\pm1} \rangle[1/p]/J^m,
	\]
	which induces a surjection after inverting $p$.
	By taking the inverse limit over $m$, this gives 
	\[
	D_{pd,\Sigma}(R') \longrightarrow D_{\Sigma}(R')\simeq (D_{pd, \Sigma}(R')[1/p])^\wedge_J.
	\]
	Note that this map of rings can also be factored as
	\[
	D_{pd,\Sigma}(R') \longrightarrow  D_{pd, \Sigma}(R')\widehat{\otimes}_{\rA_{\inf, K}} \Bp \colonequals \bigl(\lim_m  D_{pd, \Sigma}(R')[1/p] /\ker(\tilde{\theta}_K)^m \bigr) \longrightarrow  D_{\Sigma}(R').
	\]
	
	Similarly, we can form the cosimplicial divided power thickening $D_{pd, \Sigma^n}(R')$ by taking the $p$-complete divided power envelope of the surjection
	\[
	R_{\rA_{\inf}} \langle x_u^{\pm1} \rangle^{\widehat{\otimes}_{R_{\rA_{\inf}}} n+1} \longrightarrow R'_{\cO_C},
	\]
	which maps to the $\Bp$-linear cosimplicial infinitesimal thickening $D_{\Sigma^n}(R')$ of $R'_C$.
	
	For each $n$, we can now consider the perfect $D_{\Sigma^n}(R')$ complex
	\[
	\cE'(D_{pd, \Sigma^n}(R'), R'_{\cO_C/p}) \otimes^L_{D_{pd, \Sigma^n}(R')} D_\Sigma(R').
	\]
	By the crystal condition for $\cE'$, this cosimplicial object satisfies the natural base change isomorphism for any arrow $[n]\to [m]$ in $\Delta$.
	In particular, since $D_{\Sigma^\bullet}(R')$  is the \v{C}ech nerve of the weakly final infinitesimal thickening $D_\Sigma(R')$, the data of the cosimplicial complex above is equivalent to a crystal $\cF$ in perfect complexes on $X/Y_{\Bp,\inf}$, whose evaluation at $D_{\Sigma^n}(R')$ is $\cE'(D_{pd, \Sigma^n}(R'), R'_{\cO_C/p}) \otimes^L_{D_{pd, \Sigma^n}(R')} D_\Sigma(R')$.
	
	Finally, as  the construction is functorial with respect to $R$, $R'$ and its enlarged framing $\Sigma$, we can globalize the construction to get a functor
	\[
	D_\perf(X_{p=0, \crys}) \longrightarrow D_\perf(X/Y_{\Bp, \inf}).
	\]
	Using the flatness of vector bundles, it restricts to an associated functor of vector bundles.
\end{proof}
For future reference, we discuss the relationship of the functor in \cref{inf-from-crys} with the associated cohomology complexes.
We assume the same setup as in \cref{inf-affine-assump}.
We also temporarily abuse notation and let $\rA_{\crys,K}$ be the $p$-completed divided power envelope for the surjection $\rA_{\inf,K} \to \cO_C$ given by the $\cO_K$-linear extension of $\theta \colon \rA_{\inf}\to \cO_C$.
Let $R_{\rA_{\crys}}$ be the $p$-completed tensor product $(R\otimes_{\cO_K} \rA_{\crys,K})^\wedge_p$, which is a $p$-completed divided power thickening of $R_{\cO_C/p}=R\otimes_{\cO_K} \cO_C/p$.

Let $\cE'$ be a crystal in vector bundles over the corresponding crystalline site $(X_{p=0}/R)_\crys$.
Since $X$ itself is a smooth lift of $X_{p=0}$ over $R$, it forms a weakly final object in $(X_{p=0}/R)_\crys$.
By \cite[Thm.~2.12]{BdJ} the crystalline cohomology $R\Gamma((X_{p=0}/R)_\crys, \cE')$ can thus be calculated by the following two methods:
\begin{itemize}
	\item The \v{C}ech--Alexander complex $\cE'_{D_{pd, X^\bullet}(X_{p=0})}$, where $(D_{pd, X^n}(X_{p=0}))$ is the $p$-completed divided power thickening of the closed immersion $X_{p=0} \to X^{\times_Y n+1}$.
	\item The relative de Rham complex $\dR(\cE'_X, \nabla_{X/Y})$.
\end{itemize}
The two cohomology complexes are identified via the same double complex construction as in \cref{inf-CA-dR-comp}.
Similarly, by taking the $p$-complete base change along the map of divided power thickenings $(R\to R/p)\to (R_{\rA_{\crys}}\to R_{\cO_C/p})$, we obtain the analogous two constructions for the crystalline cohomology $R\Gamma((X_{\cO_C/p}/R_{\rA_{\crys}})_\crys, \cE')$.
The latter complexes in particular map to the infinitesimal analogues constructed in \cref{inf-CA-dR-construction}.
\begin{proposition}\label{inf-from-crys-coh}
	Let $\cE'\in\Vect(X_{p=0,\crys})$ and let $\cF$ be its associated infinitesimal crystal over $X/Y_{\Bp,\inf}$ via \cref{inf-from-crys}. 
	Assume $Y=\Spf(R)$ as in \cref{inf-affine-assump}.
	There are natural isomorphisms of $R_{\rB_\dR^+}$-complexes
	\begin{equation}\label{inf-from-crys-coh-comparison}
        R\Gamma\bigl((X_{p=0}/R)_\crys, \cE'\bigr) \widehat{\otimes}_R R_\Bp \xrightarrow{\sim} R\Gamma\bigl((X_{\cO_C/p}/R_{\rA_{\crys}})_\crys, \cE'\bigr)\widehat{\otimes}_{R_{\rA_{\crys}}} R_\Bp \xrightarrow{\sim} R\Gamma\bigl((X/Y)_{\Bp,\inf},\cF\bigr).
    \end{equation}
\end{proposition}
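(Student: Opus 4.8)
The plan is to identify all three complexes in (\ref{inf-from-crys-coh-comparison}) with relative de Rham complexes and to reduce each of the two comparisons to a base change statement. Both sides of (\ref{inf-from-crys-coh-comparison}) satisfy Zariski descent in $X$, so I would first pass, via a Zariski localization on $X$, to the situation where $X=\Spf(R')$ carries an integral enlarged framing $\Sigma$ as in \cref{inf-coh-eg}.\ref{inf-coh-eg-fram}; every isomorphism constructed below will be functorial in $(R,R',\Sigma)$, hence will glue back to the stated global identification.

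For the first isomorphism I would invoke the de Rham and \v{C}ech--Alexander descriptions recalled just before the statement: $R\Gamma\bigl((X_{p=0}/R)_\crys,\cE'\bigr)\simeq\dR\bigl(\cE'(X),\nabla_{X/Y}\bigr)$, and, writing $X_{R_{\rA_{\crys}}}\colonequals X\times_{\Spf R}\Spf R_{\rA_{\crys}}$ for the smooth lift of $X_{\cO_C/p}$ over the pd-thickening $R_{\rA_{\crys}}$, the $p$-complete base change of this de Rham complex along $R\to R_{\rA_{\crys}}$ computes $R\Gamma\bigl((X_{\cO_C/p}/R_{\rA_{\crys}})_\crys,\cE'\bigr)$. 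The crystal property of $\cE'$ gives $\cE'(X_{R_{\rA_{\crys}}})\simeq\cE'(X)\widehat{\otimes}_R R_{\rA_{\crys}}$ compatibly with connections, and the base change is term-wise on the de Rham complex of a vector bundle, so $R\Gamma\bigl((X_{p=0}/R)_\crys,\cE'\bigr)\widehat{\otimes}_R R_{\rA_{\crys}}\simeq R\Gamma\bigl((X_{\cO_C/p}/R_{\rA_{\crys}})_\crys,\cE'\bigr)$. Since $R\to R_\Bp$ factors as $R\to R_{\rA_{\crys}}\to R_\Bp$ through the maps in the \nameref{list}, applying $-\widehat{\otimes}_{R_{\rA_{\crys}}}R_\Bp$ yields the first isomorphism in (\ref{inf-from-crys-coh-comparison}).

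The heart of the argument is the second isomorphism. By \cref{inf-dR-comp} applied to the infinitesimal crystal $\cF$, the right-hand side is $R\Gamma\bigl(X_\Bp,\dR(\cF_{X_\Bp})\bigr)$, which — $X_\Bp$ being an ind-system of affinoids smooth over $Y_\Bp$ — is just the global relative $\rB_\dR^+$-de Rham complex of $\cF_{X_\Bp}$. Since formation of $\Omega^\bullet$ commutes with base change, it suffices to identify the $R'_\Bp$-module with relative connection $\cF_{X_\Bp}$ with $\cE'(X_{R_{\rA_{\crys}}})\widehat{\otimes}_{R_{\rA_{\crys}}}R_\Bp$. On underlying modules this is immediate from the construction in \cref{inf-from-crys}: the section $X_\Bp\to D_\Sigma(R')$, $x_u-u\mapsto 0$, together with the crystal property of $\cF$ gives $\cF_{X_\Bp}\simeq\cE'\bigl(D_{pd,\Sigma}(R')\bigr)\otimes_{D_{pd,\Sigma}(R')}R'_\Bp$, while the universal pd-morphism $D_{pd,\Sigma}(R')\to X_{R_{\rA_{\crys}}}$ and the crystal property of $\cE'$ give $\cE'(X_{R_{\rA_{\crys}}})\simeq\cE'\bigl(D_{pd,\Sigma}(R')\bigr)\otimes_{D_{pd,\Sigma}(R')}R'_{\rA_{\crys}}$, whose base change to $R'_\Bp$ is the same module. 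For the connections I would argue that both sides carry the stratification connection determined by the corresponding cosimplicial envelopes $D_{\Sigma^\bullet}(R')$ and $D_{pd,\Sigma^\bullet}(R')$, and that these are compatible via the maps of cosimplicial pd- and infinitesimal envelopes recorded just before the statement; equivalently, the Bhatt--de Jong double complex of \cref{inf-CA-dR-comp} for $\cF$ receives a natural map from its crystalline counterpart over $R_{\rA_{\crys}}$, which becomes an isomorphism after $-\widehat{\otimes}R_\Bp$ and identifies the de Rham complexes and their \v{C}ech--Alexander realizations simultaneously. Globalizing by functoriality in $(R,R',\Sigma)$ then finishes the proof.

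I expect the main obstacle to be precisely the \emph{horizontality} of the module identification $\cF_{X_\Bp}\simeq\cE'(X_{R_{\rA_{\crys}}})\widehat{\otimes}_{R_{\rA_{\crys}}}R_\Bp$: the identification of underlying modules drops out of the crystal properties, but checking that it intertwines the connection produced in \cref{inf-from-crys} with the base-changed Gauss--Manin connection of crystalline cohomology requires carefully tracking the descent data through the two compatible cosimplicial systems of divided-power and infinitesimal envelopes and invoking the functoriality of the double-complex comparison from \cref{inf-CA-dR-comp}. Everything else is a formal manipulation of relative de Rham complexes under base change.
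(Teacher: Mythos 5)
Your proposal is correct and follows essentially the same strategy as the paper: identify the crystalline cohomology over $R_{\rA_\crys}$ and the $\Bp$-infinitesimal cohomology with relative de Rham complexes, and reduce to a base-change statement on the level of vector bundles with connection, with the key point being horizontality. The only genuine difference is in how you check horizontality. The paper works with the \v{C}ech nerves of the \emph{canonical} lifts $X_{\rA_\crys}$ and $X_{\Bp}$ (not the $\Sigma$-framing envelopes), and identifies $\nabla_{\inf}$ with $(\nabla_\crys[1/p])^\wedge_{\ker(\widetilde{\theta}_K)}$ by evaluating $\cE'$ and $\cF$ on the degree-one terms of those nerves and then invoking the standard dictionary between crystals, HPD stratifications, and connections (Stacks Project Tag~07J6/07JG). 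You instead propose to stay with the $\Sigma$-framing envelopes $D_{pd,\Sigma^\bullet}(R')\to D_{\Sigma^\bullet}(R')$ and to use the Bhatt--de Jong double complex from \cref{inf-CA-dR-comp} (and its crystalline counterpart) to transport the de Rham identification. Both routes are legitimate and amount to tracking the same stratification data; the paper's version is marginally shorter because it avoids invoking the double-complex degeneracies, while yours has the minor advantage of only working with the framing envelopes that were already set up in \cref{inf-from-crys}. One small imprecision to fix if you write this up: what you call ``the section $X_\Bp\to D_\Sigma(R')$, $x_u-u\mapsto 0$'' is on rings the retraction $D_\Sigma(R')\simeq R'_\Bp\llbracket x_u-u\rrbracket\to R'_\Bp$, i.e.\ the zero section of the envelope, not a section of $X_\Bp$.
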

Here, the completed tensor products in the first two terms above are defined as
\begin{align*}
	&R\Gamma((X_{p=0}/R)_\crys, \cE')\widehat{\otimes}_R R_\Bp \colonequals \bigl( (R\Gamma((X_{p=0}/R)_\crys, \cE') \otimes_R R_{\rA_{\crys}})^\wedge_p [1/p]\bigr)^\wedge_{\ker(\widetilde{\theta}_K)}, \\
	&R\Gamma((X_{\cO_C/p}/R_{\rA_{\crys}})_\crys, \cE')\widehat{\otimes}_{R_{\rA_{\crys}}} R_\Bp \colonequals   \bigl( R\Gamma((X_{\cO_C/p}/R_{\rA_{\crys}})_\crys, \cE') [1/p] \bigr)^\wedge_{\ker(\widetilde{\theta}_K)}. 
\end{align*}
\begin{proof}
	The construction of the first map and the fact that it is an isomorphism follow from the $p$-completed base change formula for crystalline cohomology:
	\begin{align*}
		(R\Gamma((X_{p=0}/R)_\crys, \cE') \otimes_R R_{\rA_{\crys}})^\wedge_p  \simeq R\Gamma((X_{\cO_C/p}/R_{\rA_{\crys}})_\crys, \cE').
	\end{align*}
	We use the associated de Rham complexes to show that the second map in (\ref{inf-from-crys-coh-comparison}) is an isomorphism.
	As $X_{\rA_{\crys}}$ is a smooth lift of $X_{\cO_C/p}$ with respect to the surjection $R_{\rA_{\crys}} \to R_{\cO_C/p}$, the crystalline cohomology $R\Gamma((X_{\cO_C/p}/R_{\rA_{\crys}})_\crys, \cE')$ is computed by the de Rham cohomology of the vector bundle $\cE'_{X_{\rA_{\crys}}}$ together with the natural connection $\nabla_\crys$ coming from $\cE'$.
	On the other hand, by \cref{inf-dR-comp}, the infinitesimal cohomology $R\Gamma((X/Y)_{\Bp,\inf}, \cF)$ is computed by the de Rham cohomology of the vector bundle $\cF_{X_{\Bp}}$ with the connection $\nabla_{\inf}$ coming from $\cF$.
	By the calculation in the proof of \cref{inf-from-crys} (but for the envelopes of the canonical lifts), we have
	\[
	\bigl( \cE'_{X_{\rA_{\crys}}} [1/p] \bigr)^\wedge_{\ker(\widetilde{\theta}_K)} \simeq \cF_{X_{\Bp}}.
	\]
	Similarly, we can evaluate $\cE'$ and $\cF$ at the degree one terms of the \v{C}ech nerves associated with the lifts $X_{\rA_{\crys}}$ and $X_{\Bp}$, respectively.
	By a calculation as in \cite[\href{https://stacks.math.columbia.edu/tag/07JG}{Tag~07JG}]{SP}, one can explicitly identify $\nabla_{\inf}$ as $(\nabla_\crys[1/p])^\wedge_{\ker(\widetilde{\theta}_K)}$.
	As a consequence, the completed base change of $R\Gamma((X_{p=0}/R)_\crys, \cE')$ along $R \to R_\Bp$ gives $R\Gamma(X/Y_{\Bp, \inf},\cF)$.
\end{proof}
Observe that the construction above is compatible with cohomology of the underlying vector bundle $E$ over the generic fiber $X_K$ as in \cref{underlying-vb} (where $\cE'[1/p]$ underlies a convergent crystal $\cE_\conv$ over the special fiber $(X_{p=0}/W(k))_\crys)$).
Thus, we get the natural isomorphism
\[
\dR\bigl(X_\eta,(E, \nabla_{X_\eta/Y_\eta})\bigr)\widehat{\otimes}_{R_K} R_\Bp \longrightarrow R\Gamma(X_\Bp, \dR(\cF_{X_\Bp})).
\]

\subsection{Proof of compatibility}
Now we are ready to show the compatibility with filtration.
We fix a proper smooth morphism of smooth formal $\cO_K$-schemes $f \colon X \to Y$.

For the convenience of the discussion later, we use the following convention for our base prisms.
\begin{convention}\label{conv-of-prism}
	Let $(A,I)$ be a perfect prism.
	We assume that $(A,I)$ is $p$-completely flat over $(\rA_{\inf}, [p]_q)$, where $q=[\epsilon]$ for some compatible system of $p$-power roots of unity $\epsilon=(\epsilon_i)$ in $\cO_C^\flat$.
\end{convention}
Note that for a smooth  formal scheme $Y$ over $\cO_K$, there exists a basis of affinoid perfectoid spaces $\Spa(S,S^+)$ over $Y_{C,\proet}$ such that $A\colonequals \rA_{\inf}(S,S^+)$ satisfies \cref{conv-of-prism}.

\begin{theorem}\label{final-fil}
	Let $T$ be a crystalline $\ZZ_p$-lisse sheaf over $X_\eta$ and let $(\cE,\varphi_\cE)$ be its associated analytic prismatic $F$-crystal.
	Assume  $(A,I)\in Y_\Prism$ is a perfect prism satisfying \cref{conv-of-prism} for $Y=\Spf(R)$, such that $A\otimes_{W(k)}\cO_K$ admits a map from $R$ that is compatible with $R\to \overline{A}$.
	The base change of the \'etale-crystalline comparison from \cref{derived-Ccrys} to $\BB_{\dR}(\overline{A})=A[1/p]^\wedge_I[1/I]$ underlies the filtered isomorphism
	\[
	 R\Gamma_\dR(X_\eta/Y_\eta, E)\otimes_{R_K} \BB_{\dR}(\overline{A}) \simeq  R\Gamma(X_{\overline{A}[1/p], \et},T)\otimes_{\ZZ_p(\Spf(\overline{A})_\eta)} \BB_{\dR}(\overline{A}).
	\]
\end{theorem}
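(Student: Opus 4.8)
The plan is to realise the \'etale--crystalline comparison $\gamma$ of \cref{derived-Ccrys}, after base change along the period map $\varphi_{\rA_{\inf}}^*\BB_\crys\to\BB_\dR$, as one edge of a commutative diagram of natural isomorphisms over $\BB_\dR(\overline{A})$ relating \'etale, prismatic, crystalline, infinitesimal and de Rham cohomology, to equip every node of that diagram with a filtration, and to check that every edge is filtered. By Zariski descent on $Y$ we may assume $Y=\Spf R$ is affine with an enlarged framing as in \cref{inf-affine-assump}; the perfect prism $(A,I)$ is fixed as in \cref{conv-of-prism}, so $\overline{A}$ is perfectoid, $A=\rA_{\inf}(\overline{A})$, and we abbreviate $\BB_\dR^+(\overline{A})=A[1/p]^\wedge_I$ and $\BB_\dR(\overline{A})=\BB_\dR^+(\overline{A})[1/I]$. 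Every complex occurring below is perfect over $\BB_\dR^+(\overline{A})$ --- this is \cref{perfectness} together with \cref{an-to-complex} for the prismatic cohomology, \cref{inf-finite} for the infinitesimal cohomology, and the vector bundle property of the $R^if_{\Prism,*}\cE$ established inside the proof of \cref{strong-et-comp} for the \'etale side --- so the associated filtrations are well-behaved and the required identities of filtrations may be checked after the usual reductions.

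For the diagram, write $\Spa(S,S^+)$ for the affinoid perfectoid object of $Y_{C,\proet}$ with $A=\rA_{\inf}(S^+)$. Going around the outside the edges are: the strong \'etale comparison \cref{strong-et-comp}, evaluated at $\Spa(S,S^+)$ and base changed from $A[1/\mu]$ to $\BB_\dR(\overline{A})$, identifying $R\Gamma(X_{\overline{A}[1/p],\et},T)\otimes\BB_\dR(\overline{A})$ with $R\Gamma\bigl((X_{\overline{A}}/A)_\Prism,\cE\bigr)\otimes_A\BB_\dR(\overline{A})$; the prismatic--crystalline comparison of \cref{rel vs abs coh} and \cref{prism crys comp reduced-special-fib}, base changed along $A\to\rA_\crys(\overline{A})\to\BB_\dR^+(\overline{A})$, identifying this with $R\Gamma\bigl((X_{\cO_C/p}/\rA_\crys(\overline{A}))_\crys,\cE'\bigr)\otimes\BB_\dR(\overline{A})$; the crystalline--infinitesimal comparison \cref{inf-from-crys-coh} for the infinitesimal crystal $\cF$ over $X/Y_{\Bp,\inf}$ attached to $\cE'$ by \cref{inf-from-crys}; and finally the identification in \cref{inf-dR-comp} of infinitesimal cohomology with the relative de Rham complex of the underlying bundle with connection $(E,\nabla)$, yielding $R\Gamma_\dR(X_\eta/Y_\eta,E)\otimes_{R_K}\BB_\dR(\overline{A})$. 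Commutativity of this diagram --- that the composite of these four maps agrees, under the identification of $\BB_\dR(Rf_{s,*}\cE_s)$ with the de Rham cohomology of $E$ over $\BB_\dR^+(\overline{A})$, with $\gamma$ base changed to $\BB_\dR$ --- is essentially built in, since by the proof of \cref{derived-Ccrys} the map $\gamma$ was assembled precisely out of the strong \'etale and the prismatic--crystalline comparisons.

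Next I would equip the nodes with filtrations. The \'etale node carries the tensor product of the trivial filtration on $T$ with the $I$-adic filtration on $\BB_\dR(\overline{A})$, the de Rham node the tensor product of the Hodge filtration $\Fil^\bullet E$ with the $I$-adic filtration, and the crystalline and infinitesimal nodes the filtrations induced by $\Fil^\bullet E$ as in \cref{crystalline-filtration}; by \cref{OBdR isocrystal and tensor product connection} the latter are compatible with the Hodge filtration on the de Rham complex, so the crystalline--infinitesimal--de Rham portion of the diagram is filtered. The remaining input is that the strong \'etale comparison, base changed to $\BB_\dR(\overline{A})$, is a \emph{filtered} map onto the crystalline node; this is exactly the assertion that the de Rham comparison $T\otimes\BB_\dR\simeq\BB_\dR(\cE_s)$ underlying the construction of $\cE$ out of $T$ (as in \cref{EssSurj1}.\ref{EssSurj1-ass to T} and \cref{Fargues-realization}) is filtered, which is a restatement of the crystallinity of $T$ via \cref{crys loc def}.\ref{crys loc def filtr} and \cref{crys-de-Rham-fil}. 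Here one must match the Frobenius-twisted lattice $\AA_{\inf}(\cE)\otimes_{\rA_{\inf},\varphi}\BB_\dR^+$ of \cref{Fargues-realization} with the Hodge--Tate and de Rham lattices sitting at $V(\tilde{\xi})$ and $V(\xi)$ in the construction of Section~\ref{sec4.2}; since $\varphi(\tilde{\xi})$ is invertible in $\rB_\dR^+$ and the Frobenius-twisted crystalline comparison is again filtered, these lattices are identified after the twist, and chasing the diagram gives that $\gamma\otimes\BB_\dR$ is filtered.

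I expect the main obstacle to be precisely the verification that this chain of isomorphisms both composes to $\gamma\otimes\BB_\dR$ \emph{and} is filtered edge by edge --- in particular pinning down that the base change of the strong \'etale comparison to $\BB_\dR$ is literally the filtered de Rham comparison used to define crystallinity, rather than merely an isomorphism of the same underlying modules. This is the point at which the careful tracking of Frobenius twists in the period-ring diagram and of the Hodge--Tate versus de Rham modifications in the construction of $\cE$ becomes delicate, and it is the reason the filtered infinitesimal cohomology over $\rB_\dR^+$ is introduced in this section: it furnishes the single node of the diagram on which the Hodge filtration is manifestly correct (via \cref{inf-dR-comp}) and which is visibly functorial in $f$ and perfect over $\rB_\dR^+$ (via \cref{inf-finite}), so that the filtration statement for $\gamma$ can be transported around the diagram.
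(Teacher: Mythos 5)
Your overall architecture is right --- route the comparison through infinitesimal cohomology over $\rB_\dR^+$ so that the de Rham filtration becomes accessible via \cref{inf-dR-comp} and \cref{inf-finite}, and try to identify the prismatic--crystalline route with Scholze's de Rham comparison. But the proposed \emph{strategy} for establishing the filtration compatibility has a genuine gap, and it is precisely the gap that the actual proof is built to close.

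You propose to ``equip every node of the diagram with a filtration and check that every edge is filtered.'' The problem is that the prismatic edges are not filtered maps in any a priori sense: the strong \'etale comparison \cref{strong-et-comp} and the prismatic--crystalline comparison \cref{rel vs abs coh} are constructed as isomorphisms of (perfect) complexes over $A[1/\mu]$ resp.\ $\rA_\crys(\overline{A}/p)$; no filtration lives on $R\Gamma_\Prism$ that you could check against. The filtration enters only on the two ends --- the $I$-adic filtration on $\rB_\dR$ tensored with $T$, and the Hodge filtration on $E$ tensored with the $I$-adic filtration --- and what must be shown is that the \emph{composite} of the unfiltered prismatic maps is a filtered isomorphism. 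Your appeal to \cref{crys loc def}.\ref{crys loc def filtr} and \cref{crys-de-Rham-fil} gives exactly the filteredness of the \emph{sheaf-level} comparison $T\otimes_{\widehat{\ZZ}_p}\BB_\dR\simeq\BB_\dR(\cE)$, but you still have to know that after applying $Rf_*$ and unwinding the \v{C}ech--Alexander constructions defining $\gamma_\et$ and $\gamma_\crys$ one actually lands on the sheaf-level comparison --- and that is the nontrivial content.

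The paper resolves this by inserting two further nodes that you omit: the pro-\'etale cohomology $R\Gamma_\proet = R\Gamma(X_{\overline{A}[1/p],\proet},\BB_\dR(\cE))$ and the filtered de Rham complex cohomology $R\Gamma_{\proet-\dR}$. These serve to route $\gamma_\crys^{-1}\circ\gamma_\et$ through Scholze's de Rham comparison $\gamma_\dR$ from \cite[Thm.~8.8]{Sch13}, which is \emph{already known} to be filtered; filteredness of the composite is then inherited, with no need to filter individual prismatic arrows. The cost of this is commutativity of the large diagram, and that in turn requires reconciling two different computational models for the maps: the \v{C}ech--Alexander complexes used to build the prismatic and crystalline comparisons versus the Koszul/de Rham complexes used to build Scholze's comparison. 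That reconciliation --- carried out by constructing a bi-simplicial/double-complex refinement $N_\Sigma^{a,b}$ of the pro-\'etale Koszul computation (\cref{CA-Kos-comp}) paralleling the double complex $M_\Sigma^{a,b}$ of \cref{inf-CA-dR-comp}, and then verifying degeneracies in both directions --- is the technical heart of the proof and is not addressed in your proposal. You do flag this honestly as ``the main obstacle,'' but stating the obstacle is not the same as supplying the double-complex mechanism that overcomes it; without it, the diagram you describe is not actually known to commute, and the filtration statement does not transport.
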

Here, the filtration on the right is induced by the $I$-adic filtration on $A[1/p]^\wedge_I$.
On the left, we equip the relative de Rham cohomology $R\Gamma_\dR(X_\eta/Y_\eta, E)$ of the associated filtered vector bundle $E$ with relative connection $\nabla_{X_\eta/Y_\eta}$ (\cref{crys-is-dR} and \cref{underlying-vb}) with its Hodge filtration and then take the tensor product filtration.
\begin{remark}\label{BdR-OBdR}
	The statement in \cref{final-fil} is functorial in the data of the prism $(A,I)\in Y_\Prism$ together with the section $R_K\to A\otimes_{W(k)} \cO_K$, and in particular induces an isomorphism of sheaves
	\[
	Rf_{\dR, *}(E,\nabla)\otimes_{\cO_{Y_\eta}} \restr{\BB_{\dR}}{\Spf(A)_\eta} \simeq Rf_{\eta,*} T\otimes_{\widehat{\ZZ}_p} \restr{\BB_{\dR}}{\Spf(A)_\eta}.
	\]
	To get a more canonical comparison without choosing the section, we can tensor the both sides with the de Rham period sheaf $\cO\BB_{\dR,Y}$ over $Y_\Prism$.
	Using the crystal condition of $Rf_{p=0,\crys,*} \cE'$ and Griffiths transversality for the relative Hodge filtration on $R\Gamma_\dR(X_K/Y_K, E)$ as in Step 3 of the proof for \cref{crys-is-dR}, one can show that there is a natural filtered isomorphism
	\[
	R\Gamma_\dR(X_K/Y_K, E)\otimes_{R_K} \BB_{\dR}(\overline{A}) \otimes_{\BB_{\dR}(\overline{A})} \cO\BB_{\dR}(\overline{A}) \simeq 	R\Gamma_\dR(X_K/Y_K, E)\otimes_{R_K} \cO\BB_{\dR}(\overline{A}),
	\]
	where the second tensor product is through the canonical map $R_K\to  \cO\BB_{\dR,Y}(\overline{A})$.
	In particular, we obtain the canonical isomorphism of complexes over $Y_{\eta, \proet}$
	\[
	Rf_{\dR, *}(E,\nabla)\otimes_{\cO_{Y_\eta}} \cO\BB_{\dR, Y} \simeq Rf_{\eta,*} T\otimes_{\widehat{\ZZ}_{p, Y_\eta}}  \cO\BB_{\dR, Y}.
	\]
\end{remark}

To prove the theorem, our strategy is to show that the construction of the comparison maps in \cref{derived-Ccrys} is compatible with that of the de Rham comparison theorem in \cite{Sch13}, which was shown there to be filtered.
More precisely, there is a natural commutative diagram of isomorphisms
\[ \begin{tikzcd}[scale cd=.8,column sep=small,center picture]
	R\Gamma(X_{\overline{A}[1/p], \et},T)\otimes_{\ZZ_p(\Spf(\overline{A})_\eta)} \BB_\dR(\overline{A}) \arrow[d] & R\Gamma\bigl((X_{\overline{A}}/A)_\Prism, \cE\bigr) \otimes_A \BB_{\dR}(\overline{A}) \arrow[l, blue, "\gamma_\et^{-1}"] & R\Gamma\bigl((X_{\overline{A}/p}/\rA_{\crys}(\overline{A}/p)),\cE'\bigr)\otimes_{\rA_{\crys}(\overline{A}/p)} \BB_{\dR}(\overline{A}) \arrow[l, blue, "\gamma_\crys"] \ar[d] \\
	R\Gamma(X_{\overline{A}[1/p], \proet},\BB_\dR(\cE)) \arrow[r, red] &R\Gamma(X_{\overline{A}[1/p], \proet} ,\cO\BB_\dR\otimes\dR(E))& R\Gamma_\dR(X_\eta/Y_\eta, E)\otimes_{R_K} \BB_{\dR}(\overline{A}) \arrow[l, red] ,
\end{tikzcd} \]
where the blue arrows are the \'etale-crystalline comparison of \cref{derived-Ccrys} and the red arrows are the de Rham comparison of \cite{Sch13} (see also \cite[Thm.~13.1]{BMS18}).
Note that the prismatic-crystalline comparison is constructed using \v{C}ech-Alexander complexes, while the de Rham comparison theorem, which goes through $R\Gamma(X_{\overline{A}[1/p], \proet} ,\cO\BB_\dR\otimes\dR(E))$, is constructed using Koszul complexes.
For this reason, in order to relate the \v{C}ech-Alexander constructions and the Koszul constructions to each other, we need to enlarge the above diagram and compare various explicit local constructions of comparison maps.

Before we proceed to the proof, we summarize and fix the notation for various cohomology theories that are used in the proof of \cref{final-fil}.
\begin{construction}\label{fil-setup|}
	Let $T$ be a crystalline $\ZZ_p$-local system over $X_\eta$, let $(\cE,\varphi_\cE)$ be its associated analytic prismatic $F$-crystal, and let $(A,I)\in Y_\Prism$ be a perfect prism satisfying \cref{conv-of-prism}.
	We assume that $Y=\Spf(R)$ is smooth and affine.
	We fix a lift $R \to A_{\cO_K}\colonequals A\otimes_{W(k)} \cO_K$ of the structure map $R \to \overline{A}$, using the smoothness of $R$ over $\cO_K$.
	
	We shall use the following objects:
	\begin{itemize}
		\item The prismatic cohomology 
		\[R\Gamma_\Prism\colonequals R\Gamma\bigl((X_{\overline{A}}/A)_\Prism, \cE\bigr).
		\]
		\item The \'etale cohomology 
		\[R\Gamma_\et\colonequals R\Gamma(X_{\overline{A}[1/p], \et},T).
		\]
		\item The associated crystalline crystal $\cE'$ over $X_{p=0, \crys}$ (\cref{prism-crys-crystal}) and its crystalline cohomology
		\[
		R\Gamma_\crys\colonequals R\Gamma(X_{\overline{A}/p}/\rA_{\crys}(\overline{A}/p),\cE').
		\]
		Here, the ring $\rA_{\crys}(\overline{A}/p)\simeq \Prism_{\overline{A}/p}$ is the $p$-complete divided power envelope for the surjection $A\to A/(p,\varphi^{-1}(I))$, and we regard $\overline{A}/p$ as an $\rA_{\crys}(\overline{A}/p)$-algebra via the maps
		\[
		\begin{tikzcd}
			\rA_{\crys}(\overline{A}/p) \arrow[r, twoheadrightarrow] & A/(p,\varphi^{-1}(I)) \arrow{r}{\varphi}[swap]{\simeq} & \overline{A}/p,
		\end{tikzcd}
	    \]
	    which factorizes the Frobenius endomorphism of $\rA_{\crys}(\overline{A}/p)/p$.\footnote{
    	    Let $S=\overline{A}/p$ be the quasiregular semiperfect ring.
			As in \cite[Thm.~4.6.1, Ex.~4.6.9]{BL22}, the surjection from $\rA_{\crys}(\overline{A}/p) \to \overline{A}/p$ can be understood via the diagram of prismatic cohomology:
				 $$\begin{tikzcd}[ampersand replacement=\&]
				\Prism_S \arrow[rrr, "\varphi"] \arrow[d, twoheadrightarrow]\&\&\& \Prism_S \ar[d] \\
				\Prism_S/\Fil^1_N \Prism_S\simeq \varphi_A^{-1,*}(S) \arrow[rr, "\sim"] \&\& S \arrow[r, hook] \& \overline{\Prism}_S.
			\end{tikzcd}$$
		 }
		\item The associated infinitesimal crystal $\cE''$ over the infinitesimal site $X/Y_{\Bp,\inf}$ (\cref{inf-from-crys}) and its infinitesimal cohomology
		\[
		R\Gamma_{\inf} \colonequals R\Gamma(X/Y_{\Bp,\inf}, \cE'').
		\]
		\item The associated $\BB_{\dR}$-local system $\BB_{\dR}(\cE)$ over the pro-\'etale site $X_{\overline{A}[1/p], \proet}$ and its pro-\'etale cohomology
		\[
		R\Gamma_\proet \colonequals R\Gamma(X_{\overline{A}[1/p], \proet}, \BB_\dR(\cE)).
		\]
		\item The associated vector bundle with connection $(E,\nabla_{X_\eta/Y_\eta})$ over $X_\eta/Y_\eta$ and its Hodge-filtered relative de Rham cohomology 
		\[
		R\Gamma_\dR \colonequals R\Gamma_\dR(X_\eta/Y_\eta, E).
		\]
		\item The associated filtered de Rham complex (\cite[Rmk.~6.9, Thm.~7.6]{Sch13})
		\[
		\dR(\BB_\dR(\cE)\otimes \cO\BB_\dR)\colonequals \BB_\dR(\cE)\otimes_{\BB_\dR} \dR(\cO\BB_\dR, d_{X_\eta/Y_\eta})\simeq \cO\BB_\dR\otimes_{\cO_{X_\eta}} \dR(E,\nabla_{X_\eta/Y_\eta}),
		\]
		and its filtered pro-\'etale cohomology
		\[
		R\Gamma_{\proet-\dR}\colonequals R\Gamma(X_{\overline{A}[1/p], \proet}, \dR(\BB_\dR(\cE)\otimes \cO\BB_\dR)) .
		\]
	\end{itemize}
\end{construction}
Now we can state the compatibility of the various cohomology theories and comparison isomorphisms.
\begin{theorem}
	There is a natural commutative diagram of isomorphisms of cohomology complexes over $\BB_\dR(\overline{A})$, with the red arrows being filtered isomorphisms:
	\[
	\begin{tikzcd}[row sep=normal, column sep=small]
		& R\Gamma_\Prism\otimes_A \BB_\dR(\overline{A}) \arrow[dd]  &~& R\Gamma_\crys\otimes_{\rA_{\crys}(\overline{A}/p)} \BB_\dR(\overline{A}) \arrow{ll}{~}[swap, blue]{\gamma_\crys}  \arrow[dd] \\
		R\Gamma_\et\otimes_{\ZZ_p} \BB_\dR(\overline{A}) \arrow[ru, blue, "\gamma_\et"] \arrow[r, phantom, "(1)"]
		\arrow[rd, red] &~&~&~ \\
		& R\Gamma_\proet \arrow[d, red] \arrow[rruu, phantom, "(2)"] & ~& R\Gamma_{\inf}\otimes_{R_\Bp} \BB_\dR(\overline{A})  \arrow[ll, shift left= 1 ex] \arrow[ll, shift right= 1 ex, "(4)"] \\
		&R\Gamma_{\proet-\dR}&~ \arrow[u,phantom, "(3)"] &R\Gamma_\dR\otimes_{R_K} \BB_\dR(\overline{A})  \arrow[ll, red] \arrow[u]
	\end{tikzcd}
    \]
    In particular, the \'etale-crystalline comparison $\gamma_\crys^{-1}\circ \gamma_\et$ underlies a natural filtered isomorphism after base change to $\BB_\dR(\overline{A})$.
\end{theorem}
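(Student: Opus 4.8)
The plan is to construct every arrow in the diagram explicitly at the level of \v Cech--Alexander or Koszul/de Rham models, and to verify commutativity of the four subdiagrams by comparing these models over a common refinement. Throughout I would work Zariski-locally on $Y=\Spf(R)$, reduce to the case where $Y$ is affine and $X$ admits an enlarged framing $\Sigma$ as in \cref{inf-coh-eg}, and then reassemble the local isomorphisms: all of the comparison maps in sight are natural in the data $(A,I)$ together with the fixed lift $R\to A_{\cO_K}$, so the locally constructed filtered isomorphisms glue.

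First I would pin down the arrows. The map $\gamma_\et$ is the base change to $\BB_\dR(\overline A)=A[1/p]^\wedge_I[1/I]$ of the prismatic--\'etale comparison of \cref{main-comp}.\ref{main-comp-wet} — in its strong form \cref{strong-et-comp}, which already lives over $\rA_{\inf}[1/\mu]$ and hence maps to $\BB_\dR(\overline A)$ — composed with the \'etale--pro-\'etale identification $R\Gamma_\et\otimes\BB_\dR\simeq R\Gamma_\proet$; this last map is filtered for the $I$-adic filtration because $\BB_\dR(\cE)$ was defined from $T$ and its $\rB^+_\dR$-lattice in \cref{Fargues-realization}. The map $\gamma_\crys$ is the base change of the prismatic--crystalline comparison $\gamma_\crys$ of \cref{rel vs abs coh}, applied to $Z=X_{\overline A}$ over the crystalline prism $\rA_\crys(\overline A/p)\simeq\Prism_{\overline A/p}$ (with the Frobenius-twisted structure map of \cref{fil-setup|}). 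The two red arrows through $R\Gamma_{\proet-\dR}$ are the filtered de Rham--Poincar\'e isomorphism $\BB_\dR(\cE)\simeq\dR(\BB_\dR(\cE)\otimes\cO\BB_\dR)$ and the base-change map from relative de Rham cohomology, both filtered by \cite[Rmk.~6.9, Thm.~7.6]{Sch13}. The map $R\Gamma_\dR\otimes\BB_\dR\to R\Gamma_\inf\otimes\BB_\dR$ is the comparison of \cref{inf-dR-comp}; the two parallel arrows $R\Gamma_\crys\otimes\BB_\dR\rightrightarrows R\Gamma_\inf\otimes\BB_\dR$ are the base change to $\rB^+_\dR$ of the crystalline--infinitesimal comparison \cref{inf-from-crys-coh}, computed once with \v Cech--Alexander complexes and once with de Rham/Koszul complexes; and the arrow $R\Gamma_\inf\otimes\BB_\dR\to R\Gamma_\proet$ sends an infinitesimal thickening to the corresponding affinoid perfectoid $\rB^+_\dR$-thickening, using that $\AA_\crys(S,S^+)\to S^+/p$ is a pro-pd-thickening and $\rB^+_\dR(S,S^+)$ its $\ker(\tilde\theta)$-completion.

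The four subdiagrams are then handled as follows. Subdiagram $(1)$ is the triangle relating $R\Gamma_\et\otimes\BB_\dR$, $R\Gamma_\Prism\otimes\BB_\dR$ and $R\Gamma_\proet$; it commutes because all three transition maps are induced by the single chain of morphisms of (pro-)\'etale sheaves $\ZZ_p\hookrightarrow\AA_{\inf}[1/\cI_\Prism]^\wedge_p\to\BB_\dR$ evaluated on affinoid perfectoids, exactly as in the proof of \cref{et-can-duality}. Subdiagram $(2)$ — the prismatic/crystalline/infinitesimal triangle over $\BB_\dR$ — I would check by exhibiting compatible \v Cech--Alexander-type models: the prismatic one over $A$ and the crystalline one over $\rA_\crys(\overline A/p)$ identified via \cref{rel vs abs coh} and \cref{alt-Cech}, and the infinitesimal one over $\rB^+_\dR$ built from $\cE''$ via \cref{inf-from-crys} and \cref{inf-from-crys-coh}; commutativity then reduces to the functoriality of these weakly final objects under base change. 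Subdiagram $(3)$ is the pro-\'etale de Rham / de Rham / infinitesimal triangle, which follows from \cref{inf-from-crys-coh}: it identifies the infinitesimal de Rham complex of $\cE''$ over $X_\Bp$ with the completed base change of the relative de Rham complex of $E$, compatibly with the Poincar\'e-lemma identification used in \cite{Sch13}. Subdiagram $(4)$ — the assertion that the two natural maps $R\Gamma_\inf\otimes\BB_\dR\rightrightarrows R\Gamma_\proet$ (one from \v Cech--Alexander data, one from de Rham/Koszul data) agree — is precisely the double complex degeneration of \cref{inf-CA-dR-comp} applied to $\cE''$.

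The main obstacle will be subdiagram $(2)$ and the bookkeeping it entails: the prismatic, crystalline, and infinitesimal sites use genuinely different (pro-)thickenings — bounded prisms over $A$, $p$-complete divided power envelopes over $\rA_\crys(\overline A/p)$, and nilpotent $\rB^+_\dR$-thickenings of $X_C$ — and one must produce functorial comparison maps between their \v Cech nerves that are simultaneously compatible with the Frobenius-twisted map $\rA_\crys(\overline A/p)\to\overline A/p$ from \cref{fil-setup|}. Once this compatibility is in place, the filtered conclusion is immediate: the red arrows are filtered isomorphisms by \cite{Sch13} and \cref{inf-dR-comp}, so traversing the outer boundary of the diagram shows that $\gamma_\crys^{-1}\circ\gamma_\et$, base changed to $\BB_\dR(\overline A)$, carries the $I$-adic filtration on the \'etale side to the tensor-product Hodge filtration on the relative de Rham side. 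This is exactly the input needed for \cref{final-fil}, and hence for \cref{main-fil} and the filtration compatibility in \cref{Ccrys}.
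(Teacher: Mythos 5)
Your proposal follows essentially the same architecture as the paper's proof: work Zariski-locally with an enlarged framing $\Sigma$, model every cohomology group by an explicit \v Cech--Alexander or Koszul complex, and verify each subdiagram by constructing a common refinement. The identification of the arrows, the decomposition into subdiagrams $(1)$--$(4)$, and the references to \cref{strong-et-comp}, \cref{rel vs abs coh}, \cref{alt-Cech}, \cref{inf-from-crys}, \cref{inf-from-crys-coh}, and \cite[Thm.~8.8]{Sch13} all match what is done in the paper.

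There is, however, a genuine gap in your treatment of subdiagram $(4)$, which you describe as ``precisely the double complex degeneration of \cref{inf-CA-dR-comp} applied to $\cE''$.'' This is not correct. \Cref{inf-CA-dR-comp} produces a single double complex $M_\Sigma^{a,b}$ whose terms are the values of the \emph{infinitesimal} crystal $\cE''$ on the envelopes $D_{\Sigma^{a+1}}(R')$ twisted by differential forms, and its two degenerations identify the \v Cech--Alexander and de Rham models of $R\Gamma_\inf$ with one another. That lemma says nothing about the pro-\'etale side. The content of subdiagram $(4)$ is a different statement: it asserts that the two maps $(2.b)$ and $(3.b)$ from $R\Gamma_\inf\otimes\BB_\dR(\overline A)$ to $R\Gamma_\proet$---one built term-by-term out of the \v Cech nerve of the perfectoid cover, the other out of the group cohomology Koszul complex for the $G$-torsor $\Spa(R'_{\infty,\overline A[1/p]},R'^+_{\infty,\overline A})\to X_{\overline A[1/p]}$---agree. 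To prove this one needs a second double complex $N_\Sigma^{a,b}$ built from the pro-\'etale Koszul data, a map of double complexes $M_\Sigma\to N_\Sigma$ compatible with all the constructions, and a new degeneracy lemma for $N_\Sigma$ (the analogue, in the pro-\'etale setting, of \cref{inf-CA-dR-comp}; the paper states and proves this as \cref{CA-Kos-comp}). The proof of that lemma is genuinely different from \cref{inf-CA-dR-comp}: the vanishing of the higher rows must be checked for cosimplicial $\BB_\dR(Z^\bullet)$-modules coming from torsors, and the column comparison uses that each Koszul complex $\Kos_{\proet,\Sigma^{a+1}}$ independently computes $R\Gamma(X_{\overline A,\eta,\proet},\BB_\dR(\cE))$. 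Your proposal does not supply (or even identify the need for) this input, so as written subdiagram $(4)$ remains unproved. Secondarily, your commutativity argument for subdiagram $(1)$ is also a bit loose: the common factorization the paper uses goes through $T\to T\otimes_{\widehat\ZZ_p}\AA_{\inf}[1/\mu]\simeq\AA_{\inf}(\cE)[1/\mu]$, not directly through $\AA_{\inf}[1/\cI_\Prism]^\wedge_p$, because the comparison with Scholze's almost-math construction of $(1.b)$ needs the intermediate ring where the almost-zero elements are killed after inverting $\mu$; be careful to track this.
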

\begin{proof}
	The idea is to give local constructions for each map above that are either functorial with respect to the choice of an enlarged framing or independent of the enlarged framing as for (1), and to show that each small block of the diagram commutes naturally.
	After taking the colimit over all enlarged framings and passing to global sections, we can check that the induced global maps are isomorphisms.
    
    Below, we fix an affine open subset $U=\Spf(R')$ of $X$ together with an enlarged framing $\Sigma\subset R'^\times$, consisting of a finite subset of units such that the induced map $R\langle x_u^{\pm1}; u\in \Sigma\rangle \to R'$ is an surjection and such that there is a subset of $d$ elements which induces an \'etale morphism to $\TT^d_R$.
    Let $R'_\infty$ be the $p$-complete base change of $R'$ along $R\langle x_u^{\pm1}\rangle \to R\langle x_u^{\pm1/p^\infty}\rangle$, which is itself quasi-syntomic and is large quasi-syntomic over $R$ (in the sense of \cite[Def.\ 15.1]{BS19}).
    The map $R' \to R'_\infty$ is a $G\colonequals \ZZ_p(1)^\Sigma$-torsor, thus in particular integrally a pro-finite-\'etale cover.
    Let $R'^n$ be the complete self tensor product $R'^{\widehat{\otimes}_R n+1}$ relative to $R$, and similarly for $R'^n_\infty \colonequals (R'_\infty)^{\widehat{\otimes}_R n+1}$.
    \begin{enumerate}
    	\item[Diagram (1)] We consider the diagram
    	\[
    	\begin{tikzcd}
    			& R\Gamma_\Prism\otimes_A \BB_\dR(\overline{A}) \arrow[dd, "(1.a)"] \\
    			R\Gamma_\et\otimes_{\ZZ_p(\Spf(\overline{A})_\eta)} \BB_\dR(\overline{A}) \arrow[ru, "\gamma_\et"] \arrow{rd}{~}[swap,red]{(1.b)}	 \arrow[r, phantom, "(1)"]&~\\
    			& R\Gamma_\proet.
    	\end{tikzcd}
        \]
        Recall from \cref{pris to crys lem} that there is a natural $\varphi$-equivariant isomorphism of pro-\'etale sheaves over $X_{\eta, \proet}|_{\Spa(\overline{A}[1/p], \overline{A})}$
        \[
        T\otimes_{\widehat{\ZZ}_p} \AA_\crys[1/\mu] \simeq \AA_\crys(\cE)[1/\mu],
        \]
        which identifies $T$ as the $\varphi$-invariants of $\AA_\crys(\cE)[1/\mu]$.
        Moreover, base changing to $\BB_\dR$, we get
        \[
        T\otimes_{\widehat{\ZZ}_p} \BB_\dR \simeq \BB_\dR(\cE).
        \]
        
        Next, we recall the constructions of the arrows.
            \begin{itemize}
        	\item[Arrow $\gamma_\et$:] The map $\gamma_\et$, defined in the proof of \cref{etale-comparison}, is (arc-)locally constructed by applying \cref{pris to crys lem} to
        	\[
        	T(\Spf(S)_\eta) \longrightarrow \cE(\rA_{\inf}(S))[1/I]^\wedge_p,
        	\]
        	where $S$ is a perfectoid ring over $X$.
        	The induced map $\gamma_\et$ on cohomology is an isomorphism by \cref{etale-comparison} and can be defined over the smaller coefficient ring $A[1/\mu]$ by \cref{strong-et-comp} (see the proof of \cref{derived-Ccrys}).
        	
        	\item[Arrow (1.b):] The  arrow (1.b)  is defined in \cite[Thm.~8.8]{Sch13} via the natural map of almost pro-\'etale sheaves $T \to (T\otimes_{\widehat{\ZZ}_p} \AA_{\inf})^a \to (\AA_{\inf}(\cE))^a$.
        	As the almost zero elements are killed after inverting $\mu$, the construction of \cite{Sch13} produces natural maps $T \to T\otimes_{\widehat{\ZZ}_p} \AA_{\inf}[1/\mu] \simeq \AA_{\inf}(\cE)[1/\mu]$ over $X_{\overline{A}[1/p], \proet}$.
        	The induced map (1.b) of cohomology is a filtered isomorphism by \cite[Thm.~8.8.(i)]{Sch13}.
        	\item[Arrow (1.a):] The arrow (1.a) locally is induced by the inclusion of a subcategory in $(X_{\overline{A}}/A)_\Prism$, since each affinoid perfectoid space $\Spa(S[1/p],S)$ over $X_{\overline{A}[1/p], \proet}$ produces an object $(\AA_{\inf}(S[1/p],S),I)$ in $(X_{\overline{A}}/A)$.
        \end{itemize}	
        Since both $\gamma_\et$ and (1.b) can be constructed via the map $T \to T\otimes_{\widehat{\ZZ}_p} \AA_{\inf}[1/\mu] \simeq \AA_{\inf}(\cE)[1/\mu]$ evaluated at perfect prisms over $X_{\overline{A}}/A_\Prism$, we see from the constructions above that the diagram commutes,
        which in particular implies the arrow (1.a) is an isomorphism.

        \item[Diagram (2)]
        Next, we consider the diagram
        \[
        \begin{tikzcd}[column sep=6em]
        	R\Gamma_\Prism\otimes_A \BB_\dR(\overline{A}) \arrow[d] \arrow[rd, phantom, "(2)"] & R\Gamma_\crys \otimes_{\rA_{\crys}(\overline{A})} \BB_\dR(\overline{A}) \arrow{l}{~}[swap]{\gamma_{\crys}\otimes\BB_{\dR}(\overline{A})}  \arrow[d]\\
        	R\Gamma_\proet& R\Gamma_{\inf}\otimes_{R_\Bp} \BB_\dR(\overline{A}),  \arrow{l}
        \end{tikzcd}
        \]
        We first fix some notation for the discussion of this diagram.
        By slight abuse of notation, we let $\rA_{\crys,K}(\overline{A}/p)$ be the $p$-completed divided power envelope of the surjection $A_{\cO_K}\to \overline{A}/p$.
        Let $ R_{\rA_{\inf}}\colonequals (R\otimes_{\cO_K} \rA_{\inf, K})^\wedge_p\simeq (R\otimes_W \rA_{\inf})^\wedge_p$ and let $R_{\rA_{\crys}}$ be the $p$-complete divided power envelope for the surjection $R_{\rA_{\inf}} \to R_{\cO_C/p}$.
        Then as $A$ is an $\rA_{\inf}$-algebra, the map $R\to A_{\cO_K}$ in \cref{fil-setup|} induces the maps of rings
        \[ R_{\rA_{\inf}} \to A_{\cO_K}, \quad R_{\rA_{\crys}} \to \rA_{\crys,K}(\overline{A}/p), \quad R_\Bp \longrightarrow \BB_{\dR}(\overline{A}).
        \]
        The second map fits into a base change map of divided power thickenings
        \[
        \begin{tikzcd}
        	R_{\rA_{\crys}} \arrow[r, twoheadrightarrow] \ar[d]& R_{\cO_C/p} \ar[d]\\
        	\rA_{\crys,K}(\overline{A}/p) \arrow[r, twoheadrightarrow] & \overline{A}/p.
        \end{tikzcd}
        \]
        
        As the map of rings $\rA_{\crys}(\overline{A}/p) \to \BB_{\dR}(\overline{A})$ factors through $\rA_{\crys,K}(\overline{A}/p)$, we may use the   $p$-completed base change formula for crystalline cohomology to  replace $R\Gamma_\crys\otimes_{\rA_{\crys}(\overline{A}/p)} \rA_{\crys,K}(\overline{A}/p)$ by
        \[
        R\Gamma_{\crys,K}\colonequals R\Gamma_\crys((X_{\overline{A}/p}/\rA_{\crys,K}(\overline{A}/p))_\crys, \cE').
        \]
        and $\gamma_\crys$ by $\gamma_{\crys,K}\colonequals \gamma_\crys\otimes\rA_{\crys,K}(\overline{A}/p)$ in the first row.
        This yields the diagram
        \[
        \begin{tikzcd}[column sep=7em]
        	R\Gamma_\Prism\otimes_A \BB_\dR(\overline{A}) \arrow[d, "(1.a)"] \arrow[rd, phantom, "(2')"] & R\Gamma_{\crys,K} \otimes_{\rA_{\crys,K}(\overline{A}/p)} \BB_\dR(\overline{A}) \arrow{l}{~}[swap]{\gamma_{\crys,K}\otimes\BB_{\dR}(\overline{A})}  \arrow[d, "(2.a)"]\\
        	R\Gamma_\proet& R\Gamma_{\inf}\otimes_{R_\Bp} \BB_\dR(\overline{A}).  \arrow{l}{~}[swap]{(2.b)}
        \end{tikzcd}
        \]

	   Next, we analyze the explicit constructions leading to Diagram (2').
	   Each cohomology complex in this diagram can be computed using \v{C}ech--Alexander complexes;
	   we recall these constructions and construct the arrows between them, starting from the right top corner and going in the two directions.
        \begin{itemize}[leftmargin=*]
        	\item To compute $R\Gamma_{\crys,K}$ and to compare with the infinitesimal cohomology later, we give two isomorphic constructions.
        	We do this locally on $U=\Spf(R')\subset X$ and follow the construction in \cref{alt-Cech-ft-rings}.
        	By sending the elements $x_u\in R\langle x_u^{\pm1}\rangle$ to the Teichm\"uller lifts $[(x_u, x_u^{1/p},\ldots)]\in \rA_{\inf, K}(R'_{\infty, \overline{A}/p})$, we can form a natural map of two cosimplicial complexes
        	\[
        	\lim_{[n]\in \Delta} \cE'(D_{pd, \Sigma^n}(R'), R'_{\cO_C/p}) \longrightarrow  \lim_{[n]\in \Delta} \cE'\bigl(\rA_{\crys,K}((R'^n_{\infty, \overline{A}/p}), R'^n_{\infty, \overline{A}/p}) \bigr).
        	\]
        	Here, $D_{pd, \Sigma}(R')$ is the $p$-complete divided power thickening for the surjection
        	\[
        	(R \langle x_u^{\pm1}\rangle\otimes_{\cO_K } \rA_{\inf, K})^\wedge_p \longrightarrow R'_{\cO_C/p}.
        	\]
        	By the weak finality of $D_{pd, \Sigma}(R')$ in $(U_{\cO_C/p}/R_{\rA_{\crys}})_\crys$ and \cref{alt-Cech}, the first cosimplicial complex above computes $R\Gamma((U_{\cO_C/p}/R_{\rA_{\crys}})_\crys, \cE')$,
        	and the second one computes $R\Gamma((U_{\overline{A}/p}/\rA_{\crys,K}(\overline{A}/p))_\crys, \cE')$.
        	In particular, by taking the $p$-completed base change along $R_{\rA_{\crys}} \to \rA_{\crys,K}(\overline{A}/p)$ and \cref{alt-Cech-ft-rings}, we get the following two isomorphic cosimplicial complexes computing $R\Gamma_{\crys,K}$:
        	\[
        	\bigl( \lim_{[n]\in \Delta}  \cE'(D_{pd, \Sigma^n}(R'), R'_{\cO_C/p}) \bigr) \widehat{\otimes}_{R_{\rA_{\crys}}} \rA_{\crys,K}(\overline{A}/p) \simeq  \lim_{[n]\in \Delta} \cE'\bigl(\rA_{\crys,K}((R'^n_{\infty, \overline{A}/p}), R'^n_{\infty, \overline{A}/p}) \bigr).
        	\]
        	
        	\item To compute $R\Gamma_{\inf}\otimes_{R_\Bp} \BB_\dR(\overline{A})$ and construct arrow (2.a), we follow \cref{inf-from-crys} and the discussion before \cref{inf-from-crys-coh}, which says that there is a natural map of cosimplicial complexes
        	\[
        	 \bigl( \lim_{[n]\in \Delta} \cE'(D_{pd, \Sigma^n}(R'), R'_{\cO_C/p}) \bigr)\widehat{\otimes}_{\rA_{\crys, K}} \Bp \longrightarrow  \lim_{[n]\in \Delta} \cE''(D_{\Sigma^n}(R')),
        	\]
        	where the latter computes $R\Gamma_{\inf}$ and each term $\cE''(D_{\Sigma^n}(R'))$ is given by the base change
        	\[
        	\cE'(D_{pd, \Sigma^n}(R'), R'_{\cO_C/p}) \otimes_{D_{pd, \Sigma^n}(R')} D_{\Sigma^n}(R').
        	\]
        	It is shown in \cref{inf-from-crys-coh} that this map of cosimplicial complexes is a quasi-isomorphism.
        	Thus, by taking the base change along $R_\Bp\to \BB_{\dR}(\overline{A})$, we get the arrow (2.a).
        	\item For the arrow $\gamma_{\crys}\otimes \BB_{\dR}(\overline{A})$, we take the complete base change of the prismatic-crystalline comparison $\gamma_{\crys}$, constructed via the \v{C}ech nerve from \cref{alt-Cech}, along the maps of rings $\rA_\crys(\overline{A}/p) \to \rA_{\crys,K}(\overline{A}/p) \to \BB_{\dR}(\overline{A})$.
        	This produces the natural quasi-isomorphisms incarnating $\gamma_{\crys,K}\otimes \BB_{\dR}(\overline{A})$
        	\[
        	\lim_{[n]\in \Delta} \cE(\Prism_{R'^n_{\infty, \overline{A}}})\widehat{\otimes}_{A} \BB_{\dR}(\overline{A}) \longleftarrow 
        	\lim_{[n]\in\Delta}   \bigl(\cE'(\rA_{\crys,K}((R'^n_{\infty, \overline{A}/p}), R'^n_{\infty, \overline{A}/p})) \bigr)\widehat{\otimes}_{\rA_{\crys,K}(\overline{A}/p)} \BB_{\dR}(\overline{A}).
        	\]
        	Here on the left hand side we implicitly use the base change formula of prismatic cohomology along the map of prisms  $(A, I)\to (\rA_{\crys}(\overline{A}/p), (p))$ (\cref{base-change}).
        	
        	\item 
        	By definition, the map $R'\to R'_\infty$ is an integral pro-finite-\'etale cover and the base change $R'_{\infty, \overline{A}}$ is a perfectoid algebra over $\overline{A}$.
        	Thus, by passing to the generic fiber and evaluating the pro-\'etale sheaf $\BB_\dR(\cE)$ at the \v{C}ech nerve for $\Spa(R'_{\infty, \overline{A}[1/p]}, R'^+_{\infty, \overline{A}}) \to X_{\overline{A}[1/p]}$,  we have
        	\[
        	R\Gamma(U_{\overline{A}[1/p], \proet}, \BB_\dR(\cE)) \simeq \lim_{[n]\in\Delta} \BB_\dR(\cE) (\Spa(R'_{\infty, \overline{A}[1/p]}, R'^+_{\infty, \overline{A}})^{{n+1}}).
        	\]

        	Note that the cosimplicial ring $\Prism_{R'^\bullet_{\infty, \overline{A}}}$ naturally maps to $\AA_{\inf}(\Spa(R'_{\infty, \overline{A}[1/p]}, R'^+_{\infty, \overline{A}})^{  \bullet+1})$, so we get the explicit form of arrow $(1.a)$ by evaluating $\cE[1/p]^\wedge_p[1/I]$ and $\BB_\dR(\cE)$ at the two cosimplicial rings, respectively.
        	
        	Furthermore, by sending the elements $x_u\in R\langle x_u^{\pm1}\rangle$ to the Teichm\"uller lifts $[(x_u, x_u^{1/p},\ldots)]\in \AA_{\inf}(\Spa(R'_{\infty, \overline{A}[1/p]}, R'^+_{\infty, \overline{A}}))$, we get arrow (2.b) via the maps
        	\[
        	\lim_{[n]\in \Delta}  \cE''(D_{\Sigma^n}(R')) \longrightarrow \lim_{[n]\in\Delta} \BB_\dR(\cE) (\Spa(R'_{\infty, \overline{A}[1/p]}, R'^+_{\infty, \overline{A}})^{n+1}),
        	\]
        	where we identify each $\cE''(D_{\Sigma^n}(R'))$ with $\cE'(D_{pd, \Sigma^n}(R'), R'_{\cO_C/p}) \otimes_{D_{pd, \Sigma^n}(R')} D_{\Sigma^n}(R')$ by the discussion in \cref{inf-from-crys}.
        	
        \end{itemize}
        To summarize, the choice of the enlarged framing induces the following diagram of \v{C}ech-Alexander complexes:
        \[ \begin{tikzcd}[column sep=small,center picture]
        	\bigl(\cE'(\rA_{\crys,K}(R'^n_{\infty, \overline{A}/p})) \bigr)\widehat{\otimes}_{\rA_{\crys,K}(\overline{A}/p)} \BB_{\dR}(\overline{A})  \arrow[d,"\gamma_{\crys,K}\otimes \BB_{\dR}(\overline{A})"'] & 
        	\bigl( \bigl( \cE'(D_{pd, \Sigma^n}(R')) \bigr)\widehat{\otimes}_{R_{\rA_\crys}} \rA_{\crys,K}(\overline{A}/p) \bigr) \widehat{\otimes}_{\rA_{\crys,K}(\overline{A}/p)} \BB_{\dR}(\overline{A}) \arrow[l, "\sim"']\\ 
            \cE(\Prism_{R'^n_{\infty, \overline{A}}})\widehat{\otimes}_{A} \BB_{\dR}(\overline{A}) \arrow[d, "(1.a)"'] 
        	& \bigl( \bigl( \cE'(D_{pd, \Sigma^n}(R')) \bigr)\widehat{\otimes}_{R_{\rA_\crys}} R_{\Bp} \bigr) \otimes_{R_\Bp} \BB_{\dR}(\overline{A})  \arrow[u] \arrow[d, "(2.a)"] \\
        	 \BB_\dR(\cE) (\Spa(R'_{\infty, \overline{A}[1/p]}, R'^+_{\infty, \overline{A}})^{n+1})
        	& \cE''(D_{\Sigma^n}(R'))\otimes_{R_\Bp} \BB_{\dR}(\overline{A}) \arrow[l,"(2.b)"'],
        \end{tikzcd} \]
        By construction, it commutes and is functorial with respect to $U$ and $\Sigma$.
        We also note that by the arguments above, both $\gamma_{\crys,K}$ and (2.a) are quasi-isomorphisms locally (i.e., before passing to the limit over all $U$) and (1.a) is a quasi-isomorphism globally (i.e., after passing to global sections on $X$) by the proof in Diagram (1).
        The first bullet point above explains that the top horizontal arrow is a local isomorphism.
        Furthermore, by the proper smoothness of $X\to Y$ and the finiteness of crystalline cohomology, the top right arrow induces a global isomorphism.
        So the commutativity implies that (2.b) is a quasi-isomorphism globally as well.
        
        \item[Diagram (3)] Consider the diagram
        \[
        \begin{tikzcd}
        	 R\Gamma_\proet \arrow[d, red, "(3.c)"]   & ~& R\Gamma_{\inf}\otimes_{R_\Bp} \BB_\dR(\overline{A})  \arrow[ll, "(3.b)"] \\
        	R\Gamma_{\proet-\dR}&~ \arrow[u,phantom, "(3)"] &R\Gamma_\dR\otimes_{R_K} \BB_\dR(\overline{A})  \arrow[ll, red, "(3.d)"] \arrow[u, "(3.a)"]
        \end{tikzcd}
        \]
        In contrast to Diagram (2), the cohomology complexes and arrows in (3) are constructed using Koszul complexes in a $\Sigma$-functorial way, as in the proof of \cite[Thm.~13.1]{BMS18} and \cite[Thm.~8.8]{Sch13}.
        Below, we explain the various Koszul complexes used in the calculation and their relationships.
        \begin{itemize}
        	\item Following the notation of \cite[Thm.~13.1]{BMS18}, let $\widetilde{D}_\Sigma(R')$ be the formal completion of the surjection 
        	\[
        	(R\langle x_u^{\pm1} \rangle\widehat{\otimes}_R R')_\Bp \longrightarrow R'_C,
        	\]
        	which by \cref{inf-weakly-final} is also the value of the infinitesimal structure sheaf $\cO_{\inf}$ at a weakly final object.
        	Then the ring $\widetilde{D}_\Sigma(R')$ admits natural maps from both $R'_\Bp$ and $D_\Sigma(R')$, and by the functoriality of Koszul complexes in \cref{inf-CA-dR-construction}.\ref{inf-CA-dR-construction-Kos} induces natural isomorphisms of $R'_\Bp$-complexes
        	\[
        	\begin{tikzcd}
        		\dR(\cE''_{D_\Sigma(R')}) \arrow[r,"\sim"]& \dR(\cE''_{\widetilde D_\Sigma(R')}) & \dR(\cE''_{R'_\Bp}) \arrow[l,"\sim"']
        	\end{tikzcd}
        	\]
        	which all compute $R\Gamma(U/Y_{\Bp, \inf}, \cE'')$.
        	
        	\item Now we recall the construction of (3.c) and (3.d).
        	By the construction of the enlarged framing, the induced map of generic fibers $\Spa(R'_{\infty, \overline{A}[1/p]}, R'^+_{\infty, \overline{A}}) \to X_{\overline{A}[1/p]}$ is a $G$-torsor and the source is an affinoid perfectoid space.
        	So by the vanishing of the higher cohomology of pro-\'etale structure sheaves (\cite[Thm.~6.5]{Sch13}) and the crystal condition of $\cE$, the cohomology of the pro-\'etale sheaves $\BB_\dR(\cE)$ and $\BB_\dR(\cE)\otimes_{\BB_\dR} \cO\BB_\dR\otimes_{\cO_{X}} \Omega^i_{X/Y}$ is computed by Koszul complexes with respect to this $G$-torsor.
        	Moreover, by the compatibility with the de Rham complex of $(E, \nabla_{X_\eta/Y_\eta})$ (\cite[Thm.~7.6, Thm.~8.8]{Sch13}), there is a natural map of de Rham complexes of sheaves over $X_\proet|_{\Spa(\overline{A}[1/p], \overline{A})}$ 
        	\[
        	\dR(E,\nabla_{X_\eta/Y_\eta}) \longrightarrow \dR(\BB_\dR(\cE)\otimes \cO\BB_\dR).
        	\]
        	Thus, summarizing the Koszul complexes calculating (3.c) and (3.d), we have the following isomorphisms:
        	\[
        	\begin{tikzcd}[column sep=small]
        		\Kos_{G}\left( \BB_\dR(\cE)(\Spa(R'_{\infty, \overline{A}[1/p]}, R'^+_{\infty, \overline{A}})) \right)\arrow[r, "\sim"] & 
        		\mathrm{Tot}\left( \Kos_{G} (\dR(\BB_\dR(\cE)\otimes \cO\BB_\dR)) \right) & \dR(E,\nabla_{X_\eta/Y_\eta})\widehat{\otimes}_R \BB_\dR(\overline{A}) \arrow[l,"\sim"'].
        	\end{tikzcd}
            \]
            
            \item To relate the above constructions to the others, we note as in Diagram (2) that the natural map of rings
            \[
            R\langle x_u^{\pm1}\rangle \longrightarrow \AA_{\inf,K}(\Spa(R'_{\infty, \overline{A}[1/p]}, R'^+_{\infty, \overline{A}})),~x_u \longmapsto [(x_u, x_u^{1/p},\ldots)]
            \]
            induces a bigger commutative diagram (see the last paragraph of the proof of \cite[Thm.~13.1]{BMS18})
            \[
            \begin{tikzcd}
            	D_\Sigma(R') \BB_\dR(\overline{A}) \ar[r] \ar[d] & \widetilde D_\Sigma(R')  \ar[d] & R' \ar[l] \arrow[d] \\
            	\BB_\dR(\Spa(R'_{\infty, \overline{A}[1/p]}, R'^+_{\infty, \overline{A}})) \ar[r] & \cO\BB_\dR(\Spa(R'_{\infty, \overline{A}[1/p]}, R'^+_{\infty, \overline{A}})) & R'\widehat{\otimes}_{R} \BB_\dR(\overline{A}) \ar[l]
            \end{tikzcd}.
            \]
            The diagram further induces a map of modules $\cE''_{D_\Sigma(R')} \to \BB_\dR(\cE)(\Spa(R'_{\infty, \overline{A}[1/p]}, R'^+_{\infty, \overline{A}}))$ (and similarly for $\cE''_{\widetilde D_\Sigma(R')} \to \cO\BB_\dR(\cE)(\Spa(R'_{\infty, \overline{A}[1/p]}, R'^+_{\infty, \overline{A}}))$), which by the freeness of $\Omega_{R\langle x_u^{\pm1}/R\rangle}^1$ leads to a map of  de Rham complexes to Koszul complexes (see the proof of \cite[Thm.~13.1]{BMS18})
            \[
            \begin{tikzcd}[column sep=small,center picture]
            \dR(\cE''_{D_\Sigma(R')})\otimes_{R_\Bp} \BB_\dR(\overline{A}) \ar[r] \arrow[d, "(3.b)"]& \dR(\cE''_{\widetilde D_\Sigma(R')})\otimes_{R_\Bp} \BB_\dR(\overline{A}) \ar[d]& \dR(E,\nabla_{X_\eta/Y_\eta})\otimes_R \BB_\dR(\overline{A}) \arrow[l, "(3.a)"]	\arrow[d, equal]\\
            \Kos_{G}\left( \BB_\dR(\cE)(\Spa(R'_{\infty, \overline{A}[1/p]}, R'^+_{\infty, \overline{A}})) \right) \arrow[r, red, "(3.c)"] & 
            \mathrm{Tot}\left( \Kos_{G} (\dR(\BB_\dR(\cE)\otimes \cO\BB_\dR)) \right) & \dR(E,\nabla_{X_\eta/Y_\eta})\otimes_R \BB_\dR(\overline{A}). \arrow[l, red, "(3.d)"']
            \end{tikzcd}
            \]
            As the arrows in both rows induce isomorphisms globally on cohomology, the commutativity implies that the arrows in the diagram are all isomorphisms globally.
            Moreover, since the red arrows in the second rows are filtered isomorphisms, the rest of the arrows underlie filtered isomorphisms as well.
        \end{itemize}
    
    \item[Diagram (4)] Finally, we are left to deal with the two arrows in (4), that is, the diagram
    \[
    \begin{tikzcd}
    	R\Gamma_\proet & R\Gamma_{\inf}\otimes_{R_\Bp} \BB_\dR(\overline{A}). \arrow[l,shift right,"(2.b)"'] \arrow[l, shift left, "(3.b)"]
    \end{tikzcd}
    \]
    Here, the arrow (2.b) used in Step 2 is defined via \v{C}ech--Alexander complexes induced by $\Sigma$, while the arrow (3.b) in Step 3 is defined via Koszul complexes induced by $\Sigma$.
    Thus we want to show that there is a natural identification between the map of \v{C}ech-Alexander complexes and Koszul complexes.
    We denote the \v{C}ech--Alexander complexes $\lim \BB_\dR(\cE) (\Spa(R'_{\infty, \overline{A}[1/p]}, R'^+_{\infty, \overline{A}})^{\times_{X_{\overline{A},\eta}}^{n+1}})$ and  $\lim  \cE''(D_{\Sigma^n}(R'))$ by $\CA_{\proet, \Sigma}$ and $\CA_{\inf, \Sigma}$, 
    and the Koszul complexes $\Kos_{G}\left( \BB_\dR(\cE)(\Spa(R'_{\infty, \overline{A}[1/p]}, R'^+_{\infty, \overline{A}})) \right)$ and $\dR(\cE''_{D_\Sigma(R')})\widehat{\otimes}_{R_\Bp} \BB_\dR(\overline{A})$\footnote{By the triviality of the K\"ahler differential, we identify this de Rham complex with a Koszul complex.} by $\Kos_{\proet, \Sigma}$ and $\Kos_{\inf, \Sigma}$, respectively.
    We want to construct a natural commutative diagram
    \[
    \begin{tikzcd}
    	\CA_{\proet, \Sigma} \arrow[d, dashed, sloped, "\sim"] & \CA_{\inf, \Sigma} \arrow[l, "(2.b)"] 
    	\arrow[d, dashed, sloped, "\sim"] \\
    	\Kos_{\proet, \Sigma} & \Kos_{\inf, \Sigma}. \arrow[l, "(3.b)"]
    \end{tikzcd}
    \]
    
    The idea is to build natural double complexes as in \cref{inf-CA-dR-comp}.
    The case of infinitesimal cohomology is already in \cref{inf-CA-dR-comp}, namely we can construct the double complex
    \[
    M_\Sigma^{a,b} \colonequals \cE''_{D_\Sigma(R')^a} \bigotimes_{(R\langle x_u^{\pm1}\rangle)^{\widehat{\otimes}_R a+1}} \Omega^b_{R\langle x_u^{\pm1}\rangle^{\widehat{\otimes}_R a+1}/R} \widehat{\bigotimes}_{R'_\Bp} \BB_{\dR}(\overline{A}),
    \]
    where the horizontal directions are \v{C}ech-Alexander complexes and vertical directions are Koszul complexes.
    On the other hand, for pro-\'etale cohomology, we can consider the analogous double complex
    \[
    N_\Sigma^{a,b} \colonequals \Kos_{G^{a+1}}^b \left( \BB_\dR(\cE) (\Spa(R'_{\infty, \overline{A}[1/p]}, R'^+_{\infty, \overline{A}})^{a+1}) \right),
    \]
    where the horizontal directions are \v{C}ech-Alexander complexes and the vertical directions are Koszul complexes (and we use $\Kos^b$ to denote the $b$-th term of the Koszul complex).
    Then there is a natural map of double complexes $
    N_\Sigma^{a,b} \longleftarrow M_\Sigma^{a,b}$,
    such that 
    \begin{itemize}
    	\item for each $b$ the induced map of cosimplicial complexes $M_\Sigma^{\bullet,b}\to N_\Sigma^{\bullet,b}$ is defined as in (2.b) (but for a finite direct sum of it);
    	\item for each $a$ the induced map of Koszul complexes $M_\Sigma^{a,\bullet} \to N_\Sigma^{a,\bullet}$ is defined as in (3.b) (but for $(a+1)$-th product of the maps induced by $\Sigma$).
    \end{itemize}
    As a consequence, the truncation at the zeroth row and the zeroth column induces the commutative diagram
    \[
    \begin{tikzcd}
    	\CA_{\proet, \Sigma} & \CA_{\inf, \Sigma} \arrow[l , "(2.b)"]\\
    	N_\Sigma^{\bullet,\bullet} \ar[u] \ar[d] & M_\Sigma^{\bullet,\bullet} \ar[u] \ar[d] \arrow[l, "(2.b)\times (3.b)"]\\
    	\Kos_{\proet, \Sigma} & \Kos_{\inf, \Sigma} \arrow[l, "(3.b)"],
    \end{tikzcd}
    \]
    where the bottom row and the right column are isomorphisms and the top row induces a global isomorphism after taking a colimit over all possible enlarged framings $\Sigma$ and then a limit over all  $U$.
    
Finally, to finish the proof, we show as in \cref{inf-CA-dR-comp} that the double complex $N_\Sigma^{\bullet,\bullet}$ satisfies the analogous property as in \cref{inf-CA-dR-comp}, and thus converges to both $\CA_{\proet,\Sigma}$ and $\Kos_{\proet, \Sigma}$ in a way that is compatible with the diagram above.
This is summarized in the following lemma.
    \begin{lemma}\label{CA-Kos-comp}
    	Let $N_\Sigma^{\bullet,\bullet}$ be the double complex 
    	\[
    	N_\Sigma^{a,b}\colonequals \Kos_{G^{a+1}}^b \left( \BB_\dR(\cE) (\Spa(R'_{\infty, \overline{A}[1/p]}, R'^+_{\infty, \overline{A}})^{a+1}) \right).
    	\]
    	Then we have the following degeneracy properties:
    	\begin{enumerate}[label=\upshape{(\roman*)}]
    		\item\label{CA-Kos-comp-acycl} For each $b>0$, the row $N_\Sigma^{\bullet, b}$ is an acyclic cosimplicial complex.
    		\item\label{CA-Kos-comp-isom} Any degeneracy map $[a] \to [0]$ in $\Delta$ induces an isomorphism of Koszul complexes $N_\Sigma^{0,\bullet} \to N_\Sigma^{a, \bullet}$.
    	\end{enumerate}
    	In particular, the total complex of $N_\Sigma^{a,b}$ converges to both $N_\Sigma^{0,\bullet}=\Kos_{\proet, \Sigma}$ and $N_\Sigma^{\bullet,0}=  \lim_{[n]\in \Delta}\CA_{\proet,\Sigma}$.
    \end{lemma}
    \begin{proof}
    	The proof goes via the same strategy as in \cref{inf-CA-dR-comp}, with small modifications.
    	We denote the affinoid perfectoid space $\Spa(R'_{\infty, \overline{A}[1/p]}, R'^+_{\infty, \overline{A}})^{\times_{X_{\overline{A},\eta}}^{a+1}}$ as $Z^a$.
    	
        We first note that each row $N_\Sigma^{\bullet,b}$ is the termwise  $b$-th exterior power of $N_\Sigma^{\bullet,1}$.
    	By writing $N_\Sigma^{a,1}$ as the free $\BB_\dR(Z^a)$-module $\bigoplus_{u \in \Sigma^{a+1}} \BB_\dR(\cE)(Z^a) \cdot e_u$, we see $N_\Sigma^{\bullet,1}$ is the termwise tensor product of cosimplicial $\BB_\dR(Z^\bullet)$-vector bundles $\BB_\dR(\cE)(Z^\bullet)$ and $\bigoplus_{u \in \Sigma^{\bullet+1}} \BB_\dR(Z^\bullet) \cdot e_u$.
    	So to show the degeneracy in \ref{CA-Kos-comp-acycl}, it suffices to show that the cosimplicial $\BB_\dR(Z^\bullet)$-module $\bigoplus_{u \in \Sigma^{\bullet+1}} \BB_\dR(Z^\bullet)\cdot e_u$ is homotopic to zero, which follows from  \cite[Ex.~2.16]{BdJ}.
    	
    	To prove \ref{CA-Kos-comp-isom}, we note that each column $N_\Sigma^{a, \bullet}$ is the Koszul complex $\Kos_{\proet, \Sigma^{a+1}}$ of the pro-\'etale sheaf $\BB_\dR(\cE)$ with respect to the $G^{a+1}$-torsor $Z^a \to X_{\overline{A},\eta}$ in the pro-\'etale site $X_{\overline{A},\eta,\proet}$.
    	In particular, by the general result on group cohomology and pro-\'etale cohomology, each of the complexes $\Kos_{\proet, \Sigma^{a+1}}$ is isomorphic to the pro-\'etale cohomology $R\Gamma(X_{\overline{A},\eta, \proet}, \BB_\dR(\cE))$. 
    	Note that the map $\Kos_{\proet, \Sigma} \to \Kos_{\proet, \Sigma^{a+1}}$ is induced by the map of coverings $Z^a \to Z^0$.
    	Thus the claim follows from equalities 
    	\[
    	\begin{tikzcd}[row sep=small, column sep=small]
    		& \Kos_{\proet, \Sigma^{a+1}} \\
    		R\Gamma(X_{\overline{A},\eta, \proet}, \BB_\dR(\cE)) \arrow[ru,sloped,"\sim"] \arrow[rd, sloped,"\sim"]& \\
    		& \Kos_{\proet, \Sigma} \ar[uu].
    	\end{tikzcd}
    	\]
    \end{proof}
   \end{enumerate}
\end{proof}

\bibliographystyle{amsalpha}
\bibliography{references}

\end{document}